\newtheorem{theorem}{Theorem}[section]
\newtheorem{lemma}[theorem]{Lemma}
\newtheorem{corollary}[theorem]{Corollary}
\newtheorem{proposition}[theorem]{Proposition}
\theoremstyle{remark}
\newtheorem{remark}[theorem]{Remark}
\newtheorem{hypothesis}[theorem]{Hypothesis}
\newtheorem{example}[theorem]{Example}
\theoremstyle{definition}
\newtheorem{definition}[theorem]{Definition}
\newcommand\bR{{\mathbb{R}}}
\newcommand\bC{{\mathbb C}}
\newcommand\bZ{{\mathbb Z}}
\newcommand\bV{{\mathbb V}}
\newcommand\bA{{\mathbb{A}}}
\newcommand\bU{{\mathbb{U}}}
\newcommand\Hom{{\rm Hom}}
\newcommand\dev{{\bf dev}}
\newcommand\SI{{\mathbb{S}}}
\newcommand\Bd{{\rm bd}}
\newcommand\clo{{\rm Cl}}
\newcommand\bdd{{\mathbf{d}}}
\newcommand\ra{\rightarrow}
\newcommand\emp{\emptyset}
\newcommand\eps{\epsilon}
\newcommand\Aff{{\mathbf{Aff}}}
\newcommand\ovl{\overline}
\newcommand\Aut{{\mathbf{Aut}}}
\newcommand\Idd{{\rm I}}
\newcommand\bv{{\mathbf{v}}}
\newcommand\bu{{\mathbf{u}}}
\newcommand\CN{{\mathcal{N}}}
\newcommand\Pgl{{\mathrm{PGL}}(n+1, \bR)}
\newcommand\Ag{{\mathrm{Ag}}}
\newcommand\rpn{\mathbb{RP}^n}
\newcommand\rpno{\mathbb{RP}^{n-1}}
\newcommand\SL{{\mathsf{SL}}}
\newcommand\SO{{\mathsf{SO}}}
\newcommand\Ort{{\mathsf{O}}}
\newcommand\PGL{{\mathsf{PGL}}}
\newcommand\SLnp{{\mathsf{SL}}_\pm(n+1, \bR)}
\newcommand\SLn{{\mathsf{SL}}_\pm(n, \bR)}
\newcommand\SLf{{\mathsf{SL}}_\pm(4, \bR)}
\newcommand\GL{{\mathsf{GL}}}
\newcommand\PSO{{\mathsf{PSO}}}
\newcommand\PO{{\mathsf{PO}}}
\newcommand\GLnp{{\mathsf{GL}}(n+1, \bR)}
\newcommand\PGLnp{{\mathsf{PGL}}(n+1, \bR)}
\newcommand\orb{\mathcal{O}} 
\newcommand\torb{\tilde{\mathcal{O}}}
\newcommand\bGamma{{\boldsymbol \Gamma}}
\newcommand\leng{{\mathrm{length}}}
\newcommand{\ranK}{{\mathrm{rank}}}
\newcommand\mx{{\mathrm{max}}}
\newcommand\cwl{{\mathrm{cwl}}}
\begin{document}

\title[Ends of real projective orbifolds]
{The classification of radial and totally geodesic ends of properly convex real projective orbifolds}


\author{Suhyoung Choi}
\address{ Department of Mathematics \\ KAIST \\
Daejeon 305-701, South Korea 
}
\email{schoi@math.kaist.ac.kr}

\date{\today}







\begin{abstract} 
Real projective structures on $n$-orbifolds are useful in understanding the space of 
representations of discrete groups into $\SL(n+1, \bR)$ or $\PGL(n+1, \bR)$. A recent work shows that many hyperbolic manifolds 
deform to manifolds with such structures not projectively equivalent to the original ones. 
The purpose of this paper is to understand 
the structures of ends of real projective $n$-dimensional orbifolds. 
In particular, these have the radial or totally geodesic ends. Hyperbolic manifolds with cusps and hyper-ideal ends are examples.
For this, we will study the natural conditions on eigenvalues of holonomy 
representations of ends when these ends are manageably understandable. 
The main techniques are the theory of Fried and Goldman on affine manifolds, a generalization of 
the work of Goldman, Labourie, and Margulis on flat Lorentzian $3$-manifolds and 
the work on Riemannian foliations by Molino, Carri\`ere, and so on. 
We will show that only the radial or totally geodesic ends of lens type or horospherical ends exist for strongly irreducible properly convex 
real projective orbifolds under the suitable conditions. 

\end{abstract}

\subjclass{Primary 57M50; Secondary 53A20, 53C15}
\keywords{geometric structures, real projective structures, $\SL(n, \bR)$, representation of groups}
\thanks{This work was supported by the National Research Foundation
of Korea (NRF) grant funded by the Korea government (MEST) (No.2010-0027001).} 

\maketitle




\section{Introduction} 


\subsection{Preliminary definitions.} 
\subsubsection{Topology of orbifolds and their ends.}
An {\em orbifold} $\orb$ is a topological space with charts modeling open sets by quotients of Euclidean open sets or half-open sets 
by finite group actions and compatibly patched with one another. 
The boundary $\partial \orb$ of an orbifold is defined as the set of points with only half-open sets as models. 
Orbifolds are stratified by manifolds. 
Let $\orb$ denote an $n$-dimensional orbifold with finitely many ends 
where end-neighborhoods are homeomorphic to closed  $(n-1)$-dimensional orbifolds times an open interval. 
We will require that $\orb$ is {\em strongly tame}; that is, $\orb$ has a compact suborbifold $K$ 
so that $\orb - K$ is a disjoint union of end-neighborhoods homeomorphic to 
closed $(n-1)$-dimensional orbifolds multiplied by open intervals.
Hence $\partial \orb$ is a compact suborbifold. 
This is a strong assumption; however, we note that the mathematicians have 
great difficulty understanding the topology of the ends of manifolds presently. 
(We apologize for going through definitions for a few pages. 
See \cite{Cbook} for an introduction to the geometric orbifold theory.)

An {\em orbifold covering map} is a map so that for a given modeling open set as above, the inverse 
image is a union of modeling open sets that are quotients as above. 
We say that an orbifold is a manifold if it has a subatlas of charts with trivial local groups. 
We will consider good orbifolds only, i.e., covered by a simply connected manifold. 
In this case, the universal covering orbifold $\torb$ is a manifold
with an orbifold covering map $p_\orb: \torb \ra \orb$. 
The group of deck transformations will be denote by $\pi_1(\orb)$ or $\bGamma$.
They act properly discontinuously on $\torb$ but not necessarily freely. 

By strong tameness, $\orb$ has only finitely many ends $E_1, \ldots, E_m$,
and each end has an end-neighborhood diffeomorphic to $\Sigma_{E_i} \times (0, 1)$.
Let $\Sigma_{E_i}$ here denote the compact orbifold diffeomorphism type of the end $E_i$, 
which is uniquely determined. 
Such end-neighborhoods of these types are said to be of the {\em product types}. 

Each end-neighborhood $U$ diffeomorphic to $\Sigma_{\tilde E} \times (0, 1)$ of an end $E$ lifts to a connected open set 
$\tilde U$ in $\torb$ 
where a subgroup of deck transformations $\bGamma_{\tilde U}$ acts on $\tilde U$ where 
$p_{\torb}^{-1}(U) = \bigcup_{g\in \pi_1(\orb)} g(\tilde U)$. Here, each component of 
$\tilde U$ is said to a {\em proper pseudo-end-neighborhood}.
\begin{itemize} 
\item A {\em pseudo-end sequence} is a sequence of proper pseudo-end-neighborhoods 
$U_1 \supset U_2 \supset \cdots $ so that for each compact subset $K$ of $\torb$
there exists an integer $N$ so that 
$K \cap U_i = \emp$ for $i > N$.  
\item Two pseudo-end sequences are {\em compatible} if an element of one sequence is contained 
eventually in the element of the other sequence. 
\item A compatibility class of a pseudo-end sequence is called a {\em pseudo-end} of $\torb$.
Each of these corresponds to an end of $\orb$ under the universal covering map $p_{\orb}$.
\item For a pseudo-end $\tilde E$ of $\torb$, we denote by $\bGamma_{\tilde E}$ the subgroup $\bGamma_{\tilde U}$ where 
$U$ and $\tilde U$ is as above. We call $\bGamma_{\tilde E}$ is called a {\em pseudo-end fundamental group}.
\item A pseudo-end-neighborhood $U$ of a pseudo-end $\tilde E$ is a $\bGamma_{\tilde E}$-invariant open set containing 
a proper pseudo-end-neighborhood of $\tilde E$. 
\end{itemize}
(See Section \ref{sub:ends} for more detail.)





\subsubsection{Real projective structures on orbifolds.} 
Recall the real projective space $\rpn$ 
is defined as 
$\bR^{n+1} - \{O\}$ under the quotient relation $\vec{v} \sim \vec{w}$ iff $\vec{v} = s\vec{w}$ for $s \in \bR -\{O\}$. 
We denote by $[x]$ the equivalence class of a nonzero vector $x$. 
The general linear group $\GLnp$ acts on $\bR^{n+1}$ and $\PGLnp$ acts faithfully on $\rpn$. 
Denote by $\bR_+ =\{ r \in \bR| r > 0\}$.
The {\em real projective sphere} $\SI^n$ is defined as the quotient of $\bR^{n+1} -\{O\}$ under the quotient relation 
$\vec{v} \sim \vec{w}$ iff $\vec{v} = s\vec{w}$ for $s \in \bR_+$. 
We will also use $\SI^n$ as the double cover of $\rpn$ and $\Aut(\SI^n)$, isomorphic to the subgroup $\SLnp$ of $\GLnp$ of 
determinant $\pm 1$, double-covers $\PGLnp$ and acts as a group of projective automorphisms of $\SI^n$. 
A {\em projective map} of a real projective orbifold to another is a map that is projective by charts to $\rpn$. 
Let $\Pi: \bR^{n+1}-\{O\} \ra \bR P^n$ be a projection and let $\Pi':  \bR^{n+1}-\{O\} \ra \SI^n$ denote one for $\SI^n$. 
An infinite subgroup $\Gamma$ of $\PGLnp$ (resp. $\SLnp$) is {\em strongly irreducible} if every finite-index subgroup is irreducible. 
A {\em subspace} $S$ of $\bR P^n$ (resp. $\SI^n$) is the image of a subspace with the origin removed under the projection $\Pi$ (resp. $\Pi'$).

A line in $\rpn$ or $\SI^n$ is an embedded arc in a $1$-dimensional subspace. 
A {\em projective geodesic} is an arc developing into a line in $\rpn$
or to a one-dimensional subspace of $\SI^n$. 
An affine subspace $A^n$ can be identified with the complement of a codimension-one subspace 
$\rpno$ so that the geodesic structures are same up to parameterizations. 
A {\em convex subset} of $\rpn$ is a convex subset of an affine subspace in this paper. 
A {\em properly convex subset} of  $\rpn$ is a precompact convex subset of an affine subspace. 
$\bR^n$ identifies with an open half-space in $\SI^n$ defined by a linear function on $\bR^{n+1}$. 
(In this paper an affine space is either embedded in $\rpn$ or $\SI^n$.)

An {\em $i$-dimensional complete affine subspace} is 
a subset of a projective manifold projectively diffeomorphic to 
an $i$-dimensional affine subspace in some affine subspace $A^n$ of $\rpn$ or $\SI^n$. 

Again an affine subspace in $\SI^n$ is a lift of an affine space in $\rpn$, 
which is the interior of an $n$-hemisphere. 
Convexity and proper convexity in $\SI^n$ are defined in the same way as in $\rpn$. 
 
We will consider an orbifold $\orb$ with a real projective structure: 
This can be expressed as 
\begin{itemize}
\item having a pair $(\dev, h)$ where 
$\dev:\torb \ra \rpn$ is an immersion equivariant with respect to 
\item the homomorphism $h: \pi_1(\orb) \ra \PGLnp$ where 
$\torb$ is the universal cover and $\pi_1(\orb)$ is the group of deck transformations acting on $\torb$. 
\end{itemize}
$(\dev, h)$ is only determined up to an action of $\PGLnp$ 
given by 
\[ g \circ (\dev, h(\cdot)) = (g \circ \dev, g h(\cdot) g^{-1}) \hbox{ for } g \in \PGLnp. \]
We will use only one pair where $\dev$ is an embedding for this paper and hence 
identify $\torb$ with its image. 
A {\em holonomy} is an image of an element under $h$. 
The {\em holonomy group} is the image group $h(\pi_1(\orb))$.  

Let $x_0, x_1, \dots, x_n$ denote the standard coordinates of $\bR^{n+1}$. 
The interior $B$ in $\rpn$  or $\SI^n$ of a standard ball that is the image of the positive cone of 
$x_0^2 > x_1^2 + \dots + x_n^2$ in $\bR^{n+1}$
can be identified with a hyperbolic $n$-space. The group of isometries of the hyperbolic space 
equals the group $\Aut(B)$ of projective automorphisms acting on $B$. 
Thus, a complete hyperbolic manifold carries a unique real projective structure and is denoted by $B/\Gamma$ for 
$\Gamma \subset \Aut(B)$. 

We also have a lift $\dev': \torb \ra \SI^n$ and $h': \pi_1(\orb) \ra \SLnp$, 
which are also called developing maps and holonomy homomorphisms. 
The discussions below apply to $\rpn$ and $\SI^n$ equally. 
This pair also completely determines the real projective structure on $\orb$. 
Fixing $\dev$, we can identify $\torb$ with $\dev(\torb)$ in $\SI^n$ when $\dev$ is an embedding. 
This identifies $\pi_1(\orb)$ with a group of projective automorphisms $\Gamma$ in $\Aut(\SI^n)$.
The image of $h'$ is still called a {\em holonomy group}.

An orbifold $\orb$ is {\em convex} (resp. {\em properly convex} and {\em complete affine}) 
if $\torb$ is a convex domain (resp. a properly convex domain and an affine subspace.). 

A {\em totally geodesic hypersurface} $A$ in $\torb$ or $\orb$ is 
a subset where each point $p$ in $A$ has a neighborhood $U$ projectively 
diffeomorphic to an open or half-open ball where $A$ corresponds to a subspace of 
codimension-one. 


\begin{description} 
\item[Radial ends:]
We will assume that our real projective orbifold
$\orb$ is a strongly tame orbifold and some of the ends 
are {\em radial}. This means that each end has a neighborhood $U$, and each 
component $\tilde U$ of the inverse image $p_\orb^{-1}(U)$
has a foliation by properly embedded projective geodesics ending at a common point $\bv_{\tilde U} \in \rpn$. 
We call such a point a {\em pseudo-end vertex}. 
\begin{itemize} 
\item The {\em space of directions} of oriented projective geodesics through $\bv_{\tilde E}$ forms \hfil\break
an $(n-1)$-dimensional real projective space. 
We denote it by $\SI^{n-1}_{\bv_{\tilde E}}$, called a {\em linking sphere}. 
\item Let $\tilde \Sigma_{\tilde E}$ denote the space of equivalence classes of lines from $\bv_{\tilde E}$ in $\tilde U$
where two lines are regarded equivalent if they are identical near $\bv_{\tilde E}$. 
$\tilde \Sigma_{\tilde E}$ projects to a convex open domain in an affine space in  $\SI^{n-1}_{\bv_E}$
by the convexity of $\torb$. Then by Proposition \ref{prop:projconv} $\tilde \Sigma_{\tilde E}$ is projectively diffeomorphic to
\begin{itemize}
\item either a complex affine space $A^{n-1}$, 
\item a properly convex domain, 
\item or a convex but not properly convex 
and not complete affine domain in $A^{n-1}$. 
\end{itemize} 
\item The subgroup $\bGamma_{\tilde E}$, a pseudo-end fundamental group, of $\bGamma$ 
fixes $\bv_{\tilde E}$ and  acts on 
as a projective automorphism group on $\SI^n _{\bv_E}$. 
Thus, the quotient $\tilde \Sigma_{\tilde E}/\bGamma_{\tilde E}$ admits a real projective 
structure of one-dimension lower. 
\item We denote by $\Sigma_{\tilde E} $ the real projective $(n-1)$-orbifold $\tilde \Sigma_{E}/\bGamma_{E}$. 
Since we can find a transversal orbifold $\Sigma_{\tilde E}$ to the radial foliation in 
a pseudo-end-neighborhood for each pseudo-end $\tilde E$ of $\mathcal{O}$,
it lifts to a transversal surface $\tilde \Sigma_{\tilde E}$ in $\tilde U$. 
\item We say that a radial pseudo-end $\tilde E$ is  {\em convex} (resp. {\em properly convex}, and {\em complete affine}) 
if $\tilde \Sigma_{\tilde E}$ is convex  (resp. properly convex, and complete affine). 
\end{itemize}

Thus, a radial end is either
\begin{description}
\item[CA:] complete affine, 
\item[PC:] properly convex, or 
\item[NPCC:] convex but not properly convex and not complete affine. 
\end{description}

\item[Totally geodesic ends:] 
An {\em end} is totally geodesic if an end-neighborhood $U$ compactifies to an orbifold with boundary in an ambient orbifold
by adding a totally geodesic suborbifold $\Sigma_E$ homeomorphic to $\Sigma_E \times I$ for an interval $I$. 
The choice of the compactification is said to be the {\em totally geodesic end structure}. 
Two compactifications are equivalent if some respective neighborhoods are projectively diffeomorphic. 
(One can see in \cite{cdcr1} two inequivalent ways to compactify for real projective elementary annulus.)
If $\Sigma_E$ is properly convex, then the end is said to be {\em properly convex}. 
\end{description}
Note that the diffeomorphism types of end orbifolds are determined for radial or totally geodesic ends. 
We will now say that a radial end is a {\rm R-end} and a totally geodesic end is a {\rm T-end}. 




\subsubsection{Horospherical domains, lens domains, lens-cones, and so on.} 

If $A$ is a domain of subspace of $\rpn$ or $\SI^n$, we denote by $\Bd A$ the topological boundary 
in the  the subspace. 
The closure $\clo(A)$ of a subset $A$ of $\rpn$ or $\SI^n$ is the topological closure in $\rpn$ or in $\SI^n$. 
Define $\partial A$ for a manifold or orbifold $A$ to be the {\em manifold or orbifold boundary}. 
Also, $A^o$ will denote the manifold or orbifold interior of $A$. 

\begin{definition}\label{defn:join}
Given a convex set $D$ in $\rpn$, we obtain a connected cone $C_D$ in $\bR^{n+1}-\{O\}$ mapping to $D$,
determined up to the antipodal map. For a convex domain $D \subset \SI^n$, we have a unique domain $C_D \subset \bR^{n+1}-\{O\}$. 

A {\em join} of two properly convex subsets $A$ and $B$ in a convex domain $D$ of $\rpn$ or $\SI^n$ is defined 
\[A \ast B := \{[ t x + (1-t) y]| x, y \in C_D,  [x] \in A, [y] \in B, t \in [0, 1] \} \]
where $C_D$ is a cone corresponding to $D$ in $\bR^{n+1}$. The definition is independent of the choice of $C_D$. 
\end{definition} 

\begin{definition}
Let $C_1, \dots, C_m$ be cone respectively in a set of independent vector subspaces $V_1, \dots, V_m$ of $\bR^{n+1}$. 
In general, a {\em sum} of convex sets $C_1, \dots, C_m$ in $\bR^{n+1}$ 
in independent subspaces $V_i$, we define 
\[ C_1+ \dots + C_m := \{v | v = c_1+ \cdots + c_m, c_i \in C_i \}.\]
A {\em strict join} of convex sets $\Omega_i$ in $\SI^n$ (resp. in $\bR P^n$) is given as 
\[\Omega_1 \ast \cdots \ast \Omega_m := \Pi(C_1 + \cdots C_m) \hbox{ (resp. } \Pi'(C_1 + \cdots C_m)  ) \]
where each $C_i-\{O\}$ is a convex cone with image $\Omega_i$ for each $i$. 
\end{definition}
(The join above does depend on the choice of cones.)

In the following, all the sets are required to be inside an affine subspace $A^n$ and its closure either in $\bR P^n$ or $\SI^n$. 
\begin{itemize}
\item A subdomain and a submanifold $K$ of $A^n$ is said to be a {\em horoball} if it is strictly convex, 
and the boundary $\partial K$ is diffeomorphic to $\bR^{n-1}$ and $\Bd K - \partial K$ is a single point. 
The boundary $\partial K$ is said to be a {\em horosphere}. 
\item $K$ is {\em lens-shaped} if it is a convex domain and $\partial K$ is a disjoint union of two smoothly strictly convex embedded 
$(n-1)$-cells $\partial_+ K$ and $\partial K_-$. 
\item A {\em cone} is a domain in $A^n$ whose closure in $\rpn$ has a point in the boundary, called an {\em end vertex}
so that every other point has a segment contained in the domain with endpoint the cone point and itself. 
\item A {\em cone} $\{p\} \ast L$ over a lens-shaped domain $L$ in $A^n$, $p\not\in \clo(L)$ is a convex domain
so that $\{p\}\ast L = \{p\} \ast \partial_+ L$ for one boundary component $\partial_+ L$ of $L$. 
A {\em lens} is the lens-shaped domain $L$, not determined uniquely by the lens-cone itself.
\item We can allow $L$ to have non-smooth boundary that lies in the boundary of $p \ast L$. 
\begin{itemize}
\item One of  two boundary components of $L$ is called a {\em top} or {\em bottom} hypersurfaces 
depending on whether it is further away from $p$ or not. The top component is denoted by $\partial_+ L$
which can be not smooth. $\partial_-L$ is required to be smooth. 
\item A cone over $L$ where $\partial (\{p\} \ast L -\{p\}) = \partial_+ L, p \not\in \clo(L)$ is said to be a {\em generalized lens-cone} 
and $L$ is said to be a {\em generalized lens}.
\item A quasi-lens cone is a properly convex cone of form $p\ast S$ for a strictly convex hypsersurface $S$ 
so that $\partial (\{p\} \ast S -\{p\})= S$ and $p \in \clo(S) - S$
and the space of directions from $p$ to $S$ is a properly convex domain in $\SI^{n-1}_p$.
\end{itemize}
\item A {\em totally-geodesic domain} is a convex domain in a hyperspace. 
A {\em cone-over} a totally-geodesic domain $D$ is a union of all segments with  one end point a point $x$ not in the hyperspace
and the other in $D$. We denote it by $\{x\} \ast D$. 
\end{itemize} 

Let the radial pseudo-end $\tilde E$ have a pseudo-end-neighborhood of form $\{p\} \ast L -\{p\}$ that is 
a generalized lens-cone $p \ast L$ over a generalized lens $L$
where $\partial (p\ast L -\{p\}) = \partial_+L$. 
A {\em concave pseudo-end-neighborhood} of $\tilde E$ is the open pseudo-end-neighborhood in $\torb$ contained in 
a radial pseudo-end-neighborhood in $\tilde{\mathcal{O}}$ that is a component 
of $\{p\}\ast L -\{p\} - L$ containing $p$ in the boundary. 
As it is defined, such a pseudo-end-neighborhood always exists for a generalized lens pseudo-end. 


From now on, we will replace the term ``pseudo-end'' with ``p-end'' everywhere. 

\begin{description}
\item [Horospherical R-end:] A pseudo-R-end of $\torb$ is {\em horospherical} if it 
has a horoball in $\torb$ as a pseudo-end-neighborhood, or equivalently an open pseudo-end-neighborhood $U$ in $\torb$ 
so that $\Bd U \cap \torb = \Bd U - \{v\}$ for a boundary fixed point $v$ 
where the p-end fundamental group properly discontinuously on. 
\item[Lens-shaped R-end:]  An R-end is {\em lens-shaped} (resp. {\em totally geodesic cone-shaped},
{\em generalized lens-shaped}, {\em quasi-lens shaped}) 
 if it has a pseudo-end-neighborhood that is a lens-cone (resp. a cone over a totally-geodesic domain, 
 a concave pseudo-end-neighborhood, or a quasi-lens cone.) 
\item[Lens-shaped T-end:] A pseudo-T-end of $\torb$ is of {\em lens-type} if it has a lens p-end-neighborhood in 
an ambient orbifold of $\torb$. A T-end of $\orb$ is of {\em lens-type} if 
the corresponding pseudo-end is of lens-type. 
\end{description}

\subsection{Main results.} 
We will later see that horospherical end-neighborhoods are projectively diffeomorphic to 
horospherical end-neighborhoods of hyperbolic orbifolds. 
Let $\tilde E$ be a p-end and $\bGamma_{\tilde E}$ the associated p-end fundamental group.
If every subgroup of finite index of a group $\bGamma_{\tilde E} \subset \bGamma$ has a finite center, 
we say that $\bGamma_{\tilde E}$ 
is a {\em virtual center-free group} or  a {\em vcf-group}.
An {\em admissible group} is a finite extension of a finite product group
$\bZ^{k-1} \times \Gamma_1 \times \cdots \times \Gamma_k$ 
for trivial or infinite hyperbolic groups $\Gamma_i$ in the sense of Gromov. 
(See Section \ref{sub:ben} for details. In this paper, we will simply use $\bZ^{k-1}$ and $\Gamma_i$ to denote 
the subgroup in $\bGamma_{\tilde E}$ corresponding to it.)

Let $\Gamma$ be generated by finitely many elements $g_1, \ldots, g_m$. 
The {\em conjugate word length} $\cwl(g)$ of $g \in \pi_1(\tilde E)$ is the minimum of 
the word length of the conjugates of $g$ in $\pi_1(\tilde E)$. 


Let $\Omega$ be a convex domain in an affine space $A$ in $\bR P^n$ or $\SI^n$. 
Let $[o, s, q, p]$ denote the cross ratio of four points as defined by 
\[ \frac{\bar o - \bar q}{\bar s - \bar q} \frac{\bar s - \bar p}{\bar o - \bar p} \] 
where $\bar o, \bar p, \bar q, \bar s$ denote respectively the first coordinates of the homogeneous coordinates  
$o, p, q , s$ in a $1$-dimensional subspace so that the second coordinates equal $1$. 
Define a metric
$d_\Omega(p, q)= \log|[o,s,q,p]|$ where $o$ and $s$ are 
endpoints of the maximal segment in $\Omega$ containing $p, q$
where $o, q$ separated $p, s$. 
The metric is one given by a Finsler metric provided $\Omega$ is properly convex. (See \cite{Kobpaper}.)
Given a properly convex real projective structure on ${\mathcal{O}}$, it carries a Hilbert metric which we denote by $d_{\torb}$ 
on $\tilde{\mathcal{O}}$ and hence on $\tilde {\mathcal{O}}$. 
This induces a metric on ${\mathcal{O}}$. 
(Note that even if $\torb$ is not properly convex, $d_\Omega$ is still a pseudo-metric that is useful.) 

Let $d_K$ denote the Hilbert metric of the interior $K^o$ of a properly convex domain $K$ in $\bR P^n$ or $\SI^n$. 
Suppose that a projective automorphism $g$ acts on $K$. 
Let $\leng_K(g)$ denote the infinum of $\{ d_K(x, g(x))| x \in K^o\}$, compatible with $\cwl(g)$. 

An {\em ellipsoid} is a subset in an affine space defined as a zero locus of a positive definite quadratic polynomial in term of 
the affine coordinates. 
A projective conjugate  ${\mathcal H}_{\bv}$
 of a parabolic subgroup of $\SO( i_0+1, 1)$ acting cocompactly on $E - \{\bv\}$ for an $i_0$-dimensional ellipsoid $E$ 
containing the point $\bv$ 
 is called an {\em $i_0$-dimensional cusp group}. 
If the horospherical neighborhood with the p-R-end vertex $\bv$ 
has the p-end fundamental group that is a discrete cocompact subgroup in ${\mathcal H}_{\bv}$, then we call 
the p-R-end to be of {\em cusp type}. 

Our first main result classifies CA p-R-ends. 

\begin{theorem}\label{thm:mainaffine} 
Let $\mathcal O$ be a properly convex real projective $n$-orbifold with radial or totally geodesic ends. 
Let $\tilde E$  be a p-R-end of its universal cover $\torb$. 
Then $\tilde E$ is a complete affine p-R-end if and only if $\tilde E$ is a cusp p-R-end. 
\end{theorem}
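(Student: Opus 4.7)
The plan is to prove the two directions separately. For the backward direction (cusp implies complete affine), suppose $\tilde E$ is a cusp p-R-end with vertex $\bv$. By definition there is a horoball end-neighborhood $B\subset\torb$ tangent to $\partial\torb$ at $\bv$, and $\bGamma_{\tilde E}$ acts cocompactly on $B$ as a discrete subgroup of the cusp group ${\mathcal H}_{\bv}$. Because $B$ is strictly convex with $\bv$ as its only boundary contact point with the supporting hyperplane, the cone of rays from $\bv$ into $B$ fills an open affine half-space at $\bv$; projecting to the linking sphere $\SI^{n-1}_{\bv}$ produces an open hemisphere, which is projectively a full affine space $A^{n-1}$. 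Hence $\tilde\Sigma_{\tilde E}$ is complete affine.

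For the forward direction (complete affine implies cusp), assume $\tilde\Sigma_{\tilde E}$ is an affine $(n-1)$-space $A^{n-1}$ in $\SI^{n-1}_{\bv}$ with $\bv=\bv_{\tilde E}$. The complementary hyperplane $H^{n-2}\subset\SI^{n-1}_{\bv}$ is necessarily $\bGamma_{\tilde E}$-invariant and lifts to an invariant $n$-dimensional subspace $W\subset\bR^{n+1}$ containing $\bv$. With respect to the filtration $\langle\bv\rangle\subset W\subset\bR^{n+1}$, every $g\in h(\bGamma_{\tilde E})$ has block upper-triangular form
\[
g\;=\;\begin{pmatrix} \lambda(g) & \ast & \ast \\ 0 & A(g) & \ast \\ 0 & 0 & \mu(g) \end{pmatrix},
\]
and the induced affine action on $A^{n-1}$ has linear part $\mu(g)^{-1}A(g)$. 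Since $\Sigma_{\tilde E}=A^{n-1}/\bGamma_{\tilde E}$ is a closed orbifold, $\bGamma_{\tilde E}$ is an affine crystallographic group.

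I next invoke the Fried--Goldman theory of complete affine manifolds in conjunction with the proper convexity of $\torb$. Proper convexity forces $W$ to be a supporting hyperplane of $\clo(\torb)$ at $\bv$, and comparing expansion rates along the filtration forces the normalization $\lambda(g)=\mu(g)$, which we may set equal to $1$. Cocompactness of the $\bGamma_{\tilde E}$-action together with the invariant properly convex set $\torb$ then bounds all eigenvalues of $A(g)$ to have modulus $1$. Applying the Goldman--Labourie--Margulis method and the Carri\`{e}re--Molino theory of Riemannian foliations cited in the abstract upgrades this to: after passing to a finite-index subgroup, $A(g)$ preserves a positive-definite inner product on $W/\langle\bv\rangle$. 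Therefore $\bGamma_{\tilde E}$ acts by Euclidean isometries on $A^{n-1}$, and is thus conjugate into a maximal parabolic subgroup of $\SO(n,1)$. The invariant inner product determines an invariant ellipsoid tangent to $W$ at $\bv$; a sufficiently small horoball within this ellipsoid lies in $\torb$ and serves as a cusp p-end-neighborhood of $\tilde E$.

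The principal obstacle is the step upgrading ``cocompact affine crystallographic'' to ``Euclidean crystallographic.'' A discrete cocompact affine action on $\bR^{n-1}$ need not be conjugate to a Euclidean action in general (e.g., compact solvmanifolds carry non-Euclidean affine holonomies). The key additional input is the proper convexity of the ambient $\torb$, which forces the linear holonomy to preserve a transverse convex structure at $\bv$; it is only through this constraint, implemented via the Goldman--Labourie--Margulis and Carri\`{e}re--Molino tools, that one rules out the non-Euclidean polycyclic possibilities and obtains the cusp conclusion.
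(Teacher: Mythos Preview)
Your backward direction is essentially correct and close to the paper's Proposition \ref{prop:affinehoro}(i): a horoball neighborhood at $\bv_{\tilde E}$ is tangent to a unique hyperplane there, so rays from $\bv_{\tilde E}$ into the horoball fill an open hemisphere in $\SI^{n-1}_{\bv_{\tilde E}}$, giving a complete affine $\tilde\Sigma_{\tilde E}$.

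The forward direction has a genuine gap at the step you yourself flag. You correctly reach ``all eigenvalues of $g$ have modulus $1$'' (this is the content of Theorem \ref{thm:comphoro}(i), though your justification via ``comparing expansion rates'' is sketchy). But the claimed upgrade to ``$A(g)$ preserves a positive-definite inner product'' is false as stated and is not what the Goldman--Labourie--Margulis or Carri\`ere--Molino machinery delivers: a unipotent matrix has all eigenvalues equal to $1$ yet preserves no inner product, and indeed the group you end up with here is typically genuinely unipotent, not orthogonal. Those two tool-sets are used in the paper only for the NPCC case (Part III), not for complete ends; invoking them here is a misreading of the abstract.

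The paper's route through this obstacle is different. From unit-norm eigenvalues one concludes the induced affine holonomy on $A^{n-1}$ is \emph{distal}, and Fried's theorem \cite{Fried86} then gives that a finite-index subgroup of $\bGamma_{\tilde E}$ is contained in a simply connected \emph{nilpotent} Lie group $\CN$ acting on $\SI^n$ (Theorem \ref{thm:comphoro}(ii)). One then argues geometrically (Theorem \ref{thm:comphoro}(iii)) that no $\CN$-orbit can accumulate on $\Bd\torb$ away from $\bv_{\tilde E}$, using the flag structure of nilpotent actions together with proper convexity of $\torb$; this forces the p-end-neighborhood to be horospherical. Finally, the identification with a conjugate of a parabolic subgroup of $\SO(n,1)$ is obtained by citing Crampon--Marquis \cite{CM2} (Proposition \ref{prop:affinehoro}(iii)), not by producing an invariant inner product directly. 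Your outline would be repaired by replacing the GLM/Molino invocation with Fried's distality theorem plus this geometric orbit analysis.
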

\begin{proof}
Theorem \ref{thm:comphoro} implies that a complete end is of cusp type. 
Since a cusp end is horospherical, 
Proposition \ref{prop:affinehoro} implies the converse. 
\end{proof}


We will learn later that every norm of the eigenvalues $\lambda_i(g) = 1$, $g \in \bGamma_{\tilde E}$ 
if and only if $\tilde E$ is horospherical by Proposition \ref{prop:affinehoro} and Theorem \ref{thm:comphoro}. 
Thus, in these cases, we say that $\tilde E$ satisfies the {\em uniform middle eigenvalue condition} always. 

A subset $A$ of $\bR P^n$ or $\SI^n$ {\em span} a subspace $S$ if $S$ is the smallest subspace containing $A$. 

The following definition applies to properly convex R-ends. However, we will generalize this 
to NPCC ends in Definition \ref{defn:NPCC} in Part III. 

\begin{definition}\label{defn:umec}
Let $\bv_{\tilde E}$ be a p-end vertex of a p-R-end $\tilde E$. 
We assume that $\bGamma_{\tilde E}$ is admissible and the associated real projective orbifold $\Sigma_{\tilde E}$ is properly convex.
We assume that 
$\bGamma_{\tilde E}$ acts on a strict join $\clo(\tilde \Sigma_{\tilde E}) = K := K_1 * \cdots * K_{l_0}$ in $\SI^{n-1}_{\bv_{\tilde E}}$
where $K_j$ is a properly convex compact domain in a projective sphere $\SI^{i_j}$ of dimension $i_j \geq 0$.
Thus, $\bGamma_{\tilde E}$ restricts to a semisimple hyperbolic group $\Gamma_j$ acting on $K_j$ 
for some $j =1, \dots, l_0$ 
and also contains the central abelian group $\bZ^{l_0-1}$. The admissibility implies that $\Gamma_j$ is a hyperbolic group
and 
\[ \bGamma_{\tilde E} \cong \bZ^{l_0-1}\times \Gamma_1 \times \cdots \times \Gamma_{l_0}.\]
Let $\hat K_i$ denote the subspace spanned by $\bv_{\tilde E}$ and the segments from $\bv_{\tilde E}$ in the direction of $K_i$. 
The p-end fundamental group $\bGamma_{\tilde E}$ satisfies the {\em uniform middle-eigenvalue condition} if 
each $g\in \bGamma_{\tilde E}$ satisfies for a uniform  $C> 0$ independent of $g$
\begin{equation}\label{eqn:umec}
C^{-1} \leng_K(g) \leq \log\left(\frac{\bar\lambda(g)}{\lambda_{\bv_{\tilde E}}(g)}\right) \leq C \leng_K(g) , 
\end{equation}
for $\bar \lambda(g)$ equal to 
\begin{itemize}
\item the largest norm of the eigenvalues of $g$ which must occur for a fixed point of $\hat K_i$ if $g \in \Gamma_i$
\end{itemize} 
and the eigenvalue $\lambda_{\bv_{\tilde E}}(g)$ of $g$ at $\bv_{\tilde E}$.

If we require only \[\bar \lambda(g) \geq \lambda_{\bv_{\tilde E}}(g) \hbox{ for } g \in \bGamma_{\tilde E},\] 
and the uniform middle eigenvalue condition for each hyperbolic $\Gamma_i$, 
then we say that $\bGamma_{\tilde E}$ satisfies the {\em weakly uniform middle-eigenvalue conditions}. 
\end{definition}
The definition of course applies to the case when $\bGamma_{\tilde E}$ has the finite index subgroup with the above properties.

We give a dual definition: 
\begin{definition} 
Suppose that $\tilde E$ is a properly convex p-T-end. 
Then let $\bGamma_{\tilde E}^*$ acts on a point $\bv^*_{\tilde E} \in \bR P^{n \ast}$
corresponding to $\tilde \Sigma_{\tilde E}$ with the eigenvalue to be denoted $\lambda_{\bv_{\tilde E}}$.
Let $g^*:\bR^{n+1 \ast} \ra \bR^{n+1 \ast}$ be the dual transformation of $g: \bR^{n+1} \ra \bR^{n+1}$. 
Assume that $\bGamma_{\tilde E}$ acts on a properly convex compact domain $K = \clo(\tilde \Sigma_{\tilde E})$
and $K$ is a strict join $K := K_1 * \cdots * K_{l_0}$. Defining $\Gamma_i$ as above. 
The p-end fundamental group $\bGamma_{\tilde E}$ satisfies the {\em uniform middle-eigenvalue condition}
if it satisfies 
\begin{itemize}
\item the equation \ref{eqn:umec} for the largest norm $\bar \lambda(g)$ 
of the eigenvalues of $g$ which must occur for a fixed point of $K_i$ if $g \in \Gamma_i$ and 
\end{itemize} 
the eigenvalue $\lambda_{\bv_{\tilde E}}(g)$ of $g^*$ in the vector in the direction of $\bv_{\tilde E}^*$.
\end{definition} 

Here $\bGamma_{\tilde E}$ will act on a properly convex domain $K^o$ of lower-dimension
and we will apply the definition here. 
This condition is similar to ones studied by Guichard and Wienhard \cite{GW}, and the results also 
seem similar. Our main tools to understand these questions are in Appendix \ref{app:dual}, and 
the author does not really know the precise relationship here.) 



The condition is an open condition; and hence a ``structurally stable one."
(See Corollary \ref{cor:mideigen}.)

Our second main result is: 
\begin{theorem}\label{thm:secondmain} 
Let $\mathcal{O}$ be a strongly tame properly convex real projective orbifold with radial or totally geodesic ends.
Each end fundamental group is virtually isomorphic to a direct product of hyperbolic groups 
and infinite cyclic groups. Assume that the holonomy group of $\mathcal{O}$ is strongly irreducible.
\begin{itemize} 
\item Let $\tilde E$ be a properly convex  p-R-end. 
\begin{itemize} 
\item Suppose that the p-end holonomy group satisfies the uniform middle-eigenvalue condition
Then $\tilde E$ is of generalized lens-type.
\item Suppose that the p-each end holonomy group satisfies the weakly uniform middle-eigenvalue condition.
Then $\tilde E$ is of generalized lens-type or of quasi-lens-type .
\end{itemize} 
\item If $\orb$ satisfies the triangle condition  or $\tilde E$ is reducible or is a totally geodesic R-end, 
then we can replace the word ``generalized lens-type''
to ``lens-type'' in each of the above statements. 
\item Let $\tilde E$ be a totally geodesic end. 
If $\tilde E$ satisfies the uniform middle-eigenvalue condition, 
then $\tilde E$ is of lens-type. 
\end{itemize} 
\end{theorem}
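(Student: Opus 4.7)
The plan is to handle the three parts of the theorem using the uniform middle-eigenvalue condition (UMEC) to produce attracting and repelling invariant sets for $\bGamma_{\tilde E}$ and then realize the lens as the region these bound. For a properly convex p-R-end $\tilde E$, admissibility gives a join decomposition $\clo(\tilde\Sigma_{\tilde E}) = K_1 \ast \cdots \ast K_{l_0}$ with $\bGamma_{\tilde E}$ virtually $\bZ^{l_0-1} \times \Gamma_1 \times \cdots \times \Gamma_{l_0}$. Combining Benoist's theory of divisible convex sets for each hyperbolic factor $\Gamma_i$ acting on $K_i^o$ with the central $\bZ^{l_0-1}$ acting as independent scalings of the join cones, I would parametrize the top eigenvalue $\bar\lambda(g)$ and the vertex eigenvalue $\lambda_{\bv_{\tilde E}}(g)$ of $g \in \bGamma_{\tilde E}$ in terms of factorwise data. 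UMEC then guarantees a uniform gap between these two eigenvalues comparable to $\leng_K(g)$.

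The geometric content is to build a $\bGamma_{\tilde E}$-invariant, smooth, strictly convex hypersurface $\partial_+ L$ transverse to the radial foliation from $\bv_{\tilde E}$, on which $\bGamma_{\tilde E}$ acts cocompactly; the generalized lens $L$ is then obtained by flowing $\partial_+ L$ slightly along the radial direction, so that the UMEC gap forces the pushed surfaces to remain convex and disjoint from each other and from $\bv_{\tilde E}$. Concretely, I would take $\partial_+ L$ to be the boundary, inside $\torb$, of the convex hull of a suitable $\bGamma_{\tilde E}$-orbit placed at a fixed ``middle'' level. Strict convexity at each boundary point inside $\torb$ follows from Gromov-hyperbolicity of the factors by an extremality argument on convex hulls of orbit limits, and smoothness comes from the duality of Appendix \ref{app:dual} applied factor-by-factor. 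In the \emph{weakly} UMEC case, the strict inequality $\bar\lambda(g) > \lambda_{\bv_{\tilde E}}(g)$ is allowed to degenerate to equality along the extremal direction of some single join factor; the hull can then accumulate at $\bv_{\tilde E}$ along that factor, producing the quasi-lens degeneration $p \ast S$ with $p \in \clo(S) - S$ rather than a generalized lens.

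The triangle, reducibility, and totally-geodesic R-end hypotheses each preclude this one-sided degeneration. The triangle condition directly forbids the flat cone face that a quasi-lens creates; reducibility forces a splitting of $\tilde\Sigma_{\tilde E}$ in which the lens is assembled from genuine lower-dimensional lenses; and for a totally-geodesic R-end the cone structure already provides one side of the lens, so only the smooth side must be manufactured. For a p-T-end $\tilde E$ of lens-type satisfying UMEC, I would invoke the R-end/T-end duality of Appendix \ref{app:dual}: the dual orbifold has a p-R-end $\tilde E^*$ whose holonomy satisfies UMEC in the R-end sense precisely when $\tilde E$ does, so the previous construction furnishes a lens-cone neighborhood of the dual vertex. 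Dualizing back, the lens-cone yields a lens neighborhood of the totally geodesic hypersurface $\tilde\Sigma_{\tilde E}$ inside an ambient orbifold, as required.

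The step I expect to be the main obstacle is the simultaneous realization of smoothness, strict convexity, and cocompactness of $\partial_+ L$ in the presence of a non-trivial join. The central $\bZ^{l_0-1}$ shears join rays while the factor groups $\Gamma_i$ act hyperbolically on their $K_i^o$, and uniform control of the hull requires estimates that intertwine the central scalings with the factor dynamics and use UMEC genuinely quantitatively. In particular, ruling out flat boundary faces approaching $\bv_{\tilde E}$ in the weakly-UMEC case, other than through the precisely-one-direction quasi-lens mechanism, is where the middle-eigenvalue gap must be exploited at its sharpest.
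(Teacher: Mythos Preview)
Your outline has the right flavor but skips the step that does all the work, and it misreads what the triangle/reducible/totally-geodesic clause is for.

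\textbf{The missing lynchpin.} In the paper the passage from the uniform middle-eigenvalue condition to a (generalized) lens does \emph{not} go by directly hulling an orbit ``at a middle level.'' The crucial intermediate notion is that of a \emph{distanced} tubular action (Definition~\ref{defn:tubular}): a compact $\bGamma_{\tilde E}$-invariant set $K$ in $\Bd{\mathcal T}_{\bv_{\tilde E}}$ bounded away from $\bv_{\tilde E},\bv_{\tilde E-}$. Theorem~\ref{thm:distanced} produces $K$ from UMEC by dualizing to an affine action (Proposition~\ref{prop:dualDA}) and invoking the Anosov-flow/neutralized-section machinery of Appendix~\ref{app:dual} (Theorem~\ref{thm:asymnice}); in the reducible case one assembles $K$ from the irreducible factors. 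Only once $K$ exists does Proposition~\ref{prop:orbit} show that every orbit of an interior point accumulates only on $K$, and then Proposition~\ref{prop:convhull2} (via Lemmas~\ref{lem:push} and~\ref{lem:infiniteline}) hulls $K$ together with finitely many interior points to get the lens-cone. Your proposal ``take the convex hull of a suitable $\bGamma_{\tilde E}$-orbit'' begs precisely the question these results answer: why do orbit points not drift to the vertices? The eigenvalue gap by itself does not give this; you need the distanced set $K$ first, and producing $K$ is nontrivial---it is exactly where the Anosov argument of the appendix enters, not merely as a source of ``smoothness.''

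\textbf{The role of the second bullet.} The triangle condition, reducibility, and the totally-geodesic R-end hypothesis do \emph{not} ``preclude the one-sided degeneration'' to a quasi-lens. The quasi-lens alternative is governed entirely by weak versus strict UMEC (Proposition~\ref{prop:quasilens2}: under weak UMEC one join factor may be a segment carrying a non-diagonalizable action satisfying a positive-translation condition). The second bullet instead upgrades ``generalized lens'' to ``lens'': under the triangle condition, Lemma~\ref{lem:coneseq} forces the convex hull of $K$ to have its boundary away from $\Bd\torb$, so both caps are genuine strictly convex hypersurfaces in $\torb$ (this is the (ii)$\Rightarrow$(i) step in Theorem~\ref{thm:equ}); in the reducible case Theorem~\ref{thm:redtot} shows the end is actually totally geodesic and hence of lens type; for a totally geodesic R-end one has the hyperplane in $\torb$ already and Proposition~\ref{prop:convhull2} finishes. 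Your reading ``the triangle condition directly forbids the flat cone face that a quasi-lens creates'' does not match this.

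For the p-T-end case your duality idea is essentially what the paper does (Proposition~\ref{prop:dualend2}, Theorem~\ref{thm:equ2}, with the lens neighborhood supplied by Theorem~\ref{thm:lensn}), so that part is fine.
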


\begin{theorem}\label{thm:thirdmain} 
Let $\mathcal{O}$ be a strongly tame properly convex real projective orbifold with radial or totally geodesic ends.
Assume that the holonomy group of $\mathcal{O}$ is strongly irreducible.
Each end fundamental group is virtually isomorphic to a direct product of hyperbolic groups.
Let $\tilde E$ be an NPCC p-R-end. 
Suppose that the p-each end holonomy group satisfies the weakly uniform middle-eigenvalue condition.
Then $\tilde E$ is of quasi-joined type p-R-end. 
\end{theorem}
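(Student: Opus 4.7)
The plan is to reduce the NPCC case to a fibration over a lower-dimensional properly convex base whose fibres are complete affine (hence horospherical) leaves, and then to show that the weakly uniform middle-eigenvalue condition forces this fibration to assemble into a quasi-join. First I would use Proposition \ref{prop:projconv} to produce the canonical foliation of $\tilde\Sigma_{\tilde E}$ by complete affine subspaces whose leaf space is properly convex of strictly smaller dimension; this lifts to a $\bGamma_{\tilde E}$-invariant foliation of a p-end-neighbourhood of $\tilde E$ in $\torb$. Using the admissibility hypothesis, I would then (virtually) split $\bGamma_{\tilde E} \cong \bZ^{l_0-1}\times\Gamma_1\times\cdots\times\Gamma_{l_0}$ and track how each factor acts on the foliation: the hyperbolic factors $\Gamma_i$ on the properly convex leaf space, and a commuting subgroup of a semi-unipotent kind along the fibres.

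Next, I would establish that on each fibre the induced holonomy has all eigenvalue norms equal to $1$; Theorem \ref{thm:mainaffine} together with Proposition \ref{prop:affinehoro} then identifies the restriction with a cusp group acting on a horoball, so the fibres really are horospheres. On the properly convex base, the hyperbolic factors satisfy the \emph{uniform} middle-eigenvalue condition by hypothesis, so one-dimension-lower applications of Theorem \ref{thm:secondmain} produce a generalized lens-cone structure for the transverse geometry. The base structure and the fibre structure then have to be matched up into a single geometric object in $\torb$, and this matching is precisely what the term "quasi-joined type" encodes.

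To perform the matching I would invoke the three families of techniques advertised in the introduction. The Fried--Goldman theory supplies a coarse classification of the affine holonomy along the fibres; the generalization of Goldman--Labourie--Margulis controls the semi-unipotent part of the radial holonomy and prevents pathological shearing between the horospherical and lens directions; and the Riemannian foliation theory of Molino and Carri\`ere identifies the foliation, after passage to a finite cover, with a standard model whose transverse geometry is the properly convex base. The weakly uniform middle-eigenvalue inequality $\bar\lambda(g) \geq \lambda_{\bv_{\tilde E}}(g)$ furnishes exactly the quantitative bound needed to exclude "twisted" gluings.

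The main obstacle, and the one I would attack first, is that the weakly uniform middle-eigenvalue condition is genuinely weaker than the uniform one used in Theorem \ref{thm:secondmain}: across the whole p-end group one only controls $\bar\lambda(g)$ against $\lambda_{\bv_{\tilde E}}(g)$ from one side, while the two-sided uniformity is available only on each hyperbolic factor. This slack is exactly what forbids a genuine join decomposition and instead produces the quasi-join phenomenon, so the delicate work is to convert this asymmetric bound into a precise description of the non-semisimple off-diagonal blocks of the holonomy and show that the resulting geometry is the quasi-joined model, rather than some uncontrolled intermediate configuration.
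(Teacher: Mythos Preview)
Your outline captures the correct geometric picture --- foliation by complete affine leaves over a properly convex base, horospherical action on the fibres, and an assembly into a quasi-join --- and it is in the same spirit as the paper's approach. However, there are two structural gaps that would have to be closed before the argument could go through.

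First, the paper's proof is organized around a dichotomy you do not mention: whether the image $N_K$ of $\bGamma_{\tilde E}$ in $\Aut(K)$ is discrete or not. In the discrete case (Theorem~\ref{thm:NPCCcase}) the fibre group $N$ is already virtually a cusp lattice and one proceeds by a direct matrix normalization (Lemmas~\ref{lem:similarity}, \ref{lem:conedecomp1}, \ref{lem:matrix}, then Proposition~\ref{prop:decomposition}). In the indiscrete case (Theorem~\ref{thm:NPCCcase2}) the Molino--Carri\`ere theory is not merely a ``matching'' device as you describe it, but is the essential mechanism that produces a syndetic hull $S_l$ of the leaf-closure group and identifies its leafwise component $S_{l,0}$ with a cusp group (Proposition~\ref{prop:ZN}); only then can one run the matrix normalization (Proposition~\ref{prop:decomposition2}). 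Your proposal to apply Theorem~\ref{thm:mainaffine} and Proposition~\ref{prop:affinehoro} to the fibres directly would stall here, because in the indiscrete case the leafwise holonomy $N$ need not itself be a lattice in a cusp group --- one has to pass to the syndetic hull. Relatedly, invoking Theorem~\ref{thm:secondmain} on the base $K^o/N_K$ is not licit: that theorem concerns p-ends of $\orb$, while $K^o/N_K$ is an auxiliary quotient that need not even be Hausdorff when $N_K$ is indiscrete.

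Second, and more seriously, you do not address why the outcome is a \emph{quasi}-join rather than a genuine join. The matrix analysis shows (Proposition~\ref{prop:qjoin}) that the end is governed by an invariant $\alpha_7$, and $\alpha_7\equiv 0$ corresponds exactly to a join. The paper rules out the join by exhibiting, in that case, a $\pi_1(\orb)$-invariant join decomposition of $\clo(\torb)$ and appealing to the \emph{global} strong irreducibility of the holonomy of $\orb$ (the end of the proof of Theorem~\ref{thm:NPCCcase2}, via Lemmas~\ref{lem:decjoin} and~\ref{lem:joinred}). Your diagnosis that the weak-versus-uniform gap is what ``forbids a genuine join'' is not correct: the one-sided eigenvalue inequality by itself permits $\alpha_7=0$, and it is only the ambient irreducibility hypothesis on $\orb$ --- not any property of $\bGamma_{\tilde E}$ alone --- that eliminates it.
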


Joined-ends and quasi-joined ends do not satisfy the uniform middle-eigenvalue condition by construction. 
The above three theorems directly imply the following main result of this paper: 
\begin{corollary} \label{cor:main} 
Let $\mathcal{O}$ be a strongly tame properly convex real projective orbifold with radial or totally geodesic ends.
Assume that the holonomy group of $\mathcal{O}$ is strongly irreducible.
Each end fundamental group is virtually isomorphic to a direct product of hyperbolic groups.
Suppose that the each end holonomy group satisfies the uniform middle-eigenvalue condition.
Then each end is a lens-type R-end, an R-end of cusp type, or a lens-type T-end.
\end{corollary}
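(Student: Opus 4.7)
The plan is to do a case analysis on the type of each p-end $\tilde E$ of the universal cover $\torb$ and to invoke Theorems \ref{thm:mainaffine}, \ref{thm:secondmain}, and \ref{thm:thirdmain} in turn. By hypothesis every end is radial or totally geodesic, and by the classification recalled in the introduction every radial p-end is exactly one of CA, PC, or NPCC. So there are four cases to treat.

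First, if $\tilde E$ is a p-T-end, then $\tilde \Sigma_{\tilde E}$ is automatically properly convex (since $\torb$ is), so the third bullet of Theorem \ref{thm:secondmain} applies and the uniform middle-eigenvalue condition forces $\tilde E$ to be of lens-type. For radial p-ends, I split into the three subtypes. If $\tilde E$ is a CA p-R-end, then Theorem \ref{thm:mainaffine} immediately identifies it as a p-R-end of cusp type. If $\tilde E$ is a PC p-R-end, then the first bullet of Theorem \ref{thm:secondmain} under the uniform middle-eigenvalue condition produces a generalized lens-cone pseudo-end-neighborhood, which is the desired lens-type conclusion in the sense of the corollary. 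Finally, if $\tilde E$ were NPCC, Theorem \ref{thm:thirdmain} (which only requires the weakly uniform middle-eigenvalue condition) would make $\tilde E$ of quasi-joined type; but as noted in the remark preceding the corollary, quasi-joined and joined ends are built so that they fail the full uniform middle-eigenvalue condition. This contradicts our hypothesis, so the NPCC subcase is ruled out and no further analysis is needed there.

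The main obstacle is the NPCC exclusion: one must be sure that the failure of the uniform condition for quasi-joined constructions is logically incompatible with the hypothesis, rather than merely ``typical''. This is essentially packaged in Theorem \ref{thm:thirdmain} together with the remark, but the argument must use the fact that the entire p-end fundamental group, and not merely its hyperbolic factors, must satisfy the inequality \eqref{eqn:umec}; a central $\bZ^{k-1}$ direction arising from a nontrivial join is exactly where the quasi-joined construction violates the bound on $\bar\lambda(g)/\lambda_{\bv_{\tilde E}}(g)$. A secondary subtlety is the passage from ``generalized lens-type'' (delivered by the first bullet of Theorem \ref{thm:secondmain}) to ``lens-type'' in the corollary's phrasing; strictly speaking this refinement uses the second bullet of Theorem \ref{thm:secondmain} under an additional condition such as the triangle condition or reducibility, and one interprets the corollary accordingly. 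Modulo this terminological point, stringing the three theorems together case-by-case yields the trichotomy with no additional computation.
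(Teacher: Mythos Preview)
Your proposal is correct and follows exactly the approach the paper intends: the paper gives no explicit proof of Corollary \ref{cor:main} beyond the sentence ``The above three theorems directly imply the following main result,'' and your case analysis (CA via Theorem \ref{thm:mainaffine}, PC and T via Theorem \ref{thm:secondmain}, NPCC excluded via Theorem \ref{thm:thirdmain} plus the remark that quasi-joined ends violate the uniform condition) is precisely what that sentence unpacks to. You have also correctly flagged the one genuine wrinkle, namely that Theorem \ref{thm:secondmain} delivers only \emph{generalized} lens-type for a PC p-R-end absent the triangle condition or reducibility, whereas the corollary is stated with ``lens-type''; this is a looseness in the paper's own statement rather than a gap in your reasoning.
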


We will explain the quasi-joined type in Section \ref{sub:quai-lens}
and prove these in Section \ref{sec:secondmain}. 

Our work is a ``classification'' since 
we will show how to construct lens-type R-ends (Theorem \ref{thm:equ}), quasi-lens-type R-ends 
(Propositions \ref{prop:quasilens1},  \ref{prop:quasilens2}), 
lens-type T-ends (Theorem \ref{thm:equ2}), and quasi-joined NPCC R-ends
(Example \ref{exmp:joined},  Theorems \ref{thm:NPCCcase} and \ref{thm:NPCCcase2}) 
in a reasonable sense. (Of course, provided we know how to 
compute certain cohomology groups.)


\begin{remark}
A summary of the deformation spaces of real projective structures on closed orbifolds and surfaces is given 
in \cite{Cbook} and \cite{Choi2004}. See also Marquis \cite{Marquis} for the end theory of $2$-orbifolds. 
The deformation space of real projective structures on an orbifold 
loosely speaking is the space of isotopy equivalent real projective structures on
a given orbifold. (See \cite{conv} also.) 
\end{remark}

Let $\bR P^{n \ast}={\mathcal P}(\bR^{n+1 \ast})$ be the dual real projective space of $\bR P^n$. 
In Section \ref{sec:endth}, we define the projective dual domain $\Omega^*$ in $\bR P^{n \ast}$ 
to a properly convex domain $\Omega$ in $\bR P^n$ where 
the dual group $\Gamma^*$ to $\Gamma$ acts on. 
We show that 
an R-end corresponds to a T-end and vice versa. 
(See Section \ref{sub:dualend}.)


\begin{remark}[(Duality of ends)] \label{rem:duality} 
Above orbifold $\orb=\torb/\Gamma$ has a diffeomorphic dual orbifold $\orb^*$ defined as the quotient of the dual domain 
$\torb^*$ by the dual group of $\Gamma$ by Theorem \ref{thm:dualdiff}.
The ends of $\orb$ and $\orb^*$ are in a one-to-one correspondence. 
Horospherical ends are dual to themselves, i.e., ``self-dual'', 
and properly convex R-ends and T-ends are dual to one another. (See Proposition \ref{prop:dualend}.)
We will see that properly convex R-ends of generalized lens type 
are always dual to T-ends of lens type by Proposition \ref{prop:dualend2}, Theorem \ref{thm:equ}, 
and Theorem \ref{thm:equ2}. 
\end{remark}

\begin{remark}[(Self-dual reducible ends)] \label{rem:red} 
A generalized lens-type reducible properly convex R-end is always 
totally geodesic by Theorem \ref{thm:redtot} and 
Theorem \ref{thm:secondmain}, the R-end is of lens-type always. 
The dual end is totally geodesic of lens type since it satisfies the uniform middle eigenvalue condition. 
The end can be made into a totally geodesic radial one since it fixes a unique point dual to the totally
geodesic ideal boundary component and by taking a cone over that point. 
Thus, the reducible properly convex ends are ``self-dual''. Thus, we consider these the model cases.
\end{remark}

\subsubsection{Some motivations.} 
To motivate why we think that these results are important, we sketch some history: 
It was discovered by D. Cooper, D. Long, and M. Thistlethwaite \cite{Cooper2006}, \cite{CLT} that 
many closed hyperbolic $3$-manifolds deform to real projective $3$-manifolds. 
Later S. Tillmann found an example of a $3$-orbifold obtained from pasting sides of a single ideal hyperbolic tetrahedron 
admitting a complete hyperbolic structure with cusps and a one-parameter family of 
real projective structure deformed from the hyperbolic one (see \cite{conv}).
Also, Craig Hodgson, Gye-Seon Lee, and I found a few other examples: $3$-dimensional ideal hyperbolic Coxeter orbifolds 
without edges of order $3$ has at least $6$-dimensional deformation spaces in \cite{CHL}. 

Crampon and Marquis \cite{CM2} and Cooper, Long, and Tillmann \cite{CLT2} have done similar study with 
the finite volume condition. In this case, only possible ends are horospherical ones. 
The work here studies more general type ends 
but we have benefited from their work. 
We will see that there are examples where horospherical ends deform to lens-type ones and vice versa
( see also Example \ref{exmp:Lee}.)

Our main aim is to understand these phenomena theoretically. It became clear from 
our attempt in \cite{conv} that we need to understand and classify the types of 
ends of the relevant convex real projective orbifolds. We will start with the simplest ones: 
radial type ones. But as Mike Davis observed, there are many other types such 
as ones preserving subspaces of dimension greater than equal to $0$. 
In fact Daryl Cooper found some such an example from 
$S/\SL(3, \bZ)$ for the space $S$ of unimodular positive definite bilinear forms. 
We will not present any of them here; 
however, it seems very likely that many techniques here will be applicable. 

In \cite{conv}, we show that the deformation spaces of real projective structures on orbifolds are locally homeomorphic to 
the spaces of conjugacy classes of representations of their fundamental groups 
where both spaces are restricted by some end conditions. 

It remains how to see for which of these types of real orbifolds, 
nontrivial deformations exist or not for a given example such as a complete hyperbolic manifolds 
and how to compute the deformation space. 
We conjecture that maybe these types of real projective orbifolds with R-ends might be very flexible. 
Of course, we have no real analytical or algebraic means to understand these phenomena yet.
However, we do have some class of examples such as Theorem 1 of \cite{CHL} and results in \cite{Choi2006}. 

\subsection{Outline.} 
There are three parts: 
\begin{itemize}
\item[(I)] The preliminary review and examples. We discuss some parts on duality and finish our work on complete ends. 
\item[(II)] We discuss properly convex R-ends and T-ends.
\item[(III)] We discuss NPCC R-ends.
\end{itemize} 

Part I: 
In Section \ref{sec:prelim}, we go over basic definitions. We discuss ends of orbifolds, convexity, the Benoist theory on convex divisible actions, 
and so on. 

In Section \ref{sec:exend}, we discuss objects associated with ends
and examples of ends; horospherical ones, totally geodesic ones, 
and bendings of ends.

In Section \ref{sec:duality} we discuss the dual orbifolds of a given convex real projective orbifold.

In Section  \ref{sec:horo}, 
we discuss about horospherical ends. First, they are complete ends and 
have holonomy matrices with unit norm eigenvalues only
and their end fundamental groups are virtually abelian. 
Conversely, a complete end in 
a properly convex orbifold has to be a horospherical end. 

We begin the part II:

In Section \ref{sec:endth}, we start to study the end theory. First, we discuss the holonomy representation spaces.
Tubular actions and the dual theory of affine actions are discussed. We show that distanced actions
and asymptotically nice actions are dual. We prove that the uniform middle eigenvalue condition 
implies the existence of the distanced action. 

In Section \ref{sub:dualend}, we show that the dual orbifold is diffeomorphic to the original one
by the Vinberg's work. We obtain a one-to-one correspondence between ends of a dual orbifold
and the ends of the original one. Next, we showed the horospherical ends are dual to horospherical ones. 
Properly convex R-ends are dual to T-ends and vice versa.


In Section \ref{subsec:lens}, 
we discuss the properties of lens-shaped ends. We show that if the holonomy is irreducible, 
the lens shaped ends have concave neighborhoods. If the lens-shaped end is reducible, then 
it can be made into a totally-geodesic R-end of lens type, which is a surprising result in the author's opinion. 

In Section \ref{sec:chlens}, 
we show that the uniform middle-eigenvalue condition of a properly convex end is equivalent to the 
lens-shaped property of the end under some assumptions. In particular, this is true for 
reducible properly convex ends. This is a major section with numerous central lemmas. 

In Section \ref{sec:results}, we prove many results we need in anther paper \cite{conv}, 
not central to this paper. 
We show that the lens shaped property is a stable property under 
the change of holonomy representations. We obtain the exhaustion by a sequence of p-end-neighborhoods 
of $\torb$. 

Now to the final part III: 
In Section \ref{sec:notprop}, we discuss the R-ends that are NPCC. 
First, we show that the end holonomy group for an end $E$ will have an exact sequence 
\[ 1 \ra N \ra h(\pi_1(\tilde E)) \longrightarrow N_K \ra 1\] 
where $N_K$ is in the projective automorphism group $\Aut(K)$ of a properly convex compact set $K$ 
and $N$ is the normal subgroup mapped to the trivial automorphism of $K$
and $K^o/N_K$ is compact. 
We show that $\Sigma_{\tilde E}$ is foliated by complete affine spaces of dimension $\geq 1$. 

In Section \ref{sec:discrete}, we discuss the case when $N_K$ is a discrete. Here, $N$ is virtually abelian 
and is conjugate to a discrete cocompact subgroup of a cusp group. 
We introduce the example of joining of horospherical and concave type ends. 
By computations involving the normalization conditions, 
we show that the above exact sequence is virtually split and we can surprisingly show that 
the R-ends are of join or quasi-join types. 

In Section \ref{sec:indiscrete}. we discuss the case when $N_K$ is not discrete. Here, there is a foliation 
by complete affine spaces as above. The leaf closures are submanifolds $V_l$ 
by the theory of Molino \cite{Mol} on Riemannian foliations. 
We use some estimate to show that each leaf 
is of polynomial growth. This shows that the identity component of the closure of
$N_K$ is abelian and $\pi_1(V_l)$ is solvable using the work of Carri\`ere \cite{Car}. 
One can then take the syndetic closure to 
obtain a bigger group that act transitively on each leaf. 
We find a normal cusp group acting on each leaf transitively. Then we show that 
the end also splits virtually.

Finally for both of these cases, we show that the orbifold has to be reducible
by considering the limit actions of some elements in the joined ends. 
This proves that the joined end does not exist, proving Theorem \ref{thm:secondmain}
in Section \ref{sec:secondmain}.

In Appendix \ref{app:dual}, we show that the affine action of irreducible group $\Gamma$ acting cocompactly 
on a convex domain $\Omega$ in the boundary of the affine space is 
asymptotically nice if $\Gamma$ satisfies the uniform middle-eigenvalue condition. 
This was needed in Section  \ref{sec:notprop}. 

\begin{remark}
Note that the results are stated in the space $\SI^n$ or $\bR P^n$. Often the result for $\SI^n$ implies 
the result for $\bR P^n$. In this case, we only prove for $\SI^n$. In other cases, we can easily modify 
the $\SI^n$-version proof to one for the $\bR P^n$-version proof. We will say this in the proofs. 
\end{remark}

\subsection{Acknowledgements} 
We thank David Fried for helping me understand the issues with distanced nature of the tubular actions and duality
and Yves Carri\`ere with the general approach to study the indiscrete cases for nonproperly convex ends. 
The basic Lie group approach of Riemannian foliations was a key idea here as well as the theory of 
Fried on distal groups. 
We thank Yves Benoist with some initial discussions on this topic, which were very helpful
for Section \ref{sub:holfib} and thank Bill Goldman and Francois Labourie 
for discussions resulting in Appendix \ref{sub:asymnice}.
We thank Daryl Cooper and Stephan Tillmann 
for explaining their work and help and we also thank Micka\"el Crampon and Ludovic Marquis also. 
Their works obviously were influential 
here. The study was begun with a conversation with Tillmann at ``Manifolds at Melbourne 2006" 
and I began to work on this seriously from my sabbatical at Univ. Melbourne from 2008. 
We also thank Craig Hodgson and Gye-Seon Lee for working with me with many examples and their 
insights. The idea of R-ends comes from the cooperation with them.




\part{Preliminaries and the characterization of complete ends} 

\section{Preliminaries} \label{sec:prelim}

In this paper, we will be using the smooth category: that is, we will be using smooth maps and smooth charts and so on. 
We explain the material in the introduction again.  
We will establish that the universal cover $\torb$ of our orbifold $\orb$ is a domain 
in $\SI^n$ with a projective automorphism group $\Gamma \subset \SLnp$ acting on it. 
In this case, $\orb$ is projectively diffeomorphic to $\torb/\Gamma$. 

\subsection{Distances used} 

\begin{definition}\label{defn:Haus} 
Let $\bdd$ denote the standard spherical metric on $\SI^n$ {\rm (}resp. $\bR P^n${\rm )}.
Given two compact subsets $K_1$ and $K_2$ of $\SI^n$ {\rm (}resp. $\bR P^n${\rm ),}
we define the spherical distance $\bdd_H(K_1, K_2)$ between $K_1$ and $K_2$ to be 
\[ \inf\{\eps > 0| K_2 \subset N_\eps(K_1), K_1 \subset N_\eps(K_2)  \}.\] 
The simple distance $\bdd(K_1, K_2)$ is defined as 
\[ \inf\{ \bdd(x, y)| x \in K_1, K_2 \}.\]
\end{definition}

Recall that every sequence of compact sets $\{K_i\}$ in $\SI^n$ (resp. $\bR P^n$) has
a convergent subsequence. 
Also, given a sequence $\{K_i\}$ of compact sets, 
$\{K_i\} \ra K$ for a compact set $K$ if and only if every sequence of points $x_i \in K_i$ has limit points in $K$ only
and every point of $K$ has a sequence of points $x_i \in K_i$ converging to it. 
(These facts can be found in some topology textbooks.) 

\subsection{Real projective structures}

Let $O$ denote the origin of any vector space here.
Given a vector space $V$, we denote by ${\mathcal P}(V)$ the projective space 
$(V -\{O\})/\sim$ where $\vec{v} \sim \vec{w}$ iff $\vec{v} = s \vec{w}$ for $s \in \bR -\{0\}$ 
and we denote by ${\mathcal S}(V)$ the sphere $(V-\{O\})/\sim$ where $\vec{v} \sim \vec{w}$ for $s \in \bR_+$. 
We denote $\rpn = {\mathcal P}(\bR^{n+1})$ and $\SI^n = {\mathcal S}(\bR^{n+1})$. 
A {\em subspace} of ${\mathcal P}(V)$ or ${\mathcal S}(V)$ is the image of a subspace in $V$ with $O$ removed.
Given any linear isomorphism $f: V \ra W$, we denote by ${\mathcal P}(f)$ the induced 
projective isomorphism ${\mathcal P}(V) \ra {\mathcal P}(W)$ and ${\mathcal S}(f)$ the induced map 
${\mathcal S}(V) \ra {\mathcal S}(W)$. These maps are called {\em projective maps}. 

The complement of a codimension-one subspace $W$ in $\rpn$ can be considered an affine 
space $A^n$ by correspondence 
\[[1, x_1, \dots, x_n] \ra (x_1, \dots, x_n)\] for a coordinate system where $W$ is given by $x_0=0$. 
The group $\Aff(A^n)$ of projective automorphisms acting on $A^n$ is identical with 
the group of affine transformations of form 
\[ \vec{x} \mapsto A \vec{x} + \vec{b} \] 
for a linear map $A: \bR^n \ra \bR^n$ and $\vec{b} \in \bR^n$. 
The projective geodesics and the affine geodesics agree up to parametrizations.

A cone $C$ in $\bR^{n+1} -\{O\}$ is a subset so that given a vector $x \in C$, 
$s x \in C$ for every $s \in \bR_+$. 
A {\em convex cone} is a cone that is a convex subset of $\bR^{n+1}$ in the usual sense. 
A {\em proper convex cone} is a convex cone not containing a complete affine line. 


Note that we can double-cover $\rpn$ by $\SI^n$ the unit sphere in $\bR^{n+1}$ 
and this induces a real projective structure on $\SI^n$. 

We can think of $\SI^n$ as ${\mathcal S}(\bR^{n+1})$.
We call this the real projective sphere. The antipodal map
\[\mathcal{A}: \SI^n \ra \SI^n \hbox{ given by } [\vec{v}] \ra [-\vec{v}] \hbox{ for } \vec{v} \in \bR^{n+1} -\{O\}\]
which generates the covering automorphism group of $\SI^n \ra \rpn$.
The group $\Aut(\SI^n)$ of projective automorphisms of $\SI^n$ is isomorphic to $\SLnp$.

A {\em great segment} is a geodesic segment with antipodal end vertices, which is convex but not properly convex. 
A segment has $\bdd$-length $=\pi$ if and only if it is a great segment. 

Given a projective structure where $\dev: \torb \ra \rpn$ is an embedding to a properly convex 
open subset as in this paper, 
$\dev$ lifts to an embedding $\dev': \torb \ra \SI^n$ to an open domain $D$ without any pair of antipodal 
points. $D$ is determined up to $\mathcal{A}$. 

We will identify $\torb$ with $D$ or $\mathcal{A}(D)$ and 
$\pi_1(\orb)$ or $\bGamma$ lifts to a subgroup $\bGamma'$ of $\SLnp$ acting faithfully and discretely on $\torb$.
There is a unique way to lift so that $D/\bGamma'$ is projectively diffeomorphic to $\torb/\bGamma$. 
Thus, we also define the p-end vertices of p-R-ends of $\torb$ as points in
the boundary of $\torb$ in $\SI^n$ from now on. (see  \cite{conv}.)  



\subsubsection{Ends} \label{sub:ends}

Suppose that $\mathcal{O}$ is a strongly tame
properly convex real projective orbifold with radial or totally geodesic ends and a universal cover $\tilde{\mathcal{O}}$
with compact boundary $\partial \orb$ and some ends. (This will be the universal assumption for this paper.)

Consider a sequence of open sets $U_1, U_2, \ldots$ so that $U_i \supset U_{i+1}$ where 
each $U_i$ is a component of the complement of a compact subset in $\orb$ so that $\clo(U_i)$ is not compact, 
and given each compact set $K$ in $\orb$, $U_i \cap K \ne \emp$ for only finitely many $i$. 
Such a sequence is said to be an {\em  end-neighborhood system}. 
Two such sequences $\{U_1, U_2, \ldots \}$ 
and $\{U'_1, U'_2, \ldots \}$ are equivalent if 
for each $U_i$ we find $k$ so that $U'_j \subset U_i$ for $j > k$ 
and conversely for each $U'_i$ we find $k'$ such that $U_j \subset U'_i$ for $j > k'$. 
An equivalence class of end-neighborhoods is said to be an {\em end} of $\orb$.
A {\em neighborhood} of an end is one of the open set in the sequence in the equivalence class
of the end. 


A {\em radial end-neighborhood system} of $\orb$ is the union of end-neighborhoods for all ends disjoint from 
one another where each end-neighborhood is of product type and is radially foliated
compatibly with the product structure. 


Given a component of such a system, we obtain that the inverse image is a disjoint 
union of connected open sets where we have the subgroup acting on one of them $\tilde U$ denoted by 
$\bGamma_{\tilde U}$ so that $\tilde U/\bGamma_{\tilde U}$ is homeomorphic to the product end-neighborhood. 
$\tilde U$ is also foliated by lines ending at a common vertex $\bv_{\tilde U}$. 
$\tilde U$ is said to be a {\em proper p-end-neighborhood}.
Note that any other component $\tilde U'$ is of form $\gamma(\tilde U)$ for $\gamma \in \bGamma - \bGamma_{\tilde U}$ 
and $\bGamma_{\tilde U'} = \gamma \bGamma_{\tilde U} \gamma^{-1}$ and $\bv_{\tilde U'} = \gamma(\bv_{\tilde U})$. 

By an abuse of terminology, 
an open set $\tilde U'$ containing $\tilde U$ as above and where $\bGamma_{\tilde U}$ acts on
will be called a {\em p-end-neighborhood} of the {\em p-end vertex} $\bv_{\tilde U}$. 
$\tilde U'$ is not required to cover an open set in $\mathcal{O}$. 
Here, $\tilde U'$ may be not be a neighborhood in topological sense as in the cases of horospherical ends. 
We call $\bGamma_{\tilde U}$ the p-end fundamental group. 
Up to the $\bGamma$-action, there are only finitely many p-end vertices and p-end fundamental groups. 
For an end $E$, $\bGamma_U$ is well-defined up to conjugation by $\bGamma$ 
and we denote it by $\bGamma_{\tilde E}$ often for suitable choice of $\tilde U$. 
Its conjugacy class is more appropriately denoted $\bGamma_{\tilde E}$. 

From now on, we will use the term ``p-end" instead of the term ``pseudo-end" everywhere. 


\begin{lemma}\label{lem:inv}
Suppose that $\mathcal{O}$ is a strongly tame properly convex real projective orbifold 
with radial or totally geodesic ends and a universal cover $\tilde{\mathcal{O}}$
with $\partial \orb$ compact. Let $U$ be an end-neighborhood. 
Let $\tilde U$ be  the inverse image of the union $U$ of mutually disjoint end-neighborhoods. 
For a given component $U_1$ of $\tilde U$, 
if $\gamma(U_1)\cap U_1 \ne \emp$, then $\gamma(U_1)=U_1$ and 
$\gamma$ lies in the fundamental group $\bGamma_{E'}$ of the p-end $E'$ associated with $U_1$. 
\end{lemma}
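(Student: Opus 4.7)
The plan is to view this as a routine statement about components of preimages under a covering map, combined with the definition of the p-end fundamental group as a stabilizer.

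First, I would set up notation. Write $U = U^{(1)} \sqcup \cdots \sqcup U^{(m)}$ as a disjoint union of end-neighborhoods, and let $\tilde U = p_\orb^{-1}(U)$. Since $p_\orb: \torb \to \orb$ is an orbifold covering map and $U$ is open, $\tilde U$ is open; because the $U^{(i)}$ are pairwise disjoint open sets, the connected components of $\tilde U$ are precisely the connected components of each $p_\orb^{-1}(U^{(i)})$. In particular, every component is mapped by $p_\orb$ into exactly one of the $U^{(i)}$, and $U_1$ maps into the $U^{(i)}$ corresponding to its associated end.

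Next, the key observation: for any deck transformation $\gamma \in \bGamma$, the relation $p_\orb \circ \gamma = p_\orb$ shows that $\gamma(\tilde U) = \tilde U$, and since $\gamma$ is a homeomorphism of $\torb$, it permutes the connected components of $\tilde U$. In particular $\gamma(U_1)$ is again a connected component of $\tilde U$. Two distinct components of an open set are disjoint, so the hypothesis $\gamma(U_1) \cap U_1 \ne \emptyset$ forces $\gamma(U_1) = U_1$.

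Finally, I would invoke the definition of the p-end fundamental group given in the setup preceding the lemma: for the p-end $E'$ determined by the component $U_1$, the group $\bGamma_{E'} = \bGamma_{U_1}$ is defined as the subgroup of $\bGamma$ that acts on $U_1$, i.e., the stabilizer of $U_1$ in $\bGamma$. Hence $\gamma(U_1) = U_1$ means exactly $\gamma \in \bGamma_{E'}$, completing the proof.

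There is essentially no obstacle here; the only point requiring a line of care is confirming that the components of $p_\orb^{-1}(U)$ are precisely the proper p-end-neighborhoods so that the stabilizer agrees with $\bGamma_{E'}$ as defined in the preceding paragraphs. This follows directly from the description of the radial end-neighborhood system given just above the lemma statement.
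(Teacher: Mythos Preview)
Your proof is correct and is exactly the argument the paper has in mind; the paper's own proof is the single sentence ``This follows since $U_1$ covers an end-neighborhood,'' which your write-up simply unpacks into the standard covering-space argument about deck transformations permuting components of a preimage.
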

\begin{proof} 
This follows since $U_1$ covers an end-neighborhood. 
\end{proof}

\begin{lemma}\label{lem:endcover} 
Let $U$ be a p-R-end-neighborhood of a p-end vertex $\bv_{\tilde E}$
with $\Bd U \cap \torb$ meeting each open great segment with endpoints $\bv_{\tilde E}$ and $\bv_{\tilde E-}$ uniquely. 
Suppose that the boundary $\Bd U \cap \torb$ of $U$ maps into  
an end-neighborhood of $\orb$ under the covering map or equivalently $\clo(U) \cap \torb$ maps into
the end-neighborhood. 
Then $\Bd U \cap \torb$ covers a compact hypersurface homotopy equivalent to the end orbifold
$\Sigma_{\tilde E}$ and its end-neighborhood
and $p_{\orb}(U)$ is homeomorphic to $\Sigma_{\tilde E} \times \bR$. 
\end{lemma}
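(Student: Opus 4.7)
The plan is to deduce the lemma from Lemma \ref{lem:inv} combined with the transversality encoded in the unique-intersection hypothesis. The argument proceeds in three steps: first, show that $p_{\orb}|_{U}$ behaves as a covering with deck group $\bGamma_{\tilde E}$; second, show that $\Bd U \cap \torb$ is a cross-section of the radial foliation of $U$; and third, trivialize the foliation to obtain the product structure.

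\emph{Step 1.} Since $U$ is a p-end-neighborhood of $\bv_{\tilde E}$, it is $\bGamma_{\tilde E}$-invariant. The hypothesis that $\clo(U)\cap\torb$ maps into an end-neighborhood $U_0$ of $\orb$ places $U$ inside a single component $U_1$ of $p_{\orb}^{-1}(U_0)$; since $\bGamma_{\tilde E}$ stabilizes $U$ it also stabilizes $U_1$, and $U_1$ is itself a p-end-neighborhood of the same p-end $\tilde E$, so $\bGamma_{U_1}=\bGamma_{\tilde E}$. Now Lemma \ref{lem:inv} applied to $U_1$ gives that any $\gamma\in\bGamma$ with $\gamma(U)\cap U\neq\emp$ satisfies $\gamma(U_1)=U_1$, hence $\gamma\in\bGamma_{\tilde E}$. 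Consequently $p_{\orb}|_{U}$ descends to a homeomorphism $U/\bGamma_{\tilde E} \to p_{\orb}(U)$, and similarly $(\Bd U \cap \torb)/\bGamma_{\tilde E} \to p_{\orb}(\Bd U \cap \torb)$.

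\emph{Step 2.} The radial foliation on $U$ defines a $\bGamma_{\tilde E}$-equivariant map $\pi_{\tilde E}: U \to \tilde \Sigma_{\tilde E}$ sending each point to its direction from $\bv_{\tilde E}$, whose fibres are the open radial segments of $U$. The unique-intersection hypothesis says $\Bd U \cap \torb$ meets each such fibre in exactly one point, so $\pi_{\tilde E}$ restricts to a $\bGamma_{\tilde E}$-equivariant bijection, and by transversality a diffeomorphism, from $\Bd U \cap \torb$ onto $\tilde \Sigma_{\tilde E}$. Passing to quotients and combining with Step 1, the image $p_{\orb}(\Bd U \cap \torb)$ is homeomorphic to $\tilde\Sigma_{\tilde E}/\bGamma_{\tilde E}=\Sigma_{\tilde E}$, which is compact by strong tameness. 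This gives the required compact hypersurface; since it deformation retracts to the end orbifold, it is also homotopy equivalent to any end-neighborhood of $E$.

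\emph{Step 3.} Each open radial leaf of $U$ is an open segment with one endpoint $\bv_{\tilde E}$; taking the point of $\Bd U \cap \torb$ on it as origin and using an arc-length-like parameter furnishes a canonical parametrization by $\bR$. Doing this fibrewise yields a $\bGamma_{\tilde E}$-equivariant homeomorphism $U \cong (\Bd U \cap \torb)\times \bR$, with trivial action on the second factor, so after quotienting and invoking Step 2 we obtain $p_{\orb}(U)\cong \Sigma_{\tilde E}\times \bR$. The main delicate point I anticipate is Step 1: the equality $\bGamma_{U_1}=\bGamma_{\tilde E}$ and the fact that $U$ lies in exactly one preimage component of $U_0$ must be used carefully so that the application of Lemma \ref{lem:inv} actually rules out deck transformations outside $\bGamma_{\tilde E}$ from overlapping $U$ with itself; once this is in hand, Steps 2 and 3 are routine foliation trivialization.
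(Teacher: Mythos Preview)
Your proof is correct and follows essentially the same line as the paper's. The paper is terser: it takes the proper p-end-neighborhood $V$ covering the given end-neighborhood, observes that its stabilizer is exactly $\bGamma_{\tilde E}$ so that $U$ covers $p_{\orb}(U)$ with deck group $\bGamma_{\tilde E}$, and then appeals directly to the product structure $V \cong \tilde\Sigma_{\tilde E}\times\bR$ of that ambient proper p-end-neighborhood rather than rebuilding the product from the unique-intersection hypothesis as you do in Steps~2--3; but this is the same idea carried out at a different level of detail.
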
 
\begin{proof} 
Let $V'$ be the end-neighborhood of $\orb$ that $\Bd U \cap \torb$ or $\clo(U) \cap \torb$ maps 
into. Then for a component of the inverse image $V$ of $V'$, we have 
$U \subset V'$. Since $\bGamma_{\tilde E}$ is precisely the set of deck-transformations acting on $V$, 
$U$ covers $p_{\orb}(U)$ in $V'$ with the deck transformation group $\bGamma_{\tilde E}$. 
Also, $\Bd U \cap \torb$ covers the boundary of $p_{\orb}(U)$ in $V'$, 
and hence is a compact hypersurface. Since $V$ is homeomorphic to $\Sigma_{\tilde E} \times \bR$,
the result follows. 
\end{proof}

\begin{proposition}\label{prop:endvertex} 
Suppose that $\mathcal{O}$ is a strongly tame properly convex real projective orbifold with radial or totally geodesic ends, 
and its developing map sends the universal cover $\torb$
to a convex domain. Let $U'$ be an end-neighborhood in $\orb$. 
Let $\tilde U$ be $p_{\orb}^{-1}(U')$ as above with $E'$ the p-end in $\torb$ associated with a component $U$ of $\tilde U$.
Then the closure of each component of  $\tilde U$ 
contains the p-end vertex $\bv_{E'}$ of the leaf of radial foliation in $\tilde U$ lifted from $U$, and 
there exists a unique one for each component $U_1$ of $\tilde U$ associated with a p-R-end $E'$ of $\torb$. 
The subgroup of $h(\pi_1(\mathcal{O}))$ acting on $U_1$ 
or fixing the p-end vertex $\bv_{E'}$ is precisely in the subgroup $\bGamma_{E'}$.
\end{proposition}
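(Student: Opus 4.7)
The plan is to verify the three assertions in order: (i) the closure of each component of $\tilde U$ contains the p-end vertex associated to that component; (ii) the p-end vertex is unique for each component; (iii) both the stabilizer of $U_1$ in $h(\pi_1(\orb))$ and the stabilizer of $\bv_{E'}$ in $h(\pi_1(\orb))$ equal $\bGamma_{E'}$.

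For (i), by the defining property of a radial end, each component of $\tilde U$ carries a foliation by properly embedded open projective geodesics all meeting at a common point $\bv_{E'}$. Choosing any leaf $\ell \subset U_1$, the segment $\ell$ has $\bv_{E'}$ as an endpoint; since $\ell \subset U_1$, we get $\bv_{E'} \in \clo(\ell) \subset \clo(U_1)$.

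For (ii), suppose $\bv$ and $\bv'$ were two distinct common endpoints for the leaves in $U_1$. Each leaf would then be an open projective segment having both $\bv$ and $\bv'$ as endpoints, hence each leaf would lie on the unique projective line through $\bv$ and $\bv'$. Thus the foliation would be supported on a single one-dimensional projective subspace, contradicting the fact that it sweeps out the open $n$-dimensional set $U_1$.

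For (iii), Lemma \ref{lem:inv} directly gives $\{\gamma \in h(\pi_1(\orb)) : \gamma(U_1) = U_1\} = \bGamma_{E'}$. The inclusion $\bGamma_{E'} \subset \mathrm{Stab}(\bv_{E'})$ is immediate, since each $\gamma \in \bGamma_{E'}$ preserves $U_1$, permutes the leaves of the radial foliation, and therefore fixes their common endpoint $\bv_{E'}$. For the reverse inclusion, let $\gamma \in h(\pi_1(\orb))$ with $\gamma(\bv_{E'}) = \bv_{E'}$. Then $\gamma(U_1)$ is again a component of $\tilde U$, and its leaves converge to $\gamma(\bv_{E'}) = \bv_{E'}$. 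Provided no two distinct components of $\tilde U$ share a p-end vertex, this forces $\gamma(U_1) = U_1$ and hence $\gamma \in \bGamma_{E'}$. To exclude shared vertices, I would invoke proper convexity of $\torb$: the set $C$ of open projective segments from $\bv_{E'}$ into $\torb$ is a connected cone (any two such segments span a $2$-dimensional convex sector in $\torb$), and any component of $\tilde U$ with vertex $\bv_{E'}$ must approach $\bv_{E'}$ along radial segments lying in $C$. A sufficiently small radial neighborhood of $\bv_{E'}$ inside $C$ is connected and therefore lies inside a single component of $\tilde U$, so the component is uniquely determined by $\bv_{E'}$.

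The principal obstacle is this last step: rigorously ruling out the possibility that two distinct components of the lifted end-neighborhood $\tilde U$ share a p-end vertex. The intended argument couples proper convexity of $\torb$ (providing a connected cone of radial segments at $\bv_{E'}$) with strong tameness (ensuring that, after shrinking $U'$, the preimage near $\bv_{E'}$ is entirely contained in one component).
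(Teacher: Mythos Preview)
The paper states this proposition without proof, so there is no argument of the author's to compare against; you are supplying what was omitted. Parts (i) and (ii) are fine as written.

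For (iii), your reduction via Lemma~\ref{lem:inv} to the claim that distinct components of $\tilde U$ cannot share a p-end vertex is exactly right, and the obstacle you flag is real, but your proposed fix does work once made precise. Proper convexity gives $\torb$ the Hilbert metric $d_{\torb}$; for any $R>0$ the closed $d_{\torb}$-ball of radius $R$ about a basepoint is compact and hence has positive $\bdd$-distance from $\Bd\torb$. Choosing $R$ so large that this ball covers the lift of a compact core of $\orb$, one finds $\eps>0$ such that every point of $B_\eps(\bv_{E'})\cap\torb$ projects under $p_{\orb}$ into the disjoint union of product end-neighborhoods. The set $B_\eps(\bv_{E'})\cap\torb$ is convex (an intersection of two convex sets, for $\eps<\pi/2$), hence connected, so its $p_{\orb}$-image lies in a single end-neighborhood; this must be $U'$ since the radial leaves of $U_1$ already meet $B_\eps(\bv_{E'})\cap\torb$. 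Therefore $B_\eps(\bv_{E'})\cap\torb$ lies in a single component of $p_{\orb}^{-1}(U')$, namely $U_1$. Any other component $\gamma(U_1)$ with the same p-end vertex would likewise contain points of $B_\eps(\bv_{E'})\cap\torb$ along its own radial leaves, forcing $\gamma(U_1)=U_1$ and hence $\gamma\in\bGamma_{E'}$. This is precisely your connectedness argument, with the Hilbert metric supplying the ``sufficiently small'' neighborhood; no separate shrinking of $U'$ is needed.
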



\begin{definition}\label{defn:convhull} 
Given a subset $K$ of a convex domain $\Omega$ of an affine space $A^n$ in $\SI^n$ 
{\rm (}resp. $\bR P^n${\rm ),} the {\em convex hull} of 
$K$ is defined as the smallest convex set containing $K$ in $\clo(\Omega) \subset A^n$ where we required $\clo(\Omega) \subset A^n$. 
\end{definition}
The convex hull is {\em well-defined} as long as $\Omega$ is properly convex. Otherwise, it may be not. 
Often when $\Omega$ is a properly convex domain, we will take the closure $\clo(\Omega)$ instead of $\Omega$ usually. 
This does not change the convex hull. (Usually it will be clear what $\Omega$ is by context but we will mention these.) 

We will show later that 
the p-end-neighborhood can be chosen to be properly convex by taking the convex hull of a well-chosen 
p-end-neighborhood in $\torb$. 
However, there is no guarantee that the images of convex ones are disjoint.

\subsection{Convexity and convex domains}\label{subsec:conv}








A complete real line in $\rpn$ is a $1$-dimensional subspace of $\rpn$ with one point removed. 
That is, it is the intersection of a $1$-dimensional subspace by an affine space. 
An {\em affine $i$-dimensional subspace} is a submanifold of $\SI^n$ or $\rpn$ projectively diffeomorphic 
to an $i$-dimensional affine subspace of a complete affine space.
A {\em convex} projective geodesic is a projective geodesic in a real projective orbifold which lifts to 
a projective geodesic, the image of whose composition with a developing map does not contain a complete real line. 
A real projective orbifold is {\em convex} if every path can be homotoped to a convex projective geodesic with endpoints fixed. 
It is {\em properly convex} if it contains no great open segment in the orbifold.

In the double cover $\SI^n$ of $\rpn$, an affine space $A^n$ is the interior of a hemisphere. 
A domain in $\rpn$ or $\SI^n$ is {\em convex} if it lies in some affine subspace and satisfies the convexity 
property above. Note that a convex domain in $\rpn$ lifts to ones in $\SI^n$ up to the antipodal map 
$\mathcal{A}$. A convex domain in $\SI^n$ not containing an antipodal pair maps to one in $\rpn$ homeomorphically. 
(Actually from now on, we will only be interested in convex domains in $\SI^n$.)

\begin{proposition}\label{prop:projconv}
\begin{itemize}
\item A real projective $n$-orbifold is convex if and only if the developing map sends 
the universal cover to a convex domain in $\rpn$ or $\SI^n$. 
\item A real projective $n$-orbifold is properly convex if and only if the developing map sends 
the universal cover to a properly convex open domain in a compact domain in an affine patch of $\rpn$. 
\item If a convex real projective $n$-orbifold is not properly convex and not complete affine, then
its holonomy is virtually reducible in $\PGL(n+1, \bR)$ or $\SLnp$. In this case, $\torb$ is foliated 
by affine subspaces $l$ of dimension $i$ with the common boundary $\clo(l) - l$ equal to 
a fixed subspace  $\SI^{i-1}_{\infty}$ in $\Bd \torb$. 
\end{itemize}
\end{proposition}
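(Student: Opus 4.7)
\medskip

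\noindent\textbf{Proof plan for Proposition \ref{prop:projconv}.}

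The plan is to handle the three bullets in order, with the first two being relatively standard consequences of the developing map machinery and the third requiring a structural analysis of the recession directions of $\torb$. Throughout I work in $\SI^n$, noting that the $\bR P^n$ versions follow by passing to the double cover.

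For the first bullet, given convexity of $\orb$, I would lift the developing map to $\dev: \torb \to \SI^n$ and show $\dev(\torb)$ is convex in the standard sense. For any two points $\tilde p, \tilde q \in \torb$, pick a path between them in $\torb$ and homotope it (rel endpoints) to a convex projective geodesic; its image under $\dev$ is then a projective line segment in $\SI^n$ that does not contain a great segment, i.e.\ is a segment of $\bdd$-length less than $\pi$. Varying $\tilde p, \tilde q$, the image $\dev(\torb)$ lies in a single affine patch $A^n$ (using that a connected subset of $\SI^n$ containing no antipodal pair lies in an open hemisphere, via the great-segment criterion), and any two points of $\dev(\torb)$ are joined by an affine segment in $\dev(\torb)$. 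Conversely, if $\dev(\torb)$ is convex in $\SI^n$, any path in $\torb$ pushes to a path in $\dev(\torb)$ that homotopes to an affine segment there; pulling back this homotopy yields the required convex projective geodesic in $\orb$ after projecting. This direction uses that $\dev$ is a local diffeomorphism onto its image, which on a convex subset of $\SI^n$ is a covering map and hence a diffeomorphism onto $\dev(\torb)$.

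For the second bullet, combine the first bullet with the definition of proper convexity. If $\orb$ is properly convex then $\dev(\torb)$ is convex by the first bullet, and if it contained a great segment, that segment would lift to $\torb$ as a great segment in the orbifold (since $\dev$ restricted to $\torb$ is a diffeomorphism onto its convex image), contradicting proper convexity. Hence $\dev(\torb)$ is convex and contains no great segment; such a set lies as a precompact convex subset of an affine patch $A^n \subset \SI^n$. Conversely, if $\dev(\torb)$ is a properly convex open domain in an affine patch, no great segment can be in $\torb$.

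The third bullet is the main technical step. Assume $\torb$ is convex but not properly convex and not complete affine; identify it with $\dev(\torb) \subset \SI^n$. For $p \in \torb$ let $V_p \subset \bR^{n+1}$ be the set of vectors $v$ such that the whole line through $p$ with direction $\pm v$ is contained in $\clo(\torb)$; by the convexity of $\torb$ and the standard cone-of-recession argument, $V_p$ is a linear subspace, and it is nontrivial precisely because $\torb$ is not properly convex (the closure contains an antipodal pair of points, and joining it to an interior point yields a complete line inside $\clo(\torb)$, and then inside $\torb$ by openness and convexity). A parallel-transport argument, using that $V_p$ is cut out by the support functions of $\Bd \torb$ at asymptotic boundary points, shows $V_p$ is independent of $p$; call it $V$, with $\dim V = i$. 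Then the affine $i$-planes $p + V$ partition $\torb$ into complete affine $i$-subspaces. Each such leaf $l$ is the interior of a projective $i$-hemisphere whose ideal boundary is a fixed $(i-1)$-dimensional subspace $\SI^{i-1}_{\infty} = \mathcal{S}(V) \subset \Bd \torb$, independent of the leaf. Since the holonomy $h(\pi_1(\orb))$ preserves $\torb$, it preserves this canonical foliation, hence preserves $\SI^{i-1}_{\infty}$, so a finite-index subgroup preserves the associated proper linear subspace of $\bR^{n+1}$, giving virtual reducibility.

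The step I expect to be the main obstacle is the independence of $V_p$ from $p$ and the verification that the common ideal boundary $\SI^{i-1}_\infty$ is genuinely the same subspace for every leaf; this needs a careful convexity argument ruling out that two leaves in directions spanning different subspaces could coexist (otherwise taking their convex hull would enlarge the recession cone, contradicting maximality of $V_p$ at a generic point). Once this is in hand, the foliation statement and the holonomy invariance are formal.
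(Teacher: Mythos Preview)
Your proposal is correct and follows essentially the same approach as the paper: the paper dispatches the first two bullets by citing Proposition~A.1 of \cite{psconv} and immediate consequences, while for the third bullet it invokes the maximal complete affine subspace foliation from \cite{ChCh} (see also \cite{GV}), which is exactly your recession-subspace $V_p$ argument. One small point: in your first bullet the injectivity of $\dev$ should be argued before (not after) concluding convexity of the image---but this follows directly from your own observation that a convex projective geodesic develops to a segment of $\bdd$-length $<\pi$, so distinct endpoints in $\torb$ cannot have the same image.
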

\begin{proof} 
The first item is Proposition A.1 of \cite{psconv}. 
The second follows immediately. 
For the final item, a convex subset of $\rpn$ is a convex subset of an affine patch $A^n$, isomorphic to an affine space. 
A convex open domain $D$ in $A^n$ that has a great open segment must 
contain a maximal complete affine subspace.  
Two such complete maximal affine subspaces do not intersect since otherwise a larger complete affine subspace of 
higher dimension is in $D$ by convexity. We showed in \cite{ChCh} that the maximal complete affine subspaces
foliated the domain.  (See also \cite{GV}.) The foliation is preserved under the group action 
since the leaves are lower-dimensional complete affine subspaces in $D$. 
This implies that the boundary of the affine subspaces is a lower dimensional subspace. 
These subspaces are preserved under the group action.
\end{proof} 


\subsubsection{The Benoist theory} \label{sub:ben}

In late 1990s, Benoist more or less completed the theory of the divisible action as started by Benzecri, Vinberg, Koszul, Vey, and so on
in the series of papers \cite{Ben1}, \cite{Ben2}, \cite{Ben3}, \cite{Ben4}, \cite{Ben5}, \cite{Benasym}.
The comprehensive theory will aid us much in this paper. 

\begin{proposition}[(Corollary 2.13 \cite{Ben3})]\label{prop:Benoist}  
Suppose that a discrete subgroup $\Gamma$ of $\SLn$ {\rm (}resp. $\PGL(n, \bR)${\rm )}
acts on a properly convex $(n-1)$-dimensional open domain $\Omega$ in $\SI^{n-1}$ {\rm (}resp, $\bR P^{n-1}${\rm )} 
so that $\Omega/\Gamma$ is compact. Then the following statements are equivalent. 
\begin{itemize} 
\item Every subgroup of finite index of $\Gamma$ has a finite center. 
 \item Every subgroup of finite index of $\Gamma$ has a trivial center. 
\item Every subgroup of finite index of $\Gamma$ is irreducible in $\SLn$. 
That is, $\Gamma$ is strongly irreducible. 
\item The Zariski closure of $\Gamma$ is semisimple. 
\item $\Gamma$ does not contain an infinite nilpotent normal subgroup. 
\item $\Gamma$ does not contain an infinite abelian normal subgroup.
\end{itemize}
\end{proposition}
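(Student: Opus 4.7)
The plan is to organize the six conditions into a cycle of implications, handling the group-theoretic relations first and then leveraging the divisibility of $\Omega$ by $\Gamma$ to control the Zariski closure and irreducibility. Every condition is stable under passage to finite-index subgroups, so I will freely pass to them throughout.

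I would first dispatch the formal implications. Trivial center obviously implies finite center. For ``finite center in every finite-index subgroup $\Rightarrow$ no infinite abelian normal subgroup'', suppose $A$ is an infinite abelian normal subgroup of $\Gamma$; then the identity component $\bar A^0$ of its Zariski closure is normalized by a finite-index $\Gamma' \subset \Gamma$, and using that $\bar A^0$ is connected and commutative one argues by a standard Zariski-closure and rigidity argument that a further finite-index $\Gamma'' \subset \Gamma'$ centralizes an infinite subgroup of $A$, providing an infinite center in $\Gamma''$, a contradiction. Conversely, an infinite center is itself an infinite abelian normal subgroup. The equivalence ``no infinite abelian normal'' $\Leftrightarrow$ ``no infinite nilpotent normal'' is immediate in one direction, and in the other uses the ascending central series of a hypothetical infinite nilpotent normal subgroup, together with a Zariski-closure argument to guarantee infinitude at the last nontrivial step, to extract an infinite abelian characteristic subgroup.

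Next I would relate these conditions to the Zariski closure $G$ of $\Gamma$. If the radical $R(G)$ is nontrivial, the last nontrivial term of its derived series is a connected abelian normal algebraic subgroup $H$ of $G$; by a Borel density / Selberg-type argument using cocompactness of $\Omega/\Gamma$, a finite-index $\Gamma' \subset \Gamma$ meets $H$ in an infinite subgroup, which is abelian and normal in $\Gamma'$. Conversely, any infinite abelian normal subgroup Zariski-closes inside $R(G)$. For the forward direction of ``$\Gamma$ strongly irreducible $\Rightarrow$ $G$ semisimple'': strong irreducibility forces $G$ to act irreducibly on $\bR^n$, so Schur's lemma makes $R(G)$ act by scalars, and since $\Gamma \subset \SLn$ these scalars are $\pm 1$; thus $R(G)$ is finite and trivial at the identity component, so $G$ is semisimple.

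The geometric heart is the converse, ``$G$ semisimple $\Rightarrow$ $\Gamma$ strongly irreducible''. If some finite-index $\Gamma_0 \subset \Gamma$ preserves a proper nonzero subspace $V \subset \bR^n$, then, after possibly passing again to finite index, complete reducibility of representations of semisimple groups yields a $\Gamma_0$-invariant decomposition $\bR^n = V_1 \oplus \cdots \oplus V_k$ with $k \geq 2$. Proper convexity of $\Omega$ and cocompactness of $\Omega/\Gamma_0$ force $\clo(\Omega)$ to be the strict join of properly convex domains $\Omega_j \subset {\mathcal S}(V_j)$. The centralizer of $\Gamma_0$ in $\SLn$ then contains a one-parameter subgroup of diagonal scalings $\mathrm{diag}(t^{a_1}\Idd_{V_1}, \ldots, t^{a_k}\Idd_{V_k})$ with $\sum_j a_j \dim V_j = 0$, and a cohomological and divisibility argument places a nontrivial integer power of it inside $\Gamma_0$, producing an infinite central, hence abelian normal, subgroup of $\Gamma_0$ and contradicting the equivalences already established. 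The main obstacle I expect is exactly this last step: showing that a $\Gamma_0$-invariant direct sum decomposition of $\bR^n$, combined with the discrete cocompact action on the properly convex $\Omega$, forces $\clo(\Omega)$ to be a strict join and then extracts an infinite abelian normal subgroup of $\Gamma_0$. This is the essentially geometric content of Benoist's theorem on reducible convex divisible representations, and it is where proper convexity and cocompactness of $\Omega/\Gamma$ are indispensable.
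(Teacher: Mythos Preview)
The paper's proof is far more modest than your proposal: it simply cites Corollary~2.13 of Benoist \cite{Ben3} for the $\PGL(n,\bR)$/$\bR P^{n-1}$ case and then observes that the $\SLn$/$\SI^{n-1}$ case follows by lifting the properly convex domain and the group from projective space to the sphere. No re-derivation of the equivalences is attempted.

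Your proposal instead tries to reconstruct Benoist's argument. The cycle of formal implications among the group-theoretic conditions is laid out reasonably, though a few steps are thin --- for instance, extracting an \emph{infinite} abelian characteristic subgroup from an infinite nilpotent normal one needs an auxiliary fact (e.g.\ that finitely generated nilpotent linear groups are polycyclic with infinite center when infinite), which you should state. The genuine gap is the one you yourself flag as the ``main obstacle'': from a $\Gamma_0$-invariant splitting $\bR^n = V_1 \oplus \cdots \oplus V_k$ and cocompactness of $\Omega/\Gamma_0$, producing an infinite \emph{central} element of $\Gamma_0$. Your assertion that ``a cohomological and divisibility argument places a nontrivial integer power of [a diagonal scaling] inside $\Gamma_0$'' is not justified --- the diagonal torus certainly lies in the centralizer of $\Gamma_0$ in $\SLn$, but there is no a priori reason any nontrivial element of it belongs to the discrete group $\Gamma_0$. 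What Benoist actually does is first show that $\clo(\Omega)$ is the strict join of the $\Omega_i = \clo(\Omega)\cap\mathcal{S}(V_i)$, then use cocompactness to force the homomorphism $g \mapsto (\log|\det(g|_{V_i})|)_i$ to have lattice image $\cong \bZ^{k-1}$, and finally argue that $\Gamma_0$ virtually \emph{splits} as $\bZ^{k-1}\times\Gamma_1\times\cdots\times\Gamma_k$ so that the $\bZ^{k-1}$ sits inside as a central subgroup. This splitting is the substantive content of his structure theorem for reducible divisible convex sets, and your sketch does not supply it. Identifying where the difficulty lies is correct, but it does not close the gap.
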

\begin{proof}
Corollary 2.13 of \cite{Ben2} considers $\PGL(n, \bR)$ and $\bR P^{n-1}$. 
However, the version for $\SI^{n-1}$ follows from this since we can always lift 
a properly convex domain in $\bR P^{n-1}$ to one $\Omega$ in $\SI^{n-1}$ and 
the group to one in $\SLn$ acting on $\Omega$. 
\end{proof}

The group with properties above is said to be the group with a {\em trivial virtual center}. 

\begin{theorem}[(Theorem 1.1 of \cite{Ben3})] \label{thm:Benoist} 
Let $n-1 \geq 1$. 
Suppose that a virtual-center-free discrete subgroup $\Gamma$ of $\SLn$ {\rm (}resp. $\PGL(n, \bR)${\rm )}  acts on 
a properly convex $(n-1)$-dimensional open domain $\Omega \subset \SI^{n-1}$ so 
that $\Omega/\Gamma$ is compact.
Then every representation of a component of $\Hom(\Gamma, \SLn)$ {\rm (}resp. $\Hom(\Gamma, \PGL(n, \bR))${\rm ) }
containing the inclusion representation also acts on a properly convex $(n-1)$-dimensional open domain cocompactly. 
\end{theorem}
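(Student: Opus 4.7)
The plan is to prove the statement by showing that the subset $V$ of representations in $\Hom(\Gamma, \SLn)$ which act properly discontinuously and cocompactly on some properly convex $(n-1)$-dimensional open domain in $\SI^{n-1}$ is both open and closed in the connected component $C$ of the inclusion. Since the inclusion itself lies in $V$, connectedness of $C$ then forces $V = C$, which is exactly the conclusion.

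For openness, I would proceed via the Ehresmann--Thurston holonomy principle. Given $\rho_0 \in V$ with a cocompact action on the properly convex $\Omega_0$, the quotient $\Omega_0/\rho_0(\Gamma)$ is a compact real projective orbifold whose holonomy is $\rho_0$. A sufficiently small perturbation $\rho$ of $\rho_0$ then gives a nearby real projective structure on the same underlying orbifold, with a nearby developing map. What must be verified is that the deformed developing image is again a properly convex domain cocompactly covered by $\rho(\Gamma)$. This is where Proposition \ref{prop:Benoist} enters crucially: the strong irreducibility of $\rho_0$, and the openness of this condition on $C$, prevents the deformed structure from collapsing onto an affine subspace or acquiring a great segment in its developing image, so proper convexity persists.

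For closedness, I would take a sequence $\rho_k \to \rho$ in $C$ with each $\rho_k \in V$ acting cocompactly on a properly convex domain $\Omega_k$, and use a Benz\'ecri-type compactness argument. After normalizing by $\SLn$ (or choosing basepoints in the quotients) one extracts a Hausdorff-convergent subsequence $\clo(\Omega_k) \to K$ for a closed convex $K \subset \SI^{n-1}$ on whose interior $\rho$ acts. Two facts must then be established: that $K$ is \emph{properly} convex of the full dimension, and that $\rho(\Gamma)$ acts cocompactly on $K^o$. The first follows because the closed conditions in Proposition \ref{prop:Benoist} (no normal nilpotent subgroup, semisimple Zariski closure) survive the limit, so $\rho(\Gamma)$ remains strongly irreducible and cannot preserve either a hyperplane in $\clo(K)$ or a complete affine subspace inside $K$; the second follows by extracting limits of fundamental domains, using the uniform control on displacement lengths coming from the convergence in $C$.

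The principal obstacle is the closedness step, specifically showing that the limit $K$ does not degenerate either in dimension (the action collapsing onto a proper subspace) or in shape (acquiring a pair of antipodal points in its closure, so ceasing to be properly convex). Each such pathology would produce a $\rho(\Gamma)$-invariant subspace in contradiction with strong irreducibility, but converting this qualitative obstruction into a quantitative statement that is preserved under Hausdorff limits is delicate; this is precisely where Benoist's machinery in \cite{Ben3}, based on proximal elements, the Cartan projection, and the fine structure of the limit set in $\partial \Omega$, does the real work.
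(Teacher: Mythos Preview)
The paper does not supply its own proof of this theorem: it is stated as Theorem 1.1 of \cite{Ben3} and simply quoted, with the parenthetical remark that Inkang Kim \cite{ink} proved the hyperbolic $n=3$ case simultaneously. So there is no in-paper argument to compare your proposal against.

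Your outline is, in broad strokes, the correct architecture for Benoist's actual proof in \cite{Ben3}: one shows that the set of representations dividing a properly convex domain is open and closed in the component of the inclusion. Openness does go through a Koszul/Ehresmann--Thurston deformation argument, and closedness is indeed the hard part, handled by a Benz\'ecri-type compactness together with the strong irreducibility guaranteed by Proposition \ref{prop:Benoist}. You have correctly identified the main obstacle (ruling out degeneration of the limit domain) and correctly noted that the substantive work lives in Benoist's proximality and limit-set machinery. As a self-contained proof this is only a sketch --- the estimates controlling the Hausdorff limit and the cocompactness of the limit action are genuinely nontrivial --- but as a plan it matches what \cite{Ben3} does, and it is all the paper itself asks of this statement.
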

(When $\Gamma$ is a hyperbolic group and $n=3$, Inkang Kim \cite{ink} proved this simultaneously.)

We call the group such as above theorem a {\em vcf-group}. By above Proposition \ref{prop:Benoist}, 
we see that every representation of the group acts irreducibly.



\begin{proposition}[(Benoist \cite{Ben2})] \label{prop:Ben2} Assume $n \geq 2$. 
Let $\Sigma$ be a closed $(n-1)$-dimensional properly convex projective orbifold
and let $\Omega$ denote its universal cover in $\SI^{n-1}$ {\rm (}resp. $\bR P^{n-1}${\rm ).}  
Then 
\begin{itemize}
\item $\Omega$ is projectively diffeomorphic to the interior of 
a strict join $K_1 * \cdots * K_{l_0}$ where $K_i$ is a properly convex open domain of dimension $n_i \geq 0$ in 
the subspace $\SI^{n_i}$ in $\SI^n$ {\rm (}resp. $\bR P^{n_i}$ in $\bR P^n${\rm )}
corresponding to a convex cone $C_i \subset \bR^{n_i+1}$. 
\item $\Omega$ is the image of $C_1 \oplus \cdots \oplus C_r$. 
\item The fundamental group 
$\pi_1(\Sigma)$ is virtually isomorphic to $\bZ^{l_0-1} \times \Gamma_1 \times \cdots \Gamma_{l_0}$ for 
$l_0-1 + \sum n_i = n$. 
\item Suppose that each $\Gamma_i$ is hyperbolic or trivial. 
Each $\Gamma_j$ acts on $K_j$ cocompactly and the Zariski closure is the trivial group or an
embedded copy of 
\[\SL(n_i+1, \bR), \SL_{\pm}(n_i+1), \SO(n_i, 1) \hbox{ or } \Ort(n_i, 1)\] 
\[ \hbox{ {\rm (}resp. } \PGL(n_i+1, \bR), \PSO(n_i, 1), \PO(n_i, 1)\hbox{{\rm )}}.\]
in $\SLn$ {\rm (}resp. $\PGL(n, \bR)${\rm )} and acts trivially on $K_m$ for $m \ne j$. 
\item The subgroup corresponding to $\bZ^{l_0-1}$ acts trivially on each $K_j$.
\end{itemize} 
\end{proposition}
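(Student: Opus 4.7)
The plan is to combine Benoist's classification of divisible convex sets from \cite{Ben2, Ben3} with the irreducibility characterization already stated here as Proposition \ref{prop:Benoist}. By Selberg's lemma I would first pass to a torsion-free finite-index subgroup $\Gamma' \le \pi_1(\Sigma)$, which is legitimate because the conclusion is only asserted virtually; then $\Omega/\Gamma'$ is a closed properly convex real projective manifold with $\Gamma'$ acting freely and cocompactly on $\Omega$.

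The proof then splits according to whether the Zariski closure $G$ of $\Gamma'$ in $\SLn$ is semisimple, which by Proposition \ref{prop:Benoist} is equivalent to $\Gamma'$ being strongly irreducible. In the strongly irreducible case we have $l_0 = 1$, and the claimed Zariski closures $\SL(n,\bR)$, $\SLn$, $\SO(n-1,1)$, $\Ort(n-1,1)$ come directly from Benoist's identification of possible Zariski closures for automorphism groups of irreducible divisible properly convex domains in \cite{Ben3}; the hypothesis that each $\Gamma_i$ is hyperbolic or trivial is what rules out the higher-rank simple factors from the list. In the reducible case, $\Gamma'$ preserves some proper linear subspace $V \subsetneq \bR^n$, and the main geometric content of \cite{Ben2} shows that cocompactness plus proper convexity of $\Omega$ force $V \cap \clo(C_\Omega)$ to be a proper face of the cone $C_\Omega$ with an invariant complementary face, yielding a $\Gamma'$-invariant direct sum decomposition $C_\Omega = C_1 \oplus C_2$ into invariant proper subcones.

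Iterating this splitting on each factor (which must terminate since dimensions strictly drop) produces the strict join $\clo(\Omega) = K_1 * \cdots * K_{l_0}$ and the direct sum $C_\Omega = C_1 \oplus \cdots \oplus C_{l_0}$. The restriction maps send $\Gamma'$ into a product $\prod \SL_\pm(n_i+1,\bR)$ where each image $\Gamma_i$ acts cocompactly on $K_i$; applying the strongly irreducible case to each factor then identifies the Zariski closure and gives the hyperbolic group $\Gamma_i$. The central $\bZ^{l_0-1}$ summand arises from the group $\bR_+^{l_0}$ of simultaneous positive rescalings of the $C_i$, which becomes a connected abelian subgroup of rank $l_0-1$ once one passes to $\SLn$; cocompactness then forces $\Gamma'$ to meet this abelian subgroup in a cocompact lattice, necessarily free abelian of rank $l_0-1$, and by construction this lattice acts trivially on each individual $K_j$.

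The main obstacle, and the step that really requires the depth of Benoist's work rather than soft representation-theoretic reasoning, is the promotion of an invariant linear subspace to a direct sum decomposition of the cone $C_\Omega$: a reducible representation need not in general split, but proper convexity together with cocompactness of the $\Gamma'$-action provides exactly the rigidity needed to promote a flag of invariant subspaces to a direct sum of invariant subcones, via a careful analysis of the face structure of $\clo(\Omega)$. Everything else—the virtual product structure of $\pi_1(\Sigma)$, the triviality of the $\bZ^{l_0-1}$ action on each $K_j$, and the identification of the Zariski closures of each factor—then follows by assembling these pieces together with Proposition \ref{prop:Benoist}.
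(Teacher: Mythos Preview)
Your overall plan matches the paper's in spirit: both treat this proposition as essentially a compilation of Benoist's results, with the join decomposition and the virtual product $\bZ^{l_0-1}\times\Gamma_1\times\cdots\times\Gamma_{l_0}$ coming from Theorem~1.1 of \cite{Ben2} (adapted from $\GL(n,\bR)$ to $\SLn$ by adjoining dilatations), and the list of possible Zariski closures coming from \cite{Ben1} and \cite{Ben5} (you cite \cite{Ben3} instead, which is fine for the same content). Your remark that the hyperbolic-or-trivial hypothesis is what eliminates higher-rank simple factors from Benoist's list is correct and worth keeping.

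There is, however, a gap in your treatment of the fourth bullet, specifically the clause ``acts trivially on $K_m$ for $m\ne j$''. You write that the restriction maps send $\Gamma'$ into $\prod\SL_\pm(n_i+1,\bR)$ and call the images $\Gamma_i$, but this conflates the \emph{projections} of $\Gamma'$ with the \emph{factor subgroups} $\Gamma_i$ appearing in the virtual product decomposition of $\pi_1(\Sigma)$; these are a~priori different, and it is the latter whose action on the other $K_m$ must be shown to be trivial. Having each $C_i$ invariant only gives you a block-upper-triangular (here, block-diagonal) embedding of $\Gamma'$; it does not by itself force the subgroup $\Gamma_j$ of the virtual product to act trivially on the other blocks. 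The paper supplies exactly this missing step with a short centralizer argument (attributed in part to Benoist): if some $\Gamma_i$ acted nontrivially on $C_k$ with $k\ne i$, then the Zariski closures of the images of the various $\Gamma_i$ inside $G_k$ would all commute with each other, forcing $G_k$ to centralize a nontrivial product of subgroups; but $G_k$ is one of the irreducible groups on the list, which admits no such centralizer, a contradiction. You should insert this argument (or an equivalent one) where you currently pass from the join decomposition to ``applying the strongly irreducible case to each factor''.
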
 
\begin{proof} 
First consider the version for $\SI^n$.
The first four items and the last one are from Theorem 1.1. in \cite{Ben2}, 
where the work is done over $\GL(n, \bR)$. However, we assume that 
our elements are in $\SLn$ and by adding dilatations, we obtain the needed results. 
The Zariski closure part is obtained by 
Theorem 1.1 in \cite{Ben1}, and Theorem 1.3 of \cite{Ben5}.


Let $\hat h: \pi_1(\Sigma_{\tilde E}) \ra \SL_\pm(n, \bR)$ be the homomorphism associated with $\tilde E$.
The first part of the fourth item is also from Theorem 1.1 of \cite{Ben2}. 
By Theorem 1.1 of \cite{Ben3}, the Zariski closure of $\hat h(\pi_1(\tilde E))$ is virtually a product 
$\bR^{l_0-1} \times G_1 \times \cdots \times G_{l_0}$ and $G_j$, $j=1, \dots, l_0$, 
is an irreducible reductive Lie subgroup of $\SL_\pm(V_i)$.  
Suppose $\Gamma_i$ acts nontrivially on $C_k$ for $k \ne i$. Then elements of 
Zariski closures $Z_k^i$ of their images commute in $G_k$ and $G_k$ is the centralizer of products of 
subgroups $Z_k^i$s. Since $G_k$ is irreducible linear algebraic subgroup as listed above, this is absurd. 
(We were helped by Benoist in this argument.)

The proof for the $\bR P^n$-version follows easily again by the lifting arguments. 
\end{proof}

To explain more, $K_i$ could be a point. 
For some $s$, $1 \leq s\leq r$, 
we could obtain a decomposition where each $K_i$ for $i \geq s$ has dimension $\geq 2$ and 
$\Gamma_i$ is a hyperbolic group. 
Then $\Gamma$ is virtually a product of hyperbolic groups and an abelian group that is the center of the group. 


\section{Examples of ends} \label{sec:exend}

We will present some examples here, which we will fully justify later. 

\subsection{Definitions associated with ends}

We will use: 
\begin{definition}\label{defn:endvertex} 
Let $\orb$ denote a strongly tame convex real projective $n$-orbifold with radial ends or totally geodesic ends
with the universal cover $\torb \subset \SI^n$ (resp. $\subset \bR P^n$) 
and the group of deck transformation 
$\bGamma$ acting on $\torb$ projectively.  
Let $\bv_{\tilde E}$ be the p-end vertex in $\SI^n$ (resp. $\bR P^n$) 
corresponding to a p-R-end $\tilde E$ of $\torb$. 
 We need the linking sphere $\SI^{n-1}_{\bv_{\tilde E}}$ of great segments at ${\bv_{\tilde E}}$. 
 The subgroup of projective automorphisms $\SLnp_{\bv_{\tilde E}}$ 
 (resp. $\PGL(n+1, \bR)_{\bv_{\tilde E}}$)
 of $\SI^n$ (resp. $\bR P^n$)  fixing $\bv_{\tilde E}$ acts on it 
 projectivized as $\SL_{\pm}(n, \bR)$ acting on $\SI^{n-1}_{\bv_{\tilde E}}$.

 We can associate an R-end $E$ of $\mathcal O$ with a p-R-end vertex 
 ${\bv_{\tilde E}} \in \SI^n$ (resp. $\in \bR P^n$). 
Thus, $\torb$ has only finitely many orbit types for p-end vertices under $\bGamma$. We recall
from the introduction. 
\begin{itemize} 
\item We denote $\bGamma_{\bv_{\tilde E}}$ by $\pi_1(\tilde E)$ also and is said to be the {\em p-end fundamental group} of $\tilde E$. 
\item  Two segments from $\bv_{\tilde E}$ are {\em equivalent} if they agree in a neighborhood of $\bv_{\tilde E}$.
 Given a p-R-end $\tilde E$ corresponding to $\bv_{\tilde E}$, 
 we denote by $R_{\bv_{\tilde E}}(\torb)$ the space of equivalence classes of lines from 
 ${\bv_{\tilde E}}$ in $\tilde{\mathcal{O}}$. This is a  convex domain  since $\torb$ is. 
\item For a subset $K$ of $U_1$, we denote by $R_{\bv_{\tilde E}}(K)$, the equivalence class space of lines from ${\bv_{\tilde E}}$ ending at $K$, 
 which is a convex set provided $K$ is. 
 \item We have $R_{\bv_{\tilde E}}(\torb), R_{\bv_{\tilde E}}(K) \subset \SI^{n-1}_{\bv_{\tilde E}}$. 
\item  We will denote this by $\tilde \Sigma_{\tilde E}$ as well, a universal cover of the end orbifold $\Sigma_{\tilde E}$. 
 \end{itemize} 
\end{definition} 

\begin{definition} \label{defn:endsurf}
Let $\orb$ denote a strongly tame convex real projective $n$-orbifold with radial ends or totally geodesic ends
with the universal cover $\torb \subset \SI^n$ (resp. $\subset \bR P^n$)
and the group of deck transformation 
$\bGamma$ acting on $\torb$ projectively.  
Given a totally geodesic end $E$, let $S_E$ denote 
the totally geodesic orbifold corresponding to $E$. 
A pseudo-end $\tilde E$ has a corresponding totally geodesic surface $\tilde S_{\tilde E}$, which is a convex domain 
in a hyperspace, covering $S_E = \tilde S_{\tilde E}/\bGamma_{\tilde E}$. 
We will use $S_{\tilde E}$ instead of $S_E$, which we call 
{\em ideal boundary orbifold}. $\tilde S_{\tilde E}$ is called the {\em ideal boundary} of 
$\tilde E$. 
\end{definition} 


A {\em properly convex p-R-end} is a p-R-end $\tilde E$ with a properly convex domain
$R_{\bv_{\tilde E}}(\torb)$ for the p-end vertex $\bv_{\tilde E}$. 
A {\em complete p-R-end} is a p-R-end with $R_{\bv_{\tilde E}}(\torb)$ projectively a complete affine space. 
A {\em properly convex R-end} is an end corresponding to a properly convex p-R-end.
A {\em complete R-end} is an end corresponding to a complete p-R-end.

\begin{proposition}\label{prop:endorbstr} 
Let $\orb$ be a strongly tame convex real projective orbifold with radial or totally geodesic ends. 
Let $\tilde E$ be a p-R-end of $\torb$. 
Then 
\begin{itemize}
\item $R_{\bv_{\tilde E}}(\torb)$ is also a convex domain in an affine subspace of $\SI^{n-1}_{\bv_{\tilde E}}$
for the associated vertex $\bv_{\tilde E}$ for $\tilde E$.  
\item The group $h(\pi_1(\tilde E))$ induces a group $\hat h(\pi_1(\tilde E))$ of  projective transformations of 
$\SI^{n-1}$ acting on $R_{\bv_{\tilde E}}(\torb)$ and $R_{\bv_{\tilde E}}(\torb)/\hat h(\pi_1(\tilde E))$ is diffeomorphic to 
the end orbifold $\Sigma_{\tilde E}$ 
and has the induced projective structure. 
\end{itemize} 
\end{proposition}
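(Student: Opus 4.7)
The plan is to realize $R_{\bv_{\tilde E}}(\torb)$ as the image, under radial projection at $\bv_{\tilde E}$, of a transversal orbifold to the radial foliation on a proper p-end-neighborhood $\tilde U$ of $\tilde E$. First I would fix such a $\tilde U$, whose existence is guaranteed by the definition of a p-R-end; it is foliated by projective geodesics terminating at $\bv_{\tilde E}$. Identifying $\torb$ with its embedded image in $\SI^n$ via $\dev$, the assignment that sends a radial segment through $\bv_{\tilde E}$ in $\torb$ to its oriented tangent direction at $\bv_{\tilde E}$ produces the natural map $\torb \to \SI^{n-1}_{\bv_{\tilde E}}$ whose image is $R_{\bv_{\tilde E}}(\torb)$.

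For the first item, I would show $R_{\bv_{\tilde E}}(\torb)$ lies in an affine patch of $\SI^{n-1}_{\bv_{\tilde E}}$ and is convex there. Since $\torb$ is convex, no great circle through $\bv_{\tilde E}$ is contained in $\torb$, so the directions of segments in $\torb$ emanating from $\bv_{\tilde E}$ omit at least one great $(n-2)$-sphere in $\SI^{n-1}_{\bv_{\tilde E}}$, placing $R_{\bv_{\tilde E}}(\torb)$ inside an affine subspace. Convexity would follow by a two-plane argument: given two segments $s_1, s_2$ in $\torb$ issuing from $\bv_{\tilde E}$, the $2$-dimensional projective subspace they span meets $\torb$ in a convex planar domain, and the set of directions at $\bv_{\tilde E}$ arising from this planar domain is a convex one-dimensional arc in $\SI^{n-1}_{\bv_{\tilde E}}$ joining the corresponding two points; ranging over all pairs shows convexity. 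Openness of $R_{\bv_{\tilde E}}(\torb)$ comes from the openness of $\torb$ in $\SI^n$, which gives an open cone of admissible directions around each realized one.

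For the second item, each $\gamma \in \bGamma_{\tilde E}$ fixes $\bv_{\tilde E}$ by definition of a p-R-end, so $h(\gamma)$ lies in the stabilizer $\SLnp_{\bv_{\tilde E}}$ and its projectivization on the linking sphere defines $\hat h(\gamma) \in \Aut(\SI^{n-1}_{\bv_{\tilde E}})$. Because $\bGamma_{\tilde E}$ preserves $\torb$ and maps radial segments to radial segments, the induced action preserves $R_{\bv_{\tilde E}}(\torb)$. To identify the quotient with $\Sigma_{\tilde E}$, I would take the transversal orbifold $\tilde \Sigma_{\tilde E} \subset \tilde U$ to the radial foliation which covers the end orbifold $\Sigma_{\tilde E}$; radial projection sends $\tilde \Sigma_{\tilde E}$ $\bGamma_{\tilde E}$-equivariantly onto $R_{\bv_{\tilde E}}(\torb)$, and since this projection is a local projective diffeomorphism, descending the radial projection modulo $\hat h(\pi_1(\tilde E))$ yields $\Sigma_{\tilde E}$ together with its real projective structure.

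The main obstacle I expect is that $R_{\bv_{\tilde E}}(\torb)$ is defined using all radial segments in $\torb$ at $\bv_{\tilde E}$, whereas the transversal $\tilde \Sigma_{\tilde E}$ sits inside the smaller set $\tilde U$; to conclude the radial projection is a genuine (not merely local) equivariant diffeomorphism I must argue that every maximal radial segment through $\bv_{\tilde E}$ in $\torb$ eventually meets $\tilde U$, so that $\tilde \Sigma_{\tilde E}$ captures every direction in $R_{\bv_{\tilde E}}(\torb)$ exactly once. Convexity of $\torb$ and the fact that $\tilde U$ is a $\bGamma_{\tilde E}$-invariant neighborhood of $\bv_{\tilde E}$ in the sense of a cone-like region should make this straightforward, after which proper discontinuity of the action on $R_{\bv_{\tilde E}}(\torb)$ transfers from the proper discontinuity on $\tilde \Sigma_{\tilde E}$ via the established equivariant diffeomorphism.
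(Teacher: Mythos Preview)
Your proposal is correct and is precisely the kind of argument the paper has in mind: the paper's entire proof consists of the single word ``Straightforward,'' and what you have written is a faithful unpacking of that word, including the two-plane convexity argument for the first item and the transversal-orbifold identification for the second. The obstacle you flag is real but, as you note, is handled by the fact that any maximal radial segment from $\bv_{\tilde E}$ in $\torb$ must enter every p-end-neighborhood of $\bv_{\tilde E}$, so the directions seen by $\tilde \Sigma_{\tilde E}$ exhaust $R_{\bv_{\tilde E}}(\torb)$.
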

\begin{proof}
Straightforward. 
\end{proof}

We remark also that 
for the orbifold with admissible ends and the infinite-index end fundamental group condition, the p-end vertices are infinitely many 
for each equivalence class of vertices. 


\subsubsection{Examples}\label{subsub:examples}

From hyperbolic manifolds, we obtain some examples of ends. 
Let $M$ be a complete hyperbolic manifolds with cusps. 
$M$ is a quotient space of the interior $\Omega$ of an ellipsoid in $\rpn$ or $\SI^n$
under the action of a discrete subgroup $\bGamma$ of $\Aut(\Omega)$. 
Then horoballs are p-end-neighborhoods of the horospherical ends. 

Suppose that $M$ has totally geodesic embedded surfaces $S_1,.., S_m$ homotopic to the ends.  
\begin{itemize}
\item We remove the outside of $S_j$s to obtain a properly convex 
real projective orbifold $M'$ with totally geodesic boundary.
\item Each $S_i$ corresponds to a disjoint union of totally geodesic domains $\bigcup_{j \in J} \tilde S_{i, j}$
for a collection $J$. For each of which $\tilde S_{i, j} \subset \Omega$, a group 
$\Gamma_{i,j}$ acts on it where $\tilde S_{i, j}/\Gamma_{i, j}$ is 
a closed orbifold projectively diffeomorphic to $S_i$. 
\item Then $\Gamma_{i, j}$ fixes a point $p_{i,j}$ outside the ellipsoid by taking the dual 
point of $\tilde S_{i, j}$ outside the ellipsoid. 
\item Hence, we form the cone 
$M_{i, j} := \{p_{i, j}\} \ast \tilde S_{i, j}$. 
\item We obtain the quotient 
$M_{i, j}^o/\Gamma_{i, j}$ of the interior and attach to 
$M'$ to obtain the examples of real projective manifolds with radial ends. 
\end{itemize}
(This orbifold is called the {\em hyper-ideal extension} of the hyperbolic manifolds as real projective manifolds.)



\begin{proposition}\label{prop:lensend}
Suppose that $M$ is a strongly tame properly convex real projective orbifold with radial or totally geodesic ends.
Suppose that 
\begin{itemize}
\item the holonomy group of each end fundamental group is generated by the homotopy classes of closed curves about singularities or
\item  has the holonomy fixing the end vertex with eigenvalues $1$ and 
\item an R-end $E$ has a compact totally geodesic 
properly convex hyperspace in a p-end-neighborhood and not containing the p-end vertex. 
\end{itemize} 
Then the end $E$ is of lens-type.
\end{proposition}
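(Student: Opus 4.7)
The strategy is to enhance the given totally geodesic compact properly convex hypersurface into a lens around it and then cone off at the p-end vertex $\bv_{\tilde E}$ to produce a lens-cone p-end-neighborhood of $E$. First I would lift the hypersurface to a $\bGamma_{\tilde E}$-invariant totally geodesic properly convex domain $\tilde D \subset \torb$ lying in a hyperplane $H$ with $\bv_{\tilde E} \not\in H$; because the quotient $\tilde D/\bGamma_{\tilde E}$ is compact, $\bGamma_{\tilde E}$ acts cocompactly on $\tilde D$, fixes $\bv_{\tilde E}$, and preserves $H$. The direct sum decomposition $\bR^{n+1} = \langle \hat\bv_{\tilde E}\rangle \oplus V_H$ is therefore $\bGamma_{\tilde E}$-invariant, so every $\gamma\in\bGamma_{\tilde E}$ splits into a transverse eigenvalue $\lambda_{\bv_{\tilde E}}(\gamma)$ at $\bv_{\tilde E}$ and a block $A_\gamma$ acting on $\tilde D$ cocompactly, to which Proposition \ref{prop:Ben2} applies after quotienting by the finite kernel of the restriction.

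Next I would use the extra hypothesis to pin down the transverse direction. In alternative (a), $\bGamma_{\tilde E}$ is generated by finite-order elements (torsion around codimension-two singular strata), whose eigenvalues are roots of unity; in alternative (b), $\lambda_{\bv_{\tilde E}}(\gamma)=1$ by assumption. Passing to a finite-index subgroup, I may assume $\lambda_{\bv_{\tilde E}}\equiv 1$ throughout $\bGamma_{\tilde E}$. Combining this with the divisible action on $\tilde D$, Benoist-type theory (via Proposition \ref{prop:Ben2}) supplies a $\bGamma_{\tilde E}$-invariant smooth strictly convex exhaustion $\varphi$ on $\tilde D^o$. Pulling $\varphi$ back along the radial projection from $\bv_{\tilde E}$ and adding a strictly convex radial coordinate, built as a logarithmic cross-ratio between $H$ and an auxiliary $\bGamma_{\tilde E}$-invariant supporting hyperplane through $\bv_{\tilde E}$, yields a $\bGamma_{\tilde E}$-invariant smooth strictly convex function $\Phi$ defined on the open cone
\[
C \;=\; \bigl(\{\bv_{\tilde E}\}\ast \tilde D^o\bigr) - \{\bv_{\tilde E}\}.
\]

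Finally I would take two regular level sets $\partial_-L := \Phi^{-1}(c_-)$ and $\partial_+L := \Phi^{-1}(c_+)$ on opposite radial sides of $\tilde D$ inside $C$. By strict convexity and $\bGamma_{\tilde E}$-invariance of $\Phi$, these are $\bGamma_{\tilde E}$-invariant smoothly strictly convex embedded $(n-1)$-cells cobounding a convex region $L$; checking that the radial segments from $\bv_{\tilde E}$ meet each of $\partial_\pm L$ exactly once shows $L$ is a lens in the sense of the introduction. Then $\{\bv_{\tilde E}\}\ast L - \{\bv_{\tilde E}\}$ is a lens-cone that descends to a lens-cone p-end-neighborhood of $E$ in $\orb$, proving $E$ is of lens type.

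The main obstacle is the construction of $\Phi$, specifically arranging its level sets to be strictly convex transverse to the radial foliation. Because the radial direction has trivial eigenvalue, no strict convexity along radii can be extracted from the group action; it must be installed by hand through the radial coordinate, while the transverse strict convexity inherited from $\varphi$ must be balanced against the radial term so that the Hessian of $\Phi$ stays positive definite in every direction on $C$ uniformly near $\tilde D$. This balancing, together with smoothness of the level sets up to $\partial \tilde D$, is where the reduction $\lambda_{\bv_{\tilde E}}\equiv 1$ is essential—any nontrivial radial scaling would either destroy the $\bGamma_{\tilde E}$-invariance of $\Phi$ or force the level sets to shear along $\tilde D$ and lose strict convexity.
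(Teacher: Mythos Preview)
Your opening reductions are right and match the paper: the torsion-generated hypothesis forces $\lambda_{\bv_{\tilde E}}\equiv 1$ on generators and hence on all of $\bGamma_{\tilde E}$ (no passage to finite index is needed, since $\lambda_{\bv_{\tilde E}}$ is a positive real character), and the invariant splitting $\bR^{n+1}=\langle\hat\bv_{\tilde E}\rangle\oplus V_H$ then makes the action block-diagonal. The gap is in the lens construction itself. Your domain $C=(\{\bv_{\tilde E}\}\ast\tilde D^o)\setminus\{\bv_{\tilde E}\}$ lies entirely on the $\bv_{\tilde E}$-side of the hyperplane $H\supset\tilde D$; the disk $\tilde D^o$ is a boundary face of $C$, not an interior slice, so there is no ``opposite radial side of $\tilde D$ inside $C$'' on which to place $\partial_+L$. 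Even setting that aside, two level sets of a single strictly convex $\Phi$ on $C$ are \emph{nested} and curve the same way; the slab between them is convex-minus-convex, hence not a convex lens in the sense of the definition. Two subsidiary problems: Proposition~\ref{prop:Ben2} is a join-decomposition statement and does not produce an invariant strictly convex exhaustion $\varphi$ on $\tilde D^o$; and the ``auxiliary $\bGamma_{\tilde E}$-invariant supporting hyperplane through $\bv_{\tilde E}$'' needed for your cross-ratio coordinate generally fails to exist when $\bGamma_{\tilde E}$ acts irreducibly on $\tilde D$---and a log-cross-ratio is in any case affine along each radial line, not strictly convex.

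The paper's argument is both shorter and avoids the balancing you flag as the main obstacle. Once $\lambda_{\bv_{\tilde E}}\equiv 1$, pass to the affine chart with $H$ at infinity: the cone $C$ becomes a properly convex open cone in $\bR^n$ with vertex at the origin on which $\bGamma_{\tilde E}$ acts \emph{linearly with determinant $\pm 1$}. The Koszul--Vinberg characteristic function of that linear cone is then automatically $\bGamma_{\tilde E}$-invariant (no hand-built $\varphi$, no radial term), and one level set already furnishes the smooth strictly convex lower boundary $\partial_-L$. The missing idea in your plan is that $\partial_+L$ cannot be found inside $C$ at all: the paper obtains it by reflecting $\partial_-L$ across the hyperplane of $\tilde\Sigma$---the reflection commutes with the block-diagonal $\bGamma_{\tilde E}$-action---and then applying a dilatation so the reflected hypersurface lands inside $\torb$ on the far side of $\tilde\Sigma$. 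The two hypersurfaces now curve toward each other across $\tilde\Sigma$ and bound a genuine convex lens.
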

\begin{proof} 
Let $\tilde M$ be the universal cover of $M$ in $\SI^n$. It will be sufficient to prove for this case. 
Let $E$ be an R-end of $M$ with a compact totally geodesic subspace $\Sigma$ in a p-end-neighborhood. Then 
a p-end-neighborhood $U$ of $\tilde E$ corresponding to $E$
contains the universal cover $\tilde \Sigma$ of $\Sigma$. 

Since the end fundamental group $\Gamma_{\tilde E}$
is generated by closed curves about singularities. Then since 
the singularities are of finite order, the eigenvalues of the generators  corresponding to the p-end vertex 
$\bv_{\tilde E}$ equal $1$
and hence every element of the end fundamental group has $1$ as the eigenvalue at
$\bv_{\tilde E}$.
Now assume that the holonomy of the elements of the end fundamental group,
fixes the p-end vertex with eigenvalues equal to $1$. 

Then $U$ can be chosen to be 
the open cone over the totally geodesic domain with vertex $\bv_{\tilde E}$. 
projectively diffeomorphic 
to the interior of a properly convex cone in 
an affine subspace $A^n$ and the end fundamental group acts on 
it as a discrete linear group of determinant $1$. 
The theory of convex cones applies, and using the level sets of 
the Koszul-Vinberg function we obtain 
a smooth convex one-sided neighborhood in $U$ 
(see Lemma 6.5 and 6.6 of Goldman \cite{wmgnote}).
Also, the outer one-sided neighborhood can be obtained
by a reflection about the plane containing $\tilde \Sigma$ and the p-end vertex
and some dilatation action so that it is in $\torb$. 
\end{proof}

Let $S_{3,3,3}$ denote the $2$-orbifold with base space homeomorphic to a $2$-sphere and 
with three cone-points of order $3$. 
\begin{proposition} \label{prop:lensauto}
Let $\mathcal{O}$ be a  convex real projective $3$-orbifold with radial ends where the end orbifolds each of which is homeomorphic 
to a sphere $S_{3,3,3}$ or a disk with three silvered edges and three vertices of orders $3, 3, 3$. 
Then the orbifold has only lens-shaped R-ends or horospherical R-ends. 
\end{proposition}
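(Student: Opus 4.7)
\emph{Proof proposal.} The plan is to apply the trichotomy of Proposition~\ref{prop:projconv} to the convex set $\tilde\Sigma_{\tilde E}$ associated to an arbitrary p-R-end $\tilde E$ lifting an R-end $E$, and to handle the three cases separately. Since $\Sigma_{\tilde E}$, being $S_{3,3,3}$ or its reflection quotient, has Euler characteristic zero, $\bGamma_{\tilde E}$ is virtually $\bZ^2$ and is generated by finite-order elements---either the three order-three singularity rotations satisfying $abc = 1$, or three reflections with $(r_i r_{i+1})^3 = 1$. In particular the abelianization of $\bGamma_{\tilde E}$ is finite, so the eigenvalue character $\lambda_{\bv_{\tilde E}} : \bGamma_{\tilde E} \to \bR^\times$ has finite image and $|\lambda_{\bv_{\tilde E}}(g)| = 1$ for every $g \in \bGamma_{\tilde E}$. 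If $\tilde\Sigma_{\tilde E}$ is complete affine, Theorem~\ref{thm:mainaffine} immediately identifies $\tilde E$ as horospherical.

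Suppose next that $\tilde\Sigma_{\tilde E}$ is properly convex. Because $\bGamma_{\tilde E}$ is virtually abelian, it contains an infinite abelian normal subgroup and so fails virtual center-freeness in the sense of Proposition~\ref{prop:Benoist}; Proposition~\ref{prop:Ben2} therefore forces $\clo(\tilde\Sigma_{\tilde E})$ to split as a nontrivial strict join $K_1 * \cdots * K_{l_0}$ with $l_0 \geq 2$. In the $2$-dimensional linking sphere $\SI^{n-1}_{\bv_{\tilde E}}$ the only options are a triangle ($l_0 = 3$) or a cone over a segment ($l_0 = 2$). The segment-cone has only two distinguished boundary vertices and cannot equivariantly match the three order-three singular points (or corner reflectors) of $\Sigma_{\tilde E}$, so $\tilde\Sigma_{\tilde E}$ is a triangle and $\tilde E$ is reducible. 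To apply Theorem~\ref{thm:secondmain}, I verify the uniform middle-eigenvalue condition of Definition~\ref{defn:umec} by simultaneously diagonalizing a finite-index abelian subgroup of $\bGamma_{\tilde E}$ on $\bR^4$ in the basis supplied by the three vertices of the triangle together with $\bv_{\tilde E}$; then every $g$ acts as $\mathrm{diag}(\mu_1(g), \mu_2(g), \mu_3(g), \lambda_{\bv_{\tilde E}}(g))$, and since $|\lambda_{\bv_{\tilde E}}(g)| = 1$ while the Hilbert translation length $\ell_K(g)$ on the $2$-simplex is comparable to $\max_{i,j}|\log(\mu_i/\mu_j)|$, the quantity $\log(\bar\lambda(g)/\lambda_{\bv_{\tilde E}}(g))$ is uniformly comparable to $\ell_K(g)$. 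The reducibility clause of Theorem~\ref{thm:secondmain} then upgrades generalized lens-type to lens-type.

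Finally, if $\tilde E$ is NPCC, Theorem~\ref{thm:thirdmain} under the weakly uniform middle-eigenvalue condition would classify $\tilde E$ as quasi-joined, but the quasi-joined construction requires a nontrivial hyperbolic factor in the Benoist decomposition of $\bGamma_{\tilde E}$, which a virtually $\bZ^2$ group does not possess; alternatively, a direct inspection of the foliation of $\tilde\Sigma_{\tilde E}$ by complete affine $1$-spaces furnished by Proposition~\ref{prop:projconv} reveals that no cocompact virtually $\bZ^2$ action with three distinguished singular fixed directions is compatible with such a foliation, so the NPCC case is excluded. The principal obstacle in this plan is the verification of the uniform middle-eigenvalue condition in the properly convex case, requiring the explicit triangular diagonalization, the identity $|\lambda_{\bv_{\tilde E}}| = 1$, and the Hilbert translation length formula on a $2$-simplex to assemble the two-sided estimate of Definition~\ref{defn:umec}; the NPCC exclusion itself then reduces to a structural incompatibility between virtually abelian crystallographic groups of rank two and the foliation structure of $2$-dimensional NPCC ends.
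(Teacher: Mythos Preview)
Your approach has a genuine gap: both Theorem~\ref{thm:secondmain} and Theorem~\ref{thm:thirdmain} require that \emph{the holonomy group of the whole orbifold $\mathcal{O}$ is strongly irreducible}. Proposition~\ref{prop:lensauto} carries no such hypothesis, and nothing in your argument verifies it. The reducibility clause of Theorem~\ref{thm:secondmain} concerns reducibility of the \emph{end} holonomy, not of the global holonomy; the global strong-irreducibility assumption is still needed (it is used, for instance, in Theorem~\ref{thm:redtot} via Lemma~\ref{lem:joinred} to rule out the case that $\clo(\torb)$ itself is a strict join). So the invocation of the main theorems is not licensed by the stated hypotheses. Your NPCC exclusion is also shaky: the claim that a quasi-joined end ``requires a nontrivial hyperbolic factor, which a virtually $\bZ^2$ group does not possess'' is false as written, since $\bZ$ is Gromov-hyperbolic; and the alternative ``direct inspection'' of the foliation is not an argument yet.

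The paper's proof avoids the global irreducibility hypothesis entirely by staying elementary. It passes to the torus cover $T^2 \to S_{3,3,3}$ and uses the classification of convex projective tori: $\tilde\Sigma_{\tilde E}$ is a complete affine plane, the interior of a triangle, or a half-plane. The half-plane carries a canonical foliation by parallel lines which any automorphism must preserve, but the order-$3$ rotation about a cone point cannot preserve a direction in $\SI^1$; this kills the NPCC case in one line. In the triangle case one picks an infinite-order element $g'$ in the rank-two abelian subgroup, observes that its three eigenvalues on $\SI^2_{\bv_{\tilde E}}$ are not all equal, and uses $\lambda_{\bv_{\tilde E}}(g')=1$ together with proper convexity to see that $g'$ is diagonalizable on $\bR^4$ with an invariant hyperplane $P$ disjoint from $\bv_{\tilde E}$. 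Commutativity forces all of $A$ (and then, by the order-$3$ symmetry, all of $\bGamma_{\tilde E}$) to act on $P$, so the end has a compact totally geodesic hypersurface transverse to the radial foliation. Proposition~\ref{prop:lensend} (Koszul--Vinberg level sets) then produces the lens directly, with no appeal to the global structure of $\bGamma$.

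If you want to salvage your route, replace the appeal to Theorem~\ref{thm:secondmain} by this direct diagonalization-plus-Proposition~\ref{prop:lensend} argument in the triangle case, and replace the NPCC discussion by the order-$3$ symmetry obstruction to a preserved line-foliation.
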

\begin{proof}
Again, it is sufficient to prove this for the case $\torb \subset \SI^n$. 
Let $\tilde E$ be a p-R-end of type $S_{3, 3, 3}$ for $\torb$. 
It is sufficient to consider only $S_{3,3,3}$ since it double-covers the disk orbifold. 
Since $\bGamma_{\tilde E}$ is generated by finite order elements fixing a p-end vertex $\bv_{\tilde E}$,
every holonomy element has eigenvalue equal to $1$ at $\bv_{\tilde E}$.
Take a finite-index free abelian group $A$ of rank two.
Since $\Sigma_E$ is convex, 
a convex projective torus $T^2$ covers $\Sigma_E$ finitely.
Therefore, $\tilde \Sigma_{\tilde E}$ is projectively diffeomorphic either to a complete affine space or 
to the interior of a properly convex triangle or to a half-space by the classification of 
convex tori found in many places including \cite{wmgnote} and \cite{BenNil} and 
by Proposition \ref{prop:projconv}. 
Since there exists a holonomy automorphism of order $3$ fixing a point of 
$\tilde \Sigma_{\tilde E}$, 
it cannot be a quotient of a half-space with a distinguished foliation by lines.
Thus, the end orbifold admits a complete affine structure or is a quotient of a properly convex triangle

 If $\Sigma_{\tilde E}$ has a complete affine structure, 
 we have a horospherical end for $E$ by Theorem \ref{thm:comphoro}.
Suppose that $\Sigma_{\tilde E}$ has a properly convex triangle as its universal cover. 
$A$ acts with an element $g'$ with an eigenvalue $>1$ and an eigenvalue $<1$ as a transformation 
in $\SL_\pm(3, \bR)$ the group of projective automorphisms at $\SI^2_{\bv}$.
$g'$ fixes $v_1$ and $v_2$ other than $\bv_{\tilde E}$
in directions of the vertices of the triangle in the cone. 
Since the corresponding eigenvalue at $\bv_{\tilde E}$ is $1$ and $g'$ acts on 
a properly convex compact domain, 
$g'$ has four fixed points and 
an invariant subspace $P$ disjoint from $\bv_{\tilde E}$. 
That is, $g'$ is diagonalizable. 
Since elements of $A$ commute with $g'$, 
so does every other $g \in A$. 
The end fundamental group acts on $P$ 
as well. We have a totally geodesic R-end and by Proposition \ref{prop:lensend}, the end is lens-shaped. 
(See also Theorem \ref{thm:redtot}.) 
\end{proof}


\begin{example}[(Lee's example)] \label{exmp:Lee}
Consider the Coxeter orbifold $\hat P$ with the underlying space on a polyhedron $P$ 
with the combinatorics of a cube with all sides mirrored and
all edges given order $3$ but vertices removed. 
By the Mostow-Prasad rigidity and the Andreev theorem, 
the orbifold has a unique complete hyperbolic structure. 
There exists a six-dimensional space of real projective structures on it 
as found in \cite{CHL} where one has a projectively fixed fundamental domain 
in the universal cover.

There are eight ideal vertices of $P$ corresponding to eight ends of $\hat P$. 
Each end orbifold is a $2$-orbifold based on a triangle with edges mirrored 
and vertex orders are all $3$. Thus, each end has a neighborhood homeomorphic 
to the $2$-orbifold multiplied by $(0, 1)$.
We can characterize them by a real-valued invariant.  
Their invariants are related when we are working on the restricted deformation space. 
(They might be independent  in the full deformation space as M. Davis and R. Green observed. )

Then this end can be horospherical or 
be a radial lens type with a totally geodesic realization end orbifold by Proposition \ref{prop:lensauto}.
When $P$ is hyperbolic, the ends are horospherical as 
$P$ has a complete hyperbolic structure. 

Experimentations also suggest that we realize totally geodesic R-ends after deformations. 
This applies to S. Tillman's example. (See my other paper on the mathematics archive \cite{conv} for details)
\end{example}


The following construction is called ``bending'' and was investigated by Johnson and Millson
\cite{JM}. 

\begin{example}\label{exmp:bending} 
Let $\orb$ have the usual assumptions. 
Let $E$ be a totally geodesic R-end with a p-R-end $\tilde E$.
Let  the associated orbifold 
$\Sigma_{E}$ for $E$ of $\orb$ be a closed $2$-orbifold and 
let $c$ be a  simple closed geodesic in $\Sigma_{\tilde E}$. 
Suppose that $E$ has an end-neighborhood $U$ in $\orb$ diffeomorphic to $\Sigma_{E} \times (0,1)$
with totally geodesic $\Bd U$ diffeomorphic to $\Sigma_E$.
Let $\tilde U$ be a p-end-neighborhood in $\torb$ corresponding to $\tilde E$
bounded by $\tilde \Sigma_{\tilde E}$ covering $\Sigma_E$. 

Now a lift $\tilde c$ of $c$ is in an embedded disk $A'$, covering 
an annulus $A$ diffeomorphic to $c \times (0, 1)$, foliated by lines from $\bv_{\tilde E}$.
Let $g_c$ be the deck transformation corresponding to $\tilde c$ and $c$. 
Then the holonomy $g_c$ is conjugate 
to a diagonal matrix with entries $\lambda,  \lambda^{-1}, 1, 1, \lambda > 1$
where the last $1$ corresponds to the vertex $\bv$.  
We take an element $k_b$ of $\SLf$ of form in this system of coordinates
\begin{equation}\label{eqn:bendingm} 
\left(
\begin{array}{cccc}
1           &       0              & 0   & 0  \\
 0          &       1              & 0  & 0 \\ 
 0           &      0              & 1  & 0 \\ 
 0           &      0               & b & 1   
\end{array}
\right)
\end{equation}
where $b \in \bR$. 
$k_b$ commutes with $g_c$. 
Let us just work on the end $E$. 
We can ``bend'' $E$ by $k_b$: 

Then we cut $U$ by $A$ and we obtain two copies $A_1$ and $A_2$ corresponding to a single component 
by completing $U - A$. We take an ambient real projective manifold $U'$ containing 
the completion.  We can find neighborhoods $N_1$ and $N_2$ of $A_1$ and $A_2$ in $U$
diffeomorphic by a projective map $\hat k_b$ induced by $k_b$.

We take a disjoint union $(U - A) \coprod N_1 \coprod N_2$ and 
quotient it by identifying elements of $N_1$ with elements near $A_1$ in $U- A $ by the identity map
and elements of $N_2$ with elements near $A_2$ in $U-  A$ by the identity also.
We then glue back $N_1$ and $N_2$ by $\hat k_b$
the real projective diffeomorphism of a neighborhood of $N_1$ to that of $N_2$. 

For sufficiently small $b$, we see that the end is still of lens type
and it is not a totally geodesic R-end. (This follows since the condition of being 
a generalized-lens type R-end is an open condition. 
See Theorem \ref{thm:qFuch}.)

Since $k_b$ fixes a subspace of dimension $2$ containing $\bv_{\tilde E}$ and the geodesic fixed by $g_c$, 
the totally geodesic subspace is bent. 
We see that $b> 0$, we obtain a boundary of $E$ bent in a positive manner. 
The deformed holonomy group acts on a convex domain obtained by bendings of these types everywhere. 

For the same $c$, 
let $k_s$ be given by 
\begin{equation}\label{eqn:bendingm2} 
\left(
\begin{array}{cccc}
s           &       0              & 0   & 0  \\
 0          &       s              & 0  & 0 \\ 
 0           &      0              & s  & 0 \\ 
 0           &       0            & 0 & 1/s^3   
\end{array}
\right)
\end{equation}
where $s \in \bR_+$. 
These give us bendings of second type. (We talked about this in \cite{conv}.) 
For $s$ sufficiently close to $1$, the property of being of lens-type is preserved 
and being a radial totally geodesic end. 
(However, these will be understood by cohomology.)

If $s \lambda < 1$ for the maximal eigenvalue $\lambda$ of a closed curve $c_1$
meeting $c$ odd number of times, we have that the holonomy along $c_1$ has the attacking 
fixed point at $\bv_{\tilde E}$. This implies that we no longer have lens-type ends if we have 
started with a lens-shaped end.
.
\end{example} 








\section{The duality of real projective orbifolds}\label{sec:duality}

\subsection{The duality} 
We starts from linear duality. Let $\Gamma$ be a group of linear transformations $\GL(n+1, \bR)$. 
Let $\Gamma^*$ be the {\em affine dual group} defined by $\{g^{\ast -1}| g \in \Gamma \}$. 
Suppose that $\Gamma$ acts on a properly convex cone $C$ in $\bR^{n+1}$ with the vertex $O$.

An open convex cone  $C^*$ in $\bR^{n+1, *}$  is {\em dual} to an open convex cone $C $ in $\bR^{n+1}$  if 
$C^* \subset \bR^{n+1 \ast}$ is the set of linear transformations taking positive values on $\clo(C)-\{O\}$.
$C^*$ is a cone with vertex as the origin again. Note $(C^*)^* = C$. 

Now $\Gamma^*$ will acts on $C^*$.
A {\em central dilatational extension} $\Gamma'$ of $\Gamma$ by $\bZ$ is given by adding a dilatation by a scalar 
$s \in \bR_+ -\{1\}$ for the set $\bR_+$ of positive real numbers. 
The dual $\Gamma^{\prime \ast}$ of $\Gamma'$ is a central dilatation extension of $\Gamma^*$. 
 Also, if $\Gamma'$ is cocompact on $C$ if and only if $\Gamma^{\prime *}$ is on $C^*$. 
 (See \cite{wmgnote} for details.)

 Given a subgroup $\Gamma$ in $\PGL(n+1, \bR)$, a {\em lift} in $\GL(n+1, \bR)$ is any subgroup that maps to $\Gamma$ injectively.
 Given a subgroup $\Gamma$ in $\Pgl$, the dual group $\Gamma^*$ is the image in $\Pgl$ of the dual of 
 any linear lift of $\Gamma$. 

A properly convex open domain $\Omega$ in $P(\bR^{n+1})$ is {\em dual} to a properly convex open domain
$\Omega^*$ in $P(\bR^{n+1, \ast})$ if $\Omega$ corresponds to an open convex cone $C$ 
and $\Omega^*$ to its dual $C^*$. We say that $\Omega^*$ is dual to $\Omega$. 
We also have $(\Omega^*)^* = \Omega$ and $\Omega$ is properly convex if and only if so is $\Omega^*$. 

We call $\Gamma$ a {\em divisible group} if a central dilatational extension acts cocompactly on $C$.
$\Gamma$ is divisible if and only if so is $\Gamma^*$. 

Recall $\SI^n := {\mathcal{S}}(\bR^{n+1})$. We define $\SI^{n\ast} := {\mathcal{S}}(\bR^{n+1 \ast})$.

For an open properly convex subset $\Omega$ in $\SI^{n}$, the dual domain is defined as the quotient 
of the dual cone of the cone corresponding to $C_\Omega$ in $\SI^{n\ast}$. The dual set is also open and properly convex
but the dimension may not change.  
Again, we have $(\Omega^*)^* =\Omega$. 

Given a properly convex domain $\Omega$ in $\SI^n$ (resp. $\bR P^n$), 
we define the {\em augmented boundary} of $\Omega$
\[\Bd^{\Ag} \Omega  := \{ (x, h)| x \in \Bd \Omega, h \hbox{ is a supporting hyperplane of } \Omega, 
h \ni x \} .\] 
Each $x \in \Bd \Omega$ has at least one supporting hyperspace, 
a hyperspace is an element of $\bR P^{n \ast}$ since it is represented as a linear functional,   
and an element of $\bR P^n$ represent a hyperspace in $\bR P^{n \ast}$. 

\begin{theorem} \label{thm:Serre} 
Let $A$ be a subset $\Bd \Omega$. Let $A':= \Pi_{\Ag}^{-1}(A)$ be the subset $\Bd^{\Ag}(A)$. 
Then $\Pi_\Ag| A': A' \ra A$ is a Serre fibration. 
\end{theorem}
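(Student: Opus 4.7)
The plan is to exhibit $\Pi_\Ag | A'$ as a proper continuous surjection with compact convex fibers in $\bR P^{n\ast}$, and then establish the homotopy lifting property via fiberwise convex combinations. Viewing $A' \subset A \times \bR P^{n\ast}$ as a closed subspace, the first step would be to verify that the fiber $\Pi_\Ag^{-1}(x)$ above $x \in A$ is always a non-empty compact convex subset of $\bR P^{n\ast}$: nonemptiness is already noted in the paragraph preceding the theorem; compactness follows because the condition ``$h$ contains $x$ and has $\clo(\Omega)$ on one side'' is closed in the compact dual projective space; and convexity follows because any convex combination of non-negative linear functionals that vanish on $x$ is again non-negative and vanishes on $x$. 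Properness and continuity are then immediate from compactness of the ambient projective spaces.

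The second step would be to build continuous local sections. For each $(x_0, h_0) \in A'$, one can choose an affine chart on $\bR P^{n\ast}$ around $h_0$ in which the fibers become ordinary convex subsets of Euclidean space, normalized so that the supporting functional has fixed value on a chosen transversal to $\Omega$. In this normalization, proper convexity of $\Omega$ gives a locally bounded description of the supporting cone, and a Steiner-point (or barycenter) construction on the fibers, modified to pass through the prescribed value $h_0$ at $x_0$, yields a continuous local section $s \colon U \to A'$ of $\Pi_\Ag$ over some neighborhood $U \subset A$ of $x_0$ with $s(x_0) = (x_0, h_0)$.

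With local sections in hand, the homotopy lifting property is handled by cubical subdivision. Given $F \colon Y \times I \to A$ with $Y$ a disk (or cube) and a partial lift $\tilde F_0 \colon Y \times \{0\} \to A'$, subdivide $Y \times I$ finely enough that $F$ sends each small subcube into the domain of some local section as above. On each subcube one obtains a candidate lift, and one then patches these using a partition of unity on $Y \times I$ and fiberwise convex combinations in $\bR P^{n\ast}$: because each fiber of $\Pi_\Ag$ is convex, any convex combination of candidate lifts over a common base point again lies in the fiber, so the patched map lands in $A'$ and projects to $F$ by construction. Standard induction on the skeleta of a subdivision of $Y \times I$, anchored by $\tilde F_0$ on the bottom face, produces the extension $\tilde F$.

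The main obstacle is the second step, the construction of continuous local sections. The set-valued map $x \mapsto \Pi_\Ag^{-1}(x)$ is only upper semicontinuous: at a non-smooth boundary point of $\Omega$ the fiber is a higher-dimensional convex body, while at nearby smooth boundary points it collapses to a single hyperplane, so naive selection schemes (e.g.\ Michael's theorem) do not directly apply. The resolution uses proper convexity of $\Omega$ in an essential way, together with the duality symmetry between $\Bd^{\Ag}\Omega$ and $\Bd^{\Ag}\Omega^{\ast}$ discussed earlier in this section, to control how the normal cones degenerate as one approaches non-smooth points, thereby ensuring the local sections can be chosen compatibly and continuously through any prescribed data $(x_0, h_0)$.
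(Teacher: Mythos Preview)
The paper's proof is much shorter and takes a different tack: it fixes a Euclidean metric on an affine chart containing $\clo(\Omega)$, identifies each supporting hyperplane with its unit outward normal so that the fiber $\Pi_{\Ag}^{-1}(x)$ becomes a properly convex compact subset of the unit sphere, and then produces a \emph{global} continuous section $A \to A'$ by taking the center of mass of each fiber. With this section serving as an origin in every fiber, the Serre property is declared clear (via fiberwise straight-line homotopy toward the section). There is no cubical subdivision, no patching of local sections, and no appeal to duality.

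Your final paragraph is the gap. You correctly observe that the fiber map $x \mapsto \Pi_{\Ag}^{-1}(x)$ is only upper semicontinuous---at a vertex of a polytope the normal cone jumps up in dimension---so Michael's selection theorem does not apply, and a barycenter or Steiner-point construction through prescribed data $(x_0,h_0)$ is not obviously continuous. But saying that ``proper convexity and the duality $\Bd^{\Ag}\Omega \leftrightarrow \Bd^{\Ag}\Omega^{\ast}$ control how the normal cones degenerate'' is not an argument: duality merely swaps the roles of $x$ and $h$, so the same semicontinuity difficulty reappears on the other side, and you never say what the control is or how it manufactures the continuous local sections your step~2 requires. You must either make that step precise or bypass section-building altogether. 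One clean bypass: embed $A'$ into $\bR^n\setminus\clo(\Omega)$ via $(x,v)\mapsto x+v$, identifying it with the distance-$1$ level set of the nearest-point retraction $\rho\colon \bR^n\setminus\clo(\Omega)\to\Bd\Omega$; then $\rho$ restricted to this level set equals $\Pi_{\Ag}$, and $\rho$ is a Hurewicz fibration because it has the obvious global section $\Bd\Omega\hookrightarrow\bR^n\setminus\Omega$ together with the fiberwise straight-line deformation retraction $(y,t)\mapsto(1-t)y+t\rho(y)$ onto it.
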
 
\begin{proof}
We take a Euclidean metric on an affine space containing $\clo(\Omega)$. 
The supporting hyperplanes can be identified with unit vectors. 
Each fiber $\Pi_\Ag^{-1}(x)$ is 
a properly convex compact domain in a sphere of unit vectors through $x$. 
We find a continuous unit vector field to $\Bd \Omega$ by taking the center of mass of each fiber with respect 
to the Euclidean metric. This gives a local coordinate system on each fiber by giving origin and each fiber is a convex domain 
containing the origin. Then the Serre-fibration property is clear now.
\end{proof}

\begin{remark} 
We notice that for open properly convex domains $\Omega_1$ and $\Omega_2$ in $\SI^n$ 
(resp. in $\bR P^n$) we have 
\begin{equation}\label{eqn:dualinc}
\Omega_1 \subset \Omega_2 \hbox{ if and only if } \Omega_2^* \subset \Omega_1^*  
\end{equation}
\end{remark}

\begin{lemma}\label{lem:predual}
Let $\Omega^*$ be the dual of a properly convex domain $\Omega$ in $\SI^n$ {\rm (}resp. $\bR P^n${\rm ).} 
Then 
\begin{itemize}
\item $\Bd \Omega$ is $C^1$ at a point $p$ if and only $\Bd \Omega^*$  is $C^1$ at the corresponding point. 
\item $\mathcal{D}$ sends the pair of $p$ and the associated supporting hyperplanes of $\Omega$
 to the pairs of the totally geodesic hyperplane containing $D$ and points of $D$.  
\end{itemize}
\end{lemma}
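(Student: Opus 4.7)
The plan is to work in the cone picture set up at the start of Section \ref{sec:duality}. Lift $\Omega \subset \SI^n$ to an open properly convex cone $C \subset \bR^{n+1}$, so that $\Omega^*$ comes from the dual cone $C^* = \{f \in \bR^{n+1\ast} \mid f(v) > 0 \text{ for all } v \in \clo(C) - \{O\}\}$. A boundary point $p \in \Bd \Omega$ then lifts to a ray $\bR_+ v \subset \Bd C$, and a supporting hyperplane to $\Omega$ at $p$ is (the projectivization of) a nonzero $f \in \clo(C^*)$ with $f(v) = 0$; equivalently, $f$ represents a boundary point of $\Omega^*$ lying on the dual hyperplane $p^* \subset \bR P^{n\ast}$ (resp.\ $\SI^{n\ast}$).

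Using this, I will define $\mathcal{D} : \Bd^{\Ag} \Omega \to \Bd^{\Ag} \Omega^*$ on pairs by $(p, H) \mapsto (H^*, p^*)$, where $H^*$ is the point dual to $H$ and $p^*$ is the hyperplane dual to $p$. The cone description makes it immediate that $H^* \in \Bd \Omega^*$ and that $p^*$ is a supporting hyperplane of $\Omega^*$ at $H^*$, so $\mathcal{D}$ is well defined; since $(C^*)^* = C$ under the canonical isomorphism $\bR^{n+1} \cong \bR^{n+1\ast\ast}$, the map $\mathcal{D}$ is an involution and in particular a homeomorphism. The second bulleted assertion then reads off directly: the set of supporting hyperplanes of $\Omega$ at $p$ is mapped by $\mathcal{D}$ to the pairs $(q, p^*)$ with $q$ running through the face $D := p^* \cap \Bd \Omega^*$, so $\mathcal{D}$ identifies the fiber $\Pi_{\Ag}^{-1}(p)$ above $p$ with the face $D$ equipped with the single common supporting hyperplane $p^*$ of $\Omega^*$ along $D$.

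For the first bullet, I will use that, under this correspondence, the fiber $\Pi_{\Ag}^{-1}(p) \cap \Bd^{\Ag}\Omega$ is a single point iff $D$ is a single point, i.e.\ iff the hyperplane $p^*$ meets $\Bd \Omega^*$ at only one point $H^*$. Dually, the fiber of $\Pi_{\Ag}$ over $H^*$ in $\Bd^{\Ag}\Omega^*$ is a single point iff the supporting hyperplane $H$ of $\Omega$ at $p$ touches $\Bd \Omega$ only at $p$, i.e.\ iff $H \cap \Bd \Omega = \{p\}$. The equivalence of $C^1$-ness at $p$ and at the corresponding point $H^*$ then reduces to showing that, for points with a unique supporting hyperplane, uniqueness of the supporting hyperplane propagates across the duality: one fiber collapses to a point exactly when the corresponding face in the dual collapses to a point. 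I will argue this by translating ``$C^1$ at $p$'' in the sense of the paper into the statement that the local supporting-hyperplane assignment $q \mapsto H_q$ is single-valued in a neighborhood and applying Theorem~\ref{thm:Serre} to pass between the two total spaces $\Bd^{\Ag}\Omega$ and $\Bd^{\Ag}\Omega^*$, using that $\mathcal{D}$ is a fiber-swapping homeomorphism.

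The main obstacle I expect is the bookkeeping between the three points of view (cone, affine chart, and projective sphere) when checking that the upper semicontinuous assignment of supporting hyperplanes is actually continuous at a $C^1$ point, so that uniqueness of the supporting hyperplane at $p$ does give genuine $C^1$ regularity of $\Bd \Omega$ near $p$, and symmetrically for $\Bd \Omega^*$ near $H^*$. Once this is in place, the equivalence is just the statement that $\mathcal{D}$ is an involutive homeomorphism that exchanges the two base-projections $\Pi_{\Ag}$.
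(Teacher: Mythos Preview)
The paper's own proof is literally the one line ``These are straightforward.'' So you have done vastly more work than the paper, and your setup---lifting to cones, defining $\mathcal{D}$ on $\Bd^{\Ag}\Omega$ by $(p,H)\mapsto(H^*,p^*)$, and checking it is an involutive homeomorphism---is exactly the machinery the paper spells out immediately afterward in Proposition~\ref{prop:duality}. Your treatment of the second bullet is correct and matches that intent.

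For the first bullet, however, there is a genuine gap in your argument (and arguably an imprecision in the lemma statement itself). You correctly translate: $\Bd\Omega$ is $C^1$ at $p$ iff the fiber $\Pi_{\Ag}^{-1}(p)$ is a single point, which under $\mathcal{D}$ says the dual face $D = p^* \cap \Bd\Omega^*$ is the single point $\{H^*\}$. But that is \emph{strict convexity} of $\Bd\Omega^*$ at $H^*$, not $C^1$-ness. Dually, $C^1$ at $H^*$ translates to $H \cap \Bd\Omega = \{p\}$, i.e.\ strict convexity of $\Bd\Omega$ at $p$. These two conditions are not equivalent for a general properly convex domain, so your sentence ``one fiber collapses to a point exactly when the corresponding face in the dual collapses to a point'' does not yield the stated biconditional without an extra hypothesis (for instance, that $\Bd\Omega$ is both $C^1$ and strictly convex near $p$, as holds in the divisible hyperbolic-group setting used later in the paper). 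Since the paper offers no argument beyond ``straightforward,'' your analysis is already more careful than the source; just flag that the $C^1$ $\Leftrightarrow$ $C^1$ claim is really the classical $C^1$ $\Leftrightarrow$ strictly convex duality, and that the ``corresponding point'' is only well defined when one side is already $C^1$.
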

\begin{proof} 
These are straightforward. 
\end{proof} 

The homeomorphism below will be known as the {\em duality map}. 
\begin{proposition} \label{prop:duality}
Let $\Omega$ and $\Omega^*$ be dual domains in $\SI^{n \ast}$ {\rm (}resp. $\bR P^{n \ast}${\rm ).} 
\begin{itemize}  
\item[(i)] There is a proper quotient map $\Pi_{\Ag}: \Bd^{\Ag} \Omega \ra \Bd \Omega$
given by sending $(x, h)$ to $x$. 
\item[(ii)] Let a projective automorphism group 
$\Gamma$ acts on a properly convex open domain $\Omega$ if and only 
$\Gamma^*$ acts on $\Omega^*$.
\item[(iii)] There exists a duality homeomorphism 
\[ {\mathcal{D}}: \Bd^{\Ag} \Omega \leftrightarrow \Bd^{\Ag} \Omega^* \] 
given by sending $(x, h)$ to $(h, x)$ for each $(x, h) \in \Bd^{\Ag} \Omega$. 
\item[(iv)] Let $A \subset \Bd^{\Ag} \Omega$ be a subspace and $A^*\subset \Bd^{\Ag} \Omega^*$
be the corresponding dual subspace $\mathcal{D}(A)$. If a group $\Gamma$ acts on $A$ so that $A/\Gamma$ is compact 
if and only if $\Gamma^*$ acts on $A^*$ and $A^*/\Gamma^*$ is compact. 
\end{itemize} 
\end{proposition}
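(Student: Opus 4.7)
The plan is to handle the four items in order, with (iii) and (iv) building on the basic correspondence between a properly convex cone $C_\Omega \subset \bR^{n+1}$ and its dual cone $C_{\Omega^*} \subset \bR^{n+1 \ast}$ and on part (ii).

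For (i), I would verify that $\Pi_{\Ag}$ is continuous (it is the restriction of a projection from a product), surjective (every boundary point of a properly convex domain admits at least one supporting hyperplane by the Hahn--Banach separation), and proper. Properness is essentially already in the proof of Theorem \ref{thm:Serre}: each fiber $\Pi_{\Ag}^{-1}(x)$ is a compact convex subset of the sphere of unit vectors through $x$, and $\Bd \Omega$ itself is compact, so the preimage of a compact set is a closed subset of a compact set, hence compact. A proper continuous surjection onto a locally compact Hausdorff space is automatically a quotient map. For (ii), I would use the identity $g^{\ast -1}(\phi)(v) = \phi(g^{-1}v)$: if $g$ preserves $C_\Omega$, then $g^{-1}$ preserves $\clo(C_\Omega) - \{O\}$, so $g^{\ast -1}$ sends functionals strictly positive on $\clo(C_\Omega)-\{O\}$ to functionals of the same kind, meaning $g^{\ast -1}$ preserves $C_{\Omega^*}$. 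The converse is the same argument applied to $\Omega^{\ast\ast} = \Omega$ and $\Gamma^{\ast\ast} = \Gamma$.

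For (iii), the map $(x,h)\mapsto (h,x)$ requires reinterpreting $h$ as a point of $\bR P^{n \ast}$ and $x$ as a hyperplane there. Choose a representative $\phi_h$ of $h$ normalized so that $\phi_h \geq 0$ on $\clo(C_\Omega)$ and $\phi_h(\tilde x) = 0$ for a lift $\tilde x$ of $x$. Then $\phi_h \in \clo(C_{\Omega^*})$, and the vanishing at $\tilde x$ prevents $\phi_h$ from lying in the open cone, so its image lies in $\Bd \Omega^*$. Dually, the hyperplane in $\bR P^{n \ast}$ of functionals vanishing at $\tilde x$ lies above every element of $\clo(C_{\Omega^*})$ nonnegatively and vanishes on $\phi_h$, hence supports $\Omega^*$ at $h$. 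Thus $\mathcal{D}(x,h)=(h,x)$ lies in $\Bd^{\Ag}\Omega^*$. Continuity is immediate since the construction is natural in $(x,h)$, and applying the same procedure to $\Omega^*$ yields the two-sided inverse under $\Omega^{\ast\ast} = \Omega$.

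For (iv), I would combine (ii) and (iii): the relation $g^{\ast -1}(\phi_h)(v) = \phi_h(g^{-1}v)$ shows $\mathcal{D}(g\cdot(x,h)) = g^{\ast -1}\cdot \mathcal{D}(x,h)$, so $\mathcal{D}$ is equivariant with respect to $\Gamma$ and $\Gamma^*$. Hence $\mathcal{D}$ descends to a homeomorphism $A/\Gamma \to A^*/\Gamma^*$, from which the biconditional on compactness is immediate. The main obstacle I expect is purely bookkeeping in (iii): tracking representatives in $\bR^{n+1}$ versus their classes in $\SI^n$ or $\bR P^n$, and verifying that in the $\SI^n$-version one can globally pick cone lifts so that the pairing is well-defined on $\Bd^{\Ag}$ without sign ambiguity.
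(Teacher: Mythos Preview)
Your proposal is correct and follows essentially the same approach as the paper: the cone-level argument in (iii) that $\phi_h \in \clo(C_{\Omega^*}) \setminus C_{\Omega^*}$ with $x$ supporting $\Omega^*$ at $h$ is exactly the paper's argument, and your treatment of (ii) and (iv) via the pairing identity and equivariance matches the paper's (terser) justification. The only cosmetic difference is in (i), where the paper argues that $\Pi_{\Ag}$ is an open map to conclude it is a quotient map, while you use that a proper continuous surjection onto a locally compact Hausdorff space is closed and hence a quotient map; both routes are valid and equally short.
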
 
\begin{proof} 
We will prove for $\bR P^n$ but the same proof works for $\SI^n$. 
(i) Each fiber is a closed set of hyperplanes at a point forming a compact set.
The set of supporting hyperplanes at a compact subset of $\Bd \Omega$ is closed. 
The closed set of hyperplanes having a point in a compact subset of $\bR P^{n+1}$ is compact. 
Thus, $\Pi_{\Ag}$ is proper.  Clearly, $\Phi_{\Ag}$ is continuous, and it is an open map
since $\Bd^{\Ag} \Omega$ is given the subspace topology from $\bR P^n \times \bR P^{n \ast}$
with a box topology where $\Phi_{\Ag}$ extends to a projection.

(ii) Straightforward. (See Chapter 6 of \cite{wmgnote}.)

(iii) An element $(x, h)$ is $\Bd^{\Ag} \Omega$ if and only if $x \in \Bd \Omega$ and $h$ is represented 
by a linear functional $\alpha_h$ so that $\alpha_h(y) > 0$ for all $y$ in the open cone $C$ corresponding to $\Omega$ and 
$\alpha_h(v_x) =0$ for a vector $v_x$ representing $x$. 

Since the dual cone $C^*$ consists of all nonzero $1$-form $\alpha$ so that $\alpha(y) > 0$ for all $y \in \clo(C) - \{O\}$. 
Thus, $\alpha(v_x) > 0$ for all $\alpha \in C^*$ and $\alpha_y(v_x) = 0$. 
$\alpha_h \not\in C^*$ since $v_x \in \clo( C)-\{O\}$
but $h \in \Bd \Omega^*$ as we can perturb $\alpha_h$ so that it is in $C^*$. 
Thus, $x$ is a supporting hyperspace at $h \in \Bd \Omega^*$.
Hence we obtain a continuous map ${\mathcal{D}}: \Bd^{\Ag} \Omega \ra \Bd^{\Ag} \Omega^*$. 
The inverse map is obtained in a similar way. 

(iv) The item is clear from (ii) and (iii).  
\end{proof} 

\begin{definition}
The two subgroups $G_1$ of $\Gamma$ and $G_2$ of $\Gamma^*$ are {\em dual} if 
sending $g \ra g^{-1, T}$ gives us a one-to-one map $G_1 \ra G_2$. 
A set in $A \subset \Bd \Omega$ is {\em dual} to a set $B \subset \Bd \Omega^*$ if 
 $\mathcal{D}: \Pi_\Ag^{-1}(A) \ra \Pi_{\Ag}^{-1}(B)$ is a one-to-one and onto map. 
\end{definition}


We have $\orb = \Omega/\Gamma$ for a properly convex domain $\Omega$, 
the dual orbifold $\orb^* = \Omega^*/\Gamma^*$ is a properly convex real projective orbifold 
homotopy equivalent to $\orb$. The dual orbifold is well-defined up to projective diffeomorphisms. 
We call $\orb^*$ a projectively dual orbifold to $\orb$.
Clearly, $\orb$ is projectively dual to $\orb^*$. 

\begin{theorem}[(Vinberg)]  \label{thm:dualdiff} 
The dual orbifold $\orb^*$ is diffeomorphic to $\orb$.
\end{theorem}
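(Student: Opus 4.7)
The plan is to construct an explicit $\Gamma$-equivariant diffeomorphism $\Omega \to \Omega^*$ via the Vinberg gradient map, which then descends to the quotients. Let $C \subset \bR^{n+1}$ be the properly convex open cone with $\Omega = {\mathcal P}(C)$ (or ${\mathcal S}(C)$ in the spherical case), and let $C^* \subset \bR^{n+1\,*}$ be the dual cone, so that $\Omega^* = {\mathcal P}(C^*)$. Lift $\Gamma$ to a linear subgroup $\tilde\Gamma \subset \GL(n+1,\bR)$ acting on $C$; then the contragredient group $\tilde\Gamma^{-T}$ acts on $C^*$ and projects to $\Gamma^*$.

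First, I would recall the Koszul--Vinberg characteristic function
\[
\phi_C(x) \;=\; \int_{C^*} e^{-\alpha(x)}\, d\alpha, \qquad x \in C,
\]
already alluded to in the proof of Proposition \ref{prop:lensend}. The standard facts I need are: $\phi_C$ is a smooth positive function on $C$ which is homogeneous of degree $-(n+1)$, so that $-\log\phi_C$ is strictly convex on $C$; and under any $g\in\GL(n+1,\bR)$ preserving $C$ one has the transformation rule $\phi_C(gx) = |\det g|^{-1}\phi_C(x)$, whence $\log\phi_C(gx)$ differs from $\log\phi_C(x)$ by a constant. Differentiating yields $\nabla\log\phi_C(gx) = g^{-T}\nabla\log\phi_C(x)$, i.e.\ the one-form $-d\log\phi_C$ is equivariant with respect to $\tilde\Gamma$ on the source and $\tilde\Gamma^{-T}$ on the target.

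Second, I would verify that the Vinberg gradient map
\[
\Phi : C \longrightarrow C^*, \qquad \Phi(x) = -\nabla\log\phi_C(x),
\]
is a diffeomorphism onto $C^*$. Strict convexity of $-\log\phi_C$ makes $\Phi$ a local diffeomorphism and injective; a standard Legendre-transform argument (together with the properness of $\phi_C$ near $\partial C$) shows that $\Phi(C) = C^*$. Using the degree $-(n+1)$ homogeneity of $\phi_C$, one checks directly that $\Phi(tx) = t^{-1}\Phi(x)$ for $t>0$, so $\Phi$ descends to a diffeomorphism $\bar\Phi:\Omega \to \Omega^*$ (in both the $\rpn$ and $\SI^n$ settings). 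Combined with the equivariance above, $\bar\Phi$ intertwines the $\Gamma$-action on $\Omega$ with the $\Gamma^*$-action on $\Omega^*$, and therefore induces a diffeomorphism $\orb = \Omega/\Gamma \to \Omega^*/\Gamma^* = \orb^*$.

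The main obstacles are analytic rather than topological: one must verify that the image of $\Phi$ is exactly $C^*$ (surjectivity), and that strict convexity of $-\log\phi_C$ actually holds on all of $C$. Both are classical for proper convex cones, and I would simply cite Vinberg's original construction (or Koszul's account) rather than redo the analysis; the equivariance and passage to the quotient is then essentially formal.
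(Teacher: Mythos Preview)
Your proposal is correct and is essentially the same approach as the paper's: the paper's proof simply cites the Vinberg duality diffeomorphism $\torb \to \torb^*$ (referring to \cite{vin63} and Theorem 6.8 of \cite{wmgnote}), and what you have sketched is precisely that construction via the Koszul--Vinberg characteristic function and its gradient map. You have supplied the details the paper chose to outsource to the references, so there is nothing to compare.
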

\begin{proof} 
Use here the duality diffeomorphism $\torb \ra \torb^*$ of Vinberg (See  \cite{vin63} and Theorem 6.8 in \cite{wmgnote}.) 
\end{proof} 
We call the map the {\em Vinberg duality diffeomorphism}.

\section{Characterization of complete ends}\label{sec:horo}

\subsection{Horospherical ends}




By an {\em exiting sequence} of p-end-neighborhoods $U_i$ of $\torb$, we mean 
a sequence of p-end-neighborhoods $U_i$ so that $p(U_i) \subset \orb$ is so that 
for each compact subset $K$ of $\orb$, $p(U_i) \cap K \ne \emp$ for only finitely many $i$s. 


By the following proposition shows that we can exchange words ``horospherical'' with ``cuspidal''. 
\begin{proposition}\label{prop:affinehoro} 
Let $\mathcal O$ be a properly convex real projective $n$-orbifold with radial or totally geodesic ends. 
Let $\tilde E$  be a horospherical end of its universal cover $\torb$, $\torb \subset \SI^n$ {\rm (}resp. $\subset \bR P^n${\rm )} 
and $\bGamma_{\tilde E}$ denote the p-end fundamental group. 
\begin{itemize}
\item[(i)] The space $\tilde \Sigma_{\tilde E} := R_{\bv_{\tilde E}}(\torb)$ 
of lines from the end point $\bv_{\tilde E}$ forms a complete affine space of dimension $n-1$.
\item[(ii)] The norms of eigenvalues of $g \in \bGamma_{\tilde E}$
are all $1$.
\item[(iii)] $\bGamma_{\tilde E}$ is virtually abelian and a finite index subgroup is in 
a conjugate of a connected parabolic subgroup of $\SO(n, 1)$ of rank $n-1$ in $\SL_\pm(n+1, \bR)$ or $\PGL(n+1, \bR)$
that acts on an ellipsoid in $\clo(\torb) \subset \rpn$. And hence $\tilde E$ has a cusp-type.
\item[(iv)] For any compact set $K'$ inside a horospherical end-neighborhood, 
$\orb$ contains a smooth convex smooth neighborhood disjoint from $K'$. 
\item[(v)] A p-end point of a horospherical p-end cannot be an endpoint of a segment in $\Bd \tilde{\mathcal{O}}$. 
\end{itemize}
\end{proposition}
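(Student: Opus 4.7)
The plan is to handle (i)--(ii) by elementary convex analysis, (iii) by combining a Bieberbach-type argument with the rigidity of projective horoball automorphisms, and (iv)--(v) as consequences of (iii).

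For (i), fix a horoball p-end-neighborhood $U \subset \torb$ whose only boundary point on $\Bd \torb$ is $\bv_{\tilde E}$. Strict convexity of $U$ at $\bv_{\tilde E}$ gives a unique supporting hyperplane $H$, and since any supporting hyperplane of $\torb$ at $\bv_{\tilde E}$ also supports $U \subset \torb$, it must agree with $H$; thus $\Bd\torb$ is $C^1$ at $\bv_{\tilde E}$. Directions entering $\torb$ from $\bv_{\tilde E}$ therefore lie in the affine chart $A^{n-1} := \SI^{n-1}_{\bv_{\tilde E}} \setminus [H]$, and a local paraboloid model of $U$ realizes every direction of $A^{n-1}$ by a ray of $U$. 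Hence $\tilde\Sigma_{\tilde E} = A^{n-1}$ is a complete affine space of dimension $n-1$. For (ii), work in an affine chart in which $\clo(\torb)$ is compact and $\bv_{\tilde E}$ is at the origin. Any lift of $g \in \bGamma_{\tilde E}$ to $\SL_\pm(n+1,\bR)$ fixes the $\bv_{\tilde E}$-line and preserves the compact convex set $\clo(U)$; a linear eigenvalue of modulus greater than $1$ would push an interior point of $U$ off to infinity under iteration, violating invariance, and the symmetric argument for $g^{-1}$ rules out moduli less than $1$. All eigenvalues therefore have unit modulus.

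For (iii), the action of $\bGamma_{\tilde E}$ on $\tilde\Sigma_{\tilde E}\cong A^{n-1}$ is cocompact (because $\Sigma_{\tilde E}$ is compact) and affine. Invariance of $U$ yields a radius function from $\bv_{\tilde E}$ to $\Bd U$ which, together with the affine action, pins down a flat Euclidean structure on $A^{n-1}$ up to scale; the linear parts of the holonomies are then Euclidean isometries, and Bieberbach's theorem produces a finite-index free abelian subgroup of translations of rank $n-1$. On the other hand, up to projective conjugation in $\SL_\pm(n+1,\bR)$, the full automorphism group of a horoball is the parabolic subgroup of $\SO(n,1)$ fixing a boundary point of an ellipsoid $E$; hence $\bGamma_{\tilde E}$, which preserves both $\bv_{\tilde E}$ and $U$, is conjugate into that cusp subgroup, making $\tilde E$ of cusp type.

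For (iv), the cusp model from (iii) provides a one-parameter nested family of $\bGamma_{\tilde E}$-invariant horoballs $\{U_t\}$ parameterized by Busemann height, each smooth and strictly convex. Given compact $K' \subset \orb$ inside a horospherical end-neighborhood, a preimage in $\torb$ sits in some $U_{t_0}$, so for $t \gg t_0$ the deeper horoball $U_t$ is disjoint from this preimage; its quotient descends to the desired smooth convex neighborhood in $\orb$. For (v), suppose a nontrivial segment $s \subset \Bd\torb$ has $\bv_{\tilde E}$ as an endpoint and other endpoint $p$. By (i) we have $s \subset H$, and the projective line $L$ through $\bv_{\tilde E}$ and $p$ lies in $H$. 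By (iii), in standard coordinates the unipotent cusp lattice preserves $L$ and acts on the affine chart $L \setminus \{\bv_{\tilde E}\} \cong \bR$ by translations through a subgroup $G \subset \bR$; $G$ is nontrivial (since $p \ne \bv_{\tilde E}$) and therefore unbounded in both directions. Translating $s$ by the lattice produces a family of arcs $s_k \subset \Bd\torb$ whose union in the affine chart of $L$ covers all of $\bR$; passing to the closure yields $L \subset \clo(\torb)$, so $\clo(\torb)$ contains a complete projective line, contradicting proper convexity.

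The principal obstacle is (iii): upgrading cocompact affine action plus horoball invariance to a genuine projective conjugation into the parabolic subgroup of $\SO(n,1)$ inside $\SL_\pm(n+1,\bR)$. The Euclidean structure extracted on $A^{n-1}$ only records the linear parts of the holonomies, whereas the identification with a standard cusp requires compatibility with the full projective horoball geometry; making Bieberbach's classification cooperate with the projective rigidity of horoballs is the substantive step. Once (iii) is in hand, (iv) is immediate from the Busemann foliation and (v) follows by the lattice/projective-line argument above.
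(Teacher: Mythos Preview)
Your argument for (ii) has a genuine gap. Placing $\bv_{\tilde E}$ at the origin of an affine chart does not make the projective action of $g$ linear on that chart: if $g$ fixes $\bv_{\tilde E}$ then in suitable homogeneous coordinates $g$ has block form with last column $(0,\dots,0,\lambda_{\bv_{\tilde E}}(g))^T$, and the induced action on the chart is $x \mapsto Ax/(c\cdot x + \lambda_{\bv_{\tilde E}}(g))$, so a large-modulus eigenvalue of $g$ does not ``push points of $\clo(U)$ to infinity'' (a diagonal element of $\SL_\pm(3,\bR)$ with eigenvalues $2,1,\tfrac12$ preserves a compact triangle). The correct mechanism, which the paper uses, is that if some eigenvalue norm differs from $1$ then either the largest or the smallest norm differs from $\lambda_{\bv_{\tilde E}}(g)$, producing a fixed point $x_\infty \ne \bv_{\tilde E}$ realized as a limit of $g^i(x)$ for $x\in U$; proper discontinuity of $\bGamma_{\tilde E}$ on $\torb$ forces $x_\infty \in \Bd\torb$, and since a horoball satisfies $\clo(U)\cap\Bd\torb=\{\bv_{\tilde E}\}$ this is a contradiction. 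Your tangent-hyperplane argument for (i) is correct and more direct than the paper's route through the trichotomy of Proposition~\ref{prop:projconv}; note, however, that in the paper that trichotomy-plus-eigenvalue argument handles (i) and (ii) in one stroke.

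For (iii) you correctly flag the principal difficulty, but there is a second gap beyond the one you identify: the assertion that the full projective automorphism group of \emph{a} horoball is the parabolic subgroup of $\SO(n,1)$ is false for a generic horoball (most strictly convex domains have trivial automorphism group). What forces the horoball to be projectively standard here is precisely that $\bGamma_{\tilde E}$ acts on it with cocompact horosphere quotient; this is the content of Proposition~7.21 of Crampon--Marquis~\cite{CM2}, which the paper invokes after first using (ii) together with Fried's theorem~\cite{Fried86} to obtain virtual nilpotence. Your radius-function route to a $\bGamma_{\tilde E}$-invariant Euclidean structure on $A^{n-1}$ is likewise not justified as written. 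Once (iii) is established, your (iv) and (v) are fine and match the paper's; your check in (v) that the cusp lattice preserves each line in $H$ through $\bv_{\tilde E}$ and acts on it by translations is exactly the computation the paper alludes to, phrased as ``the union of translates covers the whole line'' rather than ``translates converge to a great segment.''
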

\begin{proof} 
We will prove for the case $\torb \subset \SI^n$. The $\bR P^n$-version follows from this. 
Let $U$ be a horospherical p-end-neighborhood with the p-end vertex $\bv_{\tilde E}$.
The space of great segments from the p-end vertex passing $U$ forms a convex subset $\tilde \Sigma_{\tilde E}$ 
of a complete affine space $\bR^{n-1} \subset \SI^{n-1}_{\tilde E}$ by Proposition \ref{prop:projconv}
and covers an end orbifold $\Sigma_{\tilde E}$ with the discrete group $\pi_1(\tilde E)$ acting as a discrete subgroup $\Gamma'_{\tilde E}$ of
the projective automorphisms so that $\tilde \Sigma_{\tilde E}/\Gamma'_{\tilde E}$ is projectively isomorphic to $\Sigma_{\tilde E}$.

(i) By Proposition \ref{prop:projconv}, 
\begin{itemize}
\item $\tilde \Sigma_{\tilde E}$ is properly convex,
\item is foliated by complete affine spaces of dimension $i_0$ with the common boundary sphere of dimension $i_0-1$ and 
the space of the leaves forms a properly open convex subset $K^o$ of $\SI^{n-i_0-1}$ or 
\item is a complete affine space. 
\end{itemize}
Then $\bGamma_{\tilde E}$ acts on $K^o$ cocompactly but perhaps not discretely. 
We aim to show the first two cases do not occur. 

Suppose that we are in the second case and 
 $1 \leq i_0 \leq n-2$. This implies that $\tilde \Sigma_{\tilde E}$ is foliated by complete affine spaces of dimension $i_0 \leq n-2$. 

For each element $g$ of $\bGamma_{\tilde E}$, a complex or negative eigenvalue of $g$ in $\bC - \bR_+$ 
cannot have a maximal or 
minimal absolute value different from $1$ since otherwise 
by taking the convex hull in $\torb$ of $\{g^m(x)| m \in \bZ\}$ for a generic point $x$ of $U$, 
we see that $U$ must be not properly convex. Thus, the largest and the smallest absolute value
eigenvalues of $g$ are positive. 

Since $\bGamma_{\tilde E}$ acts on a properly convex subset $K$ of dimension $\geq 1$,  an element 
$g$ has an eigenvalue $>1$ and an eigenvalue $< 1$ by Benoist \cite{Ben1}
as an element of projective automorphism on the minimal great sphere containing $K$.
Hence, we obtain the largest norm of eigenvalues and the smallest one of $g$ in $\Aut(\SI^n)$ both different from $1$. 
Therefore, let $\lambda_1 >1$ be the greatest norm eigenvalue and 
$\lambda_2< 1$ be the smallest norm one of this element $g$. Let $\lambda_{\bv_{\tilde E}} >0$ 
be the eigenvalue of $g$ associated with $\bv_{\tilde E}$. 
These are all positive. 
The possibilities are as follows 
\[ \lambda_1 = \lambda_{\bv_{\tilde E}}> \lambda_2, \quad \lambda_1 > \lambda_{\bv_{\tilde E}} > \lambda_2, \quad
\lambda_1 > \lambda_2 = \lambda_{\bv_{\tilde E}}. \]
In all cases, at least one of the largest norm or the smallest norm is different from $\lambda_1$. 
Thus $g$ fixes a point $x_\infty$ distinct from $\bv_{\tilde E}$ with the distinct eigenvalue from $\lambda_0$. 
We have $x_\infty \in \clo(U)$ since $x_\infty$ is a limit of $g^i(x)$ for $x \in U$, $i \ra \infty$ or $i\ra -\infty$. 
As $x_\infty \not \in U$, we obtain $x_\infty =\bv_{\tilde E}$ by the definition of the horoballs.
This is a contradiction. 

The first possibility is also shown to not occur similarly. Thus, 
 $\tilde \Sigma_{\tilde E}$ is a complete affine space. 

(ii) If $g \in \bGamma_{\tilde E}$ has a norm of eigenvalue different from $1$, then 
we can apply the second and the third paragraphs above to obtain a contradiction. 
We obtain $\lambda_j = 1$ for each norm $\lambda_j$ of eigenvalues of $g$ for every $g \in \bGamma_{\tilde E}$.


(iii) Since $\tilde \Sigma_{\tilde E}$ is a complete affine space, 
$\tilde \Sigma_{\tilde E}/\bGamma_{\tilde E}$ is a complete affine manifold with the norms of eigenvalues holonomy matrices all equal to $1$
where $\bGamma'_{\tilde E}$ denotes the affine transformation group corresponding to $\bGamma_{\tilde E}$. 
(By D. Fried \cite{Fried86}, this implies that $\pi_1(\tilde E)$ is virtually nilpotent.) 
The conclusion follows by Proposition 7.21 of \cite{CM2} (related to Theorem 1.6 of \cite{CM2}): 
By the theorem, we see that $\bGamma_{\tilde E}$ is in a conjugate of $\SO(n, 1)$
and hence acts on an $(n-1)$-dimensional ellipsoid fixing a unique point. 
Since a horosphere has a Euclidean metric invariant under the group action, 
the image group is in a Euclidean isometry group. 
Hence, the group is virtually abelian by the Bieberbach theorem. 

(iv) We can choose an exiting sequence of p-end horoball neighborhoods $U_i$
where a cusp group acts. We can consider the hyperbolic spaces to understand this. 

(v) Suppose that $\Bd \torb$ contains a segment $s$ ending at the p-end vertex $\bv_{\tilde E}$. 
Then $s$ is on an invariant hyperspace of $\bGamma_{\tilde E}$.
Now conjugating $\bGamma_{\tilde E}$ into a parabolic group of $\SO(n,1)$ fixing $(1,-1,0,\dots, 0)$. 
By simple computations, 
we can find a sequence $g_i \in \bGamma_{\tilde E}$ so that $\{g_i(s)\}$ geometrically converges to 
a great segment. This contradicts the proper convexity of $\torb$. 
\end{proof}

\subsection{A complete end is horospherical}\label{sec:comphoro}





We will now show a converse of Proposition \ref{prop:affinehoro}.

The results here  overlap with the results of Crampon-Marquis \cite{CM2} and 
Cooper-Long-Tillman \cite{CLT}. However, the results are more general than theirs 
and were originally conceived before their papers appeared.  We also make use of 
Crampon-Marquis \cite{CM2}. 

This prove Theorem \ref{thm:secondmain} for complete R-ends.

\begin{theorem}\label{thm:comphoro} 
Let $\orb$ be a properly convex $n$-orbifold with radial or totally geodesic ends. 
Suppose that $\tilde E$ is a complete R-end of its universal cover $\torb$ in $\SI^n$ or 
in $\bR P^n$. Let $\bv_{\tilde E} \in \SI^n$  be the p-end vertex with the p-end fundamental group 
$\bGamma_{\tilde E}$. 
Then
\begin{itemize} 
\item[(i)] The eigenvalues of elements of  $\bGamma_{\tilde E}$ have unit norms only. 
\item[(ii)] A nilpotent Lie group fixing $\bv_{\tilde E}$ contains a finite index subgroup of $\bGamma_{\tilde E}$. 
\item[(iii)] $\tilde E$ is horospherical, i.e., cuspidal. 
\end{itemize} 
\end{theorem}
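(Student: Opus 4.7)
The plan is to prove (i), use it to derive (ii) via Fried's theorem on distal affine actions, and finally deduce (iii) by identifying $\bGamma_{\tilde E}$ with a cusp group using Crampon-Marquis. Throughout I work in $\SI^n$; the $\bR P^n$-version follows. Completeness of $\tilde\Sigma_{\tilde E}$ means that in $\SI^{n-1}_{\bv_{\tilde E}}$ it is the complement of a hyperspace $S_\infty$, so the cone over $S_\infty$ from $\bv_{\tilde E}$ is a hyperplane $H\subset\SI^n$ containing $\bv_{\tilde E}$ that is preserved by $\bGamma_{\tilde E}$. In a basis with $e_0=\bv_{\tilde E}$ and $e_1,\dots,e_{n-1}$ completing a basis of $H$, each $g\in\bGamma_{\tilde E}$ has upper-triangular block form with diagonal blocks $(\lambda_{\bv}(g),\,B(g),\,c(g))$, and the induced affine map on $\tilde\Sigma_{\tilde E}$ has linear part $B(g)/c(g)$ read in $\SI^{n-1}_{\bv_{\tilde E}}\setminus S_\infty$.

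For (i), suppose for contradiction some $g\in\bGamma_{\tilde E}$ has a non-unit-norm eigenvalue. Exactly as in the proof of Proposition~\ref{prop:affinehoro}(i), proper convexity of $\torb$ rules out complex non-real eigenvalues attaining the largest or smallest modulus, so the extremal-norm eigenvalues of $g$ are real and positive. Cocompactness of $\bGamma_{\tilde E}$ on $\tilde\Sigma_{\tilde E}$ forces the affine action to preserve a parallel volume form, so $|\det B(g)|=|c(g)|^{n-1}$; together with $|\det g|=1$ this gives $|\lambda_{\bv}(g)|\,|c(g)|^n=1$. If some eigenvalue of $g$ has modulus different from $1$, then there is a fixed point $x_\infty\neq\bv_{\tilde E}$ of $g$ (or $g^{-1}$) in the closure of a p-end-neighborhood attracting a generic orbit under $g^n$. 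Iterating and using that the extremal eigenspaces differ from $\bv_{\tilde E}$, one exhibits a pair of antipodal limit points in $\clo(\torb)$, contradicting proper convexity. The same device applied to individual eigenvalues of $B(g)$ forces their moduli to coincide with $|c(g)|$, whence all eigenvalues of $g$ lie on the unit circle.

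With (i) in hand, the affine action of $\bGamma_{\tilde E}$ on the complete affine $(n-1)$-space $\tilde\Sigma_{\tilde E}$ is distal. Fried's theorem on discrete distal affine groups (a properly discontinuous cocompact such group is virtually nilpotent) yields virtual nilpotency of $\bGamma_{\tilde E}$, and its Zariski closure inside the projective stabilizer of $\bv_{\tilde E}$ is a connected nilpotent Lie group containing a finite index subgroup of $\bGamma_{\tilde E}$; this is (ii). For (iii), Proposition~7.21 of Crampon-Marquis~\cite{CM2} (invoked in Proposition~\ref{prop:affinehoro}(iii)) conjugates a finite index subgroup of $\bGamma_{\tilde E}$ into a parabolic subgroup of $\SO(n,1)$ acting on an ellipsoid $E_0$ tangent to $H$ at $\bv_{\tilde E}$. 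The proper convexity of $\torb$ and the fact that $\Bd\torb$ supports $H$ at $\bv_{\tilde E}$ force the interior of $E_0$ to lie inside $\torb$ and to be a $\bGamma_{\tilde E}$-invariant horoball, so $\tilde E$ is cuspidal.

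The main obstacle is Step~1: turning the combination of a cocompact complete affine action on $\tilde\Sigma_{\tilde E}$ with the global proper convexity of $\torb$ in $\SI^n$ into the eigenvalue constraint. Matching the moduli of the eigenvalues of $B(g)$ with $|c(g)|$ and with $|\lambda_{\bv}(g)|$ is delicate because the naive orbit arguments only produce qualitative limit points; they must be sharpened to exhibit an actual antipodal pair in $\clo(\torb)$, which is where proper convexity is forced to bite and where the geometry of the link interacts nontrivially with the ambient projective structure.
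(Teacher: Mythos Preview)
Your outline for (ii) via Fried's theorem matches the paper. The genuine problems are in (i) and (iii).

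For (i), the claim that cocompactness forces the affine action on $\tilde\Sigma_{\tilde E}$ to preserve a parallel volume form (your identity $|\det B(g)|=|c(g)|^{n-1}$) is not justified and is not used in the paper. The paper's argument is different and more direct: for $\gamma\in\bGamma_{\tilde E}$, look at the linear part $\hat L(\gamma)$ of the induced affine map on $\tilde\Sigma_{\tilde E}\cong\bR^{n-1}$ and let $A$ be the maximal subspace on which all eigenvalue norms equal $1$. If $A$ is proper, then on $\bR^{n-1}/A$ the map is hyperbolic with a fixed point, so $\gamma$ preserves an affine subspace $A'$ parallel to $A$. The half-space $H^+$ of directions in $A'$ from $\bv_{\tilde E}$ carries eigenvalues of norm $\lambda_{\bv_{\tilde E}}(\gamma)$, while the complementary subspace $S$ carries strictly larger (or smaller) norms. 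Choosing generic points $y_1,y_2$ in a p-end-neighborhood on opposite sides of $H$ inside the span $S'$ of $H$ and $S$, the iterates $\gamma^{m_i}(y_1),\gamma^{m_i}(y_2)$ converge to an antipodal pair in $S$, contradicting proper convexity of $\torb$. This pins down the antipodal pair you were looking for; the crucial missing idea in your sketch is producing the $\gamma$-invariant affine slice $A'$ via the quotient $\bR^{n-1}/A$.

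For (iii) you have a gap you do not flag. Crampon--Marquis gives an abstract conjugacy into a parabolic of $\SO(n,1)$ and hence a one-parameter family of invariant ellipsoids tangent to $H$ at $\bv_{\tilde E}$, but nothing in that statement places any of them inside $\torb$; your sentence ``proper convexity \dots\ force[s] the interior of $E_0$ to lie inside $\torb$'' is exactly the point requiring proof. The paper does not invoke Crampon--Marquis here. Instead it takes a radial p-end-neighborhood $U$, averages to make it invariant under the nilpotent hull $N$, and then shows that for $y\in U$ no sequence $g_i\in N$ can send $y$ to a limit $x_0$ in the supporting hyperplane $W$ other than $\bv_{\tilde E}$. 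This uses the full flag $V_0\subset\cdots\subset V_k=V$ preserved by the unipotent $N$, the $N$-invariant Euclidean metric on the open piece $h_V$, and a transitivity argument to force an entire affine half-space into $\clo(\torb)$, contradicting proper convexity. Only after ruling out such $x_0$ does one get $\Bd U=(\Bd U\cap\torb)\cup\{\bv_{\tilde E}\}$, whence $U$ is a horoball. Without this step your (iii) is incomplete.
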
 

\begin{proof} 
The proof here is for $\SI^n$ but it implies the $\bR P^n$-version. 

(i) Since $\tilde E$ is complete, $\tilde \Sigma_{\tilde E}$ is identifiable with $\bR^{n-1}$. 
$\bGamma_{\tilde E}$ induces $\Gamma'_{\tilde E}$ in $\Aff(\bR^{n-1})$
that are of form $x \mapsto Mx + b$ where $M$ is a linear map $\bR^{n-1} \ra \bR^{n-1}$
and $b$ is a vector in $\bR^{n-1}$. 
For each $\gamma \in \bGamma_{\tilde E}$, 
we write $\hat L(\gamma)$ this linear part of the affine transformation corresponding to $\gamma$. 

Suppose that one of the norms of relative eigenvalues of ${\hat L}(\gamma)$ for $\gamma \ne \Idd$ is greater than $1$ or less than $1$. 
At least one eigenvalue of ${\hat L}(\gamma)$ is $1$ since $\gamma$ acts without fixed point on $\bR^{n-1}$.
(See \cite{KS}.)
Now, ${\hat L}(\gamma)$ has a maximal vector  subspace $A$ of $\bR^{n-1}$ where all norms of the eigenvalues are $1$.

Suppose that $A$ is a proper subspace of $\bR^{n-1}$. 
Then $\gamma$ acts on the affine space $\bR^{n-1}/A$ 
as an affine transformation with the linear parts without a norm of eigenvalue equal to $1$.
Hence, $\gamma$ has a fixed point in $\bR^{n-1}/A$, and 
$\gamma$ acts on an affine subspace $A'$ parallel to $A$.

A subspace $H$ containing ${\bv_{\tilde E}}$ corresponds to the direction of $A'$ from $\bv_{\tilde E}$.
Let $H^+$ be the open half-space of one dimension higher corresponding to directions in $A$
with $\Bd H^+ \ni {\bv_{\tilde E}}$ so that $H^+$ is invariant under $\gamma$. 
For $\gamma$ as a projective transformation fixing the vertex ${\bv_{\tilde E}}$, 
the eigenvalue of $\gamma$ corresponding to ${\bv_{\tilde E}}$ equals 
the ones for the subspace $H^+$. This equals $\lambda_{\bv_{\tilde E}}$. 

There exists a projective subspace $S$ of dimension $\geq 0$ where 
the points are associated with eigenvalues $\lambda$
where $|\lambda| > \lambda_{\bv_{\tilde E}}$ up to reselecting $\gamma$ to be a nonzero integral power of $\gamma$ 
if necessary. 

Let $S'$ the smallest subspace containing $H$ and $S$. Let $U$ be a p-end-neighborhood of $\tilde E$. 
Let $y_1$ and $y_2$ be generic points of $U \cap S' - H^+$
so that $\ovl{y_1 y_2}$ meets $H$ in its interior.

Then we can choose a subsequence $m_i$, $m_i \ra \infty$, so that 
$\gamma^{m_i}(y_1) \ra f$ and $\gamma^{m_i}(y_2) \ra f_-$ as $i \ra +\infty$
unto relabeling $y_1$ and $y_2$ for a pair of antipodal points $f, f_- \in S$.
This implies $f, f_- \in \clo(\torb)$, 
and $\torb$ is not properly convex. This is a contradiction. 
Therefore,  the norms of eigenvalues of $\hat L(\gamma)$ all equal $\lambda_{\bv_{\tilde E}}$
and $A'$ is the $(n-1)$-dimensional affine subspace $\bR^{n-1}$. 
Thus, the norms of eigenvalues of $\gamma$ all equal to $1$ since the product of
the eigenvalues equal $\pm 1$. 

(ii) Since $\tilde \Sigma_{\tilde E}/\bGamma_{\tilde E}$ is a compact complete affine manifold, 
a finite index subgroup $F$ of $\bGamma_{\tilde E}$ is contained in a nilpotent Lie subgroup
by Theorem 3 in Fried \cite{Fried86} acting on $\tilde \Sigma_{\tilde E}$. 
Now, by Malcev, it follows that the same group is contained in 
a nilpotent group $N$ acting on $\SI^n$ since $F$ is unipotent. 

(iii) 
The dimension of $\CN$ is $n-1 = \dim \tilde \Sigma_{\tilde E}$ by Theorem 3 again. 

Let $U$ be a component of the inverse image of a p-end-neighborhood 
so that ${\bv_{\tilde E}} \in \Bd(U)$. Assume that $U$ is radial. 
Since a finite index subgroup $F$ of $\bGamma_{\tilde E}$ is in $N$ so that $N/F$ is compact by Malcev, 
$N$ will act on a smaller open set covering a p-end-neighborhood
by taking intersections under images of it under $N$ if necessary. 
We let $U$ be this open set from now on. Consequently, $\Bd U \cap \torb$ is smooth. 

We will now show that $U$ is a horospherical p-end-neighborhood:
We identify ${\bv_{\tilde E}}$ with $[1, 0, .., 0]$.
Let $W$ denote the subspace in $\SI^n$ containing ${\bv_{\tilde E}}$ supporting $U$. 
$W$ correspond to the boundary of the direction of $\tilde \Sigma_{\tilde E}$ and hence is unique 
and, thus, $N$-invariant. Also, 
$W \cap \clo(\torb)$ is a properly convex subset of $W$. 

Let $y$ be a point of $U$. 
Suppose that $N$ contains sequence $\{g_i\}$ so that  
\[g_i(y) \ra x_0 \in W \cap \clo(\torb) \hbox{ and } x_0 \ne {\bv_{\tilde E}};\] 
that is, $x_0$ in the boundary direction of $A$ from $\bv_{\tilde E}$. 
The collection of all such $x_0$ has a properly convex, convex hull $U_1$ in $\clo(\torb)$
in a subspace $V$ in $W$. 
The dimension of $V$ is $\geq 1$ as it contains $x_0$.

Again $N$ acts on $V$. 
Now, $V$ is divided into disjoint open hemispheres of various dimensions where $N$ acts on:
By Theorem 3.5.3 of \cite{Var}, $N$ preserves a flag structure 
$V_0 \subset V_1 \subset \dots \subset V_k = V$.
We take components of complement $V_i - V_{i-1}$. 
Let $H_V:=V - V_{k-1}$. 

Suppose that $\dim V = n-1$ for contradiction. 
Then $H_V \cap U_1$ is not empty since otherwise, we would have a smaller dimensional $V$. 
Let $h_V$ be the component of $H_V$ meeting $U_1$.
Since $N$ is orthopotent, $h_V$ has an $N$-invariant metric by Theorem 3 of Fried \cite{Fried86}.

We claim that the orbit of the action of $N$ is of dimension $n-1$ and hence locally transitive on $H_V$: 
If not, then a one-parameter subgroup $N'$ fixes a point of $h_V$.  
This group acts trivially on $h_V$ by the unipotent group contains a trivial orthogonal subgroup. 
Since $N'$ is not trivial, it acts as a group of nontrivial translations on the affine space $H^o$. 
Then $N'(U)$ is not properly convex. Also, an orbit of $N$ is open. 
Thus, $N$ acts transitively on $h_V$ since the orbit of $N$ in $h_V$ is closed by the invariant metric on $h_V$. 

Hence, the orbit $N(y)$ of $N$ for $y \in H_V \cap U_1$ contains a component of $H_V$. 
Since $\bGamma_{\tilde E}(y) \subset \clo(\torb)$
and a convex hull in $\clo(\torb)$ is $N(y)$ where $N(y) \subset H_V$. 
Since $F \bGamma_{\tilde E}  = N$ for a compact subset $F$ of $N$, 
the orbit $\bGamma_{\tilde E}(y)$ is within a bounded distance from every point of $N(y)$. 
Thus, a convex hull in $\clo(H_V)$ is $N(y)$, and 
this contradicts the assumption that $\clo(\torb)$ is properly convex
(compare with arguments in \cite{CM2}.)

Suppose that the dimension of $V$ is $\leq n-2$. 
Let $H$ be a subspace of dimension $1$ bigger than $\dim V$ and containing $V$ and meeting $U$. 
Then $H$ is sent to disjoint subspaces or to itself under $N$. Since $N$ acts transitively on $A$, 
a nilpotent subgroup $N_H$ of $N$ acts on $H$.
Now we are reduced to $\dim V$ by one or more. 
The orbit $N_H(y)$ for a limit point $y \in H_V$ contains a component of $V - V_{k-1}$
as above. Thus, $N_H(y)$ contains the same component, an affine subspace. 
As above, we have a contradiction to the proper convexity. 

Therefore, points such as $x_0 \in W \cap \Bd(\torb)  - \{\bv_{\tilde E}\}$ do not exist. 
Hence for any sequence of elements $g_i \in \bGamma_{\tilde E}$, we have
$g_i(y) \ra \bv_{\tilde E}$. 

Hence, $\Bd U = (\Bd U \cap \torb) \cup \{ \bv_{\tilde E}\}$.  Since the directions from ${\bv_{\tilde E}}$ to $\Bd U \cap \torb$
form $\bR^{n-1}$, $\Bd U$ is $C^1$ at ${\bv_{\tilde E}}$. 
Since $U$ is radial, this means that $U$ is a horospherical p-end-neighborhood. 
Clearly, $\Bd U$ is homeomorphic to an $(n-1)$-sphere.

\end{proof}

\part{Uniform middle eigenvalue conditions and lens-type ends.}

In this part, we will concentrate on ends that are properly convex.

\section{The end theory}
\label{sec:endth}
In this section, we discuss the properties of lens-shaped radial and totally geodesic ends and their duality also.

\subsection{The holonomy homomorphisms of the end fundamental groups: the fibering} \label{sub:holfib}

We will discuss for $\SI^n$ only here but the obvious $\bR P^n$-version exists for the theory. 
Let $\tilde E$ be a p-R-end of $\torb$. 
Let $\SLnp_{\bv_{\tilde E}}$ be the subgroup of $\SLnp$ fixing a point $\bv_{\tilde E} \in \SI^n$.
This group can be understood as follows by letting $\bv_{\tilde E} = [0, \ldots, 0, 1]$ 
as a group of matrices: For $g \in \SLnp_{\bv_{\tilde E}}$, we have 
\[ \left( \begin{array}{cc} 
        \frac{1}{\lambda_{\bv_{\tilde E}}(g)^{1/n}} \hat h(g) & \vec{0} \\ 
        \vec{v}_g                & \lambda_{\bv_{\tilde E}}(g)
        \end{array} \right) \] 
where $\hat h(g) \in \SLn, \vec{v} \in \bR^{n \ast}, \lambda_{\bv_{\tilde E}}(g) \in \bR_+ $, so-called the linear part of $h$.
Here, \[\lambda_{\bv_{\tilde E}}: g \mapsto \lambda_{\bv_{\tilde E}}(g) \hbox{ for } g \in \SLnp_{\bv_{\tilde E}}\] is a homomorphism 
so it is trivial in the commutator group $[\Gamma_{\tilde E}, \Gamma_{\tilde E}]$. 
There is a group homomorphism ${\mathcal L}': \SLnp \ra \SLn \times \bR_+$ 
by sending the above matrix to $(\hat h, \lambda)$ with kernel $\bR^{n \ast}$ a dual space to $\bR^n$. 
Thus, we obtain a diffeomorphism \[\SLnp_{\bv_{\tilde E}} \ra \SLn \times \bR^{n \ast} \times \bR_+.\]
We note the multiplication rules
\[ (A, \vec{v}, \lambda) (B, \vec{w}, \mu) = (AB, \frac{1}{ \mu^{1/n} } \vec{v}B + \lambda \vec{w}, \lambda \mu). \] 

Let $\Sigma_{\tilde E}$ be the end $(n-1)$-orbifold. 
Given a representation $\hat h: \pi_1(\Sigma_{\tilde E}) \ra \SLn$ and $\lambda: \pi_1(\Sigma_{\tilde E}) \ra \bR_+$, 
we denote by $\bR^{n}_{\hat h, \lambda}$
the $\bR$-module with the $\pi_1(\Sigma_{\tilde E})$-action given 
by \[g\cdot \vec v = \frac{1}{\lambda(g)^{1/n}}\hat h(g)(\vec v).\] 
And we denote by $\bR^{n \ast}_{\hat h, \lambda}$ will the dual vector space
with the dual action given by 
\[g\cdot \vec v = \frac{1}{{\lambda(g)^{1/n}}}\hat h(g)^{\ast}(\vec v).\] 
Let $H^1(\pi_1(\tilde E), \bR^{n \ast}_{\hat h, \lambda})$ denote the cohomology 
space of $1$-cocycles $\vec v(g) \in  \bR^{n \ast}_{\hat h, \lambda}$ 


As $\Hom(\pi_1(\Sigma_{\tilde E}), \bR_+)$ equals $H^1(\pi_1(\Sigma_{\tilde E}), \bR)$, we obtain: 

\begin{theorem} \label{thm:defspace}
Let $\orb$ be a strongly tame properly convex real projective orbifold with radial or totally geodesic ends and 
let $\torb$ be its universal cover. 
Let $\Sigma_{\tilde E}$ be the end orbifold associated with a p-R-end $\tilde E$ of $\torb$. 
Then the space of representations 
\[\Hom(\pi_1(\Sigma_{\tilde E}), \SLnp_{\bv_{\tilde E}})/\SLnp_{\bv_{\tilde E}}\] 
is  the fiber space $B$ over 
\[\Hom(\pi_1(\Sigma_{\tilde E}), \SLn)/\SLn \times H^1(\pi_1(\Sigma_{\tilde E}), \bR)\]
with the fiber isomorphic to $H^1(\pi_1(\Sigma_{\tilde E}), \bR^{n \ast}_{\hat h, \lambda}) $ 
for each $([\hat h], \lambda)$. 
\end{theorem}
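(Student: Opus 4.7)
The plan is to decompose $\SLnp_{\bv_{\tilde E}}$ as a semidirect product and translate this into a fibration structure on the representation variety using standard nonabelian cohomology.

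First I would observe that the matrix description gives an isomorphism
\[ \SLnp_{\bv_{\tilde E}} \;\cong\; \bR^{n\ast} \rtimes \bigl(\SLn \times \bR_+\bigr), \]
where the factor $\SLn \times \bR_+$ is realized by taking $\vec{v}_g = 0$, the abelian normal subgroup $\bR^{n\ast}$ is the kernel of $\mathcal{L}'$, and the $(\SLn\times\bR_+)$-action on $\bR^{n\ast}$ is read off from the multiplication rule, giving precisely the module $\bR^{n\ast}_{\hat h, \lambda}$ once a choice of $(\hat h,\lambda)$ is fixed. Note that $\bR_+$ is central in $\SLn \times \bR_+$, so conjugation by $\SLn \times \bR_+$ acts only through $\SLn$, and $\Hom(\pi_1(\Sigma_{\tilde E}), \bR_+) = H^1(\pi_1(\Sigma_{\tilde E}),\bR)$ via the logarithm.

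Next, given a representation $\rho:\pi_1(\Sigma_{\tilde E})\to \SLnp_{\bv_{\tilde E}}$, the projection $\mathcal{L}'\circ\rho$ produces the pair $(\hat h, \lambda)$; this yields a well-defined map from the total space on the left to the claimed base. Writing $\rho(g) = (\hat h(g),\vec v(g),\lambda(g))$ and expanding the cocycle relation $\rho(gh)=\rho(g)\rho(h)$ using the given multiplication formula, one sees directly that the correction data $\vec v$ must satisfy the $1$-cocycle identity for the module $\bR^{n\ast}_{\hat h,\lambda}$, so the fiber over $(\hat h,\lambda)$ is $Z^1(\pi_1(\Sigma_{\tilde E}),\bR^{n\ast}_{\hat h,\lambda})$. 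Conversely every such cocycle assembles with $(\hat h,\lambda)$ into a homomorphism into $\SLnp_{\bv_{\tilde E}}$, so the fiber is exactly the space of $1$-cocycles.

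Then I would take the quotient by conjugation. Conjugation by an element of the kernel $\bR^{n\ast}$ changes $\vec v$ by a coboundary in $B^1(\pi_1(\Sigma_{\tilde E}),\bR^{n\ast}_{\hat h,\lambda})$, while leaving $(\hat h,\lambda)$ untouched; this reduces $Z^1$ to $H^1(\pi_1(\Sigma_{\tilde E}),\bR^{n\ast}_{\hat h,\lambda})$ fiberwise. Conjugation by the complementary $\SLn \times \bR_+$ acts on the base as conjugation in $\SLn$ on $\hat h$ (trivially on $\lambda$, since $\bR_+$ is central) and carries fibers to fibers covariantly under the induced identification of modules; consequently the full quotient by $\SLnp_{\bv_{\tilde E}}$ fibers over $\Hom(\pi_1(\Sigma_{\tilde E}),\SLn)/\SLn \times H^1(\pi_1(\Sigma_{\tilde E}),\bR)$ with the stated fiber.

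The main thing to be careful about is the bookkeeping of the semidirect-product action and of the non-uniform twisting: the exponent $1/n$ in the factor $\lambda(g)^{-1/n}$ must be tracked consistently both in the cocycle equation for $\vec v$ and in the conjugation computation, to confirm that what one obtains is indeed the module $\bR^{n\ast}_{\hat h,\lambda}$ as defined, and not some reparametrization of it. Beyond this, the result is a standard instance of the principle that $\Hom$ into a semidirect product fibers over $\Hom$ into the quotient with fibers described by twisted first cohomology, and no further geometric input from the convex projective structure is needed at this step.
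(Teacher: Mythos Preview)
Your proposal is correct and follows exactly the approach the paper has in mind: the paper sets up the semidirect product structure, the multiplication rule, the module $\bR^{n\ast}_{\hat h,\lambda}$, and the identification $\Hom(\pi_1(\Sigma_{\tilde E}),\bR_+)=H^1(\pi_1(\Sigma_{\tilde E}),\bR)$ in the paragraphs immediately preceding the theorem, then states the theorem without further proof, treating it as a direct consequence of that setup. Your write-up supplies the routine cocycle and conjugation bookkeeping the paper leaves implicit, and your caveat about tracking the $\lambda(g)^{-1/n}$ twist is well placed.
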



We remark that we don't really understand the fiber dimensions and their behavior as we change 
the base points. A similar idea is given by Mess  \cite{Mess}. 
In fact the dualizing these matrices gives us
a representation to $\Aff(A^n)$. In particular if we restrict ourselves 
to linear parts to be in $\SO(n, 1)$, then we are exactly in the cases studied by Mess. 
(See the concept of the duality in Subsection \ref{sub:affdualtub} and Appendix \ref{app:dual}.)
Thus, one interesting question that Benoist and we talked about is how to compute the dimension of 
$H^1(\pi_1(\Sigma_{\tilde E}), \bR^{n \ast}_{\hat h, \lambda}) $ under some general conditions on $\hat h$.

\subsubsection{Tubular actions.}

Let us give a pair of antipodal points $\bv$ and $\bv_-$. 
If a group $\Gamma$ of projective automorphisms fixes a pair of fixed points $\bv$ and $\bv_-$, 
then $\Gamma$ is said to be {\em tubular}.
There is a projection $\Pi_{\bv}: \SI^n -\{\bv, \bv_-\} \ra \SI^{n-1}_{\bv}$ given 
by sending every great segments with endpoints $\bv$ and $\bv_-$
to the sphere of directions at $\bv$. 
(We denote by $\bR P^{n-1}_{\bv}$ the quotient of $\SI^{n-1}_\bv$ under the antipodal map 
given by the change of directions. 
We use the same notation $\Pi_{\bv}: \bR P^n -\{\bv\} \ra \bR P^{n-1}_{\bv}$ the induced projection.)

A {\em tube} in $\SI^n$ (resp. in $\bR P^n$) is the closure of the inverse image of a convex domain $\Omega$
in $\SI^{n-1}_{\bv}$ (resp. in $\bR P^{n-1}_{\bv}$).
Given a p-R-end $\tilde E$ of $\torb$, the {\em end domain} is $R_{\bv}(\torb)$. 
If a p-R-end $\tilde E$ has the end domain $\tilde \Sigma_{\tilde E}$, 
$h(\pi_1(\tilde E))$ acts on the tube domain ${\mathcal T}_{\tilde E}$ associated with $\tilde \Sigma_{\tilde E}$. 

We will now discuss for the $\SI^n$-version but the $\bR P^n$ version is obviously clearly obtained from this 
by minor discussions. 

Letting $\bv$ have the coordinates $[0, \dots, 0, 1]$, we obtain 
the matrix of $g$ of $\pi_1(\tilde E)$ of form 
\begin{equation}\label{eqn:bendingm3} 
\left(
\begin{array}{cc}
\frac{1}{\lambda_{\bv}(g)^{\frac{1}{n}}} \hat h(g)          &       0                \\
\vec{b}_g           &      \lambda_{\bv}(g)                  
\end{array}
\right)
\end{equation}
where $\vec{b}_g$ is an $n\times 1$-vector and $\hat h(g)$ is an $n\times n$-matrix of determinant $\pm 1$
and $\lambda_{\bv}(g) $ is a positive constant. 

Note that the representation $\hat h: \pi_1(\tilde E) \ra \SLn$ is given by 
sending $g \mapsto \hat h(g)$. Here we have $\lambda_{\bv}(g) > 0$.  
If $\tilde \Sigma_{\tilde E}$ is properly convex, then the convex tubular domain is {\em properly tubular}
and the action is said to be {\em properly tubular}. 

\subsubsection{Affine actions  dual to tubular actions.}\label{sub:affdualtub}





The automorphism in $\SI^n$ acting on a codimension-one subspace ${\SI^{n-1}_\infty}$ of ${\mathcal S}(\bR^{n+1})$ and 
the components of the complement
acts on an affine space $A^n$, a component of the complement of ${\SI^{n-1}_\infty}$.  
The subgroup of projective automorphisms preserving ${\SI^{n-1}_\infty}$ and the components equals
the affine group $\Aff(A^n)$.

By duality, a great $(n-1)$-sphere ${\SI^{n-1}_\infty}$ corresponds to a point $\bv_{\SI^{n-1}_\infty}$. 
Thus, for a group $\Gamma$ in $\Aff(A^n)$, 
the dual groups $\Gamma^*$ acts on ${\mathcal S}(\bR^{n+1, *})$ fixing $\bv_{\SI^{n-1}_\infty}$.
(See Proposition \ref{prop:duality} also.)
%




Suppose that $\Gamma$ acts on a properly convex open domain $U$ where $\Omega := \Bd U \cap {\SI^{n-1}_\infty}$
is a properly convex domain. 
We call $\Gamma$ a {\em properly convex affine} action.
We claim that the dual group $\Gamma^*$ acts on a properly tubular
domain $B$ with vertices $\bv:= \bv_{\SI^{n-1}_\infty}$ and $\bv_- := \bv_{{\SI^{n-1}_\infty}, -}$.
The domain $\Omega^o$ and domain $R_{\bv}(B)$ in the linking sphere $\SI^{n-1}_{\bv}$ from $\bv$ in direction of $B^o$
are projectively diffeomorphic to a pair of dual domains: 
\begin{itemize}
\item Given $\Omega^o$, we obtain the dual domain $\Omega^{o\ast}$ in $\SI^{n-1 \ast}_\infty$. 
\item A supporting $n-1$-hemisphere in $\SI^{n-1}_\infty$ of $\Omega$ corresponds to 
a point of $\Bd \Omega^{o\ast}$ and vice versa. 
\item An open $n$-hemisphere supporting $\Omega^o$ contains an $n-1$-hemisphere in $\SI^{n-1}$
supporting $\Omega^o$. 
\item The set of $n$-hemispheres containing a fixed supporting $n-1$-hemisphere of $\SI^{n-1}$ and  supporting  $\Omega^o$  
forms a great open segment in $\SI^{n \ast}$ with end points $\bv$ and $\bv_-$. 
\item The set $\Bd \Omega^{o\ast}$ parametrizes the space of such open segments. 
Let $I_x$ for $x \in \Bd \Omega^{o\ast}$ denote such a segment. 
\item  $\bigcup_{x\in  \Bd \Omega^{o\ast}} \clo(I_x)$ is 
the boundary of a convex tube $B:= {\mathcal T}(\Omega^{o\ast})$ with vertices $\bv$ and $\bv_-$. 
\item Thus, there is a one-to-one correspondence between 
the set of open $n$-hemispheres supporting $\Omega^o$ and the set of $\Bd {\mathcal T}(\Omega^{o\ast}) -\{\bv, \bv_-\}$.
(Also, $R_{\bv}(B) = \Omega^{o\ast}$.)
\end{itemize}



Given a convex open subset $U$ of $A^n$, an asymptopic hyperspace $H$ of $U$ at 
a point $x \in \Bd A^n \cap \clo(\Bd U)$ is a hyperspace so that a component of $A^n -H$ contains $U$.

\begin{definition}\label{defn:tubular}
A properly tubular action is said to be {\em distanced} if the tubular domain contains 
a properly convex compact $\Gamma$-invariant subset disjoint from the vertices. 
A properly convex affine action of $\Gamma$ is said to be {\em asymptotically nice} if 
$\Gamma$ acts on a properly convex open domain $U'$ in $A^n$ with boundary in 
$\Omega \subset {\SI^{n-1}_\infty}$ 
so that a supporting hyperspace $H_x$ exists with $H_x$ at each point $x \in \Bd \Omega$ 
is not in ${\SI^{n-1}_\infty}$.
\end{definition}
(Here, we can choose $H_x$ so that $\{H_x| x\in \Bd \Omega\}$ is $\Gamma$-invariant by choosing 
the supporting hyperspaces to $U$ in $A^n$ for each $x$.)



\begin{proposition}\label{prop:dualDA} 
Let $\Gamma$ and $\Gamma^*$ be dual groups where $\Gamma$ has an affine action on $A^n$ and $\Gamma^*$ is tubular with 
the vertex $\bv = \bv_{\SI^{n-1}_\infty}$ dual to the boundary $\SI^{n-1}_\infty$ of $A^n$.
Let $\Gamma= (\Gamma^*)^*$ acts on a convex open domain $\Omega$ with compact $\Omega/\Gamma$.
Then $\Gamma$ acts asymptotically nicely if and only if 
$\Gamma^*$ acts on a properly tubular domain $B$ and is distanced. 
\end{proposition}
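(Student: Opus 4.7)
The plan is to set up the correspondence described in the bullets preceding the proposition and push it through in both directions, using cocompactness of $\Omega/\Gamma$ to upgrade pointwise information into uniform information. Throughout, I identify supporting $n$-hemispheres of a convex set in $\SI^n$ with their dual points in $\SI^{n\ast}$, and I use that the set of $n$-hemispheres supporting $\Omega^o$ that contain a fixed supporting $(n-1)$-hemisphere at $x\in\Bd\Omega$ is precisely the open segment $I_{x^\ast}\subset\Bd\mathcal{T}(\Omega^{o\ast})$ with endpoints $\bv,\bv_-$, where $x^\ast\in\Bd\Omega^{o\ast}$ is the dual point of the supporting $(n-1)$-hemisphere. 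Thus, a supporting hyperspace $H_x$ of the relevant domain at $x$ with $H_x\neq\SI^{n-1}_\infty$ corresponds to a point $H_x^\ast\in I_{x^\ast}\setminus\{\bv\}$, and the whole tube $\Bd\mathcal{T}(\Omega^{o\ast})\setminus\{\bv,\bv_-\}$ parametrizes such hyperspaces.

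First, assume $\Gamma$ is asymptotically nice, witnessed by the $\Gamma$-invariant, properly convex open $U'\subset A^n$ with $\clo(U')\cap\SI^{n-1}_\infty=\clo(\Omega)$ together with a $\Gamma$-equivariant family $\{H_x\}_{x\in\Bd\Omega}$ of supporting hyperspaces not equal to $\SI^{n-1}_\infty$. Dualizing yields a $\Gamma^\ast$-invariant set $S:=\{H_x^\ast:x\in\Bd\Omega\}\subset\Bd\mathcal{T}(\Omega^{o\ast})\setminus\{\bv\}$. Since $H_x$ is a supporting hyperspace to the properly convex $U'$ inside the affine chart $A^n$, it does not contain the antipodal direction $\bv_-$ either, so $S$ is also away from $\bv_-$. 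Cocompactness of $\Bd\Omega/\Gamma$ then provides a uniform spherical distance $\eps>0$ with $\bdd(S,\{\bv,\bv_-\})\geq\eps$. I then take $K:=\clo(\mathrm{ConvHull}(S))$ inside the tube $\mathcal{T}(\Omega^{o\ast})$; this is $\Gamma^\ast$-invariant, compact, properly convex, and still disjoint from $\{\bv,\bv_-\}$ because $\bv,\bv_-$ are the only great-segment endpoints of the tube and $S$ avoids both with a uniform gap. This is the distanced invariant set for $\Gamma^\ast$.

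Conversely, assume $\Gamma^\ast$ preserves a properly convex compact $K\subset\mathcal{T}(\Omega^{o\ast})\setminus\{\bv,\bv_-\}$. I may enlarge $K$ equivariantly so that its radial projection from $\bv$ covers all of $\Omega^{o\ast}$, using $(\Omega^o/\Gamma)^\ast$-cocompactness. Each point of $K\cap\Bd\mathcal{T}(\Omega^{o\ast})$ corresponds to an open $n$-hemisphere in $\SI^n$ supporting $\Omega^o$; intersecting all such hemispheres (equivalently, intersecting the half-spaces dual to points of $K$) produces a $\Gamma$-invariant convex open set $U'\subset\SI^n$ with $\clo(U')\cap\SI^{n-1}_\infty=\clo(\Omega)$. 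Because $K$ avoids $\bv$ and $\bv_-$, none of the dual hyperspaces is $\SI^{n-1}_\infty$, so $U'\subset A^n$ is properly convex. For every $x\in\Bd\Omega$ the corresponding fiber $I_{x^\ast}$ meets $K$ in a point $p_x$ (from cocompactness and boundedness away from the vertices), and the hyperspace $H_x$ dual to $p_x$ is then a supporting hyperspace of $U'$ at $x$ with $H_x\neq\SI^{n-1}_\infty$. The family is $\Gamma$-equivariant by construction. Hence $\Gamma$ acts asymptotically nicely.

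The main obstacle I anticipate is the second direction: specifically, showing that after dualizing the distanced invariant set $K$ one actually obtains a \emph{properly convex} $U'$ in $A^n$ whose ideal boundary is exactly $\clo(\Omega)$, rather than something larger or with extra great segments, and that the fiber $I_{x^\ast}\cap K$ is nonempty for every $x\in\Bd\Omega$. Both points rely on being able to replace $K$ by a suitable $\Gamma^\ast$-invariant enlargement inside the tube (e.g.\ a convex hull with the radial segments from $K$) without reintroducing $\bv$ or $\bv_-$ into its closure, which is where the ``disjoint from the vertices'' hypothesis is used in its full strength together with the compactness of $\clo(\Omega^{o\ast})/\Gamma^\ast$.
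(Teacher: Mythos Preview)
Your approach is essentially the paper's: dualize the family $\{H_x\}$ to points on $\Bd B$, pass to a convex hull to obtain the distanced set, and run the construction in reverse for the converse. For the converse the paper does something close to your enlargement step: it intersects $K$ with $\Bd B\setminus\{\bv,\bv_-\}$ and takes the frontier of the component of the complement whose closure contains $\bv$, which produces a $\Gamma^\ast$-invariant set meeting every boundary segment $I_{x^\ast}$; the half-spaces dual to its points are then intersected to produce the properly convex $U'$.

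One step does not go through as written: your appeal to ``cocompactness of $\Bd\Omega/\Gamma$'' to extract the uniform gap $\bdd(S,\{\bv,\bv_-\})\geq\eps$. The group $\Gamma$ does \emph{not} act cocompactly on $\Bd\Omega$; when $\Omega/\Gamma$ is compact and $\Gamma$ is irreducible, every $\Gamma$-orbit in $\Bd\Omega$ is dense, the quotient $\Bd\Omega/\Gamma$ is non-Hausdorff, and there is no compact fundamental domain on which to minimize a $\Gamma$-invariant positive function. The paper does not use $\Bd\Omega$ for this bound at all: it uses that every $H_x$ is a supporting hyperspace of the \emph{fixed} properly convex nonempty domain $U'\subset A^n$. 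The open half-space $H_x^+\supset U'$ cannot $\bdd_H$-approach the hemisphere $\SI^n\setminus A^n$, since then $\bigcap_x H_x^+\supset U'$ would be empty; this is the paper's parenthetical ``otherwise $U$ would become empty,'' and it gives the uniform bound from $\bv_-$. All of the dual points $H_x^\ast$ also lie in the compact set $\Bd(U'^\ast)$, which together with the hypothesis $H_x\neq\SI^{n-1}_\infty$ is what keeps them away from $\bv$. Replace your cocompactness sentence by this argument based on the fixed domain $U'$ and its dual.
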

\begin{proof} 
For each point $x$ of $\Bd \Omega$, 
an open hemisphere in $\SI^n$ at $x$ supports $\Omega$
uniformly bounded at a distance in the Hausdorff metric $\bdd_H$-sense from the open hemisphere $A^n$ with boundary ${\SI^{n-1}_\infty}$
or $\SI^n - A^n$. We choose the set so that $\Gamma$ acts on it. 
(Otherwise, $U$ would become empty.) 

The dual points of the supporting hyperplanes of $U$ at $\Bd \Omega$ are points on $\Bd B$ for a tube domain $B$
with vertex $\bv$ dual to $\SI^{n-1}_\infty$. 
Since the hyperspheres of form $H$ supporting $U$ at $x \in \Bd \Omega$, 
are bounded at a distance from ${\SI^{n-1}_\infty}$  in the $\bdd_H$-sense, the dual points are uniformly bounded 
at a distance from the vertices $\bv$ and $\bv_-$. Let us call this compact set $K$. Then for every point of 
$\Bd \Omega^*$ the boundary of the dual $\Omega^* \subset \SI^{n-1}_{\bv}$ of $\Omega$, 
we have a point of $K$ in the corresponding great segment from $\bv$ to $\bv_-$.
$K$ is uniformly bounded at a distance from $\bv$ and $\bv_-$  in the $\bdd$-sense.
The convex hull of $K$ in $\clo(\torb)$  is a compact convex set bounded at a uniform distance from $\bv$ and $\bv_-$
since the tube domain is properly convex. 
Since $K$ is $\Gamma$-invariant, so is the convex hull in $\clo(\torb)$. 

Conversely, every compact convex subset $K$ of the tubular domain $B$ bounded away from 
$\bv$ and $\bv_-$ meets a great segment from $\bv$ to $\bv_-$ at a point bounded away from the end points. 
Let $A'$ denote the set $\Bd B - \{\bv, \bv_-\}$. 
Then $K \cap A'$ is a compact convex and $\Gamma$-invariant and bounded away from $\bv, \bv_-$. 
We denote by $K'$ the boundary of a component $A'- K$ containing $\bv$ in its closure. 
Then $K'$ is again $\Gamma$-invariant. 


Each point $x$ of $K'$ is dual to a hypersphere $P$ in $\SI^n$ bounded at a distance from ${\SI^{n-1}_\infty}$ 
since $x$ is bounded at a distance from $\bv, \bv_-$. 
Since $x \in \Bd B$, $P$ must be a supporting plane to the dual of $B$, a convex domain $\Omega$ in ${\SI^{n-1}_\infty}$; and 
$P \cap A^n$ is a complete hyperplane with a point of $\Bd \Omega$ its boundary in $\SI^n$. 
The intersection of the corresponding half-spaces in $A^n$ is not empty and is a properly convex open domain.



\end{proof}

\begin{theorem}\label{thm:distanced}
Let $\Gamma$ be a nontrivial properly convex tubular action at vertex $\bv = \bv_{\SI^{n-1}_\infty}$ on 
$\SI^n$ {\rm (}resp. in $\bR P^n${\rm )} 
and acts on a properly convex tube $B$
and satisfies the uniform middle-eigenvalue conditions. 
Then $\Gamma$ is distanced inside the tube $B$  where $\Gamma$ acts on 
and the minimal distanced $\Gamma$-invariant compact set $K$ in $B$ 
is uniquely determined. Furthermore, $K$ meets each open boundary segment in $\partial B$ 
at unique point. 
\end{theorem}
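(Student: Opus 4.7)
The plan is to construct the minimal distanced compact set $K$ explicitly as the convex hull of the closure of attracting fixed points of hyperbolic elements, and then verify minimality, uniqueness, and the transversality to boundary segments.

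For each $g \in \bGamma$ whose induced action $\hat h(g)$ on $\Omega := R_{\bv}(B)$ is hyperbolic with attracting fixed point $a_g \in \Bd \Omega$, the element $g$ preserves the $2$-dimensional linear subspace $V_g$ spanned by a lift of $a_g$ and the direction of $\bv$. Using the matrix form from Section \ref{sub:holfib}, on $V_g$ the map $g$ acts as a lower-triangular $2\times 2$ matrix with diagonal entries $\bar\lambda(g)$ (on the $a_g$-direction, after normalization) and $\lambda_{\bv}(g)$ (on $\bv$). The uniform middle eigenvalue condition \eqref{eqn:umec} forces $\bar\lambda(g) > \lambda_{\bv}(g)$ strictly for every hyperbolic $g$, producing a unique fixed point $\tilde a_g$ on the open segment $\ell_{a_g} \subset \Bd B$ distinct from $\bv$ and $\bv_-$. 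Set $\Lambda := \clo\{\tilde a_g : g \in \bGamma \text{ hyperbolic on } \Omega\}$; this is $\bGamma$-invariant since $\tilde a_{hgh^{-1}} = h(\tilde a_g)$.

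The first main task is to prove $\Lambda$ is uniformly separated from $\{\bv, \bv_-\}$. The affine coordinate of $\tilde a_g$ on $\ell_{a_g}$ in the chart induced by $V_g$ is proportional to $(\vec b_g \cdot w_{a_g})/(\bar\lambda(g) - \lambda_{\bv}(g))$, where $w_{a_g}$ is an eigenvector lift of $a_g$ and $\vec b_g$ is the translation part of $g$. By cocompactness on $\Omega$ one conjugates so that $a_g$ lies in a fixed compact fundamental region; then $w_{a_g}$ can be normalized and $\vec b_g \cdot w_{a_g}$ is bounded. Combining this with the lower bound $\log(\bar\lambda(g)/\lambda_{\bv}(g)) \geq C^{-1} \leng_\Omega(g)$ from \eqref{eqn:umec}, together with the positive infimum of $\leng_\Omega(g)$ over hyperbolic conjugates whose attractors meet the fundamental region, gives that the coordinate of $\tilde a_g$ is bounded and bounded away from $0$ and $\infty$. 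Hence $\Lambda$ is compact and disjoint from $\{\bv, \bv_-\}$. Defining $K$ to be the convex hull of $\Lambda$ in $\clo(B)$, we obtain a properly convex, compact, $\bGamma$-invariant set disjoint from the vertices, so $\bGamma$ is distanced. For minimality, any $\bGamma$-invariant compact convex $K' \subset B$ disjoint from $\{\bv, \bv_-\}$ must contain every $\tilde a_g$: given $y \in K'$, the dominance $\bar\lambda(g) > \lambda_{\bv}(g)$ ensures $g^n(y) \to \tilde a_g$ as $n \to \infty$, with the limit in $K'$ by compactness. Therefore $\Lambda \subset K'$ and $K \subset K'$, proving $K$ is uniquely determined. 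For the transversality property, approximate each $x \in \Bd \Omega$ by attractors $a_{g_n}$ (dense by Proposition \ref{prop:Ben2}); the uniform bounds guarantee a canonical limit point of $\tilde a_{g_n}$ on $\ell_x$, and minimality of $K$ forbids $K \cap \ell_x$ from containing more than this single point, since otherwise a proper $\bGamma$-invariant compact convex subset of $K$ could be obtained by truncating along the excess segments.

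The principal obstacle is the uniform bound on the position of $\tilde a_g$ on $\ell_{a_g}$. The eigenvalue gap from \eqref{eqn:umec} must be combined delicately with control of the translation parts $\vec b_g$, using both cocompactness on $\Omega$ and the semisimple reductive structure of the Zariski closure provided by Proposition \ref{prop:Ben2}. A secondary difficulty is the final transversality claim, where one must verify that the minimal $K$ has a codimension-one trace on $\Bd B$ transverse to each boundary segment; this likely requires either the explicit convex-hull description of $K$ or a separate rigidity argument showing that any segmental "thickening" of $K$ along some $\ell_x$ can be trimmed while preserving $\bGamma$-invariance and convexity.
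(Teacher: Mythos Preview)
Your approach is genuinely different from the paper's and is worth comparing. The paper does not construct the distanced set directly inside the tube at all: it passes to the dual affine action, invokes Theorem~\ref{thm:asymnice} (proved in Appendix~\ref{app:dual} via the Anosov geodesic flow and the Goldman--Labourie--Margulis neutralized-section construction) to obtain asymptotic niceness, and then uses Proposition~\ref{prop:dualDA} to dualize back. The uniqueness and the ``one point per boundary segment'' claim also come from the uniqueness in Theorem~\ref{thm:asymnice}. Your minimality argument (any invariant compact convex $K'$ must contain every $\tilde a_g$ since $g^n(y)\to\tilde a_g$) is correct and more transparent than what the paper does, and your identification of $K$ with the convex hull of the closure of attracting fixed points is the right picture.

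However, there is a real gap at exactly the step you flag as the ``principal obstacle'': the uniform separation of $\Lambda$ from $\{\bv,\bv_-\}$. Your proposed mechanism --- conjugate so that $a_g$ lies in a fixed compact fundamental region, then declare $\vec b_g\cdot w_{a_g}$ bounded --- does not work as stated. First, $a_g\in\Bd\Omega$, and there is no compact fundamental domain for $\Gamma$ on $\Bd\Omega$; conjugating so that the \emph{axis} of $g$ meets a fundamental domain $F\subset\Omega$ does not confine $a_g$ to a compact subset of $\Bd\Omega$. Second, and more seriously, even granting a normalization of $w_{a_g}$, the translation part $\vec b_g$ is a cocycle value with no a~priori bound: the uniform middle-eigenvalue condition controls the ratio $\bar\lambda(g)/\lambda_{\bv}(g)$, not the numerator $\vec b_g\cdot w_{a_g}$. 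Showing that the ratio $\dfrac{\vec b_g\cdot w_{a_g}}{\bar\lambda(g)-\lambda_{\bv}(g)}$ is uniformly bounded is precisely the content that the paper outsources to the Anosov flow argument in Appendix~\ref{app:dual}. Without that (or a genuine replacement, e.g.\ a direct estimate coming from a hyperbolicity/contraction argument on the bundle $\bV_-$ over $U\Sigma$), the existence part of your proof is incomplete. As a smaller point, you also do not treat the reducible case $\Gamma\cong\bZ^{l_0-1}\times\Gamma_1\times\cdots\times\Gamma_{l_0}$, which the paper handles separately by building $K$ as a join of the minimal sets for the hyperbolic factors together with fixed points for the central $\bZ^{l_0-1}$.
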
 
\begin{proof}
Let $\bv$ be the vertex of  $B$. 
First assume that $\Gamma$ induces an irreducible action on the link sphere $\SI^{n-1}_{\bv}$. 
The dual group $\Gamma^*$ acts on a properly convex domain $U^* \subset \SI^n$ dual to $U$.  
Then the closure of $U^*$ meets ${\SI^{n-1}_\infty}$ in a domain $\Omega^*$ dual to the convex domain in $\SI^{n-1}_{\bv}$
corresponding to the tube of $\Gamma$.
By Theorem \ref{thm:asymnice}, $\Gamma^*$ is asymptotically nice. 
Proposition \ref{prop:dualDA} implies the result.
The uniqueness part of Theorem \ref{thm:asymnice} 
implies the uniqueness of the minimal set and the last statement.

Suppose that $\Gamma$ acts reducibly on $\SI^{n-1}_{\bv}$. 
Then $\Gamma$ is isomorphic to $\bZ^{l_0-1} \times \Gamma_1 \times \dots \times \Gamma_{l_0}$ where 
$\Gamma_i$ is nontrivial hyperbolic for $i=1, \dots, s$ and trivial for $s+1 \leq i \leq l_0$ where $s \leq  l_0$. 
Each $\Gamma_i$ for $i = 1, \dots, s$ acts on a nontrivial tube $B_i$ with vertices $\bv$ and $\bv_-$ in a subspace. 
By above it is a distanced action and it acts on a $\Gamma_i$-invariant compact convex set $K_i \subset B_i$
disjoint from $\{ \bv, \bv_-\}$. 

For each $i$, $s+1 \leq i \leq r$,  $B_i$ is a great segment with endpoints $\bv$ and $\bv_-$. 
A point $p_i$ corresponds to $B_i$ in $\SI^{n-1}_{\bv}$. 
For $g$ in the center, we have $\lambda_1(g) > \lambda_{\bv}(g)$
for the eigenvalues in $\SI^n$. 
Hence, some central $g$ fixes a point $p'_i \in B_i$ where $\lambda_1(g) > \lambda_{\bv}(g)$ holds. 
By commutativity, $p'_i$ is a fixed point of every element of $\bZ^{s-1}$. 

The convex hull of
\[K_1 \cup \cdots \cup K_s \cup \{p'_{s+1}, \dots, p'_{l_0}\}\] 
in $\clo(B)$
is a distanced $\Gamma$-invariant compact convex set. 

Suppose that $\Gamma$ satisfies the uniform middle-eigenvalue conditions. 
Then $K_i$ for $i=1, \dots, s$ is unique by Theorem \ref{thm:asymnice} and Proposition \ref{prop:dualDA}. 
Also, $p'_i$ is uniquely determined for $i \geq s+1$. 
\end{proof}

\subsubsection{The duality of T-ends and properly convex R-ends}\label{sub:dualend}

The Vinberg duality diffeomorphism induces a one-to-one correspondence between p-ends of $\torb$ and $\torb^*$ 
by considering the dual relationship $\bGamma_{\tilde E}$ and $\bGamma^*_{\tilde E'}$ for each pair of 
p-ends $\tilde E$ and $\tilde E'$ with dual p-end fundamental groups.



\begin{proposition}\label{prop:dualend} 
Let $\orb$ be a strongly tame properly convex real projective manifold with radial ends or totally geodesic ends.
Then the dual real projective orbifold $\orb^*$ is also strongly tame and has the same number of ends so that 
\begin{itemize} 
\item there exists a one-to-one correspondence $\mathcal{C}$ between the set of ends of $\orb$ and the set of ends of $\orb^*$. 
\item $\mathcal{C}$ restrict to such a one  between the subset of horospherical ends of $\orb$ and the subset of horospherical ones of $\orb^*$.
\item $\mathcal{C}$ restrict to such a one  between the subset of 
totally geodesic ends of $\orb$ with the subset of ends of properly convex radial ones of $\orb^*$.
The ideal boundary $\tilde S_{\tilde E}$ is projectively dual to 
$\tilde \Sigma_{\tilde E^*}$ for the corresponding dual end $\tilde E^*$ of $\tilde E$. 
\item $\mathcal{C}$ restrict to such a one  between 
the subset of all properly convex R-ends of $\orb$ and the subset of all T-ends of $\orb^*$. 
Also, $\tilde \Sigma_{\tilde E}$ is projectively dual to the ideal boundary $\tilde S_{\tilde E^*}$ 
for the corresponding dual end $\tilde E^*$ of $\tilde E$. 
\end{itemize} 
\end{proposition}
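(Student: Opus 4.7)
The plan is to build the end correspondence on top of the Vinberg duality diffeomorphism $\torb \to \torb^{\ast}$ of Theorem \ref{thm:dualdiff}, which is equivariant with respect to the group duality $\bGamma \leftrightarrow \bGamma^{\ast}$. First I would observe that because this is a diffeomorphism commuting with the group actions, strong tameness of $\orb$ passes to $\orb^{\ast}$ and p-end fundamental groups $\bGamma_{\tilde E}$ correspond bijectively to dual subgroups $\bGamma_{\tilde E}^{\ast}$ acting on p-end neighborhoods of $\torb^{\ast}$. This provides the overall bijection $\mathcal{C}$ between ends once one verifies that the types (horospherical, properly convex R-end, T-end) are exhaustive for both orbifolds.

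Next I would dispatch the horospherical case using the eigenvalue characterization from Proposition \ref{prop:affinehoro}(ii)(iii): $\tilde E$ is horospherical iff every eigenvalue norm of every element of $\bGamma_{\tilde E}$ equals $1$ and $\bGamma_{\tilde E}$ is virtually conjugate into a cusp subgroup of $\SO(n,1)$. Since the dual action replaces $g$ by $g^{-T}$, the set of norms of eigenvalues is inverted but still equals $\{1\}$, and parabolic subgroups of $\SO(n,1)$ are self-dual up to conjugation because $\SO(n,1)$ is. Combined with Proposition \ref{prop:affinehoro}(v), which forbids segments on $\Bd\torb$ at a horospherical p-end vertex, one sees the dual end has a single boundary fixed point with no boundary segments through it, so it is again horospherical (hence cuspidal by Theorem \ref{thm:mainaffine}).

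For the T-end versus properly convex R-end correspondence, the key is the duality map $\mathcal{D}:\Bd^{\Ag}\torb \to \Bd^{\Ag}\torb^{\ast}$ of Proposition \ref{prop:duality}. A T-end $\tilde E$ comes with a totally geodesic ideal boundary $\tilde S_{\tilde E}$ lying in a hyperplane $H$; the point $\bv_H \in \SI^{n\ast}$ dual to $H$ is a common supporting hyperplane for all boundary points of $\tilde S_{\tilde E}$ in $\torb^{\ast}$ and is fixed by $\bGamma_{\tilde E}^{\ast}$. I would verify that $\bv_H$ is the p-end vertex of the dual end and that the great segments from $\bv_H$ into $\torb^{\ast}$ parametrize supporting hyperplanes of $\torb$ at points of $\tilde S_{\tilde E}$, so $\mathcal{D}$ restricts to a projective diffeomorphism between $\tilde S_{\tilde E}$ and the dual of $\tilde \Sigma_{\tilde E^{\ast}}$ in the linking sphere $\SI^{n-1}_{\bv_H}$. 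Conversely, a properly convex R-end has a vertex $\bv_{\tilde E}$, and by Proposition \ref{prop:duality} the points of $\Pi_\Ag^{-1}(\bv_{\tilde E})$ dualize to supporting hyperplanes in $\SI^{n\ast}$ forming a hyperplane's worth of $\Bd\torb^{\ast}$; this hyperplane cuts out the totally geodesic ideal boundary $\tilde S_{\tilde E^{\ast}}$ dual to $\tilde \Sigma_{\tilde E}$, with $\bGamma_{\tilde E^{\ast}}$-compact quotient by the cocompactness statement of Proposition \ref{prop:duality}(iv).

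The main obstacle I anticipate is the passage from the set-theoretic duality $\mathcal{D}$ on augmented boundaries to the assertion that the dual picture genuinely defines an end of $\torb^{\ast}$ in the sense of Subsection \ref{sub:ends}, that is, that $\bGamma_{\tilde E}^{\ast}$ acts cocompactly on a product-type neighborhood of the dual vertex or hyperplane with the right radial or totally geodesic structure. This requires using Vinberg's diffeomorphism to transport concrete end-neighborhoods rather than just boundary data, and then identifying the transported neighborhood with the natural radial (respectively totally geodesic) cone over the dual ideal object. Once this compatibility is checked, the three bulleted statements follow by matching the characterizations on each side.
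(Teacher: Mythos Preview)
Your proposal is correct and follows essentially the same route as the paper: Vinberg's diffeomorphism (Theorem \ref{thm:dualdiff}) for strong tameness and the end bijection, self-duality of cusp groups in $\SO(n,1)$ for the horospherical item, and the augmented-boundary duality map $\mathcal{D}$ of Proposition \ref{prop:duality} for the T-end/R-end exchange. Your treatment of the horospherical case via the eigenvalue characterization of Proposition \ref{prop:affinehoro}(ii) is a mild rephrasing of the paper's argument, which instead notes directly that the unique supporting hyperplane at the cusp vertex dualizes to a point with a unique supporting hyperplane on the other side; both come down to the self-duality of parabolic subgroups of $\SO(n,1)$. The obstacle you flag, namely that the transported boundary data actually yields an end-neighborhood of the correct product type, is exactly the step the paper's proof glosses over with phrases like ``hence the corresponding end is radial''; your instinct to invoke the Vinberg diffeomorphism to transport concrete neighborhoods is the right way to close that gap.
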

\begin{proof}
We prove for the $\SI^n$-version.
By the Vinberg dual diffeomorphism of Theorem \ref{thm:dualdiff}, $\orb^*$ is also strongly tame. 
Let $\torb$ be the universal cover of $\orb$. Let $\torb^*$ be the dual domain. 
The first item is proved above. 
Each p-end vertex $x$ of $\torb$ has a supporting hyperplanes of $C$ whose supporting 
linear functions form a properly convex domain of dimension $n-1$ if $x$ corresponds to a properly convex 
p-R-end. 
Since there is a subgroup of a cusp group acting on $\clo(\torb)$ with $x$ fixed, 
the intersection of the unique supporting hyperspace $h$ with $\clo(\torb)$ is a singleton $\{x\}$. 
The dual subgroup is also a cusp group and acts on $\clo(\torb^*)$ with $h$ fixed.
So the corresponding $\torb^*$ has the dual hyperspace $x^*$ of $x$ as the unique intersection 
at $h^*$ dual to $h$ at $\clo(\torb^*)$. Hence $x^*$ is a horospherical end. 

A  p-T-end $\tilde E$ of $\torb$ corresponds to a p-R-end of $\torb^*$ whose p-end fundamental group
acts fixing a point $\bv$ dual to the hyperplane containing $\tilde S_{\tilde E}$. 
The point is in the boundary of $\torb^*$  by Proposition \ref{prop:duality} 
and hence the corresponding end is radial. 
Each point of $\clo(\tilde S_{\tilde E})-\tilde S_{\tilde E}$ corresponds to a supporting hyperplane of $\torb^*$ at $\bv$. 
By taking a linear coordinate centered at $\bv$ of an affine patch containing $\bv$, 
we obtain that $R_{\bv}(\torb^*)$ is projectively dual to the convex domain
projectively diffeomorphic to $\tilde S_{\tilde E}$ by Proposition \ref{prop:duality}.  
The third item follows. The fourth item follows similarly. 
\end{proof} 
An NPCC p-R-end may have a dual end that is not radial nor totally geodesic.  



\begin{proposition}\label{prop:dualend2}
Let $\orb$ be a strongly tame properly convex real projective manifold with radial ends or totally geodesic ends.
The following conditions are equivalent\,{\rm :} 
\begin{itemize} 
\item[(i)] A properly convex R-end of $\orb$ satisfies the uniform middle-eigenvalue condition.
\item[(ii)] The corresponding totally geodesic end of $\orb^*$ satisfies this condition.
\end{itemize} 
\end{proposition}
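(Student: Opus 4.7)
The plan is to reduce the equivalence to a single identity between three logarithmic eigenvalue quantities attached to an element $h\in \bGamma_{\tilde E}$ and its dual $g=(h^{-1})^T \in \bGamma_{\tilde E^*}$, exploiting the geometric duality dictionary of Proposition \ref{prop:dualend}. First, I would set up the dictionary: if $\tilde E$ is the properly convex p-R-end of $\orb$ with p-end vertex $\bv_{\tilde E}$, then the dual p-T-end $\tilde E^*$ of $\orb^*$ has its ideal boundary inside the hyperplane $H^* \subset \bR P^{n\ast}$ projectively dual to $\bv_{\tilde E}$, while the point $\bv^*_{\tilde E^*}\in \bR P^{n\ast\ast}=\bR P^n$ dual to $H^*$ coincides with $\bv_{\tilde E}$. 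Moreover, the strict join $\clo(\tilde\Sigma_{\tilde E})=K_1\ast\cdots\ast K_{l_0}$ is dual factor-by-factor to $\clo(\tilde\Sigma_{\tilde E^*})=K_1^*\ast\cdots\ast K_{l_0}^*$.

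Next, I would compute both defining data on the T-end side using the relation $g=(h^{-1})^T$. Since the dual transformation $g^*=g^T$ equals $h^{-1}$ on $\bR^{n+1}$, its eigenvalue at $\bv^*_{\tilde E^*}=\bv_{\tilde E}$ is $\lambda_{\bv_{\tilde E}}(h)^{-1}$; therefore the T-end vertex eigenvalue is
\[
\lambda_{\bv_{\tilde E^*}}(g) \;=\; \lambda_{\bv_{\tilde E}}(h)^{-1}.
\]
Writing $\lambda_0:=\lambda_{\bv_{\tilde E}}(h)$ and letting $\mu_1(h)\ge \cdots\ge \mu_{n_i+1}(h)$ be the eigenvalues of $h$ on $\hat K_i$ other than $\lambda_0$, the eigenvalues of $g$ on $\vec{H^*}$ are $\mu_j(h)^{-1}$, so the largest one occurring at a fixed point of $K_i^*$ is $\bar\lambda(g)=\mu_{n_i+1}(h)^{-1}$. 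Consequently the T-end version of \eqref{eqn:umec} becomes
\[
\log\!\Bigl(\tfrac{\bar\lambda(g)}{\lambda_{\bv_{\tilde E^*}}(g)}\Bigr)
\;=\;\log\!\Bigl(\tfrac{\lambda_0}{\mu_{n_i+1}(h)}\Bigr),
\]
while the R-end version is $\log(\mu_1(h)/\lambda_0)$.

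The decisive step is the projective self-duality of the Hilbert metric, which yields $\leng_{K_i^*}(g)=\leng_{K_i}(h)$, together with the classical identity $\leng_{K_i}(h)=\tfrac12\log(\mu_1(h)/\mu_{n_i+1}(h))$ for hyperbolic actions on a divisible properly convex domain. Combining these gives the crucial additivity
\[
\log\!\Bigl(\tfrac{\mu_1(h)}{\lambda_0}\Bigr)+\log\!\Bigl(\tfrac{\lambda_0}{\mu_{n_i+1}(h)}\Bigr) \;=\; 2\,\leng_{K_i}(h).
\]
Since the right side is a uniform multiple of $\leng_K(h)=\leng_{K^*}(g)$, a two-sided bound by $C^{-1}\leng_K(h)$ and $C\leng_K(h)$ on either summand automatically transfers to the other summand with adjusted constant, establishing the equivalence (i)$\Leftrightarrow$(ii) in the irreducible case $l_0=1$. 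The admissible case $l_0>1$ reduces to this by applying the identity to each hyperbolic factor $\Gamma_i$ separately, using that $\Gamma_i$ acts trivially on $K_j$ for $j\ne i$ and that the central $\bZ^{l_0-1}$ consists of semisimple elements whose eigenvalue data on the two sides are related in the same reciprocal fashion.

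The main obstacle is bookkeeping the uniform constant across the translation: one needs $C<2$ (equivalently, strict middle placement $\mu_{n_i+1}(h)<\lambda_0<\mu_1(h)$ with uniform logarithmic gap on both sides) to ensure positivity of both summands. This is built into the admissibility hypothesis because both the R-end with its properly convex link and its dual T-end with its properly convex ideal boundary require the vertex eigenvalue to lie strictly between the extremal link-direction eigenvalues. As a cross-check, the same equivalence can be deduced more indirectly by chaining Theorem \ref{thm:distanced} (R-end UME $\Rightarrow$ distanced tubular action), Proposition \ref{prop:dualDA} (distanced tubular $\Leftrightarrow$ asymptotically nice affine), and the Appendix result (affine UME $\Leftrightarrow$ asymptotically nice), giving an alternative geometric route to the same conclusion.
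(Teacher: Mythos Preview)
Your central step—``a two-sided bound on either summand automatically transfers to the other''—does not hold. With the paper's Hilbert metric convention $d_\Omega=\log|[\cdot]|$ (no factor $\tfrac12$), the identity reads
\[
\underbrace{\log\tfrac{\mu_1(h)}{\lambda_0}}_{=:a(h)}+\underbrace{\log\tfrac{\lambda_0}{\mu_{n}(h)}}_{=:b(h)}=\leng_K(h)=:L(h),
\]
not $2L(h)$. From $C^{-1}L\le a\le CL$ and $a+b=L$ one gets only $(1-C)L\le b\le(1-C^{-1})L$; the lower bound is negative for every $C>1$, so no uniform positive lower bound on $b$ follows. Your proposed rescue (``$C<2$'' and ``strict middle placement built into admissibility'') rests on the erroneous factor $2$ and, once corrected, would require $C<1$, which is vacuous. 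Admissibility is a group-theoretic hypothesis and does not by itself supply the uniform gap $\lambda_0>\mu_n$ you need.

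The missing observation is much simpler than additivity: the map $h\mapsto h^{-1}$ interchanges the two summands, $a(h^{-1})=b(h)$ and $L(h^{-1})=L(h)$. Equivalently, parameterize $\bGamma^*_{\tilde E}=\{h^T:h\in\bGamma_{\tilde E}\}$ by $h^T$ rather than by $(h^{-1})^T$; since transpose preserves eigenvalues, the T-end data for $h^T$ are \emph{identical} to the R-end data for $h$:
\[
\bar\lambda(h^T)=\bar\lambda(h),\qquad \lambda_{\bv_{\tilde E^*}}(h^T)=\lambda_{\bv_{\tilde E}}(h),\qquad \leng_{K^*}(h^T)=\leng_K(h).
\]
This is exactly what ``considering equation~\eqref{eqn:bendingm3}'' means in the paper's proof; the two UME conditions are literally the same inequality with the same constant $C$, so no transfer argument is needed at all. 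Your cross-check chain also fails as stated: Theorem~\ref{thm:asymnice} gives only the implication UME $\Rightarrow$ asymptotically nice, not the converse, so the loop does not close.
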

\begin{proof}
The items (i) and (ii) are equivalent by considering equation \ref{eqn:bendingm3}.
\end{proof}


\subsection{The properties of lens-shaped ends} \label{subsec:lens}

One of the main result of this section is that a generalized lens-type end has 
a ``concave end-neighborhood'' that actually covers a p-end-neighborhood.

\begin{figure}
\centering
\includegraphics[height=5cm]{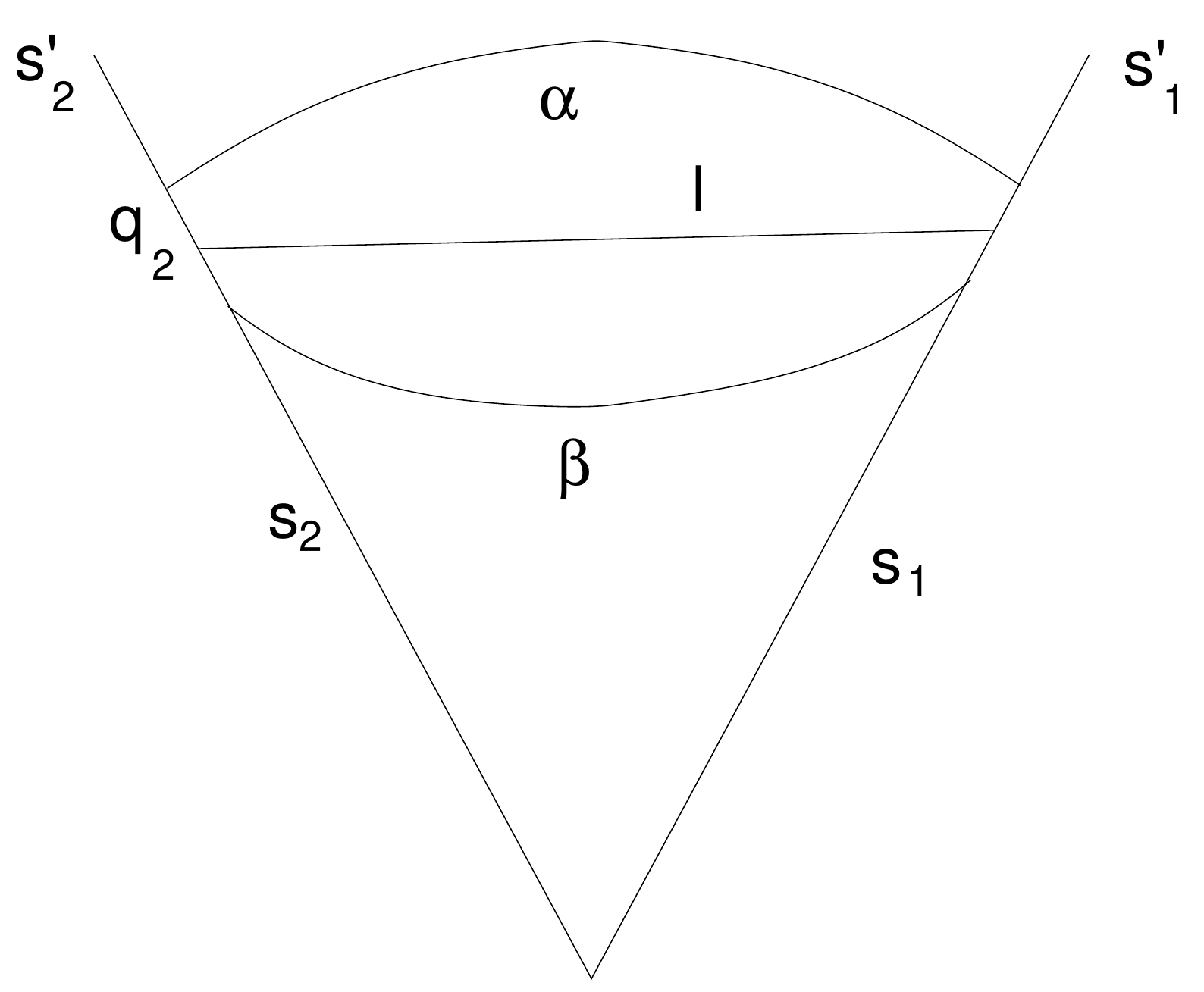}

\caption{The figure for Lemma \ref{lem:recurrent}. }

\label{fig:lenslem}
\end{figure}

Given three sequences of projectively independent points $\{p^{(j)}_i\}$ with $j=1, 2, 3$ 
so that $\{p^{(j)}_i\} \ra p^{(j)}$ where $p^{(1)}, p^{(2)}, p^{(3)}$ are independent points in $\SI^n$. 
Then a simple matrix computation shows that 
a uniformly bounded sequence $\{r_i\}$ of elements of $\Aut(\SI^n)$ or $\PGL(n+1, \bR)$
acts so that $r_i(p^{(j)}_i) = p^{(j)}$ for every $i$ and $j=1, 2, 3$.


A {\em convex arc} is an arc in a totally geodesic subspace where an arc projectively equivalent to 
a graph of a convex function $I \ra \bR$ for a connected interval in $\bR$.

Find the tube $B_{\tilde E}$ of great open segments with end points $\bv_{\tilde E}$ and $\bv_{\tilde E-}$ corresponding to elements of 
$\tilde \Sigma_{\tilde E}$. The union is a convex domain not properly convex with distinguished vertices $\bv_{\tilde E}$ and $\bv_{\tilde E-}$. 

We first need the following technical lemmas on recurrent geodesics.
\begin{lemma}\label{lem:recurrent} 
Let $\mathcal O$ be a properly convex real projective $n$-orbifold with radial or totally geodesic ends. 
Suppose that the universal cover $\torb$ is in $ \SI^n$.
Suppose that $g_i \in \SLnp$ be a sequence of automorphisms
so that $g_i(\bv_{\tilde E})= \bv_{\tilde E}$ for an end vertex ${\bv_{\tilde E}}$ and $l$ is a maximal segment in a generalized lens
with endpoints in $\Bd \tilde{\mathcal{O}}$.
{\rm (}See Figure \ref{fig:lenslem}.{\rm )}
Let $g'_i$ denote the induced projective automorphisms on $\SI^{n-1}_{\bv_{\tilde E}}$. 
$\{g'_i(l') \subset \tilde \Sigma_{\tilde E}\}$ converges geometrically to $l'$
where $l'$ is the projection of $l$ to the linking sphere $\SI^{n-1}_{\bv_{\tilde E}}$ of ${\bv_{\tilde E}}$. 
Let $P$ be the $2$-dimensional subspace containing ${\bv_{\tilde E}}$ and $l$. 
Furthermore, we suppose that 
\begin{itemize}
\item  In $P$, $l$ is in the disk $D$ bounded by two segments $s_1$ and $s_2$ from ${\bv_{\tilde E}}$ and a convex curve $\alpha$  with 
endpoints $q_1$ and $q_2$  that are end points of $s_1$ and $s_2$ respectively. 
\item $\beta$ is another convex curve with $\beta^o \subset D^o$ and endpoints in $s_1-\{\bv_{\tilde E}\}$ and $s_2-\{\bv_{\tilde E}\}$
so that $\alpha$ and $\beta$ and parts of $s_1$ and $s_2$ bound a convex disk in $D$. 
\item There is a sequence of points $\tilde q_i \in \alpha$ converging to $q_1$ and $g_i(\tilde q_i) \in F$ 
for a fixed fundamental domain $F$ of $\tilde{\mathcal{O}}$.
\item The sequences $g_i(D)$, $g_i(\alpha)$, $g_i(\beta)$, $g_i(s_1)$, and $g_i(s_2)$
converge to $D, \alpha, \beta, s_1,$ and $s_2$ respectively. 
\end{itemize} 
Then 
\begin{itemize} 
\item If the end points of $\alpha$ and $\beta$ do not coincide at $s_1$ or $s_2$, 
then $\alpha$ and $\beta$ must be geodesics from $q_1$ or $q_2$. 
\item Suppose that the pairs of endpoints of $\alpha$ and $\beta$ coincide and they are distinct curves.
Then no segment in $\clo(\tilde{\mathcal{O}})$ extends $s_1$ or $s_2$ properly.
\end{itemize} 
\end{lemma}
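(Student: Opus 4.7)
The plan is to exploit the recurrent character of $\{g_i\}$: since $\tilde q_i\to q_1\in\Bd\torb$ while $g_i(\tilde q_i)$ stays in the compact fundamental domain $F$, the sequence $g_i$ escapes to infinity in $\SLnp$, yet it preserves in the geometric limit the whole two-dimensional configuration $(D,\alpha,\beta,s_1,s_2)\subset P$. First I would reduce to a problem on $P$: because $g_i(D)\to D$ and $D$ spans $P$, the planes $g_i(P)$ converge to $P$, so composing with a bounded sequence $r_i\in \SLnp$ carrying $g_i(P)$ back to $P$ lets me assume each $g_i$ restricts to $P$. The convergences $g_i(s_j)\to s_j$ force $g_i(q_j)\to q_j$ for $j=1,2$, and using the observation recalled just before the lemma that a bounded correction absorbs the limits of images of three independent points, I produce a new sequence $h_i$ on $P$ with $h_i(\bv_{\tilde E})=\bv_{\tilde E}$, $h_i(q_1)=q_1$, $h_i(q_2)=q_2$, and $h_i(\alpha)\to \alpha$, $h_i(\beta)\to \beta$, $h_i(D)\to D$.

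In a projective basis of $P$ with basis points $\bv_{\tilde E}, q_1, q_2$, each $h_i$ is diagonal with eigenvalues $(a_i,b_i,c_i)$. Unboundedness forced by $\tilde q_i\to q_1\in\Bd\torb$ and $h_i(\tilde q_i)\to \tilde q_\infty\in F^o\subset\torb^o$ means that, up to a subsequence, the vector $(\log|a_i|,\log|b_i|,\log|c_i|)$, divided by its sup-norm, converges to a nonzero direction, producing a limiting one-parameter diagonal flow $\phi_t$ on $P$ with distinguished attracting and repelling vertices among $\bv_{\tilde E},q_1,q_2$. The Hausdorff invariance $\phi_t(\alpha)=\alpha$, $\phi_t(\beta)=\beta$ combined with convexity of the arcs and with their endpoints lying in the open edges $s_1^o, s_2^o$ forces, via a direct monomial computation in an affine chart in which $\phi_t$ acts diagonally, each of $\alpha$ and $\beta$ to be a projective segment emanating from $q_1$ or from $q_2$; the non-coincidence of their endpoints on $s_1$ and $s_2$ rules out the degenerate possibility that they slide into the edges themselves. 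This is the first assertion.

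For the second assertion, suppose for contradiction that $\alpha\neq\beta$ share endpoints on $s_1,s_2$ and that a segment $\bar s\subset\clo(\torb)$ properly extends $s_1$ beyond $q_1$ (the $s_2$ case being symmetric). The coincidence of endpoints constrains the exponents of the limit flow $\phi_t$ so that the eigenvalues at $\bv_{\tilde E}$ and $q_2$ dominate the eigenvalue at $q_1$; hence for any point $p\in\bar s$ lying strictly past $q_1$ from $\bv_{\tilde E}$ on the $\phi_t$-invariant line through $\bv_{\tilde E}$ and $q_1$, the orbit $\phi_t(p)$ converges along a subsequence to the antipode $\bv_{\tilde E-}$. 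Thus the great segment from $\bv_{\tilde E}$ through $q_1$ to $\bv_{\tilde E-}$ would lie in $\clo(\torb)$, contradicting proper convexity of $\torb\subset\SI^n$. The main obstacle I anticipate is the convex-arc rigidity step — verifying rigorously that no strictly convex arc inside the projective triangle $D$ can be Hausdorff-invariant under an unbounded diagonal one-parameter flow — together with the bookkeeping needed to identify which vertex is attracting or repelling under each of the two hypotheses, where the distinction is detected by the location of $\tilde q_\infty$ in $F^o$ and by whether the endpoint pairs of $\alpha,\beta$ coincide or not.
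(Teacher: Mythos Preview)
Your reduction to the plane $P$ and the diagonalization with fixed points $\bv_{\tilde E},q_1,q_2$ matches the paper's setup, but the step where you pass to a ``limiting one-parameter diagonal flow $\phi_t$'' and assert $\phi_t(\alpha)=\alpha$, $\phi_t(\beta)=\beta$ is a genuine gap. Knowing only that the normalized log-eigenvalue vector of $h_i$ converges in direction to $(A,B,C)$ does not make $h_i$ close to any $\phi_{t_i}$; the sequence can drift arbitrarily far from the one-parameter group $\phi_t$ while keeping the same asymptotic direction, so Hausdorff convergence $h_i(\alpha)\to\alpha$ gives no invariance of $\alpha$ under $\phi_t$. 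Your rigidity step (``no strictly convex arc can be Hausdorff-invariant under an unbounded diagonal flow'') is therefore applied to a hypothesis you have not established. The same problem infects the second assertion: even if you identify the attracting vertex of $\phi_t$, the orbit $\phi_t(p)$ lies in $\clo(\torb)$ for no obvious reason, since $\phi_t$ does not come from the deck group; only the genuine elements $g_i$ preserve $\clo(\torb)$.

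The paper avoids the flow entirely and works directly with the eigenvalue \emph{ratios} of the diagonal sequence $h_i=r_ig_i$. For the first item it uses the non-coincident endpoint $\delta_1\beta\in s_1^o\setminus\{q_1\}$: since $h_i(\delta_1\beta)\to\delta_1\beta$ and $\delta_1\beta$ lies on the edge $\overline{\bv_{\tilde E}q_1}$, one gets $\lambda_i/\mu_i\to 1$ immediately (no flow needed). With two eigenvalues asymptotically equal and the third, $\tau_i$, going to infinity, the Hausdorff limit of $h_i(\beta)$ is forced to be the segment $\overline{\delta_1\beta\,q_2}$, so $\beta$ (and likewise $\alpha$) is a geodesic through $q_2$. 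For the second item, the coincidence of endpoints together with $\alpha\neq\beta$ rules out the conclusion of the first argument, hence $\lambda_i/\mu_i$ and $\mu_i/\tau_i$ must tend to $0$ or $\infty$; one then applies $g_i$ itself (not a limiting flow) to the extended segment and sees $g_i(s')$ converge to a great segment inside $\clo(\torb)$. Replace your flow $\phi_t$ by this direct ratio analysis and both parts go through.
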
 
\begin{proof}
By the geometric convergence conditions, 
we obtain a bounded sequence of elements $r_i \in \SLnp$ so that $r_i(g_i(s_1))=s_1$ and $r_i(g_i(s_2))=s_2$
and $\{r_i\} \ra \Idd$. Then $r_i \circ g_i$ is represented as an element of $\SL_\pm(3, \bR)$ in the projective plane $P$
containing $D$. Using ${\bv_{\tilde E}}$ and $q_1$ and $q_2$ as standard basis points, $r_i\circ g_i$ is represented as a diagonal matrix.
Moreover $\{r_i \circ g_i(\alpha)\}$ is still converging to $\alpha$ as $\{r_i\} \ra \Idd$. 
Hence, this implies that the diagonal elements of each $r_i \circ g_i$ are of form $\lambda_i, \mu_i, \tau_i$ where 
$\{\lambda_i\} \ra 0, \{\tau_i\} \ra +\infty$ as $i \ra \infty$ and 
$\lambda_i$ is associated with $q_1$ and 
$\mu_i$ is associated with ${\bv_{\tilde E}}$ and $\tau_i$ is associated with $q_2$.
(Thus, $r_i \circ g_i$ is diagonalizable with fixed points $q_1, q_2, \bv_{\tilde E}$.) 

We have that $\{r_i\circ g_i(\beta)\}$ also converges to $\beta$. If the end point of $\beta$ at $s_1$ is different from that of $\alpha$, 
then \[1/C < \{\log|\lambda_i/\mu_i|\} < C \hbox{ for a constant } C >0:\] 
Otherwise, for the endpoint $\delta_1 \beta$, we obtain a contradiction
\[r_i\circ g_i(\delta_1\alpha) \ra q_1 \hbox{ or } \bv_{\tilde E} \hbox{ for } i \ra \infty.\]
Since $r_i \circ g_i(\delta_1 \beta) \ra \delta_1\beta$, it follows that $\lambda_i/\mu_i \ra 1$.
In this case, $\beta$ has to be a geodesic from $q_2$ 
since $\{r_i \circ g_i(\beta)\} \ra \beta$. 
And so is $\alpha$. The similar argument holds for the case involving $s_2$

For the second item, $\{\mu_i/\tau_i\} \ra 0, +\infty$ and $\{\lambda/\mu_i\} \ra 0, +\infty$ also
since otherwise we can show that $\beta$ and $\alpha$ have to be geodesics with distinct endpoints as above. 
If a segment $s'_2$ in $\clo(\tilde \Omega)$ extends $s_2$,
then $\{r_i \circ g_i(s'_2)\}$ converges to a great segment
and so does $\{g_i(s'_2)\}$ as $i \ra \infty$ or $i \ra -\infty$. This contradicts the proper convexity of $\mathcal{O}$. 

\end{proof}

%


%


A {\em trivial one-dimensional cone} is an open half space in $\bR^1$ given by $x > 0$ or $x < 0$. 

Recall that if $\pi_1(\tilde E)$ is an admissible group, then $\pi_1(\tilde E)$ has  a finite index subgroup isomorphic to 
$\bZ^{k-1} \times \bGamma_1 \times \cdots \times \bGamma_k$ for some $k \geq 0$
where each $\bGamma_i$ is hyperbolic or trivial. 

Let us consider $\Sigma_{\tilde E}$ the real projective $(n-1)$-orbifold associated with $\tilde E$ and 
consider $\tilde \Sigma_{\tilde E}$ as a domain 
in $\SI^{n-1}_{\bv_{\tilde E}}$ and $h(\pi_1(\tilde E))$ induces $\hat h: \pi_1(\tilde E) \ra \SL_\pm(n, \bR)$ acting on $\tilde \Sigma_{\tilde E}$. 
We denote by $\Bd \tilde \Sigma_{\tilde E}$ the boundary of $\tilde \Sigma_{\tilde E}$ in $\SI^{n-1}$. 

\begin{definition} \label{defn:sl}
A (generalized) lens-shaped p-R-end with the p-end vertex $\bv_{\tilde E}$ is {\em strictly generalized lens-shaped} 
if we choose a (generalized) lens domain $D$
with the top hypersurfaces $A$ and $B$ so that each great open segment in $\SI^n$ from $\bv_{\tilde E}$ in the direction of 
$\Bd \tilde \Sigma_{\tilde E}$ meets 
$\clo(D) - A - B$ at a unique point. 
\end{definition}
As a consequence $\clo(A) - A = \clo(B) - B$ and $\clo(A) \cup \clo(B) = \Bd D$.

\begin{theorem}\label{thm:lensclass}
Let $\mathcal{O}$ be a strongly tame properly convex $n$-orbifold with radial or totally geodesic ends. 
Assume that the universal cover $\torb$ is a subset of $\SI^n$ {\rm (}resp. in $\bR P^n${\rm )}.
Let $\tilde E$ be a p-R-end of $\tilde{\mathcal{O}}$ with a generalized lens p-end-neighborhood. 
Let $\bv_{\tilde E}$ be an associated with a p-end vertex. 
Assume that $\pi_1(\tilde E)$ is hyperbolic.
\begin{itemize} 
\item[(i)] The complement of the manifold boundary of the generalized lens-shaped domain $D$ is a nowhere dense set
in $\Bd \clo(D)$. Moreover, $\Bd D -\partial D$ is independent of the choice of $D$, and 
$D$ is strictly generalized lens-shaped. Moreover, each element $g \in \bGamma_{\tilde E}$ has an attracting 
fixed point in $\Bd \clo(D)$ in the great segment from $\bv_{\tilde E}$ in $\Bd \tilde \Sigma_{\tilde E}$. 
The set of attracting fixed points is dense in $\Bd \clo(D) - A - B$ for the top and the bottom hypersurfaces. 
\item[(ii)] 
The closure in $\clo(V)$ of a concave p-end-neighborhood $V$ of $\bv_{\tilde E}$ contains 
every segment $l \subset \Bd \tilde{\mathcal{O}}$ 
with $l^o \cap \clo(U) \neq \emp$ for  any concave p-end-neighborhood $U$ of $\bv_{\tilde E}$. 
The set $S(\bv_{\tilde E})$ of maximal segments from $\bv_{\tilde E}$ in $\clo(V)$ is independent of $V$,
and $\bigcup S(\bv_{\tilde E}) = \clo(U) \cap \Bd \tilde{\mathcal{O}}.$
\item[(iii)] $S(g(\bv_{\tilde E}))=g(S(\bv_{\tilde E}))$ for $g \in \pi_1(\tilde E)$. 
\item[(iv)] Any concave p-end-neighborhood $U$ of $\bv_{\tilde E}$ under the covering map $\tilde{\mathcal{O}} \ra \mathcal{O}$
covers the p-end-neighborhood of $\tilde E$ of form $U/\pi_1(\tilde E)$. That is, a concave p-end-neighborhood is a proper p-end-neighborhood. 
\item[(v)] Assume that $w$ is 
the p-end vertex of an irreducible hyperbolic p-R-end.  
Then $S^o(\bv_{\tilde E}) \cap S(w) = \emp$ or $\bv_{\tilde E}=w$ for 
p-end vertices $\bv_{\tilde E}$ and $w$ where we defined $S^o(\bv_{\tilde E})$ to denote 
the relative interior of $\bigcup S(\bv_{\tilde E})$ in $\Bd \tilde{\mathcal{O}}$. 
\end{itemize}
\end{theorem}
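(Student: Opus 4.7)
The plan is to reduce everything to the dynamics of the hyperbolic group $\pi_1(\tilde E)$ on the properly convex domain $\tilde\Sigma_{\tilde E}\subset\SI^{n-1}_{\bv_{\tilde E}}$. Since $\pi_1(\tilde E)$ is hyperbolic, it has trivial virtual center; by Proposition \ref{prop:Benoist} together with Benoist's strict convexity theorem, $\tilde\Sigma_{\tilde E}$ is strictly convex with $C^1$ boundary, every infinite-order element $\hat g$ has a unique attracting fixed point $q^+(\hat g)\in\Bd\tilde\Sigma_{\tilde E}$, and the set of such attracting fixed points is dense in $\Bd\tilde\Sigma_{\tilde E}$. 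Using the block form in \eqref{eqn:bendingm3}, I lift this dynamics to the tube $B_{\tilde E}$: each $g\in\bGamma_{\tilde E}$ acts on the great segment $\sigma^+\subset B_{\tilde E}$ from $\bv_{\tilde E}$ through $q^+(\hat g)$ with two fixed points whose eigenvalues are determined by the eigenvalues of $\hat h(g)$ at $q^+(\hat g)$ and by $\lambda_{\bv_{\tilde E}}(g)$.

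For (i), since $D$ is $g$-invariant and $\clo(D)\cap\sigma^+$ is a nonempty compact $g$-invariant segment, it contains a fixed point of $g$. The top and bottom hypersurfaces $A,B$ do not contain any subsegment of $\sigma^+$ (their closures approach $\Bd\tilde\Sigma_{\tilde E}$ only at $\clo(A)-A=\clo(B)-B$), so this fixed point lies in $\Bd\clo(D)-A-B$. Density then follows from the density of $q^+(\hat g)$ in $\Bd\tilde\Sigma_{\tilde E}$. The non-manifold boundary $\Bd\clo(D)-\partial D$ equals $\clo(A)\cap\clo(B)$, which sits inside a countable union of lower-dimensional strata of $\Bd\clo(D)$ and is therefore nowhere dense. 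For the strict generalized lens property and the independence of $\Bd D-\partial D$ from $D$, I characterize the side $\Bd\clo(D)-A-B$ intrinsically as the closure of the set of attracting fixed points on $\sigma^+$-type segments; any segment from $\bv_{\tilde E}$ in a direction of $\Bd\tilde\Sigma_{\tilde E}$ containing two distinct points of $\clo(D)-A-B$ would, by Lemma \ref{lem:recurrent} applied to a sequence $g_i$ with $\hat g_i(F)\cap F\neq\emp$ for a fundamental domain $F$ of $\tilde\Sigma_{\tilde E}$ and $\hat g_i$ pushing toward an attracting fixed point, produce a great segment in $\clo(\torb)$ violating proper convexity.

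For (ii), $S(\bv_{\tilde E})$ has the intrinsic description: the collection of maximal segments in $\Bd\torb$ emanating from $\bv_{\tilde E}$ in directions lying in $\Bd\tilde\Sigma_{\tilde E}$; this plainly does not depend on $V$ or on $U$, and gives $\bigcup S(\bv_{\tilde E})=\clo(U)\cap\Bd\torb$ since the only points of $\clo(U)\cap\Bd\torb$ other than $\bv_{\tilde E}$ lie on the side $\Bd\clo(D)-A-B$. For a segment $l\subset\Bd\torb$ with $l^o\cap\clo(U)\neq\emp$, let $x\in l^o$ be such a point; its direction at $\bv_{\tilde E}$ lies in $\clo(\tilde\Sigma_{\tilde E})$, and cannot be interior (else $x$ would have a neighborhood in $\torb$), so it lies in $\Bd\tilde\Sigma_{\tilde E}$; extending $l$ maximally and using Lemma \ref{lem:recurrent} for a sequence in $\bGamma_{\tilde E}$ whose projections recur near the direction of $l$, I rule out $l$ leaving the great segment through $\bv_{\tilde E}$, so $l\in\clo(V)$. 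Item (iii) is immediate equivariance: $g(V)$ is a concave p-end-neighborhood of $g(\bv_{\tilde E})$, and the construction of $S$ commutes with the $\pi_1(\mathcal{O})$-action.

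For (iv), Proposition \ref{prop:endvertex} identifies the stabilizer of $\bv_{\tilde E}$ with $\bGamma_{\tilde E}$; any $\gamma\in\bGamma$ with $\gamma(U)\cap U\neq\emp$ must satisfy $\gamma(\bv_{\tilde E})=\bv_{\tilde E}$, because concave neighborhoods of distinct p-R-end vertices are disjoint (two such neighborhoods meeting would yield a segment in $\torb$ joining two distinct fixed points of two distinct hyperbolic groups, contradicting proper convexity as in part (v)). Lemma \ref{lem:inv} then gives the covering claim. For (v), assume $\bv_{\tilde E}\neq w$ and pick $s\in S^o(\bv_{\tilde E})\cap S(w)$; then the maximal segment containing $s$ has $\bv_{\tilde E}$ at one endpoint and passes through $w$, so it contains both of these end-vertices; applying the dynamics of $\bGamma_{\tilde E}$ (hyperbolic, irreducible on $\tilde\Sigma_{\tilde E}$) I produce sequences along which the segment is pushed to converge to a great segment, contradicting proper convexity of $\torb$. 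The main obstacle will be the careful deployment of Lemma \ref{lem:recurrent} in (ii) and the strictness/independence parts of (i): one must choose the recurrent sequence of deck transformations so that both endpoints of the relevant convex arcs are controlled, and one must verify that the intrinsic characterization of the side $\Bd\clo(D)-A-B$ by attracting fixed points is robust enough to be independent of the generalized lens chosen.
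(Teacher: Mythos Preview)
Your overall framework is the paper's, but two steps have genuine gaps.

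In (i) your invocation of Lemma \ref{lem:recurrent} is not set up as the lemma requires. The lemma needs two convex arcs $\alpha,\beta$ in a $2$-plane $P$ through $\bv_{\tilde E}$. The paper takes $P$ to be the plane over a recurrent geodesic $l'\subset\tilde\Sigma_{\tilde E}$ (recurrence supplied by Benoist's topological mixing of the Hilbert geodesic flow), with $\alpha=\clo(A)\cap P$ and $\beta=\clo(B)\cap P$. Because the lower hypersurface $B$ is by hypothesis smooth and strictly convex, $\beta$ is never a straight segment, so the \emph{first} alternative of Lemma \ref{lem:recurrent} is excluded and the endpoints of $\alpha$ and $\beta$ on each radial side $s_1,s_2$ must coincide. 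That is exactly $\clo(A)-A=\clo(B)-B$, and from it strictness, nowhere-density, and the location of attracting fixed points all follow at once. Your route instead assumes a boundary radial segment carries two distinct points of $\clo(D)-A-B$ and appeals to the second alternative, but you never produce the two distinct convex curves with coinciding endpoints that this alternative needs; a single pair of points on one segment does not suffice. You also use $\clo(A)-A=\clo(B)-B$ when placing the attracting fixed point off $A\cup B$, before it has been established.

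In (iv) your argument is circular, and in (v) it is incorrect. Lemma \ref{lem:inv} applies only to components of $p_{\orb}^{-1}$ of an end-neighborhood in $\orb$; a concave p-end-neighborhood is not known to be of this form, and proving that it is \emph{is} the content of (iv). The paper's mechanism, which you are missing, is an intrinsic characterization coming out of (ii): $\bv_{\tilde E}$ is the unique point lying in the relative interior of $\bigcup S(\bv_{\tilde E})$ that is not an interior point of any segment in $S(\bv_{\tilde E})$. If $g(C_{\tilde E})$ and $C_{\tilde E}$ overlap without one containing the other, the local concavity (a supporting totally geodesic hypersurface at each boundary point, with the vertex on the concave side) forces some segment of $S(\bv_{\tilde E})$ to have its interior meet $\bigcup S(g(\bv_{\tilde E}))$; then (ii) gives $\bigcup S(\bv_{\tilde E})\subset\bigcup S(g(\bv_{\tilde E}))$, and the cone-point characterization yields $\bv_{\tilde E}=g(\bv_{\tilde E})$, hence $g\in\bGamma_{\tilde E}$. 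The same cone-point argument handles (v). Your claim in (v) that the maximal segment containing a point $s\in S^o(\bv_{\tilde E})\cap\bigcup S(w)$ has $\bv_{\tilde E}$ as one endpoint and passes through $w$ is unjustified: $s$ lies on one segment emanating from $\bv_{\tilde E}$ and on another emanating from $w$, and there is no reason these coincide, so no single segment carries both vertices and no ``great segment'' contradiction is available from the dynamics alone.
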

\begin{proof}  The proof is done for $\SI^n$ but the result implies the $\bR P^n$-version. 
Here the closure is independent of the ambient spaces. 

(i) By Fait 2.12 \cite{Ben3}, we obtain that $\pi_1(\tilde E)$ is vcf and 
acts irreducibly on a proper convex cone and the cone has to be strictly convex by Theorem 1.1 of \cite{Ben2}. 

Let $C_{\tilde E}$ be a concave end. Since $\Gamma_{\tilde E}$ acts on $C_{\tilde E}$. 
By definition, $C_{\tilde E}$ is a component of the complement of a generalized lens domain $D$ in 
a generalize R-end. 

We have a domain $D$ with boundary components $A$ and $B$ transversal to the lines in $R_{\bv_{\tilde E}}(\torb)$. 
We can assume that $B$ is strictly concave and smooth as we have a concave end-neighborhood. 
$\bGamma_{\tilde E}$ acts on both $A$ and $B$. We define
 $A_1:=\clo(A) - A$ and $B_1:= \clo(B) - B$. 

By Theorem 1.2 of \cite{Ben1}, the geodesic flow on the real projective 
$(n-1)$-orbifold $\tilde \Sigma_{\tilde E}/\bGamma_{\tilde E}$ is topologically mixing, 
i.e., recurrent since $\bGamma_{\tilde E}$ is hyperbolic. 
Thus, each geodesic $l$ in $\tilde \Sigma_{\tilde E} \subset \SI^{n-1}_{\bv_{\tilde E}}$, we can find a sequence $\{g_i \in \bGamma_{\tilde E}\}$ 
that satisfies the conditions of Lemma \ref{lem:recurrent}. 
The two arcs in $\Bd D$ corresponding to $l$ share endpoints. 
Since this is true for all geodesics, we obtain $A_1=B_1$ and $A\cup B$ is dense in $\Bd D$. 


Hence, $\partial D = A \cup B$. 
Thus, $\Bd D - \partial D$ is the closures of the attracting and repelling fixed points of $h(\pi_1(\tilde E))$
since by Theorem 1.1 of \cite{Ben1}, the set of fixed points is dense in $A_1 = B_1$. 
Therefore this set is independent of the choice of $D$. 

(ii) Consider any segment $l$ in $\Bd \tilde{\mathcal O}$ with $l^o$ 
meeting $\clo(U_1)$ for a concave p-end-neighborhood $U_1$ of $\bv_{\tilde E}$. 
This segment is contained in a union of segments from $\bv_{\tilde E}$ since $\clo(\torb)$ is convex. 
These segments are all in the boundary of $\clo(U_1)$ by 
the fact that a segment from $\bv_{\tilde E}$ can pass the interior of $U_1$ or correspond to a geodesic in $\Bd \tilde \Sigma_{\tilde E} \subset \SI^{n-1}_{\bv_{\tilde E}}$. 
We suppose that $l$ is a segment from 
$\bv_{\tilde E}$ containing a segment $l_0$ in $\clo(U_1)\cap \Bd \tilde{\mathcal O}$ from $\bv_{\tilde E}$, 
and we will show that $l$ is in $\clo(U_1) \cap \Bd \torb$. This will be sufficient to prove (ii). 

A point of $\Bd \tilde \Sigma_{\tilde E}$ is a p-end point of a recurrent geodesic 
by Lemma \ref{lem:redergodic}. 
Suppose that the interior of $l$ contains a point $p$ of $\Bd \clo(D) - A - B$ that
is in the direction of a p-end point of a recurrent geodesic $m$ in $\tilde \Sigma_{\tilde E}$. 
Lemma \ref{lem:recurrent} again applies. Thus, $l^o$ does not meet  $\Bd \clo(D) - A - B$. 

Given a segment $l'$ from $\bv_{\tilde E}$ in $\Bd \tilde{\mathcal O}$ not meeting 
 $\Bd\clo(D) - A - B$ in its interior, the maximal segment $l''$ containing $l$ from $\bv_{\tilde E}$ in $\Bd \tilde{\mathcal O}$
 meets $\Bd\clo(D) - A - B$ at the end. 
 Thus, $l \subset \clo(U_1)\cap \Bd \tilde{\mathcal O}$.

Let $U'$ be any p-end-neighborhood associated with $\bv_{\tilde E}$. 
Then since each $g \in \bGamma_{\tilde E}$ has an attracting fixed point and the repelling fixed point 
on $\Bd\clo(D) - A - B$, for any segment $s$ in $\clo(U') \cap \Bd \torb$ from $\bv_{\tilde E}$, $\{g^i(s)\}$ converges 
to an element of $S(\bv _{\tilde E})$. Since the attracting and the repelling fixed points $g \in \bGamma_{\tilde E}$ 
is dense in the directions of $\Bd \tilde \Sigma_{\tilde E}$, 
we have $\bigcup S(\bv_{\tilde E}) \subset \clo(U') \cap \Bd \tilde{\mathcal O}.$

We can form $S'(\bv_{\tilde E})$ as
the set of maximal segments from $\bv_{\tilde E}$ in $\clo(U') \cap \Bd \tilde{\mathcal O}$.
Then no segment $l$ in $S'(\bv_{\tilde E})$ has interior points in $\Bd D - A - B$ as above. 
 Thus, $S(\bv_{\tilde E}) = S'(\bv_{\tilde E})$. 
 
 Also, since every points of $\clo(U') \cap \Bd \torb$ has a segment in the direction of $\Bd \tilde \Sigma_{\tilde E}$, 
 $\bigcup S(\bv_{\tilde E}) = \clo(U') \cap \Bd \torb$. 

(iii) By the proof above, 
we now characterize $S(\bv_{\tilde E})$ as the maximal segments in $\Bd \tilde{\mathcal O}$ from $\bv_{\tilde E}$ 
ending at points of  $\Bd D - A - B$.
 Since $g(D)$ is the generalized lens for the the generalized lens neighborhood $g(U)$ of $g(\bv_{\tilde E})$, 
we obtain $g(S(\bv_{\tilde E}))= S(g(\bv_{\tilde E}))$ for any p-end vertex $\bv_{\tilde E}$.

(iv) Given a concave-end-neighborhood $C_{\tilde E}$ of a p-end vertex $\bv_{\tilde E}$,
we show that \[g(C_{\tilde E})=C_{\tilde E} \hbox{ or } g(C_{\tilde E})\cap C_{\tilde E} = \emp \hbox{  for } g \in \bGamma:\]

Suppose that 
\[g (C_{\tilde E}) \cap C_{\tilde E} \ne \emp, g(C_{\tilde E}) \not \subset C_{\tilde E}, 
\hbox{ and } C_{\tilde E} \not\subset g(C_{\tilde E}).\]



Since $C_{\tilde E}$ is concave, 
each point of $\Bd C_{\tilde E} \cap \torb$ is contained in a totally geodesic hypersurface 
$D$ so that a component $C_{E,1}$ of $C_{\tilde E} - D$ is in $C_{\tilde E}$ 
and $\clo(C_{E,1}) \ni \bv_{C_{\tilde E}}$ for the p-end vertex $\bv_{C_{\tilde E}}$ of $C_{\tilde E}$. 
Similar statements hold for $g(C_{\tilde E})$. 

Since $g(C_{\tilde E}) \cap C_{\tilde E} \ne \emp$, one is not a subset of the other, 
we have $\Bd g(C_{\tilde E}) \cap C_{\tilde E} \ne \emp$ or $g(C_{\tilde E}) \cap \Bd C_{\tilde E} \ne \emp$. Then by above 
a set of form of $C_{E,1}$ for some boundary point of $C_{E,1}$ and $g(C_{E_1})$ meet.
Since $C_{E, 1}$ is a component of a separating hypersurface in $\torb$, 
the interiors of some segments in $S(\bv_{\tilde E})$ meet segments of $S(g(\bv_{\tilde E}))$ or vice versa.
By (ii), this implies that $\bigcup S(\bv_{\tilde E}) \subset \bigcup S(g(\bv_{\tilde E}))$. 
Since $\bv_{\tilde E}$ is a unique point not in the interior of a segment but in the interior of 
$\bigcup S(\bv_{\tilde E})$ and similarly for $g(\bv_{\tilde E})$, we obtain
$\bv_{\tilde E} = g(\bv_{\tilde E})$. Hence, $g \in \bGamma_{\tilde E}$, and thus, $C_{\tilde E} = g(C_{\tilde E})$ 
as $C_{\tilde E}$ is a concave neighborhood. Therefore, this is a contradiction. 
We obtain three possibilities  \[g (C_{\tilde E}) \cap C_{\tilde E} = \emp, g(C_{\tilde E}) \subset C_{\tilde E}
\hbox{ or } C_{\tilde E} \subset g(C_{\tilde E}).\]

In the last two cases, it follows that $g(C_{\tilde E}) = C_{\tilde E}$ since 
$g$ fixes $\bv_{\tilde E}$, i.e., $g \in \bGamma_{\tilde E}$. 
This implies that $C_{\tilde E}$ is a proper p-end-neighborhood. 

(iv) If $S(\bv_{\tilde E})^o\cap S(w) \ne \emp$, then the above argument works with in this situation 
to show that $\bv_{\tilde E} = w$. 


\end{proof} 

\begin{lemma} \label{lem:redergodic} 
The geodesic flow on $\tilde \Sigma_{\tilde E}/\pi_1(\tilde E)$ is recurrent.
\end{lemma}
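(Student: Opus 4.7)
The plan is to reduce the statement to Benoist's dynamical results on Hilbert geodesic flows on divisible convex sets. Under the standing hypothesis of Theorem \ref{thm:lensclass}, $\pi_1(\tilde E)$ is word-hyperbolic, and since $\tilde E$ is a properly convex p-R-end, $\tilde \Sigma_{\tilde E}$ is a properly convex open domain in an affine patch of $\SI^{n-1}_{\bv_{\tilde E}}$ on which $\hat h(\pi_1(\tilde E))$ acts properly discontinuously and cocompactly. Hence $\Sigma_{\tilde E} = \tilde \Sigma_{\tilde E}/\pi_1(\tilde E)$ is a closed properly convex real projective $(n-1)$-orbifold equipped with its Hilbert metric, and the geodesic flow in the statement is the Hilbert geodesic flow on its unit tangent bundle.

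First, I would invoke Theorem 1.1 of \cite{Ben1}: since $\hat h(\pi_1(\tilde E))$ is a discrete word-hyperbolic group dividing the properly convex domain $\tilde \Sigma_{\tilde E}$, the domain $\tilde \Sigma_{\tilde E}$ is strictly convex and $\Bd \tilde \Sigma_{\tilde E}$ is $C^1$. Next, I would apply Theorem 1.2 of \cite{Ben1}, which asserts that the Hilbert geodesic flow on such a divisible strictly convex quotient is Anosov and topologically mixing. Topological mixing implies topological transitivity, and combined with the existence of a finite flow-invariant Bowen--Margulis measure of full support it yields Poincaré recurrence: almost every orbit, and a residual set of orbits, returns arbitrarily close to its starting point. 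This is the recurrence asserted in the lemma.

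A routine technical step is that Benoist states his results for closed manifolds, whereas $\Sigma_{\tilde E}$ is a priori an orbifold. By Selberg's lemma applied to the linear representation $\hat h: \pi_1(\tilde E) \to \SL_\pm(n,\bR)$, there is a torsion-free finite-index subgroup $\Gamma' \subset \hat h(\pi_1(\tilde E))$, and $\tilde \Sigma_{\tilde E}/\Gamma'$ is a closed strictly convex projective manifold to which Benoist's theorems apply verbatim. Recurrence on this finite cover transfers immediately to recurrence on $\Sigma_{\tilde E}$, since a finite orbifold quotient of a flow with recurrent orbits still has recurrent orbits. The only ``hard'' content is the dynamical mixing, but this is entirely supplied by \cite{Ben1}; no further estimate is needed here.
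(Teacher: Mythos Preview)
Your argument for the hyperbolic case is essentially the paper's: both simply invoke Benoist's Theorem~1.2 in \cite{Ben1} (the Hilbert geodesic flow on a divisible strictly convex quotient is Anosov and topologically mixing, hence recurrent). The Selberg/orbifold step you add is a harmless technicality.

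However, you have restricted yourself to the hyperbolic hypothesis of Theorem~\ref{thm:lensclass}, whereas the lemma as stated carries no such assumption, and the paper proves it in the full admissible setting. When $\pi_1(\tilde E)$ is only virtually $\bZ^{l_0-1}\times\Gamma_1\times\cdots\times\Gamma_{l_0}$ with $l_0\ge 2$, the closure $\clo(\tilde\Sigma_{\tilde E})$ is a strict join $\clo(D_1)\ast\cdots\ast\clo(D_{l_0})$; the domain is then \emph{not} strictly convex and Benoist's mixing theorem does not apply. The paper handles this case by a direct construction: a geodesic $\rho$ through the interior has one endpoint $p_1$ in a sub-join $J_1=\clo(D_{i_1})\ast\cdots\ast\clo(D_{i_l})$ and the other endpoint $p_2$ in the complementary sub-join $J_2$. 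One then chooses $g_i\in\bZ^{l_0-1}$ with $g_i|J_1\to\Idd_{J_1}$ and $g_i|J_2\to\Idd_{J_2}$ simultaneously (possible because the eigenvalue data transfer the problem to approximating a direction in $\bR^{l_0-1}$ by lattice points with uniformly bounded error), so that $g_i(\rho)\to\rho$. This elementary lattice argument is the actual content of the lemma beyond the Benoist citation; your proposal omits it entirely, so as a proof of the lemma \emph{as stated} it is incomplete.
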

\begin{proof} 
(The proof here is for the $\SI^n$-version nominally only. )
If $\pi_1(\tilde E)$ is a hyperbolic group, then the conclusion follows from Theorem 1.2 \cite{Ben1}. 
Assume now that $\pi_1(\tilde E)$ is a virtual product of 
the hyperbolic groups or trivial groups and the abelian group in the center that 
acts ergodically on the geodesics in $D^o = \tilde \Sigma_{\tilde E}$, 
and $D$ is a strict join $\clo(D_1) * \cdots \clo(D_k)$ for some $k$, $k \geq 2$
where the virtual center $\cong \bZ^{k-1}$ acts trivially. 

Let $\rho$ denote a geodesic passing the interior of $\tilde \Sigma_{\tilde E}$
where an end point $p_1$ is in the strict join $J_1$ of $\clo(D_{i_1}) * \cdots * \clo(D_{i_{l_0}})$ and 
and the other end point $p_2$ has to be in the strict join $J_2$ of the remaining $D_i$s. 
We can choose $g_i \in \bZ^{l_0-1}$ so that 
\[\{g_i|J_1\} \ra \Idd_{J_1} \hbox{ and } \{g_i|J_2\} \ra \Idd_{J_2}.\]
In this case, $g_i(\rho)$ approximates $\rho$ arbitrarily as $g_i \ra \infty$ in $\bGamma_{\tilde E}$. 
(This exists as $\bZ^{l_0-1}$ acts cocompactly on a properly convex $l_0$-simplex  $\Delta$ in a projective space
with transferred eigenvalues from the strict joins. This follows since in lattice in $\bR^{l_0-1}$ 
any vector is approximated by integer sum of basis vectors by uniformly bounded errors. 
\end{proof}


Now we go to the cases when $\pi_1(\tilde E)$ has more than two nontrivial factors abelian or hyperbolic. 
The following theorem shows that concave ends are totally geodesic and of lens type. 
The author obtained the proof of (i-3) from Benoist. 

 

\begin{theorem}\label{thm:redtot}
Let $\mathcal{O}$ be a strongly tame  $n$-orbifold with radial or totally geodesic ends.
Suppose that the holonomy group $\bGamma$ is not virtually reducible. 
Let $\tilde E$ be a p-R-end of the universal cover $\torb$, $\torb \subset \SI^n$ 
{\rm (}resp. $\subset \bR P^n${\rm ),} with a generalized lens p-end-neighborhood. 
Let $\bv_{\tilde E}$ be the p-end vertex  and $\tilde \Sigma_{\tilde E}$ the p-end domain of $\tilde E$. 
Suppose that the p-end fundamental group $\bGamma_{\tilde E}$ is admissible.
Then the following statements hold:
\begin{itemize} 
\item[(i)] For $\SI^{n-1}_{\bv_{\tilde E}}$, we obtain 
\begin{itemize}
\item[(i-1)]  Under a finite-index subgroup of $\hat h(\pi_1(\tilde E))$, 
 $\bR^{n}$ splits into $V_1 \oplus \cdots \oplus V_{l_0}$ and $\tilde \Sigma_{\tilde E}$ is the quotient of the sum 
$C_1+ \cdots + C_{l_0}$ for properly convex or trivial one-dimensional cones $C_i \subset V_i$ for $i=1, \dots, l_0$
\item[(i-2)] The Zariski closure of a finite index subgroup of $\hat h(\pi_1(\tilde E))$ is isomorphic 
to the product $G = G_1 \times \cdots \times G_{l_0} \times \bR^{l_0-1}$ 
where $G_i$ is a semisimple subgroup of $\Aut({\mathcal S}(V_i))$.  
\item[(i-3)] Let $D_i$ denote the image of $C_i$ in $\SI^{n-1}_{\bv_{\tilde E}}$.
Each hyperbolic group factor of $\pi_1(\tilde E)$ divides
exactly one $D_i$ and acts on trivially on $D_j$ for $j \ne i$.
\item[(i-4)] A finite index subgroup of 
$\pi_1(\tilde E)$ has a rank $l_0-1$ free abelian group center corresponding to $\bZ^{l_0-1}$ in $\bR^{l_0-1}$.
\end{itemize}
\item[(ii)] $g$ in the center is diagonalizable with positive eigenvalues. 
For a nonidentity element $g$  in the center, the eigenvalue $\lambda_{\bv_{\tilde E}}$ 
of $g$ at ${\bv_{\tilde E}}$ is strictly between its largest norm and smallest norm eigenvalues. 
\item[(iii)] The p-R-end is  totally geodesic. 
$D_i \subset \SI^{n-1}_{\bv_{\tilde E}}$ 
is projectively diffeomorphic by the projection $\Pi_{\bv_{\tilde E}}$ 
to totally geodesic convex domain $D'_i$ in $\SI^n$ (resp. in $\bR P^n$)
of dimension $\dim V_i -1$ disjoint from $\bv_{\tilde E}$, and the actions of $\Gamma_i$ 
are conjugate by $\Pi_{\bv_{\tilde E}}$.
\item[(iv)] The p-R-end is strictly lens-shaped, and
each $C_i$ corresponds to a cone $C^*_i$ in  $\SI^n$ (resp. in $\bR P^n$)
over a totally geodesic $(n-1)$-dimensional domain $D'_i$ with $\bv_{\tilde E}$.  
The p-R-end has a p-end-neighborhood, the interior of 
\[\bv_{\tilde E} \ast D \hbox{ for } D:=  \clo(D'_1) \ast \cdots \ast \clo(D'_{l_0})\]
where the interior $D \cap \torb$ 
forms the boundary in $\torb$. 
\item[(v)] The set $S(\bv_{\tilde E})$ of maximal segments in $\Bd \torb$ from $\bv_{\tilde E}$ in the closure of a p-end-neighborhood of 
$\bv_{\tilde E}$ is independent of the p-end-neighborhood. 
$S(\bv_{\tilde E})$ is equal to the set of maximal segments with vertex $\bv_{\tilde E}$ in 
the union $\bigcup_{i=1}^j \bv_{\tilde E} *\clo(D'_1)*\cdots *\clo(D'_{i-1})* \clo(D'_{i+1}) * \cdots * \clo(D'_{l_0})$.
\item[(vi)] 
A concave p-end-neighborhood of $\tilde E$ is a proper p-end-neighborhood. 
Finally, the statements (iii) and (v) of Theorem \ref{thm:lensclass} also hold. 
\end{itemize}
\end{theorem}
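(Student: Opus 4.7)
The plan is to extract the algebraic structure of $\bGamma_{\tilde E}$ from the Benoist theory, use the action of the virtual center to produce a distinguished invariant hyperspace disjoint from $\bv_{\tilde E}$, and then combine with the already-established lens-type structure (Theorem \ref{thm:lensclass}) to upgrade the end to a totally geodesic lens-cone.

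First I would apply Proposition \ref{prop:Ben2} directly to the properly convex domain $\tilde\Sigma_{\tilde E}\subset\SI^{n-1}_{\bv_{\tilde E}}$ on which the admissible group $\hat h(\bGamma_{\tilde E})$ acts cocompactly. This yields the strict join decomposition $\tilde\Sigma_{\tilde E}=D_1\ast\cdots\ast D_{l_0}$ with $D_i$ properly convex in $\SI^{n_i}$, the splitting $\bR^n=V_1\oplus\cdots\oplus V_{l_0}$, the Zariski closure $G_1\times\cdots\times G_{l_0}\times\bR^{l_0-1}$, and the fact that each hyperbolic factor $\Gamma_i$ divides $D_i$ and acts trivially on the other $D_j$. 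This delivers (i-1)--(i-4) essentially immediately, and identifies the virtual center with the $\bZ^{l_0-1}$ sitting inside $\bR^{l_0-1}$.

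For (ii), a central element $g$ acts on each $V_i$ by a positive scalar $\mu_i$ (positivity from cocompact action on the properly convex cone $C_i$) and fixes $\bv_{\tilde E}$ with eigenvalue $\lambda_{\bv_{\tilde E}}(g)$, so $g$ is diagonalizable with all positive eigenvalues. To show $\lambda_{\bv_{\tilde E}}(g)$ lies strictly between the extremal norms, I would argue by contradiction: if $\lambda_{\bv_{\tilde E}}(g)$ equals the largest norm $\mu_{i_0}$, then $g^{n}$-iterates of points in a generic segment from $\bv_{\tilde E}$ in the lens $D$ would converge to $\bv_{\tilde E}$ while iterates of other points would converge to fixed points in $\hat K_{i_0}$; combining this dynamics with a hyperbolic element of some $\Gamma_j$, $j\ne i_0$, which has a contracting fixed point in $\hat K_j$ away from $\hat K_{i_0}$, one manufactures a great segment in $\clo(\torb)$, violating proper convexity. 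A symmetric argument rules out equality with the smallest norm. For (iii) and (iv), by (ii) the central $g$ admits an invariant complementary hyperspace $H\subset\bR^{n+1}$ disjoint from $\bv_{\tilde E}$; intersecting $H$ with each $V_i$-span produces totally geodesic $D'_i$ of dimension $\dim V_i-1$ projectively diffeomorphic to $D_i$ via $\Pi_{\bv_{\tilde E}}$. Commutativity of the center with each $\Gamma_i$ shows $H$ is $\bGamma_{\tilde E}$-invariant, and irreducibility of each $G_i$ forces $\bGamma_{\tilde E}$ to preserve the strict join $D=\clo(D'_1)\ast\cdots\ast\clo(D'_{l_0})$. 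The cone $\bv_{\tilde E}\ast D$ together with $D$ being a convex totally geodesic hypersurface in $\torb$ (using the generalized lens hypothesis to place $D$ in $\torb$) gives the cone-shaped p-end-neighborhood and in particular lens-shape by Proposition \ref{prop:lensend} applied in the irreducible factors.

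For (v), using the join description of $\Bd(\bv_{\tilde E}\ast D)$, maximal segments from $\bv_{\tilde E}$ in $\Bd\torb$ must lie in the boundary $\bv_{\tilde E}\ast(\clo(D_1')\ast\cdots\ast\widehat{\clo(D_i')}\ast\cdots\ast\clo(D'_{l_0}))$ of some face; the recurrent-geodesic argument of Lemma \ref{lem:recurrent}, applied factor by factor, forbids segments to protrude beyond these faces. Independence from the p-end-neighborhood follows as in Theorem \ref{thm:lensclass}(ii), by noting that the set of attracting fixed points of the product $\bGamma_{\tilde E}$-action is dense in this boundary. Finally (vi) and the remaining items from Theorem \ref{thm:lensclass} follow by the same separating-hypersurface argument as in part (iv) of that theorem, once the totally geodesic model has been established. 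The main obstacle I anticipate is the strict-inequality argument in (ii)---the dynamics must be combined across factors carefully to extract a great segment, and using the hypothesis that the global holonomy $\bGamma$ is not virtually reducible is what prevents a degenerate configuration of factors.
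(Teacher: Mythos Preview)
Your outline follows the paper's architecture, but part (ii) has a genuine gap that propagates into (iii) and (iv).

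You assert that a central $g$ ``acts on each $V_i$ by a positive scalar $\mu_i$ \ldots\ so $g$ is diagonalizable with all positive eigenvalues.'' The $V_i$ live in $\bR^n$, i.e.\ in the quotient $\bR^{n+1}/\langle \bv_{\tilde E}\rangle$. Knowing the quotient action is scalar only gives a block lower-triangular form on $\bR^{n+1}$; when $\mu_i = \lambda_{\bv_{\tilde E}}(g)$ a nontrivial Jordan block can sit over $S_i$ (the span of $\bv_{\tilde E}$ and the directions of $D_i$). The paper rules this out using the generalized lens: if $g|C_i$ were unipotent and nontrivial, then for $x\in L\cap C_i$ the iterates $g^n(x)$ would accumulate at $\bv_{\tilde E}$ and $\bv_{\tilde E-}$, contradicting that the lens closure avoids these vertices. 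Only after this does one know that $g|C_i$ is either the identity or has a genuine complementary fixed hyperspace $P_i\subset S_i$.

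Your strict-inequality argument (``manufacture a great segment by combining with a hyperbolic element of $\Gamma_j$'') is not how the paper proceeds and, as written, does not obviously produce a contradiction. The paper's mechanism is Lemmas~\ref{lem:decjoin} and~\ref{lem:joinred}: one sets $I_2=\{i:\ g|C_i=\Idd\ \forall g\in\bZ^{l_0-1}\}$ and shows that if $I_2\ne\emp$ then, using bounded sequences on the relevant factors together with unbounded eigenvalue ratios on the others, $\clo(\torb)$ itself decomposes as a strict join; Lemma~\ref{lem:joinred} then forces $\bGamma$ to be virtually reducible. This is where the global irreducibility hypothesis enters, and it is a statement about the \emph{ambient} domain $\clo(\torb)$, not just about dynamics in a single end-neighborhood. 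Your great-segment sketch does not reach this.

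Two further issues downstream: in (iii) you need to show $D^{\prime o}\subset\torb$ rather than $\Bd\torb$; the paper does this by first placing attracting fixed points of $g'^k g_i$ in $\clo(\torb)$, then invoking Lemma~\ref{lem:simplexbd} together with Lemma~\ref{lem:joinred} to exclude $D'\subset\Bd\torb$. In (iv) your appeal to Proposition~\ref{prop:lensend} is illegitimate, since that proposition requires the vertex eigenvalue to be $1$; the paper instead uses Proposition~\ref{prop:convhull2} together with Lemma~\ref{lem:centerprojection}.
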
 
\begin{proof} 
Again the $\SI^n$-version is enough.
(i) Since each hyperbolic factor of $\bGamma_{\tilde E}$ contains no nontrivial normal subgroup, 
it goes to one of $\Gamma_i$ isomorphically to a finite index subgroup. 
Hence, each $\Gamma_i$ is hyperbolic or trivial. 
Now the proof follows from Proposition \ref{prop:Ben2}. 

(ii) If $\lambda_{\bv_{\tilde E}}(g)$ is the largest norm of eigenvalue with multiplicity one, 
then $\{g^n(x)\}$ for a point $x$ of a generalized lens converges to $\bv_{\tilde E}$ as $n \ra \infty$. 
Since the closure of a generalized lens is disjoint from the point, this is a contradiction. 
Therefore, the largest norm $\lambda_1(g)$ of the eigenvalues of $g$ is greater than or equal to 
$\lambda_{\bv_{\tilde E}}(g)$. 

Let $U$ be a concave p-end-neighborhood of $\tilde E$ in $\tilde {\mathcal{O}}$.
Let $S_1,..., S_{l_0}$ be the projective subspaces in general position meeting only at the p-end vertex $\bv_{\tilde E}$
where factor groups $\bGamma_1, ...,\bGamma_{l_0}$ act irreducibly on. 
Let $C_i$ denote the union of great segments from $\bv_{\tilde E}$ corresponding to the invariant cones in $S_i$ 
where $\bGamma_i$ acts irreducibly for each $i$. 
The abelian center isomorphic to $\bZ^{l_0-1}$ acts as the identity on $C_i$ in the projective space $\SI^n_{\bv_{\tilde E}}$. 
Let $g\in \bZ^{l_0-1}$. $g| C_i$ can have more than two eigenvalues or just single eigenvalue. 
In the second case $g|C_i$ could be represented by a matrix with eigenvalues all $1$  fixing $\bv_{\tilde E}$. 
Since a generalized lens $L$ meets it, $g|C_i$ has to be identity by the proper convexity of $\torb$: 
Otherwise, $g^n|C$ will send $x\in L \cap C_i$ to $\bv_{\tilde E}$ and to $\bv_{\tilde E -}$ as $i \ra \pm \infty$. 
This contradicts the proper convexity of $\torb$. 
 
We have one of the two possibilities for $C_i$: 
\begin{itemize} 
\item[(a)] $g|C_i$ fixes each point of a hyperspace $P_i \subset S_i$ not passing through $\bv_{\tilde E}$ 
and $g$ has a representation as a scalar multiplication in the affine subspace $S_i - P_i$ of $S_i$. 
Since $g$ commutes with every element of $\bGamma_i$ acting on $C_i$, 
$\bGamma_i$ acts on $P_i$ as well.  
\item[(b)] $g|C_i$ is an identity. 
\end{itemize}
We denote $I_1:=\{ i| \exists g \in \bZ^{l_0-1}, g|C_i \ne \Idd\} $ and 
 $I_2:= \{i| \forall g \in \bZ^{l_0-1}, g|C_i = \Idd\}.$

Suppose that $I_2 \ne \emp$. 
For each $C_i$, we can find $g_i \in \bZ^{l_0-1}$ with the largest norm eigenvalue associated with it. 
By multiplying with some other element of the virtual center, we can show that 
if $i \in I_1$, then $C_i \cap P_i$ has a sequence $\{g_{i, j}\}$ with $i$ fixed so that the premises of Lemma \ref{lem:decjoin} and 
if $i \in I_2$, then $C_i$ has such a sequence $\{g_{i, j}\}$.
By Lemma \ref{lem:decjoin}, this implies that $\clo(\torb)$ is a strict join. 
This contradicts the assumption that $\bGamma$ is not virtually reducible by Lemma \ref{lem:joinred}. 
Thus, $I_2 = \emp$. 

(iii)  By (ii), for all $C_i$, every $g\in\bZ^{l_0-1}-\{\Idd\}$ acts as nonidentity. 
Then the strict join of all $P_i$ gives us a hyperspace $P$ disjoint from $\bv_{\tilde E}$. 
We will show that it forms a p-T-end for $\tilde E$:

From above, we obtain that every nontrivial $g \in \bZ^{l_0-1}$ 
is clearly diagonalizable with positive eigenvalues associated with $P_i$ and $\bv_{\tilde E}$
and the eigenvalue at $\bv_{\tilde E}$ is smaller than the maximal ones at $P_i$. 


Let us choose $C_i$. 
We can find at least one $g'\in \bZ^{l_0-1}$ 
so that $g'$ has the largest norm eigenvalue $\lambda_1(g'_i)$ 
with respect to $C_i$ as an automorphism of $\SI^{n-1}_{\bv_{\tilde E}}$. 
We have $\lambda_1(g') > \lambda_{\bv_{\tilde E}}(g')$. 

Each $C'_i \cap P_i$ has an attracting fixed point of some $g_i \in \bGamma_i$ restricted to $P_i$ if $\bGamma_i$ is hyperbolic. 
We can choose $g_i$ so that the largest norm eigenvalue $\lambda_i$ of $g_i| P_i$ is sufficiently large. 
This follows since $\bGamma_i$ is linear on $S_i-P_i$ where we know that this is true 
for strictly convex cones by the theories of Koszul and so on. 
If $\bGamma_i$ is a trivial group, then we choose $g_i$ to be the identity. 
Then by taking $k$ sufficiently large, $g'^k g_i$ has an attracting fixed point in $C'_i \cap P_i$. 
This must be in $\clo(\tilde{\mathcal{O}})$. 
Since the set of attracting fixed points in $C'_i$ is dense in $\Bd C'_i \cap P_i$ by Benoist \cite{Ben1},
we obtain $C'_i \cap P_i \subset \clo(\tilde{\mathcal{O}})$.

Let $D'_i$ denote $C_i \cap P_i$. 
Then the strict join $D'$ of $\clo(D'_1), .., \clo(D'_{l_0})$ equals $P \cap \clo(\tilde{\mathcal{O}})$, which is $h(\pi_1(\tilde E))$-invariant.
And $D^{\prime o}$ is a properly convex subset. If any point of $D^{\prime o}$ is in $\Bd \tilde{\mathcal{O}}$, 
then $D'$ is a subset of  $\Bd \tilde{\mathcal{O}}$ by Lemma \ref{lem:simplexbd}. 
Then $\torb$ is a contained in $\bv_{\tilde E} \ast D'$ where $D'$ is a strict join of $D'_1, \dots, D'_{l_0}$. 
Some or none could be $0$-dimensional. Then $\bGamma$ acts on a strict join. 
By Lemma \ref{lem:joinred}, $\bGamma$ is virtually reducible, a contradiction. 
Therefore, $D^{\prime o} \subset \tilde{\mathcal{O}}$, and $\tilde E$ is a totally geodesic end.

(iv) Let $P$ be the minimal totally geodesic subspace containing all of $P_1, \dots, P_{l_0}$. 
The hyperspace $P$ separates $\tilde{\mathcal{O}}$ into two parts, ones in the p-end-neighborhood $U$ and the subspace outside it. 
Clearly $U$ covers $\Sigma_{\tilde E}$ times an interval 
by the action of $h(\pi_1(\tilde E))$ and the boundary of $U$ goes to a compact orbifold 
projectively diffeomorphic to $\Sigma_{\tilde E}$. 

Since each nontrivial $g \in \bZ^{l_0-1}$ has $\bv_{\tilde E}$ with the eigenvalue strictly between the largest norm
and the smallest norm ones, each point of $\Bd L - \partial L$  is a limit point of 
some sequence $g_i(x)$ for $x \in D^o$ for a generalized lens. Therefore $\Bd L - \partial L$ is exactly the boundary of
the top hypersurface and the bottom one of an open cell neighborhood of $L$ by Lemma \ref{lem:decjoin}.
We can use Proposition \ref{prop:convhull2} replacing its third condition by Lemma \ref{lem:centerprojection}.
Hence, $\tilde E$ is of strict lens-type. The rest follows by the proof of (iii). 

(v) Let $U$ be the end-neighborhood of $\bv_{\tilde E}$ obtained in (iv). 
For each $i$, we can find a sequence $g_j$ in the virtual center so that 
\[g_j|  \clo(D'_1)*\cdots *\clo(D'_{i-1})*\clo(D'_{i+1})* \cdots * \clo(D'_{l_0})\] converges to the identity. 
Then \[\bv_{\tilde E} * \clo(D'_1)*\cdots *\clo(D'_{i-1})*\clo(D'_{i+1})* \cdots * \clo(D'_{l_0})= \Bd \torb \cap \clo(U)\] 
by the eigenvalue conditions of the virtual center obtained in (iii) and Lemma \ref{lem:centerprojection}.
Hence, (v) follows easily now.

(vi) follows by argument similar to the proof of Theorem \ref{thm:lensclass}. 

\end{proof}


\subsubsection{Technical lemmas}

\begin{lemma} \label{lem:joinred} 
If a group $G$ of projective automorphisms 
acts on a strict join $A= A_1 \ast A_2$ for two compact convex sets $A_1$ and $A_2$, then $G$ is virtually reducible. 
\end{lemma}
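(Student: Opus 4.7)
The plan is to refine the given strict-join decomposition into a maximal one with join-indecomposable factors, and then to show this refined decomposition is canonically determined by $A$ up to a permutation of the factors. The group $G$ will then permute the factors, and the kernel of the induced action on the finite factor set will be a finite-index reducible subgroup of $G$.

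Let $V := \mathrm{span}(A)$. Starting from $A = A_1 \ast A_2$ with $A_i \subset V_i$ and $V = V_1 \oplus V_2$, I refine inductively, splitting each factor whenever it is itself a nontrivial strict join. Since dimensions strictly decrease, the process terminates, giving
\[
A = B_1 \ast \cdots \ast B_k, \quad B_i \subset W_i, \quad V = W_1 \oplus \cdots \oplus W_k, \quad k \geq 2,
\]
where each $B_i$ is \emph{join-indecomposable} in $W_i$, i.e., admits no nontrivial strict-join decomposition inside its own span.

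The key step, and the main obstacle, will be proving that this refined decomposition is unique up to permutation. The crucial input is the standard cone-theoretic fact that for a strict join, the extreme rays of $C_1 + \cdots + C_k$ are exactly the disjoint union of the extreme rays of each $C_i$ (one uses that $V_1, \ldots, V_k$ are independent together with extremality to force all but one summand of any decomposition $v = x_1 + \cdots + x_k$ to vanish); projectively, $\mathrm{ext}(A) = \mathrm{ext}(B_1) \sqcup \cdots \sqcup \mathrm{ext}(B_k)$. Now suppose $A = B_1' \ast \cdots \ast B_{k'}'$ is any other such maximal decomposition with $B_j' \subset W_j'$ and $V = W_1' \oplus \cdots \oplus W_{k'}'$. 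Every extreme point of $B_i$ is extreme in $A$, and hence lies in exactly one $W_j'$. Partition $\mathrm{ext}(B_i)$ by which $W_j'$ contains each point, and let $D_j^{(i)}$ be the convex hull inside $W_j'$ of the extreme points lying there. Since any convex combination of elements of $\mathrm{ext}(B_i)$ may be regrouped by the index $j$ using the direct sum $V = \bigoplus_j W_j'$, one obtains $B_i = D_1^{(i)} \ast \cdots \ast D_{k'}^{(i)}$ as a strict join inside $W_i$. Join-indecomposability of $B_i$ forces all but one $D_j^{(i)}$ to be empty, yielding $B_i \subset W_{\sigma(i)}'$ for some $\sigma(i)$. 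Running the symmetric argument gives $B_j' \subset W_{\tau(j)}$, and since the $W_i$ are independent and each $B_i$ is nonempty, $\tau \circ \sigma = \mathrm{id}$; hence $\sigma$ is a bijection and $B_i = B_{\sigma(i)}'$, $W_i = W_{\sigma(i)}'$.

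With uniqueness established, each $g \in G$ induces a permutation of the finite set $\{W_1, \ldots, W_k\}$, providing a homomorphism $\rho: G \to S_k$. The kernel $G_0 := \ker\rho$ has index at most $k!$ in $G$ and preserves each $W_i$ individually. Since $k \geq 2$, each $W_i$ is a proper nonzero subspace of $V$, so $G_0$ is reducible; therefore $G$ is virtually reducible.
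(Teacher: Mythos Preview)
Your proof is correct and follows the same overall architecture as the paper: refine to a maximal strict-join decomposition, show that $G$ must permute the factors, and pass to the finite-index kernel of the permutation action. The difference lies in how the permutation property is established. The paper argues by contradiction: assuming some $g\in G$ fails to permute the maximal factors $A'_1,\dots,A'_m$, it forms the collection of intersections $A'_i\cap g(A'_j)$ and shows (using coordinates adapted to the decomposition) that these give a strictly finer strict-join decomposition of $A$, contradicting maximality. You instead prove outright that the maximal decomposition is unique up to permutation, via the Krein--Milman/extreme-ray description $\mathrm{ext}(A)=\bigsqcup_i\mathrm{ext}(B_i)$ and the resulting induced join decomposition of each indecomposable $B_i$ relative to any competing decomposition. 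Your route is more self-contained and avoids the coordinate computation; the paper's intersection argument is shorter but leaves more to the reader. Both yield the same conclusion with the same finite-index bound.
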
 
\begin{proof} 
We prove for $\SI^n$.
Let $x_1, \dots, x_{n+1}$ denote the homogeneous coordinates. 
There is a maximal number collection of compact convex sets $A'_1, \dots, A'_m$ so that 
$A = A'_1 \ast \cdots \ast A'_m$ where $A'_i \subset S_i$ for a subspace $S_i$ corresponding to 
a subspace $V_i \subset \bR^{n+1}$ that form independent set of subspaces. 
Then $g \in G$ permutes the collection $\{A'_1, \dots, A'_m\}$. 

Suppose not. We give coordinates so that $A'_i$ satisfies $x_j = 0$ for $j \in I_i$ for some indices
and $x_i \geq 0$ for elements of $A$. 
Then we form a new collection of nonempty sets $J':= \{ A'_i \cap g(A'_j)| 0 \leq i, j \leq n, g \in G\}$
with more elements. 
 Since $A = g(A) = g(A'_1) \ast \cdots \ast g(A'_n)$, using coordinates we can show 
that each $A'_i$ is a strict join of nonempty sets in $J'_i := \{ A'_i \cap g(A'_j)| 0 \leq j \leq n, g \in G\}.$
 $A$ is a strict join of the collection of the sets in $J'$, a contraction to the maximal property. 
 
Hence, by taking a finite index subgroup $G'$ of $G$ acting trivially on the collection, 
$G'$ is reducible. 
\end{proof}

\begin{lemma} \label{lem:decjoin} 
 Suppose that a set $G$ of projective automorphisms  in $\SI^n$ {\rm (}resp. in $\bR P^n${\rm )}
 acts on subspaces $S_1, \dots, S_{l_0}$ and a properly convex domain $\Omega \subset \SI^n$ {\rm (}resp.  $\subset \bR P^n${\rm )}, corresponding 
 to subspaces $V_1, \dots, V_{l_0}$ so that $V_i \cap V_j =\{0\}$ for $i \ne j$ 
 and $V_1 \oplus \cdots \oplus V_{l_0} = \bR^{n+1}$. Let $\Omega_i : = \clo(\Omega) \cap S_i$. 
 We assume that  
 \begin{itemize}
\item for each $S_i$, $G_i := \{g|S_i| g \in G\}$ form a bounded set of automorphisms and 
\item for each $S_i$, there exists a sequence $\{g_{i, j} \in G\}$ with largest norm eigenvalue $\lambda_{i, j}$ restricted at $S_i$
 has the property $\{\lambda_{i, j}\} \ra \infty$.  
 \end{itemize}
 Then we obtain $\clo(\Omega) = \Omega_1 * \cdots * \Omega_{l_0}$ for $\Omega_j \ne \emp, j=1, \dots, l_0$. 
 \end{lemma}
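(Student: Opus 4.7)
The plan is to establish $\clo(\Omega) = \Omega_1 * \cdots * \Omega_{l_0}$ in two stages: first prove each $\Omega_i$ is nonempty, then derive the reverse containment of the strict-join equality. The inclusion $\Omega_1 * \cdots * \Omega_{l_0} \subset \clo(\Omega)$ is immediate from the convexity of $\clo(\Omega)$ together with $\Omega_l \subset \clo(\Omega)$, once nonemptiness is known. I will argue only for the $\SI^n$-version, since the $\bR P^n$-version is extracted from it in the usual manner.

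For nonemptiness of $\Omega_i$ I exploit the boundedness hypothesis as follows. Since $G_l$ is bounded in $\mathrm{Aut}(S_l)$, after normalizing each restriction $g^{(l)} := g|_{V_l}$ to have $|\det g^{(l)}| = 1$ the normalized blocks lie in a fixed compact subset of $\SL_\pm(V_l)$, so all eigenvalues of $g^{(l)}$ have norms comparable to $\alpha_g^{(l)} := |\det g^{(l)}|^{1/\dim V_l}$. The hypothesis then gives $\alpha_{g_{i,j}}^{(i)} \asymp \lambda_{i,j} \to \infty$, while the determinant constraint $\prod_l (\alpha_{g_{i,j}}^{(l)})^{\dim V_l} = 1$ forces compensating decay on the remaining factors. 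Fixing $x \in \Omega$ with a lift $\tilde x$ whose $V_l$-components are all nonzero, and passing to a subsequence so that $g_{i,j}/\|g_{i,j}\|_{\mathrm{op}}$ converges to a rank-deficient limit operator $P_i$, I will argue that after suitable further subsequencing the image of $P_i$ lies in $V_i$, so that $g_{i,j}(x)$ converges projectively to a point of $S_i \cap \clo(\Omega) = \Omega_i$.

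For the reverse containment, fix $y \in \clo(\Omega)$ with lift $\tilde y = \tilde y_1 + \cdots + \tilde y_{l_0}$, and set $I(y) = \{l : \tilde y_l \ne 0\}$. For each $l \in I(y)$ I apply the sequence $g_{l,j}$ to $y$; by the block-dominance argument just above, $g_{l,j}(y)$ converges projectively, after a subsequence, to a point of $S_l$. Since $\clo(\Omega)$ is closed and $G$-invariant the limit lies in $\clo(\Omega) \cap S_l = \Omega_l$, and using the boundedness of the restricted action $G|_{S_l}$ together with the $G$-invariance of $\Omega_l$ I deduce that $[\tilde y_l]$ itself lies in $\Omega_l$. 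Consequently every nonzero block component of $\tilde y$ represents a point of the corresponding $\Omega_l$, placing $y$ in the strict join $\Omega_1 * \cdots * \Omega_{l_0}$.

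The main obstacle, recurring in both halves, is the dominance step: the hypothesis only asserts $\lambda_{i,j} \to \infty$ on $V_i$, with no a priori control over the block scales $\alpha_{g_{i,j}}^{(l)}$ on other factors. I plan to address this by passing to nested subsequences sorted by the relative growth rate of $\alpha_{g_{i,j}}^{(l)}/\lambda_{i,j}$: if $V_i$ fails to dominate, the normalized limits concentrate in a proper sub-join of the $S_l$'s, and iterating the argument there (combined with proper convexity of $\Omega$ and the determinant constraint) forces eventual dominance of the $V_i$-block. In the actual application (Theorem~\ref{thm:redtot}), the group $G$ contains a large abelian center acting by scalars on each factor, so one can choose $g_{i,j}$ inside the center for which the dominance is immediate; this concrete situation is what guides the general argument.
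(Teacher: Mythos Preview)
Your two-stage plan coincides with the paper's argument: after the easy containment $\Omega_1 * \cdots * \Omega_{l_0} \subset \clo(\Omega)$, the paper takes $z \in \clo(\Omega)$, writes $\vec v_z = \vec v_1 + \cdots + \vec v_{l_0}$ with $\vec v_l \in V_l$, passes to a subsequence with $g_{i,j}|_{S_i} \to g_{i,\infty} \in \Aut(S_i)$ (boundedness of $G_i$), observes $g_{i,j}(z) \to z_{i,\infty} := g_{i,\infty}(z_i) \in \Omega_i$, and recovers $z_i = g_{i,\infty}^{-1}(z_{i,\infty}) \in \Omega_i$. Nonemptiness of $\Omega_i$ and the reverse containment fall out of the same computation, so your separation into two stages is cosmetic.

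The ``dominance obstacle'' you flag comes from reading ``largest norm eigenvalue $\lambda_{i,j}$ restricted at $S_i$'' as the largest eigenvalue of the restriction $g_{i,j}|_{V_i}$. The intended parsing --- confirmed by the applications in Theorem~\ref{thm:redtot} and Lemma~\ref{lem:centerprojection}, where the sequences are explicitly built so that the top eigenvalue sits on the $i$-th factor --- is that $\lambda_{i,j}$ is the largest-norm eigenvalue of $g_{i,j}$ on all of $\bR^{n+1}$, \emph{realized in $V_i$}. Together with boundedness of every $G_l$ this makes $\alpha_{g_{i,j}}^{(i)} \asymp \lambda_{i,j}$ the dominant block scale, so the paper's one-line ``by the eigenvalue condition'' is already justified and your nested-subsequence iteration is unnecessary. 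Under your reading the lemma is in fact false: for $l_0 = 3$, $\dim V_l = 1$, and $G$ the cyclic group generated by the diagonal element with entries $(2,4,1/8)$, every $G_l$ is trivially bounded and each coordinate eigenvalue tends to $\infty$ along a suitable power sequence, yet the convex hull of the $G$-orbit of $[1,1,1]$ is a properly convex $G$-invariant domain whose closure misses the first coordinate point. No amount of subsequencing can repair that interpretation; simply adopt the intended hypothesis and the obstacle vanishes.
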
 
 \begin{proof}
  We will prove for $\SI^n$ but the proof for $\bR P^n$ is identical.  
 First, $\Omega_i \subset \clo(\Omega)$ by definition. 
 Since the element of a strict join has a vector that is a linear combination of elements of 
 the vectors in direction of $\Omega_1, \dots, \Omega_{l_0}$, 
 Hence, we obtain \[\Omega_1 \ast \cdots \ast \Omega_{l_0} \subset \clo(\Omega)\] since
 $\clo(\Omega)$ is convex. 
 
 Let $z = [ \vec{v}_z]$ for a vector $\vec{v}_z$ in $\bR^{n+1}$.
 We write $\vec{v}_z= \vec{v}_1 + \cdots + \vec{v}_{l_0}$, $\vec{v}_j \in V_j$ for each $j$, $j=1, \dots, l_0$, which is a unique sum. 
Then $z$ determines $z_i = [v_i]$ uniquely. 

Let $z $ be any point.  
 We choose a subsequence of $\{g_{i, j}\}$ so that $\{g_{i, j}|S_i\}$ converges to a projective automorphism 
 $g_{i, \infty}: S_i \ra S_i$ and $\lambda_{i, j} \ra \infty$ as $j \ra \infty$. 
  Then $g_{i, \infty}$ also acts on $\Omega_i$. 
And $g_{i, j}(z_i) \ra g_{i, \infty}(z_i) = z_{i, \infty}$  for a point $z_{i, \infty} \in S_i$.
We also have
 \begin{equation} \label{eqn:lim}
 z_i = g_{i, \infty}^{-1}(g_{i, \infty}(z_i)) = g_{i, \infty}^{-1}(\lim_j g_{i, j}(z_i)) = g_{i, \infty}^{-1}(z_{i, \infty}).
 \end{equation}

Now suppose $z \in \clo(\Omega)$. 
We have   $g_{i, j}(z) \ra z_{i, \infty}$ by the eigenvalue condition. Thus, 
we obtain $z_{i, \infty} \in \Omega_i$ as $z_{i, \infty}$ is the limit of a sequence of orbit points of $z$. 
 Hence we also obtain $z_i \in \Omega_i$ by equation \ref{eqn:lim} 
 and $\Omega_i \ne \emp$.
 This shows that $\clo(\Omega)$ is a strict join. 

 
 \end{proof}

\begin{lemma} \label{lem:centerprojection}
Suppose that $\orb$ is a strongly tame properly convex real projective orbifold with radial or totally geodesic ends 
and the holonomy group is not virtually reducible. Assume $\torb \subset \SI^n$ {\rm (} resp. $\torb \subset \bR P^n${\rm )}. 
Suppose that $\tilde E$ is a lens-type p-R-end.
Then 
for every sequence $\{g_j\}$ of distinct elements of the virtual center $\bZ^{l_0-1}$, 
we have \[\frac{\lambda_1(g_j)}{\lambda_{\bv_{\tilde E}}(g_j)} \ra \infty.\] 
\end{lemma}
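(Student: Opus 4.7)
Since $\tilde E$ is of lens type, Theorem \ref{thm:redtot} provides a $\bGamma_{\tilde E}$-invariant splitting $\bR^{n+1}=V_0\oplus V_1\oplus\cdots\oplus V_{l_0}$ with $V_0:=\bR\vec{v}_{\bv_{\tilde E}}$, in which every element $g$ of the virtual center is diagonalizable, acting by scalar $\mu_i(g)$ on $V_i$ with multiplicity $d_i=\dim V_i$ for $i\ge 1$ and by $\lambda_{\bv_{\tilde E}}(g)$ on $V_0$, subject to $\lambda_{\bv_{\tilde E}}(g)\prod_{i\ge 1}\mu_i(g)^{d_i}=\pm 1$ and the strict middle inequality $\min_i\mu_i(g)<\lambda_{\bv_{\tilde E}}(g)<\max_i\mu_i(g)$ for every nontrivial $g$. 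Writing $t_i:=\log\mu_i$ and $s:=\log\lambda_{\bv_{\tilde E}}$ as linear functionals on the real span $V:=\Lambda\otimes_{\bZ}\bR$ of the rank-$(l_0-1)$ central lattice $\Lambda\cong\bZ^{l_0-1}$, the assertion of the lemma becomes the statement that the piecewise-linear function $L(v):=\max_i t_i(v)-s(v)$ tends to $+\infty$ along any sequence of distinct elements of $\Lambda$.

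The central step is to show that $s\equiv 0$ on $V$, using the non-virtual-reducibility of $\bGamma$. Suppose for contradiction that $s\not\equiv 0$. A short argument using the strict middle inequality shows that each $t_i$ with $i=1,\dots,l_0$ is likewise not identically zero on $V$: if $t_{i_1}\equiv 0$ on $V$ then a dimension count forces $V=\{t_{i_1}=0,\;\sum_j d_jt_j+s=0\}$, and one can choose $v\in\Lambda$ in the open cone $\{t_j<0 \text{ for all } j\ne i_1\}\subset V$, yielding $s(v)>0=t_{i_1}(v)\ge\max_j t_j(v)$ and contradicting Theorem \ref{thm:redtot}(ii). Hence for each $i\in\{0,1,\dots,l_0\}$ (with $t_0:=s$) one picks $w_i\in\Lambda$ with the corresponding eigenvalue strictly greater than $1$, so that $\{w_i^n\}_{n\in\bN}$ is a sequence in the center whose largest norm eigenvalue on $V_i$ tends to $\infty$. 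Because the virtual center acts by scalars on each $V_i$, its induced projective action on every $P(V_i)$ is trivial and thus bounded; Lemma \ref{lem:decjoin} applied to the decomposition $V_0\oplus V_1\oplus\cdots\oplus V_{l_0}$ therefore yields $\clo(\tilde{\mathcal{O}})=\Omega_0\ast\Omega_1\ast\cdots\ast\Omega_{l_0}$ with $\Omega_i=\clo(\tilde{\mathcal{O}})\cap P(V_i)$, and Lemma \ref{lem:joinred} applied to the full holonomy group $\bGamma$ then forces $\bGamma$ to be virtually reducible, contradicting the hypothesis.

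With $s\equiv 0$ on $V$ established, the relation $\sum_i d_it_i=0$ holds identically on $V$. Then $L(v)=\max_i t_i(v)$, and since every $d_i>0$, the condition $\max_i t_i(v)\le 0$ forces $\sum_i d_it_i(v)\le 0$ with equality only when each $t_i(v)=0$, giving $v=0$. Hence $L>0$ on $V\setminus\{0\}$. As $L$ is continuous and positively homogeneous of degree one, it attains a positive minimum $\delta>0$ on the unit sphere of $V$, so $L(g)\ge\delta\|g\|$ for every nonzero $g\in\Lambda$. Since the $g_j$ are distinct lattice elements in a discrete group, only finitely many lie in any bounded set, whence $\|g_j\|\to\infty$ and therefore $L(g_j)=\log(\lambda_1(g_j)/\lambda_{\bv_{\tilde E}}(g_j))\to\infty$ as required.

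The main difficulty is the reduction to $s\equiv 0$: this is the step that converts the non-virtual-reducibility of $\bGamma$ into a linear-algebraic constraint on the central lattice, and it is where Lemmas \ref{lem:decjoin} and \ref{lem:joinred} carry the geometric content; once this is in hand the remainder is a compactness argument on a piecewise-linear convex function on $V$.
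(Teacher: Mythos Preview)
Your proof is correct but follows a genuinely different route from the paper's. The paper argues by direct contradiction: assuming $\lambda_1(g_j)/\lambda_{\bv_{\tilde E}}(g_j)$ stays bounded, it passes to a subsequence on which the maximum is achieved on a fixed factor $V_{i^*}$, and then \emph{groups the vertex line with that factor}, setting $W_0=V_0\oplus V_{i^*}$. The bounded-ratio hypothesis is exactly what makes the projective action of $\{g_j\}$ on $W_0$ bounded, so Lemma~\ref{lem:decjoin} applies to the coarser decomposition $W_0,\{V_k\}_{k\ne i^*}$, and Lemma~\ref{lem:joinred} yields the contradiction in one stroke. You instead keep all $l_0{+}1$ blocks separate and use Lemmas~\ref{lem:decjoin}--\ref{lem:joinred} to first establish the stronger structural fact $s\equiv 0$ (i.e.\ $\lambda_{\bv_{\tilde E}}\equiv 1$ on the virtual center), after which an elementary homogeneity/compactness argument on $L=\max_i t_i$ finishes with the quantitative bound $L(g)\ge\delta\|g\|$. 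The trade-off: your approach yields an interesting byproduct the paper never states, at the cost of the extra sub-lemma $t_i\not\equiv 0$; the paper's approach avoids that sub-lemma by absorbing $V_0$ into $V_{i^*}$. Your argument for $t_i\not\equiv 0$ is correct, though the phrase ``a dimension count forces $V=\{t_{i_1}=0,\sum_j d_jt_j+s=0\}$'' is opaque --- the real content is that the $\SI^{n-1}_{\bv_{\tilde E}}$-eigenvalue functionals $\hat t_i$ satisfy exactly the single relation $\sum d_i\hat t_i=0$ (from the Benoist diagonal structure underlying Theorem~\ref{thm:redtot}(i)), so any $l_0-1$ of the $t_j$ remain independent once one vanishes. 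Incidentally, the paper's own terse invocation of Lemma~\ref{lem:decjoin} tacitly needs the same nonvanishing for the factors $V_k$ with $k\ne i^*$, so your explicit verification is not wasted.
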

\begin{proof} 
Suppose that for a sequence $g_j$ of $\bZ^l - \{\Idd\}$, 
we have $\{\lambda_1(g_j)/\lambda_{\bv_{\tilde E}}(g_j)\}$ is bounded. 
Since $\lambda_1(g_j) >  \lambda_{\bv_{\tilde E}}(g_j)$, 
we assume without loss of generality that $\lambda_1(g_j)$ occurs for a fixed collection $C'_i$, $i \in I$, by taking a subsequence of 
$\{g_j\}$ if necessary.  
Then $\{g_j\}$ acts as a bounded set of projective automorphisms of $\ast_{i \in I} C'_i$. 
Since $g_j$ acts trivially on each $D'_j$ for each $j$ for all $j \ne i$ by Theorem \ref{thm:redtot}(i). 
Again by Lemma \ref{lem:decjoin}, $\clo(\Omega)$ is a nontrivial strict join
and this leads to contradiction by Lemma \ref{lem:joinred}. 
 \end{proof}
 


\section{The characterization of lens-shaped representations} \label{sec:chlens}

The main purpose of this section is to characterize the lens-shaped representations
in terms of eigenvalues. This is a major result of this paper and is needed 
for understanding the duality of the ends.

First, we prove the inequality result for irreducible cases of ends. 
Next, we give the definition of uniform middle-eigenvalue conditions. 
We show that the uniform middle-eigenvalue conditions imply the existence of limits. 
Finally, we show the equivalence of the lens condition 
and the uniform middle-eigenvalue condition in Theorem \ref{thm:equ}
for both radial type ends and totally geodesic ends under very general conditions. 

We will now be working on $\SI^n$. However, the arguments easily apply to $\bR P^n$ versions,
which we omit. 

\subsection{The uniform middle-eigenvalue conditions}

Let $\orb$ be a properly convex real projective orbifold with radial ends
and $\torb$ be the universal cover in $\SI^n$. 
Let $\tilde E$ be a p-R-end of $\torb$ and $\bv_{\tilde E}$ be the p-end vertex. 
Let $h: \pi_1(\tilde E) \ra \SLnp_{\bv_{\tilde E}}$ be a homomorphism and suppose that $\pi_1(\tilde E)$ is hyperbolic. 
Assume that for each nonidentity element of $\pi_1(\tilde E)$, 
the eigenvalue of $g$ at the vertex ${\bv_{\tilde E}}$ of $\tilde E$ has a norm strictly between the maximal
and the minimal norms of eigenvalues of $g$ ---(*). We say that $h$ satisfies 
the {\em middle-eigenvalue condition}.
We denote by the norms of eigenvalues of $g$ by
\[\lambda_1(g), \ldots , \lambda_n(g), \lambda_{\bv_{\tilde E}}(g), \hbox{where } \lambda_1(g) \cdots \lambda_n(g) \lambda_{\bv_{\tilde E}}(g)= \pm 1. \]

Recall ${\mathcal L}_1$ from the beginning of Section \ref{sec:endth}. 
We denote by $\hat h: \pi_1(\tilde E) \ra \SLn$ the homomorphism 
${\mathcal L}_1 \circ h$. Since $\hat h$ is a holonomy of a closed convex real projective $(n-1)$-orbifold,
and $\Sigma_{\tilde E}$ is assumed to be properly convex, 
$\hat h(\pi_1(\tilde E))$ divides a properly convex domain $\tilde \Sigma_{\tilde E}$ in $\SI^{n-1}_{\bv_{\tilde E}}$.

We denote by $\tilde \lambda_1(g), ..., \tilde \lambda_n(g)$ the norms of eigenvalues of 
$\hat h(g)$ so that 
\[\tilde \lambda_1(g) \geq  \ldots \geq \tilde \lambda_n(g), \tilde \lambda_1(g)   \ldots \tilde \lambda_n(g) = \pm 1\] hold.
These are called the {\em relative norms of eigenvalues} of $g$.
We have $\lambda_i(g) = \tilde \lambda_i(g)/ \lambda_{\bv_{\tilde E}}(g)^{1/n}$ for $i=1, .., n$.  
Since ${\bv_{\tilde E}}$ is fixed, the norm of the corresponding eigenvalue $\lambda_{\bv_{\tilde E}}(g)$ is one of these. 

Note here that eigenvalues corresponding to 
$\lambda_1(g), \tilde \lambda_1(g), \lambda_n(g), \tilde \lambda_n(g), \lambda_{\bv_{\tilde E}}(g)$ are all 
positive, mainly by the work of Benoist (see \cite{Benasym}), which really goes back to Kuiper, Koszul, and so on.  

We define the $\leng(g)$ to be $\log(\tilde\lambda_1(g)/\tilde \lambda_n(g)) = \log(\lambda_1(g)/\lambda_n(g))$,
which is the infimum of the Hilbert metric lengths of the associated closed curves in $\tilde \Sigma_{\tilde E}/\hat h(\pi_1(\tilde E))$.

We recall the results in \cite{Benasym} and \cite{Ben5}.
\begin{definition} \label{defn:pro}
Each element $g \in \SLnp$ 
\begin{itemize}
\item that has the largest and smallest norms of the eigenvalues 
which are distinct and
\item the largest or the smallest norm correspond to the eigenvectors with positive eigenvalues respectively
\end{itemize} 
is said to be {\em bi-semiproximal}.
Each element $g \in \SLnp$ 
\begin{itemize}
\item that has the largest and smallest norms of the eigenvalues 
which are distinct and
\item the largest or the smallest norm correspond to the unique eigenvector respectively
\end{itemize} 
is said to be {\em biproximal}. 
\end{definition}  
All infinite order elements of $\hat h(\pi_1(\tilde E))$ are bi-semiproximal
and a finite index subgroup has only bi-semiproximal elements and the identity.
Note also an element is {\em semiproximal} if and only if it is bi-semiproximal 
when 
$\Gamma$ acts on a properly convex domain divisibly. 


When $\pi_1(\tilde E)$ is hyperbolic, 
all infinite order elements of $\hat h(\pi_1(\tilde E))$ are biproximal
and a finite index subgroup has only biproximal elements and the identity.
Note also an element is {\em proximal} if and only if it is biproximal 
when $\bGamma_{\tilde E}$ is a hyperbolic group. 

Assume that $\bGamma_{\tilde E}$ is hyperbolic. 
Suppose that $g \in \bGamma_{\tilde E}$ is proximal. 
We define 
\begin{equation}\label{eqn:alphabetag}
\alpha_g := \frac{\log \tilde \lambda_1(g)- \log \tilde \lambda_n(g)}{\log \tilde \lambda_1(g) - \log \tilde \lambda_{n-1}(g)}, 
\beta_g :=   \frac{\log \tilde \lambda_1(g)- \log \tilde \lambda_n(g)}{\log \tilde \lambda_1(g) - \log \tilde \lambda_{2}(g)},
\end{equation} 
and denote by $\bGamma_{\tilde E}^p$ the set of proximal elements. We define
\[\beta_{\bGamma_{\tilde E}} := \sup_{g \in \bGamma_{\tilde E}^p} \beta_g, 
\alpha_{\bGamma_{\tilde E}} := \inf_{g\in \bGamma_{\tilde E}^p} \alpha_g. \]
Proposition 20 of \cite{Guichard} shows that  
we have 
\begin{equation}\label{eqn:betabound}
1 < \alpha_{\tilde \Sigma_{\tilde E}} \leq \alpha_\Gamma \leq 2 \leq \beta_\Gamma \leq \beta_{\tilde \Sigma_{\tilde E}} < \infty 
\end{equation}
for constants $\alpha_{\tilde \Sigma_{\tilde E}}$ and $\beta_{\tilde \Sigma_{\tilde E}}$ depending only on $\tilde \Sigma_{\tilde E}$
since $\tilde \Sigma_{\tilde E}$ is properly and strictly convex.

Here, it follows that $\alpha_{\bGamma_{\tilde E}}, \beta_{\bGamma_{\tilde E}}$
depends on $\hat h$ and form a function of convex divisible part of 
$\Hom(\pi_1(\tilde E), \SLnp)/\SLnp$ with algebraic convergence topology.




\begin{theorem}\label{thm:eignlem} 
Let $\orb$ be a strongly tame properly convex real projective manifold with radial ends or totally geodesic ends.
Let $\tilde E$ be a properly convex p-R-end of the universal cover $\torb$, $\torb \subset \SI^n$, $n \geq 2$.
Let $\bGamma_{\tilde E}$ be a hyperbolic group. 
Then 
\[ \frac{1}{n}\left(1+ \frac{n-2}{\beta_{\bGamma_{\tilde E}}}\right) \leng(g)
 \leq \log \tilde \lambda_1(g)   \leq  \frac{1}{n}\left(1+ \frac{n-2}{\alpha_{\bGamma_{\tilde E}}}\right) \leng(g)\]
for every proximal element $g \in \hat h(\pi_1(\tilde E))$.
\end{theorem}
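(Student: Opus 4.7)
The plan is to recognize this inequality as a direct algebraic consequence of the ordering and trace-zero (log-determinant) relations among the eigenvalue-logarithms of $\hat h(g)$, together with the two ``pinching" constants $\alpha_g,\beta_g$. Since $\hat h(g) \in \SLn$ and the extremal eigenvalues are positive (invoking the Benoist/Kuiper/Koszul biproximality cited in the paragraph preceding the theorem), I will set $a_i := \log \tilde\lambda_i(g)$, so that
\begin{equation*}
a_1 \geq a_2 \geq \cdots \geq a_n, \qquad a_1 + a_2 + \cdots + a_n = 0,
\end{equation*}
and $\leng(g) = a_1 - a_n$. Proximality of $g$ guarantees $a_1 > a_2$ and $a_{n-1} > a_n$, so the definitions in \eqref{eqn:alphabetag} rearrange to
\begin{equation*}
a_1 - a_{n-1} = \frac{a_1 - a_n}{\alpha_g}, \qquad a_1 - a_2 = \frac{a_1 - a_n}{\beta_g}.
\end{equation*}

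Next I would exploit the monotonicity $a_{n-1} \leq a_i \leq a_2$ for $2 \leq i \leq n-1$. Summing the $n-2$ middle terms and using the trace-zero identity $a_2 + \cdots + a_{n-1} = -a_1 - a_n$ gives the two-sided squeeze
\begin{equation*}
(n-2)\,a_{n-1} \ \leq\ -a_1 - a_n \ \leq\ (n-2)\,a_2.
\end{equation*}
Substituting $a_{n-1} = a_1 - \leng(g)/\alpha_g$ on the left and $a_2 = a_1 - \leng(g)/\beta_g$ on the right, and solving each inequality for $a_1$, will yield
\begin{equation*}
\frac{1}{n}\left(1 + \frac{n-2}{\beta_g}\right)\leng(g) \ \leq\ a_1 \ \leq\ \frac{1}{n}\left(1 + \frac{n-2}{\alpha_g}\right)\leng(g).
\end{equation*}

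Finally, I would replace $\alpha_g,\beta_g$ by the uniform constants. Since $\alpha_{\bGamma_{\tilde E}} = \inf \alpha_g$ and $\beta_{\bGamma_{\tilde E}} = \sup \beta_g$, we have $1/\alpha_g \leq 1/\alpha_{\bGamma_{\tilde E}}$ and $1/\beta_g \geq 1/\beta_{\bGamma_{\tilde E}}$, which weakens the above squeeze to exactly the stated inequality. The hyperbolicity of $\bGamma_{\tilde E}$ together with \eqref{eqn:betabound} (Guichard's bound) is used only implicitly, to ensure $\alpha_{\bGamma_{\tilde E}}$ and $\beta_{\bGamma_{\tilde E}}$ are finite positive numbers so that the resulting bounds are nontrivial; no further dynamical input is needed. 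The one point to be careful about is that $\hat h(g)$ has determinant $\pm 1$ rather than $1$, so I should explicitly note that $\prod |\mu_i| = 1$ forces $\sum a_i = 0$ regardless of sign. There is no real obstacle here beyond bookkeeping; the whole argument is the linear algebra of the ordered log-spectrum.
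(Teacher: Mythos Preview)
Your proposal is correct and is essentially the same argument as the paper's, written additively (in $a_i=\log\tilde\lambda_i$) where the paper works multiplicatively via $\tilde\lambda_1^n=\prod_{i\geq 2}(\tilde\lambda_1/\tilde\lambda_i)$; both bound the middle ratios by $\tilde\lambda_1/\tilde\lambda_2$ (resp.\ $\tilde\lambda_1/\tilde\lambda_{n-1}$) using the ordering, invoke the definitions of $\alpha_g,\beta_g$, and use $\det=\pm1$. Your passage from the per-element constants $\alpha_g,\beta_g$ to the group constants $\alpha_{\bGamma_{\tilde E}},\beta_{\bGamma_{\tilde E}}$ is in fact slightly cleaner than the paper, which slips between $\beta_{\tilde\Sigma_{\tilde E}}$ and $\beta_{\bGamma_{\tilde E}}$ in the intermediate display.
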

\begin{proof} 
Since there is a biproximal subgroup of finite index, we concentrate on biproximal elements only.
We obtain from above that 
\[ \frac{\log \frac{\tilde \lambda_1(g)}{\tilde \lambda_n(g)}}{\log \frac{\tilde \lambda_1(g)}{\tilde \lambda_2(g)}} \leq \beta_{\tilde \Sigma_{\tilde E}}.\] 
We deduce that 
\begin{equation}\label{eqn:eigratio} 
\frac{\tilde \lambda_1(g)}{\tilde \lambda_2(g)} \geq \left( \frac{\lambda_1(g)}{\lambda_n(g)} \right)^{1/\beta_{\tilde \Sigma_{\tilde E}}}
=  \left( \frac{\tilde \lambda_1(g)}{\tilde \lambda_n(g)} \right)^{1/\beta_{\Omega}} = \exp(\frac{\leng(g)}{\beta_{\tilde \Sigma_{\tilde E}}}).
\end{equation}
Since we have $\tilde \lambda_i \leq \tilde \lambda_2 $ for $i\geq 2$, we obtain
\begin{equation}\label{eqn:betab} 
\frac{\tilde \lambda_1(g)}{\tilde \lambda_i(g)} \geq \left( \frac{\lambda_1}{\lambda_n} \right)^{1/\beta_{\tilde \Sigma_{\tilde E}}}
\end{equation}
and since $\tilde \lambda_1 \cdots \tilde \lambda_n = 1$, 
we have 
\[ \tilde \lambda_1(g)^n = \frac{\tilde \lambda_1(g)}{\tilde \lambda_2(g)} \cdots  \frac{\tilde \lambda_1(g)}{\tilde \lambda_{n-1}(g)}
 \frac{\tilde \lambda_1(g)}{\tilde \lambda_n(g)} \geq \left(  \frac{\tilde \lambda_1(g)}{\tilde \lambda_n(g)} \right)^{\frac{n-2}{\beta} + 1}.\] 
 We obtain 
 \begin{equation}\label{eqn:betabd}
  \log \tilde \lambda_1(g) \geq \frac{1}{n}\left(1+ \frac{n-2}{\beta_{\bGamma_{\tilde E}}}\right) \leng(g).
  \end{equation}
By similar reasoning, we also obtain 
\begin{equation}\label{eqn:alphabd}
\log \tilde \lambda_1(g) \leq \frac{1}{n}\left(1+ \frac{n-2}{\alpha_{\bGamma_{\tilde E}}}\right) \leng(g).
\end{equation} 

\end{proof}

\begin{remark} 
Under the assumption of Theorem \ref{thm:eignlem}, if we do not assume that $\pi_1(\tilde E)$ is hyperbolic, then 
we obtain 
\[ \frac{1}{n} \leng(g) \leq \log \tilde \lambda_1(g)   \leq C \frac{n-1}{n} \leng(g)\]
for every semiproximal element $g \in \hat h(\pi_1(\tilde E))$.
\end{remark} 
\begin{proof} 
Let $\tilde \lambda_i(g)$ denote the norms of $\hat h(g)$ for $i=1, 2, \dots, n$. 
\[\log \tilde \lambda_1(g) \geq  \ldots \geq \log \tilde \lambda_n(g), \log \tilde \lambda_1(g)  + \cdots + \log \tilde \lambda_n(g) = 0\] 
hold.
We deduce 
\begin{alignat}{3} 
\log \tilde \lambda_n(g) &=& -\log \lambda_1 - \cdots - \log \tilde \lambda_{n-1}(g) \nonumber \\
& \geq & -(n-1) \log \tilde \lambda_1 \nonumber\\ 
\log \tilde \lambda_1(g) & \geq & -\frac{1}{n-1} \log \tilde \lambda_n(g) \nonumber\\ 
\left(1+ \frac{1}{n-1}\right) \log \tilde \lambda_1(g) & \geq & \frac{1}{n-1} \log \frac{\tilde \lambda_1(g)}{\tilde \lambda_n(g)}\nonumber \\ 
\log \tilde \lambda_1(g) & \geq & \frac{1}{n} \leng(g).
\end{alignat}
We also deduce 
\begin{alignat}{3} 
-\log \tilde \lambda_1(g) & = & \log \tilde \lambda_2(g) + \cdots + \log \tilde \lambda_{n}(g) \nonumber \\
 & \geq & (n-1) \log \tilde \lambda_{n}(g) \nonumber \\ 
-(n-1) \log \tilde \lambda_{n}(g) & \geq & \log \tilde \lambda_1(g) \nonumber \\ 
(n-1) \log \frac{\tilde \lambda_1(g)}{\tilde \lambda_{n}(g)} & \geq & n \log \tilde \lambda_1(g) \nonumber \\ 
\frac{n-1}{n} \leng(g) & \geq & \log \tilde \lambda_1(g).
\end{alignat} 
\end{proof}

\begin{remark}
We cannot show that the middle-eigenvalue condition implies 
the uniform middle-eigenvalue condition. This could be false.
For example, we  could obtain a sequence of elements $g_i \in \Gamma$ so that 
$\lambda_1(g_i)/ \lambda_{\bv_{\tilde E}}(g_i) \ra 1$ while $\Gamma$ satisfies the middle-eigenvalue 
condition. Certainly, we could have an element $g$ where 
$\lambda_1(g) = \lambda_{\bv_{\tilde E}}(g)$. 
However, even if there is no such element, we might still have 
a counter-example. 
For example, suppose that we might have 
\[\frac{\log (\lambda_1(g_i)/\lambda_{\bv_{\tilde E}}(g_i))}{\leng(g)} \ra 0.\] 
(Such assignments are not really understood
but see Benoist \cite{Benasym}. Also, an analogous phenomenon seems to happen 
with the Margulis space-time and diffused Margulis invariants as investigated by Charette, Drumm, Goldman, Labourie, and Margulis 
recently.)
\end{remark}

\subsubsection{The uniform middle-eigenvalue conditions and the orbits} 

Let $\tilde E$ be a p-R-end of the universal cover $\torb$ of 
 a properly convex real projective orbifold $\orb$ with radial ends. 
Assume that $\bGamma_{\tilde E}$ satisfies the uniform middle-eigenvalue condition. 
There exists a $\bGamma_{\tilde E}$-invariant convex set $K$ distanced from $\{\bv_{\tilde E}, \bv_{\tilde E-}\}$
by Theorem \ref{thm:distanced}. 
For the corresponding tube ${\mathcal T}_{\bv_{\tilde E}}$, $K \cap \Bd {\mathcal T}_{\bv_{\tilde E}}$ is a compact 
subset distanced from $\{\bv_{\tilde E}, \bv_{\tilde E-}\}$ .
We call $K$ the {\em $\bGamma_{\tilde E}$-invariant boundary distanced set}. 
Let $C_1$ denote the convex hull of $K$ in the tube ${\mathcal T}_{\bf_{\tilde E}}$ obtained by 
Theorem \ref{thm:distanced}.
Then $C_1$ is a $\bGamma_{\tilde E}$-invariant 
subset of ${\mathcal T}_{\bv_{\tilde E}}$. 

Also, $K \cap \Bd {\mathcal T}_{\bv_{\tilde E}}$ contains all attracting and repelling 
fixed points of $\gamma \in \bGamma_{\tilde E}$ by invariance and the middle-eigenvalue condition. 

\begin{theorem}\label{thm:attracting} 
Let $\orb$ be a strongly tame properly convex real projective manifold with radial ends or totally geodesic ends.
Let $\tilde E$ be a properly convex p-R-end of the universal cover $\torb$, $\torb \subset \SI^n$. 
Assume that $\bGamma_{\tilde E}$ is irreducible and hyperbolic and satisfies the uniform middle eigenvalue conditions. 
\begin{itemize}
\item Suppose that $\gamma_i$ is a sequence of elements of $\bGamma_{\tilde E}$ acting on ${\mathcal T}_{\bv_{\tilde E}}$. 
\item The sequence of attracting fixed points $a_i$ and the sequence of  repelling fixed points $b_i$ are so that 
$a_i \ra a_\infty$ and $b_i \ra b_\infty$ where $a_\infty, b_\infty$ are not in $\{ \bv_{\tilde E}, \bv_{\tilde E-}\}$
for $a_\infty \ne b_\infty$. 
\item Suppose that the sequence $\{\lambda_i\}$ of eigenvalues where 
$\lambda_i$ corresponds to $a_i$ converges to $+\infty$. 
\end{itemize} 
Then for 
\[M := {\mathcal T}_{\bv_{\tilde E}} - \clo(\bigcup_{i=1}^\infty \ovl{b_i\bv_{\tilde E}} \cup \ovl{b_i\bv_{\tilde E-}}), \]
the point $a_\infty$ is the limit of $\{\gamma_i(K)\}$ for any compact subset $K \subset M$. 
\end{theorem}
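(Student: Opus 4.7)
The approach is to analyze the projective dynamics of $\gamma_i$ through its eigenstructure. Since $\bGamma_{\tilde E}$ is irreducible and hyperbolic, Benoist's theorem guarantees that $\tilde\Sigma_{\tilde E}$ is strictly convex with $C^1$ boundary, and each $\gamma_i$ is biproximal on $\SI^n$: it has a unique attracting eigenline through $a_i$ (eigenvalue $\lambda_i$), a unique repelling eigenline through $b_i$, and the eigenline through $\bv_{\tilde E}$ with middle eigenvalue $\alpha_i := \lambda_{\bv_{\tilde E}}(\gamma_i)$. First I would show that the hypothesis $\lambda_i \to \infty$ forces the length $\leng(\gamma_i) \to \infty$. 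By the uniform middle eigenvalue condition, $\log(\lambda_i/\alpha_i) \leq C\leng(\gamma_i)$, and by Theorem \ref{thm:eignlem} applied to $\hat h(\gamma_i)$, $\log\tilde\lambda_1(\gamma_i) = \log\lambda_i + \tfrac{1}{n}\log\alpha_i \leq C'\leng(\gamma_i)$; combining these and eliminating $\log\alpha_i$ yields a bound $\log\lambda_i \leq C''\leng(\gamma_i)$, so boundedness of $\leng(\gamma_i)$ would force boundedness of $\lambda_i$. Once $\leng(\gamma_i)\to\infty$, inequality \eqref{eqn:betab} gives $\tilde\lambda_1(\gamma_i)/\tilde\lambda_2(\gamma_i)\to\infty$, and together with $\lambda_i/\alpha_i\to\infty$ this establishes that the top eigenvalue $\lambda_i$ of $\gamma_i$ on $\bR^{n+1}$ dominates every other eigenvalue norm with an unbounded ratio.

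Next, after passing to a subsequence, I would consider the limit $L_\infty$ of $\gamma_i/\lambda_i$ in the space of $(n+1)\times(n+1)$ matrices. The unique-domination property just established implies $L_\infty$ has rank one, with image the line representing $a_\infty$ and kernel $H_\infty^-$ equal to the limit of the stable hyperplanes $H_i^- = \mathrm{span}\{e_j^{(i)} : j\neq 1\}$ spanned by all eigenvectors of $\gamma_i$ other than the attracting one. In the link $\SI^{n-1}_{\bv_{\tilde E}}$, the projected hyperplane $\hat H_i^-$ is the unique supporting hyperplane of $\clo(\tilde\Sigma_{\tilde E})$ at $\hat b_i$ by the $C^1$ regularity, and by strict convexity it meets $\clo(\tilde\Sigma_{\tilde E})$ only at $\hat b_i$. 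Lifting this identification via $\Pi_{\bv_{\tilde E}}$, and using that $\bv_{\tilde E}$ is always a middle eigenvector of $\gamma_i$ contained in $H_i^-$, one obtains
\[ H_i^- \cap \clo({\mathcal T}_{\bv_{\tilde E}}) = \ovl{b_i\bv_{\tilde E}}\cup \ovl{b_i\bv_{\tilde E-}}, \]
and in the limit $H_\infty^- \cap \clo({\mathcal T}_{\bv_{\tilde E}}) = \ovl{b_\infty\bv_{\tilde E}} \cup \ovl{b_\infty\bv_{\tilde E-}}$, which lies in the closed set removed in the definition of $M$.

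The theorem would then follow: for any compact $K\subset M$, the defining property of $M$ guarantees $K$ is disjoint from $H_\infty^-$ and hence, by continuity of the family $H_i^-$, also from $H_i^-$ for all sufficiently large $i$; thus lifts of points of $K$ to the unit sphere in $\bR^{n+1}$ stay uniformly bounded away from $\ker L_\infty$, so $L_\infty$ is bounded below in norm on them. Uniform matrix convergence $\gamma_i/\lambda_i \to L_\infty$ on such a family then yields $\gamma_i(K) \to \ima(L_\infty) = \{a_\infty\}$ uniformly in the Hausdorff sense. The main technical obstacle I anticipate is making the identification $H_i^- \cap \clo({\mathcal T}_{\bv_{\tilde E}}) = \ovl{b_i\bv_{\tilde E}} \cup \ovl{b_i\bv_{\tilde E-}}$ rigorous --- it depends on strict convexity of $\tilde\Sigma_{\tilde E}$ and on careful use of the lift from link to tube --- together with verifying that the second-largest eigenvalue norm of $\gamma_i$ on $\bR^{n+1}$, whether it comes from the link as $\tilde\lambda_2(\gamma_i)\alpha_i^{-1/n}$ or from the fixed direction as $\alpha_i$ itself, is in every case negligible compared with $\lambda_i$, so that $\gamma_i/\lambda_i$ does produce a well-defined rank-one limit along the subsequence.
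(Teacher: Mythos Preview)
Your proposal is correct and follows essentially the same route as the paper: your stable hyperplane $H_i^-$ is exactly the paper's supporting sphere $\SI^{n-1}_i$ at $b_i$, and your rank-one matrix limit $\gamma_i/\lambda_i \to L_\infty$ is the linear-algebraic restatement of the paper's assertion that $\gamma_i$ acts on the complementary hemisphere $H_i$ as a contraction with factor $k_i = \max\{\tilde\lambda_2/\tilde\lambda_1,\ \lambda_{\bv_{\tilde E}}/\lambda_1\}\to 0$ in a suitable Euclidean metric $d_{E,i}$. Both arguments rest on the same eigenvalue-gap input (the paper invokes the uniform middle-eigenvalue condition together with equation~\eqref{eqn:eigratio}; you reach it by first deducing $\leng(\gamma_i)\to\infty$ and then applying \eqref{eqn:betab}), and the technical point you flag at the end --- controlling the operator norm of $\gamma_i|_{H_i^-}/\lambda_i$ so that a genuine rank-one limit exists --- is the same subtlety the paper handles by asserting that the metrics $d_{E,i}$ can be chosen to converge uniformly on compacta, without spelling out the details.
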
 
\begin{proof} 
There exists a totally geodesic sphere $\SI^{n-1}_i$ at $b_i$ supporting ${\mathcal T}_{\bv_{\tilde E}}$. 
$a_i$ is uniformly bounded away from $\SI^{n-1}_i$ for $i$ sufficiently large.
$\SI^{n-1}_i$ bounds an open hemisphere $H_i$ containing $a_i$ where $a_i$ 
is the attracting fixed point so that for a euclidean metric $d_{E, i}$, 
$\gamma_i| H_i: H_i \ra H_i$ is a contraction by the inverse $k_i$ of the factor 
\[\min \left\{\frac{\tilde \lambda_1(\gamma_i)}{\tilde \lambda_2(\gamma_i)}, 
\frac{\tilde \lambda_1(\gamma_i)}{\lambda_{\bv_{\tilde E}}(\gamma_i)^{\frac{n+1}{n}}}\right\}.\]
Also, $k_i \ra 0$ by the uniform middle eigenvalue condition and 
and by equation \ref{eqn:eigratio}. 
Note that $\{\clo(H_i)\}$ converges geometrically to $\clo(H)$ for an open hemisphere containing $a$ in
the interior. 

Actually, we can choose a Euclidean metric $d_{E, i}$ on $H_i^o$ 
so that $\{d_{E, i}| J \times J \}$ is uniformly convergent for any compact subset $J$ of 
$H_\infty$.
This implies that since $\{a_i \} \ra a$,  if $d_{E_i}(a_i, K) \leq \eps$ for sufficiently small $\eps > 0$, then 
$\bdd(a_i, K) \leq C' \eps$ for a positive constant $C'$. 

Any compact subset $K$ of $M$, we gave $K \subset H_\infty$ and 
  the distance $\bdd(K, \Bd H_i)$ is uniformly bounded by a constant $\delta$. 
$\bdd(K, \Bd H_i) > \delta$ implies that 
$d_{E_i}(a_i, K) \leq C/\delta$ for a positive constant $C> 0$
Acting by $g_i$, we obtain 
$d_{E_i}(g_i(K), a_i) \leq k_i C/\delta$, which implies  
$\bdd(g_i(K_i), a_i) \leq C' k_i C/\delta.$
Since $\{k_i\} \ra 0$ and $\{a_i\} \ra a$ imply that $\{g_i(K)\}$ geometrically converges to $a$. 
 \end{proof}

For the following, we don't assume that $\bGamma_{\tilde E}$ acts irreducibly.
\begin{proposition}\label{prop:orbit}
Let $\orb$ be a strongly tame properly convex real projective manifold with radial ends or totally geodesic ends.
Let $\tilde E$ be a properly convex p-R-end of the universal cover $\torb$, $\torb \subset \SI^n$. 
Assume that $\bGamma_{\tilde E}$ satisfies the uniform middle eigenvalue condition. 
Let $\bv_{\tilde E}$ be the R-end vertex
and  $z \in {\mathcal T}^o_{\bv_{\tilde E}}$. Then
\begin{itemize} 
\item[(i)] Limit points of orbit elements of $z$ are in the $\bGamma_{\tilde E}$-invariant boundary distanced set $K$.
\item[(ii)] Each point of $K$ is a limit of $g_i(x)$ for a point $x \in \torb$ for a sequence $g_i \in \bGamma_{\tilde E}$. 
\item[(iii)] For each segment $s$ in $S(\bv_{\tilde E})$, the great segment meets $K$ 
at the end point of $s$ other than $\bv_{\tilde E}$. 
That is, there is a one-to-one correspondence between $S(\bv_{\tilde E})$ and $K$ and hence also with 
$\Bd \tilde \Sigma_{\tilde E}$ in $\SI^{n-1}_{\bv_{\tilde E}}$. 
\end{itemize} 
\end{proposition}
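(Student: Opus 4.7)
The plan is to reduce everything to the uniqueness and structural description of the invariant compact set $K$ furnished by Theorem~\ref{thm:distanced}. Recall that $K$ arises by taking the convex hull inside the tube $\mathcal{T}_{\bv_{\tilde E}}$ of the minimal $\bGamma_{\tilde E}$-invariant compact sets $K_i$ sitting in each factor subtube coming from the join decomposition $\clo(\tilde\Sigma_{\tilde E})=K_1\ast\cdots\ast K_{l_0}$ given by Proposition~\ref{prop:Ben2}, together with isolated fixed points $p'_i$ from any trivial factors; moreover $K\cap\Bd\mathcal{T}_{\bv_{\tilde E}}$ is by construction the closure of the set of attracting fixed points of elements of $\bGamma_{\tilde E}$ on $\partial\mathcal{T}_{\bv_{\tilde E}}$.

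For part (i), I would first treat the irreducible hyperbolic case directly with Theorem~\ref{thm:attracting}. Given a sequence $\{g_i\}\subset\bGamma_{\tilde E}$ and $z\in\mathcal{T}^o_{\bv_{\tilde E}}$, pass to a subsequence so that the attracting and repelling fixed points $a_i,b_i$ converge to $a_\infty,b_\infty\in K\cap\Bd\mathcal{T}_{\bv_{\tilde E}}$ and so that the top eigenvalue at $a_i$ blows up; Theorem~\ref{thm:attracting} then forces $g_i(z)\to a_\infty\in K$. In the general reducible case, I would use the decomposition $\bGamma_{\tilde E}\cong\bZ^{l_0-1}\times\Gamma_1\times\cdots\times\Gamma_{l_0}$ to split each $g_i$ into a virtual-centre part and hyperbolic-factor parts, and apply the uniform middle eigenvalue condition factor by factor. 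This shows that the coordinates of $g_i(z)$ transverse to the dominant eigenspace in each factor tube $B_j$ decay relative to the dominant coordinate, so that every subsequential limit of $\{g_i(z)\}$ is forced to lie on the strict join of the $K_j$ and the $p'_j$, which is exactly $K$.

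For part (ii), I would first note that attracting fixed points of proximal elements of $\bGamma_{\tilde E}$ (and of each hyperbolic factor $\Gamma_j$ acting on its factor) are dense in $K\cap\Bd\mathcal{T}_{\bv_{\tilde E}}$ by the Benoist density already invoked in the proof of Theorem~\ref{thm:lensclass}(i). Each such attracting fixed point $p$ is realised as $\lim_n g^n(x)$ for a generic $x\in\torb$, so a diagonal argument recovers every boundary point of $K$ as a genuine orbit limit of points in $\torb$. Interior points of $K$ would then be handled by exploiting the virtual centre $\bZ^{l_0-1}$: tuning the eigenvalue ratios of a product of centre elements with hyperbolic-factor elements, one can steer an orbit $g_i(x)$ to any point of the open join $K^o$.

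Finally, (iii) will be almost immediate from the last clause of Theorem~\ref{thm:distanced}: $K$ meets each open boundary segment of $\partial\mathcal{T}_{\bv_{\tilde E}}$ at a unique point. Each $s\in S(\bv_{\tilde E})$ is a maximal segment in $\Bd\torb$ from $\bv_{\tilde E}$ lying inside a great segment with endpoints $\bv_{\tilde E}$ and $\bv_{\tilde E-}$, and its other endpoint lies in $\partial\mathcal{T}_{\bv_{\tilde E}}\setminus\{\bv_{\tilde E},\bv_{\tilde E-}\}$. Using part (i) applied to a point $z$ in the interior of $\mathcal{T}_{\bv_{\tilde E}}$ close to $s$, that endpoint is realised as an orbit limit, hence belongs to $K$, and so must coincide with the unique intersection point from Theorem~\ref{thm:distanced}; combined with the evident bijection between $S(\bv_{\tilde E})$ and the directions in $\Bd\tilde\Sigma_{\tilde E}$, this gives the desired three-way correspondence. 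The main obstacle I anticipate is the bookkeeping in the reducible case of (i): balancing the growth of the virtual-centre eigenvalues against those of the individual hyperbolic factors so that \emph{every} subsequential limit is pinned onto $K$ rather than onto some proper subjoin of it.
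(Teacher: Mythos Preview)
Your outline is broadly along the right lines, and part (iii) is essentially what the paper does: just cite the last clause of Theorem~\ref{thm:distanced}. But there are two places where your plan diverges from the paper's argument in ways worth flagging.

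\textbf{A genuine gap in (i), irreducible case.} Theorem~\ref{thm:attracting} has the explicit hypothesis $a_\infty\ne b_\infty$. You pass to a subsequence so that $a_i\to a_\infty$ and $b_i\to b_\infty$, but you do not exclude the possibility that these limits coincide, and when they do the theorem does not apply. The paper handles this by the standard conjugation trick: pick $\gamma_0\in\bGamma_{\tilde E}$ with $\gamma_0(a_\infty)\ne b_\infty$, observe via Lemma~\ref{lem:gatt} that the sequence $\gamma_0\gamma_i$ has attracting fixed points converging to $\gamma_0(a_\infty)$ and repelling fixed points still converging to $b_\infty=r_\ast$, apply Theorem~\ref{thm:attracting} to $\gamma_0\gamma_i$, and then undo $\gamma_0$. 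Without this step your argument for (i) is incomplete.

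\textbf{A simpler route in (i), reducible case.} Your factor-by-factor bookkeeping is avoidable, and the obstacle you anticipate is exactly what the paper sidesteps. In the reducible case the distanced set $K$ from Theorem~\ref{thm:distanced} lies in a totally geodesic hyperspace $H$ disjoint from $\{\bv_{\tilde E},\bv_{\tilde E-}\}$ (this is the strict join $K'_1\ast\cdots\ast K'_{l_0}$ spanning a hyperplane, as in the end of the proof of Theorem~\ref{thm:distanced} and in Theorem~\ref{thm:redtot}). Writing any $\vec x$ as $\vec x_E+\vec x_H$ with $\vec x_E$ in the direction of $\bv_{\tilde E}$ and $\vec x_H$ in the direction of $H$, the uniform middle eigenvalue condition forces $\|g_i\vec x_E\|/\|g_i\vec x_H\|\to 0$ along any unbounded sequence, so every orbit limit lands in $H$, hence in $H\cap\clo(\mathcal{T}_{\bv_{\tilde E}})$, which is $K$. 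This single linear-algebra step replaces all the balancing you worry about.

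\textbf{On (ii).} You are working harder than necessary. The paper only argues that every point of $K$ in $\Bd\mathcal{T}_{\bv_{\tilde E}}$ (equivalently, every direction in $\Bd\tilde\Sigma_{\tilde E}$) is an orbit limit: pick $a\in\Bd\tilde\Sigma_{\tilde E}$, choose $g_i$ with $g_i(z)\to a$ in $\SI^{n-1}_{\bv_{\tilde E}}$ for $z\in\tilde\Sigma_{\tilde E}$, and lift $z$ to $z'\in U$; part~(i) then pins the limit of $g_i(z')$ to the unique point of $K$ over $a$. Your attempt to steer orbits to interior points of $K$ via the virtual centre is not needed for the applications (Proposition~\ref{prop:convhull2}, Lemma~\ref{lem:infiniteline}) and is not what the paper proves.
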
 
\begin{proof} 
(i) Consider first the irreducible hyperbolic $\bGamma_{\tilde E}$.
Given $z \in  {\mathcal T}^o_{\bv_{\tilde E}}$, we
let $\gamma_i$ be any sequence in $\bGamma_{\tilde E}$ so that the corresponding sequence of $\gamma_i(z)$
in $\tilde \Sigma_{\tilde E}$ converges to a point $z'$ in $\Bd \tilde \Sigma_{\tilde E}$. 
Let $z_\infty$ denote the point of $K$ corresponding to $z'$.

Clearly, a fixed point of $g$ in $\Bd {\mathcal T}_{\bv_{\tilde E}}  - \{\bv_{\tilde E}, \bv_{\tilde E -}\}$ is in $K_b$ since $g$ has a unique fixed point 
on each open segment in the boundary. 
We can assume that for the attracting fixed points $a_i$ and $r_i$ of $\gamma_i$, 
we have $\{a_i\} \ra a$ and $\{r_i\} \ra r$ for $a_i, r_i, a, r \in K$
where $a, r \in K$ by the closedness of $K$. 
Assume $a \ne r$ first.
By Theorem \ref{thm:attracting}, we have $\{\gamma_i(z)\} \ra a$ and hence $z_\infty = a$. 

However, it could be that $a = r$. In this case, we choose $\gamma_0 \in \bGamma_{\tilde E}$
so that $\gamma_0(a) \ne r$. Then $\gamma_0\gamma_i$ has the attracting fixed point $a'_i$ 
so that we obtain $\{a'_i\} \ra \gamma_0(a)$ 
and repelling fixed points $r'_i$ so that $\{r'_i \}\ra r$ holds
by Lemma \ref{lem:gatt}.

Then as above $\{\gamma_0 \gamma_i(z) \} \ra \gamma_0(a)$
and we need to multiply by $\gamma_0^{-1}$ now
to show $\{\gamma_i(z) \} \ra a$. 



Suppose that $\bGamma_E$ is reducible. 
Then a totally geodesic hyperspace $H$ is disjoint from $\{\bv_{\tilde E}, \bv_{\tilde E-}\}$ 
and meets $\torb$. Then for any sequence $g_i$ so that 
$g_i(x) \ra x_0$, let $x'$ denote the corresponding point of $\tilde \Sigma_{\tilde E}$.
Then $g_i(x')$ converges to a point $y \in \SI^{n-1}_{\bv_{\tilde E}}$. 
Let $\vec x$ be the vector in the direction of $x'$. 
We write $\vec x = \vec x_E + \vec x_H$ where $\vec x_H$ is in the direction of $H$ and $\vec x_E$ is in the direction of $\bv_{\tilde E}$. 
By the uniform middle eigenvalue condition, we obtain
$g_i(x') \ra x''$ for $x'' \in H$. Hence, $x'' \in H \cap K$. 

(ii) We can take any $a \in \Bd \tilde \Sigma_{\tilde E}$, and 
find a sequence $g_i \in \bGamma_{\tilde E}$ 
so that $g_i(z) \ra a$ for $z \in \tilde \Sigma_{\tilde E}$. 
We can use a point $z'$ in $U$ in direction of $z$ and this implies the result. 

(iii) $K$ meets at a unique point the every segment in $\Bd {\mathcal T}_{\bv_{\tilde E}}$ from $\bv_{\tilde E}$ to $\bv_{\tilde E -}$ by Theorem \ref{thm:distanced}.

\end{proof}

\begin{lemma} \label{lem:gatt}
Let $\{g_i\}$ be a sequence of projective automorphisms 
acting on a strictly convex domain $\Omega$ in $\SI^n$ {\rm (}resp. $\bR P^n${\rm ).} 
Suppose that the sequence of attracting fixed points 
$\{a_i \in \Bd \Omega\} \ra a$ and the sequence of 
repelling fixed points $\{r_i \in \Bd \Omega\} \ra r$. 
Assume that the corresponding sequence of eigenvalues of 
$a_i$ limits to $+\infty$ and that of $r_i$ limits to $0$. 
Let $g$ be any projective automorphism of $\Omega$. 
Then $\{gg_i\}$ has the sequence of attracting fixed points 
$\{a'_i\}$ converging to $g(a)$ and the sequence of repelling 
fixed points converging to $r$. 
\end{lemma}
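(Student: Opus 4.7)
The plan is to exploit the north-south dynamics of the $g_i$ on the strictly convex domain $\Omega$. Since the eigenvalue at $a_i$ tends to $+\infty$ while the eigenvalue at $r_i$ tends to $0$, strict convexity forces each $g_i$ to be proximal with the usual attracting/repelling dichotomy. First I would establish a uniform attraction statement in the spirit of Theorem \ref{thm:attracting}: for every compact $K\subset\clo(\Omega)\setminus\{r\}$, the images $g_i(K)$ Hausdorff-converge to $\{a\}$, and dually $g_i^{-1}(K')\to\{r\}$ for every compact $K'\subset\clo(\Omega)\setminus\{a\}$. The key ingredient is that strict convexity provides unique supporting hyperplanes at $a_i$ and $r_i$, allowing one to choose affine charts on open hemispheres containing $a_i$ (resp.\ $r_i$) in which $g_i$ acts as a contraction whose factor is controlled by the blowing-up eigenvalue ratio, exactly as in the proof of Theorem \ref{thm:attracting}.

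Next I would compose with $g$. Since $g$ is a fixed automorphism of $\Omega$, the previous step gives $gg_i(K)$ concentrated near $g(a)$ for every compact $K\subset\clo(\Omega)\setminus\{r\}$ and all large $i$. Writing $(gg_i)^{-1}=g_i^{-1}g^{-1}$ and noting that $g^{-1}$ carries compact subsets of $\clo(\Omega)\setminus\{g(a)\}$ to compact subsets of $\clo(\Omega)\setminus\{a\}$, one similarly gets $(gg_i)^{-1}(K')$ concentrated near $r$ for every compact $K'\subset\clo(\Omega)\setminus\{g(a)\}$ and all large $i$.

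To pin down $a_i'$ and $r_i'$, I would pick any closed convex neighborhood $U$ of $g(a)$ in $\clo(\Omega)$ with $r\notin U$; the previous paragraph gives $gg_i(U)\subset U^o$ for $i$ large, so Brouwer's theorem yields a fixed point of $gg_i$ in $U$. The Hilbert translation length of $gg_i$ differs from that of $g_i$ by at most the fixed translation length of $g$ and so tends to $\infty$, ruling out interior fixed points for large $i$; the produced fixed point must therefore lie on $\Bd\Omega\cap U$. Because $gg_i(U)\subset U$, iteration gives $(gg_i)^k(U)\subset U$ for all $k\geq 0$, which identifies this boundary fixed point with the attracting fixed point $a_i'$ (the only boundary fixed point whose basin meets $U$). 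Since $U$ was an arbitrarily small neighborhood of $g(a)$, it follows that $a_i'\to g(a)$. Applying the very same argument to $(gg_i)^{-1}$ with a closed convex neighborhood $V$ of $r$ avoiding $g(a)$ places $r_i'\in V$ and yields $r_i'\to r$.

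The main obstacle is the first step: extracting uniform north-south attraction on all of $\clo(\Omega)$ from only the data on the fixed points and their eigenvalues. One must prevent pathological limit behavior along boundary segments, which is where strict convexity is essential, as it excludes flat pieces of $\Bd\Omega$ on which $g_i$ could fail to contract uniformly. Once this uniform dynamical statement is in hand, the Brouwer argument is robust and transports the perturbation by $g$ to both fixed points simultaneously.
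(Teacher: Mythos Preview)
Your approach is essentially the same as the paper's: north-south dynamics for $g_i$ composed with $g$, then a Brouwer-type trapping argument for $a_i'$, followed by the dual argument via $(gg_i)^{-1}=g_i^{-1}g^{-1}$ for $r_i'$. The only real difference is that the paper works directly on $\Bd\Omega$ (taking a compact ball $B\subset\Bd\Omega$ with $g(a_i)\in B^o$ and $r\notin B$, so that $gg_i(B)\subset B$), which spares you the translation-length step since any fixed point produced by Brouwer already lies on the boundary; your detour through $\clo(\Omega)$ is correct but unnecessary.
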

\begin{proof}
The new attracting fixed points $a'_i \ra g(a_\ast)$ 
since a compact ball $B$ in $\Bd \Omega$ 
so that $g(a_i) \in B^o, r_\infty \not\in B$, 
becomes a small ball near $g(a_i)$ under $gg_i$ 
so that $gg_i(B) \subset B$. 
Thus, the fixed point $a'_i$ of $g g_i$ lies in the disk $gg_i(B)$. 
Also $gg_i| K$ for $K \in \clo(\Omega) -\{r_\ast\}$ converges to $g(a_\ast)$. 
Thus, the repelling fixed point $r'_i$ of $gg_i$ converges to $r$ also. 

We can also argue using $g_i^{-1}g^{-1}$ for repelling points. 
\end{proof}

\subsubsection{Convex compact actions of the p-end fundamental groups.} \label{subsec:redlens} 

In this section, we will prove Proposition \ref{prop:convhull2} obtaining a lens when we have a {\em convex cocompact action} of the end 
fundamental group as defined by the premise of the proposition. 

\begin{lemma}\label{lem:simplexbd}
Let $\orb$ be a strongly tame properly convex real projective manifold with radial ends or totally geodesic ends.
Let $\tilde E$ be a properly convex p-R-end of the universal cover $\torb$,  and $\torb$ is a subset of $\SI^n$. 
Suppose that $\orb$ is properly convex. 
Let $\sigma$ be a convex domain in $\clo(\torb) \cap P$ for a subspace $P$. 
Then either $\sigma \subset  \Bd \torb$ or $\sigma^o$ is in $\torb$. 
\end{lemma}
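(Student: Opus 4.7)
The plan is to handle two cases based on whether $\sigma^o$ meets $\torb$ or not, and to use proper convexity to reduce to a standard affine convexity argument.

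First I would observe that since $\torb$ is properly convex, $\clo(\torb)$ lies inside an affine patch $A^n \subset \SI^n$, and the convex domain $\sigma \subset \clo(\torb) \cap P$ lies inside the affine subspace $P \cap A^n$. In this affine setting, $\sigma^o$ denotes the relative interior of $\sigma$ in the minimal affine subspace spanned by $\sigma$, and $\sigma = \clo(\sigma^o)$ by the standard structure of a convex domain.

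Next, I would split into two cases. In Case 1, suppose $\sigma^o \cap \torb = \emptyset$. Since $\torb$ is open in $\SI^n$ and $\sigma^o \subset \clo(\torb)$, we get $\sigma^o \subset \clo(\torb) \setminus \torb = \Bd \torb$. Because $\Bd \torb$ is closed and $\sigma \subset \clo(\sigma^o)$, it follows that $\sigma \subset \Bd \torb$. In Case 2, suppose there exists $x_0 \in \sigma^o \cap \torb$. Let $y$ be any point of $\sigma^o$. Since $y$ lies in the relative interior of $\sigma$, the segment $\ovl{x_0 y}$ can be extended slightly past $y$ to a point $y' \in \sigma \subset \clo(\torb)$, with $y$ lying in the relative interior of $\ovl{x_0 y'}$. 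Then $x_0 \in \torb$, which is open and convex, and $y' \in \clo(\torb)$, so by the standard fact that the half-open segment from an interior point to a closure point of an open convex set is contained in the open convex set, we conclude $y \in \torb$. Hence $\sigma^o \subset \torb$.

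The argument is essentially routine convex geometry once one reduces to the affine chart, so there is no substantial obstacle. The only point requiring a touch of care is the reduction to an affine chart, which uses proper convexity of $\torb$ so that $\clo(\torb)$ contains no antipodal pair in $\SI^n$ and $P$ meets the relevant affine patch in a genuine affine subspace, allowing the standard segment-extension argument to apply without projective pathologies.
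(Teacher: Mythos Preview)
Your proof is correct and is actually cleaner than the paper's. The paper argues by contradiction: assuming $\sigma^o$ meets $\Bd \torb$ but is not contained in it, it finds a segment $s \subset \sigma^o$ with one half in $\Bd \torb$ and the other in $\torb$, then perturbs $s$ inside the $2$-plane through $s$ and the p-end vertex $\bv_{\tilde E}$ to produce a segment $s'$ whose endpoints lie in $\torb$ but whose interior meets $\Bd \torb$, contradicting convexity via Theorem~A.2 of \cite{psconv}. Your argument instead reduces directly to the standard affine fact that the half-open segment from an interior point of an open convex set to a point of its closure stays in the open set; combined with the relative-interior extension trick, this gives the conclusion without any perturbation and without invoking $\bv_{\tilde E}$ or the external reference. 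In particular, your proof makes clear that the hypotheses on the p-R-end $\tilde E$ are not actually needed for this lemma --- only the proper convexity of $\torb$ is used --- whereas the paper's route obscures this by threading $\bv_{\tilde E}$ through the argument.
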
 
\begin{proof} 
Suppose that $\sigma^o$ meets 
$\Bd \tilde {\mathcal{O}}$ and is not contained in it entirely.  
We can find a segment $s \subset \sigma^o$ with a point $z$ so that a component $s_1$ of $s-\{z\}$ 
is in $ \Bd \tilde {\mathcal{O}}$ and the other component $s_2$ is disjoint from it. 
We may perturb $s$ in the subspace containing $s$ and $\bv_{\tilde E}$ so that the new segment $s'$ meets $\Bd \tilde {\mathcal {O}}$ only in its interior. 
This contradicts the fact that $\tilde {\mathcal{O}}$ is convex by Theorem A.2 of \cite{psconv}. 
\end{proof}


\begin{proposition}\label{prop:convhull2} 
Let $\orb$ be a strongly tame  properly convex real projective orbifold with radial or totally geodesic ends 
and with admissible end fundamental groups. 
Assume that the universal cover $\torb$ is a subset of $\SI^n$.
\begin{itemize} 
\item Let $\bGamma_{\tilde E}$ be the holonomy group of a properly convex p-R-end $\tilde E$, 
\item Let ${\mathcal T}_{\bv_{\tilde E}}$ be an open tube corresponding to $R(\bv_{\tilde E})$.
\item Suppose that $\bGamma_{\tilde E}$ satisfies the uniform middle eigenvalue condition, 
and acts on a distanced compact convex set 
$K$ in $\clo({\mathcal T}_{\bv_{\tilde E}})$  
with $K \cap {\mathcal T}_{\bv_{\tilde E}} \subset \torb$. 
\end{itemize} 
Then any p-end-neighborhood containing $K \cap \torb$ contains a lens-cone p-end-neighborhood of 
the p-R-end $\tilde E$. 
\end{proposition}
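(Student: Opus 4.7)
The plan is to sandwich the distanced invariant set $K\cap\torb$ between two smooth strictly convex $\bGamma_{\tilde E}$-invariant hypersurfaces inside the given p-end-neighborhood $U$, and then form the lens-cone with apex $\bv_{\tilde E}$.

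First, by Theorem~\ref{thm:distanced} and Proposition~\ref{prop:orbit}, $K$ is compact, disjoint from $\{\bv_{\tilde E},\bv_{\tilde E-}\}$, and meets each open great segment in $\Bd{\mathcal T}_{\bv_{\tilde E}}$ joining the two apices at exactly one point; so $\Pi_{\bv_{\tilde E}}$ gives a bijection between $K$ and $\clo(\tilde\Sigma_{\tilde E})$. Since $\bGamma_{\tilde E}$ acts cocompactly on $\tilde\Sigma_{\tilde E}^o$, it also acts cocompactly on $K\cap\Bd{\mathcal T}_{\bv_{\tilde E}}$, and hence on $K$ itself.

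Next, I would produce a $\bGamma_{\tilde E}$-invariant smooth strictly convex function $\phi$ on ${\mathcal T}^o_{\bv_{\tilde E}}\cap\torb$. Transverse to the radial direction, strict convexity comes from pulling back the Koszul--Vinberg characteristic function of the properly convex cone $C_{\tilde\Sigma_{\tilde E}}$ produced by Benoist's theory (Proposition~\ref{prop:Ben2}), where $\bGamma_{\tilde E}$ acts cocompactly. For radial strict convexity and for $\bGamma_{\tilde E}$-invariance of the combined function, I combine this with a radial coordinate function rescaled via the uniform middle eigenvalue condition, which supplies exactly the gap between $\lambda_{\bv_{\tilde E}}(g)$ and the largest-norm eigenvalue of $g$ needed for the radial factor to transform correctly under $\bGamma_{\tilde E}$.

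Then, choosing regular values $c_+<c_-$ of $\phi$ with $K\cap\torb$ sandwiched between $\Sigma_+:=\phi^{-1}(c_+)$ (closer to $\bv_{\tilde E}$) and $\Sigma_-:=\phi^{-1}(c_-)$ (farther from $\bv_{\tilde E}$), both levels lie in $U$ provided $c_\pm$ are taken close enough to the $\phi$-values attained on $K$. The closed region $L$ between $\Sigma_+$ and $\Sigma_-$ is a lens with smooth strictly convex boundary components, and the cone $\{\bv_{\tilde E}\}\ast L$ is the required lens-cone. The fact that $\{\bv_{\tilde E}\}\ast L\subset U$ and that the open region between $\bv_{\tilde E}$ and $\Sigma_+$ is a proper p-end-neighborhood follows because each radial ray from $\bv_{\tilde E}$ meets $\Sigma_+$ transversally at a single point by radial strict convexity, together with Theorem~\ref{thm:lensclass}.

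The main obstacle is the construction of $\phi$: balancing the transverse Koszul--Vinberg piece with the radial piece so that the combined function is simultaneously smooth, strictly convex in all directions, and $\bGamma_{\tilde E}$-invariant. This is where the uniform middle eigenvalue condition enters essentially, since it is precisely the quantitative input controlling the admissible radial weight. An alternative route is to take $\phi$ as a $\bGamma_{\tilde E}$-averaged smoothing of the support (Hilbert-type) function of the convex hull of $K$ in $\clo(\torb)$; but verifying strict convexity of the smoothed function still rests on the same eigenvalue gap, so the technical crux of the proof is unchanged.
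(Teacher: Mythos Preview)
Your approach differs substantially from the paper's, and the proposal has a real gap at precisely the point you identify.

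The paper does not build an invariant convex function at all. Instead it chooses finitely many points $z_1,\dots,z_m\in\torb\setminus K$, one batch on each side of $K$, and forms the convex hull $C_2$ of $\bGamma_{\tilde E}(\{z_1,\dots,z_m\})\cup K$ inside the tube. Proposition~\ref{prop:orbit} guarantees that every orbit $\bGamma_{\tilde E}(z_i)$ accumulates only on $K^b=K\cap\Bd{\mathcal T}_{\bv_{\tilde E}}$, so $C_2$ is itself distanced and $C_2\cap\Bd{\mathcal T}_{\bv_{\tilde E}}=K^b$. An embedded lemma (Lemma~\ref{lem:infiniteline}) then shows that neither boundary component $A,B$ of $\Bd C_2\cap{\mathcal T}^o_{\bv_{\tilde E}}$ contains a segment ending in $\Bd\torb$; the argument is a recurrence/limiting argument pushing a hypothetical such segment into $K$ by a sequence $\gamma_i\in\bGamma_{\tilde E}$, again using Proposition~\ref{prop:orbit}. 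Once no such segments exist, $\Bd C_2\cap\torb$ is a locally finite union of compact simplices meeting in strictly convex dihedral angles, and one smooths to obtain the lens. This is elementary convex geometry once the orbit-limit control from Proposition~\ref{prop:orbit} is available; the uniform middle eigenvalue condition enters only through that proposition.

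Your function-theoretic route is not carried out. The Koszul--Vinberg function lives on the cone over $\tilde\Sigma_{\tilde E}$ and is invariant under the linear part $\hat h(\bGamma_{\tilde E})$, but the tube ${\mathcal T}^o_{\bv_{\tilde E}}$ carries no linear $\bGamma_{\tilde E}$-action when $\lambda_{\bv_{\tilde E}}$ is nontrivial, so there is no ready-made characteristic function on it. You propose to fix this by tensoring with a radial factor calibrated by the eigenvalue gap, but invariance of the product forces the radial factor to scale by powers of $\lambda_{\bv_{\tilde E}}(g)$ while strict convexity in all directions depends on the full spectrum of $g$; the uniform middle eigenvalue condition controls only the ratio $\lambda_1(g)/\lambda_{\bv_{\tilde E}}(g)$, not the intermediate eigenvalues, so it is not clear what single weight would work. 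Your alternative (averaging a support function) has the same issue. As written, the crux of the proof is deferred rather than done; the paper's convex-hull-and-smooth argument sidesteps this entirely.
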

\begin{proof} 
By assumption, $\torb - K$ has two components. 
Let $K^b$ denote $\Bd {\mathcal T}_{\bv_{\tilde E}} \cap K$. 
Let us choose finitely many points $z_1, \dots, z_m \in \torb - K$ in the two components. 
We can cut off $B$ by hyperspaces the vertices and obtain a properly convex domain. 
Then it contains a convex hull $C_2:= C(\bGamma_{\tilde E}(\{z_1, \dots, z_m\}, K)$. 
Proposition \ref{prop:orbit} shows that the orbits of $z_i$ for each $i$ accumulate to points of $K^b$ only. 
Hence, a totally geodesic hypersphere separates $\bv_{\tilde E}$ with these orbit points
and another one separates $\bv_{\tilde E-}$ and the orbit points. 
Thus, $C_2$ is a compact convex set disjoint from $\bv_{\tilde E}$ and $\bv_{\tilde E-}$ 
and $C_2 \cap \Bd {\mathcal T}_{\tilde E} = K'$. 

Continuing to assume as above. 
\begin{lemma}\label{lem:push} 
We are given a distanced compact convex set $K$ in $\clo({\mathcal T}_{\bv_{\tilde E}})$  where $\bGamma_{\tilde E}$ acts on, 
where $(K\cap{\mathcal T}^o_{\bv_{\tilde E}})/\bGamma_{\tilde E}$ is compact. 
Then we can choose 
$z_1, \dots, z_m$ in $\torb$ so that 
for $C_2:= C(\bGamma_{\tilde E}(\{z_1, \dots, z_m\}, K))$, 
$\Bd C'_2 \cap \torb$ is disjoint from $K$
and $C_2 \subset \torb$. 
\end{lemma}
\begin{proof} 
We can cover a compact fundamental domain of 
$\Bd K \cap {\mathcal T}_{\bv_{\tilde E}}$ by the interior of $n$-balls in $\torb$ that are convex hulls of finite set of points in $U$. 
Then $K$ will satisfy the properties. 
\end{proof}


We continue:
\begin{lemma} \label{lem:infiniteline} 
Let $C$ be another $\bGamma_{\tilde E}$-invariant 
distanced compact convex set with boundary in ${\mathcal{T}}_{\tilde E}$
where $(C \cap {\mathcal{T}^o_{\tilde E}})/\bGamma_{\tilde E}$ is compact. 
There are two components $A$ and $B$ of
$\Bd C \cap {\mathcal T}^o_{\bv_{\tilde E}}$ meeting every great segment in ${\mathcal T}^o_{\bv_{\tilde E}}$. 
Suppose that $A$ and $B$ be  disjoint from $C$.
Then $A \cap B$ contains no line ending in $\Bd \torb$. 
\end{lemma} 
 \begin{proof} 
Suppose that there exists 
a line $l$ in  $A \cup B$ ending at a point of $\Bd {\mathcal T}_{\bv_{\tilde E}}$. Assume $l \subset A$. 
The line $l$ project to a line $l'$ in ${\tilde E}$. 

Let $C_1 = C \cap {\mathcal T}_{\bv_{\tilde E}}$. 
Since $A/\bGamma_{\tilde E}$ and $B/\bGamma_{\tilde E}$ are both compact, 
and there exists a fibration $C_1/\bGamma_{\tilde E} \ra A/\bGamma_{\tilde E}$ 
induced from $C_1 \ra A$ using the foliation by great segments from $\bv_{\tilde E}$. 

Since $C_1/\bGamma_{\tilde E}$ is compact,  
we choose a compact fundamental domain $F$ in $C_1$ and 
choose a sequence $\{x_i \in l\}_{i=1, 2, \dots }$ converging to the endpoint of $l'$ in $\Bd \tilde \Sigma_{\tilde E}$. 
We choose $\gamma_i \in \bGamma_{\bv_{\tilde E}}$ so that $\gamma_i(x_i) \in F$ 
where $\{\gamma_i(l')\}$ converges to a segment $l'_\infty$ with both endpoints in $\Bd \tilde \Sigma_{\tilde E}$. 
Hence, $\{\gamma_i(l)\}$ converges to a segment $l_\infty$ in $A$.
We can assume that for the endpoint $z$ of $l$ in $A$, $\gamma_i(z) $ converges to the endpoint $p_1$. 
Proposition \ref{prop:orbit} implies that the endpoint $p_1$ of $l_\infty$ is in $K^b$ also. 
Let $t$ be the endpoint of $l$ not equal to $z$. Then $t \in K^b$
since $t $ is in the boundary $A$ with limit points in $K^b$ by Proposition \ref{prop:orbit}. 
Thus, $\gamma_i(t)$ converges to a point of $K^b$ and both end points of $l_\infty$ is in $K^b$
and hence $l_\infty \subset C_1$.
$l \subset A$ implies that $l_\infty \subset A$. As $A$ is disjoint from $C_1$, 
this is a contradiction.  
\end{proof} 

Since $A$ and analogously $B$ do not contain any geodesic ending at $\Bd \torb$, 
$\Bd C'_1 - \Bd {\mathcal T}_{\bv_{\tilde E}}$ is a union of compact $n-1$-dimensional simplices
meeting one another in strictly convex dihedral angles. 
By choosing $\{z_1, \dots, z_m\}$ sufficiently close to $\Bd C_1$, we may assume 
that $\Bd C'_1 - \Bd {\mathcal T}_{\bv_{\tilde E}}$ is in $\torb$. 
Now by smoothing 
we obtain two boundary components of a lens. 
(Actually the condition can replace the definition of the lens condition.)

When $\bGamma_{\tilde E}$ is reducible, we do the above arguments to show that each factor $\bGamma_i$ acts on 
a compact convex set $K'_i$ in $B(K_i)$ distanced from $\bv_{\tilde E}$ and $\bv_{\tilde E-}$. 
We obtain a strict join $K'_1 * \cdots * K'_{l_0}$ where $\bGamma_{\tilde E}$ acts on naturally. 

Similar to the reasoning in the proof of Theorem \ref{thm:redtot}, 
$\tilde E$ is totally geodesic: 
Let $z \in {\mathcal T}_{\bv_{\tilde E}}$. 
(Basically, each element $g\in \bZ^{l_0-1}$ acts fixing each point of 
totally geodesic plane meeting $B(K_i)$ and $K'_i$ has to be the intersection
since $\lambda_1(g) > \lambda_{\bv_{\tilde E}}(g)$ for $g$ fixing the subspace corresponding to $K_i$ by the uniform middle eigenvalue condition. 
The strict join of $K'_1, \dots, K'_{l_0}$ is totally geodesic compact convex set where 
$\bGamma_{\tilde E}$ acts.)

Again, we need to do argument similar to above to find a lens and to complete the proof:
that is, we find the convex hull $C'_1$ as above and the top and the bottom 
hypersurface boundary components $A$ and $B$. 
By the uniform middle-eigenvalue condition and Proposition \ref{prop:orbit}, 
the orbits of $z$ limit to points of $K^b$ only.

\end{proof}


\subsection{The uniform middle-eigenvalue conditions and the lens-shaped ends.} 




A {\em radially foliated end-neighborhood system} of $\mathcal{O}$
is a collection of end-neighborhoods of $\mathcal{O}$ that are radially foliated 
where each great segment from the end vertex meets the boundary of the end-neighborhoods uniquely 
and the complement is a compact suborbifold with the boundary the union of 
boundary components of the end-neighborhoods. 

We say that $\mathcal{O}$ satisfies the {\em triangle condition} if 
for $\tilde {\mathcal{O}}$, the interior of every triangle $T$ with $\partial T$ in 
$\Bd \tilde {\mathcal{O}}$ is a subset of a radially foliated p-end-neighborhood $U$ in $\tilde {\mathcal{O}}$
from a fixed radially foliated end-neighborhood system of $\mathcal{O}$. 

In \cite{conv}, we will show that this condition is satisfied if $\pi_1(\mathcal{O})$ is 
relatively hyperbolic with respect to the end fundamental groups. 
 We will prove this in \cite{conv} since it is a global result and 
not a result on ends only. 


\begin{theorem}\label{thm:equ} 
Let $\orb$ a strongly tame  properly convex real projective orbifold with radial or totally geodesic ends 
and with admissible end fundamental groups.
Assume that the holonomy group is strongly irreducible.
Assume the following conditions. 
\begin{itemize}
\item The universal cover $\torb$ is a subset of $\SI^n$ {\rm (}resp. in $\bR P^n${\rm ).}
\item The holonomy group $\bGamma$ is strongly irreducible. 
\item ${\mathcal{O}}$ satisfies the triangle condition or, alternatively, assume that $\tilde E$ is reducible.
\end{itemize} 
Let $\bGamma_{\tilde E}$ be the holonomy group of a properly convex R-end $\tilde E$.
Then {\rm (i)} and {\rm (ii)} are equivalent. 
\begin{itemize} 
\item[(i)] $\bGamma_{\tilde E}$ is of lens-type. 
\item[(ii)] $\bGamma_{\tilde E}$ satisfies the uniform middle-eigenvalue condition.
\end{itemize}
Now not assuming the triangle condition nor reducibility, 
$\bGamma_{\tilde E}$ is of generalized lens-type if and only if  $\bGamma_{\tilde E}$ satisfies the uniform middle-eigenvalue condition.
\end{theorem}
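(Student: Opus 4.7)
The plan is to adapt the argument already carried out for the equivalence of (i) and (ii) under the triangle or reducibility hypothesis, observing that those extra hypotheses were invoked only to smooth a generalized lens into an actual lens. Dropping them preserves all the other steps and only weakens the conclusion from lens-type to generalized lens-type accordingly.

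For the direction that the uniform middle-eigenvalue condition implies generalized lens-type, I would proceed as follows. First, Theorem \ref{thm:distanced} yields a $\bGamma_{\tilde E}$-invariant, distanced compact convex set $K$ inside the properly convex tube $\mathcal{T}_{\bv_{\tilde E}}$, disjoint from the two vertices. Proposition \ref{prop:orbit} then ensures that the orbit closures of points of $\torb$ accumulate only in the compact boundary stratum $K \cap \Bd\mathcal{T}_{\bv_{\tilde E}}$, and establishes a bijection between this stratum and the family $S(\bv_{\tilde E})$ of maximal boundary segments from $\bv_{\tilde E}$. Next, applying Lemma \ref{lem:push} together with the convex-hull construction of Proposition \ref{prop:convhull2}, I would choose finitely many points $z_1,\dots, z_m$ in $\torb$ on either side of $K$ and form $C_2 := C(\bGamma_{\tilde E}(\{z_1,\dots,z_m\}) \cup K)$, a $\bGamma_{\tilde E}$-invariant convex set lying inside $\torb$ and distanced from the tube vertices. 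Lemma \ref{lem:infiniteline} ensures that $\Bd C_2 \cap \mathcal{T}^o_{\bv_{\tilde E}}$ contains no line ending in $\Bd\torb$, so its two components furnish the top and bottom hypersurfaces of a generalized lens $L$ on which $\bGamma_{\tilde E}$ acts cocompactly. The concave p-end-neighborhood of $\tilde E$ is then the component of $\mathcal{T}_{\bv_{\tilde E}}\setminus L$ with $\bv_{\tilde E}$ in its closure. In contrast with the triangle-condition case, no smoothing of the boundary is attempted, so the output is only a generalized lens, which is what the statement demands.

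For the converse direction, assume $\tilde E$ admits a generalized lens $L$ acted on cocompactly by $\bGamma_{\tilde E}$, with $L$ distanced from $\{\bv_{\tilde E},\bv_{\tilde E-}\}$. The closure of $L$ meets $\Bd\mathcal{T}_{\bv_{\tilde E}}$ in a $\bGamma_{\tilde E}$-invariant compact set $K^b$ bounded away from the vertices, so the action is distanced. The cocompact action of $\bGamma_{\tilde E}$ on $L$ together with Benoist's decomposition (Proposition \ref{prop:Ben2}) of $\bGamma_{\tilde E}$ into hyperbolic factors $\Gamma_i$ on convex components $K_i$ and an abelian center $\bZ^{l_0-1}$ gives control over the relative eigenvalues $\tilde\lambda_i(g)$ via Theorem \ref{thm:eignlem} applied factor by factor. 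The fact that $L$ is distanced from $\bv_{\tilde E}$ and $\bv_{\tilde E-}$ forces $\lambda_{\bv_{\tilde E}}(g)$ to lie strictly between the largest and smallest eigenvalue norms for each $g \in \bGamma_{\tilde E}$, and cocompactness upgrades this pointwise strict inequality into the uniform estimate
\[
C^{-1}\leng_K(g)\ \leq\ \log\!\left(\frac{\bar\lambda(g)}{\lambda_{\bv_{\tilde E}}(g)}\right)\ \leq\ C\,\leng_K(g)
\]
required by equation \ref{eqn:umec}.

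The main obstacle will be the converse direction: extracting the uniform middle-eigenvalue condition from a generalized lens requires a uniform comparison between eigenvalue gaps and Hilbert translation lengths over all of $\bGamma_{\tilde E}$, not just over any single hyperbolic factor. The difficulty is compounded because a generalized lens lacks the smoothness available in the lens case, so one cannot directly appeal to a tangent-hyperplane analysis; instead one must work with the decomposition into factors $\Gamma_i$ acting on properly convex $K_i$ and the central $\bZ^{l_0-1}$, and verify that the central elements supply the required control of the eigenvalue at $\bv_{\tilde E}$ relative to the largest and smallest norms uniformly over the whole group, in the spirit of Lemma \ref{lem:centerprojection} and the proof strategy of Theorem \ref{thm:redtot}.
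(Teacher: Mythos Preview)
Your forward direction (uniform middle-eigenvalue $\Rightarrow$ generalized lens) follows the paper closely: Theorem \ref{thm:distanced} gives the distanced set, and the convex-hull machinery of Proposition \ref{prop:convhull2} builds the lens. One point you gloss over is that Proposition \ref{prop:convhull2} requires $K\cap\mathcal{T}^o_{\bv_{\tilde E}}\subset\torb$; in the paper the triangle condition (via Lemma \ref{lem:coneseq}) or reducibility (via Theorem \ref{thm:redtot}) is precisely what secures this, and without either hypothesis one only obtains the lower boundary component inside $\torb$, hence a generalized lens rather than a lens. You state this outcome correctly but do not explain why the argument degrades in exactly this way.

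The genuine gap is in your converse direction. You assert that the distanced generalized lens forces $\lambda_{\bv_{\tilde E}}(g)$ strictly between the extreme eigenvalue norms for each $g$, and then that ``cocompactness upgrades this pointwise strict inequality into the uniform estimate.'' The first part is fine (this is the argument of Theorem \ref{thm:redtot}(iii)), but the upgrade is exactly the hard step and you give no mechanism for it. The paper explicitly remarks just before this subsection that the pointwise middle-eigenvalue condition need not imply the uniform one; cocompactness alone does not bridge this. Theorem \ref{thm:eignlem} bounds $\log\tilde\lambda_1(g)$ in terms of $\leng(g)$, but says nothing about $\lambda_{\bv_{\tilde E}}(g)$, which is governed by an independent homomorphism $\bGamma_{\tilde E}\to\bR_+$.

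The paper's argument for (i) $\Rightarrow$ (ii) is entirely different from yours: it is a deformation argument. The map $g\mapsto\log\lambda_{\bv_{\tilde E}}(g)$ factors through a linear functional $\Lambda^h\colon H_1(\bGamma_{\tilde E},\bR)\to\bR$. If uniformity fails along a sequence $g_i$, one perturbs $\Lambda^h$ (changing only the eigenvalue at $\bv_{\tilde E}$, not the linear part $\hat h$) near the limiting homology direction $[g_i]/\leng(g_i)$ so that the perturbed representation satisfies $\lambda_1(g)<\lambda_{\bv_{\tilde E}}(g)$ for some $g$. The key input is stability: by the proof of Theorem \ref{thm:qFuch}, the strictly convex lower boundary of the generalized lens persists under small changes of the affine connection, so the perturbed representation still has a distanced generalized lens. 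But then the pointwise middle-eigenvalue argument (Theorem \ref{thm:redtot}(iii)) applied to the perturbed representation gives a contradiction. You should replace your direct cocompactness claim with this perturbation-and-stability scheme.
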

\begin{proof} 
By Lemma \ref{lem:coneseq}, every triangle $T$ with  $\partial T$ in 
$\Bd \tilde {\mathcal{O}}$ that is a subset of a p-end-neighborhood in $\tilde {\mathcal{O}}$
in a p-end-neighborhood system does not have the corresponding p-end vertex as its vertex. 

(ii)  $\Rightarrow$ (i): 
Let ${\mathcal T}_{\tilde E}$ denote the tube domain with the p-end vertex $\bv_{\tilde E}$ and $\bv_{E -}$. 
Let $K^b$ denote the intersection of $\Bd \clo( {\mathcal T}_{\tilde E})$ with the distanced compact 
$\bGamma_{\tilde E}$-invariant convex set $K'$ by Theorem \ref{thm:distanced}. 
In the reducible case, there is a lens by Theorem \ref{thm:redtot}. 

Now assume the triangle condition.
Let $C_1$ be the convex hull of $K$ in the tube domain ${\mathcal T}_{\tilde E}$ cut off at vertices. 
Suppose that $\Bd C_1 - K$ meets $\Bd \tilde {\mathcal{O}}$. 
Then by the above discussions, 
$\Bd C_1 - K$ contains a line $l$ with endpoints $x, y$ in $K$ 
completely contained in $\Bd \tilde {\mathcal{O}}$.
There exists a triangle $T$ with vertices $x, y, \bv_{\tilde E}$ with $\partial T \subset \Bd \tilde {\mathcal{O}}$. 
$T^o$ is now a subset of a p-end-neighborhood by assumption. 
Lemma \ref{lem:coneseq} contradicts this.

Therefore, we conclude that $\Bd C_1 - K$ is contained in $\tilde{\mathcal{O}}$. 
The convex hull $C_1$ is a $h(\pi_1(\tilde E))$-distanced from $v_{\tilde E}$ and $v_{\tilde E-}$. 
$C_1 \cap {\mathcal T}_{\tilde E}$ is a subset of $\torb$. 

It is standard to show that $\Bd C_1 - K$ is a union of $i$-simplices ($i \leq n$) in the interior ${\mathcal T}_{\tilde E}^o$ 
of the tube with vertices in $K^b$. 

Proposition \ref{prop:convhull2}  implies the result.

(i) $\Rightarrow$ (ii):
First, the generalized lens condition implies that 
$\bGamma_{\tilde E}$ satisfies the middle-eigenvalue condition that $\lambda_1(g)/\lambda_{\bv_{\tilde E}}(g) > 1$ for every $g$
as the proof of Theorem  \ref{thm:redtot} (iii) shows for irreducible $\bGamma_{\tilde E}$ as well. 

There is a map 
\[ \bGamma_{\tilde E} \ra H_1(\bGamma_{\tilde E}, \bR)\]
obtained by taking a homology class. 
The above map $g \ra \log \lambda_{\bv_{\tilde E}}(g)$ 
induces homomorphism
\[\Lambda^h: H_1(\bGamma_{\tilde E}, \bR) \ra \bR \]
that depends on the holonomy homomorphism $h$. 

By the generalized lens-domain, 
there is a lower boundary component $B$ of $D \cap {\mathcal T}_{\tilde E}^o$ closer to $\bv_{\tilde E}$ 
that is strictly convex and transversal to every radial great segment from $\bv_{\tilde E}$ in $\tilde \Sigma_{\tilde E}$. 


If $\bGamma_{\tilde E}$ satisfies the middle-eigenvalue condition, then so does its factors. 
Suppose that $\bGamma_{\tilde E}$ does not satisfy the uniform middle-eigenvalue condition. 
Then there exists a sequence of elements $g_i$ so that 
\[\frac{\log\left(\frac{\lambda^h_1(g_i)}{\lambda^h_{\bv_{\tilde E}}(g_i)}\right)}{ \leng(g_i)} \ra 0 \hbox{ as } i \ra \infty.\] 

Note that we can change $h$ by only changing the homomorphism $\Lambda^h$
and still obtain a representation. 
By a small change of $h$ so that $\Lambda^h(k)$ 
becomes bigger near the limit homology class $k$ 
of the sequence $[g_i]/\leng(g_i)$
fixing the linear part, 
we obtain that 
\[\log\left(\frac{\lambda^h_1(g)}{\lambda_{\bv_{\tilde E}}(g)}\right)< 0  \hbox{ for some } g \in \Gamma.\]
We know that a small perturbation of a lower 
boundary component of a generalized lens-shaped end remains 
strictly convex and in particular distanced
since we are changing the connection by a small amount 
which does not change the strict convexity. 
(See the proof of Theorem \ref{thm:qFuch}.)
We obtain that $\lambda^h_1(g) < \lambda^h_{\bv_{\tilde E}}(g)$ for some $g$
for the largest eigenvalue $\lambda^h_1(g)$ of $h(g)$
and that $\lambda^h_{\bv_{\tilde E}}(g)$ at $\bv_{\tilde E}$.
However, as above the proof of Theorem \ref{thm:redtot} (iii)
contradicts.

\end{proof} 



\begin{lemma} \label{lem:coneseq} 
Suppose that $\mathcal{O}$ is a strongly tame properly convex real projective manifold with radial or totally geodesic ends 
and satisfies the triangle condition. 
Assume that the universal cover $\torb$ is a subset of $\SI^n$.
Then every triangle $T$ with $\partial T \subset \Bd \torb$ and $T^o$ contained in a radially foliated p-end-neighborhood
has no vertex equal to a p-R-end vertex.
\end{lemma}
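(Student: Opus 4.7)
The plan is to assume for contradiction that a vertex of $T$, say $\bv_{\tilde E}$, is a p-R-end vertex, and derive a contradiction from the structure of the radial foliation of the p-end-neighborhood containing $T^o$. By hypothesis, $T^o$ is contained in some radially foliated p-end-neighborhood $U$ of a p-R-end vertex $\bv_{\tilde E'}$ drawn from the fixed system. The essential geometric input is that the cross-section boundary $\partial U \cap \torb$, which descends to the boundary of the chosen end-neighborhood of $\mathcal{O}$ (and that boundary lies in the compact suborbifold complementing the system), sits strictly inside $\torb$ and hence is disjoint from $\Bd \torb$. Consequently every radial leaf of $U$ is an arc $[\bv_{\tilde E'}, q]$ from $\bv_{\tilde E'} \in \Bd \torb$ to a point $q \in \partial U \cap \torb \subset \torb$, whose open interior lies entirely in $U \subset \torb$.

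Fix any $p \in T^o$, let $\ell$ denote the great projective line through $\bv_{\tilde E'}$ and $p$, and let $q \in \partial U \cap \torb$ be the other endpoint of the radial leaf through $p$. I would treat first the principal case in which $\bv_{\tilde E'}$ lies in the $2$-plane $\Pi_T$ of $T$; this case covers in particular the subcase $\bv_{\tilde E'} = \bv_{\tilde E}$, which is naturally available by choosing $U$ to be the p-end-neighborhood of $\bv_{\tilde E}$ itself from the fixed system. Here $\ell \subset \Pi_T$, so $\ell \cap T$ is a chord both of whose endpoints lie on $\partial T \subset \Bd \torb$. Parameterizing $\ell \cap \clo(\torb)$ from $\bv_{\tilde E'}$ at $0$ to the opposite endpoint of $\ell$ in $\Bd \torb$ at $1$, and comparing the parameters of $p$, of $q$ and of the two chord endpoints in the sub-configurations $\bv_{\tilde E'} \in T$ (so $\bv_{\tilde E'} \in \partial T$ is itself one chord endpoint) and $\bv_{\tilde E'} \notin T$ (so the chord enters $T$ at a point $p_1 \in \partial T$ strictly between $\bv_{\tilde E'}$ and $p$ on $\ell$), one finds in every configuration that either a point of $\partial T \subset \Bd \torb$ is forced into the open interior of the leaf (hence into $U \subset \torb$), or $q$ is forced onto $\partial T \subset \Bd \torb$, or $q$ is forced into $T^o \subset U$ while $q \in \partial U$ (impossible since $U$ is open). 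Each is a direct contradiction.

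The main anticipated obstacle is the residual case $\bv_{\tilde E'} \notin \Pi_T$, in which $\ell$ is transverse to $\Pi_T$ and meets $T$ only at $p$, so the chord argument is unavailable. One route is to argue that whenever the triangle $T$ has vertex $\bv_{\tilde E}$ a p-R-end vertex, the fixed system allows the choice of $U$ to be the p-end-neighborhood of $\bv_{\tilde E}$ itself, so that $\bv_{\tilde E'} = \bv_{\tilde E} \in \Pi_T$ and one is back in the principal case. Alternatively, one proceeds by a limiting argument: taking a sequence $p_i \in T^o$ converging to an interior point $z$ of an edge of $T$, the parameter of $p_i$ on $\ell_{p_i}$ is squeezed toward the endpoint of the line opposite to $\bv_{\tilde E'}$, which therefore must coincide with $z$; this in turn forces $q_{p_i} \to z \in \Bd \torb$, and the contradiction is extracted from the fact that the image in $\mathcal{O}$ of the cross-section is a compact suborbifold while $z$ would have to lie in a different p-end vertex orbit. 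It is worth noting that the argument of the principal case yields the stronger conclusion that no triangle $T$ with $\partial T \subset \Bd \torb$ and $T^o$ in a radially foliated p-end-neighborhood exists at all, independently of the vertex hypothesis, which is in fact the form used in Theorem \ref{thm:equ}.
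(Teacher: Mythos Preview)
Your core idea --- that a radial leaf of $U$ ends at a point $q \in \partial U \cap \torb$ while the chord of $T$ through the same point extends to $\partial T \subset \Bd\torb$, forcing a contradiction --- is exactly the paper's argument. The paper, however, is more direct: it simply takes $U$ to be the p-end-neighborhood of $\bv_{\tilde E}$ from the fixed system, chooses the segment $l$ from $\bv_{\tilde E}$ to an interior point $w$ of the opposite edge, and observes that $l$ must cross $\partial U$ at a point of $T^o$, so $T^o\not\subset U$; since the p-end-neighborhoods of the fixed system are pairwise disjoint and $T^o$ meets $U$ (along $l$ near $\bv_{\tilde E}$), $T^o$ lies in no p-end-neighborhood of the system.

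Your case split on whether $\bv_{\tilde E'}\in\Pi_T$ is unnecessary, because the residual case is vacuous. Since $\bv_{\tilde E}$ is a vertex of $T$, points of $T^o$ near $\bv_{\tilde E}$ lie in the p-end-neighborhood $U_0$ of $\bv_{\tilde E}$; disjointness of the system forces $U=U_0$, hence $\bv_{\tilde E'}=\bv_{\tilde E}\in\Pi_T$. Your ``Option~1'' alludes to this but does not supply the disjointness reason; your ``Option~2'' limiting argument is flawed (when $\bv_{\tilde E'}\notin\Pi_T$ the line $\ell_{p_i}$ is transverse to $\Pi_T$ and does not terminate at $z$). Finally, your closing remark is incorrect on two counts: the principal case alone does \emph{not} yield the stronger conclusion (it requires $\bv_{\tilde E'}\in\Pi_T$, which fails in general), and the application in Theorem~\ref{thm:equ} uses precisely the lemma as stated --- the triangle there has $\bv_{\tilde E}$ as a vertex.
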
 
\begin{proof} 
Let $\bv_{\tilde E}$ be a p-end vertex. Choose a fixed radially foliated p-end-neighborhood system. 
Suppose that a triangle $T$ with $\partial T \subset \Bd \torb$ contains a vertex equal to a p-end vertex. 
Let $U$ be an inverse image of a radially foliated p-end-neighborhood $\widehat{U}$ in the p-end-neighborhood system corresponding 
to $\tilde E$ with a p-end vertex $\bv_{\tilde E}$. 
The end orbifold $\Sigma_{\tilde E}$  is a properly convex end of an orbifold $\orb$.

Choose a maximal line $l$ in $T$ with endpoints $\bv_{\tilde E}$ and $w$ in the interior of an edge of $T$ not containing $\bv_{\tilde E}$. 
Then this line has to pass a point of the boundary of $U$ and in $T^o$ by definition of 
the radial foliations of the end-neighborhoods. This implies that $T^o$ is not a subset of a p-end-neighborhood. 
This contradicts the assumption. 
\end{proof}

We now prove the dual to Theorem \ref{thm:equ}. For this we do not need the triangle condition or the
reducibility of the end. 

\begin{theorem}\label{thm:equ2}
Let $\orb$ be a strongly tame properly convex real projective manifold with radial ends or totally geodesic ends.
Assume that the holonomy group is strongly irreducible.
Assume that the universal cover $\torb$ is a subset of $\SI^n$ {\rm (}resp. $\bR P^n${\rm ).}
Let $\tilde S_{\tilde E}$ be a totally geodesic ideal boundary of a p-T-end $\tilde E$ of $\torb$. 
Then the following conditions are equivalent: 
\begin{itemize} 
\item[(i)] $\tilde E$ satisfies the uniform middle-eigenvalue condition.
\item[(ii)] $\tilde S_{\tilde E}$ has a lens-neighborhood in an ambient open manifold containing $\torb$
and hence $\tilde E$ has a lens-type p-end-neighborhood in $\torb$. 
\end{itemize} 
\end{theorem}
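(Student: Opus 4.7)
The natural strategy is to reduce this statement to Theorem \ref{thm:equ} via the projective duality developed in Section \ref{sec:duality}. By Proposition \ref{prop:dualend}, the p-T-end $\tilde E$ of $\torb$ corresponds to a properly convex p-R-end $\tilde E^*$ of the dual orbifold $\torb^*$, where the ideal boundary $\tilde S_{\tilde E}$ is projectively dual to the end domain $\tilde \Sigma_{\tilde E^*}$ and the p-end vertex $\bv_{\tilde E^*}$ is dual to the hyperplane $H \supset \tilde S_{\tilde E}$. Moreover, by Proposition \ref{prop:dualend2}, the uniform middle-eigenvalue condition for $\bGamma_{\tilde E}$ is equivalent to the same condition for the dual group $\bGamma_{\tilde E^*}$.

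For (i) $\Rightarrow$ (ii), I would apply Theorem \ref{thm:equ} (the form not requiring the triangle condition) to conclude that $\tilde E^*$ is of generalized lens-type, so there is a generalized lens-cone $\{\bv_{\tilde E^*}\} \ast L^*$ with lens $L^*$ having strictly convex lower boundary component $\partial_- L^*$ transversal to the radial foliation at $\bv_{\tilde E^*}$. I would then translate this structure across the duality map $\mathcal{D}: \Bd^{\Ag}\torb^* \to \Bd^{\Ag}\torb$ of Proposition \ref{prop:duality}: the family of supporting hyperplanes to $\partial_- L^*$ at points near $\bv_{\tilde E^*}$ corresponds, via $\mathcal{D}$, to a family of strictly convex hypersurfaces on both sides of $H$ inside an ambient extension of $\torb^{**}=\torb$. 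The intersection of these two convex hypersurfaces with $H$ is exactly $\clo(\tilde S_{\tilde E})$, and together they bound a lens-neighborhood of $\tilde S_{\tilde E}$ in the ambient orbifold, whose intersection with $\torb$ gives a lens-type p-end-neighborhood of $\tilde E$.

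For (ii) $\Rightarrow$ (i), I would reverse the procedure: given a lens-neighborhood $L$ of $\tilde S_{\tilde E}$ in an ambient orbifold, the two strictly convex smooth boundary hypersurfaces of $L$ dualize to strictly convex hypersurfaces in $\torb^*$ sitting on either side of the tube $\mathcal{T}_{\bv_{\tilde E^*}}$ associated to $\tilde \Sigma_{\tilde E^*}$, with common boundary equal to the set corresponding to $\Bd \tilde S_{\tilde E}$ inside $\Bd \mathcal{T}_{\bv_{\tilde E^*}} - \{\bv_{\tilde E^*}, \bv_{\tilde E^*-}\}$. These bound a generalized lens $L^*$ and the corresponding lens-cone $\{\bv_{\tilde E^*}\} \ast L^*$ is a p-end-neighborhood of $\tilde E^*$. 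Theorem \ref{thm:equ} then gives the uniform middle-eigenvalue condition for $\bGamma_{\tilde E^*}$, and Proposition \ref{prop:dualend2} transfers it back to $\bGamma_{\tilde E}$.

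The main obstacle will be making the duality dictionary between ``lens-cone at a vertex'' and ``lens around a hyperplane'' fully rigorous. In particular, one must check that $\mathcal{D}$ preserves the relevant strict-convexity and smoothness properties, that the two boundary components of the lens correspond correctly to the top and bottom hypersurfaces of the dual generalized lens, and that the ambient extension on the T-end side matches the concave-neighborhood structure on the R-end side (cf.\ Theorem \ref{thm:lensclass}(ii)). Since no triangle condition or reducibility hypothesis is assumed here, the generalized lens-type output of Theorem \ref{thm:equ} suffices on the R-end side, but the dualization must be performed carefully so that the resulting lens in the ambient T-end orbifold has genuinely smooth strictly convex boundary components, not merely a generalized lens shape.
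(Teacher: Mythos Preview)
Your overall duality strategy is correct and matches the paper's approach for (ii) $\Rightarrow$ (i): the paper also passes to the dual properly convex domain $U^*$, observes that the two-sided lens forces the supporting hemispheres at points of $\clo(\tilde S_{\tilde E}) - \tilde S_{\tilde E}$ to be transversal to $\SI^{n-1}$, and then applies the direction (i) $\Rightarrow$ (ii) of Theorem \ref{thm:equ} on the R-end side. (Here Proposition \ref{prop:dualend2} is implicit, as you noted.)

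For (i) $\Rightarrow$ (ii), however, the paper takes a different and more direct route than yours. Rather than dualizing to $\tilde E^*$, invoking Theorem \ref{thm:equ} to get a generalized lens-cone, and then dualizing back (which, as you correctly flagged, requires a careful lens-cone $\leftrightarrow$ lens dictionary across $\mathcal{D}$), the paper simply invokes Theorem \ref{thm:lensn} from the appendix. That theorem directly constructs a lens-neighborhood of a totally geodesic $\Omega$ in an ambient affine space, given only a one-sided properly convex $\Gamma$-invariant neighborhood and the uniform middle-eigenvalue condition; its proof uses the Anosov-flow/neutralized-section machinery (Theorem \ref{thm:asymnice}) and a Hilbert-metric $\eps$-neighborhood argument, not the augmented-boundary duality map. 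This sidesteps exactly the technical obstacle you identified: there is no need to verify that $\mathcal{D}$ carries smoothness and strict convexity of $\partial_- L^*$ to genuine (not merely generalized) lens boundary components on the T-end side. Your route would work, but would essentially require reproving Proposition \ref{prop:dualDA} in a sharpened form; the paper's route is shorter because the hard analysis has already been packaged into Theorem \ref{thm:lensn}.
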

\begin{proof}
It suffices to proves for $\SI^n$.
Assuming (i), the existence of a lens neighborhood follows from Theorem \ref{thm:lensn}. 

Assuming (ii), we obtain a totally geodesic $(n-1)$-dimensional properly convex domain $\tilde S_{\tilde E}$ in
a subspace $\SI^{n-1}$ where $\bGamma_{\tilde E}$ acts on. 
Let $U$ be the two-sided properly convex neighborhood of it where $\bGamma_{\tilde E}$ acts on. 
Then since $U$ is a two-sided neighborhood, the supporting hemisphere at each point of $\clo(\tilde S_{\tilde E})-\tilde S_{\tilde E}$ is now transversal to 
$\SI^{n-1}$. Considering the dual $U^*$ of $U$ and $\bGamma_E$ action, we apply (i) $\Rightarrow$ (ii) part of Theorem \ref{thm:equ}. 

\end{proof}

\subsection{The characterization of quasi-lens p-R-end-neighborhoods} \label{sub:quai-lens} 

This is the last remaining case for the properly convex ends with weak uniform middle eigenvalue conditions. 
We will only prove for $\SI^n$. 

\begin{definition}\label{defn:quasilens}
Let $U$ be a totally geodesic lens cone p-end-neighborhood of a p-R-end 
in a subspace $\SI^{n-1}$ with vertex $\bv$. Let $G$ denote the p-end fundamental group
satisfying the weak uniform middle eigenvalue condition. 
\begin{itemize}
\item Let $D$ be the totally geodesic $n-2$-dimensional domain so that $U = D \ast \bv$. 
\item Let $\SI^1$ be a great circle meeting $\SI^{n-1}$ at $\bv$. 
\item Extend $G$ to act on $\SI^1$ as a nondiagonalizable transformation fixing $\bv$. 
\item Let $\zeta$ be a projective automorphism 
acting on $U$ and $\SI^1$ so that $\zeta$ commutes with $G$ and restrict to a diagonalizable transformation on $\clo(D)$
and act as a nondiagonalizable transformation on $\SI^1$ fixing $\bv$ also. 
\end{itemize}
Every element of $G$ and $ \zeta$ can be written as a matrix
\begin{equation}
\left( \begin{array}{c|c}
S(g) & 0 \\
\hline
0 &  \begin{array}{cc}
\lambda_{\bv}(g) & \lambda_{\bv}(g)v(g)\\
0 & \lambda_{\bv}(g) \end{array} 
\end{array}\right) \label{eqn:qj}
\end{equation} 
where $\bv =[0, \dots, 1]$. 
Note that $g \mapsto v(g) \in \bR$ is a well-defined map inducing a homomorphism 
\[ \bGamma_{\tilde E} \ra H_1(\bGamma_{\tilde E}) \ra \bR\] 
and hence 
\[ |v(g)| \leq C \cwl(g)\] for a positive constant $C$. 


\begin{description}
\item[Positive translation condition:] We choose an affine coordinate on a component $I$ of $\SI^1 -\{\bv, \bv_-\}$.
We assume that for each $g \in \langle G, \zeta \rangle$,
if $\lambda_{\bv}(g) > \lambda_2(g)$ for the largest eigenvalue $\lambda_2$ associated with $\clo(D)$, 
then $v(g) > 0$ in equation \ref{eqn:qj}, and 
\[ \frac{v(g)}{\log \frac{\lambda_{\bv}(g)}{\lambda_2(g)}} > c_1 > 0 \]
for a constant $c_1$. 
\end{description}
\end{definition}


\begin{proposition}\label{prop:quasilens1} 
Suppose that $\bGamma_{\tilde E}$ satisfies the positive translation condition. 
Then  the above $U$ is in the boundary of a properly convex p-end open neighborhood $V$ of $\bv$ 
and $\langle G, \zeta \rangle$ acts on $V$.
\end{proposition}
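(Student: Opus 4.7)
The plan is to construct $V$ explicitly as the interior in $\SI^n$ of a convex hull of an $\langle G,\zeta\rangle$-orbit together with $\clo(U)$. First I pass to the affine chart $x_{n+1}=1$: by \eqref{eqn:qj}, every $g\in\langle G,\zeta\rangle$ acts there by the affine transformation $(\xi,w)\mapsto(\hat S(g)\xi,\,w+v(g))$, where $\hat S(g):=S(g)/\lambda_{\bv}(g)$ and $(\xi,w)=((x_1,\ldots,x_{n-1}),x_n)$. In this chart, $U$ sits at the ideal hyperplane $\{x_{n+1}=0\}$, $\bv$ is the ideal point along the positive $w$-axis, and the cone $C_D\subset\bR^{n-1}$ is $\hat S(g)$-invariant with $G$ acting projectively cocompactly on the link $D=C_D/\bR_+$.

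I pick $q\in\SI^n$ represented in the chart by $(\xi_0,0)$ for some $\xi_0\in C_D$, and set
\[
V:=\operatorname{int}\operatorname{conv}_{\SI^n}\bigl(\langle G,\zeta\rangle\cdot q\;\cup\;\clo(U)\bigr).
\]
Openness, convexity, and $\langle G,\zeta\rangle$-invariance are then automatic, while $\bv\in\clo(U)\subset\partial V$ and $U\subset\partial V$ follow directly. To establish proper convexity, I work with the cone $\widehat V\subset\bR^{n+1}$ generated by $\{g\cdot q\}$ together with the cone over $\clo(U)$. The $(n{+}1)$-st coordinate of each $g\cdot q$ equals $\lambda_{\bv}(g)>0$, so $\widehat V\subset\{x_{n+1}\geq 0\}$; any line contained in $\widehat V$ must therefore lie in the hyperplane $\{x_{n+1}=0\}$, where $\widehat V$ reduces to the cone over $\clo(U)=\clo(D)\ast\bv$, which is properly convex and hence contains no line.

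The positive translation condition enters to guarantee that $V$ is a bona fide p-end neighborhood of the single p-end vertex $\bv$ rather than a two-ended convex domain accumulating at $\bv_-$ as well. Applying the condition to $g^{-1}$ in the regime $\lambda_n(g)>\lambda_{\bv}(g)$ yields $-v(g)>c_1\log(\lambda_n(g)/\lambda_{\bv}(g))$; since $|\hat S(g)\xi_0|$ is bounded below by a constant times $\lambda_n(g)/\lambda_{\bv}(g)$ (for $\xi_0$ away from the smallest-expansion eigendirections), this forces $|\hat S(g)\xi_0|\gtrsim e^{-v(g)/c_1}$ as $v(g)\to-\infty$. Consequently the projective limit of $g\cdot q$ is a point of the limit set of $G$ on $\partial D\subset U$, not $\bv_-$; symmetrically, when $v(g)\to+\infty$ the orbit converges radially to $\bv$.

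The hard part will be matching $V$ to the precise structural definition of a p-end neighborhood from Section~\ref{sub:ends}---in particular verifying that $V/\langle G,\zeta\rangle$ carries a product structure compatible with the radial foliation emanating from $\bv$, so that $\bv$ is the \emph{unique} p-end vertex of $V$. Proper convexity is direct, but confirming that every radial great segment from $\bv$ into $V$ exits through $\clo(U)\cap\partial V$ requires the quantitative positive translation estimate in both signs of $v(g)$ to control the full orbit closure in $\SI^n$.
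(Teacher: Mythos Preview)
Your overall plan---take the convex hull of a $\langle G,\zeta\rangle$-orbit of an interior point of the tube and argue that the result is properly convex by controlling where the orbit accumulates---is exactly the paper's strategy. The paper picks $x$ in the interior of the tube $D\ast I$, splits $x=[v_1+v_2]$ with $[v_1]\in D$ and $[v_2]\in I$, and runs a trichotomy on $\lambda_{\bv}(g_i)/\lambda_2(g_i)$ (tending to $\infty$, bounded, tending to $0$) to see that every accumulation point of the orbit lies in $\clo(D)\ast\bv$, never near $\bv_-$. So the architecture matches; the differences are in the execution.

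There are two genuine gaps in your argument. First, the proper-convexity step is circular. Saying $\widehat V\cap\{x_{n+1}=0\}$ ``reduces to the cone over $\clo(U)$'' is precisely the assertion that the orbit of $q$ has no accumulation points on the ideal hyperplane outside $\clo(U)$; until you have ruled out accumulation at $\bv_-$ (and elsewhere), you cannot conclude that the closed cone meets the hyperplane only in the cone over $\clo(U)$. Second, your orbit estimate is not valid as stated. The bound $|\hat S(g)\xi_0|\gtrsim \lambda_n(g)/\lambda_{\bv}(g)$ ``for $\xi_0$ away from the smallest-expansion eigendirections'' cannot be made uniform in $g$, because those eigendirections move with $g$. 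What one actually gets for $\xi_0\in C_D^\circ$ (using that $S(g)$ preserves the proper cone $C_D$) is the \emph{opposite} inequality $|\hat S(g)\xi_0|\gtrsim \lambda_2(g)/\lambda_{\bv}(g)$, involving the \emph{largest} eigenvalue on $D$; and with that bound your comparison with $e^{-v(g)/c_1}$ no longer closes. Moreover, your dichotomy on the sign of $v(g)$ leaves uncovered the regime $\lambda_n(g)\le\lambda_{\bv}(g)\le\lambda_2(g)$, where neither $g$ nor $g^{-1}$ satisfies the hypothesis of the positive translation condition, so you get no direct control on $v(g)$.

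The paper sidesteps both problems by never passing to an affine chart: it compares $\|g_i(v_1)\|$ against $\|g_i(v_2)\|$ directly, so the case $\lambda_{\bv}/\lambda_2\to 0$ forces the $D$-component to dominate (limit in $D$), the case $\lambda_{\bv}/\lambda_2\to\infty$ forces the $I$-component to dominate and $v(g_i)\to+\infty$ sends the limit to $\bv$, and the bounded case gives $|v(g_i)|$ bounded and hence the orbit sits in a fixed $(\pi-\eps)$-ball about $\bv$. (Also, minor but worth fixing: with $\bv=[0,\dots,0,1]$ and the block in \eqref{eqn:qj}, the chart in which the action is affine of the form $(\xi,w)\mapsto(\hat S(g)\xi,\,w+v(g))$ is $x_n=1$, not $x_{n+1}=1$; in the latter chart $\bv$ is the origin, not an ideal point.)
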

\begin{proof}
Let $I$ be the segment in $\SI^1$ bounded by $\bv$ and $\bv_-$. 
Take $D\ast I$ is a tube with vertices $\bv$ and $\bv_-$. 

Let $x$ be an interior point of the tube. 
Given a sequence $g_i \in G$, then $g_i(x)$ accumulates to points of $D\ast \bv$ 
by the positive translation conditions as we can show by using estimates. 

Given any sequence $g_i \in \langle G, \zeta \rangle$, we write as $g_i = \zeta^{j_i} g'_i$ for $g'_i \in G$. 
We write 
\begin{align} 
& x = [v], v = v_1 + v_2, [v_1] \in D, [v_2] \in I -\{\bv\} \subset \SI^1, \nonumber \\
& g_i(x) = [g_i(v_1) + g_i(v_2)].
\end{align} 
If $\lambda_{\bv}(g_i)/\lambda_2(g_i) \ra \infty$, then $||g_i(v_1)||/||g_i(v_2)|| \ra 0$  
and $g_i(x)$ converges to the limit of $[g_i(v_2)]$, i.e., $\bv$, since 
$v(g_i) \ra \infty$. 
If $\lambda_{\bv}(g_i)/\lambda_2(g_i)$ is uniformly bounded from $0$ and $\infty$, 
then $|v(g_i)| < C'$ for a constant. This implies $g_i(x)$ lies in a $(\pi-\eps)$-$\bdd$-neighborhood of 
$\bv_{\tilde E}$ for a uniform constant $\eps$.
If $\lambda_{\bv}(g_i)/\lambda_2(g_i) \ra 0$, then 
$||g_i(v_2)||/||g_i(v_1)|| \ra 0$ and $g_i(x)$ has accumulation points in $D$ only. 
Since these points are inside the properly convex tube and outside a small ball at $\bv_{\tilde E-}$, 
 the interior of the convex hull of the orbit of $x$ is a properly convex open domain as desired above. 
\end{proof}

This generalizes the quasi-hyperbolic annulus discussed in \cite{cdcr2}. 
We give a more concise condition at the end of the subsection. 



Conversely, we obtain:

\begin{proposition} \label{prop:quasilens2} 
Let $\orb$ be a strongly tame properly convex real projective manifold with radial ends or totally geodesic ends.
Assume that the universal cover $\torb$ is a subset of $\SI^n$ {\rm (}resp. $\bR P^n${\rm ).}
Suppose that $\pi_1(\orb)$ is strongly irreducible. 
Let $\tilde E$ be a properly convex radial end satisfying the weak uniform middle eigenvalue conditions
but not the uniform middle eigenvalue condition.
Then $\tilde E$ has a quasi-lens type p-end-neighborhood. 
\end{proposition}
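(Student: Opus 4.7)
The plan is to deduce from the failure of the uniform middle-eigenvalue condition (while retaining the weak one) that the holonomy along the $\bv_{\tilde E}$-direction has a non-trivial Jordan block attached to one of the join factors, and then apply Proposition~\ref{prop:quasilens1}. First I would invoke the structure given by Proposition~\ref{prop:Ben2}: a finite-index subgroup of $\bGamma_{\tilde E}$ is isomorphic to $\bZ^{l_0-1}\times \Gamma_1\times\cdots\times\Gamma_{l_0}$, acting on a strict join $K=K_1*\cdots*K_{l_0}\subset \SI^{n-1}_{\bv_{\tilde E}}$, with each hyperbolic factor $\Gamma_i$ dividing $K_i$ and the central $\bZ^{l_0-1}$ acting trivially on each $K_j$. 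The weak uniform middle-eigenvalue condition forces the uniform middle-eigenvalue condition on each $\Gamma_i$ (via the dual semisimple action on $K_i$), so the failure of UMEC can only come from the way $\lambda_{\bv_{\tilde E}}(g)$ compares to the top eigenvalue $\bar\lambda(g)$; in particular there must exist a sequence $g_n\in\bGamma_{\tilde E}$ with $\log(\bar\lambda(g_n)/\lambda_{\bv_{\tilde E}}(g_n))/\leng_K(g_n)\to 0$.

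Next I would lift the holonomy into the matrix form given in equation~\eqref{eqn:bendingm3} and analyze the block $\hat h$ on $\SI^{n-1}_{\bv_{\tilde E}}$. Because each $\Gamma_i$ is irreducible and semisimple on its factor $K_i$, the eigenvalue $\lambda_{\bv_{\tilde E}}(g)$ must coincide with one of the eigenvalues of the action on some specific factor $\clo(K_{i_0})$ (otherwise we could separate them in the spectrum and recover the full uniform middle-eigenvalue condition by the argument of Theorem~\ref{thm:redtot}(ii), contradicting the hypothesis). By commutativity with the central $\bZ^{l_0-1}$ and irreducibility of $\Gamma_{i_0}$ on $K_{i_0}$, the generalized eigenspace corresponding to this common eigenvalue must be a $2$-plane on which $\bGamma_{\tilde E}$ acts as a non-diagonalizable (hence unipotent-up-to-scalar) map fixing $\bv_{\tilde E}$. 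This produces the $2\times 2$ Jordan block of the form displayed in equation~\eqref{eqn:qj}, with $v(g)$ the associated cocycle; the strong irreducibility of $\pi_1(\orb)$ globally is used to rule out the degenerate case where this Jordan block could be unlinked from the rest by a reducible splitting of all of $\torb$.

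I would then verify the positive translation condition. The orbit of an interior point of $\torb$ must stay inside the properly convex $\torb$, and the weak UMEC gives $\lambda_{\bv_{\tilde E}}(g)\leq \bar\lambda(g)$ (with equality precisely on the Jordan-block factor). A direct matrix computation on the tube over $K_{i_0}$, comparing $\|g(v_1)\|$ with $\|g(v_2)\|$ exactly as in the proof of Proposition~\ref{prop:quasilens1}, shows that if $v(g)$ were to have the wrong sign or grow too slowly relative to $\log(\lambda_{\bv_{\tilde E}}(g)/\lambda_2(g))$, then iterating $g$ would push orbit points out of any properly convex domain, contradicting the proper convexity of $\torb$; the bound $|v(g)|\leq C\,\cwl(g)$ coming from the cocycle interpretation gives the two-sided estimate and in particular the constant $c_1>0$.

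Finally, with the positive translation condition in hand and with the totally-geodesic cone structure on $D*\bv_{\tilde E}$ produced by the semisimple factor, Proposition~\ref{prop:quasilens1} directly furnishes the properly convex p-end-neighborhood, which is by definition a quasi-lens. The main obstacle I anticipate is the second step: forcing the eigenvalue coincidence $\lambda_{\bv_{\tilde E}}(g)=\lambda_2(g)$ at the Jordan block to be \emph{global} (on a fixed factor $K_{i_0}$, independent of $g$) rather than just occurring along the bad sequence $\{g_n\}$, and controlling the cocycle $v$ well enough to get the uniform positivity $c_1>0$ rather than a merely non-negative asymptotic behavior. This will require combining the cocompact action of $\Gamma_{i_0}$ on $K_{i_0}$ with the convex-cocompact estimates of Section~\ref{subsec:redlens} to pass from asymptotic to uniform control.
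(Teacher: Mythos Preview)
Your overall strategy is aligned with the paper's, but there is a genuine gap in the second step that the paper resolves and you do not. You write that ``by commutativity with the central $\bZ^{l_0-1}$ and irreducibility of $\Gamma_{i_0}$ on $K_{i_0}$, the generalized eigenspace corresponding to this common eigenvalue must be a $2$-plane.'' This is not justified if $\Gamma_{i_0}$ is a nontrivial hyperbolic factor acting on a higher-dimensional $K_{i_0}$; in that case the relevant generalized eigenspace would not be $2$-dimensional and no $2\times 2$ Jordan block of the form \eqref{eqn:qj} arises. The paper's argument instead partitions the index set into $I_1$ (factors where some central element acts with two eigenvalues on $C_i$, hence fixes a hyperplane $P_i$ off $\bv_{\tilde E}$) and $I_2$ (factors where every central element has a single eigenvalue on $C_i$). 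The definition of \emph{weak} UMEC already imposes full UMEC on each hyperbolic $\Gamma_i$, so Theorem~\ref{thm:equ} gives a lens there; combined with the proper-convexity argument from Theorem~\ref{thm:redtot}, this forces $\Gamma_i$ to be \emph{trivial} and $C_i$ to be a segment (i.e.\ $K_i$ a point) for every $i\in I_2$. Cocompactness of $\tilde\Sigma_{\tilde E}/\bGamma_{\tilde E}$ then forces $|I_2|=1$, which is exactly what produces the single $2$-plane and the block form \eqref{eqn:qj}. Your proposal skips this reduction and treats the $2$-plane as automatic.

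For the positive translation condition, your sign argument from proper convexity is correct in spirit, but the uniform lower bound $c_1>0$ is obtained in the paper by a contradiction: assuming a sequence $g_i$ with $\lambda_{\bv}(g_i)/\lambda_2(g_i)\to\infty$ and $v(g_i)$ bounded produces a limiting segment in $\SI^1\cap\clo(\torb)$, which forces $v\equiv 0$; one then extracts elements with $\lambda_{\bv}/\lambda_2\to\infty$ and bounded action on $D$, and Lemmas~\ref{lem:decjoin} and~\ref{lem:joinred} yield a strict-join decomposition of $\clo(\torb)$, contradicting strong irreducibility of $\pi_1(\orb)$. Your appeal to ``convex-cocompact estimates of Section~\ref{subsec:redlens}'' does not supply this; the irreducibility hypothesis is what actually closes the argument, and it enters precisely here rather than in ruling out a global splitting at the Jordan-block stage.
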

\begin{proof}
If $\tilde E$ is irreducible, then it satisfies the uniform middle eigenvalue condition by definition.  
We recall a part of the proof of Theorem \ref{thm:redtot}. 

Let $U$ be a concave p-end-neighborhood of $\tilde E$ in $\tilde {\mathcal{O}}$.
Let $S_1,..., S_{l_0}$ be the projective subspaces in general position meeting only at the p-end vertex $\bv_{\tilde E}$
where factor groups $\bGamma_1, ...,\bGamma_{l_0}$ act irreducibly on. 
Let $C_i$ denote the union of great segments from $\bv_{\tilde E}$ corresponding to the invariant cones in $S_i$ 
where $\bGamma_i$ acts irreducibly for each $i$. 
The abelian center isomorphic to $\bZ^{l_0-1}$ acts as the identity on $C_i$ in the projective space $\SI^n_{\bv_{\tilde E}}$. 
Let $g\in \bZ^{l_0-1}$. $g| C_i$ can have more than two eigenvalues or just single eigenvalue. 
In the second case $g|C_i$ could be represented by a matrix with eigenvalues all $1$  fixing $\bv_{\tilde E}$. 
\begin{itemize} 
\item[(a)] $g|C_i$ fixes each point of a hyperspace $P_i \subset S_i$ not passing through $\bv_{\tilde E}$ 
and $g$ has a representation as a scalar multiplication in the affine subspace $S_i - P_i$ of $S_i$. 
Since $g$ commutes with every element of $\bGamma_i$ acting on $C_i$, 
$\bGamma_i$ acts on $P_i$ as well.  We let $D'_i = C_i \cap P_i$.
\item[(b)] $g|C_i$ is represented by a matrix with eigenvalues all $1$  fixing $\bv_{\tilde E}$. 
\end{itemize}
We denote $I_1:=\{ i| \exists g \in \bZ^{l_0-1}, g|C_i \ne \Idd\} $ and 
 \[I_2:= \{i| \forall g \in \bZ^{l_0-1}, g|C_i  \hbox{ has only one eigenvalue} \}.\]
 
 Let $D_i \subset \SI^{n-1}_{\bv}$ denote the convex compact domain that is the space of great segments in $C_i$ from $\bv_{\tilde E}$ to 
 $\bv_{\tilde E -}$. Then \[\tilde \Sigma_{\tilde E}=D_1 \ast \cdots \ast D_{l_0}\] by Theorem \ref{thm:redtot}. 
 Also, $D'_i$ is projectively diffeomorphic to $D_i$ by projection for $i \in I_1$. 
 
 If hyperbolic $\bGamma_i$ acts on $C_i$, then it satisfies the uniform middle eigenvalue condition by Definition \ref{defn:umec}.
 Hence by Theorem \ref{thm:equ}, $\bGamma_i$ acts on a lens domain $D_i$. 
 If $i \in I_2$, then $g|C_i$ must be identity; otherwise, we again obtain a violation of the proper convexity 
 in the proof of Theorem \ref{thm:redtot}. 
 
 We know $l_2$ is not empty since otherwise $\tilde E$ satisfies a uniform middle eigenvalue condition.


For $i \in I_2$, $\bGamma_i$ is not hyperbolic as above and hence must be a trivial group 
and $C_i$ is a segment. 
Consider $C_{I_2}:= \ast_{i \in I_2} C_i$. Then $g|C_i$ for $g \in \bZ^{l_0-1}$ 
has only eigenvalue $\lambda_{\bv}$ associated with it
and $g|C_i$ is a translation in an affine coordinate system. 
Therefore, $\bZ^{l_0-1}$ acts trivially on the space of great segments in $C_{I_2}$. 
Thus, $\dim C_{I_2} = 1$ since otherwise we cannot obtain the compact quotient 
$\tilde \Sigma_{\tilde E}/\bGamma_{\tilde E}$. 


Therefore, we obtain $D= \ast_{i =1}^{n-1}D_i$ is a totally geodesic plane disjoint from $\bv_{\tilde E}$.  
Let $\bv_{\tilde E} = [0, \dots, 0, 1]\in \SI^n$. Let $l'_2 =\{n\}=I_2$. 
We write $g \in \bGamma_{\tilde E}$ in coordinates as: 
\[ g = \left( \begin{array}{c|c} 
S_g & 0 \\
\hline
0 & \begin{array}{cc}
\lambda_{\bv}(g) & \lambda_{\bv}(g)v(g)\\
0 & \lambda_{\bv}(g) \end{array} 
\end{array}\right)\]
where $S_g$ is a $n-1\times n-1$-matrix representing coordinates $\{1, \dots, n-1\}$. 
Then $V: g \in \bZ^{l_0} \ra v(g) \in \bR$ is a linear function. 
The proper convexity of $\torb$ implies that $v(g) \geq 0$ if 
$\lambda_{\bv}(g_i)/\lambda_2(g_i) > 1$ since otherwise we obtain a great segment in 
$\SI^1$ by a limit of $g_i(s)$ for a segment $s \subset U$ from $\bv$.  

Suppose that we have a sequence $g_i$ so that $\lambda_{\bv}(g_i)/\lambda_2(g_i) \ra \infty$, 
and $v(g_i) < C, v(g_i) \geq 0$ for a uniform constant $C$.
Given a segment $s \subset U$ with an endpoint $\bv$, $g_i(s)$ then converges to a segment $s_\infty$ in
$\SI^1 \cap \clo(\torb)$. Now $v(g) = 0$ for all $g \in \bGamma_{\tilde E}$ since otherwise 
we can apply $g^i(s)$ to obtain a great segment in the limit for $i \ra \pm \infty$. 
We obtain an element $\eta_i$ so that $\lambda_{\bv}(\eta_i)/\lambda_2(\eta_i) \ra \infty$ and 
$\eta_i| D$ is uniformly bounded using the fact that $\tilde \Sigma_{\tilde E}$ is projectively
diffeomorphic to the interior of the cone $\{p\} \ast D$. We have $v(\eta_i) =0$ for all $i$.  
Then we can apply Lemmas \ref{lem:decjoin} and \ref{lem:joinred}
to obtain a contradiction to the strong irreducibility of $\bGamma$. 

Since every element $g$ is of form $\eta^i g'$ for $\lambda_{\bv}(g')/\lambda_2(g)$ uniformly bounded 
above and $\eta$ with $\lambda_{\bv}(\eta) > \lambda_2(\eta)$, 
we can verify the positive translation condition. 
By Proposition \ref{prop:quasilens1}, we obtain a quasi-lens p-end-neighborhood.  
\end{proof}

\begin{remark} 
To explain the positive translation condition more, $\log \lambda_{\bv_{\tilde E}}(g)$ and $v(g)$ give
us homomorphisms  $\log \lambda_{\bv}, V : H_1(\bGamma_{\tilde E}) \ra \bR$. 
Restricted to $\bZ^{l_0-1} \subset H_1(\bGamma_{\tilde E}) $, 
we obtain $\log \lambda_i: \bZ^{l_0-1} \ra \bR$ given by taking the log of the eigenvalues restricted to 
$D_i$ above. 
The condition restricts to the positivity of $V$ on 
the cone $C$ in $\bZ^{l_0-1}$ defined by \[\log \lambda_{\bv_{\tilde E}}([g])  > \log \lambda_i([g]), i=1, \dots, l_0-1.\] 
Since $\lambda_{\bv_{\tilde E}}(g)$ is less than largest norm of the eigenvalues in $\clo(D)$ for $g \in \bGamma_i - \{\Idd\}, i < l_0$
by the uniform middle eigenvalue conditions, this cone condition is equivalent to the full conditions. 
\end{remark}


\section{The results needed later} \label{sec:results}

We will list a number of properties that we will need later \cite{conv}. 
We show the openness of the lens properties, i.e., the stability for properly convex radial ends
and totally geodesic ends. We show that we can find an increasing sequence of 
horoball p-end-neighborhoods, lens-type p-end-neighborhoods for radial or totally geodesic p-ends that exhausts 
$\torb$. We also show that the p-end-neighborhood always contains a horoball p-end-neighborhood 
or a concave neighborhood. 

\subsection{The openness of lens properties}


A {\em radial affine connection} is an affine connection on $\bR^{n+1} -\{O\}$ invariant under 
the radial dilatation $S_t: \vec{v} \ra t\vec{v}$ for every $t > 0$. 


For representations of $\pi_1(\tilde E)$, being a generalized lens-shaped one and being just a lens-shaped one 
are not different conditions. Given a representation of $\pi_1(\tilde E)$ that has a generalized lens-shaped cone neighborhood,  
the holonomy group satisfies the uniform middle eigenvalue condition by Theorem \ref{thm:equ}. 
We can find a lens cone by choosing our orbifold to be ${\mathcal T}_{\bv_{\tilde E}}/\pi_1(\tilde E)$ 
and using the last step of Theorem \ref{thm:equ}. 

\begin{theorem}\label{thm:qFuch}
Let $\orb$ be a strongly tame properly convex real projective manifold with radial ends or totally geodesic ends.
Assume that the holonomy group is strongly irreducible.
Assume that the universal cover $\torb$ is a subset of $\SI^n$ {\rm (}resp.\, $\bR P^n${\rm ).} 
Let $\tilde E$ be a properly convex p-R-end of the universal cover $\torb$. 
Let $\Hom_E(\pi_1(\tilde E), \SLnp)$ {\rm (}resp. $\Hom_E(\pi_1(\tilde E), \PGL(n+1, \bR)${\rm )} be the space of representations of the 
fundamental group of an $n$-orbifold $\Sigma_{\tilde E}$ with an admissible fundamental group. 
Then 
\begin{itemize}
\item[(i)] $\tilde E$ is a generalized lens-type R-end if and only if $\tilde E$ is a strictly generalized lens-type R-end.
\item[(ii)] The subspace of generalized lens-shaped  representations of an R-end is open. 
\end{itemize}
Finally, if $\orb$ satisfies the triangle condition or every end is reducible, then we can replace the word generalized lens-type
to lens-type in each of the above statements. 
\end{theorem}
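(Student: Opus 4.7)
The plan is to prove the three assertions in succession, using Theorem \ref{thm:equ} as the bridge between the geometric generalized-lens condition and the algebraic uniform middle eigenvalue condition.

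For (i), I would establish that any generalized lens-shaped p-R-end $\tilde E$ is automatically strictly generalized lens-shaped (the converse being immediate from the definitions). When $\pi_1(\tilde E)$ is hyperbolic, this is exactly the content of Theorem \ref{thm:lensclass}(i): there, $\Bd D - \partial D$ is identified with the closures of attracting/repelling fixed points of proximal elements and shown to be nowhere dense, and $\clo(A)-A = \clo(B)-B$. When $\pi_1(\tilde E)$ is reducible admissible, Theorem \ref{thm:redtot}(iv)(v) gives an explicit strict-join structure of the form $\clo(D'_1)*\cdots*\clo(D'_{l_0})$ to which the strictness condition is immediate. So this part reduces to quoting the already-established structure theorems.

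For (ii), the idea is to combine the openness of the uniform middle eigenvalue condition (Corollary \ref{cor:mideigen}) with Theorem \ref{thm:equ} in both directions. Specifically, given a generalized lens-type representation $h_0 \in \Hom_E(\pi_1(\tilde E), \SLnp)$, Theorem \ref{thm:equ}, direction (i)$\Rightarrow$(ii), gives that $h_0$ satisfies the uniform middle eigenvalue condition. Since the defining inequality
\[
C^{-1} \leng_K(g) \leq \log\left(\bar\lambda(g)/\lambda_{\bv_{\tilde E}}(g)\right) \leq C\, \leng_K(g)
\]
persists under sufficiently small perturbations of $h_0$ with the same uniform constant (up to a small shift), nearby representations $h$ also satisfy the uniform middle eigenvalue condition. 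Then Theorem \ref{thm:equ}, direction (ii)$\Rightarrow$(i), applied to $h$ produces a generalized-lens-cone for $h$. The geometric lens itself is built as in Proposition \ref{prop:convhull2}: take the $h(\pi_1(\tilde E))$-invariant distanced compact convex set $K_h$ provided by Theorem \ref{thm:distanced}, form its convex hull inside the appropriate tube, and smooth the top and bottom faces.

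The main obstacle is that the geometric lens produced depends on $h$, so one must verify that the lens property truly transfers to nearby $h$ rather than only the algebraic condition. Here I would argue that the distanced invariant set $K_h$ varies continuously (in the Hausdorff sense) with $h$ because the asymptotically nice action on the dual side (Theorem \ref{thm:asymnice}) and the uniform bounds of the middle eigenvalue condition jointly control $K_h$; then the convex hull construction is continuous, so the resulting generalized-lens neighborhood of $\bv_{\tilde E}$ for $h_0$ perturbs to a generalized-lens neighborhood for $h$. For the final sentence, under the triangle condition or reducibility of each end, Theorem \ref{thm:equ} upgrades ``generalized lens-type'' to ``lens-type'' in both directions; hence the same arguments for (i) and (ii) apply verbatim with ``generalized lens-type'' replaced throughout by ``lens-type''.
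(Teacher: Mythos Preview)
Your treatment of (i) matches the paper exactly: the hyperbolic case is Theorem \ref{thm:lensclass}(i) and the reducible admissible case is Theorem \ref{thm:redtot}(iv).

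For (ii), however, your argument is circular. You invoke Theorem \ref{thm:equ} in the direction (i)$\Rightarrow$(ii) to pass from the generalized-lens property to the uniform middle-eigenvalue condition, and you invoke Corollary \ref{cor:mideigen} for the openness of that condition. But look at the proofs of those two results in the paper: the proof of Theorem \ref{thm:equ}, direction (i)$\Rightarrow$(ii), explicitly appeals to ``the proof of Theorem \ref{thm:qFuch}'' to know that a small perturbation of the lower boundary component of a generalized lens remains strictly convex; and the proof of Corollary \ref{cor:mideigen} says outright that it follows from Theorems \ref{thm:equ} and \ref{thm:qFuch}. So both of the inputs you use to prove Theorem \ref{thm:qFuch}(ii) logically depend on Theorem \ref{thm:qFuch}(ii). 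Your sentence ``the defining inequality \ldots\ persists under sufficiently small perturbations of $h_0$ with the same uniform constant'' is precisely the non-obvious infinite-family estimate that needs an independent proof; it is not a formality.

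The paper's proof of (ii) avoids this circularity by arguing geometrically and directly, without passing through the uniform middle-eigenvalue condition at all. One takes the lens $K$ (with smooth strictly convex top and bottom), passes to the cone $C(K)\subset\bR^{n+1}$ and its compact quotient under $\langle \mu(\pi_1(\tilde E)),\gamma\rangle$ with $\gamma$ the radial dilatation, and uses Vey's smooth strictly convex Hessian function on $C(K)$. A nearby representation $\mu'$ corresponds to a small change of the radial affine connection on this compact quotient; on a fundamental domain the Hessian stays positive definite and the boundary stays transversally strictly convex, so Koszul's convexity theorem gives that the perturbed domain $K''$ is again properly convex with strictly convex boundary components $A'', B''$. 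Then Theorems \ref{thm:lensclass} and \ref{thm:redtot} upgrade this to strict generalized-lens shape. The final clause about the triangle condition or reducibility is then handled exactly as in the (ii)$\Rightarrow$(i) part of Theorem \ref{thm:equ}, which does not depend on Theorem \ref{thm:qFuch}.
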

\begin{proof} 

(i) If $\pi_1(\tilde E)$ is hyperbolic, then the equivalence is in Theorem \ref{thm:lensclass} (i) and 
if $\pi_1(\tilde E)$ is a virtual product of hyperbolic groups and abelian groups, then it is in Theorem \ref{thm:redtot} (iv).

(ii) Let $\mu$ be a representation $\pi_1(\tilde E) \ra \SLnp$ acting on a convex $n$-domain $K$
bounded by two open strictly convex $(n-1)$-cells $A$ and $B$ and $\Bd K - A - B$ is a nowhere 
dense set. We assume that $A$ and $B$ are smooth and strictly convex.

We note that $K/\mu(\pi_1(\tilde E))$ is a compact manifold with boundary equal to the union of two closed $n$-orbifold components
$A/\mu(\pi_1(\tilde E)) \cup B/\mu(\pi_1(\tilde E))$.
Thus,  $A$ and $B$ are strictly convex hypersurfaces.
By using the theory of deformations of geometric structures on compact orbifolds,
we obtain a manifold $N'$ diffeomorphic to $K/\mu(\pi_1(\tilde E))$. 
$\tilde N'$ is a manifold with two boundary components $A'$ and $B'$
and developing into $\SI^{n-1}$.
Suppose that $\mu'$ is sufficiently near $\mu$. Then $\mu'$ must act on $A'$ and $B'$ sufficiently near 
in the compact open $C^r$-topology, $r \geq 2$. 

Given $K$, we can find a convex $(n+1)$-domain $K' \subset K^o$ bounded by two smooth open 
$n$-cells $A'$ and $B'$ in $K^o$. We may also assume that $K'$ is strictly convex. 

Since $K$ is properly convex, we choose $K'$ as above. 
The linear cone $C(K)\subset \bR^{n+1}$ over $K$ has a smooth strictly convex hessian function $V$ 
by Vey's work \cite{Vey}. Let $C(K')$ denote the linear cone over $K'$.
For the fundamental domain $F$ of $C(K')$ under the action of $\mu(\pi_1(\tilde E))$ extended 
by a transformation $\gamma: \vec{v} \mapsto 2\vec{v}$, the hessian restricted to $F \cap C(K')$ has a lower bound. 
Also, the boundary $\partial C(K')$ is strictly convex in any affine coordinates 
in any transversal subspace to the radial directions at any point.

Let $M$ be $C(K')/\langle \mu(\pi_1(\tilde E)), \gamma \rangle$, a compact orbifold. Note that $S_t$, $t \in \bR_+$, 
becomes an action of circle on $M$.
The change of representation $\mu$ to $\mu': \pi_1(\tilde E) \ra \Aut(\SI^n)_{\bv_{\tilde E}}$ 
is realized by a change of holonomy representations of $M$ and hence by
a change of affine connections on $C(K)$. Since $S_t$ commutes with the images of $\mu$ and $\mu'$, 
$S_t$ still gives us a circle action on $M$ with a different affine connection. 
We may assume without loss of generality 
that the circle action is fixed and $M$ is invariant under this action.

If we change $C(K')$ to to a cone $C(K'')$ of $K''$ by 
a sufficiently small change in the radial affine connection which does not 
change the radial directions locally, the positive definiteness of 
the hessian in the fundamental domain and the boundary transversal strict convexity is preserved. 
Thus $K''$ is also a properly convex domain by Koszul's work \cite{Kos}. 

Thus the perturbed $K''$ is a properly convex domain with strictly convex boundary $A''$ and $B''$. 
The complement  $\Lambda =\clo(K'') - A'' - B''$ is a closed subset. 
Then by Theorems \ref{thm:lensclass} and \ref{thm:redtot}, we obtain that the end is also strictly lens-shaped. 

The final statement follows as in the proof of the part (ii)  $\Rightarrow$ (i) of Theorem \ref{thm:equ} 

\end{proof} 

A {\em strict lens p-end-neighborhood} of a p-T-end $\tilde E$ is a lens p-end-neighborhood 
so that for its boundary component $A$, $\clo(A) - A$ is a subset of $\Bd S_{\tilde E} $ for 
the ideal boundary component $S_{\tilde E}$ of $\tilde E$. 

\begin{theorem}\label{thm:qFuch2}
Let $\orb$ be a strongly tame properly convex real projective orbfold with radial ends or totally geodesic ends.
Assume that the holonomy group is strongly irreducible.
Assume that the universal cover $\torb$ is a subset of $\SI^n$ {\rm (}resp. of $\bR P^n${\rm ).}
Let $\tilde E$ be a p-T-end of the universal cover $\torb$. 
Let $\Hom_E(\pi_1(\tilde E), \SLnp)$  {\rm (}resp. $\Hom_E(\pi_1(\tilde E), \PGL(n+1, \bR)${\rm )} be the space of representations of the 
fundamental group of an $n$-orbifold $\Sigma_{\tilde E}$ with an admissible fundamental group. 
Then 
\begin{itemize}
\item[(i)] $\tilde E$ is a  lens-type p-T-end if and only if $\tilde E$ is a strictly  lens-type p-T-end.
\item[(ii)] the subspace of  lens-shaped  representations of a p-T-end is open. 
\end{itemize}
\end{theorem}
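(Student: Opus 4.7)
The plan is to derive both statements from the corresponding R-end results (Theorems \ref{thm:equ}, \ref{thm:qFuch}) via the end duality of Subsection \ref{sub:dualend}. Throughout, let $\tilde E^*$ denote the p-R-end of the dual orbifold $\torb^*$ corresponding to the p-T-end $\tilde E$ under Proposition \ref{prop:dualend}. The two ends have p-end fundamental groups that are dual in the sense of Subsection \ref{sub:dualend}, and by Proposition \ref{prop:dualend2} they satisfy the uniform middle-eigenvalue condition simultaneously. The ideal boundary $\tilde S_{\tilde E}$ is projectively dual to $\tilde \Sigma_{\tilde E^*}$.

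For (i), assume $\tilde E$ is lens-type. Theorem \ref{thm:equ2} gives that $\bGamma_{\tilde E}$ satisfies the uniform middle-eigenvalue condition; hence so does $\bGamma_{\tilde E^*}$. By Theorem \ref{thm:equ}, $\tilde E^*$ is of generalized lens-type, and Theorem \ref{thm:qFuch}(i) upgrades this to \emph{strictly} generalized lens-type. Concretely, there is a generalized lens $L^*$ whose top/bottom components $A^*, B^*$ satisfy $\clo(A^*)\setminus A^* = \clo(B^*)\setminus B^* \subset \Bd\clo(L^*)$, and the closures lie in directions of $\Bd \tilde \Sigma_{\tilde E^*}$ from $\bv_{\tilde E^*}$. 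I now transfer strictness back via the duality map $\mathcal{D}$ of Proposition \ref{prop:duality}, combined with the Vinberg diffeomorphism (Theorem \ref{thm:dualdiff}). Under $\mathcal{D}$ a supporting hyperplane of $\torb^*$ at a boundary point of $L^*$ in the $\bv_{\tilde E^*}$-direction of a point of $\Bd\tilde\Sigma_{\tilde E^*}$ corresponds to a point of $\Bd \tilde S_{\tilde E}$. Thus $L^*$ dualizes to a two-sided lens-neighborhood $L$ of $\tilde S_{\tilde E}$ with boundary components $A, B$ satisfying $\clo(A)\setminus A \subset \Bd \tilde S_{\tilde E}$ and $\clo(B)\setminus B \subset \Bd \tilde S_{\tilde E}$, which is exactly the strict lens condition.

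For (ii), I would use that the assignment $h \mapsto h^{\ast-1}$ is a continuous involution of representation spaces that carries T-end representations of $\pi_1(\Sigma_{\tilde E})$ into R-end representations in the dual ambient space, preserving the lens/generalized-lens property by the equivalences of Theorems \ref{thm:equ2} and \ref{thm:equ} together with Proposition \ref{prop:dualend2}. Openness of the generalized lens-shaped locus for R-end representations, given by Theorem \ref{thm:qFuch}(ii), then pulls back to openness of the lens-shaped locus for T-end representations. Alternatively, one may repeat directly the deformation argument of Theorem \ref{thm:qFuch}(ii): a sufficiently small perturbation of the T-end holonomy induces a small perturbation of the radial affine connection on the linear cone over the two-sided lens neighborhood, and both the strict convexity of the Hessian on a fundamental domain and the transversal strict convexity of the boundary components $A, B$ persist under such perturbations by Vey's \cite{Vey} and Koszul's \cite{Kos} arguments.

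The principal obstacle is the careful verification that the strict generalized lens condition for $\tilde E^*$ dualizes term-by-term to the strict lens condition for $\tilde E$; this requires writing $\mathcal{D}$ explicitly on the pair (boundary point, supporting hyperplane). A subtlety is that on the R-end side a \emph{generalized} lens is allowed to have a non-smooth top (the $\partial_+L$ in the definitions preceding Definition \ref{defn:sl}), whereas on the T-end side a \emph{lens} demands two smoothly strictly convex boundary hypersurfaces. This is resolved by observing that once the hyperplane containing $\tilde S_{\tilde E}$ is itself smooth and supports $L$ on both sides, smoothing-off arguments as at the end of the proof of Proposition \ref{prop:convhull2} promote the generalized lens on the dual side to an honest lens.
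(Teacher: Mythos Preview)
Your route for (ii) --- either pulling back openness through the duality homeomorphism $h\mapsto h^{*-1}$ or repeating the Koszul--Vey deformation argument directly on the two-sided lens --- is sound and essentially what the paper does (it takes the second option). For (i), however, your approach diverges from the paper's and has a real gap.

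The paper does not dualize the lens. In the irreducible case it works directly on the T-end: the uniform middle-eigenvalue condition (from Theorem~\ref{thm:equ2}) forces attracting fixed points of $\bGamma_{\tilde E}$ to lie in $\clo(\tilde S_{\tilde E})\setminus\tilde S_{\tilde E}$; these are dense by Benoist; and then a T-end analogue of Theorem~\ref{thm:attracting} (stated as Lemma~\ref{lem:attracting2}) shows that for any compact $J$ in the lens, $\gamma_i(J)\to a_\infty\in\Bd\tilde S_{\tilde E}$. This is exactly what pins $\clo(A)\setminus A$ inside $\Bd\tilde S_{\tilde E}$. The reducible case is then handled by observing that $\bGamma_{\tilde E}$ also fixes a point (dual to the hyperplane of $\tilde S_{\tilde E}$), so the R-end argument of Theorem~\ref{thm:qFuch} applies verbatim.

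Your step~5 --- ``dualize the strict generalized lens $L^*$ back to a strict lens $L$'' --- is where the gap lies. The duality map $\mathcal D$ of Proposition~\ref{prop:duality} acts on $\Bd^{\Ag}\Omega$, exchanging pairs (boundary point of $\torb^*$, supporting hyperplane of $\torb^*$). But the points of $\Bd L^*\setminus\partial L^*$ live on $\Bd\mathcal T_{\bv_{\tilde E^*}}$, not on $\Bd\torb^*$, and supporting hyperplanes of $L^*$ are not supporting hyperplanes of $\torb^*$; so $\mathcal D$ does not act on the data you feed it. What actually dualizes cleanly is the distanced/asymptotically-nice dichotomy of Proposition~\ref{prop:dualDA}, and turning that into the strict-lens statement on the T-end side requires precisely the orbit-limit analysis (Lemma~\ref{lem:attracting2}, Lemma~\ref{lem:inde}) that the paper carries out directly. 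Your smoothing remark at the end does not address this: smoothness of the top component is not the issue, the issue is locating $\clo(A)\setminus A$ inside $\Bd\tilde S_{\tilde E}$, and that does not come for free from the R-end strictness via $\mathcal D$.
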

\begin{proof} 
We are proving for $\SI^n$ only. 
First assume that $\bGamma_{\tilde E}$ is irreducible. 

(i) Let $L$ be a lens p-end-neighborhood of the totally geodesic domain $\tilde S_{\tilde E}$ corresponding to $\tilde E$. 
The set of attracting limit points 
lies in $\clo(\tilde S_{\tilde E})-\tilde S_{\tilde E}$ by the uniform middle eigenvalue condition. 
The set is dense by the results of Benoist \cite{Ben1}. 
Each hemisphere supporting the lens neighborhood at a point of $\clo(\tilde S_{\tilde E})-\tilde S_{\tilde E}$ is transversal to the hypersphere containing 
$\tilde S_{\tilde E}$. By Lemma \ref{lem:inde}, the hyperspheres are uniformly bounded away from $\SI^{n-1}$. 
Similarly to the proof of Proposition \ref{prop:orbit}, we obtain the result using Lemma \ref{lem:attracting2}. 

(ii) follows as in the proof of Theorem \ref{thm:qFuch} for the irreducible $\bGamma_E$ 
since the lens neighborhood of the end totally geodesic 
orbifold in the ambient space containing $\orb$ has smooth strictly convex boundary.

In the reducible case for $\bGamma_{\tilde E}$,  
$\bGamma_E$ is dual to the holonomy group of a totally geodesic R-end 
acting on a hyperspace $S$ by Theorem \ref{thm:redtot}. 
Thus, $\bGamma_{\tilde E}$ acts fixing a point $S^*$ dual to $S$. 
Then the proof of Theorem \ref{thm:qFuch} for reducible $\bGamma_E$ applies for this case.
\end{proof} 


\begin{lemma}\label{lem:attracting2} 
Let $\tilde E$ be a p-T-end of the universal cover $\torb$, 
a subset of $\SI^n$ {\rm (}resp. $\bR P^n${\rm ),} of a strongly tame properly convex real projective orbifold $\orb$
with radial ends or totally geodesic ends. 
Assume that $\bGamma_{\tilde E}$ is irreducible and hyperbolic. 
\begin{itemize}
\item Suppose that $\gamma_i$ is a sequence of elements of $\bGamma_{\tilde E}$ acting on $S_{\tilde E}$. 
\item The sequence of attracting fixed points $a_i$ and the sequence of  repelling fixed points $b_i$ are so that 
$a_i \ra a_\infty$ and $b_i \ra b_\infty$ where $a_\infty, b_\infty$ are in $K$. 
\item Suppose that the sequence $\{\lambda_i\}$ of eigenvalues where 
$\lambda_i$ corresponds to $a_i$ converges to $+\infty$. 
\end{itemize} 
Then for the maximal domain $U$ of the affine action
the point $\{a_\infty\}$ is the limit of $\{\gamma_i(J)\}$ for any compact subset $J \subset U$. 
\end{lemma}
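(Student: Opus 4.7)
The plan is to mirror the proof of Theorem \ref{thm:attracting} but executed in the affine setting dual to the tubular one. Since $\tilde E$ is a p-T-end, $\clo(\tilde S_{\tilde E})$ lies in a great hypersphere $\SI^{n-1}$ and $\bGamma_{\tilde E}$ acts affinely on the open hemisphere $A^n$ bounded by $\SI^{n-1}$. The hyperbolicity and irreducibility of $\bGamma_{\tilde E}$, together with the uniform middle-eigenvalue condition implicit in the context (cf.\ Theorem \ref{thm:equ2}), guarantee via Theorem \ref{thm:asymnice} and Proposition \ref{prop:dualDA} that $\bGamma_{\tilde E}$ acts asymptotically nicely on a properly convex open set whose closure meets $\SI^{n-1}$ exactly in $\clo(\tilde S_{\tilde E})$, and that through each point of $\clo(\tilde S_{\tilde E})-\tilde S_{\tilde E}$ there is a $\bGamma_{\tilde E}$-equivariantly chosen supporting hyperplane uniformly transverse to $\SI^{n-1}$ in the $\bdd_H$-sense by Lemma \ref{lem:inde}.

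First, I would attach such a supporting hyperplane $H_i$ to each repelling fixed point $b_i$; the half-space $\overline{H_i^+}$ containing $a_i$ is $\gamma_i$-invariant, and on it $\gamma_i$ acts as a contraction toward $a_i$ with rate
\[
k_i = \min\Bigl\{\frac{\tilde\lambda_2(\gamma_i)}{\tilde\lambda_1(\gamma_i)},\; \frac{\lambda_{\bv_{\tilde E}^\ast}(\gamma_i)^{(n+1)/n}}{\tilde\lambda_1(\gamma_i)}\Bigr\}
\]
in a suitable Euclidean chart $d_{E,i}$ on $\overline{H_i^+}$, exactly as in the tubular proof of Theorem \ref{thm:attracting}. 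The uniform middle-eigenvalue condition, combined with estimate \eqref{eqn:eigratio} and the hypothesis $\lambda_i \to \infty$, forces $k_i \to 0$.

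Next, because the $H_i$ stay uniformly $\bdd_H$-away from $\SI^{n-1}$, one extracts a geometric limit $H_\infty$ through $b_\infty$; the Euclidean metrics $d_{E,i}$ can be chosen so as to converge uniformly on compact subsets of $H_\infty^o$. Any compact $J \subset U$ eventually lies in $\overline{H_i^+}$ at uniform affine distance $\delta > 0$ from $\partial H_i$, so the contraction estimate gives
\[
d_{E,i}(\gamma_i(J), a_i) \le \frac{C k_i}{\delta} \longrightarrow 0,
\]
which by uniform convergence of the $d_{E,i}$ to a limit metric translates into $\bdd(\gamma_i(J), a_i) \to 0$. Since $a_i \to a_\infty$, this yields $\gamma_i(J) \to \{a_\infty\}$ in the Hausdorff sense.

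The principal obstacle will be the eigenvalue bookkeeping: I must verify that the eigenvalue ratios driving the affine contraction on $\overline{H_i^+}$ are genuinely the ones controlled by the uniform middle-eigenvalue condition for the p-T-end (i.e.\ match the dual tubular data via Proposition \ref{prop:dualend2} and equation \eqref{eqn:bendingm3}), so that \eqref{eqn:eigratio} applies to give $k_i \to 0$. A secondary technical point is ensuring the uniform lower bound $\delta > 0$ on $\bdd(J, \partial H_i)$ persists along the sequence; this follows from uniform transversality of $H_i$ to $\SI^{n-1}$ together with the hypothesis that $J \subset U$, but needs care because $b_\infty$ may lie on $\SI^{n-1}$ while $J$ does not.
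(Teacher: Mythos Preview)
Your proposal is correct and follows essentially the same approach as the paper: the paper's proof consists of the single remark that one mimics Theorem \ref{thm:attracting} using the fact that the supporting hyperspheres at points of $\clo(\tilde S_{\tilde E})-\tilde S_{\tilde E}$ are uniformly bounded away from the hypersphere $\SI^{n-1}$ containing $\tilde S_{\tilde E}$, which is precisely the uniform transversality you extract from Lemma \ref{lem:inde} and Theorem \ref{thm:asymnice}. Your worries about the eigenvalue bookkeeping and the uniform $\delta$-bound are the right technical checkpoints but present no real obstacle, as the dual correspondence of Proposition \ref{prop:dualend2} and the uniform transversality handle them respectively.
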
 
\begin{proof} 
The proof is similar to that of Theorem \ref{thm:attracting}. Here we can use the fact that the supporting hyperspheres 
are at uniformly bounded distances from the hypersphere containing $S_{\tilde E}$. 
\end{proof}


\begin{corollary}\label{cor:mideigen}
We are given a properly convex end $\tilde E$ of a strongly tame properly convex orbifold $\orb$ with radial or totally geodesic  ends. 
Assume that $\torb \subset \SI^n$ {\rm (}resp.\, $\torb \subset \bR P^n${\rm ).}  
Then the subset of 
\[\Hom_E(\pi_1(\tilde E), \SL_\pm(n+1, \bR)) \hbox{ {\rm (}resp.}\, \Hom_E(\pi_1(\tilde E), \PGL(n+1, \bR)) ). \]
consisting of  representations satisfying the uniform middle-eigenvalue condition is open.
\end{corollary}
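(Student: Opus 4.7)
The plan is to deduce this openness statement by translating the uniform middle-eigenvalue condition, which is an eigenvalue condition on every element of $\bGamma_{\tilde E}$ and thus not obviously an open condition on its own, into the lens-type (or generalized lens-type) geometric condition, which is already known to be stable under small perturbations. The two equivalences are established in Theorems \ref{thm:equ} and \ref{thm:equ2}, and the stability statements are Theorems \ref{thm:qFuch} and \ref{thm:qFuch2}.

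First I would split into two cases according to whether $\tilde E$ is a properly convex R-end or a properly convex T-end. In the R-end case, I would invoke Theorem \ref{thm:equ} (or its generalized-lens version stated at the end of the theorem, which does not require the triangle condition or reducibility) to identify the subset of representations satisfying the uniform middle-eigenvalue condition with the subset of representations for which $\tilde E$ is of generalized lens-type. Then Theorem \ref{thm:qFuch}(ii) says precisely that this latter subset is open in $\Hom_E(\pi_1(\tilde E), \SL_\pm(n+1,\bR))$ (resp.\ in $\Hom_E(\pi_1(\tilde E), \PGL(n+1,\bR))$). In the T-end case, I would instead use Theorem \ref{thm:equ2} to identify the representations satisfying the uniform middle-eigenvalue condition with those for which $\tilde E$ is of lens-type, and then Theorem \ref{thm:qFuch2}(ii) supplies the openness.

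The main subtlety, and the step I would be most careful about, is to make sure that the equivalences in Theorems \ref{thm:equ} and \ref{thm:equ2} can genuinely be applied at every representation in the set of interest, without assuming extra hypotheses like the triangle condition or reducibility of $\tilde E$. This is why one must use the final unconditional sentence of Theorem \ref{thm:equ}, which asserts equivalence between the uniform middle-eigenvalue condition and the \emph{generalized} lens-type condition, rather than the lens-type condition; accordingly, Theorem \ref{thm:qFuch}(ii) must be invoked in its generalized-lens form as well. This matches up cleanly because Theorem \ref{thm:qFuch}(i) shows that for representations of $\pi_1(\tilde E)$, generalized lens-type and strictly generalized lens-type coincide, and openness is proved for this class. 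A minor point is to note that the global hypotheses on $\orb$ (strong tameness, strong irreducibility of the holonomy, admissibility of the end fundamental groups) are inherited by each nearby representation in $\Hom_E$ by definition of that representation variety, so they are available in the invocation of the cited theorems.

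Combining the two cases gives openness of the set of representations satisfying the uniform middle-eigenvalue condition, and the $\bR P^n$-version follows from the $\SI^n$-version by the standard lifting argument used throughout the paper. I would expect the write-up to be only a few lines once the bookkeeping between the R-end and T-end cases, and between the lens and generalized-lens versions, is made explicit.
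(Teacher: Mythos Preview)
Your proposal is correct and follows exactly the same approach as the paper: the paper's proof consists of two sentences, citing Theorems \ref{thm:equ} and \ref{thm:qFuch} for the p-R-end case and Theorems \ref{thm:equ2} and \ref{thm:qFuch2} for the p-T-end case. Your discussion of why one must use the generalized-lens version of Theorem \ref{thm:equ} (so as to avoid the triangle condition or reducibility hypothesis) is a sound elaboration that the paper leaves implicit.
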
 
\begin{proof} 
For p-R-ends, this follows by Theorems \ref{thm:equ} and \ref{thm:qFuch}.
For p-T-ends, this follows by dual results: Theorem \ref{thm:equ2} and Theorems \ref{thm:qFuch2}. 
\end{proof}


\subsection{The end and the limit sets}

\begin{definition} \label{defn:limitset}
Define the {\em limit set} $\Lambda(\tilde E)$ of a p-R-end $\tilde E$ with a generalized p-end-neighborhood to be 
$\Bd D - \partial D$ for a generalized lens $D$ of $\tilde E$ in $\SI^n$ {\rm (}resp. $\bR P^n${\rm )}
and the {\em limit set} $\Lambda(\tilde E)$ of a p-T-end $\tilde E$ of lens type to be 
$\clo(\tilde S_{\tilde E})-\tilde S_{\tilde E}$ for
the ideal totally geodesic boundary component $\tilde S_{\tilde E}$ of $\tilde E$. 
\end{definition} 

\begin{corollary}\label{cor:independence} 
Let $\mathcal{O}$ be a noncompact strongly tame $n$-orbifold with radial or totally geodesic ends 
and the holonomy group is strongly irreducible.
Let $\tilde E$ be a generalized lens-type p-R-end of $\tilde{\mathcal{O}}$ associated with a p-end vertex $\bv_{\tilde E}$,
and let $U$ be a p-end-neighborhood of $\tilde E$ where $\tilde E$ is a p-T-end or p-R-end. 
Then $\clo(U) \cap \Bd \torb$ is independent of the choice of $U$ and so is the limit set $\Lambda(\tilde E)$ of $\tilde E$.
\end{corollary}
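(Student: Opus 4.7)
The plan is to split into the R-end case and the T-end case and in each case reduce the claim to independence results already established.

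First, I would handle the p-R-end case. The key observation is that a generalized lens-type p-R-end is either strictly generalized lens-shaped with hyperbolic $\bGamma_{\tilde E}$ (so that Theorem \ref{thm:lensclass} applies) or reducible with $\bGamma_{\tilde E}$ admissible (so that Theorem \ref{thm:redtot} applies). In both situations the set $S(\bv_{\tilde E})$ of maximal segments from $\bv_{\tilde E}$ lying in $\Bd \torb \cap \clo(U)$ for any concave p-end-neighborhood $U$ has already been shown to be independent of the choice of such $U$, and to satisfy $\bigcup S(\bv_{\tilde E})=\clo(U)\cap \Bd\torb$ (Theorem \ref{thm:lensclass}(ii) and Theorem \ref{thm:redtot}(v), together with the statement that concave p-end-neighborhoods are proper in Theorem \ref{thm:lensclass}(iv) and Theorem \ref{thm:redtot}(vi)). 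What remains is to promote this from concave p-end-neighborhoods to arbitrary p-end-neighborhoods $U'$ of $\tilde E$. For this I would argue that $\clo(U')\cap \Bd\torb$ is a union of maximal segments from $\bv_{\tilde E}$ (since any boundary point lies on a segment from $\bv_{\tilde E}$ in $\Bd\torb$ by the radial structure and convexity of $\torb$), and each such segment must either pass through the interior of a fixed concave p-end-neighborhood $V$ or correspond to a direction in $\Bd\tilde\Sigma_{\tilde E}$; in the first case it ends inside $\torb$, contradicting it being a boundary segment, so it lies in $\clo(V)\cap\Bd\torb=\bigcup S(\bv_{\tilde E})$. Conversely, density of attracting fixed points of $\bGamma_{\tilde E}$ in $\Lambda(\tilde E)=\Bd D-\partial D$, together with $\bGamma_{\tilde E}$-invariance of $\clo(U')$, forces $\bigcup S(\bv_{\tilde E})\subset \clo(U')\cap \Bd\torb$. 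Independence of the limit set then follows because $\Lambda(\tilde E)=\Bd D-\partial D$ is independent of the choice of generalized lens $D$ by Theorem \ref{thm:lensclass}(i) and Theorem \ref{thm:redtot}(iv).

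Next, I would handle the p-T-end case, which is more direct. By Theorem \ref{thm:equ2}, the lens-type p-T-end $\tilde E$ has a lens p-end-neighborhood in an ambient convex open set containing $\torb$, and the ideal boundary $\tilde S_{\tilde E}$ is uniquely determined by $\tilde E$ as a totally geodesic $(n-1)$-dimensional convex domain. Since any p-end-neighborhood $U$ of $\tilde E$ in $\torb$ is bounded in $\Bd\torb$ precisely by $\clo(\tilde S_{\tilde E})$ (the only portion of $\Bd\torb$ that can accumulate near the totally geodesic ideal boundary), we get $\clo(U)\cap\Bd\torb=\clo(\tilde S_{\tilde E})$, which is manifestly independent of $U$. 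The limit set $\Lambda(\tilde E)=\clo(\tilde S_{\tilde E})-\tilde S_{\tilde E}$ is then also independent.

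The main obstacle I expect is in the R-end case: passing from concave or lens-type p-end-neighborhoods (where the existing theorems apply) to an \emph{arbitrary} p-end-neighborhood $U'$. The subtlety is that $U'$ need not be contained in, nor contain, a given concave p-end-neighborhood, and the comparison must be done purely in terms of $\bGamma_{\tilde E}$-invariance and boundary structure. The right tool is Proposition \ref{prop:orbit} (which forces orbit accumulation onto the distanced invariant set, hence onto $\Lambda(\tilde E)$) combined with the density of attracting fixed points in $\Lambda(\tilde E)$ from Theorem \ref{thm:lensclass}(i). Once these inclusions are established in both directions, the corollary follows.
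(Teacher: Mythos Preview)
Your proposal is essentially correct and follows the same two-case strategy as the paper. For the p-R-end case the paper is slightly more streamlined: it first invokes Theorem~\ref{thm:equ} to obtain the uniform middle-eigenvalue condition, then works directly with the distanced invariant set $K^b=\Bd\mathcal{T}_{\bv_{\tilde E}}\cap K$ via Proposition~\ref{prop:orbit}, rather than routing through the $S(\bv_{\tilde E})$ statements of Theorems~\ref{thm:lensclass} and~\ref{thm:redtot} and then promoting to arbitrary neighborhoods; but these are two packagings of the same mechanism, and you correctly identify Proposition~\ref{prop:orbit} and density of attracting fixed points as the essential inputs. For the p-T-end case your citation of Theorem~\ref{thm:equ2} only gives existence of a lens; what you actually need (and what the paper uses) is the \emph{strictness} statement of Theorem~\ref{thm:qFuch2}(i), namely that for the boundary component $A=\Bd L\cap\torb$ of a lens $L$ one has $\clo(A)-A\subset\clo(\tilde S_{\tilde E})-\tilde S_{\tilde E}$, which is precisely what forces $\clo(U)\cap\Bd\torb=\clo(\tilde S_{\tilde E})$.
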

\begin{proof} 
Let $\tilde E$ be a generalized lens-type p-R-end. Then by Theorem \ref{thm:equ}, $\tilde E$ satisfies the uniform middle eigenvalue condition. 
Suppose that $\pi_1(\tilde E)$ acts irreducible. 
Let $K^b$ denote $\Bd {\mathcal T}_{\bv_{\tilde E}} \cap K$ for a distanced minimal compact convex set $K$ where $\bGamma_{\tilde E}$ acts on.
Proposition \ref{prop:orbit} shows that the 
limit set is determined by a set $K^b$ in $\bigcup S(v_{\tilde E})$ since $S(v_{\tilde E})$ is an $h(\pi_1(\tilde E))$-invariant set. 
We deduce that $\clo(U) \cap \Bd \torb = \bigcup S(v_{\tilde E})$. 

Also, $\Lambda(\tilde E) \supset K^b$ since $\Lambda(\tilde E)$ is a $\pi_1(\tilde E)$-invariant compact set in 
$\Bd T_{\tilde \Sigma_{\tilde E}} - \{v_{\tilde E}, v_{\tilde E-}\}$. 
By Proposition \ref{prop:orbit}, each point of $K^b$ is a limit of some $g_i(x)$ for $x \in D$ for a generalized lens. 
Since $D$ is  $\pi_1(\tilde E)$-invariant compact set, $K^b \subset \Lambda(\tilde E)$. 

Suppose now that $\pi_1(\tilde E)$ acts reducibly. Then by Theorem \ref{thm:redtot}, $\tilde E$ is a totally geodesic p-R-end. 
Proposition \ref{prop:orbit} again implies the result. 

Let $\tilde E$ be a p-T-end. By Theorem \ref{thm:qFuch2}(i), $\clo(U) \cap \Bd \torb$  equals 
$\clo(\tilde S_{\tilde E})$ since $\clo(A) - A $ is a subset of this set for $A= \Bd L \cap \torb$
for a lens neighborhood $L$ by the strictness of the lens.

\end{proof} 




\subsection{Expansion and shrinking of admissible p-end-neighborhoods} 


\begin{lemma}\label{lem:expand}  
Let $\mathcal{O}$ have a noncompact strongly tame SPC-structure $\mu$ with admissible ends. 
Assume that the holonomy group is strongly irreducible.
Let $U_1$ be a lens cone p-neighborhood of a horospherical or a lens-type p-R-end $\tilde E$ with the p-end vertex $v$ in 
$\tilde{\mathcal{O}}$ that is foliated by segments from $v${\rm ;} 
or $U_1$ is a lens neighborhood of a totally geodesic p-end $\tilde E$.
Let $\bGamma_{\tilde E}$ denote the p-end fundamental group corresponding to $\tilde E$. 
Then the following holds\,{\rm :} 
\begin{itemize} 
\item Given a compact subset of $\tilde{\mathcal{O}}$, there exists an integer $i_0$ such that 
$U_i$ for $i > i_0$ contains it. 
\item The Hausdorff distance between $U_i$ and $\tilde{\mathcal{O}}$ can be made as small as possible, i.e., 
\[ \forall \eps > 0, \exists \delta, \delta > 0, \hbox{ so that }  \bdd_H (U_i, \torb) < \epsilon. \]
\item There exists a sequence of convex open neighborhoods $U_i$ of $U_1$ in $\tilde{\mathcal{O}}$ 
so that $(U_i - U_j)/\bGamma_{\tilde E}$ for a fixed $j$ and $i> j$ is homeomorphic to a product of an open interval with 
the end orbifold. 
\item We can choose $U_i$ so that $\Bd U_i \cap \torb$ is smoothly embedded and strictly convex with 
$\clo(\Bd U_i) - \torb \subset \Lambda$ where $\Lambda$ is the limit set contained in $\bigcup S(v)$ 
if $v$ is the p-end vertex when $\tilde E$ is radial and in $\clo(\tilde  S_{\tilde E}) - \tilde S_{\tilde E}$ if $\tilde E$ is totally geodesic. 
\end{itemize}
\end{lemma}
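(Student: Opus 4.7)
The plan is to build each $U_i$ as a smooth strictly convex $\bGamma_{\tilde E}$-invariant thickening of a nested family of closed convex orbit hulls in $\torb$.

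First, I observe that in each of the three cases covered by the hypothesis (horospherical p-R-end, generalized lens-shaped p-R-end, lens-shaped p-T-end) the given $U_1$ is already $\bGamma_{\tilde E}$-invariant and convex, with $\Bd U_1\cap\torb$ smoothly embedded and strictly convex, and $\clo(\Bd U_1)\cap\Bd\torb\subset\Lambda$: Proposition~\ref{prop:affinehoro}(v) handles the horospherical case (where $\Lambda=\{\bv_{\tilde E}\}$), Theorem~\ref{thm:lensclass}(i)--(ii) together with Theorem~\ref{thm:redtot} handle the generalized lens-type p-R-end case, and Theorem~\ref{thm:qFuch2}(i) with Lemma~\ref{lem:attracting2} handle the lens-type p-T-end case. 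By Corollary~\ref{cor:independence}, $\Lambda$ is intrinsic to the end and does not depend on the choice of $U_1$.

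Next I fix a countable dense subset $\{p_j\}_{j\geq 1}\subset\torb$ and set
\[
C_i := \clo\bigl(\mathrm{conv}\bigl(\clo(U_1)\cup \bGamma_{\tilde E}\cdot\{p_1,\dots,p_i\}\bigr)\bigr)\cap\clo(\torb),
\]
the closed convex hull taken in $\clo(\torb)$. By Proposition~\ref{prop:orbit}, and its p-T-end analogue derived from Lemma~\ref{lem:attracting2}, every $\bGamma_{\tilde E}$-orbit in $\torb$ accumulates on $\Bd\torb$ only inside the compact set $\Lambda$; combined with the hypothesis on $\clo(U_1)$, this forces $C_i\cap\Bd\torb\subset\Lambda$, hence $C_i^o\subset\torb$. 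The $C_i$ are $\bGamma_{\tilde E}$-invariant, closed convex, and nested, and by density of $\{p_j\}$ satisfy $\bigcup_i C_i^o=\torb$, which will take care of the first two bullets of the lemma.

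I then replace each $C_i$ by a $\bGamma_{\tilde E}$-invariant smooth strictly convex open thickening $U_i\subset\torb$. The main tool is the Koszul--Vinberg characteristic function $\phi$ of the cone $C(\torb)\subset\bR^{n+1}$, which is smooth, strictly convex on the cone, $\SLnp$-invariant, and blows up at $\Bd\torb$; adding a small multiple of $\log\phi$ to a convex defining function of $C_i$ and taking a superlevel set produces the desired $U_i$, smooth and strictly convex by the Hessian of $\log\phi$, open because the perturbation is strictly positive, and $\bGamma_{\tilde E}$-invariant because both ingredients are. The inclusion $\clo(\Bd U_i)\cap\Bd\torb\subset\Lambda$ will persist for sufficiently small perturbation, yielding the last bullet. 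For the third bullet, the product structure of $(U_i-U_j)/\bGamma_{\tilde E}$, I will flow $\Bd U_j\cap\torb$ to $\Bd U_i\cap\torb$ along the radial foliation from $\bv_{\tilde E}$ for p-R-ends, or along the geodesic normal flow from $\tilde S_{\tilde E}$ for p-T-ends; strict convexity of the bounding hypersurfaces ensures transversality to this flow and hence a $\bGamma_{\tilde E}$-equivariant diffeomorphism to $\Sigma_{\tilde E}\times(0,1)$.

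The hardest step will be the smoothing in the third paragraph, where the perturbation must be small enough near $\Lambda$ so that the smoothed boundary still limits only on $\Lambda$, and simultaneously large enough off $\Lambda$ to enforce strict convexity and smoothness. Naive convolutional smoothings would spread the accumulation set across all of $\Bd\torb$ and destroy the boundary control, so one must use the fact that the Hessian of $\log\phi$ on $C(\torb)$ diverges at $\Bd\torb$ in a geometrically controlled way. Matching these divergence estimates to the local geometry of $\Lambda$ in each of the three end types, uniformly in $i$, is where the real technical work lies.
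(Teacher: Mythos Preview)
Your convex-hull-of-orbits strategy for p-R-ends is essentially the paper's, but the key step is misstated and the real argument is missing. The claim ``$C_i\cap\Bd\torb\subset\Lambda$'' is false for lens-type p-R-ends: already $\clo(U_1)\cap\Bd\torb=\bigcup S(\bv_{\tilde E})$, which strictly contains $\Lambda$, so $C_i\cap\Bd\torb\supset\bigcup S(\bv_{\tilde E})$ before you add a single orbit point. What the fourth bullet actually requires is that the \emph{hypersurface} $\Bd C_i\cap\torb$ has closure meeting $\Bd\torb$ only in $\Lambda$, and orbit accumulation on $\Lambda$ (Proposition~\ref{prop:orbit}) does not give this by itself: the hull of a set accumulating on $\Lambda$ can have boundary faces spanning secant simplices in $\Bd\torb$. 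The paper closes this gap by stratifying the hull boundary into simplices with vertices in the orbit set and in $\bigcup S(\bv_{\tilde E})$, using Lemma~\ref{lem:simplexbd} to force any face touching $\Bd\torb$ to lie entirely in $\Bd\torb$, and then observing that such a face has all vertices in $\bigcup S(\bv_{\tilde E})\subset\clo(U_1)$, contradicting that the radial segment through its interior misses $\Bd\Omega_t\cap\torb$. Once this is in hand, Lemma~\ref{lem:infiniteline} shows the polyhedral boundary has compact quotient, and smoothing is routine as in Proposition~\ref{prop:convhull2}; so your ``hardest step'' with divergence estimates of $\log\phi$ near $\Lambda$ is a red herring, and the Koszul--Vinberg perturbation of a ``convex defining function of $C_i$'' is ill-posed anyway since $C_i$ has no smooth defining function to perturb.

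For p-T-ends your plan departs from the paper and has a second gap: Lemma~\ref{lem:attracting2} is stated only for irreducible hyperbolic $\bGamma_{\tilde E}$, so it does not supply the orbit-accumulation control you need in the reducible case, and even in the hyperbolic case it treats only specific sequences rather than arbitrary accumulation points. The paper avoids any direct analysis on the T-side by dualizing: the p-T-end $\tilde E$ corresponds to a p-R-end of $\torb^*$ with vertex $\bv_{\tilde E^*}$, one builds a \emph{shrinking} nested family $U_1\supset U_2\supset\cdots\supset\torb^*$ inside the tube ${\mathcal T}_{\tilde E^*}$ as intersections of $\bGamma_{\tilde E}^*$-orbits of half-spaces, and then the duals $U_i^*\subset\torb$ form the required expanding exhaustion by the order-reversing property of duality in equation~\eqref{eqn:dualinc}. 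This buys uniformity across all admissible T-ends without a separate accumulation lemma.
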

\begin{proof} 
First, we study the p-R-end case. 
The p-end-neighborhood $U_1$ is foliated by segments from $v$. 
The foliation leaves are geodesics concurrently ending at a vertex $v$ corresponding to the p-end of $U_1$.
We follow the foliation outward from $U_1$ and take a union of finitely many geodesic leaves $L$ from $\bv_{\tilde E}$
of finite length outside $U_1$ and 
take the convex hull of $U_1$ and $\bGamma_{\tilde E}(L)$ in $\tilde{\mathcal{O}}$. 

Suppose that $U_1$ is horospherical. Then the convex hull is again horospherical. 
We can smooth the boundary to be strictly convex. 
Call the set $\Omega_t$ where $t$ is a parameter $\ra \infty$ measuring the distance from $U_1$. 
By taking $L$ sufficiently densely, we can choose a sequence $\Omega_i$ of strictly convex horospherical open sets at $v$ 
so that eventually any compact subset of $\tilde{\mathcal{O}}$ is in it for sufficiently large $i$.

Let $U_1$ be a  lens-cone now.
Take a union of finitely many geodesic leaves $L$ from $\bv_{\tilde E}$ in $\torb$ 
of $d_{\torb}$-length $t$ outside the lens-cone $U_1$ and
take the convex hull of $U_1$ and $\bGamma_{\tilde E}(L)$ in $\tilde{\mathcal{O}}$. 
Denote the result by $\Omega_t$. Thus, the end points of $L$ not equal to $\bv_{\tilde E}$ are in $\torb$.

We claim that $\Bd \Omega_t \cap \tilde{\mathcal{O}}$ is a connected $(n-1)$-cell and 
$\Bd \Omega_t \cap \tilde{\mathcal{O}}/\bGamma_{\tilde E}$ is 
a compact $(n-1)$-orbifold homeomorphic to $\Sigma_{\tilde E}$ and  $\Bd U_1 \cap \torb$ bounds 
a compact orbifold homeomorphic to the product of a closed interval with  
$(\Bd \Omega_t \cap \tilde{\mathcal{O}})/\bGamma_{\tilde E}$: 
First, each leaf of $g(l), g\in \bGamma_{\tilde E}$ for $l$ in $L$ is so that any converging subsequence of 
$\{g_i(l)\}, g_i\in \bGamma_{\tilde E}$, converges to a segment in $S(v)$ for an infinite collection of $g_i$. 
This follows since a limit is a segment in $\Bd \tilde{\mathcal{O}}$ with a p-endpoint $v$ 
and must belong to $S(v)$ by Proposition \ref{prop:affinehoro}. 

Let $S_1$ be the set of segments with end points in $\bGamma_{\tilde E}(L) \cup \bigcup S(v)$
and  define inductively $S_i$ be the set of simplices with sides in $S_{i-1}$. 
Then the convex hull of $\bGamma_{\tilde E}(L)$ in $\clo(\torb)$ is a union of $S_1 \cup \cdots \cup S_n$. 
We claim that for each maximal segment $s$ from $v$ not in $S(v)$, $s^o$ meets $\Bd \Omega_t \cap \torb$ at a unique point: 
Suppose not. 
Then let $v'$ be its other end point of $s$ in  $\Bd \tilde{\mathcal{O}}$ not passing $\Bd \Omega_t \cap \torb$ in the interior. 
Now, $v'$ is contained in the interior of a simplex $\sigma$ in $S_i$ for some $i$.
Since $\sigma^o \cap \Bd \torb \ne \emp$, $\sigma\subset \Bd \torb$ by Lemma \ref{lem:simplexbd}.
Since the end points $\bGamma_{\tilde E}(L)$ are in $\torb$, the only possibility is that 
the vertices of $\sigma$ are in $\bigcup S(v)$. 
Since $U_1$ is convex and contains $\bigcup S(v)$ in its boundary, 
$\sigma$ is in $\clo(U_1)$ and
and is in the interior of the lens-cone, and no interior point 
of $\sigma$ is in $\Bd \tilde{\mathcal{O}}$, a contradiction. 
Therefore, each maximal segment $s$ from $v$ meets the boundary 
$\Bd \Omega_t \cap \tilde{\mathcal{O}}$ exactly once. 

As in Lemma \ref{lem:infiniteline}, $\Bd \Omega_t \cap \torb$ contains no line segment ending in $\Bd \torb$.  
The strictness of convexity of $\Bd \Omega_t$ follows as 
by smoothing as in the proof of Proposition \ref{prop:convhull2}. 
By taking sufficiently many leaves for $L$ with $\bdd_{\torb}$-lengths $t$ sufficiently large, we can show that any compact subset is
inside $\Omega_t$. From this, the final item follows. 
The first three items now follow if $\tilde E$ is an R-end. 


Suppose now that $\tilde E$ is totally geodesic. Now we use the dual domain $\torb^*$ and the group $\bGamma_{\tilde E}^*$. 
Let $\bv_{\tilde E^*}$ denote the vertex dual to $S_{\tilde E}$. 
By the homeomorphism induced by great segments with end points $\bv_{\tilde E}^*$, we obtain 
 \[(\Bd \torb^* - \bigcup S(\bv_{\tilde E^*}))/\bGamma_{\tilde E}^* \cong \Sigma_{\tilde E}/\bGamma_{\tilde E}^*,\] 
a compact orbifold.
Then we obtain $U_i$ containing $\torb^*$ in ${\mathcal{T}}_{\tilde E}$
by taking finitely many hypersphere outside $F_{i}$ disjoint from $\torb^*$ but meeting
${\mathcal{T}}_{\tilde E}$. Let $H_{i}$ be the open hemisphere containing $\torb^*$ bounded by $F_{i}$. 
Then we form $U_1 := \bigcap_{g\in \Gamma_{\tilde E}} g(H_i)$. 
By taking more hyperspheres, we obtain a sequence 
\[U_1 \supset U_2 \supset \cdots \supset U_i \supset U_{i+1} \supset \cdots \supset \torb^* \]
so that $\clo(U_{i+1}) \subset U_i$ and 
\[\bigcap_i \clo(U_i) =\clo(\torb^*) - \bigcup S(\bv_{\tilde E^*}).\]
That is for sufficiently large hyperplanes, we can make 
$U_i$ disjoint from any compact subset disjoint from $\clo(\torb^*) \cup \mathcal{A}(\torb)$.
Now taking the dual $U_i^*$ of $U_i$ and by equation \ref{eqn:dualinc} we obtain
\[ U_1^* \subset U_2^* \subset \cdots \subset U_i^* \subset U_{i+1}^* \subset \cdots \subset \torb.\]
Then $U_{i}^* \subset \torb$ is an increasing sequence eventually containing all compact subset of $\torb$. 
This completes the proof for the first three items.

The fourth item follows from Corollary \ref{cor:independence}.
\end{proof} 


We now discuss the ``shrinking'' of p-end-neighborhoods. These repeat some results. 
\begin{corollary} \label{cor:shrink} 
Suppose that $\orb$ is a strongly tame properly convex real projective orbifold with radial or totally geodesic ends
and let $\torb$ be a properly convex domain in $\SI^n$ {\rm (}resp. $\bR P^n${\rm )} covering $\orb$. 
Assume that the holonomy group is strongly irreducible.
Then the following statements hold\,{\rm :} 
\begin{itemize} 
\item[(i)] If $\tilde E$ is a horospherical p-R-end, 
every p-end-neighborhood of $\tilde E$ contains a horospherical p-end-neighborhood.
\item[(ii)] If $\tilde E$ is a lens-shaped p-R-end or satisfies the uniform middle eigenvalue condition,
 every p-end-neighborhood $V$ where $(\Bd V \cap \torb)/\pi_1(\tilde E)$ is a compact orbifold and 
and  $V^o \supset I \cap \torb$ for the convex hull $I$ of $\bigcup S(\bv_{\tilde E})$ 
of the p-end vertex $\bv_{\tilde E}$ contains a lens-shaped p-end-neighborhood. 
\item[(iii)] If $\tilde E$ is a generalized lens-shaped p-R-end or satisfies the uniform middle eigenvalue condition, 
every p-end-neighborhood of $\tilde E$ contains a concave p-end-neighborhood.
\item[(iv)] Suppose that $\tilde E$ is a p-T-end of lens type or satisfies the uniform middle eigenvalue condition.
Then every p-end-neighborhood contains a convex p-end-neighborhood $L$ with strictly convex boundary in $\torb$. 
\end{itemize} 
\end{corollary}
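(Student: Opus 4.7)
The plan is to treat the four parts sequentially, with (ii) as the main technical statement and (i), (iii), (iv) following from it or by duality. For (i), Proposition \ref{prop:affinehoro} shows that a finite-index subgroup of $\bGamma_{\tilde E}$ is a cocompact discrete subgroup of an $(n-1)$-dimensional cusp group $\mathcal{H}_{\bv_{\tilde E}}$ preserving a shrinking one-parameter family of horoballs at $\bv_{\tilde E}$. Given an arbitrary p-end-neighborhood $U$, I would combine cocompactness of $\bGamma_{\tilde E}$ on each horosphere with the fact that every radial segment from $\bv_{\tilde E}$ into $\torb$ enters $U$ near $\bv_{\tilde E}$, to conclude that a sufficiently deep horoball in the family lies entirely in $U$.

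For (ii), Theorem \ref{thm:distanced} and the uniform middle-eigenvalue condition produce a $\bGamma_{\tilde E}$-invariant distanced compact convex set $K \subset \clo({\mathcal T}_{\bv_{\tilde E}})$, and the hypothesis $V^o \supset I \cap \torb$ places $K \cap \torb$ inside $V$. Following Lemma \ref{lem:push} and Proposition \ref{prop:convhull2}, I would pick finitely many points $z_1, \dots, z_m \in V$ in a $\bGamma_{\tilde E}$-fundamental domain of $\Bd V \cap \torb$ and near $\Bd K \cap \torb$, then form the convex hull $C$ of $K \cup \bGamma_{\tilde E}(\{z_1, \dots, z_m\})$ in the tube. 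Proposition \ref{prop:orbit} together with the cocompactness of $\bGamma_{\tilde E}$ on $\Bd V \cap \torb$ force $C \subset V$; Lemma \ref{lem:infiniteline} then rules out lines in $\Bd C \cap \torb$ ending at $\Bd \torb$, so smoothing the top and bottom components exactly as in the proof of Theorem \ref{thm:qFuch} yields a lens-shaped p-end-neighborhood inside $V$.

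For (iii), I would first intersect a given p-end-neighborhood $U$ with a $\bGamma_{\tilde E}$-invariant family of hyperplanes transverse to the radial foliation to produce a sub-p-end-neighborhood $V \subset U$ satisfying the hypotheses of (ii); applying (ii) in its generalized-lens form (using the last statement of Theorem \ref{thm:equ}) inside $V$ produces a generalized lens-cone $\{\bv_{\tilde E}\} \ast L$ in $V$, and the component of $\{\bv_{\tilde E}\} \ast L - \{\bv_{\tilde E}\} - L$ abutting $\bv_{\tilde E}$ is a concave p-end-neighborhood of $\tilde E$ inside $U$, proper by Theorem \ref{thm:lensclass}(iv) and Theorem \ref{thm:redtot}(vi). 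For (iv), Propositions \ref{prop:dualend}, \ref{prop:dualend2} and Theorem \ref{thm:dualdiff} identify a lens-type p-T-end of $\orb$ satisfying the uniform middle-eigenvalue condition with a generalized lens-type p-R-end of $\orb^*$, and a p-end-neighborhood $U$ of $\tilde E$ corresponds to a p-end-neighborhood $U^*$ of the dual end; applying (iii) to $U^*$ and dualizing back yields a convex p-end-neighborhood of $\tilde E$ inside $U$ with strictly convex boundary in $\torb$, the strict convexity coming from the $C^1$-correspondence of Lemma \ref{lem:predual}.

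The central obstacle is (ii): arranging simultaneously that the convex hull $C$ contains all of $K$, stays inside $V$, and has top and bottom faces that are convex cells transverse to the radial directions. The weak uniform middle-eigenvalue condition provides the orbit-limit control of Proposition \ref{prop:orbit} that prevents $C$ from leaking out through $\Bd V \cap \torb$, and the finite-choice argument powered by cocompactness of $\bGamma_{\tilde E}$ on $\Bd V \cap \torb$ is where the specific form of the hypothesis on $V$ enters most critically. Once (ii) is secured, parts (i), (iii), (iv) reduce to it together with the duality and smoothing results already in hand.
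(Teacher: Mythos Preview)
Your treatment of (i) and (ii) is fine and essentially matches the paper: (i) is the horoball-shrinking argument, and (ii) is exactly the mechanism of Proposition \ref{prop:convhull2} applied with the convex hull of $\bigcup S(\bv_{\tilde E})$ playing the role of $K$.

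There is, however, a genuine gap in your argument for (iii). You propose to cut the given p-end-neighborhood $U$ by $\bGamma_{\tilde E}$-invariant hyperplanes to obtain $V\subset U$ satisfying the hypotheses of (ii), and then to find a generalized lens-cone inside $V$. But the hypothesis of (ii) requires $V^o \supset I\cap\torb$ for the convex hull $I$ of $\bigcup S(\bv_{\tilde E})$, and this condition cannot be achieved by shrinking $U$: a small p-end-neighborhood $U$ close to $\bv_{\tilde E}$ will not contain $I\cap\torb$ at all, and neither will any $V\subset U$. More fundamentally, in (iii) the generalized lens $L$ is typically \emph{not} contained in $U$; only the concave piece near $\bv_{\tilde E}$ is. So one cannot reduce (iii) to producing a lens inside $U$.

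The paper's proof of (iii) goes in the opposite direction: fix once and for all a generalized lens-cone $V = \{\bv_{\tilde E}\}\ast L$, and then \emph{enlarge} the lens $L$ to $L' = \{x\in V : d_V(x,L)\le \eps\}$ using the Hilbert metric $d_V$ of the lens-cone (Lemma 1.8 of \cite{CLT2} keeps $L'$ a generalized lens). Since $(\Bd U\cap\torb)/\bGamma_{\tilde E}$ is compact, one can take $\eps$ large enough that $\Bd U\cap\torb \subset L'$, and then the concave region $V - L'$ lies inside $U$. This enlarging-the-lens mechanism is the missing idea in your (iii).

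Your duality approach for (iv) is also different from the paper, which simply invokes Theorem \ref{thm:lensn} directly (its proof already contains the Hilbert $\eps$-neighborhood step needed to fit the lens inside a prescribed one-sided neighborhood $U$). Your route via dualizing a concave neighborhood of the dual R-end is plausible in spirit, but you have not explained what the dual of a concave (non-convex) region is, nor why the Vinberg correspondence carries a specific p-end-neighborhood of $\tilde E$ to one of the dual end; these steps need justification before the argument closes.
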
 
\begin{proof}
Let us prove for $\SI^n$. 

(i) Let $v_{\tilde E}$ denote the p-R-end vertex corresponding to $\tilde E$. 
By Theorem \ref{thm:comphoro}, we obtain a conjugate of a parabolic subgroup of $\SO(n, 1)$ as the finite index subgroup of $h(\pi_1(\tilde E))$
acting on $U$, a p-end-neighborhood of $\tilde E$. We can choose an ellipsoid of $\bdd$-diameter $\leq \eps$ for any $\eps > 0$ 
in $U$ fixing $v_{\tilde E}$.    

(ii) This follows from Proposition \ref{prop:convhull2} since 
the convex hull of $\partial \bigcup S(\bv_{\tilde E})$ has the right properties. 

(iii) Suppose that we have a lens-cone $V$
that is a p-end-neighborhood equal to $L \ast v_{\tilde E}$ where $L$ is a generalized lens bounded away from $v_{\tilde E}$. 
Let \[\eps := \sup \{d_{\torb}(x, L)| x \in \Bd U \cap \torb \}.\]
If $\Bd U \cap \torb \subset L$, then $\Bd (V - L) \cap \torb \subset U$ holds. 
Since a point of $V-L$ near $v_{\tilde E}$ is in $U$, and $V-L$ is connected,
the concave p-end-neighborhood $V - L$ is  a subset of $U$ and we are done. 

Now suppose that $\Bd U \cap \torb$ is not a subset of $L$. 
By taking smaller $U$ if necessary, we may assume that $U$ and $L$ are disjoint. 
Since $\Bd U/h(\pi_1(\tilde E))$ and $L/h(\pi_1(\tilde E))$ are compact, $\eps > 0$. 
Let $L' := \{x \in L | d_{V}(x, L) \leq \eps\}$. Then we can show that $L'$ is a generalized ens
since a lower component of $\partial L'$ is strictly convex by Lemma 1.8 of \cite{CLT2}. 
(Given $u, v \in N_\eps$, we find 
\[w, t \in \Omega \hbox{ so that } d_V(u, w) < \eps, d_V(v, t) < \eps.\]
Then $\ovl{uv}$ is within $\eps$ of $\ovl{wt} \subset \Omega$ in the $d_V$-sense.) 
Clearly, $h(\pi_1(\tilde E))$ acts on $L'$. 

Let $V$ be the subspace $V - L'$. Then we choose sufficiently large $\eps'$ so that
 $\Bd U \cap \torb \subset L'$, and hence $V-L'\subset U$ form a concave p-end-neighborhood as above. 


(iv) This follows from Theorem \ref{thm:lensn}.

\end{proof}

 \part{The classification of NPCC ends} 

\section{The uniform middle eigenvalue conditions for NPCC ends} \label{sec:notprop}

We will now study the ends where the transverse real projective structures are not properly convex but not projectively diffeomorphic to 
a complete affine subspace.
Let $\tilde E$ be a p-R-end of $\orb$ and let $U$ the corresponding p-end-neighborhood in $\torb$
with the p-end vertex $\bv_{\tilde E}$. 

The closure $\clo(\tilde \Sigma_{\tilde E})$ contains 
a great $(i_0-1)$-dimensional sphere and $\tilde \Sigma_{\tilde E}$ is foliated by $i_0$-dimensional hemispheres 
with this boundary. 
Let $\SI^{i_0-1}_\infty$ denote the great $(i_0-1)$-dimensional sphere in $\SI^{n-1}_{\bv_{\tilde E}}$ of $\tilde \Sigma_{\tilde E}$. 
The space of $i_0$-dimensional hemispheres in $\SI^{n-1}_{\bv_{\tilde E}}$ with boundary $\SI^{i_0-1}_\infty$ form 
a projective sphere $\SI^{n-i_0-1}$. 
The projection \[\SI^{n-1}_{\bv_{\tilde E}} - \SI^{i_0-1}_\infty \ra \SI^{n-i_0-1} \] 
gives us an image of $\tilde \Sigma_{\tilde E}$ 
that is the interior of a  properly convex compact set $K$. (See \cite{ChCh} for details. See also \cite{GV}.)

Let $\SI^{i_0}_\infty$ be a great $i_0$-dimensional sphere containing $\bv_{\tilde E}$ corresponding to the directions
of $\SI^{i_0-1}_\infty$ from $\bv_{\tilde E}$. 
The space of  $(i_0+1)$-dimensional hemispheres 
with boundary $\SI^{i_0}_\infty$ again has the structure of the projective sphere $\SI^{n-i_0-1}$, 
identifiable with the above one. 
Denote by $\Aut_{\SI^{i_0}_\infty} (\SI^n)$ the group of projective automorphisms of $\SI^n$
acting on $\SI^{i_0}_\infty$ and fixing $\bv_{\tilde E}$.
We also have the projection \[\Pi_K: \SI^{n} - \SI^{i_0}_\infty \ra \SI^{n-i_0-1} \] 
giving us the image $K^o$ of a p-end-neighborhood $U$. 


Each $i_0$-dimensional 
hemisphere $H^{i_0}$ in $\SI^{n-1}_{\bv_{\tilde E}}$ with $\Bd H^{i_0} = \SI^{i_0-1}_\infty$ corresponds to an $(i_0+1)$-dimensional hemisphere 
$H^{i_0+1}$ in $\SI^n$ with common boundary $\SI^{i_0}_\infty$ that contains $\bv_{\tilde E}$. 

Let $\SL_\pm(n+1, \bR)_{\SI^{i_0}_\infty, \bv_{\tilde E}}$ 
denote the subgroup of $\Aut(\SI^n)$ acting on 
$\SI^{i_0}_\infty$ and $\bv_{\infty}$.  
The projection $\Pi_K$ induces a homomorphism 
\[\Pi_K^*: \SL_\pm(n+1, \bR)_{\SI^{i_0}_\infty, \bv_{\tilde E}} 
\ra \SL_\pm( n, \bR).\]

Suppose that $\SI^{i_0}_\infty$ is $h(\pi_1(\tilde E))$-invariant. 
We let $N$ be the subgroup of $h(\pi_1(\tilde E))$ of elements inducing trivial actions on $\SI^{n-i_0-1}$. 
The above exact sequence 
\[ 1 \ra N \ra h(\pi_1(\tilde E)) \stackrel{\Pi^*_K}{\longrightarrow} N_K \ra 1\] 
is so that the kernel normal subgroup $N$ acts trivially on $\SI^{n-i_0-1}$ but acts on each hemisphere with 
boundary equal to $\SI^{i_0}_\infty$
and $N_K$ acts faithfully by the action induced from $\Pi^*_K$.

Here $N_K$ is a subgroup of the group $\Aut(K)$ of the group of projective automorphisms of $K$
is called the {\em semisimple quotient } of $h(\pi_1(\tilde E))$ or $\bGamma_{\tilde E}$. 

\begin{theorem}\label{thm:folaff}
Let $\Sigma_{\tilde E}$ be the end orbifold of an NPCC p-R-end $\tilde E$ of a strongly tame  
properly convex $n$-orbifold $\orb$ with radial or totally geodesic ends. Let $\torb$ be the universal cover in $\SI^n$. 
We consider the induced action of $h(\pi_1(\tilde E))$ 
on $\Aut(\SI^{n-1}_{\bv_{\tilde E}})$ for the corresponding end vertex $\bv_{\tilde E}$. 
Then 
\begin{itemize} 
\item $\Sigma_{\tilde E}$ is foliated by complete affine subspaces of dimension $i_0$, $i_0 > 0$.
\item $h(\pi_1(\tilde E))$ fixes the great sphere $\SI^{i_0-1}_\infty$ of dimension $i_0-1$ in $\SI^{n-1}_{\bv_{\tilde E}}$. 
\item There exists an exact sequence 
\[ 1 \ra N \ra \pi_1(\tilde E) \stackrel{\Pi^*_K}{\longrightarrow} N_K \ra 1 \] 
where $N$ acts trivially on quotient great sphere $\SI^{n-i_0-1}$ and 
$N_K$ acts faithfully on a properly convex domain $K^o$ in $\SI^{n-i_0-1}$ isometrically 
with respect to the Hilbert metric $d_K$. 
\end{itemize} 
\end{theorem}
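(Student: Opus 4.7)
The plan is to reduce the theorem to the trichotomy of Proposition \ref{prop:projconv} applied to the end domain $\tilde \Sigma_{\tilde E}$ sitting in the linking sphere $\SI^{n-1}_{\bv_{\tilde E}}$. Since $\tilde E$ is an NPCC p-R-end, the convex domain $\tilde \Sigma_{\tilde E}$ is by hypothesis neither properly convex nor complete affine. The third bullet of Proposition \ref{prop:projconv} then supplies, intrinsically at the level of the linking sphere, both an invariant great subsphere $\SI^{i_0-1}_\infty \subset \Bd \tilde \Sigma_{\tilde E}$ of dimension $i_0-1$ and a foliation of $\tilde \Sigma_{\tilde E}$ by maximal complete affine subspaces of dimension $i_0$, all having the common ideal boundary $\SI^{i_0-1}_\infty$. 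This is precisely the first bullet, and since this foliation and its ideal boundary are canonically attached to the projective structure on $\tilde \Sigma_{\tilde E}$, the action of $h(\pi_1(\tilde E))$ on $\SI^{n-1}_{\bv_{\tilde E}}$ permutes the leaves and fixes $\SI^{i_0-1}_\infty$ setwise, yielding the second bullet.

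Next I would invoke the projection construction given just before the theorem. Collapsing the $i_0$-dimensional leaves of the foliation yields the projection $\SI^{n-1}_{\bv_{\tilde E}} - \SI^{i_0-1}_\infty \to \SI^{n-i_0-1}$, and the image of $\tilde \Sigma_{\tilde E}$ is a convex open subset $K^o$ of $\SI^{n-i_0-1}$. The proper convexity of $K$ needs a brief argument: if $\clo(K)$ contained a pair of antipodal points, then lifting this pair to a pair of antipodal great $i_0$-hemispheres in $\SI^{n-1}_{\bv_{\tilde E}}$ and then to $\SI^n$ via the cone from $\bv_{\tilde E}$ would produce a pair of antipodal points in $\clo(\torb)$, contradicting proper convexity of $\torb$. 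Since $h(\pi_1(\tilde E))$ preserves $\SI^{i_0-1}_\infty$, its action descends to an action by projective automorphisms of $\SI^{n-i_0-1}$ preserving $K$; and any projective automorphism of a properly convex set preserves the Hilbert metric, so the induced action on $K^o$ is isometric for $d_K$.

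Finally, to obtain the exact sequence, I would simply define $N$ as the kernel of the induced homomorphism $h(\pi_1(\tilde E)) \to \Aut(\SI^{n-i_0-1})$, so $N$ is exactly the subgroup acting trivially on the quotient sphere (hence preserving each foliation leaf), and set $N_K$ equal to the image in $\Aut(K)$. The sequence
\[
1 \longrightarrow N \longrightarrow h(\pi_1(\tilde E)) \stackrel{\Pi^*_K}{\longrightarrow} N_K \longrightarrow 1
\]
is then tautological, and the action of $N_K$ on $K^o$ is faithful by construction.

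The main conceptual obstacle is not in the individual steps, which are essentially bookkeeping, but in confirming that Proposition \ref{prop:projconv}—which is stated for convex projective orbifolds themselves—applies to the transverse structure $\tilde \Sigma_{\tilde E}$ on the linking sphere. This is justified because $\tilde \Sigma_{\tilde E}$ is itself a convex open domain in an affine patch of $\SI^{n-1}_{\bv_{\tilde E}}$ acted on by the projective group $\hat h(\pi_1(\tilde E))$, so the proposition's classification of convex domains applies verbatim. The only other point to check is that the maximal complete affine leaves in $\tilde \Sigma_{\tilde E}$ lift to $i_0$-hemispheres in $\SI^{n-1}_{\bv_{\tilde E}}$ with boundary $\SI^{i_0-1}_\infty$ rather than to isolated affine subspaces in distinct hyperplanes, but this is forced by the uniqueness of the common ideal boundary asserted by Proposition \ref{prop:projconv}.
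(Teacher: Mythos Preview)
Your proposal is correct and is essentially the same approach as the paper's: the paper's proof is a one-line citation to \cite{ChCh} (and \cite{GV}), and Proposition \ref{prop:projconv}, which you invoke, is itself proved from exactly those references. You have simply unpacked the citation into the explicit steps (foliation by maximal complete affine leaves, canonical invariance of $\SI^{i_0-1}_\infty$, projection to the properly convex leaf space $K$, and the tautological exact sequence), which is precisely what the referenced material provides; the only small point to tighten is that proper convexity of $K$ is most cleanly argued from maximality of the $i_0$-dimensional leaves rather than from antipodal pairs in $\clo(\torb)$.
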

\begin{proof} 
These follow from Section 1.4 of \cite{ChCh}. (See also \cite{GV}.) 
\end{proof} 

We denote by $\mathcal{F}$ the foliations on $\Sigma_{\tilde E}$ or the corresponding one in $\tilde \Sigma_{\tilde E}$. 

\subsubsection{The main eigenvalue estimations}

We denote by $\bGamma_{\tilde E}$ the p-end fundamental group acting on $U$ fixing $\bv_{\tilde E}$. 
Denote the induced foliations on $\Sigma_{\tilde E}$ and $\tilde \Sigma_{\tilde E}$ by ${\mathcal F}_{\tilde E}$.
For each element $g \in \bGamma_{\tilde E} $, we define $\leng_K(g)$ to 
be $\inf \{ d_K(x, g(x))| x \in K^o \}$.

\begin{definition}\label{defn:jordan}
Given an eigenvalue $\lambda$ of an element $g \in \SLnp$, 
a $\bC$-eigenvector $\vec v$ is a nonzero vector in 
\[\bR E_{\lambda}(g) := \bR^{n+1} \cap (\ker (g - \lambda I) + \ker (g - \bar \lambda I)), \lambda \ne 0, {\mathrm{Im}} \lambda \geq 0\]
A $\bC$-fixed point is a direction of $\bC$-eigenvector. 

Any element of $g$ has a Jordan decomposition. An irreducible Jordan-block corresponds to a unique subspace
in $\bC^{n+1}$, called an {\em elementary Jordan subspace}. We denote by $J_{\mu, i} \subset \bC^{n+1}$ 
for an eigenvalue $\mu \in \bC$
for $i$ in an index set.
A {\em real elementary Jordan subspace} is defined as 
\[ R_{\mu, i} := \bR^{n+1} \cap (J_{\mu, i} + J_{\bar \mu, i}),
\mu \ne 0, {\mathrm{Im}} \mu \geq 0\]
of Jordan subspaces with 
 $\ovl{J_{\mu, i}} = J_{\bar \mu, i}$ in $\bC^{n+1}.$
We define the  {\em real sum} of elementary 
Jordan-block subspaces is defined to be 
\[\bigoplus_{i \in I}R_{\mu, i}\]
for a finite collection $I$.

A point $[\vec v], \vec v \in \bR^{n+1},$ is {\em affiliated} with a norm $\mu$ of an eigenvalue if 
$\vec v \in \bigoplus_{|\lambda| = \mu, i \in I_{\lambda}} R_{\lambda, i}$ 
for a sum of all real elementary Jordan subspaces $R_{\lambda, i}$, $\mu = |\lambda|$. 
\end{definition}

Let $V^{i+1}_\infty$ denote the subspace of $\bR^{n+1}$ corresponding 
to $\SI^i_\infty$. 
By invariance of $\SI^i_\infty$, 
if \[\oplus_{(\mu, i) \in J} R_{\mu, i} \cap V^{i+1}_\infty \ne \emp\] for some finite collection $J$, 
then $\oplus_{(\mu, i) \in J} R_{\mu, i} \cap V^{i+1}_\infty$ always contains a $\bC$-eigenvector. 

\begin{definition}\label{defn:eig} 
Let $\Sigma_{\tilde E}$ be the end orbifold of a nonproperly convex and p-R-end $\tilde E$ of a 
strongly tame properly convex $n$-orbifold $\orb$ with radial or totally geodesic ends. Let $\bGamma_{\tilde E}$ 
be the p-end fundamental group. 
We fix a choice of a Jordan decomposition of $g$ for each $g \in \bGamma_{\tilde E}$. 
\begin{itemize}
\item Let $\lambda_1(g)$ denote the largest norm of the eigenvalue of $g \in \bGamma_{\tilde E}$ affiliated
with $\vec{v} \ne 0$, $[\vec{v}] \in \SI^n - \SI^{i_0}_\infty$,
i.e., \[\vec v \in \bigoplus_{(\mu_1(g), i) \in J} R_{\mu_1(g), i} -  V^{i_0+1}_\infty, |\mu_1| = \lambda_1(g)\]
where $J$ indexes all elementary Jordan subspaces of $\lambda_1(g)$.
\item Also, let $\lambda_{n+1}(g)$ denote the smallest one affiliated with a nonzero vector $\vec v$, $[\vec{v}] \in \SI^n - \SI^{i_0}_\infty$,
i.e., \[\vec v \in \bigoplus_{(\mu_1(g), i) \in  J'} R_{\mu_{n+1}(g), i} - V^{i_0+1}_\infty, |\mu_{n+1}|=\lambda_{n+1}(g)\]
where $J'$ indexes all real elementary Jordan subspaces of $\lambda_{n+1}(g)$.
\item Let $\lambda(g)$ be the largest of the norm of the eigenvalue of $g$ with a $\bC$-eigenvector $\vec v$, $[\vec v] \in \SI^{i_0}_\infty$
and $\lambda'(g)$ the smallest such one. 
\end{itemize} 
\end{definition}

Then for some $l' \geq 1$, $\bGamma_{\tilde E}$ is isomorphic to 
$\bZ^{l'-1} \times \bGamma_1 \times \cdots \times \bGamma_{l'} $ up to finite index
where each $\bGamma_i$ acts irreducibly on $K_i$ for each $i = 1, \dots, r$. 
We assume that $\bGamma_i$ is torsion-free hyperbolic for $i=1, \dots, s$ and 
$\bGamma_i = \{\Idd\}$ for $s+1 \leq  i \leq l'$.  
We will use the notation $\bGamma_i$ the corresponding subgroup in $\bGamma_{\tilde E}$. 

Suppose that $K$ has a decomposition into $K_1 * \cdots * K_{l_0}$ for properly convex domains 
$K_i$, $i =1, \dots, l_0$. 
Let $K_i, i=1, \dots, s$, be the ones with dimension $\geq 2$.
$N_K$ is virtually isomorphic to the product \[ \bZ^{l_0-1} \times \Gamma_1 \times \dots \times \Gamma_{s}\]
where $\Gamma_i$ is obtained from $N_K$ by restricting to $K_i$ and 
$A$ is a free abelian group of finite rank. 

$\bGamma_i \cap N$ is a normal subgroup of $\bGamma_i$. 
If $\bGamma_i$ is hyperbolic, then this group has to be trivial. 
Therefore, we obtain that each $\bGamma_i$ for $i = 1, \dots, s$, is mapped isomorphic to some $\Gamma_i$ 
in $N_K$ provided $\bGamma_i$ is hyperbolic. Thus, each $K_i$ is a strictly convex domain or a point
by the results in \cite{Ben1}. 


The following definition generalizes Definition \ref{defn:umec}. 
The two definitions can be stated in the same manner but we avoid doing so here. 
\begin{definition} \label{defn:NPCC}
We also assume that the {\em uniform middle-eigenvalue condition} relative to $N$: 
\[N_K \cong \bZ^l \times \Gamma_1 \times \dots \times \Gamma_k, l \geq k-1\] acts on
\[\clo(K) = p_{1} \ast \cdots \ast p_{l-k-1} \ast K_1 \ast \cdots \ast K_k \]
where $\Gamma_i$ is a hyperbolic group acting on properly convex domain $K_i$ for each $i$, $i=1, \dots, k$,
 and each $p_j$ is a singleton for $j=1, \dots l-k-1$ with following conditions.
 Let $\hat K_i$ denote the subspace spanned by $\Pi_K^{-1}(K_i) \cup \SI^{i_0}_\infty$. 
\begin{itemize}
\item there exists a constant $C >0$ independent of $g \in \bGamma_{\tilde E}$ such that
\begin{equation} \label{eqn:mec}
C^{-1}  \leng_{K}(g) \leq \log \frac{\bar \lambda(g)}{\lambda_{\bv_{\tilde E}}(g)} \leq C \leng_{K}(g)
\end{equation}
for $\bar \lambda(g)$ equal to 
\begin{itemize}
\item the largest norm of the eigenvalues of $g$ which must occur for a fixed point of $\hat K_i$ if $g \in \Gamma_i$
\end{itemize} 
and the eigenvalue $\lambda_{\bv_{\tilde E}}(g)$ of $g$ at $\bv_{\tilde E}$.

\end{itemize}



If we require only $\bar \lambda(g) \geq \lambda_{\bv_{\tilde E}}(g)$ for $g \in \bGamma_{\tilde E}$, 
and the uniform middle eigenvalue condition for each hyperbolic group $\bGamma_i$, 
then we say that $\bGamma_{\tilde E}$ satisfies the {\em weakly uniform middle-eigenvalue conditions}. 
\end{definition}
This definition is simply an extension of one for 
properly convex end ones since we could have used
the degenerate metric on $\tilde \Sigma_{\tilde E}$ 
based on cross-ratios and the definition would agree. 
In fact, for complete ends, the definition agrees by 
Proposition \ref{prop:affinehoro} and Theorem \ref{thm:comphoro}. 









The following proposition is very important in this part III showing that 
$\lambda_1(g)$ and $\lambda_{n+1}(g)$ are true largest and smallest norms of the eigenvalues of $g$. 
We will sharpen the following to inequality in the discrete and indiscrete cases.  
\begin{proposition}\label{prop:eigSI}  
Let $\Sigma_{\tilde E}$ be the end orbifold of a nonproperly convex p-R-end $\tilde E$ of a strongly tame 
properly convex $n$-orbifold $\orb$ with radial or totally geodesic ends. 
Suppose that $\torb$ in $\SI^n$ (resp. $\bR P^n$) 
covers $\orb$ as a universal cover. 
Let $\bGamma_{\tilde E}$ be the p-end fundamental group satisfying the weak uniform middle-eigenvalue condition.
Let $g \in \bGamma_{\tilde E}$. 
Then
\[\lambda_1(g) \geq \lambda(g) \geq  \lambda'(g) \geq \lambda_{n+1}(g)\]
holds. 
\end{proposition}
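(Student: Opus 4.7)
My plan is to argue by contradiction, establishing $\lambda_1(g) \geq \lambda(g)$ by assuming $\lambda(g) > \lambda_1(g)$ and deriving a contradiction with proper convexity of $\torb$; the companion inequality $\lambda'(g) \geq \lambda_{n+1}(g)$ then follows by the same argument applied to $g^{-1}$, using that $\lambda(g^{-1}) = 1/\lambda'(g)$ and $\lambda_1(g^{-1}) = 1/\lambda_{n+1}(g)$ (and the roles of $V^{i_0+1}_\infty$ and its quotient direction interchange accordingly).

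For the setup, I would use the $g$-invariance of $V^{i_0+1}_\infty$ to track which real elementary Jordan subspaces $R_{\mu,i}$ of $g$ lie entirely inside $V^{i_0+1}_\infty$ (contributing to the ``inner'' eigenvalues $\lambda'(g), \lambda(g)$) and which have a vector projecting nontrivially to the quotient $\bR^{n+1}/V^{i_0+1}_\infty$ (contributing to the ``outer'' eigenvalues $\lambda_{n+1}(g), \lambda_1(g)$). Under the contradiction hypothesis, the largest norm eigenvalue of $g$ on all of $\bR^{n+1}$ equals $\lambda(g)$, and the direct sum $E$ of real Jordan subspaces with that eigenvalue norm is contained in $V^{i_0+1}_\infty$.

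Next I would choose a generic point $x = [\vec v] \in \torb$ so that the component $\vec v_E \in E$ is nonzero (a dense open condition, since $\torb$ is open and the complement of the hyperplane of vectors with trivial $E$-component has nonempty intersection with $\torb$). A comparison of leading growth rates, with polynomial corrections from Jordan blocks absorbed into the exponential $\lambda(g)^n$, gives $\|g^n \vec v - g^n \vec v_E\|/\|g^n \vec v_E\| \to 0$, so $[g^n \vec v]$ accumulates in $[E] \subset \SI^{i_0}_\infty$. Any accumulation point $p$ satisfies $p \in \SI^{i_0}_\infty \cap \clo(\torb)$. A symmetric analysis on the backward iteration, in the regime $\lambda'(g) < \lambda_{n+1}(g)$, produces an accumulation $q \in \SI^{i_0}_\infty \cap \clo(\torb)$ in the smallest-norm inner eigendirection. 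By choosing the decomposition so that $\vec v_E$ and its counterpart for the smallest inner eigenvalue lie in antipodal real Jordan subspaces, $p$ and $q$ become antipodal, yielding a great segment in $\clo(\torb)$ that contradicts proper convexity.

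The subtlety lies in the case when only one of the two strict inequalities actually occurs. Here the weak uniform middle eigenvalue condition supplies the missing coupling: $\bar\lambda(g) \geq \lambda_{\bv_{\tilde E}}(g)$, where $\bar\lambda(g)$ arises from the quotient action $\hat g \in N_K$ on $K$ and hence lies in the outer range $[\lambda_{n+1}(g), \lambda_1(g)]$, while $\lambda_{\bv_{\tilde E}}(g)$, being the eigenvalue at $\bv_{\tilde E} \in \SI^{i_0}_\infty$, lies in the inner range $[\lambda'(g), \lambda(g)]$. Applying this condition to both $g$ and $g^{-1}$, together with the limit-point construction above, would force \emph{both} inequalities of the proposition to hold simultaneously. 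The hardest part will be making the antipodal-pair construction rigorous: ensuring the forward and backward accumulation points truly lie on a common great segment in $\SI^{i_0}_\infty$ rather than merely at two distinct points, which will require the careful generic choice of $\vec v$ together with an appeal to the real Jordan structure and, if necessary, passing to high powers $g^k$ to sharpen the dynamics.
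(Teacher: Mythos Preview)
Your overall contradiction strategy is right, and your observation that under $\lambda(g)>\lambda_1(g)$ the top real Jordan sum $E$ lies in $V^{i_0+1}_\infty$ matches the paper. The gap is in how you manufacture the antipodal pair.

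Your plan is to iterate forward to get $p\in[E]$ and backward to get $q$ in the $\lambda'(g)$-eigenspace, then argue $p$ and $q$ are antipodal. They are not: $p$ lives in the sphere of the largest-norm Jordan block and $q$ in the sphere of the smallest-norm one, and these are transverse subspheres of $\SI^{i_0}_\infty$, not antipodal points. A single accumulation point in $\SI^{i_0}_\infty\cap\clo(\torb)$ is harmless (indeed $\bv_{\tilde E}$ already sits there), and two such points only give a segment in $\clo(\torb)$, not a great segment. Your phrase ``antipodal real Jordan subspaces'' has no meaning here, and the difficulty you flag at the end is not a detail but the whole point.

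The paper produces the antipodal pair from \emph{forward iteration alone}. Let $\SI^j\subset\SI^{i_0}_\infty$ be the sphere of $R_{\lambda(g)}$ and $C_1$ its complementary invariant sphere (the span of all Jordan blocks of strictly smaller norm). One first uses the weak middle-eigenvalue condition, not to ``couple'' the two inequalities as you suggest, but to rule out $\lambda(g)=\lambda_{\bv_{\tilde E}}(g)$; this forces $\bv_{\tilde E}\in C_1$, gives the dimension bound $j+1\le i_0$, and via the leaf structure of $\tilde\Sigma_{\tilde E}$ guarantees that a small ball $B'\subset U$ meets $C_1$. Now pick $[w]\in C_1\cap B'$ and $[v]\in\SI^j$, and set $w_1=[w+\eps v]$, $w_2=[w-\eps v]\in B'$. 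For any $[u]=[u_1+u_2]$ with $[u_1]\in C_1$, $[u_2]\in\SI^j$, one has $\|g^k u_1\|/\|g^k u_2\|\to 0$, so $g^k([u])$ tracks $g^k([u_2])$. Applied to $w_1,w_2$, a subsequence $g^{k_i}(w_1)$ and $g^{k_i}(w_2)$ converge to an antipodal pair in $\clo(U)$, contradicting proper convexity. The inequality $\lambda'(g)\ge\lambda_{n+1}(g)$ then follows by running the same argument for $g^{-1}$; the two inequalities are proved independently, so your worry about ``only one failing'' is a red herring.
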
 
\begin{proof}
We may assume that $g$ is of infinite order. 
Suppose that $\lambda(g) > \lambda_1(g)$. 
We have $\lambda(g) \geq  \lambda_{\bv_{\tilde E}}(g)$,
and $\lambda(g)$ is the largest norm of the eigenvalues of $g$. 
If $\lambda(g) = \lambda_{\bv_{\tilde E}}(g)$, then $\lambda_{\bv_{\tilde E}}(g) > \lambda_1(g)$ contradicts 
the weak uniform middle-eigenvalue condition. Thus, $\lambda(g) > \lambda_{\bv_{\tilde E}}(g) .$

Now, $\lambda_1(g) < \lambda(g)$ implies that
\[R_{\lambda(g)}:= \bigoplus_J R_{\mu, i}(g),  |\mu|=\lambda(g)\]
is a subspace of $V^{i_0+1}_\infty$ and corresponds to a great sphere $\SI^j$. 
Hence, a great sphere $\SI^j$, $j \geq 0$, in $\SI^{i_0}_\infty$ is disjoint from 
$\{\bv_{\tilde E}, \bv_{\tilde E-}\}.$ 
Since $\bv_{\tilde E} \in \SI^{i_0}_\infty$ is not contained in $\SI^j$, we obtain $j+1 \leq i_0$.

There exists a great sphere $C_1$ disjoint from $\SI^j$ 
whose vector space $V_1$ corresponds the real sum of Jordan-block subspaces where 
$g$ has strictly smaller norm eigenvalues and is complementary to $R_{\lambda(g)}$. 
Then $C_1$ contains $\bv_{\tilde E}$
and $C_1$ is of complementary dimension to $S^j$, i.e., $\dim C_1 = n - j-1$. 


Since $C_1$ is complementary to $\SI^j \subset \SI^{i_0}_\infty$, a complementary subspace 
$C'_1$ to $\SI^{i_0}_\infty$ of dimension $n-i_0-1$ is in $C_1$. 
Considering the sphere $\SI^{n-1}_{\bv_{\tilde E}}$ at $\bv_{\tilde E}$, 
it follows that $C'_1$ goes to a $n-i_0-1$-dimensional subspace $C''_1$ in 
$\SI^{n-1}_{\bv_{\tilde E}}$ disjoint from $\partial l$ for any complete affine leaf $l$. 
Each complete affine leaf $l$ of $\tilde \Sigma_{\tilde E}$ has the dimension $i_0$ 
and meets $C''_1$ in $\SI^{n-1}_{\bv_{\tilde E}}$ by the vector space consideration. 


Hence, a small ball $B'$ in $U$ meets $C_1$. 
\begin{align}
& \hbox{For any } [v] \in B', v \in \bR^{n+1}, v = v_1 + v_2 \hbox{ where } [v_1] \in C_1 \hbox{ and } [v_2] \in \SI^j. \nonumber \\
& \hbox{We obtain } g^k([v]) = [ g^k(v_1) + g^k(v_2)].
\end{align}
By the real Jordan decomposition consideration, 
the action of $g^k$ as $k \ra \infty$ makes the former vectors very small compared to the latter ones, i.e., 
\[ ||g^k(v_1)||/||g^k(v_2)|| \ra 0 \hbox{ as } k \ra \infty.\]
Hence, $g^k([v])$ converges to the limit of $g^k([v_2])$ if it exists. 

Now choose $[w]$ in $C_1 \cap B'$ and $v, [v]\in \SI^j$. 
We let $w_1 = [w + \eps v]$ and $w_2 =[w -\eps v]$ in $B'$ for small $\eps'> 0$. 
Choose a subsequence $\{k_i\}$ so that $g^{k_i}(w_1)$ converges to a point of $\SI^n$. 
The above estimations shows that $\{g^{k_i}(w_1)\}$ and $\{g^{k_i}(w_2)\}$ converge to 
an antipodal pair of points in $\clo(U)$ respectively. 
This contradicts the proper convexity of $U$ 
as $g^k(B'') \subset U$ and the geometric limit is in $\clo(U)$. 

Also the consideration of $g^{-1}$ completes the inequality.

\end{proof}



\section{The discrete case } \label{sec:discrete}

Now, we will be working on projective sphere $\SI^n$ only for while. 
Suppose that the semisimple quotient group $N_K$ is a discrete subgroup of $\Aut(\SI^{n-i_0-1})$, 
which is a much simpler case to start. 
(Actually $N_K$ virtually equals the group 
\[\bZ^{l_0-1} \times \Gamma_1 \times \cdots \times \Gamma_{l_0}\] 
since each factor $\Gamma_i$ commutes with the other factors 
and acts trivially on $K_j$ for $j \ne i$ as was shown in the proof of Theorem \ref{thm:redtot}
and $N_K$ acts cocompactly on $K$.)

We have a corresponding fibration 
\begin{eqnarray}
l/N & \ra & \tilde \Sigma_{\tilde E}/\bGamma_{\tilde E} \nonumber \\ 
     &       & \downarrow \nonumber \\ 
   &         & K^o/N_K 
\end{eqnarray}
where the fiber and the quotients are compact orbifolds
since $\Sigma_{\tilde E}$ is compact. 

Since $N$ acts on each leaf $l$ of ${\mathcal F}_{\tilde E}$ in $\tilde \Sigma_{\tilde E}$, 
it also acts on a properly convex domain $\torb$ and $\bv_{\tilde E}$ in a subspace $\SI^{i_0+1}_l$ in $\SI^n$ 
corresponding to $l$. $l/N \times \bR$ is an open real projective orbifold diffeomorphic to 
$(H^{i_0+1}_l \cap \torb)/N$
for an open hemisphere $H^{i_0+1}_l$ corresponding to $l$. 
As above, Proposition \ref{prop:affinehoro} and Theorem \ref{thm:comphoro} show that 
\begin{itemize} 
\item $l$ corresponds to a horospherical end of $(\SI^{i_0+1}_l \cap \torb)/N$
and 
\item $N$ is virtually unipotent and $N$ is virtually a cocompact subgroup of 
a unipotent group, conjugate to a parabolic subgroup of
$\SO(i_0+1, 1)$ in $\Aut(\SI^{i_0+1}_l)$ 
and acting on an ellipsoid of dimension $i_0$ in $H^{i_0+1}_l$. 
\end{itemize} 


By Malcev, the Zariski closure $Z(N)$ of $N$ is a virtually nilpotent Lie group with finitely many components 
and  $Z(N)/N$ is compact. 
Let $\CN$ denote the identity component of the Zariski closure of $N$
so that $\CN/(\CN \cap N)$ is compact.
$\CN \cap N$ acts on the great sphere $\SI^{i_0+1}_l$ containing $\bv_{\tilde E}$ and corresponding to $l$.
Since $N$ acts on a horoball in $\SI^{i_0+1}_l$ and $\CN/\CN\cap N$ is compact, we can modify the horoball 
to be invariant under $\CN$ by taking the convex hull of images of it under $\CN$.

Let $V^{i_0+1}$ denote the subspace corresponding to $\SI^{i_0}_\infty$ containing $\bv_{\tilde E}$
and $V^{i_0+2}$ the subspace corresponding to $\SI^{i_0+1}_l$.
We choose the coordinate system so that 
\[\bv_{\tilde E} = \underbrace{[0, \cdots, 0, 1]}_{n+1}\]
and points of $V^{i_0+1}$ and those of $V^{i_0+2}$ respectively correspond to 
\[ \overbrace{[0, \dots, 0}^{n-i_0}, \ast, \cdots, \ast], \quad \overbrace{[0, \dots, 0}^{n-i_0-1}, \ast, \cdots, \ast].\]
We can write each element  $g \in \CN$ as an $(n+1)\times (n+1)$-matrix
\begin{equation} \label{eqn:nilmat}
 \left( \begin{array}{ccc}                \Idd_{n-i_0-1} &  0 &   0 \\ 
                                                       \vec{0}      & 1  &  0 \\
                                                     C_g          & *   & U_g 
                                    \end{array} \right)
 \end{equation}
where $C_g > 0$ is an $(i_0+1)\times (n-i_0-1)$-matrix, 
$U_g$ is a unipotent $(i_0+1) \times (i_0+1)$-matrix, 
$0$ indicates various zero row or column vectors, 
$\vec{0}$ denotes the zero row-vector of dimension $n-i_0-1$, and 
$\Idd_{n-i_0-1}$ is  the $(n-i_0-1) \times (n-i_0-1)$ identity-matrix. 
This follows since $g$ acts trivially on $\bR^{n+1}/V^{i_0+1}$ 
and $g$ acts as a unipotent matrix on the subspace $V^{i_0+2}$.  
We choose an arbitrary point $w \in \SI^{i_0+1}_l \cap U$
and choose the coordinates so that  
\[ w= [\underbrace{\overbrace{0, \dots, 1}^{n-i_0}, \dots, 0]}_{n+1},\]
and one can think of $w$ as the origin of the affine space $A$ so that 
$U \subset A$ and $\SI^{i_0}_\infty \subset \Bd A$.


Since $\CN$ is abelian and acts on an ellipsoid with a complete Euclidean metric of dimension $i$, 
each element of $\CN$ is a translation in some affine coordinates of $\tilde \Sigma_{\tilde E}$. 
Considering the great $2$-sphere 
containing the translation subspace and $\bv_{\tilde E}$ as a coordinate subspace, 
we write an element of $\CN$ 
on a coordinate system of $\SI^{i_0+1}$ as 
\begin{equation} \label{eqn:generator}
\left( \begin{array}{ccc}                
                                                         1 &  \vec{0}  & 0 \\
                                                        \vec{v}^T  & \Idd_{i_0-1} & \vec{0}^T \\ 
                                                \frac{||\vec{v}||^2}{2}  &  \vec{v}       & 1 
                                    \end{array} \right) \hbox{ for } \vec{v} \in \bR^{i_0}. 
 \end{equation}
 (see \cite{CM2} for details.)
 We can make each translation direction of generators of $\CN$ in $\tilde \Sigma_{\tilde E}$ to be one of the standard vector. 
Therefore, we can find 
a coordinate system of $V^{i_0+2}$ so that the generators are of $(i_0+2) \times (i_0+2)$-matrix forms 
\begin{equation} \label{eqn:generator2}
\hat \CN_j :=  \left( \begin{array}{cccc}                1 &   \vec{0} & 0 \\ 
                                                        \vec{e}^T_j  & \Idd_{i_0}    & 0 \\
                                                       \frac{1}{2} &  \vec{e}_j     & 1 
                                    \end{array} \right)
 \end{equation}
 where $(\vec{e}_{j})_{k} = \delta_{jk}$ a row $i$-vector for $j=1, \dots, i$.  
Hence, the generator $\CN_j$ of $\CN$ is of form 
\begin{equation} \label{eqn:nilmat2}
\CN_j:= \left( \begin{array}{cccc}         \Idd_{n-i_0-1} & 0 &  0 &0 \\ 
                                                       \vec{0}  & 1  &  0  & 0\\
                                                      C_j    & \vec{e}_j^T & \Idd & 0 \\ 
                                                      c'_j    & \frac{1}{2} & \vec{e}_j & 1 
                                    \end{array} \right)
 \end{equation}
where we used coordinates so that $N'_j$ is a lower-triangular form in an $(i_0+1)\times (i_0+1)$-matrix
and $v_j$ is a column vector of dimension $i_0+1$ and is not all zero and $C_j$ is an $(n-i_0-1)\times i_0$-matrix
and $c'_j$ is an $(n-i_0-1)\times 1$-matrix. 
(We remark that $N \cap \CN := \CN(L)$ for a lattice $L$ in $\bR^{i_0}$.) 

Let $g$ be an element of $\Gamma$ mapping to a nontrivial element of $N_K$ 
also acting on $\SI^{i_0}_\infty$. Let $g' = g| \SI^{i_0}_\infty$ and let $U_g$ denote the corresponding 
matrix for $V^{i_0+1}$. Then $U_g N' U_g^{-1}$ is still an element of $N'$ restricted to $V^{i_0+1}$. 
Thus, $U_g$ belongs to a normalizer of the restriction of $N'$ in $V^{i_0+1}$. 
The matrix of $g$ can be written as 
\begin{equation} \label{eqn:gamma}
\left( \begin{array}{cc} S_g          &   0 \\
                                    C_g        & U_g 
                                \end{array} \right)
 \end{equation}
where $U_g$ is an $(i_0+1)\times (i_0+1)$ normalizing matrix and $S_g$ is an $(n-i_0) \times (n-i_0)$ semisimple matrix 
and $C_g$ is an $(n-i_0) \times (i_0+1)$-matrix. 
We call the subgroup 
\[\left\{\frac{1}{|\det(S_g)|^{\frac{1}{n-i_0}}} S_g| g \in h(\pi_1(\tilde E)) \right\} \subset \SL_\pm(n-i_0, \bR)\] the semisimple part of 
$h(\pi_1(\tilde E))$ since it acts on a compact convex subset $K$ discretely and properly discontinuously on a properly convex domain $K^o$
with a compact quotient $K^o/h(\pi_1(\tilde E))$ and has to be semisimple by the main results of \cite{Ben1}.


\begin{example} \label{exmp:joined} 
Let us consider two ends $E_1$, a generalized lens-type radial one, 
with the p-end-neighborhood $U_1$ in the universal cover of a real projective orbifold $\mathcal{O}_1$ in $\SI^{n-i_0-1}$
of dimension $n-i_0-1$ with the p-end vertex $\bv_1$, 
and $E_2$  the p-end-neighborhood $U_2$ , a horospherical type one, in the universal cover
of a real projective orbifold $\mathcal{O}_2$ of dimension $i_0+1$
with the p-end vertex $\bv_2$. 
\begin{itemize} 
\item Let $\bGamma_1$ denote the projective automorphism group in $\Aut(\SI^{n-i_0-1})$ acting on $U_1$ 
corresponding to $E_1$. 
\item We assume that $\bGamma_1$ acts on a great sphere $\SI^{n-i_0-2} \subset \SI^{n-i_0-1}$ disjoint from $\bv_1$.
There exists a properly convex open domain $K'$ in $\SI^{n-i_0-2}$ where $\bGamma_1$ acts cocompactly
but not necessarily freely. 
We change $U_1$ to be the interior of the join of $K'$ and $\bv_1$. 
\item Let $\bGamma_2$ denote the one in $\Aut(\SI^{i_0+1})$ acting on $U_2$
unipotently and hence it is a cusp action. 
\item We embed $\SI^{n-i_0-1}$ and $\SI^{i_0+1}$ in $\SI^n$ meeting tranversally at $\bv = \bv_1 = \bv_2$. 
\item We embed $U_2$ in $\SI^{i_0+1}$ and $\bGamma_2$ in $\Aut(\SI^n)$ fixing each point of 
$\SI^{n-i_0-1}$. 
\item We can embed $U_1$ in $\SI^{n-i_0-1}$ and $\bGamma_1$ in $\Aut(\SI^n)$ acting 
on the embedded $U_1$ so that 
$\bGamma_1$ acts on $\SI^{i_0-1}$ normalizing $\bGamma_2$ and acting on $U_1$. 
One can find some such embeddings by finding an arbitrary 
homomorphism $\rho: \bGamma_1 \ra N(\bGamma_2)$ for a normalizer $N(\bGamma_2)$ of 
$\bGamma_2$ in $\Aut(\SI^n)$. 
\end{itemize}

We find an element $\zeta \in \Aut(\SI^n)$ fixing each point of $\SI^{n-i_0-2}$ and 
acting on $\SI^{i_0+1}$ as an unipotent element normalizing $\bGamma_2$ 
so that the corresponding matrix has only two norms of eigenvalues. 
Then $\zeta$ centralizes $\bGamma_1| \SI^{n-i_0-2}$ and normalizes $\bGamma_2$. 
Let $U$ be the join of $U_1$ and $U_2$, a properly convex domain. 
$U/\langle \bGamma_1, \bGamma_2, \zeta \rangle$ gives us a p-R-end of dimension $n$
diffeomorphic to $\Sigma_{E_1} \times \Sigma_{E_2} \times \SI^1 \times \bR$
and the transversal real projective manifold is diffeomorphic to $\Sigma_{E_1} \times \Sigma_{E_2} \times \SI^1$. 
We call the results the {\em joined} end and the joined end-neighborhoods. 
Those ends with end-neighborhoods finitely covered by these are also called 
{\em joined} end. The generated group $\langle \bGamma_1, \bGamma_2, \zeta \rangle$ is 
called a {\em joined group}. 

Now we generalize this construction slightly: 
Suppose that $\bGamma_1$ and $\bGamma_2$ are Lie groups and they have compact stabilizers at points of $U_1$ and $U_2$ respectively, 
and we have a parameter of $\zeta^t$ for $ t\in \bR$ centralizing $\bGamma_1| \SI^{n-i_0-2}$ and 
normalizing $\bGamma_2$ and restricting to a unipotent action on $\SI^{i_0}$ acting on $U_2$. 
The other conditions remain the same. We obtain 
a {\em joined homogeneous action} of the semisimple and cusp actions. 
Let $U$ be the properly convex open subset obtained as above as a join of 
$U_1$ and $U_2$.  Let $G$ denote the constructed Lie group by taking the embeddings of 
$\bGamma_1$ and $\bGamma_2$ as above. $G$ also has a compact stabilizers on $U$. 
Given a discrete cocompact subgroup of $G$, we obtained a p-end-neighborhood of a {\em joined p-end}
by taking the quotient of $U$. Those ends with end-neighborhoods finitely covered by such a one 
are also called a {\em joined end}. 

Later we will show this case cannot occur. 
We will generalize this construction to quasi-joined end. 
Here, $\Gamma_2$ is not required to act on $U_2$. 
\end{example}

\begin{example}\label{exmp:nonexmp}
Let $N$ be as in equation \ref{eqn:nilmat}.
In fact, we let $C_1 =0$ to simplify arguments and let $N$ be a nilpotent group in 
conjugate to $\SO(i_0+1, 1)$ acting on an $i_0$-dimensional ellipsoid in $\SI^{i_0+1}$. 

We find a closed properly convex real projective 
orbifold $\Sigma$ of dimension $n-i_0-2$ and find a homomorphism from $\pi_1(\Sigma)$
to a subgroup of $\Aut(\SI^{i_0+1})$ normalizing $N$ or even $N$ itself. 
(We will use  a trivial one to begin with. )
Using this, we obtain a group $\Gamma$ so that 
\[ 1 \ra N \ra \Gamma \ra \pi_1(\Sigma) \ra 1. \] 
Actually, we assume that this is ``split'', 
i.e., $\pi_1(\Sigma)$ acts trivially on $N$.

We now consider an example where $i_0 = 1$. 
Let $N$ be $1$-dimensional and be generated by $N_1$ as in Equation \ref{eqn:nilmat2}. 
\renewcommand{\arraystretch}{1.2}
\begin{equation} \label{eqn:nilmat3}
\newcommand*{\temp}{\multicolumn{1}{r|}{}}
N_1:= \left( \begin{array}{ccccc}         \Idd_{n-i_0-1} & 0 & \temp &  0 & 0 \\ 
                                                       \vec{0}           & 1  & \temp &  0  & 0\\
                                                       \cline{1-5}
                                                       \vec{0}         & 1 & \temp & 1   & 0 \\
                                                       \vec{0}       &\frac{1}{2} & \temp & 1  & 1 
                                    \end{array} \right)
 \end{equation}
 where $i_0 = 1$ and we set $C_1=0$.

We take a discrete faithful proximal 
representation $\tilde h: \pi_1(\Sigma) \ra \GL(n-i_0, \bR)$
acting on a convex cone $C_\Sigma$ in $\bR^{n-i_0}$.
We define $h: \pi_1(\Sigma) \ra \GL(n+1, \bR)$ by matrices
\begin{equation} \label{eqn:gammaJp}
h(g):= \left( \begin{array}{ccc}  \tilde h(g)         & 0                          & 0\\ 
                                                 \vec{d}_1(g)    & a_1(g)           & 0 \\
                                                 \vec{d}_2(g)    & c(g)  & \lambda_{\bv_{\tilde E}}(g)    
                                                               \end{array} \right)
\end{equation}
where $\vec{d}_1(g)$ and $\vec{d}_2(g)$ are $n-i_0$-vectors
and $g \mapsto \lambda_{\bv_{\tilde E}}(g)$ is a homomorphism as defined above
for the p-end vertex and $\det \tilde h(g) a_1(g) \lambda_{\bv_{\tilde E}}(g) = 1$.
 \begin{equation} \label{eqn:gammaJpi}
h(g^{-1}):= \left( \begin{array}{cc}  \tilde h(g)^{-1}         &  \begin{array}{cc} 0  & 0 \end{array}  \\ 
                             -   \left(   \begin{array}{cc} \frac{\vec{d}_1(g)}{a_1(g)} \\ \frac{-c(g)\vec{d}_1(g)}{a_1(g) \lambda_{\bv_{\tilde E}}(g)} 
                                + \frac{\vec{d}_2(g)}{\lambda_{\bv_{\tilde E}}(g)}
                                                                 \end{array}   \right) \tilde h(g)^{-1} 

                                          & \begin{array}{cc} \frac{1}{a_1(g)} & 0 \\ 
                                          \frac{-c(g)}{a_1(g) \lambda_{\bv_{\tilde E}}(g)} &  \frac{1}{\lambda_{\bv_{\tilde E}}(g)} \end{array} \\ 
                         
                                                               \end{array} \right).
\end{equation}
 
Then the conjugation of $N_1$ by $h(g)$ gives us 
 \begin{equation} \label{eqn:nilmat4}
 \left( \begin{array}{cc}         \Idd_{n-i_0}  & \begin{array}{cc} 0  & 0 \end{array} \\ 
                                                      \left(\begin{array}{cc} \vec{0} & a_1(g) \\ \vec{\ast} & \ast \end{array} \right) \tilde h(g)^{-1}&
                                                      \begin{array}{cc} 1   & 0 \\ \frac{\lambda_{\bv_{\tilde E}}(g)}{a_1(g)}  & 1 \end{array} 
                                                      \end{array}
                                                      \right).
 \end{equation}
 Our condition on the form of $N_1$ shows that 
 $(0,0,\dots,0,1)$ has to be a common eigenvector by $\tilde h(\pi_1(\tilde E))$
 and we also assume that $a_1(g) = \lambda_{\bv_{\tilde E}}(g)$ for the reasons to be justified later
 and the last row of $\tilde h(g)$ equals 
 $(\vec{0}, \lambda_{\bv_{\tilde E}}(g))$. Thus, the semisimple part of $h(\pi_1(\tilde E))$ is reducible. 

Some further computations show that 
we can take any $h: \pi_1(\tilde E) \ra \SL(n-i_0, \bR)$ with matrices of form 
\renewcommand{\arraystretch}{1.2}
\begin{equation} \label{eqn:nilmat5}
\newcommand*{\temp}{\multicolumn{1}{r|}{}}
h(g):= \left( \begin{array}{ccccc}   S_{n-i_0-1}(g) & 0 & \temp &  0 & 0 \\ 
                                                       \vec{0}           & \lambda_{\bv_{\tilde E}}(g)  &   \temp &  0  & 0\\
                                                        \cline{1-5}
                                                       \vec{0}         & 0 & \temp & \lambda_{\bv_{\tilde E}}(g)   & 0 \\
                                                       \vec{0}          & 0 & \temp & 0  & \lambda_{\bv_{\tilde E}}(g) 
                                    \end{array} \right)
 \end{equation}
 for $g \in \pi_1(\tilde E) - N$ by a choice of coordinates by the semisimple property of the $(n-i_0) \times (n-i_0)$-upper left part of $h(g)$. 
 (Of course, these are not all example we wish to consider but we will modify later to quasi-joined ends.)
 
 Since $\tilde h(\pi_1(\tilde E))$ has a common eigenvector, 
 Benoist \cite{Ben2} shows that the open convex domain $K$ 
 that is the image of $\Pi_K$ in this case is decomposable and 
 $N_K = N'_K \times \bZ$ for another subgroup $N'_1$ 
and the image of the homomorphism $g \in N'_K \ra S_{n-i_0-1}(g)$ can be assumed to give 
a discrete projective automorphism group acting properly discontinuously on a properly convex subset 
$K'$ in $\SI^{n-i_0-2}$ with a compact quotient. 
Furthermore the generator of $\bZ$ is central. 

Let $\mathcal{E}$ be the one-dimensional ellipsoid where lower right $3\times 3$-matrix of $N_K$ acts on. 
From this, the end is of the join form 
$K^{\prime o}/N'_K \times \SI^1 \times {\mathcal{E}}/\bZ$ by taking a double cover if necessary 
and $\pi_1(\tilde E)$ is isomorphic to $N'_K \times \bZ \times \bZ$
up to taking an index two subgroups. 
(In this case, $N_K$ centralizes $\bZ \subset N'_K$
and the second $\bZ$ is in the centralizer of $\Gamma$. )

We can think of this as the join of $K^{\prime o}/N'_K$ with $\mathcal{E}/\bZ$ as
$K'$ and $\mathcal{E}$ are on disjoint complementary projective spaces
of respective dimensions $n-3$ and $2$ to be denoted $S(K')$ and $S(\mathcal{E})$ respectively. 


\end{example}


\subsection{Some preliminary results } 

For results in this subsection, we do not use a discreteness assumption on the semisimple quotient group $N_K$. 

\subsubsection{The standard quadric in $\bR^{i_0+1}$ and the group acting on it.} \label{subsub:quadric}

Let us consider an affine subspace $A^{i_0+1}$ of $\SI^{i_0+1}$ with 
coordinates $x_0, x_1, \dots, x_{i_0+1}$ given by $x_0 > 0$.
The standard quadric in $A^{i_0+1}$ is given by 
\[ x_{i_0+1} = x_1^2 + \cdots + x_{i_0}^2. \] 
Clearly the group of the orthogonal maps $O(i_0)$ acting on the planes given by $x_{i_0+1} = const$ acts on
the quadric also. Also, the matrices of the form 
\[
\left(
\begin{array}{ccc}
 1                                 & 0                    & 0  \\
 \vec{v}^T                    & \Idd_{i_0}        & 0   \\
\frac{||\vec{v}||^2}{2}  & \vec{v}   & 1  
\end{array}
\right)
\]
induce and preserve the quadric. 
They are called the standard cusp group. 

The group of affine transformations that acts on the quadric is exactly the Lie group
generated by the cusp group and $O(i_0)$. The action is transitive and each of the stabilizer is a conjugate of
$O(i_0)$ by elements of the cusp group. 

The proof of this fact is simply that the such an affine transformation is conjugate to an element a parabolic group in
the $i_0+1$-dimensional complete hyperbolic space $H$ where the ideal fixed point is identified with 
$[0, \dots, 0, 1] \in \SI^{i_0+1}$ and with $\Bd H$ tangent to $\Bd A^{i_0}$.

\subsubsection{Technical lemmas}


Using coordinates as in the previous subsection, 
recall $\CN_j$ from Equation \ref{eqn:nilmat2}.
For $\vec{v} \in \bR^{i_0}$, we define 
\renewcommand{\arraystretch}{1.2}
\begin{equation} \label{eqn:nilmatstd}
\newcommand*{\temp}{\multicolumn{1}{r|}{}}
\CN(\vec{v}):= \left( \begin{array}{ccccccc}         \Idd_{n-i_0-1} & 0 &\temp &  0 & 0& \dots & 0 \\ 
                                                       \vec{0}           & 1  &\temp &  0  & 0&  \dots & 0\\
                                                       \cline{1-7}
                                                       \vec{c}_1(\vec{v})    & {v}_1 &\temp & 1   & 0 &  \dots & 0 \\
                                                       \vec{c}_2(\vec{v})   & {v}_2 &\temp & 0   & 1 & \dots & 0\\
                                                       \vdots   & \vdots &\temp & \vdots & \vdots & \ddots & \vdots \\
                                                       \vec{c}_{i_0+1}(\vec{v})  & \frac{1}{2}||\vec{v}||^2& \temp & {v}_1 & v_2 & \dots  & 1
                                                        
                                    \end{array} \right)
 \end{equation}
where $||v||$ is the norm of $\vec{v} = (v_1, \cdots, v_i) \in \bR^{i_0}$. 
The elements of our nilpotent group $\CN$ are of this form since $\CN(\vec{v})$ is the product  
$\prod_{j=1}^{i_0} \CN(e_j)^{v_j}$.  
By the way we defined this, 
for each $k$, $k=1, \dots, i_0$, $\vec{c}_k:\bR^{i_0} \ra \bR^{n-i_0-1}$ are  linear functions of $\vec{v}$ 
defined as $\vec{c}_k(\vec{v}) = \sum_{j=1}^{i_0} \vec c_{kj} v_j$ for $\vec{v} = (v_1, v_2, \dots, v_{i_0})$
so that we form a group. (We do not need the property of $\vec{c}_{i_0+1}$ at the moment.)

We denote by $C_1(\vec{v})$ the $(n-i_0-1) \times i_0$-matrix given by 
the matrix with rows $\vec{c}_j(\vec{v})$ for $j= 1, \dots, i_0$
and by $c_2(\vec{v})$ the row $(n-i_0-1)$-vector $\vec{c}_{i_0+1}(\vec{v})$. 
The lower-right $(i_0+2)\times (i_0+2)$-matrix is form is called the {\em standard cusp matrix form}.

\begin{hypothesis} \label{h:norm}
The assumptions for this subsection are as follows: 
\begin{itemize} 
\item We assume that $\CN$ acts on an NPCC
p-R-end-neighborhood of $\bv_{\tilde E}$ 
and acts on each hemisphere with boundary $\SI^i_{\infty}$, and fixes
$\bv_{\tilde E} \in \SI^i_\infty$. 
\item The p-end fundamental group $\bGamma_{\tilde E}$ normalizes 
$\CN$. 
\end{itemize}
\end{hypothesis}

Since $\SI^{i_0}_\infty$ is invariant, 
$g$, $g\in \Gamma$, is of {\em standard} form 
\renewcommand{\arraystretch}{1.2}
\begin{equation}\label{eqn:matstd}
\newcommand*{\temp}{\multicolumn{1}{r|}{}}
\left( \begin{array}{ccccccc} 
S(g) & \temp & s_1(g) & \temp & 0 & \temp & 0 \\ 
 \cline{1-7}
s_2(g) &\temp & a_1(g) &\temp & 0 &\temp & 0 \\ 
 \cline{1-7}
C_1(g) &\temp & a_4(g) &\temp & A_5(g) &\temp & a_6(g) \\ 
 \cline{1-7}
c_2(g) &\temp & a_7(g) &\temp & a_8(g) &\temp & a_9(g) 
\end{array} 
\right)
\end{equation}
where $S(g)$ is an $(n-i_0-1)\times (n-i_0-1)$-matrix
and $s_1(g)$ is an $(n-i_0-1)$-column vector, 
$s_2(g)$ and $c_2(g)$ are $(n-i_0-1)$-row vectors, 
$C_1(g)$ is an $i_0\times (n-i_0-1)$-matrix, 
$a_4(g)$ and $a_6(g)$ are  $i_0$-column vectors, 
$A_5(g)$ is an $i_0\times i_0$-matrix, 
$a_8(g)$ is an $i_0$-row vector, 
and $a_1(g), a_7(g)$, and $ a_9(g)$ are scalars. 
(We show $a_6(g) = 0$ for any standard form of $g$ soon.)

Denote 
\[ \hat S(g) =
\left( \begin{array}{cc} 
S(g) & s_1(g)\\ 
s_2(g) & a_1(g)
\end{array} \right). \]

\begin{lemma}\label{lem:bdhoro} 
Assume Hypothesis \ref{h:norm}. 
Let $l'$ in $\Bd K$ be a fixed point $l'$ of $\hat S(g)$ for an infinite order $g$, $g \in \bGamma_{\tilde E}$. 
Suppose that the leaf $l'$ corresponds to a hemisphere $H^{i_0+1}_{l'}$ where we assume
\[(\SI^{i_0+1}_{l'} - \SI^{i_0+1}_\infty) \cap \clo(U) \ne \emp.\] 
Then $\CN$ acts on the open ball $U_{l'}$ in $\clo(U)$ bounded by an ellipsoid in 
a component of $H^{i_0+1}_{l'} - \SI^{i_0+1}_\infty$. 
\end{lemma}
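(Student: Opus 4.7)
The plan is to establish $\CN$-invariance of $H^{i_0+1}_{l'}$, then verify that in suitable affine coordinates on $H^{i_0+1}_{l'}$ the restriction $\CN|_{H^{i_0+1}_{l'}}$ is exactly the standard cusp group of Section~\ref{subsub:quadric}, and finally obtain $U_{l'}$ as the horoball bounded by the $\CN$-orbit of a single point of $\clo(U)\cap H^{i_0+1}_{l'}$.

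For invariance, Theorem~\ref{thm:folaff} shows that elements of $N$, and hence of $\CN$, induce the identity on the quotient sphere $\SI^{n-i_0-1}$, so each leaf $l \in \clo(K)$ together with its associated hemisphere $H^{i_0+1}_l$ (containing $\SI^{i_0}_\infty$) is $\CN$-preserved; in particular so is $H^{i_0+1}_{l'}$. Next, I would inspect the matrix form (\ref{eqn:nilmatstd}) restricted to the $(i_0+2)$-dimensional linear subspace of $\bR^{n+1}$ whose projectivization contains $H^{i_0+1}_{l'}$. Freezing the first $n-i_0-1$ coordinates at the values $\vec p_{l'}$ determining $l'$, the cross terms $\vec c_k(\vec v)\cdot\vec p_{l'}$ become $\vec v$-dependent affine shifts of the remaining coordinates; a single projective conjugation corresponding to a translation of origin on $H^{i_0+1}_{l'}$ absorbs these shifts, bringing the lower-right $(i_0+2)\times(i_0+2)$ block of $\CN(\vec v)$ into the standard cusp form. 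Consequently, in the resulting affine chart, $\CN|_{H^{i_0+1}_{l'}}$ acts as the standard cusp Lie group, fixing $\bv_{\tilde E}$ on $\partial H^{i_0+1}_{l'}$.

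Now pick a point $x$ from the nonempty set $\clo(U)\cap (\SI^{i_0+1}_{l'} - \SI^{i_0+1}_\infty)$. Its $\CN$-orbit lies in $\clo(U)\cap H^{i_0+1}_{l'}$ by $\CN$-invariance of $\clo(U)$, and by the description in Section~\ref{subsub:quadric} this orbit is precisely an ellipsoid $E_x$ through $x$, tangent to $\partial H^{i_0+1}_{l'}$ at $\bv_{\tilde E}$. Let $U_{l'}$ be the open horoball in $H^{i_0+1}_{l'}$ bounded by $E_x$. By convexity of $\clo(U)$ and $E_x \subset \clo(U)$, we deduce $\clo(U_{l'})\subset \clo(U)$, and $\CN$ acts on $U_{l'}$ since it preserves both $E_x$ and $\bv_{\tilde E}$.

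The main technical obstacle is verifying that the normalized restriction is the full-rank standard cusp group, equivalently that the linear map $\vec v \mapsto \CN(\vec v)|_{H^{i_0+1}_{l'}}$ remains injective after the translation of origin. A rank drop would make $\CN\cdot x$ a proper subvariety of a horosphere rather than a full ellipsoid, and such degeneration must be excluded using the hypothesis that $\clo(U)$ meets $H^{i_0+1}_{l'}$ away from $\SI^{i_0+1}_\infty$, together with the proper convexity of $U$ and the analyses of Proposition~\ref{prop:affinehoro} and Proposition~\ref{prop:eigSI}.
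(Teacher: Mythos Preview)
Your overall strategy is sound and matches the paper's, but step 2 contains a genuine gap that your final paragraph only partially acknowledges. When you restrict $\CN(\vec v)$ to $V^{i_0+2}_{l'}$ using the basis $\{\vec p_{l'}, e_{n-i_0+1},\dots,e_{n+1}\}$, the resulting $(i_0+2)\times(i_0+2)$ matrix has the shape
\[
\left(\begin{array}{ccc}
1 & 0 & 0\\
L\vec v^{\,T} & \Idd_{i_0} & 0\\
\kappa(\vec v) & \vec v & 1
\end{array}\right),
\]
where $L:\bR^{i_0}\to\bR^{i_0}$ is the linear map $(L\vec v)_j=\vec c_j(\vec v)\cdot\vec p_{l'}'+p_{n-i_0}v_j$ and $\kappa$ has both a linear and a quadratic part. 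The cross terms $\vec c_k(\vec v)\cdot\vec p_{l'}$ enter as a $\vec v$-linear contribution to $L$, not as a constant shift, so a ``translation of origin'' cannot remove them: any conjugation by a unipotent matrix with $\Idd_{i_0}$ in the middle block leaves $L$ untouched. Bringing this to the standard cusp form requires a \emph{linear} change of the middle $i_0$ coordinates by some $A$ with $A^{-1}A^{-1,T}=L$, and that demands two facts you have not established: that $L$ is nonsingular, and that $L$ is symmetric.

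The paper supplies exactly these two arguments. Nonsingularity comes from proper convexity: if $\vec v_0\in\ker L$, then the orbit of any point under $\{\CN(t\vec v_0)\}_{t\in\bR}$ traces a curve whose projection to the middle coordinates is constant while the last coordinate is quadratic in $t$, forcing a full affine line into $\clo(U)\cap H^{i_0+1}_{l'}$ and contradicting proper convexity of $\torb$. Symmetry comes from the abelian group law $\CN(\vec v)\CN(\vec w)=\CN(\vec v+\vec w)$: comparing $(3,1)$-entries yields $\vec v L\vec w^{\,T}=\vec w L\vec v^{\,T}$. Only after these steps can one conjugate by $\mathrm{diag}(1,A,1)$ to get $L=\Idd$, and then by a further $(3,2)$-shear to kill the linear part of $\kappa$. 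Your appeal to Propositions~\ref{prop:affinehoro} and~\ref{prop:eigSI} does not substitute for this; those results concern global eigenvalue behaviour, not the local linear-algebraic structure of the restricted action.
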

\begin{proof} 
The existence of the hemisphere is clear since $\SI^{n-i_0-1}$ is considered 
the space of $(i_0+1)$-dimensional hemispheres. 

Only one component
$\SI^{i_0+1}_{l'} - \SI^{i_0+1}_\infty$ meets $\clo(U)$ since otherwise, 
$K$ contains a pair of antipodal points. 
Since $g$ is semisimple in $\SI^{n-i_0-1}$ and the fixed point has a distinct eigenvalue 
from those of $\SI^{i_0+1}$, we have a corresponding fixed point in
a component above.

Let $A_{l'}$ denote this component. Let $J_{l'}:= A_{l'} \cap \clo(U) \ne \emp$. 
Each $g \in \CN$ then has the form in $H_{l'}^{i_0+1}$ as 
\[
\left(
\begin{array}{ccc}
1              & 0         &  0 \\
L(\vec{v}^T)  & \Idd_{i_0}  & 0  \\
\kappa(\vec{v})  & \vec{v}  & 1  
\end{array}
\right)
\]
since the $\SI^{i_0}_\infty$-part, i.e., the last $i_0+1$ coordinates, is not changed from one for 
equation \ref{eqn:nilmatstd}
where $L: \bR^{i_0} \ra \bR^{i_0}$ is a linear map. The linearity of $L$ is the consequence of the group property. 
$\kappa: \bR^{i_0} \ra \bR$ is some function. We consider $L$ as an $i_0\times i_0$-matrix. 

If there exists a kernel $K_1$ of $L$, then we use $t\vec{v} \in K_1-\{O\}$ and 
as $t \ra \infty$, we can show that $\CN(J_{l'})$ cannot be properly convex. 

Also, since $\CN$ is abelian, the computations shows that $\vec v L \vec w^T = \vec w L \vec v^T$
for every pair of vectors $\vec v$ and $\vec w$ in $\bR^{i_0}$. 
Thus, $L$ is a symmetric matrix. 

We may obtain new coordinates $x_{n-i_0+1}, \dots, x_n$ by taking 
linear combinations of these. 
Since $L$ hence is nonsingular, we can find 
new coordinates $x_{n-i_0+1}, \dots, x_n$ so that $\CN$ is now of standard form: 
We conjugate $\CN$ by 
\[
\left(
\begin{array}{ccc}
1              & 0         &  0 \\
0 & A  & 0  \\
0 &  0 & 1  
\end{array}
\right)
\]
for nonsingular $A$. 
We obtain
\[
\left(
\begin{array}{ccc}
1              & 0         &  0 \\
AL\vec{v}^T  & \Idd_{i_0}  & 0  \\
\kappa(\vec{v})  & \vec{v} A^{-1}  & 1  
\end{array}
\right).
\]
We thus need to solve for $A^{-1} A^{-1 T} = L$, which can be done. 

We can factorize each element of $\CN$ into forms 
\[
\left(
\begin{array}{ccc}
1              & 0         &  0 \\
0  & \Idd_{i_0}  & 0  \\
\kappa(\vec{v}) - \frac{||\vec{v}||^2}{2}  & 0  & 1  
\end{array}
\right)
\left(
\begin{array}{ccc}
1              & 0         &  0 \\
\vec{v}^T  & \Idd_{i_0}  & 0  \\
 \frac{||\vec{v}||^2}{2}   & \vec{v}  & 1  
\end{array}
\right).
\]
Again, by the group property, $\alpha_7(\vec{v}) : = \kappa(\vec{v}) - \frac{||\vec{v}||^2}{2}$ 
gives us a linear function $\alpha_7: \bR^{i_0} \ra \bR$. 
Hence $\alpha_7(\vec{v}) = \kappa_\alpha \cdot \vec{v}$
for $\kappa_\alpha \in \bR^{i_0}$. 
Now, we conjugate $\CN$ by 
the matrix 
\[
\left(
\begin{array}{ccc}
 1 & 0  & 0  \\
 0 & \Idd_{i_0}  & 0  \\
0  & -\kappa_\alpha  & 1  
\end{array}
\right)
\]
and this will put $\CN$ into the standard form. 

Now it is clear that the orbit of $\CN(x_0)$ for a point $x_0$ of $J_{l'}$ is an ellipsoid with a point removed. 
as we can conjugate so that the first column entries from the second one to the $(i_0+1)$-th one equals those of the last row.
Since $\clo(U)$ is $\CN$-invariant, we obtain that $\CN(x_0) \subset J_{l'}$.

\end{proof} 


\begin{lemma}\label{lem:eigSI2} 
For $g \in \bGamma_{\tilde E} - \CN$ going to an infinite order element in $N_K$, 
we have $\lambda_1(g) \geq \lambda(g)$ and 
$\SI^{i_0+1}_{l'} - \SI^{i_0}_\infty$ contains an accumulation point of 
 $\{g^j(x)\}_{j \in \bZ}$  for $x \in U$, and 
$l'$ is a leaf corresponding  to an attracting or repelling fixed point of $\Pi^*_K(g)$.
Thus, we have \[(\SI^{i_0+1}_{l'} - \SI^{i_0}_\infty) \cap \clo(U) \ne \emp.\]
\end{lemma}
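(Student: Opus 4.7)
The proof will have two parts: first, establishing $\lambda_1(g) \geq \lambda(g)$, and second, producing an accumulation point of $\{g^j(x)\}$ in $\SI^{i_0+1}_{l'} - \SI^{i_0}_\infty$ for $l'$ a leaf lying over an attracting or repelling fixed point of $\Pi^*_K(g)$. The first inequality is an immediate application of Proposition \ref{prop:eigSI}, since the weak uniform middle-eigenvalue condition is in force throughout Part III and the standing hypotheses of that proposition apply verbatim.

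For the second part, I would proceed as follows. Since $g$ projects to an infinite-order element of $N_K \subset \Aut(K)$ and $K$ is a properly convex domain in $\SI^{n-i_0-1}$, the image $\Pi^*_K(g)$ is bi-semiproximal by the Benoist theory of automorphisms of properly convex divisible domains. Let $a_+$ (respectively $a_-$) denote an attracting (respectively repelling) fixed point of $\Pi^*_K(g)$ in $\clo(K)$, and let $l_+, l_-$ be the corresponding leaves of $\mathcal{F}_{\tilde E}$. Next, fix $x = [v] \in U$ and decompose $v = v_1 + v_2$ via the real Jordan decomposition of $g$: let $v_1$ be the component in the sum of real elementary Jordan subspaces of $g$ associated to eigenvalues of norm $\lambda_1(g)$ (which, by the defining choice of $\lambda_1$, contribute at least one direction outside $V^{i_0+1}_\infty$), and $v_2$ the complementary component. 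Because $\lambda_1(g) \geq \lambda(g)$, the asymptotic ratio $||g^j v_1||/||g^j v_2||$ does not tend to zero, so any subsequential limit point $x_\infty$ of $\{g^j(x)\}$ has a nonzero component outside $V^{i_0+1}_\infty$, which means $x_\infty \in \SI^n - \SI^{i_0}_\infty$.

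To identify the projection of $x_\infty$, I would use the equivariance $\Pi_K \circ g = \Pi^*_K(g) \circ \Pi_K$ on $\SI^n - \SI^{i_0}_\infty$: one has $\Pi_K(g^j(x)) = \Pi^*_K(g)^j(\Pi_K(x))$, and the right-hand side accumulates only at attracting fixed points of $\Pi^*_K(g)$ in $\clo(K)$ as $j \to +\infty$ by standard projective dynamics on properly convex domains. Hence $\Pi_K(x_\infty) = a_+$, which by the definition of $l_+$ places $x_\infty \in \SI^{i_0+1}_{l_+} - \SI^{i_0}_\infty$. Applying the same argument with $g^{-1}$ in place of $g$ yields the conclusion for $l_-$. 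Since $g^j(x) \in U$ for every $j$, we have $x_\infty \in \clo(U)$, whence $(\SI^{i_0+1}_{l'} - \SI^{i_0}_\infty) \cap \clo(U) \ne \emp$ for $l' \in \{l_+, l_-\}$.

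The main obstacle is the step where one must rule out the possibility that, despite the inequality $\lambda_1(g) \geq \lambda(g)$, the nilpotent parts of the Jordan blocks produce polynomial-in-$j$ factors that drive $g^j(v_1)$ into $V^{i_0+1}_\infty$ faster than $\lambda(g)^j$ grows on $v_2$. Handling this requires a careful bookkeeping of both the dominant exponential $\lambda_1(g)^j$-growth and the subdominant polynomial factors from the Jordan structure, and comparing them against the parallel expansion in $\SI^{i_0}_\infty$; this is the analogue for the NPCC setting of the proper-convexity arguments used in Proposition \ref{prop:eigSI} and Theorem \ref{thm:attracting}, but carried out relative to the projection $\Pi_K$ rather than on a tube.
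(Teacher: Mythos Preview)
Your first part and the strict-inequality case $\lambda_1(g)>\lambda(g)$ are essentially what the paper does; the equivariance argument via $\Pi_K$ is a clean way to pin down that the limit lies over the attracting fixed point $l'$.

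The gap is exactly where you flag it, and it is not a mere bookkeeping issue. In the equality case $\lambda_1(g)=\lambda(g)$, the $\lambda_1$-generalized eigenspace $R_{\lambda_1}$ meets $V^{i_0+1}_\infty$ nontrivially (that is what $\lambda(g)=\lambda_1(g)$ means). Your decomposition $v=v_1+v_2$ with $v_2$ in the strictly smaller-norm part does give $||g^jv_1||/||g^jv_2||\to\infty$, but it says nothing about where $[g^jv_1]$ accumulates \emph{inside} $R_{\lambda_1}$. If the largest Jordan block in $R_{\lambda_1}$ has its eigenvector in $V^{i_0+1}_\infty$ (which is not excluded, since $V^{i_0+1}_\infty$ is $g$-invariant and hence inherits sub-blocks), then for generic $v_1$ the direction $[g^jv_1]$ converges precisely to a point of $\SI^{i_0}_\infty$, and your conclusion ``$x_\infty$ has a nonzero component outside $V^{i_0+1}_\infty$'' fails. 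So the projection argument via $\Pi_K$ cannot even be started, because continuity of $\Pi_K$ is lost at the limit.

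The paper does not try to control this Jordan-block competition. Instead it argues by contradiction: if every accumulation point of $\{g^j(x)\}_{j\ge0}$ lay in $\SI^{i_0}_\infty$, then one such point $y$ is a fixed point of $g$ with eigenvalue of norm $\lambda(g)>\lambda_{\bv_{\tilde E}}(g)$ (using the weak middle-eigenvalue condition, passing to $g^{-1}$ if needed), so $y\ne\bv_{\tilde E},\bv_{\tilde E-}$. Now bring in $\CN$: from the explicit lower-triangular form of $\CN|\SI^{i_0}_\infty$ one computes that the convex hull of the orbit $\CN(y)$ is not properly convex for any $y\in\SI^{i_0}_\infty\setminus\{\bv_{\tilde E},\bv_{\tilde E-}\}$. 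Since $y\in\clo(U)\subset\clo(\torb)$ and $\CN$ preserves $\clo(\torb)$, this contradicts the proper convexity of $\torb$. This use of the cusp group $\CN$ is the missing idea; the Jordan-block analysis alone does not suffice.
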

\begin{proof}
By Proposition \ref{prop:eigSI},  we have $\lambda_1(g) \geq \lambda(g)$. 
Suppose that $\lambda_1(g) = \lambda(g)$. 
We can assume that $\lambda(g)$ is distinct from $\lambda_{\bv_{\tilde E}}$ by 
the weakly uniform middle-eigenvalue condition and taking $g^{-1}$ instead of $g$ if necessary.  
Let $l'$ denote the fixed point of $g$ in $K$ and hence the corresponding 
sphere $\SI^{i_0+1}_{l'}$ exists in $\SI^n$. 

Suppose that there is no point in 
$\SI^{i_0+1}_{l'} - \SI^{i_0}_\infty$ 
where $\{g^j(x)| j \geq 0 \}$ accumulates. 
Then we obtain a fixed point $y$ of 
$g$ in $\SI^{i_0}_\infty$ corresponding to $\lambda(g)$ where
$\{g^j(x)| j \geq 0 \}$ accumulates to $y \in \SI^{i_0}_\infty$ for $x \in U$ and 
$y$ is distinct from $\bv_{\tilde E}$ as $\lambda_1(g) > \lambda_{\bv_{\tilde E}}(g)$ 
by the weak uniform middle-eigenvalue condition. 
The convex hull of $\CN(y)$ in $\clo(\torb)$ is not properly convex in $\SI^{i_0}_\infty$ as we can prove
from the matrix form of $\CN| \SI^{i_0}_\infty$ of form of equation \ref{eqn:nilmatstd}
and some computations as 
$y \ne \bv_{\tilde E}, \bv_{\tilde E -}$.  
Since this is a subset of $\clo(U)$, we obtained a contradiction to the proper convexity of $\clo(\torb)$. 

Hence, there is a point $y$ of  $\SI^{i_0+1}_{l'} - \SI^{i_0}_\infty$  where $\{g^j(x)| j \geq 0 \}$ accumulates. 

Suppose now that $\lambda_1(g) > \lambda(g)$. 
In this case, $\{g^j(x)| j \geq 0 \}$, $x \in U$, 
accumulates to a point $y$ in $\SI^{i_0+1}_{l'} - \SI^{i_0}_\infty$. 
\end{proof}

Let $a_5(g)$ denote $| \det(A^5_g)|^{\frac{1}{i_0}}$.
Define $\mu_g:= \frac{a_5(g)}{a_1(g)} = \frac{a_9(g)}{a_5(g)}$ for $g \in \bGamma_{\tilde E}$.

\begin{lemma} \label{lem:similarity}
Assume Hypothesis \ref{h:norm}. 
Let $\CN$ be an $i_0$-dimensional nilpotent Lie group with elements of form of equation \ref{eqn:nilmatstd}
and $K^o/N_K$ compact. 
Then 
any element $g \in \bGamma_{\tilde E}$ of form of 
the equation \ref{eqn:matstd} normalizing $\CN$ 
and acting on $\SI^{i_0}_\infty$ induces an $(i_0\times i_0)$-matrix $M_g$ given by
\[g \CN(\vec{v}) g^{-1} = \CN(\vec{v}M_g) \hbox{ where } \] 
\[M_g = \frac{1}{a_1(g)} (A_5(g))^{-1} = \mu_g O_5(g)^{-1} \]
for $O_5(g)$ in a compact Lie group $G_{\tilde E}$, and 
the following hold. 
\begin{itemize} 
\item $(a_5(g))^2 = a_1(g) a_9(g)$ or equivalently $\frac{a_5(g)}{a_1(g)}= \frac{a_1(g)}{a_5(g)}$.
\item Finally, $a_1(g), a_5(g),$ and $a_9(g)$ are all nonzero and $a_6(g) =0$.
\end{itemize}  
This is true as long as $g$ is in the standard form. 
\end{lemma}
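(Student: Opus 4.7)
The strategy is a direct block-matrix computation that reduces the problem to the $(i_0+2)\times(i_0+2)$ cusp sub-block at rows and columns $n-i_0,\dots,n+1$. First, since $\bv_{\tilde E}=[0,\dots,0,1]$ is fixed by every element of $\bGamma_{\tilde E}$, the last column of $g$ must be proportional to $\bv_{\tilde E}$, which immediately forces $a_6(g)=0$ (and identifies $a_9(g)$ as $\lambda_{\bv_{\tilde E}}(g)$). Combined with the vanishing of the upper-right $(n-i_0)\times(i_0+1)$ block of $g$ (invariance of $V^{i_0+1}$ from equation \ref{eqn:matstd}), the conjugation $g\CN(\vec v)g^{-1}$ is completely determined on its cusp sub-block by the block-lower-triangular matrix with diagonal blocks $a_1(g),A_5(g)$, and $a_9(g)$.

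I would then carry out the block conjugation explicitly and demand that the result still have the standard form $\CN(\vec w)$ for some $\vec w\in\bR^{i_0}$. Reading off the entry that replaces the column $\vec v^T$ yields $\vec w^T=A_5(g)\vec v^T/a_1(g)$, while reading off the entry that replaces the row $\vec v$ yields $\vec w=a_9(g)\vec v A_5(g)^{-1}$. Requiring these two expressions to agree for every $\vec v\in\bR^{i_0}$ gives the key relation
\[
A_5(g)^T A_5(g)=a_1(g)\,a_9(g)\,\Idd_{i_0}.
\]
Positive semidefiniteness of the left-hand side forces $a_1(g)a_9(g)>0$; nonsingularity of $A_5(g)$ (needed so that $g\CN g^{-1}=\CN$ is surjective, not merely contained) then forces $a_1(g),a_5(g),a_9(g)$ to be nonzero. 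Taking determinants gives $|\det A_5(g)|^2=(a_1(g)a_9(g))^{i_0}$, so the definition $a_5(g)=|\det A_5(g)|^{1/i_0}$ yields $a_5(g)^2=a_1(g)a_9(g)$. Setting $O_5(g):=A_5(g)/a_5(g)$ then produces an element of the compact Lie group $O(i_0)$, which serves as $G_{\tilde E}$, and substitution into either expression for $\vec w$ gives $M_g=\mu_g O_5(g)^{-1}$ with $\mu_g=a_5(g)/a_1(g)=a_9(g)/a_5(g)$.

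The computation itself is mechanical once the block structure is laid out; the main conceptual point is that the constraint $A_5^T A_5=a_1a_9\,\Idd$ arises from simultaneously matching both the $\vec v^T$ column and the $\vec v$ row in the conjugated matrix, and this is precisely the matrix manifestation of the geometric fact recalled in Section \ref{subsub:quadric}: the affine normalizer of the standard cusp group on the paraboloid is generated by the cusp group, by $O(i_0)$, and by the natural dilatations along the paraboloid, so any normalizing element of $g$ decomposes projectively as a cusp element times an orthogonal rotation $O_5(g)$ times a scaling $\mu_g$. The only additional care needed is in tracking the quadratic entry $\|\vec v\|^2/2$ through the conjugation; this produces compatible constraints on the lower-left entries $a_4(g),a_7(g),a_8(g),C_1(g),c_2(g)$ of $g$ but does not affect the stated conclusions, and consistency follows automatically from $O_5(g)\in O(i_0)$.
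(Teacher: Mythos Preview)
Your argument for $a_6(g)=0$ via the fact that $g$ fixes $\bv_{\tilde E}=[0,\dots,0,1]$ is correct and cleaner than the paper's derivation from the quadratic term. Your reading of the $(4,3)$-block is also right and gives the clean relation $\vec w=a_9(g)\,\vec v\,A_5(g)^{-1}$, hence one half of the formula for $M_g$.

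The gap is in the other half. When you read off ``the entry that replaces the column $\vec v^T$'' (the $(3,2)$-block of $g\CN(\vec v)=\CN(\vec w)g$), the right-hand side is not $a_1(g)\vec w^T+a_4(g)$ but
\[
C_1(\vec w)\,s_1(g)+a_1(g)\vec w^T+a_4(g),
\]
because the third block-row of $\CN(\vec w)$ has the nonzero entry $C_1(\vec w)$ in its first block-column, and the second block-column of $g$ has $s_1(g)$ in its first block-row. Neither $s_1(g)=0$ nor $C_1\equiv 0$ is part of the hypotheses at this stage (both are established only later, in Lemma~\ref{lem:conedecomp1}). So the relation $A_5(g)^TA_5(g)=a_1(g)a_9(g)\Idd$ does not follow in the given coordinate system, and you cannot conclude $O_5(g)\in O(i_0)$ directly.

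The paper circumvents this by a per-element maneuver: for each fixed $g$ it first changes the top $(n-i_0)$ coordinates so that $s_1(g)=0$ (using that $\hat S(g)$ is semisimple), then invokes Lemmas~\ref{lem:bdhoro} and \ref{lem:eigSI2} to re-standardize $\CN$ on a suitable $\SI^{i_0+1}_{l'}$ without disturbing $s_1(g)=0$. In those $g$-dependent coordinates your computation goes through and shows that $A_5(g)/a_5(g)$ is conjugate to an element of $O(i_0)$. Since the conjugation depends on $g$, one only knows each $A_5(g)/a_5(g)$ is individually conjugate to an orthogonal matrix; the paper then appeals to Lemma~\ref{lem:cpt} (a closed linear group whose elements are each conjugate to orthogonal is compact) to produce the common compact group $G_{\tilde E}$. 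This is why the statement asserts only $O_5(g)\in G_{\tilde E}$ for a compact $G_{\tilde E}$, not $O_5(g)\in O(i_0)$ in the original coordinates.
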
 
\begin{proof}
Since the conjugation by $g$ sends elements of $\CN$ to itself in a one-to-one manner, 
the correspondence between the set of $\vec{v}$ for $\CN$ and $\vec{v'}$ is one-to-one.

Since we have $g \CN({\vec{v}}) = \CN({\vec{v}'}) g$ for vectors $\vec{v}$ and $\vec{v'}$ 
in $\bR^{i_0}$ by Hypothesis \ref{h:norm},
we consider
\begin{equation} \label{eqn:firstm}
\newcommand*{\temp}{\multicolumn{1}{r|}{}}
\left( \begin{array}{ccccccc} 
S(g) & \temp & s_1(g) & \temp & 0 & \temp & 0 \\ 
 \cline{1-7}
s_2(g) &\temp & a_1(g) &\temp & 0 &\temp & 0 \\ 
 \cline{1-7}
C_1(g) &\temp & a_4(g) &\temp & A_5(g) &\temp & a_6(g) \\ 
 \cline{1-7}
c_2(g) &\temp & a_7(g) &\temp & a_8(g) &\temp & a_9(g) 
\end{array} 
\right)
\left( \begin{array}{ccccccc} 
\Idd_{n-i_0-1} & \temp & 0 & \temp & 0 & \temp & 0 \\ 
 \cline{1-7}
0 &\temp & 1 &\temp & 0 &\temp & 0 \\ 
 \cline{1-7}
C_1({\vec{v}}) &\temp & \vec{v}^T &\temp & \Idd_{i_0} &\temp & 0 \\ 
 \cline{1-7}
c_2({\vec{v}}) &\temp & \frac{||\vec{v}||^2}{2} &\temp & \vec{v} &\temp & 1 
\end{array} 
\right)
\end{equation}
where $C_1({\vec{v}})$ is an $(n-i_0-1)\times i_0$-matrix where
each row is a linear function of $\vec{v}$, 
$c_2({\vec{v}})$ is a $(n-i_0-1)$-row vector,
$\vec{v}$ is an $i_0$-row vector, and $s$ is a scalar. 
This must equal the following matrix
for some $\vec{v'}\in \bR$
\begin{equation} \label{eqn:secondm}
\newcommand*{\temp}{\multicolumn{1}{r|}{}}
\left( \begin{array}{ccccccc} 
\Idd_{n-i_0-1} & \temp & 0 & \temp & 0 & \temp & 0 \\ 
 \cline{1-7}
0 &\temp & 1 &\temp & 0 &\temp & 0 \\ 
 \cline{1-7}
C_1({\vec{v'}}) &\temp & \vec{v'}^T &\temp & \Idd_{i_0} &\temp & 0 \\ 
 \cline{1-7}
c_2({\vec{v'}}) &\temp & \frac{||\vec{v'}||^2 }{2} &\temp & \vec{v'} &\temp & 1 
\end{array} 
\right)
\left( \begin{array}{ccccccc} 
S(g) & \temp & s_1(g) & \temp & 0 & \temp & 0 \\ 
 \cline{1-7}
s_2(g) &\temp & a_1(g) &\temp & 0 &\temp & 0 \\ 
 \cline{1-7}
C_1(g) &\temp & a_4(g) &\temp & A_5(g) &\temp & a_6(g) \\ 
 \cline{1-7}
c_2(g) &\temp & a_7(g) &\temp & a_8(g) &\temp & a_9(g) 
\end{array} 
\right).
\end{equation}
From equation \ref{eqn:firstm}, 
we compute the $(4, 3)$-block of the result 
to be $a_8(g) + a_9(g) \vec{v}$. 
From Equation \ref{eqn:secondm}, 
the $(4, 3)$-block is
$\vec{v'} A_5(g) + a_8(g)$. We obtain the relation
$a_9(g) \vec{v} = \vec{v'} A_5(g)$ for every $\vec{v}$. 
Since the correspondence between $\vec{v}$ and $\vec{v'}$ is one-to-one, 
we obtain 
\begin{equation}\label{eqn:vp1}
\vec{v'} = a_9(g) \vec{v} (A_5(g))^{-1}
\end{equation}
for the $i_0\times i_0$-matrix $A_5(g)$ and we also infer $a_9(g) \ne 0$
and $\det(A_5(g)) \ne 0$. 
The $(3, 2)$-block of the result of Equation \ref{eqn:firstm} 
equals 
\[a_4(g) + A_5(g) \vec{v}^T + \frac{1}{2} ||\vec{v}||^2 a_6(g).\]  
The $(3, 2)$-block of the result of Equation \ref{eqn:secondm} 
equals
\begin{equation}
C_1({\vec{v}'}) s_1(g) + a_1(g) \vec{v}^{\prime T} + a_4(g). 
\end{equation}
Since the $||v||^2$-term is only term that is quadratic, we obtain $a_6(g)=0$ 
and 
\begin{equation} \label{eqn:sim0}
A_5(g) \vec{v}^T =  C_1({\vec{v}'}) s_1(g)  + a_1(g) \vec{v}^{\prime T}.
\end{equation} 
We obtain 
\begin{equation} \label{eqn:a6}
a_6(g) = 0
\end{equation} 
since $||\vec{v}||^2$ is the only quadratic term. 
We will assume this from now on. 


For each $g$, we can choose a coordinate system so that $s_1(g) = 0, a_6(g)=0$ as $\hat S(g)$ is semisimple, 
which involves the coordinate changes of the first $n-i_0$ coordinate functions only. 
We may also assume that $U$ satisfies $x_{n-i_0} > 0$ since $U$ is convex.



Since $\CN$ acts on $\SI^{i_0+1}_{l'}$ for some leaf $l'$ as a cusp group by Lemma \ref{lem:bdhoro},
there exists a coordinate change involving the last $(i_0+1)$-coordinates 
$x_{n-i_0+1}, \dots, x_n, x_{n+1}$
so that the matrix form of the lower-right $(i_0+2)\times(i_0+2)$-matrix of each element $\CN$ is of the standard cusp form.
This will not affect $s_1(g) = 0, a_6(g) =0$ as we can check from the proof of Lemma \ref{lem:bdhoro}. 
Denote this coordinate system by $\Phi_{g, l'}$. 

Let us use $\Phi_{g, l'}$ for a while
using primes for new set of coordinates functions. 
Now $A'_5(g)$ is conjugate to $A_5(g)$ as we can check in the proof of Lemma \ref{lem:bdhoro}. 
Under this coordinate system for given $g$, 
we obtain $a'_1(g) \ne 0$ and we can recompute to show that 
$a'_9(g) \vec{v} = \vec{v'} A'_5(g)$ for every $\vec{v}$ as in equation \ref{eqn:vp1}. 
By equation \ref{eqn:sim0} for this case, we obtain 
\begin{equation}\label{eqn:vp2}
\vec{v'} = \frac{1}{a'_1(g)} \vec{v} (A'_5(g))^T
 \end{equation}
 as $s'_1(g) = 0$ here since we are using the coordinate system $\Phi_{g, l'}$.
 Since this holds for every $\vec{v} \in \bR^{i_0}$, 
 we obtain 
 \[a'_9(g) (A'_5(g))^{-1} = \frac{1}{a'_1(g)} (A'_5(g))^T.\] 
 Hence $\frac{1}{ |\det(A'_5(g))|^{1/i_0}} A'_5(g) \in O(i_0)$. 
 Also, \[\frac{a'_9(g)}{a'_5(g)} = \frac{a'_5(g)}{a'_1(g)}.\] 
 Here, $A'_5(g)$ is a conjugate of the original matrix $A_5(g)$ by linear coordinate changes 
 as we can see from the above processes to obtain the new coordinate system.
 (Check the proof of Lemma \ref{lem:bdhoro} and that the coordinate changes involved for 
 getting $s_1(g)=0$ do not change $A_5(g)$.)  
 
 This implies that the original matrix $A_5(g)$ is conjugate to an orthogonal matrix multiplied by a positive scalar for every $g$. 
The set of matrices $ \{ A_5(g)| g \in \bGamma_{\tilde E}\}$ forms a group since every $g$ is of a standard matrix form 
(see equation \ref{eqn:matstd}) where $a_6(g) = 0$ for every $g$. 
  Given such a group of matrices normalized to have determinant $\pm 1$, we obtain 
 a compact group 
 \[G_{\tilde E}:= \left\{ \frac{1}{|\det A_5(g)|^{\frac{1}{i_0}}} A_5(g) \left| g \in \Gamma_{\tilde E}\right. \right\}\]
  by Lemma \ref{lem:cpt}. 
This group has a coordinate where every element is orthogonal by coordinate changes
of coordinates $x_{n-i_0+1}, \dots, x_n$. 

 
\end{proof} 


\begin{lemma}\label{lem:cpt} 
Suppose that $G$ is a closed subgroup of a linear group $\GL(i_0, \bR)$ where each 
element is conjugate to an orthogonal element. Then $G$ is a compact group. 
\end{lemma}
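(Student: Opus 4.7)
Plan: The plan is to show that $G$ preserves a positive-definite inner product on $\bR^{i_0}$; this will embed $G$ as a closed subgroup of a conjugate of $O(i_0)$ and so force $G$ to be compact. Preliminarily, the hypothesis makes each $g\in G$ semisimple over $\bC$ with all eigenvalues of modulus $1$, so $\det g=\pm 1$ and the cyclic group $\langle g\rangle$ has relatively compact closure in $\GL(i_0,\bR)$, sitting inside a torus.

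The first main step is to show that the identity component $G^0$ is compact. Letting $\mathfrak g$ denote the Lie algebra of $G$, I would apply the hypothesis to each one-parameter subgroup $\{e^{tX}\}_{t\in\bR}$ and match additive and multiplicative Jordan decompositions to force each $X\in\mathfrak g$ to be semisimple with purely imaginary spectrum. The symmetric bilinear form $B(X,Y):=-\mathrm{tr}(XY)$ would then be positive-definite on $\mathfrak g$ (for $X\ne 0$ with eigenvalues $i\lambda_1,\dots,i\lambda_{i_0}$, one has $-\mathrm{tr}(X^2)=\sum_j \lambda_j^2>0$) and $\mathrm{Ad}(G)$-invariant by cyclicity of the trace, making $\mathfrak g$ a compact Lie algebra. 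The center $Z(G^0)$ is a commuting family of semisimple matrices with unit-modulus eigenvalues, hence simultaneously conjugate over $\bR$ into a maximal torus of $O(i_0)$; being closed in $G$ it is compact, and combined with the compact semisimple part of $G^0$ this will give compactness of $G^0$.

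The hardest step will be to show the discrete quotient $G/G^0$ is finite. The plan is to use the adjoint representation $\mathrm{Ad}: G\to\GL(\mathfrak g)$, whose image lies in the compact orthogonal group $O(\mathfrak g,B)$ and so is relatively compact; its kernel is the centralizer $Z_G(G^0)$. For the kernel I would decompose $\bR^{i_0}$ into $G^0$-isotypic components $V_\rho=W_\rho\otimes U_\rho$; the group $G$ permutes the finitely many $V_\rho$ (a finite outer action), while by Schur's lemma $Z_G(G^0)$ acts on each $V_\rho$ as $\Idd_{W_\rho}\otimes A_\rho$ with each $A_\rho\in\GL(U_\rho)$ still semisimple with eigenvalues of modulus $1$, so that when $G^0$ acts nontrivially one has $\dim U_\rho<i_0$ and can induct on $i_0$. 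For the outer action, $\mathrm{Ad}(g)$ preserves $B$ and so has unit-modulus eigenvalues on each torus factor $\mathfrak t\subset\mathfrak g$, giving an element of $\GL(a,\bZ)$ with eigenvalues on the unit circle; Kronecker's theorem then forces those eigenvalues to be roots of unity, so the outer action has finite order, and Minkowski's finiteness of torsion subgroups of $\GL(a,\bZ)$ together with the finiteness of $\mathrm{Out}(K)$ for compact semisimple $K$ will control the full outer image. The main obstacle lies in the base case $G^0=\{I\}$, where the Lie-algebra input is vacuous and no isotypic decomposition is available, requiring a separate argument that a closed discrete torsion subgroup of $\GL(i_0,\bR)$ with every element conjugate to orthogonal must in fact be finite.
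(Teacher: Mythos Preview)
Your route is quite different from the paper's. The paper's argument is essentially two lines: every element has eigenvalues of norm $1$, so by the Conze--Guivarch/Moore theorem on distal linear groups $G$ preserves a flag on whose successive quotients it acts through relatively compact groups (an ``orthopotent'' flag); the additional fact that each $\{g^n:n\in\bZ\}$ is bounded (equivalently, each $g$ is semisimple) then forces $G$ into a single orthogonal group. All the work is hidden in the cited theorem.

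Your argument for $G^0$ via the $\mathrm{Ad}$-invariant form $-\mathrm{tr}(XY)$ on $\mathfrak g$ is correct and pleasantly self-contained, and the outer-action step using Kronecker/Minkowski on the toral part together with finiteness of $\mathrm{Out}(K)$ on the semisimple part is sound. The isotypic-component induction needs more care than you indicate: when $V$ is a single isotypic component with $\dim_{D_\rho}W_\rho=1$ (for instance $G^0=\SO(2)$ acting by complex scalars, or $G^0\to\mathrm{Sp}(1)$ on a quaternionic line) one has $\dim_\bR U_\rho=i_0$ and the real dimension does not drop; you must run the induction over $\GL(m,D)$ for $D\in\{\bR,\bC,\bH\}$ simultaneously. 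This can be repaired.

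The genuine gap is the one you name: the base case $G^0=\{I\}$. There $G$ is a closed discrete torsion subgroup of $\GL(i_0,\bR)$, your entire machinery is inert, and you need such a group to be finite. That is not a formality---it is the Jordan--Schur theorem (periodic linear groups in characteristic $0$ are locally finite, and then abelian-by-bounded-index via Jordan; the abelian part is a commuting family of finite-order semisimple matrices, hence lies in a torus and is finite by discreteness). This input is of depth comparable to the Conze--Guivarch/Moore result the paper invokes, so your longer route postpones rather than avoids the need for a substantial external theorem.
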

\begin{proof} 
Clearly, the norms of eigenvalues of $g \in G$ are all $1$. 
$G$ is virtually an orthopotent group by \cite{CG2} or \cite{Moore}. 
Since the group is linear and for each element $g$, 
$\{g^n| n \in \bZ\}$ is a bounded collection of matrices, 
$G$ is a subgroup of an orthogonal group under a coordinate system. 
\end{proof}


We denote by $(C_1({\vec{v}}), \vec{v}^T)$ the matrix obtained 
from $C_1(\vec{v})$ by adding a column vector $\vec{v}^T$.


\begin{lemma} \label{lem:conedecomp1}
Assume as in Lemma \ref{lem:similarity}. Then the following hold{\rm :} 
\begin{itemize} 
\item $K$ is a cone over a totally geodesic 
$(n-i_0-2)$-dimensional domain $K''$. 
\item The rows of $(C_1({\vec{v}}), \vec{v}^T)$ are proportional to a single vector and 
we can find a coordinate system where $C_1({\vec v}) = 0$
not changing any entries of the lower-right $(i_0+2)\times (i_0+2)$-submatrices for all $\vec v \in \bR^{i_0}$. 
\item We can find a common coordinate system where 
\begin{equation}\label{eqn:O5coor}
O_5(g)^{-1} = O_5(g)^T, O_5(g) \in O(i_0), s_1(g) = s_2(g) = 0 \hbox{ for all } g.
\end{equation} 
\item In this coordinate system, we have
\begin{equation} \label{eqn:conedecomp1}
 a_9(g) c_2({\vec{v}})  
 = c_2({\mu_g\vec{v}O_5(g)^{-1}}) S(g)  + \mu_g \vec{v} O_5(g)^{-1} C_1(g).
\end{equation} 


\end{itemize}
\end{lemma}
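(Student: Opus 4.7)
The plan is to extract all four claims from the conjugation identity $g\,\CN(\vec v)\,g^{-1} = \CN(\vec v M_g)$ of Lemma~\ref{lem:similarity} by matching blocks in the resulting $(n+1)\times(n+1)$ matrix equation, and to then use the proper convexity of $\torb$ together with the cocompact action of $N_K$ on $K$ to force the necessary rigidity.

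First, I would establish that $K$ is a cone. After the normalization $s_1(g)=0$ already provided by Lemma~\ref{lem:similarity}, the matrix $\hat S(g)$ is block lower triangular, so the last of the first $n-i_0$ standard basis directions gives a common fixed point $p \in \clo(K)$ for every $g$ in the image of $\bGamma_{\tilde E}$ in $\Aut(K)$. Because $N_K$ acts cocompactly on the properly convex $K$ and the $S(g)$-block is semisimple and acts on a complementary $(n-i_0-2)$-domain, the decomposition results of Proposition~\ref{prop:Ben2} combined with Lemma~\ref{lem:simplexbd} show that $\clo(K)$ splits as the join $\{p\}\ast \clo(K'')$ for a properly convex $K''$ on which $S(g)$ acts cocompactly. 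The $\bGamma_{\tilde E}$-invariance of the vertex $p$ then allows a second coordinate change on the first $n-i_0$ coordinates, preserving $s_1(g)=0$ and fixing the last $i_0+1$ coordinates, which puts $\hat S(g)$ into block diagonal form, giving $s_2(g)=0$.

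Next, for the rank-one constraint on $(C_1(\vec v),\vec v^T)$, I would equate the $(3,1)$-blocks in $g\,\CN(\vec v)=\CN(\vec v M_g)\,g$. With $s_1(g)=s_2(g)=0$ and $a_6(g)=0$ the identity reduces to
\[
A_5(g)\,C_1(\vec v) \;=\; C_1(\vec v M_g)\,S(g).
\]
Using $A_5(g)=a_5(g)O_5(g)$ and $M_g=\mu_g O_5(g)^{-1}$, this is a covariance relation between a compact $O(i_0)$-action on the left and the semisimple $S(g)$-action on $K''$ on the right, to be combined with the symmetry $\vec v_1 C_1(\vec v_2)=\vec v_2 C_1(\vec v_1)$ coming from the commutativity of $\CN$. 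I would then argue, in the spirit of Lemmas~\ref{lem:decjoin} and~\ref{lem:joinred}, that any column of $C_1(\vec v)$ not in the direction of $\vec v^T$ would produce an orbit $\CN(x_0)$ for $x_0 \in U$ whose closure spans a great subspace violating the proper convexity of $\clo(\torb)$. This forces $C_1(\vec v)=\vec v^T \alpha$ for a fixed row $\alpha\in\bR^{n-i_0-1}$, and the coordinate change $x_{n-i_0}\mapsto x_{n-i_0}+\alpha\cdot(x_1,\dots,x_{n-i_0-1})$, which acts only on the first $n-i_0$ coordinates, eliminates $C_1$ without disturbing the lower-right $(i_0+2)\times(i_0+2)$ block and without reintroducing $s_1$ or $s_2$.

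The three normalizations therefore act on compatible coordinate blocks: the $O_5(g)\in O(i_0)$ normalization uses only the last $i_0$ coordinates (Lemmas~\ref{lem:similarity} and~\ref{lem:cpt}), while the $s_1=s_2=0$ and $C_1$-elimination changes use only the first $n-i_0$ coordinates and can be composed since both preserve the cone-vertex direction $p$. In this common coordinate system I would then obtain equation~\eqref{eqn:conedecomp1} by matching the $(4,1)$-blocks in $g\,\CN(\vec v)=\CN(\vec v M_g)\,g$: the left-hand side simplifies to $c_2(g)+a_9(g)\,c_2(\vec v)$ and the right-hand side to $c_2(\vec v M_g)\,S(g)+\vec v M_g\,C_1(g)+c_2(g)$, and the substitution $\vec v M_g=\mu_g\vec v O_5(g)^{-1}$ yields the displayed identity. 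The principal obstacle is the rank-one claim, since the algebraic relations alone leave a high-dimensional family of symmetric-tensor solutions for $C_1$; collapsing this to a single direction genuinely requires the global geometry of $\torb$ and the fact that the horospherical orbits of $\CN$ in a leaf $\SI^{i_0+1}_{l'}$ cannot spread into directions transverse to the join $\{p\}\ast K''$ without destroying proper convexity.
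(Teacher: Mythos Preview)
Your argument has a genuine ordering gap. You begin by asserting that Lemma~\ref{lem:similarity} provides $s_1(g)=0$ in a common coordinate system, and from this you read off a common fixed point $p$ of all $\hat S(g)$ to obtain the cone structure $K=\{p\}\ast K''$. But Lemma~\ref{lem:similarity} does \emph{not} give $s_1(g)=0$ globally: in its proof that normalization is made separately for each $g$ (using semisimplicity of $\hat S(g)$) solely to verify the orthogonality of $A_5(g)$, and the statement is then asserted back in the original standard form with $s_1(g)$, $s_2(g)$ arbitrary. Without a common eigenvector for all $\hat S(g)$ you cannot invoke Proposition~\ref{prop:Ben2} to split off a point factor, so your first paragraph is circular. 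Consequently the $(3,1)$-block identity you write down, $A_5(g)C_1(\vec v)=C_1(\vec v M_g)S(g)$, is also not yet available: with $s_2(g)\ne 0$ there is an extra term $\vec v^{\prime T}s_2(g)$.

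The paper runs the logic in the opposite direction, and this is where the real content lies. Comparing the combined $(3,1)$--$(3,2)$ blocks of $g\CN(\vec v)$ and $\CN(\vec v')g$ \emph{before} any $s_1,s_2$ normalization yields
\[
a_5(g)O_5(g)\bigl(C_1(\vec v),\ \vec v^{\,T}\bigr)\;=\;\mu_g\bigl(C_1(\vec v O_5(g)^{-1}),\ O_5(g)^{-1,T}\vec v^{\,T}\bigr)\hat S(g),
\]
which says that the span $V_C\subset\bR^{n-i_0\,\ast}$ of all row vectors of $(C_1(\vec v),\vec v^T)$ is $\hat S(g)$-invariant, and that the normalized $\hat S(g)$ acts there through the compact group $\{O_5(g)\}$. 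Dualizing, $N_K^\ast$ acts orthogonally on the corresponding subspace $\SI_C^\ast$; Benoist's decomposition (Theorem~1.1 of \cite{Ben2}) then forces the maximal such orthogonal piece to be zero-dimensional, because an orthogonal cocompact action on a properly convex factor of positive dimension is impossible. This is what yields $\dim V_C=1$, i.e.\ the rank-one claim, and the cone structure $K=\{k\}\ast K''$ together with $s_1=s_2=0$ then drop out by choosing coordinates so that the single row direction is $(0,\dots,0,1)$. Your proposed replacement for this step---arguing that a non-rank-one $C_1$ makes some $\CN$-orbit in $U$ non-properly-convex, ``in the spirit of'' Lemmas~\ref{lem:decjoin} and~\ref{lem:joinred}---does not identify any concrete mechanism; those lemmas concern eigenvalue growth along sequences in $\bGamma_{\tilde E}$, not the shape of a single unipotent orbit, and the symmetry $\vec v_1 C_1(\vec v_2)=\vec v_2 C_1(\vec v_1)$ you extract from commutativity of $\CN$ still leaves a large family of $C_1$'s. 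Your derivation of equation~\eqref{eqn:conedecomp1} from the $(4,1)$-block is correct once the earlier normalizations are in place.
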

\begin{proof} 

The assumption implies that $M_g =  \mu_g O_5(g)^{-1}$ by Lemma \ref{lem:similarity}.
We consider the equation 
\begin{equation} \label{eqn:conj0}
g \CN(\vec{v}) g^{-1} = \CN(\mu_g \vec{v} O_5(g)^{-1}).
\end{equation}


For the second, we consider 
\[g \CN(\vec v) = \CN(\mu_g \vec v O_5(g)) g\]
and consider the lower left $(n-i_0) \times (i_0+1)$-matrix of the left side,  
we obtain 
\begin{equation} 
\left(\begin{array}{cc} 
C_1(g) & a_4(g) \\ c_2(g) & a_7(g) \end{array} \right) 
+ 
\left( \begin{array}{cc} 
a_5(g) O_5(g) C_1({\vec{v}}) & a_5(g) O_5(g) \vec{v}\\ 
a_8(g) C_1({\vec{v}}) + a_9 c_2({\vec{v}})  
& a_8(g) {\cdot} \vec{v}^T + a_9(g) \vec{v}\cdot\vec{v}/2
\end{array} \right) \end{equation}
where the entry sizes are clear. 
From the right side, we obtain
\begin{equation} 
\left( \begin{array}{cc} 
C_1({\mu_g \vec{v} O_5(g)^{-1}})& \mu_g O_5(g)^{-1, T}\vec{v}^T \\ 
c_2({\mu_g\vec{v}}O_5(g)^{-1})     & \vec{v}\cdot\vec{v}/2
\end{array} \right) \hat S(g)
+ \left(\begin{array}{cc} 
C_1(g) & a_4(g) \\ \vec{v'} \cdot C_1(g) + c_2(g) & a_7(g) + \vec{v'} \cdot a_4(g) 
\end{array} \right).
\end{equation}
From the top row, we obtain that 
\begin{alignat}{1}\label{eqn:C1} 
& (a_5(g) O_5(g) C_1({\vec{v}}) , a_5(g) O_5(g) \vec{v}^T) 
=  (\mu_g C_1({\vec{v}}  O_5(g)^{-1} ),  \mu_g O_5(g)^{-1, T} \vec{v}^T) \hat S(g). \\ 
&(a_5(g) C_1({\vec{v}}) , a_5(g) \vec{v}^T) \hat S(g^{-1})
=  (\mu_g O_5(g)^{-1} C_1({\vec{v}}  O_5(g)^{-1} ),  \mu_g O_5(g)^{-1}O_5(g)^{-1, T} \vec{v}^T) 
\end{alignat}
since $C_1$ is linear where we multiplied the both sides by $O_5(g)^{-1}$. 
Let us form the subspace $V_C$ in the dual sphere $\bR^{n-i_0 \ast}$ spanned by 
row vectors of $(C_1({\vec{v}}), \vec{v}^T)$. Let $\SI_C^\ast$ denote the corresponding subspace 
in $\SI^{n-i_0-1 \ast}$. Then 
\[\left\{\frac{1}{\det \hat S(g)^{\frac{1}{n-i_0-1}}}\hat S(g) \vert g \in \bGamma_{\tilde E}\right\}\] 
acts on $V_C$ as a group of bounded 
linear automorphisms since $O_5(g) \in G$ for a compact group $G$, 
and hence $\{\hat S(g)| g\in \bGamma_{\tilde E}\}$ on $\SI_C^\ast$ is in a compact group of 
projective automorphisms. 

We recall that the dual group $N_K^*$ of $N_K$ acts on the properly convex dual domain $K^*$ of $K$ by Theorem \ref{thm:dualdiff}.
Then $g$ acts as an element of a compact group on $\SI_C^\ast$. Thus, $N_K^*$ is reducible. 

We claim that $\dim(\SI_C) = 0$. 
Let $\SI_M$ be the maximal invariant subspace where each $g\in N_K^*$ acts orthogonally containing $\SI_C$.  
Since $N_K^*$ is semisimple by Benoist \cite{Ben3}, 
$N_K^*$  acts on 
a complementary subspace $\SI_C$. 
By the theory of Benoist \cite{Ben3}, $K^*$ has an invariant subspace $K^*_1$ and $K^*_2$ 
so that $K^* = K^*_1 \ast K^*_2$, a strict join
so that $\dim K^*_1 = \dim \SI_M$ and $\dim K^*_2 = \dim \SI_C$. 
We assume that $K^*_2 = K^* \cap \SI_M$ and $K^*_2 \cap \SI_C$. 
Also, by the theory of Benoist \cite{Ben3}, $N_K^*$ is isomorphic to 
$N_{K, 1} \times N_{K, 2} \times A$ where $A$ is a subgroup of $\bR$ 
and $N_{K, i}$ acts on a properly convex domain that is
the interior of $K^*_i$ properly and cocompactly for $i=1,2$.
But since $N_{K, 1}$ acts orthogonally on $\SI_M$, 
the only possibility is that $\dim \SI_M = 0$.  

Therefore this shows that the rows of $(C_1({\vec{v}}), \vec{v}^T)$ are proportional to a single row vector.

Since $(C_1({\vec{e}_j}), \vec{e}_j^T)$ has $0$ is the last column except 
for the $j$th one, only the $j$th row is nonzero
and moreover, it equals to a scalar multiple of a common vector 
$(C_1({1, \vec{e}_1}), 1)$ for every $j$ where $C_1({1, \vec{e}_1})$ is the first 
row of $C_1({\vec{e}_1})$. 
Now we can choose coordinates of $\bR^{n-i_0 \ast}$ so that 
this row vector now has a coordinate $(0, \dots, 0, 1)$. 
We can also choose so that $K^*_1$ is given by setting the last coordinate be zero.  
With this change, we need to do conjugation by matrices 
with the top left $(n-i_0-1)\times (n-i_0-1)$-submatrix being different from $\Idd$ and 
the rest of the entries staying the same. 
This will not affect the expressions of matrices of 
 lower right $(i_0+2)\times (i_0+2)$-matrices involved here. 
Thus, $C_1({\vec{v}}) =0$ in this coordinate for all $\vec{v} \in \bR^{i_0}$ and $g \in \bGamma_{\tilde E} - N$. 

And the in the above coordinate system, we obtain
$s_1(g) =0, s_2(g) =0$ 
and that $K$ is a strict join of a point 
\[k =\overbrace{ [0, \dots, 0, 1]}^{n-i_0}\] 
and a domain $K''$ given by
setting $x_{n-i_0} = 0$ in a totally geodesic 
sphere of dimension $n-i_0-2$ by duality. 

For the final item we have: 
\begin{equation} \label{eqn:form1}
g = \left( \begin{array}{cccc} 
 S(g) & 0 & 0 & 0 \\ 
 0 & a_1(g) & 0 & 0 \\ 
C_1(g) & a_4(g) & a_5(g) O_5(g) & 0  \\
c_2(g) & a_7(g) & a_8(g) & a_9(g) 
\end{array} \right), \quad
 \CN(\vec{v}) = \left( \begin{array}{cccc} 
 \Idd & 0 & 0 & 0 \\ 
 0 &    1 & 0 & 0 \\ 
0 & \vec{v}^T & \Idd & 0 \\ 
 c_2({\vec{v}}) & \frac{1}{2} ||\vec{v}||^2 & \vec{v} & 1 
 \end{array} \right). 
 \end{equation} 
 The normalization of $\CN$ shows as in the proof of Lemma \ref{lem:similarity} that $O_5(g)$ is orthogonal now.
 (See equations  \ref{eqn:vp1} and  \ref{eqn:sim0}.)
 We consider the lower-right $(i_0+1)\times (n-i_0)$-submatrices of 
 $g\CN(\vec{v})$ and $\CN(\vec{v'})g$. 
 For the first one, we obtain 
 \[
 \left( \begin{array}{cc} C_1(g) & a_4(g) \\ c_2(g) & a_7(g) \end{array} \right ) 
 + \left( \begin{array}{cc} a_5(g) O_5(g) & 0 \\ a_8(g) & a_9(g) \end{array} \right) 
 \left( \begin{array}{cc} 0 & \vec{v}^T \\ c_2({\vec{v}}) &  \frac{1}{2} ||\vec{v}||^2 \end{array} \right) 
 \]
 For $\CN(\vec{v'})g$, we obtain 
  \[
  \left( \begin{array}{cc} 0 & \vec{v'}^T \\ c_2({\vec{v'}}) & \frac{1}{2} ||\vec{v}'||^2 \end{array} \right) 
 \left( \begin{array}{cc} S(g) & 0 \\ 0 & a_1(g) \end{array} \right ) 
 + \left( \begin{array}{cc}  \Idd & 0 \\ \vec{v'} & 1 \end{array} \right) 
 \left( \begin{array}{cc} C_1(g) & a_4(g) \\ c_2(g) & a_9(g) \end{array} \right).
 \]
Considering $(2, 1)$-blocks, we obtain 
\[ c_2(g) + a_9(g) c_2({\vec{v}}) = c_2({\vec{v'}}) S(g) 
+ \vec{v'} C_1(g) 
+ c_2(g).\]

\end{proof}


\begin{lemma} \label{lem:matrix}
Assume as in Lemma \ref{lem:similarity}. 
Suppose additionally that every $g \in \Gamma \ra M_g$ is so that $M_g$ is in a fixed compact group $O(i_0)$
or equivalently $\mu_g = 1$. 
Then we can find coordinates so that the following holds for all $g$: 
\begin{align} 
O_5(g)^{-1} a_4(g) &= (a_8(g))^T \hbox{ or } a_4(g)^T O_5(g) = a_8(g),\\
a_1(g) = a_9(g)  &= \lambda_{\bv_{\tilde E}}(g) \hbox{ and } 
A_5(g) = \lambda_{\bv_{\tilde E}}(g) O_5(g). 
\end{align}
\end{lemma}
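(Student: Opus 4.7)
The plan is to extract the three conclusions sequentially: first to obtain the scalar identities $a_1(g) = a_5(g) = a_9(g) = \lambda_{\bv_{\tilde E}}(g)$ together with $A_5(g) = \lambda_{\bv_{\tilde E}}(g)O_5(g)$ directly from Lemma \ref{lem:similarity}, and then to derive the relation $O_5(g)^{-1}a_4(g) = a_8(g)^T$ by reading off one further entry of the conjugation identity $g\CN(\vec v) = \CN(\vec v M_g) g$.

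First, by Lemma \ref{lem:similarity} we already know $\mu_g = a_5(g)/a_1(g) = a_9(g)/a_5(g)$, $A_5(g) = a_5(g) O_5(g)$, and $O_5(g) \in O(i_0)$ in the chosen coordinates. The hypothesis $\mu_g = 1$ then immediately forces $a_1(g) = a_5(g) = a_9(g)$. Since $\bv_{\tilde E} = [0,\dots,0,1]$ is fixed by $g$ with associated eigenvalue recorded in the bottom-right diagonal entry, that entry equals $\lambda_{\bv_{\tilde E}}(g)$; so all three scalars equal $\lambda_{\bv_{\tilde E}}(g)$, and substituting gives $A_5(g) = \lambda_{\bv_{\tilde E}}(g) O_5(g)$. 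This handles the second and third claims.

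For the first claim, I would work in the coordinates of Lemma \ref{lem:conedecomp1}, so $s_1(g) = s_2(g) = 0$, $C_1(\vec v) = 0$, and the conjugation relation reads $g\CN(\vec v) = \CN(\vec v')g$ with $\vec v' = \vec v M_g = \vec v O_5(g)^{-1}$. Comparing the $(4,2)$-scalar entries of both sides of this matrix identity, the left side yields
\[
a_7(g) + a_8(g)\vec v^T + a_9(g)\tfrac{1}{2}\|\vec v\|^2,
\]
while the right side yields
\[
a_7(g) + \vec v' a_4(g) + a_1(g)\tfrac{1}{2}\|\vec v'\|^2.
\]
Using $a_1(g) = a_9(g)$ together with $\|\vec v'\| = \|\vec v\|$ (since $O_5(g)$ is orthogonal), the quadratic and constant terms cancel, leaving the purely linear identity
\[
a_8(g)\vec v^T \;=\; \vec v \bigl(O_5(g)^{-1}a_4(g)\bigr) \qquad \text{for every } \vec v \in \bR^{i_0}.
\]
Equating coefficients in $\vec v$ forces $O_5(g)^{-1}a_4(g) = a_8(g)^T$, equivalently $a_4(g)^T O_5(g) = a_8(g)$.

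No serious obstacle is anticipated: the argument is entirely a bookkeeping exercise in matrix multiplication once the correct entry is selected. The only step requiring mild care is verifying that we are in the coordinate system common to Lemmas \ref{lem:similarity} and \ref{lem:conedecomp1}, so that simultaneously $O_5(g)$ is genuinely orthogonal, $s_1(g) = s_2(g) = 0$, and $C_1(\vec v) = 0$; with these normalizations in place the quadratic-term cancellation that drives the linear identity is valid.
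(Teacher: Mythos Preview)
Your proof is correct and follows essentially the same approach as the paper: both deduce the scalar identities immediately from $\mu_g=1$ via Lemma \ref{lem:similarity}, and then read off the linear relation $a_8(g)\vec v^T = \vec v' a_4(g)$ from the same scalar entry of the conjugation identity $g\CN(\vec v)=\CN(\vec v')g$, canceling the quadratic terms via orthogonality. The only cosmetic difference is that the paper restricts at the outset to the lower-right $(i_0+2)\times(i_0+2)$ block (so it never needs $s_1(g)=0$), whereas you work in the full block matrix and invoke the coordinate normalization of Lemma \ref{lem:conedecomp1}; the computation and the entry compared are identical.
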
 
\begin{proof} 
Since we have $\mu_g = 1$, we obtain $a_1(g) = a_9(g) = a_5(g) = \lambda_{\bv_{\tilde E}}(g)$ and
$A_5(g) = \lambda_{\bv_{\tilde E}}(g) O_5(g)$ by Lemma \ref{lem:similarity}. 

Again, we use equations \ref{eqn:firstm} and \ref{eqn:secondm}. 
We need to only consider lower right $(i_0+2)\times (i_0+2)$-matrices. 
\begin{align} 
&\left( \begin{array}{ccc} 
a_1(g) & 0 & 0 \\ a_4(g) & a_5(g) O_5(g)  & 0 \\ a_7(g) & a_8(g) & a_9(g) \end{array} \right) 
\left(\begin{array}{ccc} 
1 & 0 & 0 \\ \vec{v}^T & \Idd & 0 \\ \frac{1}{2}||\vec{v}||^2 & \vec{v} & 1 \end{array} \right) \\
& = \left( \begin{array}{ccc} 
a_1(g) & 0 & 0 \\ a_4(g) + a_5(g)  O_5(g) \vec{v}^T & a_5(g) O_5(g) & 0 
\\ a_7(g) + a_8(g) \vec{v}^T + \frac{a_9(g)}{2} ||\vec{v}||^2 & a_8(g) + a_9(g) \vec{v} & a_9(g) 
\end{array} \right). 
\end{align}
This equals 
\begin{align}
& \left( \begin{array}{ccc} 
1 & 0 & 0 \\ \vec{v'}^T & \Idd & 0 \\ \frac{1}{2}||\vec{v'}||^2 & \vec{v'} & 1 \end{array} \right) 
\left( \begin{array}{ccc} 
a_1(g) & 0 & 0 \\ a_4(g) & a_5(g) O^5_g & 0 \\ a_7(g) & a_8(g) & a_9(g) \end{array} \right) \\
& = \left(\begin{array}{ccc} 
a_1(g) & 0 & 0 \\ a_1(g) \vec{v'}^T + a_4(g) & a_5(g) O_5(g) & 0 \\ 
\frac{a_1(g)}{2} ||\vec{v'}||^2 + \vec{v'} a_4(g) + a_7(g) & 
a_5(g) \vec{v'} O_5(g) + a_8(g) & a_9(g) \end{array} \right).
\end{align}
Then by comparing the $(3, 2)$-blocks, 
we obtain $a_8(g) + a_9(g) \vec{v} = a_8(g) + a_5(g) \vec{v'} O_5(g) $.
Thus, $\vec{v} =  \vec{v'} O_5(g)$

From the $(3, 1)$-blocks, we obtain 
\[ a_1(g) \vec{v'}\cdot\vec{v'}/2 + \vec{v'}a_4(g) = a_8(g)\vec{v}^T + a_9(g) \vec{v}\cdot\vec{v}/2. \]
Since the quadratic forms have to equal each other, we obtain 
$\vec{v} O_5(g)^{-1} \cdot a_4(g) = \vec{v} \cdot a_8(g)$ for all $\vec{v}$. 
Thus, $(O_5(g)^T a_4(g))^T = a_8(g)^T$.
\end{proof}

Thus, in cases that we are concerned in this section,  and by 
taking a finite index subgroup of $\Gamma$, 
we conclude that each $g \in \Gamma - N$ has the form 
\begin{equation} \label{eqn:formg}
\newcommand*{\temp}{\multicolumn{1}{r|}{}}
\left( \begin{array}{ccccccc} 
S(g) & \temp & 0 & \temp & 0 & \temp & 0 \\ 
 \cline{1-7}
0 &\temp & \lambda_{\bv_{\tilde E}}(g) &\temp & 0 &\temp & 0 \\ 
 \cline{1-7}
C_1(g) &\temp & \lambda_{\bv_{\tilde E}}(g)\vec{v}^T_g &\temp & \lambda_{\bv_{\tilde E}}(g) O_5(g) &\temp & 0 \\ 
 \cline{1-7}
c_2(g) &\temp & a_7(g) &\temp & \lambda_{\bv_{\tilde E}}(g) \vec{v}_g O_5(g) &\temp & \lambda_{\bv_{\tilde E}}(g)
\end{array} 
\right).
\end{equation}



\begin{corollary}\label{cor:formg2} 
If $g$ of form of equation \ref{eqn:formg} centralizes a Zariski dense subset $A'$ of $\CN$, 
then $O_5(g) = \Idd_{i_0}$. 
\end{corollary}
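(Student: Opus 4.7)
The plan is to use the conjugation formula established in Lemma \ref{lem:similarity} and then invoke Zariski density.

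First, I recall that in the setup of equation \ref{eqn:formg} we are in the situation of Lemma \ref{lem:matrix}, so $\mu_g = 1$ and consequently the conjugation identity from Lemma \ref{lem:similarity} simplifies to
\begin{equation*}
g\,\CN(\vec v)\,g^{-1} \;=\; \CN\!\bigl(\vec v\, O_5(g)^{-1}\bigr) \qquad \text{for all } \vec v \in \bR^{i_0}.
\end{equation*}
Next, I would observe that the parametrization $\vec v \mapsto \CN(\vec v)$ is an algebraic isomorphism from $\bR^{i_0}$ onto $\CN$, so if $A' \subset \CN$ is Zariski dense then the set
\begin{equation*}
V' \;:=\; \{\vec v \in \bR^{i_0} \mid \CN(\vec v) \in A'\}
\end{equation*}
is Zariski dense in $\bR^{i_0}$.

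Now the hypothesis that $g$ centralizes each element of $A'$ means $g\,\CN(\vec v)\,g^{-1} = \CN(\vec v)$ for every $\vec v \in V'$; combined with the displayed identity and the injectivity of $\vec v \mapsto \CN(\vec v)$, this yields $\vec v\, O_5(g)^{-1} = \vec v$ for every $\vec v \in V'$. The condition $\vec v\,(O_5(g)^{-1} - \Idd_{i_0}) = 0$ is a system of polynomial (in fact linear) equations in the coordinates of $\vec v$, so it cuts out a Zariski closed subset of $\bR^{i_0}$; since it contains the Zariski dense set $V'$, it must be all of $\bR^{i_0}$. Therefore $O_5(g)^{-1} = \Idd_{i_0}$, hence $O_5(g) = \Idd_{i_0}$, as required.

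There is no serious obstacle here: the only thing to be careful about is making sure the parametrization $\vec v \mapsto \CN(\vec v)$ is algebraic (which is immediate from the polynomial entries in equation \ref{eqn:nilmatstd}) and that $\mu_g = 1$ in the present form, both of which are already in place from Lemmas \ref{lem:similarity} and \ref{lem:matrix}.
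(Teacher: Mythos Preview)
Your proof is correct and follows essentially the same route as the paper's own argument: both pull back the Zariski dense set $A' \subset \CN$ to a Zariski dense subset of $\bR^{i_0}$ via the polynomial parametrization $\vec v \mapsto \CN(\vec v)$, use the conjugation identity (with $\mu_g = 1$) to deduce $\vec v\,O_5(g)^{-1} = \vec v$ on that set, and conclude $O_5(g) = \Idd_{i_0}$ by density. The paper is simply terser about the Zariski closure step.
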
 
\begin{proof} 
Note that the subset $A''$ of $\bR^i$ corresponding to $A'$ is also Zariski dense in $\bR^i$. 
$g \CN(\vec{v}) = \CN(\vec{v}) g$ shows that 
$\vec{v} = \vec{v} O_5(g)$ for all $\vec{v} \in A''$. 
Hence $O_5(g) =\Idd$. 
\end{proof}

\subsubsection{The discrete case of $N_K$} 

\begin{proposition} \label{prop:decomposition}
Assume as in Lemma \ref{lem:similarity}. 
Suppose additionally the following{\rm :} 
\begin{itemize} 
\item every $g \in \Gamma \ra M_g$ is  so that $M_g$ is in a fixed compact group $O(i_0)$.
\item $\bGamma_{\tilde E}$ satisfies the weakly uniform middle-eigenvalue conditions
and it normalizes and virtually centralizes $\CN$.
\item $N_K$ acts properly discontinuously, discretely and cocompactly on $K^o$. Also, $K = \{k\} \ast K''$ as above. 
\end{itemize} 
Then $K''$ embeds projectively in the closure of $\Bd \torb$
invariant under $\bGamma_{\tilde E}$, and 
one can find a coordinate system so that for every $\CN(\vec v)$ and each element $g$ of 
$\bGamma_{\tilde E}$ is written so that
\begin{itemize}
\item $C_1(\vec v)=0, c_2({\vec v})=0$, and 
\item $C_1(g)=0$ and $c_2(g) = 0$.
\end{itemize}
\end{proposition}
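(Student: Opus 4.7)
The plan proceeds in four steps: normalize matrix forms via the preceding lemmas, find a $\bGamma_{\tilde E}$-invariant lift $P$ of $K''$ using a central element, change coordinates adapted to $P$, and conclude using the cocycle identity from Lemma \ref{lem:conedecomp1}. To begin, I would invoke Lemma \ref{lem:conedecomp1} to pass to a coordinate system in which $C_1(\vec v)=0$ for every $\vec v \in \bR^{i_0}$, in which the decomposition $\clo(K) = \{k\}\ast K''$ is realized with $K''$ properly convex of dimension $n-i_0-2$, and in which $s_1(g)=s_2(g)=0$ for every $g \in \bGamma_{\tilde E}$. Using the additional hypothesis $\mu_g=1$, Lemma \ref{lem:matrix} then gives each $g$ the shape \eqref{eqn:formg} with $a_1(g)=a_9(g)=\lambda_{\bv_{\tilde E}}(g)$, $A_5(g)=\lambda_{\bv_{\tilde E}}(g) O_5(g)$, and $O_5(g)\in O(i_0)$, leaving only $C_1(g)$, $c_2(g)$, and $c_2(\vec v)$ still to be eliminated.

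Next I would locate a $\bGamma_{\tilde E}$-invariant totally geodesic $(n-i_0-2)$-dimensional subspace $P\subset \SI^n$ that is disjoint from $V^{i_0+1}$ and projects under $\Pi_K$ isomorphically onto the hyperplane carrying $K''$. Since $\clo(K)$ is a nontrivial strict join, Proposition \ref{prop:Ben2} supplies a central $\bZ$ factor in $N_K$ acting as a scalar on the $K''$-factor; pulling this factor back to a finite-index subgroup of $\bGamma_{\tilde E}$, and using the hypothesis that $\bGamma_{\tilde E}$ virtually centralizes $\CN$, I would obtain a genuinely central element $\zeta$ whose form \eqref{eqn:formg} has $S(\zeta)=\alpha\,\Idd_{n-i_0-1}$ for a scalar $\alpha$. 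The weakly uniform middle-eigenvalue condition forces $\alpha \neq \lambda_{\bv_{\tilde E}}(\zeta)$, so the generalized $\alpha$-eigenspace $P$ of $\zeta$ in $\bR^{n+1}$ has the right dimension and avoids $V^{i_0+1}$. Invariance of $P$ under every $g$ commuting with $\zeta$ is automatic, and uniqueness of the top-norm eigenspace extends this to full $\bGamma_{\tilde E}$-invariance after finite averaging.

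Now change coordinates so that $P=\{x'_{n-i_0}=x'_{n-i_0+1}=\cdots=x'_{n+1}=0\}$; the change permutes only the first $n-i_0-1$ coordinates, so it leaves both the lower-right $(i_0+2)\times(i_0+2)$ block of \eqref{eqn:formg} and every $\CN(\vec v)$ unchanged, preserving $C_1(\vec v)=0$ and the shape \eqref{eqn:formg}. In these coordinates $\bGamma_{\tilde E}$-invariance of $P$ is precisely $C_1(g)=0$ and $c_2(g)=0$ for every $g$. The projective embedding $K'' \hookrightarrow \Bd \torb$ then follows by proper convexity of $\torb$ and Lemma \ref{lem:simplexbd}: the intersection $P \cap \clo(\torb)$ cannot meet $\torb$, yet it accumulates the $\zeta$-orbits, so it lies in $\Bd \torb$ and contains a projective copy of $K''$.

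Finally, to force $c_2(\vec v)=0$, substitute $C_1(\zeta)=0$, $O_5(\zeta)=\Idd$ (from Corollary \ref{cor:formg2}), $S(\zeta)=\alpha\,\Idd$, and $\mu_\zeta=1$ into \eqref{eqn:conedecomp1}: the identity collapses to $(\lambda_{\bv_{\tilde E}}(\zeta)-\alpha)c_2(\vec v) = 0$, and since $\alpha \neq \lambda_{\bv_{\tilde E}}(\zeta)$ one gets $c_2(\vec v)=0$ for all $\vec v$. The hard part will be the second step, producing the central element $\zeta$ with both the scalar shape $S(\zeta)=\alpha\,\Idd$ and with genuine $\bGamma_{\tilde E}$-commutation rather than commutation only modulo $N$; this relies essentially on the hypotheses that $\bGamma_{\tilde E}$ virtually centralizes $\CN$ and that $N_K$ acts discretely and cocompactly, which together impose the product structure of $\bGamma_{\tilde E}$ dictated by Benoist's theorem and permit isolation of $\zeta$.
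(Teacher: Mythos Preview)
Your strategy—choosing a lift $\zeta$ of a central element of $N_K$ with $S(\zeta)=\alpha\,\Idd_{n-i_0-1}$ and taking $P$ to be its $\alpha$-eigenspace—is a genuinely different route from the paper's.  The paper instead produces a generic return element $\gamma_{m_0}$ (with $S(\gamma_{m_0})=\delta_{m_0}O_{m_0}$ merely bounded, not scalar) from the cocompact action on $K^o$, shows its small-eigenvalue subspace $S(K''_{m_0})$ is fixed pointwise by $\CN$ via the commutation $\gamma_{m_0}n_1=n_1\gamma_{m_0}$, and then has to invoke Theorem~\ref{thm:distanced} on the resulting tube $B(K''_{m_0})$ to locate the $\bGamma'_{\tilde E}$-invariant cross-section inside it.  Your approach, once the gap below is closed, avoids Theorem~\ref{thm:distanced} altogether; the price is that you must manufacture the special central element, which the paper never needs.

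There is a real gap in your step~2.  Your $\zeta$ is central only in $N_K$, so for arbitrary $g\in\bGamma_{\tilde E}$ you get $g\zeta g^{-1}=\zeta n$ with $n\in N$ possibly nontrivial; neither ``uniqueness of the top-norm eigenspace'' nor ``finite averaging'' shows $g(P)=P$, and appealing to a product structure on $\bGamma_{\tilde E}$ itself (rather than on $N_K$) is circular at this stage.  The argument that actually works is: choose $\zeta\in\bGamma'_{\tilde E}$ so that $\zeta$ centralizes $\CN$; then $n$ commutes with $\zeta$, hence $n$ preserves $P$; since $(n-\Idd)(\bR^{n+1})\subset V^{i_0+1}_\infty$ while $P\cap V^{i_0+1}_\infty=0$ (the eigenvalues $\alpha$ and $\lambda_{\bv_{\tilde E}}(\zeta)$ differ), one gets $n|_P=\Idd$, so $\zeta n$ has the same $\alpha$-eigenspace $P$, whence $g(P)=P$.  (Incidentally, $\alpha\ne\lambda_{\bv_{\tilde E}}(\zeta)$ follows simply because $\bar\zeta$ is nontrivial in $N_K$, not from the weak middle-eigenvalue condition.)

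A smaller error in step~3: the coordinate change sending $P$ to standard position is a lower-triangular conjugation by $\left(\begin{smallmatrix}\Idd&0\\ B&\Idd\end{smallmatrix}\right)$, not a change ``permuting only the first $n-i_0-1$ coordinates'', and it \emph{does} modify $\CN(\vec v)$ by sending $c_2(\vec v)\mapsto c_2(\vec v)+\vec v B_2$.  What you need—and what does hold—is that the $\alpha$-eigenspace computation forces the top row of $B$ to vanish, so $C_1(\vec v)=0$ survives and your final application of \eqref{eqn:conedecomp1} goes through.
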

\begin{proof}
Let $\bGamma'_{\tilde E}$ denote the finite index subgroup of $\bGamma_{\tilde E}$ centralizing $\CN$
and a product of cyclic and hyperbolic groups. 


By Lemma \ref{lem:conedecomp1}, $K$ is a strict join $\{k\} * K''$ for a point $k \in K$ and 
an open convex domain $K''$ of dimension $n-i_0-2$
in $\Bd K$. 
Since $K^o/N_K$ is compact, $K$ has a compact set $F$ which every orbit meets.
$K$ is foliated by open lines from a point $k \in K$ to points of $K''$.
Call these $k$-radial lines. 
Take such a line $l$ and a sequence of points 
$\{k_m\}$ so that 
\[k_m \ra  k_\infty \in K^{\prime \prime o} \hbox{ as } m \ra \infty.\] 
Hence, $\Gamma$ contains a sequence $\{\gamma_m\}$
so that $\gamma_m(k_m) \in F$ and $\gamma_m(l)$ is a line passing $F$
and $\gamma_m(\partial_1 l) \ra k_\infty$ for the endpoint $\partial_1 l$ of $l$ in $K''$. 
Since $K''$ is properly convex, $\{\gamma_m| K''\}$ is a bounded sequence of 
transformations and hence $\gamma_m$ is of form: 
\renewcommand{\arraystretch}{1.2}
\begin{equation}\label{eqn:gamman}
\newcommand*{\temp}{\multicolumn{1}{r|}{}}
\left( \begin{array}{ccccccc} 
\delta_m O_m & \temp & 0 & \temp & 0 & \temp & 0 \\ 
 \cline{1-7}
0 &\temp & \lambda_m &\temp & 0 &\temp & 0 \\ 
 \cline{1-7}
 C_{1,m} &\temp & \lambda_m \vec{v}^T_m &\temp & \lambda_m O_5(\gamma_m)  &\temp & 0 \\ 
 \cline{1-7}
c_{2,m} &\temp & a_7(\gamma_m) &\temp & \lambda_m \vec{v}_m O_5(\gamma_m) &\temp & \lambda_m 
\end{array} 
\right)
\end{equation}
where $\{O_m\}$ is a bounded sequence of matrices in $\Aut(K'')$ in 
$\SL_{\pm}(n-i_0-1, \bR)$
since the set of projective automorphisms of 
$K''$ moving interior points uniformly bounded distances is bounded. 

We note that $\delta_m^{n-i_0-1} \lambda_m^{i_0+2} = 1$
and $\delta_m/\lambda_m \ra 0$ as $\gamma_m|l$ pushes the points towards
the vertex $k$ of $K$. 
For the chosen single element $\gamma_m$, we can find the subspace 
$\SI^{n-i_0-1}$ containing $K''$ and $\bv_{\tilde E}, \bv_{E -}$ where $\gamma_m$ acts on by the $(n-i_0)\times (n-i_0)$-matrix of form 
\[\frac{1}{(\lambda_m \delta_m^{n-i_0-1})^{\frac{1}{n-i_0}}}
\left( \begin{array}{cc} \delta_m O_m & 0 \\ 0 & \lambda_m \end{array} \right)\in \SL_{\pm}(n-i_0, \bR).\]

We choose $m$ so that the norms of eigenvalues of $\delta_m O_m$ are strictly much smaller 
than the norm of $\lambda_m$, the unique norm of the eigenvalues of the lower-right $(i_0+2)\times(i_0+2)$-matrix. 
We fix one such $m_0$. 
Let $S(K''_{m_0})$ denote the $\gamma_{m_0}$-invariant subspace corresponding to subspaces 
associated with the real sum of the real Jordan-block subspaces with norms of eigenvalues $< \lambda_{m_0}$. 
We choose a coordinate system of $\SI^n$ so that $\gamma_{m_0}$ is of form 
so that $C_{1, m_0} =0, c_{2, m_0}=0$. 
Then a compact proper convex domain $K''_{m_0}$ in $S(K''_{m_0})$ maps to $K''$ under 
under the projection $\Pi_K: \SI^n - \SI^{i_0}_\infty \ra \SI^{n-i_0-1}$.

We will now show that $K''_{m_0}$ is invariant under $\bGamma_{\tilde E}$: 
let $n_1$ be an element of $\CN$. Then $\gamma_{m_0} n_1 = n_1 \gamma_{m_0}$. 
Since $n_1 \gamma_{m_0}(S(K''_{m_0})) = \gamma_{m_0} n_1(S(K''_{m_0}))$, we have 
\[n_1(S(K''_{m_0})) = \gamma_{m_0} n_1(S(K''_{m_0})).\]
Since the form of $\gamma_{m_0}$ determines $S(K''_{m_0})$
using the span of real Jordan-block subspaces, 
and $n_1(S(K''_{m_0}))$ is a $\gamma_{m_0}$-invariant subspace, 
and $\CN$ is connected, 
we obtain $n_1(S(K''_{m_0})) = S(K''_{m_0})$
for all $n_1 \in \CN$. Now we obtain 
\[n_1(K''_{m_0}) = K''_{m_0} \hbox{ for all } n_1 \in \CN\]
since they map to $K_{m_0}$ under $\Pi_K$. 
Hence, $C_1(\vec v) = 0$ and $c_2(\vec v) = 0$ 
for every $\vec v \in \bR^i$ in this system of coordinates. 

Let $B(K''_{m_0})$ denote the tube that is a union of great segment
passing $K''_{m_0}$.
Let $S(B(K''_{m_0}))$ denote the minimal subspace of $\SI^n$ containing $B(K''_{m_0})$. 
Let $B(\clo(\tilde \Sigma_{\tilde E}))$ denote the tube with vertex  $\bv_{\tilde E}$ and $\bv_{\tilde E-}$ 
corresponding to directions of $\clo(\tilde \Sigma_{\tilde E})$. 
We note that $B(K''_{m_0}) = B(\clo(\tilde \Sigma_{\tilde E})) \cap S(B(K''_{m_0}))$. 

Since $C_1(\vec v) = 0$ and $c_2(\vec v) = 0$,  
$S(B(K''_{m_0}))$ is the unique subspace of fixed points of $\CN(\vec v)$ 
for all $\vec v$ according to the equation \ref{eqn:nilmatstd}. 

For any element $g \in \bGamma'_{\tilde E}$, 
we also have $\CN(\vec v) g = g \CN(\vec v)$ for all $\vec v \in \bR^i$. 
Again we have $\CN(\vec v) g(x) = g(x)$ for all 
$x \in S(B(K_{m_0}))$ and $\vec v \in \bR^i$. 
This implies that 
\[g(K''_{m_0}) \subset S(B(K''_{m_0})) \cap B(\clo(\tilde \Sigma_{\tilde E})) = B(K''_{m_0})\] 
since $S(K''_{m_0})$ is the unique subspace of fixed points of $\CN$. 
Therefore, $\bGamma'_{\tilde E}$ acts on $B(K''_{m_0})$. 

Each irreducible hyperbolic factor group 
$\bGamma_i \cap \bGamma'_{\tilde E}$ satisfies the uniform 
middle-eigenvalue conditions. Thus, it acts on an invariant set $K_i \subset \clo(K'')\subset \SI^{n-i_0-1}$.  
(Here, we don't consider one corresponding to the vertex $k$ in $K$
and hence the irreducible invariant subspaces are all in the complimentary subspace of $k$ in $K \subset \SI^{n-i_0-1}$. )
Thus, $\bGamma_i$ acts on a set $K'_i$ in $B(K''_{m_0})$ distanced 
from $\bv_{\tilde E}$ and $\bv_{\tilde E-}$, which is unique if $\bGamma_i$ is hyperbolic by Theorem \ref{thm:distanced}.

If $\bGamma_i$ is not hyperbolic, then it is a trivial group. 
$\bGamma'_{\tilde E}$ acts each segment $B(K_i)$ corresponding 
to the $0$-dimensional $K_i$ either
\begin{itemize}
\item trivially or 
\item with a unique common 
fixed point in the interior or 
\item without fixed point in 
the interior but fixing the vertices of $B(K_i)$. 
\end{itemize}
Since $\gamma_{m_0}$ acts on $S(K''_{m_0})$, and 
$S(K''_{m_0}) \cap B(K_i)$ is a point in the interior, 
only the second case is possible. 
(See the proof of Theorem \ref{thm:distanced}.)
We choose the unique fixed point or the arbitrary fixed point $K'_i$ in the  interior of $B(K_i)$.
The strict join of $K'_1, \dots, K'_{l_0}$ in $B(K''_{m_0})$ is 
totally geodesic and $\bGamma'_{\tilde E}$-invariant. 
Therefore, $K''_{m_0}$ where $\gamma_{m_0}$ acts
must be this strict join. 

Since $\bGamma_{\tilde E}/\bGamma'_{\tilde E}$ is finite, 
we obtain finitely many sets of form $g(K''_{m_0})$ for $g \in \bGamma_{\tilde E}$. 
If they are not identical, at least one $g'$ satisfies
$g'(K''_{m_0}) \ne K''_{m_0}$. Then $\gamma_{m_0}^i(g'(K''_{m_0}))$ then produces 
infinitely many distinct sets of form $g(K''_{m_0})$, which is a contradiction. 
Hence $g(K''_{m_0}) = K''_{m_0}$ for all $g \in \bGamma_{\tilde E}$. 
This implies that $C_1(g)=0$ and $c_2(g) = 0$.

\end{proof}

We remark that Propositions \ref{prop:decomposition} and \ref{prop:decomposition2}
have very similar proofs. The first one is much simpler, and so we wrote both proofs. 
It seems worth repeating the proof for convincing the readers. 


\subsection{Joins and quasi-joined ends} \label{subsec:join}

We will now discuss about joins and their generalizations in depth in this subsection.


\begin{hypothesis}\label{h:join} 
Let $G$ be a p-end fundamental group. 
We continue to assume as in Lemma \ref{lem:similarity} for $G$. 
\begin{itemize} 
\item Every $g \in \Gamma \ra M_g$ is 
so that $M_g$ is in a fixed compact group $O(i_0)$. Thus, $\mu_g = 1$ identically. 
\item $G$ acts on the subspace $\SI^{i_0}_\infty$ containing 
$\bv_{\tilde E}$ and the properly convex 
domain $K_{m_0}''$ in the subspace $\SI^{n-i_0-2}$ 
disjoint from $\SI^{i_0}_\infty$. 
\item $\CN$ acts on these two subspaces fixing every points 
of $\SI^{n-i_0-2}$.
\end{itemize} 
\end{hypothesis} 



We assume $\bv_{\tilde E}$ to have coordinates $[0, \dots, 0, 1]$.
$\SI^{n-i_0-2}$ contains the standard points $[e_i]$ for $i=1, \dots, n-i_0 -1$ 
and $\SI^{i_0+1}$ contains $[e_i]$ for $i=n-i_0, \dots, n+1$. 
Let $H$ be the open $n$-hemisphere defined by $x_{n-i_0} > 0$. Then by convexity of $U$, 
we can choose $H$ so that $K'' \subset H$ and $\SI^{i_0}_\infty \subset \clo(H)$. 

By Hypothesis \ref{h:join}, 
elements of $\CN$ have the form of equation \ref{eqn:nilmatstd} with 
\[C_1(\vec{v})=0, c_2(\vec{v})=0 \hbox{ for all } \vec{v} \in \bR^{i_0}\] 
and the group $G$ of form of equation \ref{eqn:formg} with 
\[s_1(g) =0, s_2(g) = 0, C_1(g) = 0, \hbox{ and } c_2(g) = 0.\] 
We assume further that $O_5(g) = \Idd_{i_0}$. 


Again we recall the projection $\Pi_K: \SI^n - \SI^{i_0}_\infty \ra \SI^{n-i_0-1}$. 
$G$ has an induced action on $\SI^{n-i_0 -1}$ and
acts on a properly convex set $K''$ in $\SI^{n-i_0-1}$ so that 
$K$ equals a strict join $k* K''$ for $k$ corresponding to $\SI^{i_0+1}$. 
(Recall the projection $\SI^n - \SI^{i_0}_\infty$ to $\SI^{n-i_0 -1}$. )


\begin{figure}
\centering
\includegraphics[height=6cm]{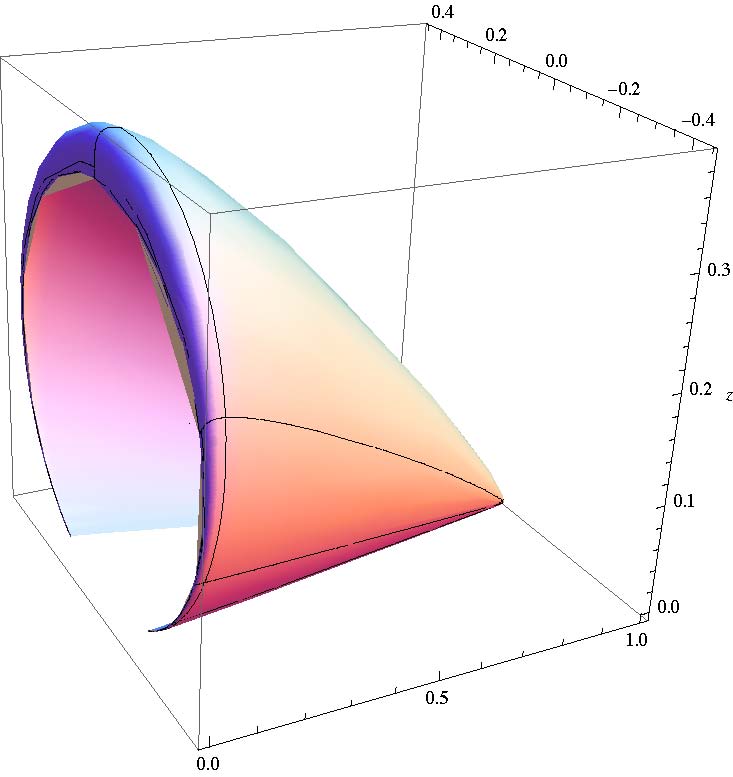}
\caption{A figure of quasi-joined p-R-end-neighborhood}
\label{fig:quasi-j}
\end{figure}

We define invariants from the form of equation \ref{eqn:formg}
\[\alpha_7(g):= \frac{a_7(g)}{\lambda_{\bv_{\tilde E}}(g)} - \frac{||\vec{v}_g||^2}{2} \]
for every $g \in G$. 
\[\alpha_7(g^n) = n \alpha_7(g) \hbox{ and }
\alpha_7(gh) = \alpha_7(g) + \alpha_7(h), \hbox{ whenever }g, h, gh \in G.\] 

Here $\alpha_7(g)$ is determined by factoring  
the matrix of $g$ into commuting matrices of form
\renewcommand{\arraystretch}{1.2}
\begin{equation}\label{eqn:kernel}
\newcommand*{\temp}{\multicolumn{1}{r|}{}}
\left( \begin{array}{ccccccc}
\Idd_{n-i_0-1} & \temp & 0 & \temp & 0 & \temp & 0 \\   
 \cline{1-7}
0                    &\temp & 1 & \temp & 0 & \temp & 0  \\ 
 \cline{1-7}
0                    & \temp &   0 &\temp & \Idd_{i_0} &\temp & 0 \\ 
 \cline{1-7}
0                    &\temp &  \alpha_7(g) &\temp & \vec{0} &\temp & 1 \\ 
\end{array} 
\right)
\left( \begin{array}{ccccccc}
S_g & \temp & 0 & \temp & 0 &\temp & 0 \\ 
\cline{1-7}
0 & \temp & \lambda_{\bv_{\tilde E}}(g) & \temp & 0 & \temp & 0  \\ 
 \cline{1-7}
0& \temp & \lambda_{\bv_{\tilde E}}(g) \vec{v}_g &\temp & \lambda_{\bv_{\tilde E}}(g) O_5(g) &\temp & 0 \\ 
 \cline{1-7}
0& \temp & \lambda_{\bv_{\tilde E}}(g)  \frac{||\vec{v}||^2}{2}  
&\temp & \lambda_{\bv_{\tilde E}}(g) \vec{v}_g O_5(g) &\temp & \lambda_{\bv_{\tilde E}}(g) \\ 
\end{array} 
\right).
\end{equation}

\begin{remark} \label{rem:alpha7}
We give a bit more explanations. 
Recall that the space of segments in a hemisphere $H^{i_0+1}$ with the vertices $\bv_{\tilde E}, \bv_{\tilde E-}$ 
forms an affine space $A^i$ one-dimension lower, and the group $\Aut(H^{i_0+1})_{\bv_{\tilde E}}$ of projective automorphism 
of the hemisphere fixing $\bv_{\tilde E}$ maps to $\Aff(A^{i_0})$ with kernel $K$ equal to 
transformations of an $(i_0+2)\times (i_0+2)$-matrix form
\renewcommand{\arraystretch}{1.2}
\begin{equation}\label{eqn:kernel2}
\newcommand*{\temp}{\multicolumn{1}{r|}{}}
\left( \begin{array}{ccccc} 
1 & \temp & 0 & \temp & 0  \\ 
 \cline{1-5}
0 &\temp & \Idd_{i_0} &\temp & 0 \\ 
 \cline{1-5}
b &\temp & \vec{0} &\temp & 1 \\ 
\end{array} 
\right)
\end{equation}
where $\bv_{\tilde E}$ is given coordinates $[0, 0, \dots, 1]$ and 
a center point of $H^{i_0+1}_l$ the coordinates $[1, 0, \dots, 0]$. 
In other words the transformations are of form 
\begin{align}\label{eqn:temp}
\left[
\begin{array}{c}
 1\\
 x_1  \\
 \vdots \\ 
 x_{i_0} \\ 
 x_{i_0+1}   
\end{array}
\right]
\mapsto 
\left[
\begin{array}{c}
 1 \\
 x_1\\
 \vdots \\ 
 x_{i_0} \\ 
 x_{i_0+1}+b
\end{array}
\right]
\end{align}
and hence $b$ determines the kernel element. 
Hence $\alpha_7(g)$ indicates the translation towards $\bv_{\tilde E}=[0,\dots, 1]$. 
\end{remark}

We define $G_+$ to be a subset of $G$ consisting of elements $g$ so that 
the largest norm $\lambda_1(g)$ of the eigenvalue occurs at the vertex $k$. 
Then since $\mu_g=1$, we necessarily have $\lambda_1(g) = \lambda_{\bv_{\tilde E}}(g)$
with all other norms of the eigenvalues occurring at $K''$ is strictly less than $\lambda_{\bv_{\tilde E}}(g)$.  
The second largest norm $\lambda_2(g)$ of the eigenvalue occurs at the complementary subspace $K''$ of $k$ in $K$.
Thus, $G_+$ is a semigroup.      
The condition that $\alpha_7(g) \geq 0$ for $g \in G_+$ is said to be the 
{\em positive translation condition}.


Again, we define \[\mu_7(g) : = \frac{\alpha_7(g)}{\log\frac{\lambda_{\bv_{\tilde E}}(g)}{\lambda_2(g)}}\] where 
$\lambda_2(g)$ denote the second largest norm of the  eigenvalues of $g$ and 
we restrict $g \in G_+$. 
The condition $\mu_7(g) > C_0, g \in  G_+$ for a uniform constant $C_0$
is called the {\em uniform positive translation condition}. 

Suppose that $G$ is a p-end fundamental group.



For this proposition, we do not assume $N_K$ is discrete. 

\begin{proposition} \label{prop:qjoin}
Let $\Sigma_{\tilde E}$ be the end orbifold of a nonproperly convex R-end $\tilde E$ of a strongly tame 
$n$-orbifold $\orb$ with radial or totally geodesic ends. Let $G$ be the p-end fundamental group. 
Let $\tilde E$ be an NPCC p-R-end
and $G$ acts on a p-end-neighborhood 
$U$ fixing $\bv_{\tilde E}$. Let $K, K'', \SI^{i_0}_\infty,$ and $\SI^{i_0+1}$ be as above.
We assume that $K^o/G$ is compact, $K= K''* k$ in $\SI^{n-i_0}$ with $k$ 
corresponding to $\SI^{i_0+1}$ under the projection $\Pi_K$. 
Assume that 
\begin{itemize} 
\item $G$ satisfies the weakly uniform middle-eigenvalue condition. 
\item Elements of $G$ and $\CN$ are of form of equation \ref{eqn:form1}
with \[C_1(\vec{v}) = 0, c_2(\vec{v})=0, C_1(g) = 0, c_2(g) =0\]
for every $\vec{v} \in \bR^{i_0}$ and $g \in G$.
\item $G$ normalizes $\CN$, and $\CN$ acts on $U$ 
and each leaf of $\mathcal{F}$ of $\tilde \Sigma_{\tilde E}$. 
\end{itemize} 
Then
\begin{itemize} 
\item[(i)] The condition $\alpha_7 \geq 0$ is a necessary condition that $G$ acts on a properly convex domain in $H$.
\item[(ii)] The uniform positive translation condition is equivalent to the existence of
properly convex p-end-neighborhood $U'$ whose closure meets $\SI^{i_0+1}_k$ at $\bv_{\tilde E}$ only.
\item[(iii)] $\alpha_7$ is identically zero if and only if $U$ is a join and $U$ is properly convex. 
\end{itemize}
\end{proposition}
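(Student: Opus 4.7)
\medskip

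\noindent\textbf{Proof plan for Proposition \ref{prop:qjoin}.}

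The plan is to analyze each of the three statements via the factorization in equation~\eqref{eqn:kernel}, which splits every $g \in G$ into the commuting product of a semisimple part (with the same eigenvalue data and $\vec v_g$) and a pure ``translation toward $\bv_{\tilde E}$'' by the amount $\alpha_7(g)$ (see Remark \ref{rem:alpha7}). The role of $\alpha_7$ is precisely to record the displacement along the fiber direction of the projection $\Pi_K$ over a loop in $K^o/G_K$. Throughout, I will exploit the fact that for $g \in G_+$ the unique largest norm eigenvalue is $\lambda_{\bv_{\tilde E}}(g)$, realized both on the $k$-direction (a single eigenvector in $\SI^{n-i_0-1}$) and on all of $\SI^{i_0+1}$ as a single Jordan block of cusp-type; second largest norms are controlled by $S_g$ acting on $K''$.

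For (i), I will argue by contradiction. Suppose $\alpha_7(g) < 0$ for some $g \in G_+$. Pick a generic $x = [v] \in U$ and decompose $v = v_1 + v_2 + v_3$ with $[v_1] \in k$, $[v_2] \in K''$, $[v_3] \in \SI^{i_0}_\infty$; using the factored form, $g^n(x)$ has its ``$\bv_{\tilde E}$-translation'' shifted by $n\alpha_7(g)$ while the semisimple part pushes the $k$-coordinate to dominate over $K''$. A careful bookkeeping on the lower $(i_0+2)\times(i_0+2)$ block shows that when $\alpha_7 < 0$ the orbit $\{g^n(x)\}$ necessarily produces two limit directions forming an antipodal pair inside $\clo(U) \cap \SI^{i_0+1}$ (one from the $n\to\infty$ direction near $\bv_{\tilde E-}$, one from the dominated branch converging to $\bv_{\tilde E}$). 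This contradicts proper convexity of $U$ and gives $\alpha_7(g) \geq 0$ for every $g \in G_+$.

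For (iii), if $\alpha_7 \equiv 0$, then in the factorization \eqref{eqn:kernel} every element of $G$ equals its semisimple part, and together with the hypothesis $C_1(g)=c_2(g)=0$ and the analogous vanishing for $\CN$, the representation block-diagonalizes into an action on $\SI^{n-i_0-2}$ (the span of $K''$) and on $\SI^{i_0+1}$. I will then take the convex hull in $\clo(\torb)$ of the orbit of a fundamental domain of the $K''$-action together with a cusp orbit in $\SI^{i_0+1}$, and use Lemma \ref{lem:bdhoro} on the $\SI^{i_0+1}$-factor together with proper convexity of $K''$ to conclude $U$ is the strict join of a properly convex domain in $\SI^{n-i_0-2}$ and a horoball in $\SI^{i_0+1}$, hence properly convex. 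The converse, that a join forces $\alpha_7 \equiv 0$, is immediate from Remark \ref{rem:alpha7}: a join decomposition means the kernel translation \eqref{eqn:kernel2} is trivial on every element.

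The hard part is (ii), the quantitative equivalence. For the direction ``uniform positive translation implies properly convex $U'$ meeting $\SI^{i_0+1}$ only at $\bv_{\tilde E}$,'' I will form the convex hull in $\clo(\torb)$ of the orbit of a single well-chosen compact set $F \subset U$ intersecting the cusp orbit of $\CN$. For any sequence $g_i \in G$, write $g_i = \zeta_i h_i$ where $h_i$ has bounded $\lambda_{\bv_{\tilde E}}/\lambda_2$ and $\zeta_i \in G_+$; then estimate $g_i(F)$ using the factored form together with the bound $\alpha_7(\zeta_i) \geq C_0 \log(\lambda_{\bv_{\tilde E}}(\zeta_i)/\lambda_2(\zeta_i))$ and the weak uniform middle-eigenvalue condition. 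These estimates should force the limit of $g_i(F)$ to lie in $\{\bv_{\tilde E}\} \cup (\clo(K'')$ lifted to $\Bd {\mathcal T})$, precluding any antipodal limit pair and hence yielding proper convexity of the hull; by construction the only intersection of its closure with $\SI^{i_0+1}$ is $\bv_{\tilde E}$. For the converse, I will take such a properly convex $U'$, choose orbits approaching the boundary, and read off from the matrix form \eqref{eqn:formg} applied to a point of $U'$ near $\bv_{\tilde E}$ that a failure of the uniform bound $\mu_7 > C_0$ would allow construction of a sequence $g_{i_j} \in G_+$ with $g_{i_j}(x)$ accumulating on a segment in $\SI^{i_0+1}$ through $\bv_{\tilde E-}$, contradicting the hypothesis. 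The main technical obstacle is controlling the interaction between the unipotent $\vec v_g$-part and the translation $\alpha_7(g)$ when $\|\vec v_g\|^2$ is comparable to $\alpha_7(g)$, since both contribute to the last row of the matrix; this will require expanding $g^n$ carefully and using that $O_5(g)=\Idd$ so the cross terms add coherently rather than cancel.
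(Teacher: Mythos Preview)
Your overall direction is right, but the argument for (i) has a real gap, and the fix for it also resolves the ``main technical obstacle'' you flag in (ii).

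You try to obtain an antipodal pair from the orbit of a single generic point $x$ under $g^n$. This does not work: for $g\in G_+$ the $\SI^{i_0+1}$-block has a single eigenvalue and is a cusp matrix times the $\alpha_7$-translation, so if $\vec v_g\neq 0$ then the quadratic term $n^2\|\vec v_g\|^2/2$ dominates $n\alpha_7(g)$ in the last coordinate of $g^n(x)$ and the limit as $n\to\infty$ is $\bv_{\tilde E}$ regardless of the sign of $\alpha_7(g)$; the limit as $n\to-\infty$ lands in the span of $K''$, not at $\bv_{\tilde E-}$. So no antipodal pair appears. The paper instead tracks not a point but the whole \emph{horoball slice} $E_l:=H_l\cap U$ in each leaf: since $\CN$ acts on $U$ and on each $H_l$, each $E_l$ is a standard paraboloid $\{x_{n+1}> x_{n-i_0+1}^2+\cdots+x_n^2+C_l\}$ in affine coordinates, and the induced map $L_g:\bR^{i_0}\to\bR^{i_0}$ in the factorization \eqref{eqn:kernel} preserves this quadric up to a shift of $C_l$ by exactly $\alpha_7(g)$. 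Thus when $\alpha_7(g)<0$ the images $g^n(E_{k'})$ are horoballs with $C_{g^n(k')}\to-\infty$, so they fill out an entire $(i_0+1)$-hemisphere in the limit, contradicting proper convexity. This is the key geometric observation, and it completely decouples the $\vec v_g$-part (which just moves points \emph{along} the paraboloid) from the $\alpha_7$-part (which shifts the paraboloid's height), so your anticipated obstacle of ``cross terms'' simply does not arise.

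With this in hand, the rest of your plan for (ii) goes through much more cleanly along the paper's lines: for the forward direction one takes a fundamental domain $F$ in $K^o$ for a cyclic $\langle g\rangle\subset G_+$, bounds $\alpha_7$ below on $G_F=\{h:h(x_0)\in F\}$, so the horoball constants $C_l$ are uniformly bounded below over $F$, and then the convex hull $D_F$ of $\bigcup_{h\in G_F}h(E_{k'})$ is properly convex; iterating by $g$ pushes $D_F$ either to $\bv_{\tilde E}$ or into $K''*\bv_{\tilde E}$, giving the properly convex $U'$. For the converse one argues as you suggest: if $\mu_7(g_i)\to 0$ along $G_+$ one rechooses powers to get $\alpha_7(g_i^{n_i})\to 0$ while $\lambda_1/\lambda_{K''}\to\infty$, producing a non-degenerate horoball in $\Bd U'\cap\SI^{i_0+1}$, hence $\clo(U')\cap\SI^{i_0+1}\neq\{\bv_{\tilde E}\}$; and if some $\alpha_7(g)=0$ one first forces $\alpha_7\equiv 0$ (else a further iterate blows the horoball to a hemisphere) and then invokes Lemmas~\ref{lem:decjoin} and~\ref{lem:joinred} to contradict strong irreducibility. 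Part (iii) is then an immediate corollary of (i) and this last step, rather than requiring the direct join construction you sketch.
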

\begin{proof} 
We projectively identify the smallest open tube containing $U$ 
as product of a bounded convex set equivalent to $K^o$ 
multiplied by a complete affine space of dimension $i_0+1$ 
in an affine space given by $H^o$. 
Each of $E_l := H_l \cap U$ is given by \[x_{n+1} > x_{n-i_0+1}^2 + \cdots + x_{n}^2 + C_l \] since $\CN$ acts on each
where $C_l$ is a constant depending on $l$ and $U'$. (See Section \ref{subsub:quadric}.)

Let $A^n$ be an affine space containing $U$ with $K'', \bv_{\tilde E}, \SI^{i_0}_\infty \subset \Bd A^n$.
This gives global coordinate functions $x_{n-i_0+1}, \dots, x_{n+1}$ on $A^n$ where we set $x_{n-i_0} = 1$.

Let $\Pi_{i_0}: U \ra \bR^{i_0}$ be the projection to the last 
$i_0+1$ coordinates $x_{n-i_0+1}, \dots, x_{n+1}$. 
We obtain a commutative diagram and an induced $L_g$ 
\begin{alignat}{3}
H_l  & \stackrel{g}{\ra} & g(H_l) \nonumber \\
\Pi_{i_0} \downarrow &  & \Pi_{i_0} \downarrow \nonumber\\ 
\bR^{i_0} & \stackrel{L_g}{\ra}  & \bR^{i_0} 
\end{alignat}
By Equation \ref{eqn:kernel}, 
$L_g$ preserves the quadric above in
the form of the projection up to translations in the $x_{n+1}$-axis direction.

Suppose that $G$ acts with a uniform positive translation condition.
Given a point $x = [\vec v] \in U'\subset \SI^n$ where 
$\vec v = \vec v_s + \vec v_h$ 
where $\vec v_s$ is in the direction of $K''$ and 
$\vec v_h$ is in one of $H^{i_0+1}$. 
If $g \in G_+$, then $g[\vec v] = [g \vec v_s + g \vec v_h]$ where $[g \vec v_s] \in K''$ 
and $[g \vec v_h] \in H_k$. 
The Euclidean length of $g \vec v_s$ is decreased compared to $v_s$. 
and that of $g \vec v_h$ is increased as we can deduce from the form of $g$ in Equation \ref{eqn:form1}
with $C_1(g)= 0, c_2(g)=0$.

(i) Suppose that $\alpha_7(g) < 0$ for some $g \in G_+$.
Let $k'\in K^o$.  
Then the action by $g$ gives us that $\{g^n(E_{k'})\}$ 
converges geometrically to an $(i_0+1)$-dimensional hemisphere
since $\alpha_7(g^n) \ra -\infty$ as $n \ra \infty$ implies that 
$g$ translates the affine space $H^o_{k'}$ a component  to $H^o_{g^n(k')}$
down toward $[-1,0,\dots, 0]$ in the above coordinate system. 
Thus, $G$ cannot act on a properly convex domain. 

(ii) 
Let $x \in U$. 
Choose an element $g \in G_+$ so that $\lambda_1(g) > \lambda_2(g)$
and let $F'$ be the fundamental domain in $K^o$ with respect to $\langle g \rangle$. 
This corresponds to 
a radial subset $F$ from $\bv_{\tilde E}$ 
bounded away at a distance from $k$ and $K''$ in $U$.
The set $F$ has the property that 
$|\log \frac{\lambda_1(g')}{\lambda_2(g')}| < C_F$ for a positive constant 
whenever $g'(x_0) \in F$ for a fixed $x_0 \in F$.

Let $G_F := \{g \in G| g'(x_0) \in F\}$. For $g \in G_F$, 
$ |\log \frac{\lambda_1(g)}{\lambda_2(g)}| < C$ where $C_F$ is a number depending of $F$ only.  
Hence, $\alpha_7(g)$ is bounded below by some negative number for 
$g \in G_F$ by the uniform positive translation condition. 
In the above affine coordinates for $k'\in F$, 
there is a lower bound on values of linear function obtained from $x_{n+1}$. 
Thus, the convex hull $D_F$ of $\bigcup_{g' \in G_F} g(H^o_{k'} \cap U)$ in $\clo(\torb)$ is a properly convex set because 
this is a union of lower bounded horoballs meeting $\bv_{\tilde E}$ as we can see from 
using the above coordinates. 

Since $\alpha_7(g^i) = i \alpha_7(g) \ra +\infty$ as $i \ra \infty$, we obtain that
$\{g^i(D_F)\} \ra \{ \bv_{\tilde E}\}$ for $i \ra \infty$ and 
$\{g^i(D_F)\}$ geometrically converges to a subset of  $K'' * \bv_{\tilde E}$ for $i \ra -\infty$. 
Thus, using the above coordinates, 
the convex hull of these sets is properly convex also since they are uniformly bounded from below. 


Let $U'$ be a p-end-neighborhood of $\bv_{\tilde E}$ that is the interior of 
the convex hull of $\{g_i(D_F)\}$. By the boundedness from $\bv$ of at most distance $\pi - C$ for some $C> 0$, 
the convex hull is properly convex. Then the above paragraph implies that 
$\clo(U') \cap \SI^{i_0+1}_k =\{ \bv_{\tilde E}\}$ holds. 



Conversely, suppose that $G$ acts on a properly convex p-end-neighborhood $U'$.

Let $z_1$ be a real valued projective function on $K$ with $z_1 =0$ on $K''$ and $z_1 = \infty$ on the vertex $k$. 
This induces a real valued projective function on $U$ also by precomposing with $\Pi_K$.
For an element $g \in G_+$, we take the radial fundamental domain $F$ of $U$ 
for $g$  containing $x_0$. Assume $z_1(x_0) =1$. 
Points of $F$ satisfy $1/C_F < z_1 < C_F$ for a positive constant $C_F$.  
Let $G_{C_F}$ denote this subset of $G_+$ of elements $h$ so that 
\[ 1/C_F < z_1(h(x_0)) < C_F \hbox{ and also } 1/C_F < z_1(h^{-1}(x_0)) < C_F.\]

Suppose that $G_{C_F}$ contains a sequence $g_i $ with above property so that
$\alpha_7(g_i) \ra - \infty$. Then $\{g_i(E_l)\}$ becomes larger and larger and every convergent 
subsequence converges to a hemisphere geometrically for a choice of $\pm 1$. 
Since $U$ is properly convex, this cannot happen. 
Thus, $\{\alpha_7(g)| g \in G_{C_F}\}$ is bounded below by a uniform constant. 
Similarly $\{\alpha_7(g)| g \in G_{C_F}\}$ is bounded above 
as we can use $\alpha_7(g^{-1})$. 


Suppose that $\alpha_7(g) =0$ for some $g\in G_+$. 
Then $g_i(\clo(U) \cap H_l)$ for a leaf $l$ geometrically converges to a horoball $B$ at $H_k$. 
Then $\alpha_7(h) =0$ for all $h \in G$ since otherwise $g^i(B)$ converges to a hemisphere for 
$i \ra \pm \infty$ for $h$ with $\alpha_7(h) \ne 0$. 
Now we obtain a sequence $h_i \in G_+$ with 
$\lambda_{\bv_{\tilde E}}(h_i)/\lambda_2(h_i) \ra \infty$ and $h_i| K''$ is uniformly bounded
since $\tilde \Sigma_{\tilde E}/\bGamma_{\tilde E}$ is compact. 
Since $\alpha_7(h_i)=0$, we obtain a contradiction by Lemmas \ref{lem:decjoin} and \ref{lem:joinred}. 
Therefore, $\mu_7(h) > 0$ for $h \in G_+$ by (i). ---(*)

%

Let $\lambda_{K''}(g)$ denote the largest eigenvalue of $g$ restricted to the subspace spanned by $K''$. 
Suppose that $\mu_7(g'_i) \ra 0$ for a sequence $g'_i \in G_+$. 
Then by above discussion,
we can obtain a sequence $g_i  \in \bigcup_{n \in J} g^nG_{C_F}$ for 
a fixed finite set $J$ and $\mu_7(g_i) \ra 0$.
We can assume that $ \lambda_1(g_i)/\lambda_{K''}(g_i) > 1+\eps$ for a positive constant $\eps > 0$ since we can take powers 
of $g_i$ not changing $\mu_7$. Here, $\lambda_2(g_i) = \lambda_{K''}(g_i)$. 

We obtain a sequence $n_i$, $n_i > 0$, by carefully choosing a slow growing one, so that 
\[\alpha_7(g_i^{n_i})= n_i\alpha_7(g_i) \ra 0 \hbox{ and }
 \lambda_1(g_i^{n_i})/\lambda_{K''}(g_i^{n_i}) \ra \infty.\]   
 However, from such a sequence 
 we obtain that $\{g_i^{n_i}(l \cap U)\}$ converges geometrically
 to a non-degenerate horoball at the boundary of $U$ 
 corresponding to $k$. This again implies that $U$ is a join
 and hence is a contradiction. 
 Hence $\mu_7(g) > C$ for all $g \in G_+$ and a uniform constant 
 $C > 0$. 
 This proves the converse part of (ii). 

Hence, if $h \in G_+$ with $i$ not sufficiently large, this shows that 
$\mu_7(g)$ is positive and hence bounded below. 

(i) and (*) in the proof (ii) proves (iii). 


\end{proof} 

\begin{definition}
The second case of Proposition \ref{prop:qjoin}, 
$\tilde E$ is said to be a {\em quasi-joined p-R-end} and $G$ now is called a {\em quasi-joined end group}.
An end with an end-neighborhood that is covered by a p-end-neighborhood of 
such a p-R-ends is also called a {\em quasi-joined p-R-end}. 
\end{definition}


\subsubsection{Splitting the ends}


\begin{theorem}\label{thm:NPCCcase} 
Let $\Sigma_{\tilde E}$ be the end orbifold of an NPCC p-R-end $\tilde E$ of a strongly tame  
properly convex $n$-orbifold $\orb$ with radial or totally geodesic ends. 
Assume that the holonomy group is strongly irreducible.
Let $\bGamma_{\tilde E}$ be the p-end fundamental group,
and it satisfies the weakly uniform middle-eigenvalue condition. Assume also that the semisimple quotient $N_K$ is discrete. 
Then
$\tilde E$ is a quasi-join of a T-end and a cusp type R-end. 
\end{theorem}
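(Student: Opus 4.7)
The plan is to set up the matrix structure coming from the foliation and the normalizer of the associated nilpotent group, reduce to the hypotheses of Propositions \ref{prop:decomposition} and \ref{prop:qjoin}, and then read off the quasi-join description.

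First I would invoke Theorem \ref{thm:folaff} to get the foliation $\mathcal{F}_{\tilde E}$ by complete affine leaves of dimension $i_0$, the invariant great sphere $\SI^{i_0}_\infty$ containing $\bv_{\tilde E}$, and the short exact sequence $1\to N\to \bGamma_{\tilde E}\to N_K\to 1$. Restricting $N$ to a single leaf $l$ and applying Proposition \ref{prop:affinehoro} together with Theorem \ref{thm:comphoro} identifies $N$ virtually as a discrete cocompact subgroup of a cusp group acting on an $i_0$-dimensional horoball in $\SI^{i_0+1}_l$. By Malcev, the Zariski closure $\CN$ is nilpotent with $\CN/(\CN\cap N)$ compact, and modifying the horoball by $\CN$ we may assume $\CN$ acts on each leaf as the standard cusp group. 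Choosing coordinates so that $\bv_{\tilde E}=[0,\dots,0,1]$ and $\SI^{i_0}_\infty$ is given by vanishing of the first $n-i_0$ coordinates, each element of $\CN$ takes the form in equation \eqref{eqn:nilmatstd}, and each $g\in\bGamma_{\tilde E}$ takes the standard form \eqref{eqn:matstd}. Lemma \ref{lem:similarity} then gives $a_6(g)=0$, $a_5(g)^2=a_1(g)a_9(g)$, and $M_g=\mu_g O_5(g)^{-1}$ with $O_5(g)$ in a compact group.

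The crucial step is to show $\mu_g=1$ for every $g\in\bGamma_{\tilde E}$, equivalently $a_1(g)=a_5(g)=a_9(g)=\lambda_{\bv_{\tilde E}}(g)$. Since $N_K$ is discrete and cocompact on $K^o$, Proposition \ref{prop:Ben2} gives $N_K$ virtually equal to $\bZ^{l_0-1}\times\Gamma_1\times\cdots\times\Gamma_{l_0}$ with each $\Gamma_i$ hyperbolic or trivial and $K=K_1\ast\cdots\ast K_{l_0}$. For an infinite order $g\in\bGamma_{\tilde E}$ mapping nontrivially into $N_K$, Lemma \ref{lem:eigSI2} combined with Proposition \ref{prop:eigSI} gives $\lambda_1(g)\geq\lambda(g)\geq\lambda'(g)\geq\lambda_{n+1}(g)$, where $\lambda(g),\lambda'(g)$ are the extreme norms associated to $\SI^{i_0}_\infty$. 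In the matrix \eqref{eqn:matstd} these norms are exactly $\max(a_1(g),a_5(g),a_9(g))$ and its minimum, so the inequality together with the weakly uniform middle-eigenvalue condition $\bar\lambda(g)\geq\lambda_{\bv_{\tilde E}}(g)=a_1(g)$ and the relation $a_5^2=a_1a_9$ forces $\mu_g=1$: if $\mu_g\neq 1$, then iterating $g^n$ and using the discreteness of $N_K$ to suitably perturb by an element of $\CN$ produces a sequence of orbit points whose convex hull in $\clo(\torb)$ accumulates on an antipodal pair inside $\SI^{i_0}_\infty$, contradicting proper convexity via the standard argument used in the proof of Proposition \ref{prop:eigSI}. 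This step, verifying $\mu_g=1$ from the weakly uniform middle-eigenvalue condition together with discreteness of $N_K$, is the main obstacle; the delicate point is excluding the possibility that $\lambda_{\bv_{\tilde E}}$ sits strictly between $a_5$ and $a_9$ while the full eigenvalue inequalities still hold.

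Once $\mu_g=1$ is established, Lemma \ref{lem:matrix} puts each $g$ into the form \eqref{eqn:formg} with $a_1=a_9=\lambda_{\bv_{\tilde E}}$ and $A_5(g)=\lambda_{\bv_{\tilde E}}(g)O_5(g)$ with $O_5(g)\in O(i_0)$. Now Proposition \ref{prop:decomposition} applies: $K$ decomposes as a strict join $\{k\}\ast K''$ with $K''$ embedded in $\Bd\torb$, and coordinates exist in which $C_1(\vec v)=0$, $c_2(\vec v)=0$ for all $\vec v\in\bR^{i_0}$ and $C_1(g)=0$, $c_2(g)=0$ for all $g\in\bGamma_{\tilde E}$. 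Passing to a finite index subgroup and conjugating, we may further arrange $O_5(g)=\Idd_{i_0}$ by Corollary \ref{cor:formg2} applied to the virtual centralizer of $\CN$. Hypothesis \ref{h:join} is then satisfied.

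Finally, I would apply Proposition \ref{prop:qjoin}. The properly convex p-end-neighborhood $U$ of $\tilde E$ exists by hypothesis, so part (i) of that proposition gives $\alpha_7(g)\geq 0$ on $G_+$, and part (ii) together with the existence of $U$ forces the uniform positive translation condition: otherwise, by the argument in the proof of Proposition \ref{prop:qjoin}(ii), one could extract a sequence $g_i^{n_i}$ whose images of a horoball in a leaf geometrically converge to a full hemisphere, contradicting proper convexity; excluding the degenerate case $\alpha_7\equiv 0$ (pure join) uses the strong irreducibility of $\bGamma$ via Lemmas \ref{lem:decjoin} and \ref{lem:joinred}, exactly as in the proof of (iii). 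Hence $\tilde E$ is of quasi-joined type in the sense of Proposition \ref{prop:qjoin}, with the T-end factor corresponding to the convex domain $K''\subset\SI^{n-i_0-2}$ (on which a finite index subgroup of $\bGamma_{\tilde E}$ acts cocompactly through the semisimple part $S(g)$) and the cusp type R-end factor corresponding to the cusp action of $\CN$ on $\SI^{i_0+1}$ via the standard cusp form \eqref{eqn:generator2}. This identifies $\tilde E$ as a quasi-join of a T-end and a cusp type R-end, completing the proof.
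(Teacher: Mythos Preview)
Your overall architecture matches the paper: set up the foliation and cusp group $\CN$, normalize $\CN$ by $\bGamma_{\tilde E}$, establish $\mu_g=1$, then feed into Proposition \ref{prop:decomposition} and Proposition \ref{prop:qjoin}, and finally exclude the pure join via strong irreducibility. The last step in the paper is simply deferred to the proof of Theorem \ref{thm:NPCCcase2}, which uses exactly the Lemmas \ref{lem:decjoin} and \ref{lem:joinred} you cite, so that part is fine.

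The genuine gap is your argument for $\mu_g=1$. You try to extract it from the eigenvalue inequalities of Proposition \ref{prop:eigSI} together with an antipodal-accumulation contradiction, and you correctly flag this as the obstacle. The paper bypasses this entirely with a much shorter lattice argument that you are missing: since $N_K$ is discrete, $N\subset\CN$ is a discrete cocompact subgroup, hence a lattice $L\cong\bZ^{i_0}$ in $\CN\cong\bR^{i_0}$. Because $\bGamma_{\tilde E}$ normalizes $N$ (not merely $\CN$), conjugation by $g$ sends $L$ to itself, so $M_g:L\to L$ is conjugate into $\SL_\pm(i_0,\bZ)$. In particular $|\det M_g|=1$, and since $M_g=\mu_g O_5(g)^{-1}$ with $O_5(g)$ orthogonal this forces $\mu_g^{i_0}=1$, i.e.\ $\mu_g=1$. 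Moreover the image $\{M_g\}$ is then a compact subgroup of $\SL_\pm(i_0,\bZ)$, hence finite, so a finite-index subgroup of $\bGamma_{\tilde E}$ centralizes $\CN$; this is exactly the hypothesis needed for Proposition \ref{prop:decomposition} and for Corollary \ref{cor:formg2}. Your route never uses that $N$ itself is preserved, only that $\CN$ is normalized, which is why you cannot close the argument. (Incidentally, $\lambda_{\bv_{\tilde E}}(g)=a_9(g)$ in the coordinates of \eqref{eqn:matstd}, not $a_1(g)$; this slip suggests the eigenvalue bookkeeping in your sketch would not go through as written.)
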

\begin{proof} 
We will continue to use the notation developed above in this proof. 
By Lemma \ref{lem:similarity}, 
$h(g) \CN(\vec{v}) h(g)^{-1} = \CN( \vec{v} M_g)$ where 
$M_g$ is a scalar multiplied by an element of a copy of an orthogonal group $O(i_0)$. 

Since $N \subset \CN$ is a discrete cocompact, $N$ is virtually isomorphic to $\bZ^{i_0}$ as we recall from the beginning of Section 
\ref{sec:discrete}.
Without loss of generality, we assume that $N$ is a cocompact subgroup of $\CN$. 
$h(g)N h(g)^{-1} = N$. Since $N$ corresponds to a lattice $L \subset \bR^n$ by the map $\CN$, 
and the conjugation by $h(g)$ is 
to a map given by right multiplication $M_g: L \ra L$
by Lemma \ref{lem:similarity}.
Thus, $M_g: L \ra L$ is conjugate to an element of 
$\SL_\pm(i_0, \bZ)$ and 
$\{M_g| g \in \bGamma_{\tilde E}\}$ 
is a compact group as their determinant is $\pm 1$. 
Hence, 
the image of the homomorphism given by $g \in h(\pi_1(\tilde E)) \mapsto M_g \in \SL_\pm(i_0, \bZ))$ is a finite order group. 
Thus, $\bGamma_{\tilde E}$ has a finite index group $\bGamma'_{\tilde E}$ centralizing $\CN$. 

We find a kernel $K_1$ of this map and take $\Sigma_{E'}$ to be the corresponding cover of $\Sigma_{\tilde E}$. 
By Proposition \ref{prop:decomposition}, we have the result needed to apply Proposition \ref{prop:qjoin}. 
Now $\gamma_m$ is in the virtual center since $M_g =\Idd$. 
Finally, Proposition \ref{prop:qjoin}(i) and (ii) imply that $\bGamma_{\tilde E}$ virtually is either a join or a quasi-joined group.
The proof of Theorem \ref{thm:NPCCcase2} shows that a joined end cannot occur.  
\end{proof}



\section{The indiscrete case} \label{sec:indiscrete}

Let $\Sigma_{\tilde E}$ be the end orbifold of an NPCC R-end $\tilde E$ of a strongly tame 
properly convex $n$-orbifold $\orb$ with radial or totally geodesic ends. 
Let $\bGamma_{\tilde E}$ be the p-end fundamental group. 
Let $U$ be a p-end-neighborhood in $\torb$ corresponding to a p-end vertex $\bv_{\tilde E}$. 

We can assume that $\partial U$ is smooth by smoothing if necessary. 



Recall the exact sequence 
\[ 1 \ra N \ra \pi_1(\tilde E) \stackrel{\Pi^*_K}{\longrightarrow} N_K \ra 1 \] 



An element $g \in \bGamma_{\tilde E}$ is of form: 
\begin{equation} \label{eqn:g}
\newcommand*{\temp}{\multicolumn{1}{r|}{}}
g = \left( \begin{array}{ccc} 
K(g) & \temp &  0 \\ 
 \cline{1-3}
* &\temp & U(g)
\end{array} 
\right).
\end{equation}
Here $K(g)$ is an $(n-i_0)\times (n-i_0)$-matrix and 
$U(g)$ is  an $(i_0+1)\times (i_0+1)$-matrix acting on $\SI^{i_0}_\infty$. 
We note $\det K(g) \det U(g) = 1$. 


\subsection{Estimations with $KA \bU$.}

Let $\bU$ denote a maximal nilpotent subgroup of $\Aut(\SI^n)_{\SI^{i_0}_\infty}$ given 
by lower triangular matrices with diagonal entries equal to $1$.

\begin{lemma}\label{lem:orth} 
The matrix of $g \in \Aut(\SI^n)$ can be written under a coordinate system orthogonal 
at $V^{i_0+1}_\infty$ as $k(g) a(g) n(g)$ where 
$k(g)$ is an element of $O(n+1)$, $a(g)$ is a diagonal element, and $n(g)$ is in the group  $\bU$ of unipotent 
lower triangular matrices. 
Also, diagonal elements of $a(g)$ are the norms of eigenvalues of $g$ as elements of $\Aut(\SI^n)$. 
\end{lemma}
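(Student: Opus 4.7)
The plan is to obtain the claimed decomposition essentially as an Iwasawa $KAN$ factorization, but in a coordinate system tailored to $g$ so that the $A$-part records the norms of eigenvalues. Since each $g \in \bGamma_{\tilde E}$ preserves the subspace $V^{i_0+1}_\infty$ by the setup of this section, I would first produce a real $g$-invariant flag
\[
0 = V_0 \subset V_1 \subset \cdots \subset V_{n+1} = \bR^{n+1}
\]
refining $V^{i_0+1}_\infty$, with $\dim V_{k+1}/V_k \in \{1,2\}$: one-dimensional quotients for real eigenvalues and two-dimensional quotients for each complex conjugate pair. Such a flag exists by the real Jordan decomposition of $g$, and it can be chosen so that one of the $V_k$ equals $V^{i_0+1}_\infty$.

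Next I would pick an ordered real Jordan basis compatible with this flag and apply Gram--Schmidt to produce an orthonormal basis $(e_1,\dots,e_{n+1})$ of $\bR^{n+1}$ adapted to the same flag; since Gram--Schmidt is block upper triangular with positive diagonal, the last $i_0+1$ vectors still span $V^{i_0+1}_\infty$, so the resulting coordinate system is orthogonal at $V^{i_0+1}_\infty$ in the sense required. In this basis the matrix of $g$ is block lower triangular with diagonal blocks of two shapes: a $1\times 1$ block equal to a real eigenvalue $\lambda$, and a $2\times 2$ block equal to $|\mu|\,R_\theta$ for a complex eigenvalue $\mu = |\mu|e^{i\theta}$, where $R_\theta$ is a rotation.

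Finally, I would factor this block lower triangular matrix as $k(g)a(g)n(g)$ as follows. Let $a(g)$ be the diagonal matrix whose $k$-th entry is $|\lambda_k|$ (each complex pair contributing the same value $|\mu|$ in two adjacent slots). Let $k_0(g)$ be the block diagonal matrix made of the signs of real eigenvalues and the rotation blocks $R_\theta$; this is an element of $O(n+1)$. Because $a(g)$ is a scalar on each block where $k_0(g)$ is nontrivial (the entries in a complex pair are equal), $a(g)$ and $k_0(g)$ commute. Let $n(g) = a(g)^{-1}k_0(g)^{-1}g$; the block triangular structure guarantees that $n(g)$ is unipotent lower triangular, i.e.\ in $\bU$. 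Setting $k(g) = k_0(g)$ gives $g = k(g)a(g)n(g)$ with the diagonal entries of $a(g)$ the norms of the eigenvalues of $g$, as claimed.

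The only genuine subtlety is the handling of complex eigenvalue pairs and the commutation step: the key observation is that after Gram--Schmidt adapted to a real Jordan flag, each $2\times 2$ diagonal block is automatically a scalar multiple of a rotation, which is exactly what forces $a(g)$ and $k_0(g)$ to commute and what identifies the diagonal of $a(g)$ with the list of eigenvalue norms. Everything else is essentially the QR factorization.
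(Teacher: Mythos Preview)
Your overall approach matches the paper's: choose a basis adapted to the real Jordan decomposition of $g$ and to $V^{i_0+1}_\infty$, apply Gram--Schmidt, and read off a $KAN$-type factorization from the resulting block-triangular form. The paper does exactly this (placing the $V^{i_0+1}_\infty$ vectors first and then reversing the order at the end) and defers the remaining details to Kostant's Iwasawa decomposition.

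There is, however, a gap in your treatment of complex eigenvalue pairs. The claim that after Gram--Schmidt each $2\times 2$ diagonal block is automatically $|\mu|R_\theta$ is false. Gram--Schmidt changes the basis of each two-dimensional quotient $V_{k+2}/V_k$ by an upper-triangular matrix $T$ with positive diagonal, and the new diagonal block becomes $T(|\mu|R_\theta)T^{-1}$; this is a similarity only when $T$ is scalar, i.e.\ only when the two real Jordan vectors already project to an orthogonal pair of equal length in $V_k^\perp$, which there is no reason to expect. For instance $g=\left(\begin{smallmatrix}1 & -1/2\\ 2 & 1\end{smallmatrix}\right)$ in an orthonormal basis has eigenvalues $1\pm i$ of norm $\sqrt 2$, but its lower-triangular Iwasawa $A$-part is $\mathrm{diag}(4/\sqrt 5,\sqrt 5/2)$, not $\sqrt 2\,I_2$. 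Consequently the commutation $k_0(g)a(g)=a(g)k_0(g)$ that you rely on fails, and with it the conclusion that the diagonal of $a(g)$ records the eigenvalue norms exactly. The paper's own proof is terse at precisely this point and does not supply an argument you are missing; your more explicit version simply brings the difficulty into the open rather than resolving it.
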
 
\begin{proof} 
%
Let $\vec{v}_1, \dots, \vec{v}_{i_0+1}, \vec{v}_{i_0+2}, \dots, \vec{v}_{n+1}$ denote the basis vectors of $\bR^{n+1}$ 
that are chosen from the real Jordan-block subspaces of $g$ with the same norms of eigenvalues
where $\vec{v}_j \in V^{i_0+1}_\infty$ for $j =1, \dots, i_0+1$. 
We require $[\vec{v}_1] = \bv_{\tilde E}$. 

Now we fix a Euclidean metric on $\bR^{n+1}$. 
We obtain vectors 
\[\vec{v}_1', \dots, \vec{v}_{i_0+1}', \vec{v}_{i_0+2}', \dots, \vec{v}_{n+1}'\]
by the Gram-Schmidt orthogonalization process using the corresponding Euclidean metric on $\bR^{n+1}$.
Then the desired result follows by writing the matrix of $g$ in terms of coordinates
given by letting the basis vectors $\vec{v}'_i = \vec{u}_{n+1 -i}$. 
(See also Proposition 2.1 of Kostant \cite{BK}.)

\end{proof}

We define 
\[ \bU' := \bigcup_{ k \in O(n+1)} k\bU k^{-1}.  \]

\begin{corollary}\label{cor:bdunip} 
Suppose that we have for a positive constant $C_1$, and $g \in \bGamma_{\tilde E}$, 
\[ \frac{1}{C_1} \leq \lambda_{n+1}(g),\lambda_1(g) \leq C_1.\]
Then $g$ is in a bounded distance from $\bU'$ with the bound depending only on $C_1$.
\end{corollary}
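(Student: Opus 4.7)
The plan is to combine the Iwasawa-type decomposition of Lemma \ref{lem:orth} with the eigenvalue ordering of Proposition \ref{prop:eigSI}. First I would apply Lemma \ref{lem:orth} to write $g = k(g)\, a(g)\, n(g)$ with $k(g) \in O(n+1)$, $n(g) \in \bU$, and $a(g)$ diagonal whose entries are precisely (up to reordering) the norms of the eigenvalues of $g$ as an element of $\Aut(\SI^n)$. By Proposition \ref{prop:eigSI} together with the definitions of $\lambda_1(g)$ and $\lambda_{n+1}(g)$, every norm of an eigenvalue of $g$ is sandwiched in the interval $[\lambda_{n+1}(g), \lambda_1(g)]$, which by hypothesis is contained in $[C_1^{-1}, C_1]$. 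Consequently $a(g)$ ranges over a compact subset $A_{C_1}$ of the diagonal subgroup depending only on $C_1$.

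Next I would exploit that the diagonal subgroup normalizes $\bU$: conjugation of a lower-triangular unipotent by a diagonal matrix leaves it lower-triangular unipotent. Setting $n'(g) := a(g)\, n(g)\, a(g)^{-1} \in \bU$, we can rearrange
\[
g \;=\; k(g)\, n'(g)\, a(g) \;=\; \bigl(k(g)\, n'(g)\, k(g)^{-1}\bigr)\,\bigl(k(g)\, a(g)\bigr).
\]
The first factor lies in $\bU' = \bigcup_{k \in O(n+1)} k\bU k^{-1}$ by construction. The second factor $k(g)\, a(g)$ lies in the compact set $O(n+1) \cdot A_{C_1}$, whose diameter is bounded in terms of $C_1$ alone. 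In any fixed left-invariant metric on the ambient Lie group (e.g.\ on $\SL_{\pm}(n+1,\bR)$), this shows that $g$ lies within uniformly bounded distance of the element $k(g)\, n'(g)\, k(g)^{-1} \in \bU'$, with the bound depending only on $C_1$.

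I do not expect any essential obstacle: the only points to verify carefully are (i) that every eigenvalue norm of $g$ (not merely $\lambda_1$ and $\lambda_{n+1}$) is controlled by the hypothesis, which is exactly the chain $\lambda_1(g) \geq \lambda(g) \geq \lambda'(g) \geq \lambda_{n+1}(g)$ from Proposition \ref{prop:eigSI}; and (ii) the naturality of the chosen left-invariant metric so that left-multiplication by a bounded element produces a bounded displacement. Both are routine once the decomposition is in place.
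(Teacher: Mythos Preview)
Your proposal is correct and follows essentially the same route as the paper: apply the $KAN$-type decomposition of Lemma~\ref{lem:orth}, use Proposition~\ref{prop:eigSI} to bound every eigenvalue norm (hence $a(g)$) in terms of $C_1$, and observe that the unipotent factor lands in $\bU'$ after conjugation by the orthogonal part. The only point to tidy is that Lemma~\ref{lem:orth} gives the decomposition in a $g$-dependent orthonormal coordinate system, so strictly one has $k^{-1}gk = k(g)a(g)n(g)$ for some $k\in O(n+1)$ (as the paper writes explicitly); since $\bU'$ is $O(n+1)$-conjugation invariant and the metric is bi-invariant under $O(n+1)$, this extra conjugation is harmless and your argument goes through unchanged.
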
 
\begin{proof} 
By Lemma \ref{lem:orth}, we can find an element $k\in O(n+1)$ so that 
\[ g  = k k(g) k^{-1} k a(g) k^{-1} k n(g) k^{-1}\] as above. 
Then $k k(g) k^{-1} \in O(n+1)$ and $k a(g) k^{-1}$ is uniformly bounded from $\Idd$ by 
a constant depending only on $C_1$ by Proposition \ref{prop:eigSI}. 
Finally, we obtain $k n(g) k^{-1} \in \bU'$. 
\end{proof} 

A subset of a Lie group is of {\em polynomial growth} if the volume of the ball $B_R(\Idd)$ radius $R$ is less than or
equal to a polynomial of $R$. 
As usual, the metric is given by the standard positive definite left-invariant bilinear form that is invariant 
under the conjugations by the compact group $O(n+1)$. 

\begin{lemma} \label{lem:bU} 
$\bU'$ is of polynomial growth in terms of the distance from $\Idd$. 
\end{lemma}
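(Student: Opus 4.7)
The plan is to reduce the claim about $\bU'$ to the well-known polynomial growth of the nilpotent group $\bU$, using compactness of $O(n+1)$ and the conjugation-invariance of the chosen metric. Since $\bU$ is unipotent, it is a connected simply connected nilpotent Lie group, so by the standard Guivarc'h/Bass volume estimate (or by a direct computation through the exponential map and the lower central series) one has
\[
\vol\bigl(B_R(\Idd)\cap\bU\bigr)\ \leq\ C_0\,R^{d}
\]
for some $d\geq 1$ and $C_0>0$ depending only on $\bU$; equivalently, $B_R(\Idd)\cap\bU$ can be covered by at most $C_1R^{d}$ balls of radius $1$ in the ambient group $\Aut(\SI^n)$.

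Next I would exploit that the left-invariant metric on $\Aut(\SI^n)$ is invariant under conjugation by $O(n+1)$, so that $d(kuk^{-1},\Idd)=d(u,\Idd)$ for every $k\in O(n+1)$ and every $u\in\bU$. In particular,
\[
B_R(\Idd)\cap\bU'\ \subset\ \bigcup_{k\in O(n+1)} k\bigl(B_R(\Idd)\cap\bU\bigr)k^{-1},
\]
so the set we must control is precisely the image of $O(n+1)\times\bigl(B_R(\Idd)\cap\bU\bigr)$ under the smooth conjugation map $\phi(k,u)=kuk^{-1}$.

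To turn this into a counting estimate I would fix a finite covering of the compact group $O(n+1)$ by, say, $M$ balls of a small radius $\varepsilon$ chosen so that on each such ball the map $k\mapsto kuk^{-1}$ is uniformly Lipschitz for $u$ ranging over any compact set; a standard argument using the smoothness of $\phi$ and the fact that $O(n+1)$ acts by isometries under conjugation gives, for each such small patch $V\subset O(n+1)$ and for each $u\in B_R(\Idd)\cap\bU$, that $\phi(V\times\{u\})$ lies in a ball of radius $O(\varepsilon)$ around $u_0 u u_0^{-1}$ for any $u_0\in V$. Hence a covering of $B_R(\Idd)\cap\bU$ by $C_1R^{d}$ unit balls pulls back (via each of the finitely many patches) to a covering of $B_R(\Idd)\cap\bU'$ by at most $MC_1R^{d}$ balls of a fixed radius, which is the desired polynomial growth of $\bU'$.

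The only real issue is interpretational rather than technical: since $\bU'$ is not a subgroup, one must fix what ``polynomial growth'' means here — I would take it to mean that the covering number of $B_R(\Idd)\cap\bU'$ by unit balls (or equivalently its volume with respect to the ambient Haar measure on $\Aut(\SI^n)$) is polynomially bounded in $R$. Once this is fixed, the argument above is essentially a packaging of ``compact $\times$ polynomial = polynomial,'' and no genuine obstacle remains.
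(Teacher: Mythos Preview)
Your overall plan (reduce to polynomial growth of the nilpotent $\bU$, then sweep by the compact group $O(n+1)$) matches the paper's, but there is a genuine gap in the Lipschitz step. You assert that for a small patch $V\subset O(n+1)$ and any $u\in B_R(\Idd)\cap\bU$ the set $\{kuk^{-1}:k\in V\}$ lies in a ball of radius $O(\varepsilon)$, with the implied constant independent of $u$. This is false: conjugation by a \emph{fixed} $k\in O(n+1)$ is an isometry, but the map $k\mapsto kuk^{-1}$ (for fixed $u$) is not, and its Lipschitz constant blows up with $d(u,\Idd)$. Concretely, $d(kuk^{-1},u)=d(\Idd,u^{-1}kuk^{-1})$, and the commutator $u^{-1}kuk^{-1}$ is typically far from $\Idd$ when $u$ is large even if $k$ is close to $\Idd$ (think of $u$ a large unipotent in $\SL(2,\bR)$ and $k$ a small rotation). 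So your fixed finite cover of $O(n+1)$ by $M$ patches cannot produce balls of uniformly bounded radius as $R\to\infty$, and the count $MC_1R^{d}$ does not follow.

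What rescues the argument --- and this is exactly the point the paper supplies --- is that elements of $\bU$ are \emph{unipotent}. Writing $u(s)=\exp(s\vec{u})$ with $\vec{u}$ nilpotent and $g(t)=\exp(t\vec{x})$ in $O(n+1)$, one computes
\[
u(t,s)^{-1}\frac{d}{dt}\,u(t,s)=(Ad_{u(t,s)^{-1}}-\Idd)(\vec{x}),
\]
and since $ad$ of a nilpotent element is nilpotent, $Ad_{u(t,s)^{-1}}$ is a polynomial in $s$. Hence the speed of the conjugation orbit through $u$ is bounded by a polynomial in $d(u,\Idd)$, so the $O(n+1)$-orbit through $u$ has volume growing only polynomially in $d(u,\Idd)$. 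Combining this polynomial orbit growth with the polynomial growth of $\bU$ gives polynomial growth of $\bU'$. Your ``compact $\times$ polynomial $=$ polynomial'' slogan is right, but only after you prove this orbit-size estimate; without the unipotency the orbit could grow exponentially and the argument would fail.
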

\begin{proof}
Let $\Aut(\SI^n)$ have a left-invariant Riemannian metric. 
Clearly $\bU$ is of polynomial growth since $\bU$ is nilpotent by Gromov \cite{Gr}.
Given $g \in O(n+1)$, the distance between $gug^{-1}$ and $u$ for $u \in \bU'$ 
is proportional to a constant multiplied by $\bdd(u, \Idd)$: 
Choose $u \in \bU'$ which is unipotent. We can write $u(s) = \exp(s \vec{u})$ where 
$\vec{u}$ is a nilpotent matrix of unit norm. $g(t) := \exp(t \vec{x})$ for $\vec{x}$ in the Lie 
algebra of $O(n+1)$ of unit norm. 
For a family of $g(t) \in O(n+1)$, we define 
\begin{equation}\label{eqn:conj}
u(t, s) = g(t) u(s) g(t)^{-1} =  \exp(s Ad_{g(t)} \vec{u}).
\end{equation}
We compute 
\[ u(t, s)^{-1} \frac{d u(t, s)}{d t} :=u(t, s)^{-1} (\vec{x} u(t, s) - u(t, s) \vec{x}) = (Ad_{u(t, s)^{-1}} - \Idd)( \vec{x} ).\]
Since $\vec{u}$ is nilpotent, $Ad_{u(t, s)^{-1}} - \Idd$ is a polynomial of variables $t, s$. 
The norm of $d u(t, s)/ dt$ is bounded above by a polynomial in $s$ and $t$. 
The conjugation orbits of $O(n+1)$ in $\Aut(\SI^n)$ are compact.
Also, the conjugation by $O(n+1)$ preserves the distances of elements from $\Idd$
since the left-invariant metric $\mu$ is preserved by conjugation at $\Idd$  
and geodesics from $\Idd$ go to geodesics from $\Idd$ of same $\mu$-lengths under 
the conjugations by equation \ref{eqn:conj}.
Hence, we obtain a parametrization of $\bU'$ by $\bU$ and $O(n+1)$ where 
the volume of each orbit of $O(n+1)$ grows polynomially. 
Since $\bU$ is of polynomial growth, $\bU'$ is of polynomial growth
in terms of the distance from $\Idd$. 
\end{proof}


%



\subsection{Closures of leaves} \label{subsec:leaves}

Given a subgroup $G$ of an algebraic Lie group, 
the {\em syndetic hull} $S(G)$ of $G$ is a connected Lie group so that 
$S(G)/G$ is compact. (See Fried and Goldman \cite{FG} and D. Witte \cite{DW}.)

The properly convex open set $K, K \subset \SI^{n-i_0}$ has a Hilbert metric. Also the group $\Aut(K)$ of 
projective automorphisms of $K$ in $\SL_\pm(n-i_0+1, \bR)$ is a closed Lie group. 

\begin{lemma}\label{lem:invmet} 
Let $D$ be a properly convex domain with the closed locally compact group $\Aut(D)$ of smooth automorphisms of $D$.
Given a group $G$ acting isometrically on an open domain $D$ faithfully so that 
$G \rightarrow \Aut(D)$ is an embedding. 
Suppose that $D/G$ is compact. 
Then the closure $\bar G$ of $G$ is a Lie subgroup 
acting on $D$ properly, and there exists a smooth Riemannian metric on $D$ that is 
$\bar G$-invariant. 
\end{lemma}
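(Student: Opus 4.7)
The plan is to prove the three assertions (Lie subgroup, proper action, invariant Riemannian metric) in order, using closedness, the Hilbert metric on $D$, and an averaging/slice argument.

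First, I would observe that since $\Aut(D)$ is a closed locally compact subgroup of $\PGL(n+1,\bR)$ (being the group of smooth projective automorphisms preserving $D$), Cartan's closed subgroup theorem identifies $\Aut(D)$ with an embedded Lie subgroup. The closure $\bar G$ is a closed subgroup of $\Aut(D)$, hence closed in $\PGL(n+1,\bR)$, and therefore is itself a Lie subgroup by the same theorem.

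Next, I would establish the properness of the $\bar G$-action on $D$. The key observation is that $D$ carries the Hilbert metric $d_D$, which is a genuine metric since $D$ is properly convex, and $(D,d_D)$ is a proper metric space (closed $d_D$-balls in $D$ are compact). Every element of $\Aut(D)$ acts as a $d_D$-isometry, and the isometry group of a proper metric space, topologized by uniform convergence on compact sets, acts properly by an Arzel\`a--Ascoli argument. Since $\bar G$ is a closed subgroup of $\Aut(D)$, the restricted $\bar G$-action on $D$ is also proper; in particular every point stabilizer $\bar G_x$ is compact. Moreover, since $G \subset \bar G$ and $D/G$ is compact by hypothesis, the quotient $D/\bar G$ is a compact Hausdorff space.

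For the last assertion, I would use the slice theorem for proper Lie group actions (Palais). Around each point $x \in D$ there is a $\bar G$-invariant tubular neighborhood $U_x \cong \bar G \times_{\bar G_x} S_x$ for a slice $S_x$; averaging any Riemannian metric on $S_x$ under the compact stabilizer $\bar G_x$ and extending by the $\bar G$-action produces a smooth $\bar G$-invariant Riemannian metric $\mu_x$ on $U_x$. Because $D/\bar G$ is compact Hausdorff, I can choose finitely many such tubes whose images cover $D/\bar G$, take a partition of unity subordinate to this cover in $D/\bar G$, and pull it back to a smooth $\bar G$-invariant partition of unity $\{\rho_i\}$ on $D$. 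Then $\mu := \sum_i \rho_i \mu_{x_i}$ is a smooth $\bar G$-invariant Riemannian metric on $D$.

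The main obstacle will be verifying properness of $\Aut(D)$ on $D$ with full rigor --- this depends on the Hilbert metric being a proper metric and on the fact that isometric group actions on proper metric spaces are proper; once this is in hand the construction of the invariant metric via the slice theorem and an invariant partition of unity is standard. A subtlety to handle carefully is that we need $\bar G$-invariance of the partition of unity, which is why I pull it back from $D/\bar G$ rather than constructing it directly on $D$.
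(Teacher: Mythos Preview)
Your proposal is correct. The first two parts (Lie subgroup via Cartan, properness via the Hilbert metric) spell out more carefully what the paper asserts in a single sentence (``Since $\bar G$ is in the isometry group, $\bar G$ is a Lie subgroup acting on $D$ properly''), so there is no substantive difference there.

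For the invariant metric, however, you take a genuinely different route. The paper does not use the slice theorem or an invariant partition of unity; instead it fixes an arbitrary Riemannian metric $\mu$ on $D$, chooses a bump function $\phi$ supported on a compact set containing a fundamental domain for $G$ with $\phi>0$ on that domain, and then averages the compactly supported tensor $\phi\mu$ over $\bar G$ with respect to Haar measure:
\[
\mu_D \;=\; \int_{\bar G} g^*(\phi\mu)\,d\mu_{\bar G}.
\]
Properness of the action guarantees that for each $x\in D$ the set $\{g\in\bar G : g^{-1}(x)\in\operatorname{supp}\phi\}$ is compact, so the integral converges; equicontinuity of the family $\{g^*(\phi\mu)\}$ gives smoothness; and $\phi>0$ on a fundamental domain gives positive definiteness. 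Your slice-theorem approach is cleaner structurally and avoids the analytic estimates (equicontinuity, bounded derivatives) that the paper invokes, at the cost of importing a heavier theorem. The paper's averaging argument is more hands-on and self-contained once properness is known. Either is fine here.
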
 
\begin{proof} 
Since $\bar G$ is in the isometry group, 
$\bar G$ is a Lie subgroup acting on $D$ properly. 

One can construct a Riemannian metric $\mu$ with bounded entries. 
Let $\phi$ be a function supported on a compact set containing a fundamental domain $F$ of $D/G$ where $\phi|F > 0$. 
We can assume  that the derivatives of the entries of $\phi \mu$ up to the $m$-th order are uniformly bounded above.

Then $\{g^*\phi\mu| g \in \bar G\}$ is an equicontinuous family up to any order. 
Thus the integral 
\[ \int_{g \in \bar G} g^* \phi\mu d \mu \]
of $g^* \phi\mu$ for $g \in \bar G$ is a $C^\infty$-Riemannian metric and that is positive definite. 
This bestows us a $C^\infty$-Riemannian metric $\mu_D$ on $D$ invariant under $\bar G$-action.
\end{proof} 

The foliation on $\tilde \Sigma_{\tilde E}$ given by fibers of $\Pi_K$
has leaves that are $i_0$-dimensional complete affine spaces.
Then $K^o$ admits a smooth Riemannian metric $\mu_{0, 1}$ invariant under $N_K$ by Lemma \ref{lem:invmet}. 
Since $N_K$ is not discrete, a component $N_{K, 0}$ of the closure of $N_K$ in $\Aut(K)$
is a Lie group of dimension $\geq 1$. 
We consider the orthogonal frame bundle $FK^o$ over $K^o$. 
A metric on each fiber of $FK^o$ is induces from $\mu_K$.
Since the action of $N_{K, 0}$ is isometric on $FK^o$ with trivial stabilizers, 
we find that $N_{K, 0}$ acts on a smooth orbit submanifold $\Sigma_{1, 0}$ of $FK^o$ transitively 
with trivial stabilizers. 
(See Lemma 3.4.11 in \cite{Thbook}.)

There exists a bundle $F\tilde \Sigma_{\tilde E}$ from pulling back $FK^o$ by the projection map. 
Since $\bGamma_{\tilde E}$ acts isometrically on $FK^o$, 
the quotient space $F\tilde \Sigma_{\tilde E}/\bGamma_{\tilde E}$ is a bundle $F\Sigma_{\tilde E}$ over 
$\Sigma_{\tilde E}$ with a subbundle with compact fibers isomorphic to the orthogonal group of dimension $n-i_0$. 
Also, $F\tilde \Sigma_{\tilde E}$ is foliated by $i_0$-dimensional affine 
spaces pulled-back from the $i_0$-dimensional leaves on the foliation $\tilde \Sigma_{\tilde E}$. 
Also, $F\tilde \Sigma_{\tilde E}$ covers $F\Sigma_{\tilde E}$. 

Each leaf $l$ of $F\tilde \Sigma_{\tilde E}$ goes to a point $x$ of $FK^o$. 
Here $N_{K, 0}(x)$ is as an orbit in $FK^o$, whose inverse image 
becomes a smooth submanifold $\tilde V_l$ covering a compact submanifold $V_l$ in $F\Sigma_{\tilde E}$
by the work of Molino \cite{Mo1}.
Here $l$ maps to a dense leaf in $V_l$. 

\begin{lemma} \label{lem:polynomial}
Each leaf $l$ is of polynomial growth. That is, each ball $B_R(x)$ in $l$ of radius $R$ has an area 
less than equal to $f(R)$ for a polynomial $f$ where we are using an arbitrary Riemannian metric on 
$F\tilde \Sigma_{\tilde E}$ and $F\Sigma_{\tilde E}$ so that the covering map $F\tilde \Sigma_{\tilde E} \ra F\Sigma_{\tilde E}$ is a local isometry. 
\end{lemma}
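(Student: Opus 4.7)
The plan is to measure the growth of $l$ by transporting balls to the compact quotient $V_l$ and counting preimages. Fix a basepoint $x_0 \in l$ and a relatively compact fundamental domain $F \subset \tilde V_l$ for the covering map $\tilde V_l \to V_l$. Since $V_l$ is compact, there is a constant $C_0$ so that any ball $B_R(x_0)$ in $l$ is covered by at most $C_0 \cdot N(R)$ translates $\gamma(F\cap l)$, where $N(R)$ counts those $\gamma$ in the group $\Gamma_l$ of deck-transformations of $\tilde V_l \to V_l$ (inside $\bGamma_{\tilde E}$) with $\bdd(x_0,\gamma(x_0)) \le R$. So it suffices to show that $N(R)$ grows polynomially in $R$.

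Next I would pin down the ambient group where $\Gamma_l$ lives. Since $V_l$ is a compact closure of a single $N_{K,0}$-orbit in the frame bundle, every $\gamma \in \Gamma_l$ projects under $\Pi^*_K$ into a bounded subset of $N_{K,0} \subset \Aut(K)$ (bounded in the invariant Riemannian metric $\mu_{0,1}$ of Lemma \ref{lem:invmet}); equivalently, $\gamma$ restricted to $K$ moves a fixed compact set a uniformly bounded Hilbert distance, so $\log(\bar\lambda(\gamma)/\lambda_{\bv_{\tilde E}}(\gamma))$ is uniformly bounded by the weakly uniform middle-eigenvalue condition. Combined with Proposition \ref{prop:eigSI}, this yields a uniform constant $C_1>0$ with $1/C_1 \le \lambda_{n+1}(\gamma) \le \lambda_1(\gamma) \le C_1$ for all $\gamma \in \Gamma_l$. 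Corollary \ref{cor:bdunip} then locates each such $\gamma$ within a uniform bounded distance of the set $\bU'$ in $\Aut(\SI^n)$.

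Finally, I would compare the intrinsic distance on $l$ to the ambient left-invariant distance on $\Aut(\SI^n)$: since the $\bGamma_{\tilde E}$-action on $F\tilde\Sigma_{\tilde E}$ is by isometries for the pulled-back metric (and has compact stabilizers of dimension zero on the frame bundle by construction), there is a constant $C_2$ with $\bdd_{\Aut(\SI^n)}(\Idd,\gamma) \le C_2(\bdd_l(x_0,\gamma(x_0))+1)$ for $\gamma \in \Gamma_l$. Hence $N(R)$ is bounded above by the number of lattice-like elements of $\Gamma_l$ inside a ball of radius $C_2(R+1)$ around $\Idd$ in a bounded neighborhood of $\bU'$, and Lemma \ref{lem:bU} gives polynomial growth for such neighborhoods of $\bU'$. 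Combining with the first step completes the estimate. The main obstacle, and the step I would check most carefully, is the second one: justifying that the deck-transformation subgroup $\Gamma_l$ really does have the uniform bi-sided eigenvalue bound $1/C_1 \le \lambda_{n+1}(\gamma),\lambda_1(\gamma) \le C_1$—the weakly uniform middle-eigenvalue condition only controls a ratio, so I need the cocompactness of $V_l$ and the isometric $N_{K,0}$-action to convert bounded displacement on $K$ into a two-sided eigenvalue bound of $\gamma$ on the whole of $\SI^n$.
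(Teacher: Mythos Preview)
Your overall architecture matches the paper's: reduce the growth of $l$ to counting deck-transformations in a discrete subset of $\bGamma_{\tilde E}$ that stay within bounded distance of $\bU'$, then invoke Corollary \ref{cor:bdunip} and Lemma \ref{lem:bU}. The paper sets this up slightly differently---it takes a fundamental domain $F$ for all of $F\tilde\Sigma_{\tilde E}$ under $\bGamma_{\tilde E}$ rather than for $\tilde V_l$ under $\Gamma_l$, and works with the set $G(l)=\{g_i\in\bGamma_{\tilde E}: g_i(D_i)\subset l\}$---but this is cosmetic.

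The step you flag as the obstacle is indeed where your argument is incomplete, and the paper resolves it differently from what you propose. The weakly uniform middle-eigenvalue condition only controls the ratio $\bar\lambda(g)/\lambda_{\bv_{\tilde E}}(g)$; it gives no direct bound on $\lambda_1(g)$ or $\lambda_{n+1}(g)$ individually, so your passage from ``bounded displacement on $K$'' to ``$1/C_1\le\lambda_{n+1}(\gamma),\lambda_1(\gamma)\le C_1$'' via the MEC and Proposition \ref{prop:eigSI} does not go through as written. The paper instead uses the block decomposition of equation \eqref{eqn:g}: bounded action on $K$ forces the normalized block $\hat K(g_i)\in\SL_\pm(n-i_0,\bR)$ into a bounded set, so its extreme eigenvalue norms $\tilde\lambda_1,\tilde\lambda_n$ are bounded and $\lambda_1(g_i),\lambda_{n+1}(g_i)$ equal $\det(K(g_i))^{1/(n-i_0)}$ times these. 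The determinant constraint $\det K(g_i)\cdot a_1(g_i)\cdots a_{i_0+1}(g_i)=1$ then forces some $a_j$ to blow up or collapse if $|\det K(g_i)|\to 0$ or $\infty$, and since the $a_j$ are the eigenvalue norms on $\SI^{i_0}_\infty$, this would violate the sandwich $\lambda_1\ge\lambda\ge\lambda'\ge\lambda_{n+1}$ of Proposition \ref{prop:eigSI}. So $|\det K(g_i)|$ is uniformly bounded above and below, and the two-sided eigenvalue bound follows. Replacing your MEC invocation with this determinant argument closes the gap.
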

\begin{proof} 
Let us choose a fundamental domain $F$ of $F\Sigma_{\tilde E}$. Let $\hat F$ denote the image of $F$ in $FK^o$. 
Then $l$ is a union of $g_i(D_i)$  $i \in I_l$ for the intersection $D_i$ of a leaf with $F$ where $g_i \in \bGamma_{\tilde E}$
for some index set $I_l$. 
We have that $D_i \subset D'_i$ where $D'_i$ is an $\eps$-neighborhood of $D_i$ in the leaf. 
Then \[\{g_i(D'_i)| i \in I_l \}\] cover $l$ in a locally finite manner. 
The subset $G(l):= \{g_i\in \Gamma| i \in I_l\}$ is a discrete subset. 

Choose an arbitrary point $d_i \in D_i$ for every $i \in I_l$. 
The set $\{g_i(d_i)| i \in I_l \}$ and $l$ is quasi-isometric: 
a map from $G(l)$ to $l$ is given by $f_1: g_i \mapsto g_i(d_i)$ 
and the multivalued map $f_2$ from $l$ to $G(l)$ given by sending each point $x \in l$ 
to one of finitely many $g_i$ such that $g_i(D'_i) \ni x$. 
Let $\bGamma_{\tilde E}$ be given the Cayley metric and $\tilde \Sigma_{\tilde E}$ a metric induced 
from $\Sigma_{\tilde E}$. 
Let $\tilde \Sigma_{\tilde E}/\bGamma_{\tilde E}$ have a Riemannian metric and the induced on on $\tilde \Sigma_{\tilde E}$. 
Both maps are quasi-isometries since 
these maps are restrictions of quasi-isometries $\bGamma_{\tilde E} \ra \tilde \Sigma_{\tilde E}$
and $\tilde \Sigma_{\tilde E} \ra \bGamma_{\tilde E}$ defined in an analogous manner.  

The action of $g_i$ in $K$ is bounded since it sends some points of $\hat F$ to ones of $\hat F$. 
Thus, $\Pi^*_K(g_i)$ goes to a bounded subset of $\Aut(K)$. 
Hence 
\[K(g_i) = \det(K(g_i))^{1/(n-i_0)} \hat K(g_i) \hbox{ where } \hat K(g_i) \in \SL_{\pm}(n-i_0, \bR).\]
Let $\tilde \lambda_1(g_i)$ and $\tilde \lambda_n(g_i)$ denote the largest norm and the smallest norm of
eigenvalues of $\hat K(g_i)$. These are bounded by two positive real numbers. 
The largest and the smallest eigenvalues of $g_i$ equal
\[\lambda_1(g) = \det(K(g_i))^{1/(n-i_0)}\tilde \lambda_1(g_i) \hbox{ and } 
\lambda_{n+1}(g) = \det(K(g_i))^{1/(n-i_0)}\tilde \lambda_n(g_i)\]
Denote by $a_j(g_i), j=1, \dots, i_0+1$, the norms of eigenvalues associated with $\SI^{i_0}_\infty$.  
Since 
\[\det(K(g_i)) a_1(g_i) \dots a_{i_0+1}(g_i) = 1,\] 
if $|\det(K(g_i))| \ra 0$ or $\infty$, then 
the equation in Proposition \ref{prop:eigSI} cannot hold. 
Therefore, we obtain 
\[1/C < |\det(K(g_i))| < C\] for a positive constant $C$. 
We deduce that  
the largest norm and the smallest norm of eigenvalues of $g_i$
\[\det(K(g_i))^{1/(n-i_0)}\tilde \lambda_1(g_i) \hbox{ and } 
\det(K(g_i))^{1/(n-i_0)}\tilde \lambda_n(g_i)\]
are bounded above and below by two positive numbers. 
Hence, $\lambda_1(g_i)$ and $\lambda_n(g_i)$ and  
the components of $a(g_i)$ are all bounded above and below by a fixed set of positive numbers. 

By Corollary \ref{cor:bdunip}, $\{g_i\}$ is of bounded distance from $\bU'$.
Let $N_c(\bU')$ be a $c$-neighborhood of $\bU'$. 
Then \[G(l)  \subset N_c(\bU').\] 

Let $d$ denote the left-invariant metric on $\Aut(\SI^n)$. 
By the discreteness of $\bGamma_{\tilde E}$, the set $G(l)$ is discrete 
and there exists a lower  
bound to \[\{d(g_i, g_j)| g_i, g_j \in G(l), i \ne j\}.\]
Also given any $g_i \in G(l)$, there exists 
an element $g_j \in G(l)$ so that 
$d(g_i, g_j) < C$ for a uniform constant $C$.
(We need to choose $g_j$ so that $g_j(F)$ is 
adjacent to $g_i(F)$.)
Let $B_R(\Idd)$ denote the ball in $\SL(n+1, \bR)$ of radius $R$ with the center $\Idd$. 
Then $B_R(\Idd) \cap N_c(\bU')$ is of polynomial growth with respect to $R$, and so is $G(l) \cap B_R(\Idd)$. 
Since the $\{g_i(D'_i)| g_i \in G(l)\}$ 
of uniformly bounded balls  cover $l$ in a locally finite manner, 
$l$ is of polynomial grow as well. 
\end{proof}


\subsection{The orthopotency of $N$} \label{sub:uniN}
Let $p_{\Sigma_{\tilde E}}: F\tilde \Sigma_{\tilde E} \ra F \Sigma_{\tilde E}$ be the covering map
induced from $\tilde \Sigma_{\tilde E} \ra  \Sigma_{\tilde E}$.
The foliation on $\tilde \Sigma_{\tilde E}$ gives us a foliation of $F\tilde \Sigma_{\tilde E}$. 
Recall the fibration \[\Pi_K:  \tilde \Sigma_{\tilde E} \ra K^o
\hbox{ which induces } \tilde \Pi_K: F \tilde \Sigma_{\tilde E} \ra F K^o.\]
Since $N_K$ acts as isometries of Riemannian metric on 
$K^o$, we can obtain a metric on $\Sigma_{\tilde E}$ 
so that the foliation is a Riemannian foliation. 


Let $l$ be a leaf of $F\tilde \Sigma_{\tilde E}$,
and  $p$ be the image of $l$ in $FK^o$. 
Since $l$ maps to a polynomial growth leaf in $F\Sigma_{\tilde E}$ by Lemma \ref{lem:polynomial},
by the work of Carri\`ere \cite{Car}, a connected nilpotent Lie group
$A_l$ in the closure of $N_K$ acts on $FK^o$ so that we have a submanifold
\begin{alignat}{3} 
\tilde \Pi_K^{-1}(A_l(p))  : = \tilde V & \hookrightarrow & F\tilde \Sigma_{\tilde E} \nonumber \\
\downarrow  &                     &p_{\Sigma_{\tilde E}} \downarrow \nonumber \\
V_l      & \hookrightarrow & F \Sigma_{\tilde E}
\end{alignat} 
for a compact submanifold $V_l := \overline{p_{\Sigma_{\tilde E}}(l)}$ in $F \Sigma_{\tilde E}$.
Here $A_l$ is the component of the closure of $N_K$ the image of $\bGamma_{\tilde E}$ in $\Aut(K)$. 
Clearly $A_l$ is an algebraic group. 
Hence, $\bGamma_{\tilde E}$ is in a Lie group 
\[\bR^l \times Z(\Gamma_1) \times \dots \times Z(\Gamma_k), l \geq k-1\]
for the Zariski closure $Z(\Gamma_i)$ of $\Gamma_i$. 

Since $A_l$ is in the product group, we can project to each 
$\Gamma_i$-factor or the central $\bR^{l_0-1}$.
One case is that the image of $A_l$ is $Z(\Gamma_j)$ is not discrete in $\Aut(K_j)$ and hence the image equals a union of 
components of copies of $PO(n_j, 1)$ or $SO(n_j, 1)$ in $Z(\gamma_j)$ 
by Theorem 1.1 of \cite{Ben2} and Fait 5.4 of \cite{Ben1}, proved by Benzecri \cite{Benz}. 
The nilpotency implies that the image is a cusp group fixing a unique point in $\Bd K_j$.
Thus, the image is an abelian group since $A_l$ is connected. 
In case, $\Gamma_j$ is discrete, then a nilpotency implies that the image group 
fixes a unique pair of points in $\Bd K_j$ and hence is abelian also.  
Thus, 
 $A_l$ is an abelian group. 

Let $N_l$ be exactly the subgroup of $\pi_1(V_l)$ fixing a leaf $l$ in $FK^o$, 
for each closure $V_l$ of a leaf $l$, the manifold $V_l$ is compact and 
we have an exact sequence 
\[ 1 \ra N_l \ra h(\pi_1(V_l)) \stackrel{\Pi^*_K}{\longrightarrow} A'_l \ra 1.\]
Since the leaf $l$ is dense in $V_l$, it follows that $A'_l$ is dense in $A_l$. 
Each leaf $l'$ of $\tilde \Sigma_{\tilde E}$ has a realization a subset in $\torb$. 
Since $N_l$ fixes every points of $K^o$ and $N$ is in $\pi_1(V_l)$, we obtain $N = N_l$. 
We have the norms of eigenvalues $\lambda_i(g) = 1$ for $g \in N_l$.  
By Proposition \ref{prop:eigSI}, we have that $N=N_l$ is orthopotent
since the norms of eigenvalues equal $1$ identically and $N_l$ is discrete. Then $N$ is easily seen to be virtually solvable 
since it is of polynomial growth as we can deduce 
from the orthopotent flags.
(See the proof of Proposition \ref{prop:affinehoro} also. )


We summarize below: 
\begin{proposition}\label{prop:V_l} 
Let $l$ be a generic fiber of $F\tilde \Sigma_{\tilde E}$ and $p$ be the corresponding point $p$ of $FK^o$. 
Then there exists an algebraic abelian group $A_l$ acting on $FK^o$ so that 
$\tilde \Pi_K^{-1}(A_l(p)) =\tilde V_l$ covers a compact suborbifold $V_l$ in $F\Sigma_{\tilde E}$, and 
the image of the holonomy group of $\tilde V_l$ is a dense subgroup of $A_l$. 
\end{proposition}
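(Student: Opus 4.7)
My plan is to assemble the proof from the ingredients already established in Subsection \ref{subsec:leaves} and Subsection \ref{sub:uniN}, together with Carri\`ere's theorem on Riemannian foliations of polynomial growth. First I would fix the setup: by Theorem \ref{thm:folaff} the domain $\tilde \Sigma_{\tilde E}$ is foliated by complete affine $i_0$-dimensional leaves, and the fibration $\Pi_K\colon \tilde \Sigma_{\tilde E} \to K^o$ descends to a Riemannian foliation on $\Sigma_{\tilde E}$ since $N_K$ acts isometrically on $K^o$ in a Hilbert-type metric (Lemma \ref{lem:invmet}). Lifting to the orthonormal frame bundle, $F\tilde \Sigma_{\tilde E} \to F\Sigma_{\tilde E}$ inherits a Riemannian foliation, and $\tilde \Pi_K\colon F\tilde \Sigma_{\tilde E}\to FK^o$ intertwines the two structures.

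Next I would invoke Lemma \ref{lem:polynomial}, which says each leaf $l$ is of polynomial growth. The core input there is Corollary \ref{cor:bdunip} together with Proposition \ref{prop:eigSI}: the eigenvalue estimates force the set $G(l)$ of return holonomies into a bounded neighborhood of $\bU'$, and $\bU'$ has polynomial volume growth by Lemma \ref{lem:bU}. With polynomial growth in hand, I apply Carri\`ere's theorem: the closure $V_l$ of the projected leaf $p_{\Sigma_{\tilde E}}(l)$ in $F\Sigma_{\tilde E}$ is a compact submanifold, and there exists a connected nilpotent Lie group $A_l$ in the closure of the image of $\bGamma_{\tilde E}$ inside $\Aut(FK^o)$ whose orbit $A_l(p)$ at the image point $p$ of $l$ satisfies $\tilde V_l := \tilde \Pi_K^{-1}(A_l(p))$ covers $V_l$. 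The density of $l$ in $V_l$ translates directly into the density of the image of $\pi_1(V_l)$ in $A_l$.

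The main technical step, and the one I expect to be the most delicate, is upgrading ``nilpotent'' to ``abelian'' for $A_l$. For this I would use the Benoist decomposition: passing to a finite-index subgroup, the image of $\bGamma_{\tilde E}$ in $\Aut(K)$ sits inside $\bR^{l_0-1}\times Z(\Gamma_1)\times\cdots\times Z(\Gamma_{l_0})$, and $A_l$ projects into each factor. For each hyperbolic factor $Z(\Gamma_j)$, either the projection is discrete—in which case a connected nilpotent subgroup is trivial—or the Zariski closure is a copy of $PO(n_j,1)$ or $SO(n_j,1)$ by Theorem~1.1 of \cite{Ben2} and Fait~5.4 of \cite{Ben1}, and a connected nilpotent subgroup of such a simple Lie group is necessarily a cusp subgroup fixing a unique boundary point, hence abelian. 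The central $\bR^{l_0-1}$ factor contributes only abelian pieces. Combining these, $A_l$ is abelian, and because it is the identity component of a Zariski closure of subgroup of $\Aut(K)$, it is an algebraic group.

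Finally I would wrap up by writing the exact sequence
\[
1 \to N_l \to \pi_1(V_l) \xrightarrow{\Pi_K^*} A_l' \to 1,
\]
observing that $N_l = N$ because elements of the kernel fix every point of $K^o$ and therefore lie in the group acting trivially on $\SI^{n-i_0-1}$, and noting that $A_l'$ is the image of the holonomy of $V_l$, dense in $A_l$ by construction. This yields the statement of the proposition. I do not expect any hidden obstacle beyond verifying the abelianness step above; the polynomial growth and the Carri\`ere machinery are already set up, and the eigenvalue control from Proposition \ref{prop:eigSI} is exactly what prevents the holonomy from escaping into a non-amenable direction.
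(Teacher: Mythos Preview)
Your proposal is correct and follows essentially the same route as the paper: the proposition is stated as a summary of the preceding discussion in Subsections \ref{subsec:leaves} and \ref{sub:uniN}, and you have accurately reconstructed every step---the Riemannian foliation on $F\Sigma_{\tilde E}$ via Lemma \ref{lem:invmet}, polynomial growth of leaves from Lemma \ref{lem:polynomial}, Carri\`ere's theorem producing the connected nilpotent $A_l$ with $\tilde V_l=\tilde\Pi_K^{-1}(A_l(p))$ covering the compact $V_l$, and the upgrade to abelianness by projecting $A_l$ into the Benoist factors $\bR^{l_0-1}\times Z(\Gamma_1)\times\cdots\times Z(\Gamma_{l_0})$ and invoking Theorem~1.1 of \cite{Ben2} and Fait~5.4 of \cite{Ben1}. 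The only cosmetic difference is that in the discrete-factor case the paper argues the image fixes a unique pair of boundary points and is therefore abelian, whereas you observe that a connected group with discrete image is trivial; both reach the same conclusion.
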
 


\subsubsection{The nilradical of the syndetic hull $US_{l, 0}$ is normalized by $\bGamma_{\tilde E}$.}\label{subsub:syndetic}



The leaf holonomy acts on $F\tilde \Sigma_{\tilde E}/\mathcal F$ as an abelian killing field group
without any fixed points. 
Hence, each leaf $l$ is in $\tilde V_l$ with a constant dimension. 
Thus, $\mathcal F$ is a foliation with leaf closures of the identical dimensions. 
The leaf closures form another foliation $\bar F$ with compact leaves by Lemma 5.2 of Molino \cite{Mol}. 
We let $F\Sigma_{\tilde E}/\bar F$ denote the space of closures of leaves has an orbifold structure
where the projection $F\Sigma_{\tilde E} \ra F\Sigma_{\tilde E}/\bar F$ is an orbifold morphism
by Proposition 5.2 of \cite{Mol}.
Since $\Sigma_{\tilde E}$ has a geometric structure induced from the transverse real projective structure, 
$\Sigma_{\tilde E}$ is a very good orbifold. We may assume that $\Sigma_{\tilde E}$ is an $n-1$-manifold
and so is $F\Sigma_{\tilde E}$ since we need our results for finite index subgroups only. 
By Lemma 5.2 of \cite{Mol}, $F\Sigma_{\tilde E}/\bar F$ is the quotient space of $F\tilde \Sigma_{\tilde E}/\mathcal F$ by the connected 
abelian group $A_l$ acting properly with trivial stabilizers.  
Thus, it admits a geometric structure induced from the real projective structure 
of $F\tilde \Sigma_{\tilde E}/\mathcal F$. 
There exists a finite regular manifold-cover $M$ of 
$F\Sigma_{\tilde E}/\bar F$
as in Chapter 13 of Thurston \cite{Thnote} (see \cite{Choi2004} also.)

By pulling back the fiber bundle over orbifolds, we consider the fundamental groups.
We obtain  a regular finite cover $F\Sigma_{\tilde E}^f$ of $F\Sigma_{\tilde E}$ and 
a regular fibration 
\begin{alignat}{3}
V_l  &\ra & F\Sigma_{\tilde E}^f & \ra & M  \nonumber \\
\downarrow &  & \downarrow & & \downarrow \nonumber \\ 
V_l & \ra & F\Sigma_{\tilde E} & \ra & F\Sigma_{\tilde E}/\bar F 
\end{alignat}
where $V_l$ is a generic fiber of $F\Sigma_{\tilde E}^f$ for the induced foliation $\bar F^f$ isomorphic to 
a generic fiber of $F\Sigma_{\tilde E}$. 

We obtain an exact sequence 
\[  \pi_1(V_l) \ra \pi_1(F\Sigma_{\tilde E}^f) \stackrel{\pi'_K}{\longrightarrow} \pi_1(M) \ra 1\]
and the image $\pi_1(V_l)$ is a normal subgroup of $\pi_1(F\Sigma_{\tilde E}^f)$.  
Since $F\Sigma_{\tilde E}^f$ is fibered by fibers diffeomorphic to ${\SO}(n-i_0)$ or its cover, 
we have a fibration 
\[\widetilde{\SO}(n-i_0) \ra F\Sigma_{\tilde E}^f \ra \Sigma_{\tilde E}^f\]
where $\Sigma_{\tilde E}^f$ is a regular finite cover of $\Sigma_{\tilde E}$
and $\widetilde{\SO}(n-i_0)$ is a finite cover of $\SO(n-i_0)$. Thus, 
we also have an exact sequence 
\[ \pi_1(\widetilde{\SO}(n-i_0)) \ra \pi_1(F\Sigma_{\tilde E}^f) \ra \pi_1(\Sigma_{\tilde E}^f) \ra 1.\]
Since $\pi_1(\Sigma_{\tilde E}^f)$ is a quotient group of $\pi_1(F\Sigma_{\tilde E}^f)$, 
the image of $\pi_1(V_l)$ is a normal subgroup of $\pi_1(\Sigma_{\tilde E}^f)$
for the generic $l$ so that $V_l$ in $F\Sigma_{\tilde E}^f$ to $V_l$ in $\Sigma_{\tilde E}^F$ is homeomorphic. 
We define $\bGamma_l$ as the image $h(\pi_1(V_l))$.
The above sequence tells us that 
$\bGamma_l$ is a normal subgroup of a finite index subgroup of $\bGamma_{\tilde E}$. 

From now on, we will assume that $\bGamma_l$ is a normal subgroup of $\bGamma_{\tilde E}$ by 
taking a finite cover of the end-neighborhood if necessary. 

Recall that $A_l$ is abelian from Section \ref{subsec:leaves}. By the above paragraph, 
$\Pi^*_K(\bGamma_{\tilde E})$ normalizes $A'_l$ and hence it closure $A_l$.
Since $A_l$ is a subgroup of the product of hyperbolic groups and abelian groups, 
$A_l$ fixes a common set of fixed points on each factor $K_i$. 
Thus, the normalizer $A_{\tilde E} : = \Pi^*_K(\bGamma_{\tilde E})$ commutes with each element of $A_l$ up to finite index also. 
We may pass to the finite index subgroup and we assume that $A_{\tilde E}$ is in the centralizer of $A_l$.

The group
\begin{equation}\label{eqn:gammal}
\bGamma_{l, l} := \bGamma_l \cap N = N_l = N
\end{equation}
is the subgroup of $\pi_1(V_l)$ 
acting on each leaf of $\tilde \Sigma_{\tilde E}$. 
$\bGamma_{l,l}=N$ is orthorpotent 
and hence is solvable. (See \cite{Fried86}.)

\begin{equation}\label{eqn:exactVl} 
\bGamma_{l,l} \ra \bGamma_l \ra A'_l
\end{equation}
is exact. Since $A'_l$ is abelian and $\bGamma_{l,l}$ is 
solvable, $\bGamma_l$ is solvable. 

We let $Z(\bGamma_{\tilde E})$ and $Z(\bGamma_l)$ denote the Zariski closures in $\Aut(\SI^n)$ of 
$\bGamma_{\tilde E}$ and $\bGamma_l$ respectively. 

By Theorem 1.6 of Goldman-Fried \cite{FG}, 
there exists a closed Lie group $S_l$ containing $\bGamma_l$ 
with the following four properties: 
\begin{itemize}
\item $S_l$ has finitely many components.
\item $S_l/\bGamma_l$ is compact. 
\item The Zariski closure $Z(S_l)$ is the same as 
$Z(\bGamma_l)$. 
\item \begin{equation}\label{eqn:rankSl}
{\mathrm{rank}}(S_l) \leq {\mathrm{rank}} (\bGamma_l). 
\end{equation}
\end{itemize}

Since $\bGamma_{\tilde E}$ normalizes $\bGamma_l$ by above, 
$\bGamma_{\tilde E}$ also normalizes 
$Z(\bGamma_l)=Z(S_l)$ but maybe not $S_l$ itself. 




Since $\bGamma_l$ acts on $V_l$ an algebraic set in $F\tilde \Sigma_{\tilde E}$
over the algebraic orbit $A_l(p)$, 
$Z(\bGamma_l) = Z(S_l)$ also acts on $V_l$
and hence so does $S_l$. 
We have 
Since $Z(\bGamma_l) \ra A_l$, 
we have $S_l \ra A_l$ as an onto map.

We summarize: 
\begin{lemma}\label{lem:V} 
$h(\pi_1(V_l))$ is virtually solvable and  is contained in a virtually solvable Lie group $S_l := S(h(\pi_1(V_l))$ 
with finitely many components, and $S_l/h(\pi_1(V_l))$ is compact. 
$S_l$ acts on $\tilde V_l$. 
Furthermore, one can modify a p-end-neighborhood $U$ so that 
$S_l$ acts on it. Also the Zariski closure of $h(\pi_1(V_l))$ is the same as that of $S_l$. 
\end{lemma}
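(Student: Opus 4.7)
The plan is to verify the four separate claims of the lemma in sequence, drawing on the exact sequence \eqref{eqn:exactVl} set up in the preceding discussion and on the Fried--Goldman syndetic hull construction (Theorem 1.6 of \cite{FG}) which is already invoked in the text. First I would establish virtual solvability of $\bGamma_l=h(\pi_1(V_l))$. By \eqref{eqn:gammal} we have $N_l = N$ sitting inside $\bGamma_l$, and the quotient is the dense subgroup $A'_l$ of the connected abelian Lie group $A_l$ (Proposition \ref{prop:V_l}). Since $N$ is orthopotent by the argument following Proposition \ref{prop:eigSI} (all eigenvalue norms equal $1$ for elements of $N$), $N$ is virtually solvable by Fried's theorem; combined with the abelian quotient, $\bGamma_l$ is virtually solvable.

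Next I would apply the Fried--Goldman result to $\bGamma_l$ to obtain a closed Lie group $S_l$ with finitely many components, $S_l/\bGamma_l$ compact, and $Z(S_l)=Z(\bGamma_l)$; the rank inequality \eqref{eqn:rankSl} is automatic. Virtual solvability of $S_l$ follows from that of $\bGamma_l$ together with compactness of $S_l/\bGamma_l$ (a solvable subgroup that is cocompact in a connected-component-finite Lie group forces the ambient group to be virtually solvable, which can also be read off from $Z(S_l)=Z(\bGamma_l)$ being virtually solvable as an algebraic group). The action of $S_l$ on $\tilde V_l$ is then forced: $\tilde V_l = \tilde\Pi_K^{-1}(A_l(p))$ is algebraic, being the preimage of an algebraic orbit under an algebraic map; the property of preserving an algebraic subvariety is Zariski closed; and $\bGamma_l$ preserves $\tilde V_l$. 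Hence $Z(\bGamma_l)=Z(S_l)\supset S_l$ preserves $\tilde V_l$. The Zariski closure equality is part of Fried--Goldman.

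The one genuinely new step, and the main obstacle, is the modification of the p-end-neighborhood $U$ so that $S_l$ acts on it. The key preliminary observation is that $S_l$ fixes the p-end vertex $\bv_{\tilde E}$: fixing a specified point is a Zariski-closed condition in $\SL_\pm(n+1,\bR)$, and $\bGamma_l\subset\bGamma_{\tilde E}$ fixes $\bv_{\tilde E}$, so $Z(\bGamma_l)=Z(S_l)\supset S_l$ fixes $\bv_{\tilde E}$ as well. Choose a compact set $F\subset S_l$ with $S_l=F\cdot\bGamma_l$ (possible by compactness of $S_l/\bGamma_l$ and finiteness of components). Using Lemma \ref{lem:expand} and Corollary \ref{cor:shrink}, pick a $\bGamma_l$-invariant p-end-neighborhood $U_0$ of $\bv_{\tilde E}$ small enough that $F\cdot\clo(U_0)\subset\torb\cup\{\bv_{\tilde E}\}$; such a choice is possible because $F$ is compact, $S_l$ fixes $\bv_{\tilde E}$, and so $F$ moves any sufficiently small neighborhood of $\bv_{\tilde E}$ inside a fixed larger neighborhood still lying in $\torb$. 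Now set
\[
U' \;:=\; \operatorname{int}\Bigl(\bigcap_{s\in F} s(\clo(U_0))\Bigr)\,.
\]
Since $U_0$ is $\bGamma_l$-invariant and $S_l=F\bGamma_l$, the set $U'$ is $S_l$-invariant; it is nonempty because it contains $\bv_{\tilde E}$ on its boundary together with a genuine radial neighborhood (using continuity of the $F$-action and compactness of $F$); and it is convex as the interior of an intersection of convex sets. This $U'$ is the required modified p-end-neighborhood.

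The delicate point in the final paragraph is arranging both nonemptiness and that $U'$ remain a legitimate p-end-neighborhood of $\bv_{\tilde E}$ (radial structure in the R-end case, proper convexity, correct incidence with $\Bd\torb$). One may need to average instead against a $\bGamma_l$-invariant smooth function defining $U_0$ and take an $S_l$-invariant sublevel set, using an $S_l$-invariant smooth measure on $F$ coming from a Haar-like construction on $S_l/\bGamma_l$; this softer variant avoids worrying about whether the intersection has empty interior. Either implementation turns on the fact that $F$ is compact and $\bv_{\tilde E}$ is an $S_l$-fixed point, so all continuity-type difficulties are localized near $\bv_{\tilde E}$ where the action is under control.
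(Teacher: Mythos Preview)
Your proposal is correct and follows essentially the same route as the paper. The paper's proof is very terse: the virtual solvability, the Fried--Goldman syndetic hull properties, and the action of $Z(\bGamma_l)=Z(S_l)$ on $\tilde V_l$ are all established in the discussion preceding the lemma exactly as you outline, and the p-end-neighborhood step is the one-line observation that $\bigcap_{g\in S_l} g(U) = \bigcap_{g\in F} g(U)$ for a compact fundamental domain $F$ of $\bGamma_l$ in $S_l$ (using that $U$ is already $\bGamma_l$-invariant), with compactness of $F$ ensuring the intersection is still a p-end-neighborhood. Your version makes explicit the point that $S_l$ fixes $\bv_{\tilde E}$ (a Zariski-closed condition), which is what underlies the paper's bare assertion that the intersection remains a p-end-neighborhood; the paper leaves this implicit.
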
 
\begin{proof} 
By above, $Z(S_l) = Z(\bGamma_l)$ acts on $\tilde V_l$. 
We prove about the p-end-neighborhood only. 
Let $F$ be a compact fundamental domain of $S_l$ under the $\Gamma_l$. 
Then we have 
\[ \bigcap_{g\in S_l} g(U) = \bigcap_{g \in F} g(U).\]
Since $F$ is compact, the latter set is still a p-end-neighborhood. 
\end{proof}

Since $S_l$ acts on $U$ as shown in Lemma \ref{lem:V}, 
we have a homomorphism $S_l \ra Aut(K)$.  
We define by $S_{l, 0}$ the kernel of this map. 
Then $S_{l, 0}$ acts on each leaf of $\tilde \Sigma_{\tilde E}$.
$S_{l, 0}$ is the normal subgroup of $S_l$ characterized by 
the condition that it acts on each leaf; that is, 
its element acts on each of the hemispheres of dimension $i_0+1$
with boundary $\SI^{i_0}_\infty$. 

\subsubsection{The form of $US_{l, 0}$.}






From now on, we will let $S_l$ to denote the only the identity component of itself for simplicity
as $S_l$ has a finitely many components to begin with. 
This will be sufficient for our purpose of getting a cusp group normalized by $\bGamma_{\tilde E}$. 

Let $US_l$ denote the unipotent radical of the Zariski closure
$Z(S_l)$ of $S_l$ in $\Aut(\SI^n)$, which is a solvable Lie group. 
Also, $US_{l, 0}$ denote the unipotent radical of the Zariski closure of $S_{l, 0}$. 
Since $S_{l, 0}$ is normalized by $\bGamma_{\tilde E}$, 
so is $Z(S_{l, 0})$. 

\begin{proposition}\label{prop:ZN} 
Let $l$ be a generic fiber so that $A_l$ acts with trivial stabilizers. 
\begin{itemize} 
\item $S_l$ acts on $\tilde V_l$ and on $\tilde \Sigma_{\tilde E}$ and $\partial U$ freely, properly, and transitively
and acts as isometries on these spaces with respect to Riemannian metrics. 
\item $S_{l,0}$ acts transitively on each leaf $l$ with a compact stabilizer
and acts on an $i_0$-dimensional ellipsoid passing $\bv_{\tilde E}$ with an invariant Euclidean metric. 
\item $S_{l, 0}$ is an $i_0$-dimensional cusp group
and the unipotent radical $US_{l, 0}$ is an $i_0$-dimensional abelian group equal to $S_{l, 0}$. 
\item $US_{l, 0}$ acts on $\tilde V_l$ and is normalized by $\bGamma_{\tilde E}$ also. 
\end{itemize}
\end{proposition}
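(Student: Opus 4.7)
The plan is to analyse $S_l$ and $S_{l,0}$ by passing through the exact sequence for $\bGamma_l$ and then exploiting the rank inequality $\mathrm{rank}(S_l)\le \mathrm{rank}(\bGamma_l)$ together with the orthopotent structure of $N=\bGamma_{l,l}$ established in Section~\ref{sub:uniN}.

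First I would take the exact sequence \eqref{eqn:exactVl}, namely $1 \to \bGamma_{l,l} \to \bGamma_l \to A'_l \to 1$, and take syndetic hulls in a compatible way. Since $A'_l$ is dense in the connected abelian algebraic group $A_l$, and since passing to the identity component of $S_l$ (allowed by Lemma~\ref{lem:V}) kills finite pieces, one obtains a short exact sequence $1 \to S_{l,0} \to S_l \to A_l \to 1$, which identifies $S_{l,0}$ as exactly the kernel described in Subsection~\ref{subsub:syndetic}. In particular $S_{l,0}$ is a connected Lie group containing $N=\bGamma_{l,l}$ cocompactly (modulo a compact factor one can quotient out, using the compactness of $V_l/S_{l,0}$-stabilizers on fibers of $\tilde\Pi_K$).

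Next I would identify $S_{l,0}$ with a cusp group. By Theorem~\ref{thm:comphoro} and Proposition~\ref{prop:affinehoro} applied to the complete affine leaf $l$ (which carries a cocompact action of $N=N_l$ by the discussion in Section~\ref{sub:uniN}), $N$ is virtually a cocompact lattice inside a standard $i_0$-dimensional cusp group $\CN_{\mathrm{std}}$ in a conjugate of $\SO(i_0+1,1)$ acting on an $i_0$-dimensional ellipsoid through $\bv_{\tilde E}$. The syndetic hull $S_{l,0}$ therefore contains $\CN_{\mathrm{std}}$. Applying the rank inequality \eqref{eqn:rankSl} (combined with the rank of $A_l$ taken out by the projection to $A_l$) forces $\dim S_{l,0}\le i_0$, and hence $S_{l,0}=\CN_{\mathrm{std}}$. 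This is the $i_0$-dimensional abelian unipotent cusp group, so it equals its own unipotent radical $US_{l,0}$. The invariant Euclidean metric on the ellipsoid and the compact stabilizer claim then follow from the standard description of $\CN_{\mathrm{std}}$ recalled in Subsection~\ref{subsub:quadric}.

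To upgrade from $S_{l,0}$ to $S_l$, I would combine the transitive action of $S_{l,0}$ on each leaf $l$ (just established) with the transitive action of $A_l$ on the orbit $A_l(p)\subset FK^o$ (Proposition~\ref{prop:V_l}); the projection $S_l\to A_l$ is onto and the fiber through any point is an $S_{l,0}$-orbit, giving transitivity of $S_l$ on $\tilde V_l$. Freeness and properness come from the fact that $S_l$ sits inside the isometry group of the invariant Riemannian metric on $F\tilde\Sigma_{\tilde E}$ produced by Lemma~\ref{lem:invmet}, that $A_l$ acts freely on its orbit by genericity of $l$, and that the cusp group $\CN_{\mathrm{std}}$ acts freely on each complete affine leaf. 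The same argument transfers to $\tilde\Sigma_{\tilde E}$ and, via the radial structure, to $\partial U$. Finally, $US_{l,0}$ is the unipotent radical of the Zariski closure of $S_{l,0}$, and since $\bGamma_{\tilde E}$ normalizes $S_{l,0}$ (as shown in Subsection~\ref{subsub:syndetic}), it normalizes this Zariski closure and hence its unipotent radical $US_{l,0}=S_{l,0}$.

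The main obstacle I anticipate is the dimension count identifying $S_{l,0}$ with $\CN_{\mathrm{std}}$: one has to combine the rank inequality \eqref{eqn:rankSl} with the dimension of $A_l$ carefully, and one must rule out the syndetic hull being strictly larger than the cusp group (e.g.\ picking up a compact orthogonal factor acting on the ellipsoid). This is where the normality of $N$ inside $S_{l,0}$ and the uniqueness of the cusp group as the algebraic envelope of a cocompact unipotent lattice will be essential.
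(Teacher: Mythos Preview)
Your proposal reverses the paper's logical order in a way that creates a genuine gap. You try to identify $S_{l,0}$ with the cusp group $\CN_{\mathrm{std}}$ \emph{first}, by invoking Theorem~\ref{thm:comphoro} and Proposition~\ref{prop:affinehoro} for the action of $N=\bGamma_{l,l}$ on the leaf $l$, and then deduce freeness and transitivity of $S_l$ from that. But in the indiscrete setting of Section~\ref{sec:indiscrete}, $N$ need not act cocompactly on $l$; indeed $N$ can be finite or even trivial (this case is explicitly treated in the proof of Proposition~\ref{prop:mug}). When $N$ is trivial there is no lattice to feed into Theorem~\ref{thm:comphoro}, so your claim that ``the syndetic hull $S_{l,0}$ therefore contains $\CN_{\mathrm{std}}$'' has no foundation, and the dimension count $\dim S_{l,0}\le i_0$ you derive from \eqref{eqn:rankSl} is left unjustified.

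The paper proceeds in the opposite direction. It first runs the Fried--Goldman argument of \cite[Section~1.8]{FG}: the unipotent radical $US_l$ acts freely, so $US_l(x)$ is a $K(\bGamma_l,1)$, giving $\mathrm{rank}\,\bGamma_l\le\dim US_l\le\dim S_l\le\mathrm{rank}\,\bGamma_l$; hence equality, and a fibration argument forces $\pi_1(S_l)$ finite, so $S_l$ is contractible and thus acts freely. Transitivity then follows from compactness of $S_l/\bGamma_l$. Only \emph{after} this does the paper conclude that $S_{l,0}$ acts simply transitively on each leaf and on the horosphere $\partial U_l$; the identification with the cusp group then comes from the classification of transitive actions on horospheres (via \cite{CM2}), with no appeal to $N$ being a lattice. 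You also assert that Subsection~\ref{subsub:syndetic} shows $\bGamma_{\tilde E}$ normalizes $S_{l,0}$, but that subsection explicitly says $\bGamma_{\tilde E}$ normalizes $Z(S_l)$ ``but maybe not $S_l$ itself''; the paper instead argues that any conjugate $gS_lg^{-1}$ is again a syndetic hull whose kernel must be the \emph{same} cusp group by uniqueness.
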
  
\begin{proof} 
Since $Z(S_l) = Z(\bGamma_l)$ acts on $\tilde V_l$ as stated above, it follows 
that $S_l$ and $US_l$ both in the group act on $\tilde V_l$. 

(i) A stabilizer $S_{l, x}$ of each point $x \in \tilde V_l$ for $S_l$ is compact: 
let $F$ be the fundamental domain of $S_l$ with $\Gamma_l$ action. 
Let $F'$ be the image $F(x):= \{g (x)| g \in F\}$ in $\tilde V_l$. This is a compact set. 
Define
\[\Gamma_{l, F'} := \{g \in \Gamma_l|  g(F(x)) \cap F(x) \ne \emp\}.\] 
Then $\Gamma_{l, F'}$ is finite by the properness of the 
action of $\Gamma_l$. 
Since an element of $S_{l, x}$ is a product of an element $g'$ of $\Gamma_l$ and $f \in F$, and
$g' f(x) = x$, it follows that $g'F(x) \cap F(x) \ne \emp$ and $g' \in \Gamma_{l, F}$. 
Hence $S_{l, x} \subset \Gamma_{l, F'}F$ and $S_{l, x}$ is compact.
Similarly, $S_l$ acts properly on $\tilde \Sigma_{\tilde E}$. 
Since $\partial U$ is in one-to-one correspondence with $\tilde \Sigma_{\tilde E}$, 
$S_l$ acts on $\partial U$ properly. Hence, these spaces have 
compact stabilizers with respect to $S_l$. 
The invariant metric follows by Lemma \ref{lem:invmet}. 
Hence, the action is proper and the orbit is closed. 
(Since $\tilde V_l/\Gamma_l$ is compact, $\tilde V_l /S_l$ is compact also. )


(ii) We assume that $\bGamma_{\tilde E}$ is torsion-free by taking a finite index subgroup 
since $\Sigma_{\tilde E}$ is a very good orbifold, admitting a geometric structure. 
Now, we show that $S_l$ acts freely on 
$\tilde \Sigma_{\tilde E}$: 
We use the last part of 
Section 1.8 of \cite{FG} where we can replace $H$ there with $S_l$ and $\bR^n$ with $\tilde V_l$ and $\Gamma$ with 
the solvable subgroup $\bGamma_l$, we obtain the results for $\bGamma_l$: 

First, $\bGamma_l$ is solvable and discrete, and hence is polycyclic
and $S_l$ has the same Zariski closure as $\bGamma_l$. 
We work on the projection of $\tilde V_l$ on 
$\tilde \Sigma_{\tilde E}$, a convex but not properly convex open domain in an affine space $A^{n-1}$.  
The proof  
identical with that of Lemma 1.9 of \cite{FG}
shows that the unipotent radical $US_l$ of $Z(S_l)$ acts freely on $\tilde \Sigma_{\tilde E}$.

Being unipotent, $US_l$ is simply connected. The orbit $US_l(x)$ for $x \in \tilde \Sigma_{\tilde E}$ 
is simply connected and invariant under $Z(\bGamma_l)= Z(S_l)$. 
$US_l(x)/\bGamma_l$ is a $K(\bGamma_l, 1)$-space. 
Thus, $\ranK \bGamma_l = cd \bGamma_l \leq \dim US_l$. 
By Lemma 4.36 of \cite{Rag}, $\dim US_l \leq \dim S_l$ and 
by Lemma 1.6 (iv) of \cite{FG}, we have $\dim S_l \leq \ranK \bGamma_l$.
Thus, $\ranK \bGamma_l = \dim S_l$. 

We now show $S_l$ acts freely on $\tilde \Sigma_{\tilde E}$.
We have a fibration sequence 
\[ \bGamma_l \ra S_l \ra S_l/\bGamma_l \] and 
an exact sequence 
\[\pi_1(S_l) \ra \pi_1(S_l/\bGamma_l) \ra \bGamma_l,\]
and hence $\ranK \pi_1(S_l) + \ranK \bGamma_l = \ranK \pi_1(S_l/\bGamma_l) = \dim S_l$
since $S_l$ is solvable and $S_l/\bGamma_l$ is a compact manifold following the argument in Section 
1.8 of \cite{FG}. (See Proposition 3.7 of \cite{Rag} also where we need to take the universal cover of $S_l$.)
Since $\ranK \bGamma_l = \dim S_l$, we have $\ranK \pi_1(S_l) = 0$. 
This means that $\pi_1(S_l)$ is finite. Being solvable, it is trivial. 
Thus, $S_l$ is simply connected. Since $S_l$ is homotopy equivalent to $T^{j_1}$, 
$S_l$ is contractible. 

Since $S_l$ acts transitively on any of its orbits, 
$S_l$ is homotopy equivalent to a bundle over it with 
fiber homeomorphic to a stabilizer. Since $S_l$ is contractible, the stabilizer is trivial. 
Since $S_l$ acts with trivial stabilizers on $\tilde \Sigma_{\tilde E}$, it acts so on $\tilde V_l$. 
We showed that $S_l$ acts freely on $\tilde V_l$. 
(We followed Section 1.8 of \cite{FG} faithfully here.)

(iii) Now, we show that $S_l$ acts transitively on
$\tilde \Sigma_{\tilde E}$:  Choose $x \in \tilde V_l$. 
There is a map $f: S_l/\bGamma_l \ra \tilde V_l/\bGamma_l$ given by
sending each $g \in S_l$ to $g(x) \in \tilde V_l$. 
By the free action property the map is open. 
The image of the map is also closed since $S_l/\bGamma_l$ is compact.
Hence, the map is onto and $S_l$ acts transitively on $\tilde V_l$. 


(iv) Hence, $S_{l, 0}$ acts simply transitively on each $l$;
$S_{l, 0}$ is diffeomorphic to a leaf $l$ and hence is connected
and is a solvable Lie group. 

Since the subset $U_{l}:= U \cap H^{i_0+1}_l$ of $U$ corresponding to $l$ is a strictly convex set containing $v_{\tilde E}$, 
we have $S_{l, 0}$ acting simply transitively on $\partial U_{ l}$. 
As before in the proof of Theorem \ref{thm:comphoro} using the
results of \cite{CM2}, 
$S_{l, 0}$ acts on an $i_0$-dimensional ellipsoid that has to equal $\partial U_{l}$. 
Since one can identify each leaf with an affine space $S_{l, 0}$ is isomorphic to an affine isometry group 
acting simply transitively on an affine space $\bR^i$.
Let ${\mathcal H}_{\bv_{\tilde E}}$ denote the cusp group acting on the ellipsoid. 
An elementary argument using the cocompact subgroup simultaneously in both groups
shows that $S_{l,0}$ and ${\mathcal H}_{\bv_{\tilde E}}$ are identical.

This shows also that $S_{l, 0}$ is nilpotent and we have $US_{l, 0} = S_{l, 0}$ also. 
Finally, this implies that $US_{l, 0}$ is an $i_0$-dimensional abelian Lie group. 

For $g \in \bGamma_{\tilde E}$, 
$S'_l := g S_l g^{-1}$ is a syndetic hull of $\bGamma_{\tilde E}$. 
Then we define $S'_{l, 0}$ as the subgroup acting trivially on the space of leaves. 
Since $S'_{l, 0}$ has to be the cusp group as above by the same proof, it 
follows that $S'_{l, 0} = S_{l, 0} = gS_{l, 0}g^{-1}$. Thus, $S_{l, 0}$ is a normal subgroup. 

\end{proof}


\subsection{The forms of $\bGamma_{\tilde E}$.}

\subsection{The existence of splitting.} 

\subsubsection{Matrix form.} 

We can parametrize $US_{l, 0}$ by $\CN(\vec{v})$ for $\vec{v} \in \bR^{i_0}$ by Proposition \ref{prop:ZN}. 
As above by Lemmas  \ref{lem:similarity} and \ref{lem:conedecomp1}, we have that the matrices are of form. 
\renewcommand{\arraystretch}{1.2}
\begin{equation} 
\newcommand*{\temp}{\multicolumn{1}{r|}{}}
\CN(\vec{v}) = \left( \begin{array}{ccccccc} 
\Idd_{n-i_0-1} & \temp & 0 & \temp & 0 & \temp & 0 \\ 
 \cline{1-7}
0 &\temp & 1 &\temp & 0 &\temp & 0 \\ 
 \cline{1-7}
0 &\temp & \vec{v}^T &\temp & \Idd_{i_0} &\temp & 0 \\ 
 \cline{1-7}
c_2({\vec{v}}) &\temp & ||\vec{v}||^2 /2&\temp & \vec{v} &\temp & 1 
\end{array} 
\right), 
\end{equation} 
\begin{equation}
\newcommand*{\temp}{\multicolumn{1}{r|}{}}
g = \left( \begin{array}{ccccccc} 
S(g) & \temp & 0 & \temp & 0 & \temp & 0 \\ 
 \cline{1-7}
0 &\temp & a_1(g) &\temp & 0 &\temp & 0 \\ 
 \cline{1-7}
C_1(g) &\temp & a_4(g) &\temp & a_5(g) O_5(g) &\temp & a_6(g) \\ 
 \cline{1-7}
c_2(g) &\temp & a_7(g) &\temp & a_8(g) &\temp & a_9(g) 
\end{array} 
\right)
\end{equation}
where $g \in \bGamma_{\tilde E}$. Recall 
$\mu_g = a_5(g)/a_1(g) = a_9(g)/a_5(g)$. 
Since $S_l$ is in $Z(\bGamma_l)$ and the orthogonality of normalized $A_5(g)$
is an algebraic condition, the above form also holds for $g \in S_l$. 

\begin{proposition} \label{prop:mug}
Assume that $\bGamma_{\tilde E}$ is discrete. Then
we have $\mu_g= 1$ for every $g \in N'''$. 
\end{proposition}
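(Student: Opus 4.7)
The plan is to pit the discreteness of $\bGamma_{\tilde E}$ against the abelian cusp structure of $US_{l,0}$ established in Proposition \ref{prop:ZN}, exactly as in the finite-group argument at the start of the proof of Theorem \ref{thm:NPCCcase}, but now with the indiscrete $N_K$ replaced by the connected cusp $US_{l,0}$. First I would record that by Proposition \ref{prop:ZN} the group $US_{l,0}=S_{l,0}$ is an abelian $i_0$-dimensional Lie group (an $i_0$-dimensional cusp group) acting simply transitively on each leaf $l$ of $\tilde\Sigma_{\tilde E}$, and that $N=\bGamma_{l,l}=\bGamma_l\cap US_{l,0}$ by \eqref{eqn:gammal}. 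Since $N$ is discrete and acts cocompactly on the leaf (the horospherical quotient inside $\tilde V_l/\bGamma_l$), under the parametrization $\vec v\mapsto \CN(\vec v)$ of $US_{l,0}$ the subgroup $N$ corresponds to a full rank lattice $L\subset\bR^{i_0}$.

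Next, for $g\in N'''$ (which by its definition lies in the normalizer of $N$ and hence, via the Zariski closure, of $US_{l,0}$), the conjugation relation from Lemma \ref{lem:similarity} reads
\[
g\,\CN(\vec v)\,g^{-1} \;=\; \CN(\vec v\,M_g),\qquad M_g=\mu_g\,O_5(g)^{-1},
\]
with $O_5(g)$ in the compact group $G_{\tilde E}\subset O(i_0)$ (in the coordinates of Lemma \ref{lem:similarity} and Lemma \ref{lem:conedecomp1}). Since $g$ normalizes $N$, the linear map $M_g\colon\bR^{i_0}\to\bR^{i_0}$ must preserve the lattice $L$, hence $M_g\in\GL(L)$ and $|\det M_g|=1$. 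Taking determinants in $M_g=\mu_g\,O_5(g)^{-1}$ and using $|\det O_5(g)|=1$ gives $\mu_g^{i_0}=1$. Because $\mu_g>0$ (it is a ratio of positive eigenvalue norms $a_5(g)/a_1(g)=a_9(g)/a_5(g)$), we conclude $\mu_g=1$.

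The main obstacle I expect is showing that the relevant lattice $L$ is genuinely preserved by every $g\in N'''$, i.e.\ justifying the normalization of $N$ by $N'''$ in the form needed. The definition of $N'''$ (given just before the statement) should make this direct, but one must be careful that $N$ and $US_{l,0}$ match properly: $US_{l,0}$ was produced by the syndetic hull construction in Section \ref{subsub:syndetic} from the possibly indiscrete ambient, and one has to check that the cusp group extracted there is exactly the one in whose lattice $N$ sits with full rank. After passing to a finite-index subgroup of $\bGamma_{\tilde E}$ (as was done before Proposition \ref{prop:ZN} to ensure $\bGamma_l\triangleleft\bGamma_{\tilde E}$), this should be automatic; the remainder of the argument is then the clean determinant calculation above.
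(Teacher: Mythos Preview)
Your argument has a real gap: the claim that $N=\bGamma_{l,l}$ acts cocompactly on the leaf $l$, i.e.\ corresponds to a full-rank lattice in $US_{l,0}\cong\bR^{i_0}$, is not justified in the indiscrete setting of Section~\ref{sec:indiscrete} where this proposition lives. In the discrete $N_K$ case the fibration $l/N\to\Sigma_{\tilde E}\to K^o/N_K$ over a compact base forces $l/N$ compact, but here $N_K$ is indiscrete, $K^o/N_K$ is not a manifold, and the leaf $l$ is merely \emph{dense} in $V_l=\tilde V_l/\bGamma_l$. It is $\bGamma_l$ that yields the compact quotient $V_l$, not $N$; and the intersection $N=\bGamma_l\cap S_{l,0}$ of a lattice with a closed normal subgroup need not be a lattice in that subgroup (for a model, take $S_l=\bR^2$, $\bGamma_l=\bZ^2$, and $S_{l,0}$ a line of irrational slope: then $N=\{0\}$). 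So $\bGamma_{l,l}$ may be trivial or finite, and in those cases your determinant computation is vacuous.

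The paper accordingly treats three cases. When $\bGamma_{l,l}$ is trivial, $\bGamma_{\tilde E}\hookrightarrow\Aut(K)$ is injective, and since its image centralizes the abelian $\Pi^*_K(\bGamma_l)$ it centralizes $Z(\bGamma_l)\supset S_l\supset US_{l,0}$, whence $M_g=\Idd$ directly; the finite case reduces to this after passing to a finite-index subgroup. Only in the infinite case does one have a nontrivial $h=k\,\CN(\vec v_h)\in\bGamma_{l,l}$ with $\vec v_h\ne 0$, and then $g^n h g^{-n}$ has cusp part $\CN(\mu_g^{\,n}\,\vec v_h\,O_5(g)^{-n})$, which accumulates on a compact set if $\mu_g<1$, contradicting discreteness of $\bGamma_{\tilde E}$. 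This last step is your argument in norm rather than determinant form; incidentally your determinant trick also works for any rank $k\ge 1$, since $M_g$ preserves $\mathrm{span}_{\bR}(L)$ and $\det(M_g|_{\mathrm{span}\,L})=\pm\mu_g^{\,k}$. The piece you are missing is the centralization argument for small $\bGamma_{l,l}$.
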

\begin{proof} 
First suppose that $\bGamma_{l,l}$ is trivial. 
Then $N$ is trivial and $\bGamma_{\tilde E} \ra \Aut(K)$ is injective. 
The image $N_K$ centralizes the image of $\bGamma_l$ in $\Aut(K)$. 
This implies that $\bGamma_{\tilde E}$ centralizes $\bGamma_l$ also
since $N=\{\Idd\}$. Here, $\bGamma_l$ is abelian and then we can choose 
a syndetic hull $S_l$ where $S_{l,0}$ is not trivial. 
$\bGamma_{\tilde E}$ centralizes the Zariski closure $Z(\bGamma_l)$ of $\bGamma_l$. 
$S_l \subset Z(\bGamma_l)$ implies that $\bGamma_{\tilde E}$ centralizes $S_l$
and hence $US_{l, 0}$. This implies $\mu_g =1$ of course since $M_g = \Idd$ 
for all $g \in \bGamma_{\tilde E}$. 

Suppose that $\bGamma_{l, l}$ is finite. Then $\bGamma_{\tilde E}$ acts as a group of finite automorphisms of $\Gamma_l$ 
since a finite index subgroup of $\bGamma_{\tilde E}$ centralizes $S_l$. 
This implies $\mu_g = 1$ for all $g \in \bGamma_{\tilde E}$. 

Suppose that $\Gamma_{l, l}$ is infinite. 
For some $h \in \Gamma_{l, l} - \{\Idd\}$, we have $h = kN(\vec{v}_h)$
for $k \in O(i_0)$. 
If $\mu_g \ne 1$ for $g \in \bGamma_{\tilde E}$, we may assume without loss of generality that $\mu_g < 1$. 
We obtain that 
\[g^n k N(\vec{v}_h)g^{-n} = g^n k g^{-n} N(\vec{v}_h \mu_g^n O_g^{5, -n}).\] 
We can choose a subsequence that converges to an elliptic element $k$ 
since the off-diagonal elements of $N(\vec{v})$ are linear functions of $\vec{v}$. 
Thus, $\Gamma_{l, l}$ contains elements arbitrarily close to $k$
This contradicts the discreteness of $\Gamma_l$, and hence, $\mu_g = 1$. 


\end{proof}

\begin{proposition} \label{prop:decomposition2}
Let $\orb$ be a strongly tame properly convex real projective orbifold with radial or totally geodesic ends. 
Let $\tilde E$ be a p-R-end of $\orb$ with a p-end-neighborhood $U$ and the end vertex $\bv_{\tilde E}$.
Assume the following{\rm :}
\begin{itemize}
\item $\bGamma_{\tilde E}$ satisfies the weakly uniform middle-eigenvalue conditions.
\item $\bGamma_{\tilde E}$ normalizes the cusp group. 
\end{itemize} 
Then 
\begin{itemize}
\item One can find a coordinate system so that for every element of $\bGamma_{\tilde E}$ is written so that
$C_1(g)=0$ and $c_2(g) = 0$ for $g \in \bGamma_{\tilde E}$. 
\item Our p-R-end $\tilde E$ is a join or quasi-joined type of a cone over a totally geodesic domain $K''$ 
and a cusp p-R-end with a common p-end vertex $\bv_{\tilde E}$. 
\end{itemize}
\end{proposition}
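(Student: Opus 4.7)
The plan is to reduce to the standard matrix form obtained in the earlier discrete case and then reuse the join/quasi-join dichotomy of Proposition \ref{prop:qjoin}. First, Proposition \ref{prop:ZN} produces the $i_0$-dimensional abelian cusp group $US_{l,0}$, which by hypothesis is normalized by $\bGamma_{\tilde E}$; set $\CN := US_{l,0}$. By Proposition \ref{prop:mug} (whose proof is about discreteness of $\bGamma_{\tilde E}$, not of $N_K$) we have $\mu_g = 1$ for every $g \in \bGamma_{\tilde E}$, so the conjugation action of $\bGamma_{\tilde E}$ on $\CN$ lies in the compact group $O(i_0)$. Hypothesis \ref{h:norm} is therefore in force, and Lemma \ref{lem:similarity} gives $a_1(g) = a_5(g) = a_9(g) = \lambda_{\bv_{\tilde E}}(g)$, $A_5(g) = \lambda_{\bv_{\tilde E}}(g) O_5(g)$ with $O_5(g) \in O(i_0)$, and $a_6(g) = 0$.

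Next, I would invoke Lemma \ref{lem:conedecomp1} to obtain the splitting $K = \{k\} \ast K''$ and a coordinate system in which, simultaneously, $C_1(\vec v) = 0$, $c_2(\vec v) = 0$ for all $\vec v \in \bR^{i_0}$, and $s_1(g) = s_2(g) = 0$ for all $g \in \bGamma_{\tilde E}$. Lemma \ref{lem:matrix} then forces $O_5(g)^{-1} a_4(g) = a_8(g)^T$. At this stage every $g \in \bGamma_{\tilde E}$ has the shape \eqref{eqn:formg} except that $C_1(g)$ and $c_2(g)$ may still be nonzero.

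To kill $C_1(g)$ and $c_2(g)$, I would run an analogue of the $\gamma_{m_0}$-argument in the proof of Proposition \ref{prop:decomposition}. By Theorem \ref{thm:folaff} together with the weakly uniform middle-eigenvalue condition, pick $\gamma_{m_0} \in \bGamma_{\tilde E}$ whose image in $N_K$ pushes points of $K^o$ toward the cone vertex $k$, so that $\lambda_{\bv_{\tilde E}}(\gamma_{m_0})$ strictly dominates all norms of eigenvalues of $\gamma_{m_0}$ acting along the $K''$-directions. Conjugating the coordinates (without altering the already-normalized lower-right $(i_0+2)\times(i_0+2)$ block), arrange $C_1(\gamma_{m_0}) = 0$ and $c_2(\gamma_{m_0}) = 0$; then the real Jordan decomposition of $\gamma_{m_0}$ yields a distinguished invariant subspace $S(K''_{m_0})$ associated to the smaller-eigenvalue Jordan blocks, containing a properly convex compact lift $K''_{m_0}$ of $K''$. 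Since $\CN$ commutes with $\gamma_{m_0}$ and is connected, $n(S(K''_{m_0}))$ is a $\gamma_{m_0}$-invariant subspace of the same Jordan type, hence equals $S(K''_{m_0})$; likewise, for any $g \in \bGamma_{\tilde E}$, the conjugate $g\gamma_{m_0}g^{-1}$ has the same eigenvalues at the same invariant data because $\bGamma_{\tilde E}$ centralizes $\CN$ modulo the compact $O(i_0)$, so $g(S(K''_{m_0}))$ is also $\gamma_{m_0}^k$-invariant of the right type for all large $k$. Finiteness of the $\bGamma_{\tilde E}/\bGamma'_{\tilde E}$-orbit of $K''_{m_0}$ combined with iteration by $\gamma_{m_0}$ (exactly as at the end of the proof of Proposition \ref{prop:decomposition}) forces $g(K''_{m_0}) = K''_{m_0}$, whence $C_1(g) = 0$ and $c_2(g) = 0$.

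Once this standard form is in place, the three bullets of Hypothesis \ref{h:join} are verified, and Proposition \ref{prop:qjoin} (ii)(iii) supplies the dichotomy: either $\alpha_7 \equiv 0$ on $\bGamma_{\tilde E}$, in which case $U$ is a genuine join of the cone $\{\bv_{\tilde E}\} \ast K''$ with a cusp p-R-end, or $\alpha_7$ is nonzero, in which case the weakly uniform middle-eigenvalue condition rules out $\alpha_7 < 0$ on the semigroup $G_+$ (part (i) of the proposition) and a standard rescaling using Lemmas \ref{lem:decjoin}--\ref{lem:joinred} upgrades positivity to the uniform positive translation condition $\mu_7 > C_0 > 0$, giving the quasi-joined structure. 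I expect the main obstacle to be step three: in the indiscrete setting the candidate element $\gamma_{m_0}$ must be chosen so its \emph{projected} dominant eigenvalue comes from the $k$-factor of $K = \{k\} \ast K''$ rather than from a nondiscrete hyperbolic factor $\Gamma_i$ of $N_K$, and one must check that the Zariski-density / uniqueness step identifying $S(K''_{m_0})$ still goes through when the centralizer of $\gamma_{m_0}$ in $\bGamma_{\tilde E}$ is only virtually the full central abelian piece.
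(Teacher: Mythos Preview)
Your proposal has two genuine gaps, both in the ``kill $C_1(g)$, $c_2(g)$'' step.

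First, Lemma~\ref{lem:conedecomp1} does \emph{not} give $c_2(\vec v)=0$; it only gives $C_1(\vec v)=0$ (plus the cone splitting $K=\{k\}\ast K''$, the orthogonality of $O_5$, $s_1=s_2=0$, and the relation~\eqref{eqn:conedecomp1}). The vanishing of $c_2(\vec v)$ is precisely what the paper must establish separately, and in the indiscrete setting this is the crux.

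Second, your commutation step ``since $\CN$ commutes with $\gamma_{m_0}$'' is unjustified here. In the discrete case (Proposition~\ref{prop:decomposition}) virtual centralization of $\CN$ comes from the fact that $N\cap\CN$ is a lattice, so $g\mapsto M_g$ lands in $O(i_0)\cap\GL(i_0,\bZ)$, a finite group. In the indiscrete case there is no such lattice, $g\mapsto O_5(g)$ may have infinite image in $O(i_0)$, and no finite-index subgroup of $\bGamma_{\tilde E}$ need centralize $\CN$. Your Jordan-subspace argument identifying $S(K''_{m_0})$ as $\CN$-invariant collapses without this.

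The paper's route is different and avoids both problems. After Lemmas~\ref{lem:similarity}, \ref{lem:conedecomp1}, and~\ref{lem:matrix} (so $\mu_g=1$, $C_1(\vec v)=0$, $s_1=s_2=0$), it picks a sequence $\gamma_m$ with $\delta_m/\lambda_m\to 0$ and locates a point $x'=[\vec a_1,0,\vec 0,a_4]\in K''_{m_0}\subset\Bd\torb$. Applying $\CN(n\vec v)$ to $x'$ yields $[\vec a_1,0,\vec 0,\,n\,c_2(\vec v)\cdot\vec a_1+a_4]$; if $c_2(\vec v)\cdot\vec a_1\ne 0$, letting $n\to\pm\infty$ produces an antipodal pair in $\clo(U)$, contradicting proper convexity. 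This geometric argument forces $c_2(\vec v)=0$ for all $\vec v$. Then relation~\eqref{eqn:conedecomp1} immediately gives $C_1(g)=0$ for all $g$, so $\bGamma_{\tilde E}$ acts on the tube $B(K''_{m_0})$. The paper then invokes Theorem~\ref{thm:distanced} (distanced action from the weakly uniform middle-eigenvalue condition) to produce a $\bGamma_{\tilde E}$-invariant compact $K'\subset B(K''_{m_0})$ away from $\bv_{\tilde E},\bv_{\tilde E-}$, and a limit argument with $\gamma_{m_0}^i$ forces $K'=K''_{m_0}$, whence $c_2(g)=0$. The invocation of Proposition~\ref{prop:qjoin} at the end is as you say.
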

\begin{proof}
Since $\bGamma_{\tilde E}$ normalizes $US_{l, 0}$ as written above, 
Lemmas \ref{lem:similarity} and \ref{lem:conedecomp1} apply to this situation. 

Since $K^o/N_K$ is compact, $K^o$ has a compact set $F$ which every orbit meets.
$K^o$ is foliated by open lines from a point $k \in K$ to points of convex open domain 
in $\clo(K'')$ of dimension $n-i_0-1$. 
That is, $K$ is the interior of the join $k* \clo(K'')$
for a properly convex open domain $K''$ of dimension $n-i_0-1$. 
Call these {\em $k$-radial lines}. 
Take such a line $l$ and a sequence of points $k_m \ra k_\infty \in K''$ as $m \ra \infty$. 
Hence, $\bGamma_{\tilde E}$ contains a sequence $\{\gamma_m\}$ 
so that $\gamma_m(k_m) \in F$ and $\gamma_m(l)$ is a line passing $F$
so that $\gamma_m(\partial_1 l) \ra k_\infty$ for the endpoint $\partial_1 l$ of $l$ in the interior $K''$. 
Since $K''$ is properly convex, this implies that $\{\gamma_m| K''\}$ is a bounded sequence of 
transformations and hence $\gamma_m$ is of form: 
\renewcommand{\arraystretch}{1.2}
\begin{equation}\label{eqn:gammap}
\newcommand*{\temp}{\multicolumn{1}{r|}{}}
\left( \begin{array}{ccccccc} 
\delta_m O_m & \temp & 0 & \temp & 0 & \temp & 0 \\ 
 \cline{1-7}
0 &\temp & \lambda_m &\temp & 0 &\temp & 0 \\ 
 \cline{1-7}
 C_{1,m} &\temp & \lambda_m \vec{v}^T_m &\temp & \lambda_m O_5({\gamma_m}) &\temp & 0 \\ 
 \cline{1-7}
c_{2,m} &\temp & a_7(\gamma_m) &\temp &\lambda_m  \vec{v}_m O_5(\gamma_m) &\temp & \lambda_m 
\end{array} 
\right)
\end{equation}
where $O_m$ is a bounded sequence of matrices in $\Aut(K'')$ in $\SL(n-i_0-1, \bR)$
since the set of projective automorphisms of 
$K''$ moving points only bounded distances in $d_{K''}$ is bounded
as in the proof of Proposition \ref{prop:decomposition}
and we have $\mu_g=1$ identically by Proposition \ref{prop:mug} and hence Lemma \ref{lem:matrix} applies.
Note that $\delta_m^{n-i_0-1} \lambda_m^{i_0+2} = 1$,
and $\delta_m/\lambda_m \ra 0$ as $\gamma_m|l$ pushes the points toward 
the vertex $k$ of $K$. 

We assume by choosing subsequences so that $\{O_m\}$ converges to $O_\infty \in \Aut(K'')$
and $\delta_m \ra 0$ and $\lambda_m \ra \infty$
for the determinant $\pm 1$ representation of $\gamma_m$.


Now we will show that $\bGamma_{\tilde E}$ acts on a tube on a copy of $K''$ in $\Bd \torb$.

Let $S_m$ denote the subspace spanned by the real Jordan-block subspaces corresponding to 
eigenvalues with norms strictly smaller than $\lambda_m$.
Then we obtain $\dim S_m = \dim K''$, and there is a projection $S_m \ra K'' \subset \SI^{n-i_0 -2}$ 
given by $\SI^n - \SI^{i_0+1}_{k}$ where $\SI^{i_0+1}_k$ is the great sphere containing  $H^{i_0+1}_1$. 
Let $K''_m$ denote the corresponding properly convex subset of $S_m$ to $K''$.


We introduce a coordinate on $\SI^n$ respecting the matrix form of equation \ref{eqn:gammap}.
Thus, given a vector $\vec{a} := (\vec{a}_1, a_2, \vec{a}_3, a_4)$ where $\vec{a}_1$ is a $n-i_0-1$-vector in 
a direction to $S_m$ and $(a_2, \vec{a}_3, a_4)$ is in the direction of $H^{i_0+1}_1$.
$a_2$ is a component in the direction of an interior point of $H^{i_0+1}_1$, 
$a_4$ is the component in the direction of $\bv_{\tilde E}$ and
$\vec{a}_3$ is an $i_0$-vector in the direction of the subspace complementary to the sum of 
the one-dimensional subspaces in these directions. 

Let $m_0$ be a large number so that $\delta_{m_0} > \lambda_{m_0}$. 
At least one point $x_{m_0}$ of $K''_{m_0}$ is in $\Bd \torb$ since we can apply $\gamma_{m_0}^i$ to a point of $U$ 
as $i \ra \infty$. $x_{m_0} \ne \bv_{\tilde E}, \bv_{\tilde E-}$ implies that $x_{m_0}$ has coordinates $\vec{a}_1 \ne 0$. 
Recall that
\renewcommand{\arraystretch}{1.2}
\begin{equation} \label{eqn:secondm2}
\newcommand*{\temp}{\multicolumn{1}{r|}{}}
\CN(\vec{v}) = \left( \begin{array}{ccccccc} 
\Idd_{n-i_0-1} & \temp & 0 & \temp & 0 & \temp & 0 \\ 
 \cline{1-7}
0 &\temp & 1 &\temp & 0 &\temp & 0 \\ 
 \cline{1-7}
0 &\temp & \vec{v}^T &\temp & \Idd_{i_0} &\temp & 0 \\ 
 \cline{1-7}
c_2({\vec{v}}) &\temp & \frac{||\vec{v}||^2 }{2}&\temp & \vec{v} &\temp & 1 
\end{array} 
\right).
\end{equation}
In other words, on $K''_{m_0}$, $a_2=0$ and $\vec{a}_3=0$. 
We note that $c_2(\vec{v})$ is linear in $\vec{v}$ in this situation.

In fact a neighborhood of $x_{m_0}$ in $K''_{m_0}$ is in $\Bd \torb$ 
since $\{O_m\}$ is a bounded collection of projective automorphisms. 

By Proposition \ref{prop:ZN}, $\CN(\vec{v})$ acts on $U$ and hence on the properly convex domain 
$\clo(U)$ which contains $K''_{m_0}$.  
Now we choose the point $x'$ in an open set $K''_{m_0} \subset \Bd \torb$ and represent it as the vector 
$\vec{a} = (\vec{a}_1, 0, \vec{0}, a_4)$.
We note that 
\begin{equation} \label{eqn:nonprop}
\CN(n \vec{v}) \vec{a} = ( \vec{a}_1, 0, \vec{0}, n c_2(\vec{v})\cdot \vec{a}_1+ a_4) \hbox{ for } n \in \bZ. 
\end{equation}
Therefore, it must be that $c_2(\vec{v})\cdot  x'= 0$ 
otherwise one can use $n \ra \infty$ and $n \ra -\infty$, we obtain 
two antipodal points, which are in $\clo(U)$. 
Hence $c_2(\vec{v}))=0$ for all $\vec{v} \in \bR^{i_0}$.


By  Equation \ref{eqn:conedecomp1}, 
$C_1(g) =0$ for all $ g\in \bGamma_{\tilde E}$ since $c_2(\vec{v})=0$ for all 
$\vec{v} \in \bR^i$.

Let $\SI^{n-i_0}_{m_0}$ denote the minimal subspace in $\SI^n$ containing $K''_{m_0}$ and $\bv_{\tilde E}, \bv_{\tilde E-}$. 
We define the tube $B(K''_{m_0})$ that is the union of segments passing $K''_{m_0}$ with endpoints 
$\bv_{\tilde E}, \bv_{\tilde E-}$. Since $C_1(g)=0$, $g \in \bGamma_{\tilde E}$ acts on $\SI^{n-i_0}_{m_0}$.
Since $g$ acts on $K''$, it follows that $g$ acts on $B(K''_{m_0})$, the subset of 
$\SI^{n-i_0}_{m_0}$ corresponding to $K'$ under the projection $\Pi_K$. 




Since $\bGamma_{\tilde E}$ acting on $B(K''_{m_0})$ thus satisfies the  weakly uniform middle-eigenvalue condition, 
Theorem \ref{thm:distanced} implies that 
$\bGamma_{\tilde E}$ acts on a compact set $K'$ distanced from $\bv_{\tilde E}$ and $\bv_{\tilde E-}$
as we saw in the last part of the proof of Proposition \ref{prop:decomposition}.

Since $\gamma_m$ acts on $K'$ as well, and we have
$\delta_m/\lambda_m \ra  0$ and $O_m \ra O_\infty \in \Aut(K'')$.
Suppose that $K' \ne K''_{m_0}$. Then $\gamma_{m_0}^i(K')$ has to 
become strictly closer to $K''_{m_0}$ than $K'$ for $i$ sufficiently large by the matrix form of equation \ref{eqn:gammap} 
as $\gamma_{m_0}$ acts on $B(K''_0)$ with only invariant subspaces $K''_m$ or one in $\{\bv_{\tilde E}, \bv_{\tilde E-}\}$. 
However, we have $\gamma_{m_0}^i(K')=K'$, an invariant set. 
Hence, we have $K'= K''_{m_0}$ for sufficiently large $m_0$. 
(Thus, $K''_{m_0}$ is defined independently of $m_0$.)

It also follows that $K'$ is in a totally geodesic hypersurface $H'$ of dimension $n-i_0-1$
and $K' = B(K''_{m_0}) \cap H'$ since $K'$ meets every complete segment in $B(K''_{m_0})$ with 
vertices $\bv_{\tilde E}$ and $\bv_{\tilde E-}$. 

Since $g$ acts on $K''_{m_0}$ and $\bv_{\tilde E}, \bv_{\tilde E-}$, 
we obtain $C_1(g) = 0$ and $c_2(g) =0$ for all $g \in \bGamma_{\tilde E}$
under this system of coordinates. 

Recall that $\CN$ acts on a horosphere $H$ in $\SI^{i_0+1}_1$ with the vertex $\bv_{\tilde E}$ fixed.
If $\alpha_7(g) = 0$ for all $g \in \bGamma_{\tilde E}$, 
then the join of a horosphere in $H$ and $K'$ form the join p-end-neighborhood that we wished to obtain by Proposition \ref{prop:qjoin}.
If $\alpha_7(g) > 0$ for $g \in \bGamma_{E, +}$, we obtained a quasi-joined p-R-end again  by Proposition \ref{prop:qjoin}.
If $\alpha_7(g) < 0$, we do not have a properly convex p-end-neighborhood of the p-R-end. 
\end{proof}



\subsection{The non-existence of split joined cases as well} 
\label{sub:proofNPCC}

\begin{theorem}\label{thm:NPCCcase2} 
Let $\Sigma_{\tilde E}$ be the end orbifold of an NPCC p-R-end $\tilde E$ of a strongly tame  
properly convex $n$-orbifold $\orb$ with radial or totally geodesic ends. 
Assume that the holonomy group is strongly irreducible.
Let $\bGamma_{\tilde E}$ be the p-end fundamental group,
and it satisfies the weakly uniform middle-eigenvalue condition.  
Then there exists a finite cover $\Sigma_{E'}$ of $\Sigma_{\tilde E}$ so that 
$E'$ is a quasi-join of a totally geodesic R-ends and a cusp type R-end. 
\end{theorem}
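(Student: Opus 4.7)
The plan is to combine the discrete case (Theorem \ref{thm:NPCCcase}) with the analysis of the indiscrete case developed in Section \ref{sec:indiscrete}, and then to rule out the pure join case using the strong irreducibility hypothesis on the holonomy of $\orb$. Throughout, we pass to a finite cover $\Sigma_{E'}$ of $\Sigma_{\tilde E}$ without further comment, which amounts to replacing $\bGamma_{\tilde E}$ by a suitable finite-index subgroup; this is permissible since the conclusion is about a finite cover.

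First I would dichotomize on the semisimple quotient $N_K = \Pi_K^\ast(\bGamma_{\tilde E}) \subset \Aut(K)$. If $N_K$ is discrete, Theorem \ref{thm:NPCCcase} directly produces a finite cover in which $\tilde E$ is a quasi-join of a T-end (coming from the cone over the totally geodesic $K''$) and a cusp-type R-end. If $N_K$ is not discrete, then by Lemma \ref{lem:polynomial} each leaf of the pulled-back foliation of $F\tilde\Sigma_{\tilde E}$ has polynomial growth, and Carri\`ere's theorem (as implemented in Proposition \ref{prop:V_l}) yields a connected abelian Lie group $A_l$ in the closure of $N_K$ whose orbits foliate the closures $V_l$ of generic leaves. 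The syndetic-hull construction of Fried--Goldman (Lemma \ref{lem:V}) then gives a solvable Lie group $S_l \supset h(\pi_1(V_l))$ acting on a possibly shrunken p-end-neighborhood with the same Zariski closure as $h(\pi_1(V_l))$; Proposition \ref{prop:ZN} identifies its subgroup $US_{l,0}$ fixing each leaf with an $i_0$-dimensional cusp group $\CN$, normalized by $\bGamma_{\tilde E}$ after passing to a finite-index subgroup.

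At this point Proposition \ref{prop:decomposition2} applies: one finds coordinates in which $C_1(g) = 0$ and $c_2(g) = 0$ for all $g \in \bGamma_{\tilde E}$, the group preserves both the cusp group $\CN$ (acting on a horosphere in $\SI^{i_0+1}_k$) and a totally geodesic properly convex domain $K''_{m_0} \subset \Bd\torb$ complementary to $\bv_{\tilde E}$, and then Proposition \ref{prop:qjoin} classifies the possibilities via the homomorphism $\alpha_7$. The case $\alpha_7 < 0$ on $G_+$ is ruled out by proper convexity of $\torb$, so one is left with either the pure join case ($\alpha_7 \equiv 0$) or the quasi-joined case ($\mu_7 > 0$ uniformly on $G_+$).

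The main obstacle, and the last step, will be to eliminate the pure join case $\alpha_7 \equiv 0$ using the assumption that the global holonomy group $\bGamma = h(\pi_1(\orb))$ is strongly irreducible. In this case Proposition \ref{prop:qjoin}(iii) gives that $U$ itself splits as a strict join of a cusp horoball in $\SI^{i_0+1}_k$ with a properly convex domain $K''_{m_0} \subset \Bd\torb$, and $\bGamma_{\tilde E}$ preserves this join decomposition. One then chooses sequences $h_i \in \bGamma_{\tilde E}$ (or in its virtual abelian center plus a hyperbolic factor) along which $\lambda_{\bv_{\tilde E}}(h_i)/\lambda_2(h_i) \to \infty$ while the restrictions $h_i | K''_{m_0}$ stay bounded, exactly as in the last step of the proof of Theorem \ref{thm:redtot} and in the proof of Proposition \ref{prop:qjoin}(ii); the resulting limit dynamics produces invariant complementary subspaces on which one can apply Lemmas \ref{lem:decjoin} and \ref{lem:joinred} to show that $\bGamma$ virtually preserves a nontrivial projective join decomposition of $\clo(\torb)$, contradicting strong irreducibility. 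Hence only the quasi-joined case survives, which is the conclusion of the theorem. The delicate point will be to carry out the limit argument in the present NPCC setting, where the ``cusp factor'' is horospherical rather than properly convex; this is why one must first invoke Proposition \ref{prop:decomposition2} to put $\bGamma_{\tilde E}$ into the normal form with $C_1(g) = 0$ and $c_2(g) = 0$ before attempting the join-reduction argument.
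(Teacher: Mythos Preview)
Your overall strategy matches the paper's: reduce to the join-or-quasi-join dichotomy via Propositions \ref{prop:decomposition}/\ref{prop:decomposition2} and \ref{prop:qjoin}, then eliminate the join case using strong irreducibility together with Lemmas \ref{lem:decjoin} and \ref{lem:joinred}. One organizational point: Theorem \ref{thm:NPCCcase} itself defers the elimination of the join case to the proof of Theorem \ref{thm:NPCCcase2}, so your dichotomy should be read as reducing both the discrete and indiscrete cases to the join-or-quasi-join alternative, not as fully resolving the discrete case by citation.

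There is one substantive gap in your join-elimination step. To apply Lemma \ref{lem:decjoin} with $S_1$ the span of $K''_{m_0}$ and $S_2 = \SI^{i_0+1}_k$, you need the restrictions $\{\gamma_m | S_2\}$ to form a bounded set of projective automorphisms of $S_2$. In the normal form of equation \ref{eqn:gammao} the lower-right $(i_0+2)\times(i_0+2)$ block is $\lambda_m$ times a unipotent matrix depending on $\vec{v}_m$ and $\alpha_7(\gamma_m)$; after normalizing determinants this is bounded only when $\vec{v}_m$ stays bounded (in the join case $\alpha_7 \equiv 0$ handles itself). You arrange $h_i | K''_{m_0}$ bounded but say nothing about the cusp-translation part $\vec{v}_{h_i}$, and a priori these can be arbitrarily large. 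The paper treats this explicitly: when $N_K$ is discrete one multiplies $\gamma_m$ by an element of $N \subset \CN$ to cancel most of $\vec{v}_{\gamma_m}$; when $N_K$ is indiscrete one uses compactness of a fundamental domain $F \subset \tilde\Sigma_{\tilde E}$ to find $g \in \bGamma_{\tilde E}$ close to $\CN(\vec{v}_{\gamma_m})$ with $g^{-1}\CN(\vec{v}_{\gamma_m})(x) \in F$, forcing $\vec{v}_{g^{-1}\gamma_m}$ to be uniformly bounded. Only after this modification does Lemma \ref{lem:decjoin} yield $\clo(\torb) = H_{\mx} \ast K'_{\mx}$, at which point Lemma \ref{lem:joinred} (or the paper's refinement, which uses the parabolic action on $\partial H_{\mx}$ to pin down $P(H)$ as a factor of the maximal join decomposition and hence as $\pi_1(\orb)$-invariant) contradicts strong irreducibility of $\bGamma$. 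This bounding step, not the normal form $C_1(g)=c_2(g)=0$ you flag at the end, is the genuinely delicate point.
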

\begin{proof}
Proposition \ref{prop:decomposition2}
show that we have a joined or quasi-joined end. 

We will now show that the joined end does not occur. 

By Proposition \ref{prop:mug}, we know $\mu_g = 1$ for all $g \in \bGamma_{\tilde E}$. 
As in Proposition \ref{prop:decomposition} or \ref{prop:decomposition2}, we obtain a sequence 
 $\gamma_m$ of form: 
\renewcommand{\arraystretch}{1.2}
\begin{equation}\label{eqn:gammao}
\newcommand*{\temp}{\multicolumn{1}{r|}{}}
\left( \begin{array}{ccccccc} 
\delta_m O_m & \temp & 0 & \temp & 0 & \temp & 0 \\ 
 \cline{1-7}
0 &\temp & \lambda_m &\temp & 0 &\temp & 0 \\ 
 \cline{1-7}
 0 &\temp & \lambda_m \vec{v}^T_m &\temp & \lambda_m O_5(\gamma_m) &\temp & 0 \\ 
 \cline{1-7}
0 &\temp & \lambda_m\left(\alpha_7(\gamma_m) + \frac{||\vec{v}_m||^2}{2}\right) &\temp &
\lambda_m \vec{v}_m &\temp & \lambda_m 
\end{array} 
\right)
\end{equation}
as  $C_{1, m} =0$ and $c_{2, m}=0$
where $\lambda_m \ra \infty$ and $\delta_m \ra 0$ and $O_m$ is in a set of bounded matrices in $\SL_{\pm}(n-i_0-1)$
and $\mu_7(\gamma_m ) \ra 0$ by Proposition \ref{prop:qjoin}.
This implies $\alpha_7(\gamma_m) \ra 0$ also by definition. 


Since every element $g\in \bGamma_{\tilde E}$ is in the above matrix form, we denote by 
$\vec{v}_g$ the element obtained by the element $\vec{v}$ in the above matrix.

If $N_K$ is discrete, 
we change the sequence of element $\gamma_m \in \Gamma$ of above form with 
a bounded $\vec{v}_{\gamma_m} \in \bR^{i_0}$ by multiplying by an element of $N\subset \CN$. 



Now suppose that $N_K$ is indiscrete. 
Given a positive constant $C$, $\bGamma_{\tilde E} \cap N_{C}(\CN)$ is a discrete subset of $N_{C}(\CN)$. 
There exists a constant $C_1, C_1 > 0$ so that $N_{C_1}(\bGamma_{\tilde E} \cap N_{C}(\CN))$
contains $\CN$: $\tilde \Sigma_{\tilde E}$ has a compact fundamental domain $F$ under $\bGamma_{\tilde E}$. 
Thus, given any $\vec{v}$, 
$\CN(\vec{v})(x)$ for $x \in F$ is in $g(F)$ for some $g \in \bGamma_{\tilde E}$. 
Then $g^{-1}\CN(\vec{v})(x) \in F$. Since $g(y) = \CN({\vec{v}}) \in g(F)$
for $y \in F$, 
$\Pi^*_K(g)$ has the value 
$d_K(\Pi_K(x), g\Pi_K(x))$ is bounded by a constant $C_F$ depending on $F$. 
$g$ is in a bounded neighborhood of $\CN$ by  
Proposition \ref{prop:eigSI} since $g$ is of form of matrix of equation \ref{eqn:gammao}. 
From the linear block form of $g^{-1}\CN(\vec{v})$ and the fact that  $g^{-1}\CN(\vec{v})(x) \in F$, 
we obtain that the corresponding $\vec{v}_{g^{-1}\CN(\vec{v})}$ can be made uniformly bounded independent of $\vec{v}$. 

For element $\gamma_m$ above, we take its vector $\vec{v}_{\gamma_m}$ and find our $g$ in this way.
We obtain $g^{-1}\gamma_m$. Then the corresponding 
$\vec{v}_{g^{-1}\gamma_m}$ is uniformly bounded as
we can see from the block multiplications. 

Thus, from now on, we assume that $\gamma_m$ 
has $\vec{v}_{\gamma_m}$ uniformly bounded.

Note that the lower-right $(i_0+2)\times (i_0+2)$-matrix of the above matrix
must act on the horosphere $H$. $\CN$ also act transitively on $H$. 
Hence, for any such matrix we can find an element of $\CN$ so that 
the product is in the orthogonal group acting on $H$. 



Denote by $S(K')$ and $S(H)$ the subspaces determined by $K'$ and $H$ and containing them. 
$S(K')$ and $S(U)$ form a pair of complementary subspaces in $\SI^n$.


We have the sequence $\gamma_m$ acting on $K'_{\mx}$ is uniformly bounded and 
$\gamma_m$ acting on $H_{\mx}$ in a uniformly bounded manner 
as $m \ra \infty$ since $\{\vec{v}_{\gamma_m}\}$ is bounded and 
$\{ \alpha_7(\gamma_m)\} \ra 0$. 
Let $H_{\mx}$ denote $S(H) \cap \clo(\torb)$ and $K^{\prime}_{\mx}$ the set $S(K') \cap \clo(\torb)$.
By Lemma \ref{lem:decjoin} for $l = 2$ case, 
$\clo(\torb)$ equals the join of $H_{\mx}$ and $K'_{\mx}$. 






Note that a group of parabolic automorphisms in $\Gamma$ acts on $\partial H_\mx -\{{\bv_{\tilde E}}\}$ cocompactly. 
The only $\bGamma_{\tilde E}$-invariant subset of $\partial H_\mx$ is $\bv_{\tilde E}$ and its complement. 
The maximal join decomposition of $H_\mx$ if it exists, has finitely many compact convex subsets 
and they are permuted by $\bGamma_{\tilde E}$. 

Any maximal join decomposition of $\clo(H)$ then has $P(H)$ as a factor. 
Since $\pi_1(\mathcal{O})$ permutes the factors, $P(H_\mx)=P(H)$ is 
$\pi_1(\mathcal{O})$-invariant. 
This implies that $\bGamma$ is reducible. Hence the joined ends cannot occur. 


\end{proof}


\section{The proof of Theorem \ref{thm:secondmain}} \label{sec:secondmain} 

\begin{proof}[{\rm (}Theorem \ref{thm:secondmain}{\rm )}] 
Suppose that $\tilde E$ is a properly convex domain.
First assume the uniform middle eigenvalue condition.  
Now, Theorem \ref{thm:equ} implies that $\tilde E$ is of generalized 
lens type. If $\orb$ satisfies the triangle condition, Theorem \ref{thm:equ} implies that $\tilde E$ is of lens-type. 
Theorem \ref{thm:redtot} implies that $\tilde E$ is of lens-type if $\tilde E$ is reducible. 
If $\tilde E$ is totally geodesic and hyperbolic, then $\pi_1(\tilde E)$ acts cocompactly 
on an open totally geodesic surface $\tilde \Sigma_{\tilde E}$ in $\clo(\Omega)$. 
By Lemma \ref{lem:simplexbd}, 
$\torb$ contains $\tilde \Sigma_{\tilde E}$. By Proposition \ref{prop:convhull2}, we obtain a lens. 

If we assume the weak uniform middle eigenvalue condition, then 
Proposition \ref{prop:quasilens1} implies the result. 

Now suppose that $\tilde E$ is totally geodesic. Then Theorem \ref{thm:equ2} implies the result. 
\end{proof}

\begin{proof}[{\rm (}Theorem \ref{thm:thirdmain}{\rm )}]
Suppose that $\tilde E$ is an NPCC R-end. Then Theorems \ref{thm:NPCCcase} and \ref{thm:NPCCcase2} prove the result.  
\end{proof}


\appendix 

\section{The affine action dual to the tubular action} \label{app:dual}

Let $\Gamma$ be an affine group acting on the affine space $A^n$ with boundary $\Bd A^n$ in $\SI^n$, 
i.e., an open hemisphere.
Recall that $\Gamma$ is asymptotically nice if 
there exists a properly convex invariant $\Gamma$-invariant domain 
$U'$ with boundary in a properly convex domain 
$\Omega \subset \Bd A^n$ and 
$U'$ is in the intersection of all half-spaces $H$, $H \ne A^n$, supporting $U'$ at all point of $\Bd \Omega$.

In this section, we will work with $\SI^n$ only, while
the $\bR P^n$ versions are clear enough. 

Each element of $g$ is of form 
\begin{equation}\label{eqn:bendingm4} 
\left(
\begin{array}{cc}
\frac{1}{\lambda_{{\tilde E}}(g)^{1/n}} \hat h(g)          &       \vec{b}_g     \\
\vec{0}          &     \lambda_{{\tilde E}}(g)                  
\end{array}
\right)
\end{equation}
where $\vec{b}_g$ is $1\times n$-vector and $\hat h(g)$ is an $n\times n$-matrix of determinant $\pm 1$
and $\lambda_{{\tilde E}}(g) > 0$ is a constant. 
In the affine coordinates, it is of form 
\begin{equation}\label{eqn:affact} 
x \mapsto \frac{1}{\lambda_{{\tilde E}}(g)^{1+ \frac{1}{n}}} \hat h(g) x + \frac{1}{\lambda_{{\tilde E}}(g)} \vec{b}_g. 
\end{equation}
Recall that if there exists a uniform constant $C > 0$ so that 
\[C^{-1} \leng(g) \leq \log \frac{\lambda_1(g)}{\lambda_{{\tilde E}}(g)} \leq C \leng(g), \quad
g \in \bGamma_{\tilde E} -\{\Idd\},\] 
then $\Gamma$ is said to satisfy 
the {\em uniform middle-eigenvalue condition}.

In this appendix, it is sufficient for us to prove when $\Gamma$ is a hyperbolic group. 

\begin{theorem}\label{thm:asymnice}
We assume that $\Gamma$ is a hyperbolic group. 
Let $\Gamma$ have a properly convex affine action on the affine space $A^n$, $A^n \subset \SI^n$,
acting on a properly convex domain $U \subset A^n$ with boundary in the convex domain
$\clo(\Omega)$ for a properly convex domain $\Omega$ in $\Bd A^n$. 
Suppose that $\Omega/\Gamma$ is a closed $n-1$-dimensional orbifold and 
$\Gamma$ satisfies the uniform middle-eigenvalue condition. 
Then $\Gamma$ is asymptotically nice with a properly convex 
open domain $U$ so that $\clo(U) \cap \Bd A^n = \clo(\Omega)$ 
and the asymptotic hyperspace at 
each boundary point of $\Omega$ is uniquely determined and transversal to $\Bd A^n$. 
\end{theorem}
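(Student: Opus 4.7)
The plan is to construct the asymptotically nice domain directly from the dynamics of proximal elements of $\Gamma$, without invoking the dual Theorem \ref{thm:distanced} (whose stated proof already uses Theorem \ref{thm:asymnice}). Since $\Gamma$ is a hyperbolic group dividing the properly convex $\Omega \subset \Bd A^n$, Benoist's results give strict convexity and $C^1$-smoothness of $\Bd \Omega$, and every $g\in\Gamma - \{1\}$ acts on $\Omega$ as a biproximal element of $\SL_{\pm}(n,\bR)$ with unique attracting fixed point $\hat p_g \in \Bd \Omega$ and repelling fixed point $\hat p_g^-$. In the matrix form \eqref{eqn:bendingm4}, the eigenvalue $\lambda_{\tilde E}(g)$ sits on an invariant line in $\bR^{n+1}$ transversal to $\Bd A^n$ (it is the eigenvalue of $e_{n+1}$ up to a shift by the unique affine fixed point produced by $\vec b_g$). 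The uniform middle-eigenvalue condition gives $\lambda_1(g) \geq \exp(\leng(g)/C) \cdot \lambda_{\tilde E}(g)$, so the attracting fixed point $p_g$ of $g$ in $\SI^n$ lies in $\Bd \Omega$ and is distinct from the affine fixed point.

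First I would identify, for each such $g$, the $g$-invariant hyperplane $H_g \subset \SI^n$ through $p_g$ obtained as the sum of all real Jordan-block subspaces of $g$ with eigenvalue norms strictly less than $\lambda_1(g)$. This $H_g$ contains the $\lambda_{\tilde E}(g)$-eigenline, hence is transversal to $\Bd A^n$, and meets $\clo\Omega$ only at $p_g$ by strict convexity of $\Omega$. Next I would show the assignment $g\mapsto (p_g,H_g)$ is $\Gamma$-equivariant and, crucially, that the set $\{p_g\}$ is dense in $\Bd \Omega$ by Benoist \cite{Ben1}.

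The central step is to pass from the dense set $\{(p_g,H_g)\}$ to a $\Gamma$-equivariant assignment $x\mapsto H_x$ defined for every $x\in \Bd\Omega$. For a sequence $g_n$ with $p_{g_n}\to x$ chosen so that $\leng(g_n)\to \infty$, the uniform middle-eigenvalue bound on the ratio $\lambda_1(g_n)/\lambda_{\tilde E}(g_n)$ together with the eigenvalue gap estimates of Theorem \ref{thm:eignlem} give a uniform contraction estimate that makes the $H_{g_n}$ converge geometrically to a unique hyperplane $H_x \subset \SI^n$, transversal to $\Bd A^n$ and meeting $\clo\Omega$ only at $x$. Independence from the approximating sequence comes from the strict convexity of $\Omega$ and the $C^1$-smoothness of $\Bd\Omega$: two limiting hyperplanes at $x$ would both be supporting hyperplanes at a $C^1$-boundary point, hence equal. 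Then I would define
\[
U' := A^n \cap \bigcap_{x\in \Bd\Omega} D_x^+,
\]
where $D_x^+$ is the closed half-space in $\SI^n$ bounded by $H_x$ on the side of $U$; this is properly convex (since the $H_x$ are transversal to $\Bd A^n$ and cut $\clo \Omega$ only at $x$), $\Gamma$-invariant by equivariance, and satisfies $\clo(U') \cap \Bd A^n = \clo(\Omega)$ by construction.

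The hard step will be the limit/uniqueness argument for $H_x$. Showing that the $H_{g_n}$ converge and that the limit is independent of the sequence is genuinely an Anosov-type statement in the spirit of Guichard--Wienhard and Labourie, and it is exactly where the \emph{uniform} nature of the middle-eigenvalue condition (rather than merely the strict inequality $\lambda_1(g)>\lambda_{\tilde E}(g)$) becomes essential: without the uniform comparison with $\leng(g)$, one could produce sequences along which $H_{g_n}$ degenerates into $\Bd A^n$, destroying transversality and proper convexity of the intersection $U'$. Once this step is secured, equivariance, transversality and the identification $\clo(U')\cap \Bd A^n = \clo(\Omega)$ all follow formally from the construction.
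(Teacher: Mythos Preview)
Your overall strategy---construct invariant hyperplanes at attracting fixed points, extend by density, then intersect half-spaces---has the right shape, and your $H_g$ agrees with the paper's $h(p_g,p_g^-)$ on periodic orbits. But the uniqueness step you propose is genuinely broken, not merely hard.

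You write that two limiting hyperplanes at $x$ ``would both be supporting hyperplanes at a $C^1$-boundary point, hence equal.'' The $C^1$-smoothness of $\Bd\Omega$ only determines the $(n-2)$-sphere $\partial H_x \subset \Bd A^n$ (the tangent hypersphere to $\Omega$ inside $\Bd A^n$). It says nothing about how $H_x$ sits transversally to $\Bd A^n$: there is a full one-parameter pencil of $(n-1)$-hemispheres through that fixed $(n-2)$-sphere, and your argument does not select one. So independence of the limit from the approximating sequence $g_n$ is not established, and without it the intersection $U'=\bigcap_x D_x^+$ is not even well-defined. The same gap undermines the convergence claim: the uniform middle-eigenvalue condition and Theorem~\ref{thm:eignlem} control eigenvalue ratios, but they do not by themselves force the \emph{transversal slope} of $H_{g_n}$ to stabilize.

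This is exactly the information the paper supplies by a different mechanism. Following Goldman--Labourie--Margulis, the paper builds a \emph{neutralized section} $s_0:U\Sigma\to\bA$ of the flat affine bundle (Lemma~\ref{lem:exist}), using the Anosov property of the geodesic flow (Proposition~\ref{prop:contr}) to integrate away the $\bV_\pm$-components of $\nabla^{\bA}_\phi s$. This section assigns to every oriented geodesic---not just periodic ones---a codimension-two affine subspace $\bar s(l)\subset A^n$, continuously and $\Gamma$-equivariantly (Proposition~\ref{prop:mapgh}). The hyperplane $h(x,y)$ is then the span of $\bar s(\overline{xy})$ with the tangent sphere at $x$, so the transversal position is fixed by the section rather than by a limit. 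Uniqueness (Lemma~\ref{lem:inde}) is then argued using the given domain $U$ on both sides, not from $C^1$-smoothness alone. If you want to avoid the neutralized-section machinery, you would need an independent argument pinning down the transversal slope of $H_x$; your proposal does not contain one.
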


In the case when the linear part of the affine maps are unimodular, 
Theorem 8.2.1 of Labourie \cite{Lab} shows that such a domain $U$ exists but without showing the asymptotic niceness. 
In general, we think that the existence of the domain $U$ can be obtained but the proof 
is much longer. (See Appendix of \cite{CG} in the special case that can be extended here.)

(It is fairly easy to show that this holds also for virtual products of hyperbolic and abelian groups as well.
by Proposition \ref{prop:dualend2} and Theorem \ref{thm:distanced}.)

\subsection{The Anosov flow.}\label{sec:anosov}

We apply the work of Goldman-Labourie-Margulis \cite{GLM}: Assume 
as in the premise of Theorem \ref{thm:asymnice}. 
Since $\Omega$ is properly convex, 
$\Omega$ has a Hilbert metric. 
Let $U\Omega$ denote the unit tangent bundle over $\Omega$.
This has a smooth structure as a quotient space of $T\Omega - O/\sim$ where 
$O$ is the image of the zero-section and $\vec{v} \sim \vec{w}$ if $\vec{v}$ and $\vec{w}$ are over the same point of $\Omega$
and $\vec{v} = s \vec{w}$ for a real number $s > 0$.

Assume $\Gamma$ as above. 
Since $\Sigma:= \Omega/\Gamma$ is a properly convex convex real projective orbifold, 
$U\Sigma := U\Omega/\Gamma$ is a compact smooth orbifold again. 
A geodesic flow on $U\Omega/\Gamma$ is Anosov and hence topologically mixing. Hence, the flow is nonwondering everywhere. (See \cite{Ben1}.)
$\Gamma$ acts irreducibly on $\Omega$, and $\Bd \Omega$ is $C^1$. 

Let $h: \Gamma \ra \Aff(A^n)$ denote the representation 
as described in equation \ref{eqn:bendingm4}. 
We form the product $U\Omega \times A^n$ that is an affine bundle over $U\Omega$. 
We take the quotient $U\Omega \times A^n$ by the diagonal action 
\[g(x, \vec u)= (g(x), h(g) \vec u) \hbox{ for } g \in \Gamma, x \in U\Omega, \vec u \in A^n.\] 
We denote the quotient by $\bA$ fibering over
the smooth orbifold $U\Omega/\Gamma$ with fiber $A^n$. 

Let $V^n$ be the vector space associated with $A^n$.
Then we can form $U\Omega \times V^n$ and take the quotient under 
the diagonal action:
\[g(x, \vec u)= (g(x), {\mathcal L}\circ  h(g) \vec u) \hbox{ for } g \in \Gamma, x\in U\Omega, \vec u \in V^n\]
where $\mathcal L$ is the homomorphism taking the linear part of $g$.  
We denote by $\bV$ the fiber bundle over $U\Omega/\Gamma$ 
with fiber $V^n$. 

Since $U\Omega \times A^n$ is a flat $A^n$-bundle over $U\Omega$, 
we have a flat connection $\nabla^{\bA}$ on the bundle $\bA$ over $U\Sigma$
and a flat linear connection $\nabla^{\bV}$ on the bundle $\bV$ over $U\Sigma$. 
The connections are simply the ones induced by the trivial product structure. 

We give a decomposition of $\bV$ into three parts $\bV_+, \bV_0, \bV_-$: 
For each vector $\vec u \in U\Omega$, we find the maximal geodesic 
$l$ ending at two points $\partial_+ l, \partial_- l$. They correspond to 
the $1$-dimensional vector subspaces $V_+$ and $V_-$. 
There exists a unique pair of supporting hyperspheres $H_+$ and $H_-$ in $\Bd A^n$ at each of $\partial_+ l$ and $\partial_- l$. 
We denote by $H_0 = H_+ \cap H_-$. It is a codimension $2$ great sphere in $\Bd A^n$
and corresponds to a vector subspace $V_0$ of codimension two in $\bV$. 
For each vector $\vec u$, we find the decomposition of $V$ as $V_+ \oplus V_0 \oplus V_-$
and hence we can form the subbundles $\bV_+, \bV_0, \bV_-$
where $\bV = \bV_+ \oplus \bV_0 \oplus \bV_-$. These are $C^0$-bundles since $\Bd \Omega$ is $C^1$ and hence
the end points of $l$ depends on $l$ in a $C^1$-manner and the supporting subspaces depends on 
the boundary points in $C^0$-manner. 

We can identify $\Bd A^n = {\mathcal{S}}(V)$ where $g$ acts by  ${\mathcal{L}}(g) \in \GL(n, \bR)$. 

If $g \in \Gamma$ acts on $l$, then $V_+$ and $V_-$ are eigenspaces of the largest norm  $\lambda_1(g)$ of the eigenvalues 
and the smallest norm $\lambda_n(g)$ of the eigenvalues of the linear part ${\mathcal{L}}(g)$ of $g$ equal to 
\[\frac{1}{\lambda_{{\tilde E}}(g)^{1+\frac{1}{n}}} \hat h(g).\]
$V_+$ and $V_-$ are one-dimensional by the proximality and strictly convex boundary condition for hyperbolic groups in \cite{Ben1}. 
Hence on $V_+$, $g$ acts by expending by $\lambda_1(g)/\lambda_{{\tilde E}}(g)$
and on $V_-$, $g$ acts by contracting by $\lambda_n(g)/\lambda_{{\tilde E}}(g)$.

There exists a flow $\hat \Phi_t: U\Sigma \ra U\Sigma$ for $t \in \bR$ given 
by sending $\vec v$ to the unit tangent vector to at $\alpha(t)$ where 
$\alpha$ is a geodesic tangent to $\vec v$ with $\alpha(0)$ equal to the base point of $\vec v$.

We define a flow on $\tilde \Phi_t: \bA \ra \bA$ by considering a unit speed geodesic flow line $\vec{l}$ in $U\Omega$ and 
and considering $\vec{l} \times E$ and acting trivially on the second factor as we go from $\vec v$ to $\hat \Phi_t(\vec v)$
(See remarks in the beginning of Section 3.3 and equations in Section 4.1 of \cite{GLM}.)
Each flow line in $U\Sigma$ lifts to a flow line on $\bA$ from every point in it. 

We define a flow on $\tilde \Phi_t: \bV \ra \bV$ by considering a unit speed geodesic flow line $\vec{l}$ in $U\Omega$ and 
and considering $\vec{l} \times V$ and acting trivially on the second factor as we go from $\vec v$ to $\Phi_t(\vec v)$
for each $t$. (This generalizes the flow on \cite{GLM}.)
Also, $\tilde \Phi_t$ preserves $\bV_+$, $\bV_0$, and $\bV_-$ since on the line $l$, the end point $\delta_\pm l$ does not change. 

Since $\Gamma$ acts on $U\Omega$ preserving $\bV_+, \bV_0, \bV_-$, we obtain bundles over $U\Omega/\Gamma$. 
We will use the same notation for these bundles $\bV_+$, $\bV_0$, and $\bV_-$.  

We let $||\cdot||_S$ denote some metric on these bundles over $U\Sigma/\Gamma$ defined as fiberwise inner product.
The construction of such a metric $||\cdot||_S$ is given by choosing one for $A^n$ 
and extending it on $\Omega \times A^n$ by choosing 
a cover of $\Omega/\Gamma$ by compact sets $K_i$ 
and choosing the extension over $K_i \times A^n$. 
Then we use the partition of unity.

As in Section 4.4 of \cite{GLM}, 
$\bV = \bV_+ \oplus \bV_0 \oplus \bV_-$.
By the uniform middle-eigenvalue condition,  
$\bV$ has a fiberwise Euclidean metric $g$ with the following properties: 
\begin{itemize} 
\item the flat linear connection $\nabla^{\bV}$ is bounded with respect to $g$.
\item hyperbolicity: There exists constants $C, k > 0$ so that 
\begin{align} 
 || \tilde \Phi_t ({\vec{v}})||_S \geq \frac{1}{C} \exp(kt) ||{\vec{v}}||_S \hbox{ as } t \ra \infty
\end{align}
for $\vec{v} \in \bV_+$ and 
\begin{align} 
 || \tilde \Phi_t (\vec{v})||_S \leq C \exp(-kt) ||\vec{v}||_S \hbox{ as } t \ra \infty
\end{align}
for $v \in \bV_-$.
\end{itemize} 

Proposition \ref{prop:contr} proves this property by taking $C$ sufficiently large according to $t_1$, 
which is a standard technique. 


\subsection{The proof of the Anosov property.} \label{subsub:Anosov}


We can apply this to $\bV_-$ and $\bV_+$ by possibly reversing the direction of 
the flow. 
The Anosov property follows from the following proposition. 

Let $\bV_{-,1}$ denote the subset of $\bV_-$ of the unit length under $||\cdot||_S$.

\begin{proposition} \label{prop:contr} 
Let $\Omega/\bGamma$ be a closed real projective orbifold with hyperbolic group. 
Then there exists a constant $t_1$ so that 
\[ || \Phi_t(\bv)||_S \leq \tilde C || \bv||_S, \bv \in \bV_- \hbox{ and } || \Phi_{-t}(\bv)||_S \leq \tilde C ||\bv||_S, \bv \in \bV_+\] 
for $t \geq t_1$ and a uniform $\tilde C$, $0 < \tilde C < 1$.
\end{proposition}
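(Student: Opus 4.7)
The plan is to recast the proposition as the strict negativity of the top Lyapunov exponent of the multiplicative cocycle induced by $\Phi_t$ on the continuous line bundle $\bV_-$ over the compact base $U\Sigma := U\Omega/\Gamma$, and to deduce this negativity from the uniform middle-eigenvalue condition via a periodic-orbit argument on the Benoist Anosov geodesic flow.

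First I would set up the cocycle $c(T,\bv) := \|\Phi_T(\bv)\|_S/\|\bv\|_S$ for $\bv$ a nonzero vector in the fiber $(\bV_-)_{\vec u}$. Because $\Gamma$ is hyperbolic, $\Omega$ is strictly convex with $C^1$ boundary (Benoist), so $\bV_-$ is a continuous $\Phi_t$-invariant line bundle and $\log c(T,\cdot)$ descends to a continuous function on $U\Sigma$ satisfying the cocycle identity $\log c(T{+}S,\vec u) = \log c(S,\Phi_T\vec u) + \log c(T,\vec u)$. The quantity $\Psi(T) := \sup_{\vec u}\log c(T,\vec u)$ is then subadditive, so $\alpha := \lim_{T\to\infty}\Psi(T)/T = \inf_{T>0}\Psi(T)/T$ exists, and once $\alpha < 0$ is proved, picking $T_0$ with $\Psi(T_0) \leq -\log 2$ together with subadditivity and the compactness bound $M := \sup_{\vec u,\,|s|\leq T_0}|\log c(s,\vec u)|$ yield the proposition with $\tilde C = 1/2$ (up to possibly enlarging $t_1$ past $T_0$ to absorb $M$).

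The periodic data comes in next. A closed orbit of period $\tau$ in $U\Sigma$ is represented by the axis of a proximal $g \in \Gamma$ with $\tau = \leng(g) = \log(\lambda_1(g)/\lambda_n(g))$ (the Hilbert translation length, using the cross-ratio convention of the paper), the fiber $(\bV_-)_{\vec u}$ over the axis is the repelling eigenline $V_-$, and the time-$\tau$ map on that fiber is multiplication by $\lambda_n(g)/\lambda_{\tilde E}(g)$ as recorded in the appendix. Applying the uniform middle-eigenvalue condition to $g^{-1}$ (so that $\leng(g^{-1}) = \leng(g)$) gives $\log(\lambda_{\tilde E}(g)/\lambda_n(g)) \geq C^{-1}\leng(g)$, whence $\log c(\tau,\vec u) \leq -c_0 \tau$ on every periodic orbit for the universal constant $c_0 := 1/C$, or equivalently $\int \log c(1,\cdot)\,d\mu_{[g]} \leq -c_0$ for the invariant probability measure $\mu_{[g]}$ on the closed orbit.

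The hard part, and where I would spend the most effort, is transferring the periodic bound to the uniform bound $\alpha < 0$. I would argue by contradiction. If $\alpha \geq 0$, pick $\vec u_T$ with $\log c(T,\vec u_T) \geq 0$ and let $\mu_T := \frac{1}{T}\int_0^T \delta_{\Phi_s \vec u_T}\,ds$. Using the cocycle identity and the compactness bound above, one computes $\int \log c(1,\cdot)\,d\mu_T = \log c(T,\vec u_T)/T + O(1/T)\geq -O(1/T)$, and any weak-$\ast$ subsequential limit $\mu_\infty$ is a $\Phi_t$-invariant probability measure on $U\Sigma$ with $\int \log c(1,\cdot)\,d\mu_\infty \geq 0$ by continuity of $\log c(1,\cdot)$. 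To reach a contradiction, use that the Benoist flow on $U\Sigma$ is topologically mixing Anosov (hyperbolicity of $\Gamma$ gives strict convexity and $C^1$ boundary, yielding Anosov, and topological mixing is Benoist's theorem), and invoke Sigmund's density theorem: the periodic-orbit measures $\{\mu_{[g]}\}$ are weak-$\ast$ dense in the compact space $\mathcal{M}(\Phi_t)$ of all flow-invariant probability measures. Combined with continuity of $\mu \mapsto \int \log c(1,\cdot)\,d\mu$ on $\mathcal{M}(\Phi_t)$ and the periodic estimate $\leq -c_0$, this forces $\int \log c(1,\cdot)\,d\mu \leq -c_0$ uniformly over $\mu \in \mathcal{M}(\Phi_t)$, contradicting the bound $\geq 0$ on $\mu_\infty$. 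Therefore $\alpha \leq -c_0 < 0$, which establishes the claim for $\bV_-$; the claim for $\bV_+$ follows by running the argument with the time-reversed flow, which swaps the roles of $g$ and $g^{-1}$.
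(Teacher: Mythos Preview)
Your argument is correct and takes a genuinely different route from the paper. The paper deduces the proposition from a subsequent lemma establishing that $\|\Phi_t(\bv)\|_S \to 0$ uniformly on the unit bundle $\bV_{-,1}$, and proves that lemma by a direct geometric sequential argument: given $t_i \to \infty$ and base points $x_i$ in a fundamental domain, one pulls $\hat\Phi_{t_i}(x_i)$ back by deck transformations $g_i$, analyses the attracting and repelling fixed points $a_i, r_i$ of $g_i$ via the conical-limit-point property of $\Bd\Omega$ (available because $\Gamma$ is hyperbolic), decomposes $\bv_{-,i}$ into a component parallel to $r_i$ and a component along the $g_i$-invariant sphere $\SI^{n-2}_i$ through $a_i$, and shows that both images under $\mathcal{L}(g_i)$ tend to zero. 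Your approach instead recasts the statement as negativity of the top Lyapunov exponent of the line-bundle cocycle, verifies the required inequality on every periodic orbit directly from the uniform middle-eigenvalue condition, and transfers it to all invariant measures (hence to the supremum, via the Krylov--Bogolyubov construction of $\mu_\infty$) by invoking Sigmund's density of periodic measures for the Benoist Anosov flow. The paper's proof is self-contained and exhibits concretely how the boundary dynamics forces the contraction; yours is cleaner and would port readily to other Anosov settings with periodic-orbit eigenvalue control, at the cost of importing the specification machinery behind Sigmund's theorem (which does hold here, via Benoist's Anosov property for the Hilbert geodesic flow, though that flow is only $C^{1}$-regular). One bookkeeping caution: in the paper's convention the forward endpoint of the flow is $\partial_- l$, so the element $g$ whose $\lambda_n$-eigenline is $V_-$ translates \emph{against} the flow direction; your identification of the time-$\tau$ return map on $(\bV_-)_{\vec u}$ with multiplication by $\lambda_n(g)/\lambda_{\tilde E}(g)$ is correct once this orientation is matched.
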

\begin{proof} 
It is sufficient to prove the first part of the inequalities since we can substitute $t \ra -t$. 
Let $\bV_{-,1}$ denote the subset of $\bV_-$ of the unit length under $||\cdot||_S$.

By the following Lemma \ref{lem:S_0}, the uniform convergence implies that for given $0< \eps < 1$, 
for every vector $\bv$ in $\bV_{-, 1}$, there exists $T$ so that for $t > T$, 
$\Phi_t(\bv)$ is in $\eps$-neighborhood $U_\eps(S_0)$ of the image $S_0$ of the zero section. 
\end{proof} 

The line bundle $\bV_-$ lifts to $\tilde \bV_-$ where each unit vector $\bu$ on $\Omega$ 
one associates the line $\bV_{-,\bu}$ corresponding to the end point $\Bd \Omega$ of the geodesic tangent to it.
$\Phi$ lifts to a parallel translation or constant flow of $\tilde \bV_-$ fixing each vector $\bv$.

\begin{figure}
\centering
\includegraphics[height=8cm]{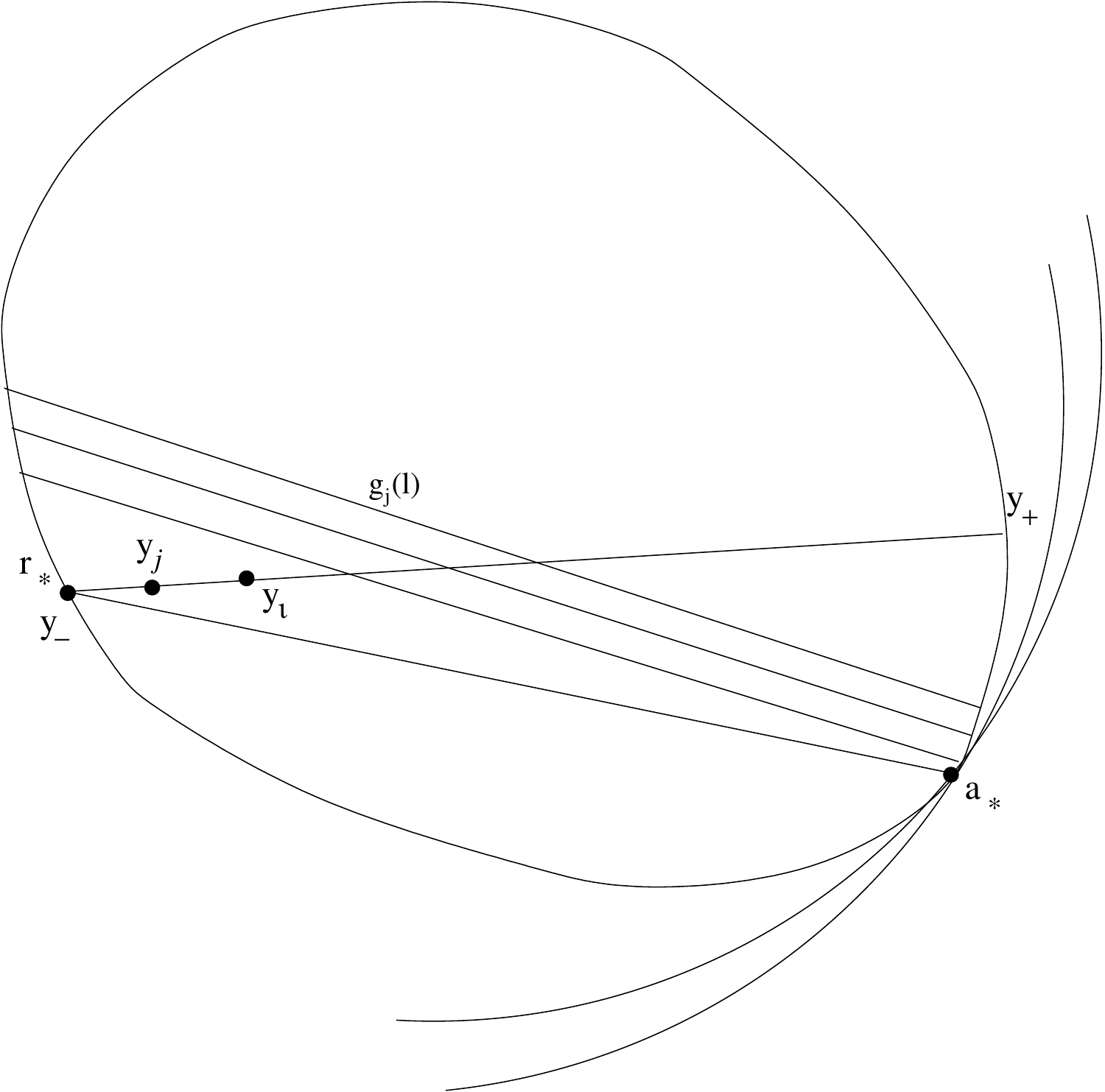}
\caption{The figure for Lemma \ref{lem:S_0}.}

\label{fig:contr}

\end{figure}

By the work in \cite{Ben1}, there exist a unique attracting fixed point 
and a unique repelling fixed point in $\Bd \Omega$ 
and there is no other fixed point 
for each infinite order element $g \in \Gamma$
since $\Gamma$ is hyperbolic and $\Omega/\Gamma$ is compact. 
Hence, we obtain that 
if there is a compact disk $B$ in $\Bd \Omega$ and 
$g \in \Gamma$ so that $g(B) \subset B$, then there exists 
a unique attracting fixed point of $g$ in $B$. 


Let $\Pi:U\Omega \ra \Omega$ be a projection of the unit tangent bundle to the base space. 

\begin{lemma} \label{lem:S_0} 
For each $\bv \in \bV_-$, 
$||\Phi_{t}(\bv)||_S  \ra 0 $ holds as $t \ra \infty$. 
Moreover, this is a uniform convergence. 
\end{lemma}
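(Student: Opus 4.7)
My plan is to prove uniform exponential contraction of the forward flow on the line bundle $\bV_-$ via a Birkhoff-average argument tied to the periodic orbits of the geodesic flow on $U\Sigma$. Since $\bV_-$ is a line bundle on which $\Phi_t$ acts linearly on each fiber, the ratio $r(\bu, t) := ||\Phi_t(\bv)||_S / ||\bv||_S$ for any nonzero $\bv \in \bV_{-,\bu}$ is well-defined and independent of the choice of $\bv$, and $(\bu, t) \mapsto \log r(\bu, t)$ is a continuous additive cocycle over $\hat \Phi_t$. Its continuity on the compact base $U\Sigma$ is an input: it uses the $C^1$-smoothness of $\Bd\Omega$, which Benoist's theory provides since $\Omega/\bGamma$ is a closed strictly convex quotient by a hyperbolic group \cite{Ben1}.

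First I would analyze $\log r$ along periodic orbits. A $\hat\Phi$-periodic orbit of period $T$ in $U\Sigma$ lifts to a $g$-translation of an axis $l$ in $U\Omega$ for some biproximal $g \in \bGamma$, with $\partial_+ l$ the attracting fixed point associated to $\lambda_1(g)$. On such an orbit, $\Phi_T$ restricted to the fiber $\bV_{-,\bu}$ equals multiplication by the $V_-$-eigenvalue of the linear part of the affine action. Because the affine action here satisfies the uniform middle-eigenvalue condition (and is dual to a properly tubular action by Proposition \ref{prop:dualDA}), the logarithm of this eigenvalue is bounded above by $-c\,\leng(g) = -cT$ for a uniform constant $c > 0$. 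This gives
\[ \frac{1}{T}\log r(\bu, T) \leq -c \]
for every periodic orbit. Since the geodesic flow on $U\Sigma$ is topologically mixing Anosov (Benoist \cite{Ben1}), periodic-orbit invariant measures are weak-$\ast$ dense in the set $\mathcal{M}(\hat\Phi)$ of $\hat\Phi$-invariant Borel probability measures, and weak-$\ast$ continuity of $\mu \mapsto \int_{U\Sigma} \log r(\cdot, 1)\, d\mu$ upgrades the inequality to $\int \log r(\cdot, 1)\, d\mu \leq -c$ for every $\mu \in \mathcal{M}(\hat\Phi)$.

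To conclude uniform exponential decay I would argue by contradiction: if there existed $\bu_n \in U\Sigma$ and $t_n \to \infty$ with $\frac{1}{t_n}\int_0^{t_n} \log r(\hat\Phi_s \bu_n, 1)\, ds > -c/2$, extracting a weak-$\ast$ limit of the empirical measures $\mu_n := \frac{1}{t_n}\int_0^{t_n} \delta_{\hat\Phi_s \bu_n}\, ds$ would give a $\hat\Phi$-invariant probability $\mu^\ast$ with $\int \log r(\cdot, 1)\, d\mu^\ast \geq -c/2$, contradicting the previous step. Hence there exists $t_0 > 0$ with $r(\bu, t) \leq \exp(-ct/2)$ for every $\bu \in U\Sigma$ and $t \geq t_0$; by the linearity of $\Phi_t$ on fibers, $||\Phi_t(\bv)||_S \leq \exp(-ct/2) ||\bv||_S$ uniformly for $\bv \in \bV_-$ and $t \geq t_0$, which is the pointwise decay together with the uniform convergence over $\bV_{-,1}$.

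The main obstacle is the last step — the uniform upper Birkhoff bound derived from the pointwise bound on invariant-measure integrals. The delicate feature is that $\log r(\cdot, 1)$ is merely continuous (since $\bV_-$ is only a $C^0$ line bundle, the regularity of $\Bd\Omega$ being only $C^1$), so one must use the soft weak-$\ast$/empirical-measure argument above rather than smooth Anosov machinery. A secondary care is the correct identification of $\Phi_T$ on the fiber in terms of $h(g)$, in particular the sign convention of the quotient that selects which of $\bV_\pm$ contracts in forward versus backward time; this is fixed once and for all by matching against the conclusion of Proposition \ref{prop:contr}.
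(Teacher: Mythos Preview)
Your argument is correct and takes a genuinely different route from the paper. The paper argues directly and geometrically: given any sequence $x_i$ in a fundamental domain $F$ and $t_i \to \infty$, it pulls $y_i = \tilde\Phi_{t_i}(x_i)$ back to $F$ via deck transformations $g_i$, analyzes the attracting and repelling fixed points $a_i, r_i$ of $g_i$ and their limits $a_\ast, r_\ast$ (reducing to $a_\ast \ne r_\ast$ via Lemma \ref{lem:gatt}), uses the conical-limit-point property of $\Bd\Omega$ to pin down $r_\ast = y_-$, and then decomposes $\bv_{-,i}$ into a component parallel to the repelling direction $r_i$ and a component in the supporting sphere $\SI^{n-2}_i$ to check by hand that ${\mathcal L}(g_i)(\bv_{-,i}) \to 0$. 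Your ergodic-theoretic route---treating $\log r(\cdot, t)$ as a continuous additive cocycle over the Anosov geodesic flow, controlling it on periodic orbits via the uniform middle-eigenvalue condition, passing to all invariant measures by density of periodic measures, and concluding by a Krylov--Bogolyubov contradiction---is cleaner and in fact delivers the exponential rate of Proposition \ref{prop:contr} in one stroke, whereas the paper separates the qualitative uniform convergence (this lemma) from the exponential estimate. The trade-off is that the paper's argument is more self-contained, invoking neither specification nor density of periodic measures, and its explicit decomposition of $\bv_{-,i}$ makes transparent exactly where the middle-eigenvalue hypothesis enters. Your caution about the sign convention is warranted: in the paper's convention $V_-$ corresponds to the forward endpoint of the geodesic, so on a closed orbit of period $T$ represented by a forward-translating $g$ the holonomy of $\Phi_T$ on $\bV_-$ is multiplication by $\lambda_{\tilde E}(g)/\lambda_1(g)$, and the bound $\log r(\cdot, T) \leq -C^{-1}T$ then follows exactly from the uniform middle-eigenvalue condition as you intend.
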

\begin{proof} 
%
We choose a sequence $\{x_i\}$, $\{x_i\} \ra x$ in a fundamental domain $F$ of $U\Omega$ under $\Gamma$. 
For each $i$, let $\bv_{-, i}$ be a unit vector in  $\bV_{-,1}$ for the unit vector $x_i \in U\Omega$, 
i.e., in the $1$-dimensional subspace in $\bR^{n}$ corresponding to the end point of the geodesic determined by $x_i$ in $\Bd \Omega$. 
Then $x_i$ determines a line $l_i$ in $\Omega$. We will show that $||\Phi_{t_i}(\bv_{-,i})||_S \ra 0$ for any sequence $t_i \ra \infty$. 
This will prove the uniform convergence to $0$ by the compactness of  $\bV_{-,1}$. 
(Here, $[\bv_{-, i}]$ is an end point of $l_i$ in the direction given by $x_i$.)

For this, we just need to show that any sequence of $\{t_i\} \ra \infty$ has a subsequence $\{t_j\}$ so that 
$||\Phi_{t_j}(\bv_{-, j})||_S \ra 0$ converging to $0$
since otherwise we can always extract a subsequence converging to nonzero or to $\infty$. 

Let $y_i := \tilde \Phi_{t_i}(x_i)$ for the lift of the flow $\tilde \Phi$. 
By construction, we recall that each $\Pi(y_i)$ is in the flow line $l_i$. 
Since $x_i \ra x$, we obtain that $l_i$ geometrically converges to a line $l_\infty$ passing $\Pi(x)$ in $\Omega$. 
Let $y_+$ and $y_-$ be the end points of $l_\infty$, where $\{\Pi(y_i)\} \ra y_-$.

Find a deck transformation $g_i$ so that 
$g_i(y_i) \in F$ and $g_i$ acts on the line bundle $\bV_-$ by the linearization of the matrix of form of equation \ref{eqn:bendingm4}: 
\[{\mathcal L}(g_i) := \frac{1}{\lambda_{{\tilde E}}(g_i)^{1+\frac{1}{n}}} \hat h(g_i) : \bV_{-, y_i} \ra \bV_{-, g_i(y_i)}.\] 

Since $g_i(l_i) \cap F \ne \emp$, we choose a subsequence of $g_i$ and relabel it $g_i$ so that
$\{g_i(l_i)\}$ converges to a nontrivial line in $\Omega$.

Since $\Gamma$ is hyperbolic, $y_-$ is a conical limit point as shown in the proof of Theorem 5.7 of \cite{conv}. 
We choose a subsequence of $\{g_i\}$ so that for the attracting fixed point $a_i \in \clo(\Omega)$ 
and the repelling fixed point $r_i \in \clo(\Omega)$ of each $g_i$, the sequences $\{a_i\}$ and $\{r_i\}$
are convergent. Then $\{a_i\} \ra a_\ast$ and $r_i \ra r_\ast$ for $a_\ast, r_\ast \in \Bd \Omega$. 
(See Figure \ref{fig:contr}.) Also, it follows that  for every compact $K \subset \clo(\Omega) - \{r_\ast\}$,
\begin{equation}\label{eqn:giK} 
g_i|K \ra \{a_\ast\} 
\end{equation} 
uniformly as in the proof of Theorem 5.7 of \cite{conv}.

Suppose that $a_{\ast} = r_{\ast}$. Then we choose an element $g \in \Gamma$ so that $g(a_{\ast}) \ne r_{\ast}$
and replace the sequence by $\{g g_i\}$ and replace $F$ by $F \cup g(F)$. 
The above uniform convergence condition still holds. 
Then the new attracting fixed points $a'_i \ra g(a_\ast)$ 
and the sequence $\{r'_i\}$ of repelling fixed point $r'_i$ of $gg_i$ converges to $r_\ast$ also
by Lemma \ref{lem:gatt}.
Hence, we may assume without loss of generality that \[a_\ast \ne r_\ast\]
by replacing our sequence $g_i$.  

We will use the standard metric $|| \cdot ||_E$ on $\bR^{n+1}$.
Suppose that both $y_+, y_- \ne r_\ast$. Then $\{g_i(l_i)\}$ converges to $\{a_\ast\}$ by 
equation \ref{eqn:giK} and this cannot be. 
If \[r_\ast = y_+ \hbox{ and } y_- \in \Bd \Omega - \{r_\ast\},\] then 
$g_i(y_i) \ra a_\ast$. 
Since $g_i(y_i) \in F$, this is a contradiction. 
Therefore \[r_\ast = y_- \hbox{ and } y_+ \in \Bd \Omega-\{r_\ast\}.\] 

Let $d_i$ denote the other end point of $l_i$ from $[\bv_{-, i}]$. 
Since $[\bv_{-, i}] \ra y_-$ and $l_i$ converges to a nontrivial line $l_\infty$, it follows that 
$\{d_i\}$ is in a compact set in $\Bd \Omega -\{y_-\}$.
Then $\{g_i(d_i)\} \ra a_{\ast}$ as $\{d_i\}$ is in a compact set in $\Bd \Omega -\{y_-\}$.
Thus, $\{g_i([\bv_{-, i}])\} \ra y'$ where $a_\ast \ne y'$ holds since 
$\{g_i(l_i)\}$ converges to a nontrivial line in $\Omega$.


Also, $g_i$ has an invariant great sphere $\SI^{n-2}_i \subset \Bd A^n$ containing the attracting fixed point $a_i$ and supporting $\Omega$.
Thus, $r_i$ is uniformly bounded at a distance from $\SI^{n-2}_i$ as $\{r_i\} \ra y_-= r_\ast$.

Since $y_i \ra y_-$, $y_i$ is also uniformly bounded away from $a_i$ and the tangent sphere $\SI^{n-1}_i$ at $a_i$. 
As $l_i \ra l_\infty$, we have $[\bv_{-, i}] \ra y_-$ also. 
The vector $\bv_{-, i}$ has the component $\bv_i^p$ parallel to $r_i$ and the component $\bv_i^S$ 
in the direction of $\SI^{n-2}_i$ where $\bv_{-, i} = \bv_i^p + \bv_i^S$. 
Since $r_i \ra r_\ast= y_-$ and $[\bv_{-, i}] \ra y_-$, we obtain $\bv_i^S \ra 0$ and that $\bv_i^p$ is uniformly bounded in $||\cdot||_E$. 
$g_i$ acts by preserving the directions of $\SI^{n-2}_i$ and $r_i$. 
Since $y' \in \Bd \Omega, y' \ne a_\ast$, $y'$ is bounded away from $\SI^{n-2}_i$. 
Since $\{g_i([\bv_{-, i}])\} (\ra y')$ is bounded away from $\SI^{n-2}_i$ uniformly, we have that 
\begin{itemize} 
\item the Euclidean norm of \[\frac{{\mathcal L}(g_i)(\bv_i^S)}{||{\mathcal L}(g_i)(\bv_i^p)||_E}\] is bounded above uniformly. 
\end{itemize} 
As $r_i$ is a repelling fixed point of $g_i$ and $||\bv_i^p||_E$ is uniformly bounded above, 
we have $\{{\mathcal L}(g_i)(\bv_i^p)\} \ra 0$. 
\[\{{\mathcal L}(g_i)(\bv_i^p)\} \ra 0 \hbox{ implies } \{{\mathcal L}(g_i)(\bv_i^S)\} \ra 0.\] 
Hence, we obtain $\{{\mathcal L}(g_i)(\bv_{-, i})\} \ra 0$ under $||\cdot||_E$. 

This implies $\{||\Phi_{t_i}(\bv_{-, i})||_S \}\ra 0$ since for the fundamental domain $F$, the Euclidean metric and 
the Riemannian metric of $\tilde \bV_-$ are related by a bounded constant on the compact set $F$. 
\end{proof}





\subsection{The neutralized section.}

A section $s:U\Sigma \ra \bA$ is {\em neutralized} if 
\begin{equation}\label{eqn:neu}
\nabla^{\bA}_{\phi} s \in \bV_0 . 
\end{equation}
We denote by $\Gamma(\bV)$ the space of sections 
$U\Sigma \ra \bV$ and by $\Gamma(\bA)$ the space of
sections $U\Sigma \ra \bA$. 

Recall from \cite{GLM} the one parameter-group of bounded 
operators $D\Phi_{t, *}$ on $\Gamma(\bV)$ and 
$\Phi_{t, *}$ on $\Gamma(\bA)$. We denote by $\phi$ the vector field generated by 
this flow on $U\Sigma$.
Recall Lemma 8.3 of \cite{GLM} also 

\begin{lemma}\label{lem:lem83}
 If $\psi \in \Gamma(\bA)$, and 
\[t \mapsto D\Phi_{t, *}(\psi) \] 
is a path in $\Gamma(\bV)$ that is differentiable at $t =0$, then 
\[ \frac{d}{dt}\left|_{t=0}  (D\Phi_t)_* (\psi)  \right. = \nabla^{\bA}_\phi(\psi). \]
\end{lemma}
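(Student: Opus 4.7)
The plan is to reduce the computation to the flat local model $U\Omega \times A^n$, where the lifted flow $\tilde \Phi_t$ is given by acting trivially on the second factor. In that model the flat connection $\nabla^{\bA}$ is the exterior derivative (componentwise) in the $A^n$-factor, and a section of $\bA$ over $U\Sigma$ lifts to a $\Gamma$-equivariant map $\tilde \psi : U\Omega \to A^n$. The derivative that needs to be identified is $\frac{d}{dt}\big|_{t=0}(D\Phi_t)_*(\psi)$, a limit of affine differences that lands in $\Gamma(\bV)$ because the difference of two affine points is a vector.

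First I would fix a point $x \in U\Sigma$, pick a lift $\tilde x \in U\Omega$, and evaluate the pullback section in the trivial frame. Since $\tilde \Phi_t(\tilde x, \vec u) = (\hat\Phi_t(\tilde x), \vec u)$ by the construction of the flow on $U\Omega \times A^n$ described just before Lemma \ref{lem:lem83}, the pulled-back section $(D\Phi_t)_*(\psi)$ is represented in the flat trivialization by $\tilde x \mapsto \tilde \psi(\hat \Phi_t(\tilde x))$. Taking the difference with $\tilde\psi(\tilde x)$ inside the affine space $A^n$ gives a vector in $V^n$, and its $t$-derivative at $0$ is just the directional derivative of $\tilde \psi$ along the generator $\tilde \phi$ of $\hat \Phi_t$ on $U\Omega$.

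Next I would identify this directional derivative with $\nabla^{\bA}_\phi \psi$. By definition, the flat affine connection on $\bA$ is precisely the one whose parallel sections in the trivialization $U\Omega \times A^n$ are the constant maps; equivalently, $\nabla^{\bA}_\phi \psi$ is represented in the flat frame by the ordinary derivative of $\tilde \psi$ along $\tilde \phi$. Equivariance under $\Gamma$ descends the identity to $U\Sigma$, and the two sides of the claimed equality coincide pointwise in $\bV_x$. The differentiability hypothesis on $t \mapsto (D\Phi_t)_* \psi$ simply guarantees that the limit exists in $\Gamma(\bV)$, so no regularity beyond that assumed needs to be forced.

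The main obstacle here is essentially notational rather than substantive: one must be careful that the affine pushforward is defined using the affine $\Gamma$-action of equation \ref{eqn:affact}, while its linearization (taking differences) is the linear $\Gamma$-action on $\bV$, so the limit genuinely lives in $\Gamma(\bV)$ and not in some affine-translate thereof. Once the equivariance of the trivialization is recorded, the proof is a one-line identification in the flat model, so I expect the writeup to be short.
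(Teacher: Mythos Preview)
Your argument is correct: in the flat trivialization $U\Omega \times A^n$ the flow acts trivially on the fiber, so the pushforward of a section is just precomposition with the base flow, and differentiating at $t=0$ gives the directional derivative along $\phi$, which is by definition $\nabla^{\bA}_\phi \psi$ for the flat connection. The care you take with the affine-versus-linear equivariance is exactly the point that needs checking.

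However, you should know that the paper does not supply its own proof of this lemma. The sentence immediately preceding it reads ``Recall Lemma 8.3 of \cite{GLM} also,'' and the label \texttt{lem:lem83} is a direct reference to that result in Goldman--Labourie--Margulis. So there is nothing to compare your argument against in this paper; the authors simply import the statement. Your write-up is essentially the proof one finds in \cite{GLM}, carried over to the present setting where the fiber is an affine space modeled on $V^n$ rather than a vector space. If you want to include a proof in your own work, what you have is fine; otherwise a citation suffices.
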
 

Recall that $U\Sigma$ is a recurrent set under the geodesic flow. 
\begin{lemma}\label{lem:exist} 
A neutralized section exists on $U\Sigma$. 
This lifts to a map $\tilde s_0: U\Omega \ra \bA$  
so that $\tilde s_0 \circ \gamma = \gamma \circ \tilde s_0$. 
\end{lemma}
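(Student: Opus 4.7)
The plan is to obtain a neutralized section by perturbation. I would start from an arbitrary continuous (in fact $C^1$) section $s_1 \in \Gamma(\bA)$, which exists because $\bA \to U\Sigma$ is an affine bundle with contractible fibers over a paracompact orbifold. Applying $\nabla^{\bA}_\phi$ gives $f := \nabla^{\bA}_\phi s_1 \in \Gamma(\bV)$, and using the flow-invariant splitting $\bV = \bV_+ \oplus \bV_0 \oplus \bV_-$ I decompose $f = f_+ + f_0 + f_-$. Since any section $\chi \in \Gamma(\bV)$ can be added to $s_1$ to produce another section of the affine bundle $\bA$, and since $\nabla^{\bA}_\phi (s_1 + \chi) = f + \nabla^{\bV}_\phi \chi$ with $\nabla^{\bV}_\phi$ preserving each of $\bV_+, \bV_0, \bV_-$, neutralization reduces to solving the two cohomological equations
\begin{equation*}
\nabla^{\bV}_\phi \chi_+ = -f_+ \quad \text{in } \Gamma(\bV_+), \qquad \nabla^{\bV}_\phi \chi_- = -f_- \quad \text{in } \Gamma(\bV_-).
\end{equation*}

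The hyperbolicity furnished by Proposition \ref{prop:contr} lets me solve these by Duhamel-type integration along flow lines. By Lemma \ref{lem:lem83}, $\nabla^{\bV}_\phi$ is the infinitesimal generator of the flow $D\Phi_{t,\ast}$ on $\Gamma(\bV)$, so formally
\begin{equation*}
\chi_+(x) = \int_0^{\infty} D\Phi_{-t}\bigl(f_+(\hat\Phi_t(x))\bigr)\, dt, \qquad \chi_-(x) = -\int_{-\infty}^{0} D\Phi_{-t}\bigl(f_-(\hat\Phi_t(x))\bigr)\, dt.
\end{equation*}
Proposition \ref{prop:contr} gives $\|D\Phi_{-t}|_{\bV_+}\|_S \leq \tilde C \exp(-kt)$ for $t$ large (and similarly on $\bV_-$ for $t \to -\infty$), while $f_\pm$ are bounded on the compact orbifold $U\Sigma$, so both integrals converge absolutely and uniformly and define continuous sections satisfying the required ODE by the standard variation-of-constants argument. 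Setting $s_0 := s_1 + \chi_+ + \chi_-$ produces the neutralized section: $\nabla^{\bA}_\phi s_0 = f_0 \in \Gamma(\bV_0)$.

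For the lift, the bundle $\bA$ is by construction the quotient of $U\Omega \times A^n$ under the diagonal $\Gamma$-action, so pulling $s_0$ back under the covering $U\Omega \to U\Sigma$ yields a section $\tilde s_0 \colon U\Omega \to U\Omega \times A^n$, which can be identified with a map $\tilde s_0 \colon U\Omega \to A^n$ satisfying $\tilde s_0(\gamma \cdot x) = h(\gamma)\tilde s_0(x)$, i.e.\ the claimed equivariance.

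The main technical obstacle is regularity: the splitting $\bV = \bV_+ \oplus \bV_0 \oplus \bV_-$ is only continuous because $\Bd \Omega$ is only $C^1$, so $f_\pm$ need not be differentiable even if $s_1$ is smooth, and one must justify that the pointwise ODE is satisfied by the integral formulas in a suitable (e.g.\ distributional or integrated) sense. This is handled by observing that along each flow line the integrals defining $\chi_\pm$ are genuine absolutely convergent Bochner integrals in the fiber direction, and differentiating in the flow parameter recovers $-f_\pm$ by the fundamental theorem of calculus; continuity in the transverse direction then follows from uniform convergence via Proposition \ref{prop:contr}. Everything else (existence of $s_1$, the algebraic decomposition, the equivariance of the lift) is routine.
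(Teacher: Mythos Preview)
Your proposal is correct and follows essentially the same route as the paper: start from an arbitrary section, decompose $\nabla^{\bA}_\phi s$ along $\bV_+\oplus\bV_0\oplus\bV_-$, and kill the $\pm$ components by integrating $(D\Phi_{\pm t})_*$ along flow lines, with convergence guaranteed by the Anosov estimates. The paper writes the correction in one line as
\[
s_0 = s + \int_0^\infty (D\Phi_t)_*(\nabla^{\bA-}_\phi s)\,dt - \int_0^\infty (D\Phi_{-t})_*(\nabla^{\bA+}_\phi s)\,dt
\]
and defers the verification to \cite{GLM}; you spell out the same integrals (up to sign conventions for $(D\Phi_t)_*$) and add the regularity discussion, which is a reasonable elaboration but not a different argument. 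The lift is likewise handled the same way in both.
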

\begin{proof} 
Let $s$ a continuous section $U\Sigma \ra \bA$. 
We decompose 
\[\nabla^{\bA}_\phi(s) = \nabla^{\bA+}_\phi(s) + \nabla^{\bA 0}_\phi(s) + \nabla^{\bA-}_\phi(s) \in \bV \]
so that $\nabla^{\bA\pm}_\phi(s) \in \bV_\pm$ and $\nabla^{\bA_0}_\phi(s)  \in \bV_0$ hold.
Again
\[ s_0 = s + \int_0^\infty (D\Phi_t)_*(\nabla^{\bA-}_\phi(s)) dt - \int_0^\infty (D\Phi_{-t})_*(\nabla^{\bA+}_\phi(s)) dt\] 
is a continuous section and 
$\nabla^{\bA}_\phi(s_0) = \nabla^{\bA_0}_\phi(s_0) \in \bV_0$ as shown in \cite{GLM}. 

Since $U\Sigma$ is connected, there exists a fundamental domain $F$ 
so that we can lift $s_0$ to $\tilde s_0'$ defined on $F$ mapping to $\bA$. 
We can extend $\tilde s_0'$ to $U\Omega \ra \Omega \times E$. 
\end{proof} 

Let $N_2(A^n)$ denote the space of codimension two affine spaces of $A^n$. 
We denote by $G(\Omega)$ the space of maximal oriented geodesics in $\Omega$.
We use the quotient topology on both spaces. 
There exists a natural action of $\Gamma$ on both spaces. 

For each element $g\in \Gamma -\{\Idd\}$, we define $N_2(g)$: 
Now, $g$ acts on $\Bd A^n$ with invariant subspaces corresponding to 
invariant subspace of the linear part ${\mathcal L}(g)$ of $g$. 
Since $g$ and $g^{-1}$ are positive proximal,  a unique fixed point corresponds
to the largest norm eigenvector, an attracting fixed point in $\Bd A^n$, 
and a unique fixed point corresponds to the smallest norm eigenvector,
a repelling fixed point by \cite{Ben1} or \cite{Benasym}.
There exists a ${\mathcal L}(g)$-invariant 
vector subspace $V_g^0$ complementary to the join of the subspace generated 
by these eigenvectors. (This space equals $V_0$ for the unit tangent vector 
tangent to the unique maximal geodesic $l_g$ in $\Omega$ where $g$ acts on.)
It corresponds to a $g$-invariant 
subspace $M(g)$ of codimension two in $\Bd A^n$.

Let $\tilde c$ be the geodesic in $U\Sigma$ that is $g$-invariant for $g \in \Gamma$. 
$\tilde s_0(\tilde c)$ lies on a fixed affine space parallel to $V_0^g$ by the neutrality, i.e., Lemma \ref{lem:exist}. 
There exists a unique affine subspace $N_2(g)$ of codimension two in $A^n$ 
whose containing $\tilde s_0(\tilde c)$. 
One immediate property is $N_2(g) = N_2(g^{-1})$.

\begin{definition}\label{defn:tau} 
We define $S'(\Bd  \Omega)$ the space of $(n-1)$-dimensional hemispheres
with interiors in $A^n$ each of  
whose boundary in $\Bd A^n$ is a supporting hypersphere in $\Bd A^n$ to $\Omega$. 
We denote by $S(\Bd \Omega)$ the space of pairs 
$(x, H)$ where $H \in S'(\Bd \Omega)$ and $x$ is in the boundary of 
$H$ and in $\Bd \Omega$. 
\end{definition}

Define $\Delta$ to be the diagonal set of $\Bd \Omega \times \Bd \Omega$. 
Denote by $\Lambda^* = \Bd \Omega \times \Bd \Omega - \Delta$. 
Let $G(\Omega)$ denote the space of maximal oriented geodesics in $\Omega$. 
$G(\Omega)$ is in one-to-one correspondence with $\Lambda^*$ by 
the map taking the maximal oriented geodesic to the ordered pair of its endpoints. 

\begin{proposition}\label{prop:mapgh}
\begin{itemize}
\item There exists  a continuous function $\hat s: U\Omega \ra N_2(A^n)$ 
equivariant with respect to $\Gamma$-actions.
\item Given $g \in \Gamma$ and for the unique unit speed geodesic $\vec{l}_g$ in 
$U\Omega$ lying over a geodesic $l_g$ where $g$ acts on, $\hat s(\vec{l}_g) = \{N_2(g)\}$. 
\item This gives a continuous map 
\[\tau: \Bd \Omega \times \Bd \Omega - \Delta \ra N_2(A^n)\] 
again equivariant with respect to the $\Gamma$-actions. 
There exists a continuous function 
\[\tau:\Lambda^* \ra S(\Bd \Omega).\]
\end{itemize}
\end{proposition}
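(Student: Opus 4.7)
The plan is to define $\hat s\colon U\Omega \to N_2(A^n)$ by letting $\hat s(\bv)$ be the unique affine subspace of codimension two in $A^n$ passing through $\tilde s_0(\bv)$ and parallel to the vector subspace $V_0(\bv)\subset V$ introduced in Section \ref{sec:anosov}. Continuity of $\hat s$ follows from continuity of $\tilde s_0$ (Lemma \ref{lem:exist}) together with continuity of the splitting $\bV = \bV_+\oplus \bV_0\oplus \bV_-$, which in turn rests on the $C^1$-regularity of $\Bd\Omega$ (from \cite{Ben1}) so that the pair of supporting hyperplanes at the two endpoints of the geodesic tangent to $\bv$ depends continuously on $\bv$. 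For $\Gamma$-equivariance, the relation $\tilde s_0(g\bv) = g\tilde s_0(\bv)$ together with the fact that ${\mathcal L}(g)$ carries $V_0(\bv)$ to $V_0(g\bv)$ (since $g$ permutes supporting hyperplanes of $\Omega$) gives $g\cdot\hat s(\bv) = \hat s(g\bv)$.

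For the second bullet, I would exploit the neutrality condition \eqref{eqn:neu}: along the flow line $\vec l_g$ associated to $g$, the endpoints of $l_g$ are the attracting and repelling fixed points of $g$ in $\Bd\Omega$ and remain constant under $\Phi_t$. Hence $V_0$ is literally constant along $\vec l_g$, equal to the subspace $V_0^g$ used to define $N_2(g)$. Lemma \ref{lem:lem83} then shows that $t\mapsto \tilde s_0(\Phi_t \vec l_g)$ has derivative lying in the fixed vector subspace $V_0^g$, so the image is contained in a single affine subspace of $A^n$ parallel to $V_0^g$ passing through $\tilde s_0(\vec l_g)$. By the definition of $N_2(g)$ recorded just before the statement, this subspace is exactly $N_2(g)$, giving $\hat s(\vec l_g) = \{N_2(g)\}$.

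For the descent to $\Lambda^*$, the same neutrality--plus--constancy argument applies to \emph{every} flow line (not only those fixed by some $g$): along any $\Phi_t$-orbit, $V_0$ is constant because the endpoints of the underlying maximal geodesic in $\Omega$ do not change, so $\hat s$ is constant on that orbit. Thus $\hat s$ factors through the space $G(\Omega)$ of oriented maximal geodesics, which is canonically identified with $\Lambda^* = \Bd\Omega\times\Bd\Omega - \Delta$ via the endpoint map, yielding the continuous equivariant $\tau\colon \Lambda^*\to N_2(A^n)$. To obtain the second map $\tau\colon \Lambda^*\to S(\Bd\Omega)$, I would attach to each $(p,q)\in\Lambda^*$ the pair consisting of $p$ itself together with the $(n-1)$-hemisphere spanned in $\SI^n$ by the codimension-two subspace $\tau(p,q)\subset A^n$ and the unique supporting hypersphere to $\Omega$ at $p$ inside $\Bd A^n$; continuity and equivariance follow from the corresponding properties of $\tau$ and of the $C^1$ Gauss map $p\mapsto T_p\Bd\Omega$.

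The main obstacle I anticipate is justifying the step in paragraph two rigorously, namely that the pointwise differential condition \eqref{eqn:neu} forces $\tilde s_0$ to stay inside a \emph{single} affine subspace parallel to $V_0$ along a flow line, even when $V_0$ is only continuous across different flow lines. The clean way around this is to restrict to one flow line at a time, where $V_0$ is constant (the endpoints of the geodesic are fixed), and integrate Lemma \ref{lem:lem83} directly; this avoids any need to differentiate the continuous field $V_0$ transverse to the flow.
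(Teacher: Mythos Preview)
Your proposal is correct and follows essentially the same route as the paper: define $\hat s(\vec u)$ as the codimension-two affine subspace through $\tilde s_0(\vec u)$ parallel to $V_0(\vec u)$, use the neutrality condition (with $V_0$ constant along each flow line) to show $\hat s$ is flow-invariant and hence descends to $G(\Omega)\cong\Lambda^*$, and then pass to $S(\Bd\Omega)$ by taking the unique $(n-1)$-hemisphere through $\tau(x,y)$ whose boundary is the supporting hypersphere at $x$. Your anticipated obstacle and its resolution (restricting to a single flow line where $V_0$ is literally constant, so the integration of Lemma~\ref{lem:lem83} is unproblematic) is exactly the point, and the paper handles it the same way.
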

\begin{proof} 

Given a vector ${\vec u} \in U\Omega$, we find $\tilde s_0(\vec{u})$.
There exists a lift $\tilde \phi_t: U\Omega \ra U\Omega$ of 
the geodesic flow $\phi_t$.
Then $\tilde s_0(\tilde \phi_t({\vec u}))$ all lies in 
an affine subspace $H^{n-2}$ parallel to $V_0$ for $\vec u$ by the neutrality 
condition equation \ref{eqn:neu}.
We define $\hat s(\vec u)$ to be this $H^{n-2}$. 

For any unit vector $\vec u'$ on the maximal (oriented) geodesic 
in $\Omega$ determined by $\vec u$, we obtain
$\hat s(\vec u') = H^{n-2}$. 
Hence, this determines the continuous map $\bar s: G(\Omega) \ra N_2(A^n)$. 
The $\Gamma$-equivariance comes from that of $\tilde s_0$. 

For $g \in \Gamma$, $\vec u$ and $g(\vec u)$
lie on the $g$-invariant geodesic $l_g$ provided $\vec u$ is tangent to $l_g$. 
Since $g(\tilde s_0(\vec u)) = \tilde s_0(g(\vec u))$ by equivariance, 
$g(\tilde s_0(\vec u))$ lies on $\hat s(\vec u) = \hat  s(g(\vec u))$ by two paragraphs above
and $g(\bar s(l_g)) = \bar s(l_g)$.

$\Bd \Omega \times \Bd \Omega - \Delta$ is in one-to-one correspondence with 
the space $G(\Omega)$.
The last item follows by taking for each pair $(x, y) \in \Lambda^*$
we take the geodesic $l$ with endpoints $x$ and $y$,
and taking the hyperspace in $A^n$ containing $\bar s(l)$ and its boundary containing $x$.  
\end{proof}

\subsubsection{The asymptotic niceness.} \label{sub:asymnice}


\begin{lemma}\label{lem:hdisj} 
Let $U$ be a $\bGamma_{\tilde E}$-invariant 
properly convex open domain in $\bR^n$ so that 
$\Bd U \cap \Bd A^n = \clo(\Omega)$. 
Suppose that $x$ and $y$ are fixed points of an element $g$ of $\Gamma$ in $\Bd \Omega$. 
Then $h(x, y)$ is disjoint from $U$. 
\end{lemma}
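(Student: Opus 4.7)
The plan is to argue by contradiction: assume there exists $p \in h(x,y) \cap U$ and derive an impossibility from strict convexity of $\Omega$, proper convexity of $U$, and the uniform middle eigenvalue condition. First I would record the structural facts about $h(x,y)$ from Proposition \ref{prop:mapgh}: it is the $(n-1)$-dimensional hemisphere with interior in $A^n$, whose boundary in $\Bd A^n$ is the supporting $(n-2)$-sphere to $\Omega$ at $x$, and which contains the $g$-invariant codimension-two affine subspace $N_2(g) \subset A^n$. Hence $h(x,y)$ is $g$-invariant. Since $\Gamma$ is Gromov hyperbolic and acts cocompactly on $\Omega$, Benoist's theorem \cite{Ben1} gives strict convexity of $\Omega$, so the supporting sphere at $x$ meets $\clo(\Omega)$ only at $x$; in particular $y \notin \clo(h(x,y))$.

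Next I would read off the dynamics of $g$ on $h(x,y)$ from the eigenvalue decomposition. The $n$-dimensional vector subspace $W_H$ spanning $h(x,y)$ splits as $V_x \oplus V_0^g \oplus V_{\tilde E}$, where $V_x$ carries the top eigenvalue of $g$ on $\SI^n$, $V_0^g$ carries the middle eigenvalues in $\Bd A^n$ associated with $N_2(g)$, and $V_{\tilde E}$ is the affine-direction eigenspace. The uniform middle eigenvalue condition, applied to both $g$ and $g^{-1}$, yields $\lambda_1(g)/\lambda_{\tilde E}(g) > 1$ and $\lambda_{\tilde E}(g)/\lambda_n(g) > 1$ uniformly, so $x$ is the unique attracting fixed point of the induced action on $\SI(W_H)$. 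Since $h(x,y) \cap U$ is a nonempty $g$-invariant open convex set containing $p$, the forward orbit $g^n(p)$ stays in $h(x,y) \cap U$ and converges to $x$ in $\SI^n$.

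For the backward orbit $g^{-n}(p)$, the full-sphere repelling fixed point $y$ does not lie in $\clo(h(x,y))$, so within the invariant set $h(x,y)$ the iterates must converge to the attracting fixed point $z$ of $g^{-1}|_{h(x,y)}$, which lies in $\clo(h(x,y)) - \{x\}$. Since $g^{-n}(p) \in U$ for all $n$, we have $z \in \clo(U)$. If $z \in \Bd A^n$, then by hypothesis $z \in \clo(\Omega)$; but $z$ also lies on the supporting sphere to $\Omega$ at $x$, so strict convexity forces $z = x$, contradicting that $z$ is the attracting (not repelling) fixed point of $g^{-1}|_{h(x,y)}$. If instead $z \in A^n$, then the closed segment $\overline{xz}$ lies in $\clo(U) \cap h(x,y)$ and is $g$-invariant; pushing a generic point of $U$ lying off $h(x,y)$ by $g^n$ produces (through the attracting/repelling structure on $\SI^n$) a pair of antipodal accumulation points in $\clo(U)$, violating proper convexity of $U$.

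The main obstacle will be the eigenvalue bookkeeping needed to locate $z$ precisely — in particular, deciding whether the smallest eigenvalue on $W_H$ sits in $V_0^g$ or in $V_{\tilde E}$, and verifying that the contradiction in the case $z \in A^n$ goes through by careful use of the proper convexity of $U$. An alternative route, if that bookkeeping becomes unwieldy, is to exploit continuity of $\tau$ from Proposition \ref{prop:mapgh} together with density of attracting–repelling pairs of hyperbolic elements of $\Gamma$ in $\Lambda^*$, reducing the general case to a perturbative one where the asymptotic structure of $U$ near $\clo(\Omega)$ makes the disjointness manifest.
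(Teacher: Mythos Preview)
Your overall plan matches the paper's: contradiction, $g$-invariance of $h(x,y)$, and a case split on whether the backward-limit $z$ of $g|_{h(x,y)}$ lies in $\Bd A^n$ or in $A^n$. Your case $z\in\Bd A^n$ is exactly the paper's second case: $z\in\clo(U)\cap\Bd A^n=\clo(\Omega)$ lies on the supporting sphere $\partial h(x,y)$ at $x$, so strict convexity of $\Omega$ forces $z=x$, a contradiction.

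The case $z\in A^n$ is where your write-up slips. Forward iteration by $g^{n}$ sends every generic point to the global attractor $x$ and never produces antipodes, and a single ``generic point of $U$'' cannot yield an antipodal pair. What works is this: since $p\in U\cap h(x,y)$ and $U$ is open in $A^n$, there exist $q_+,q_-\in U$ near $p$ on the two sides of the hyperplane spanned by $h(x,y)$, hence with $V_y$-components of opposite sign; under $g^{-n}$ these converge to $y$ and $y_-$ respectively, placing the antipodal pair $\{y,y_-\}$ in $\clo(U)$ and contradicting proper convexity. With this correction your antipodal mechanism in fact handles both cases at once and is cleaner than the paper's device, which instead passes to the $2$-sphere $P$ spanned by $x,y,z$, observes that $z$ is a saddle there (eigenvalue $\lambda_{\tilde E}(g)$ strictly between $\lambda_1(g)$ and $\lambda_n(g)$ by the middle-eigenvalue condition), and argues that $P\cap U$ then fails to be properly convex. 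Your ``alternative route'' via continuity of $\tau$ and density of attracting--repelling pairs is precisely what the paper uses in the \emph{next} lemma (Lemma~\ref{lem:inde}) to pass from fixed-point pairs to all of $\Lambda^*$; it is not needed here, where $x,y$ are already fixed by $g$.
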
 
\begin{proof}
Suppose not. 
$h(x, y)$ is a $g$-invariant hemisphere, and $x$ is a fixed point of $g$ in it, 
Then $U \cap h(x, y)$ is a $g$-invariant properly convex open domain containing $x$ in its boundary. 

Suppose first that $h(x, y)$ has a fixed point $z$ of $g$ 
with the smallest eigenvalue in $h(x, y)^o$. 
Then the associated eigenvalue to $z$ 
is strictly less than that of $x$ 
by the uniform middle-eigenvalue condition
and hence $z$ is in the closure of $U\cap h(x, y)$. 
$g$ acts on the $2$-sphere $P$ containing $x, y, z$. 
Then $P\cap U$ cannot be properly convex due to the fact that $z$ is a 
saddle-type fixed point. Hence, there exists no fixed point $z$. 

The alternative is as follows: 
$h(x, y)$ contains a $g$-invariant affine subspace $A^{\prime}$ of codimension at least $2$
and the smallest eigenvalue in $h(x, y)$ is associated with a point of the boundary of $A'$. 
$g| h(x, y)$ has the largest norm eigenvalue at $x, x_{-}$. 
Therefore, acting by  $\langle g \rangle$ on a generic point $z$ of $h(x, y) \cap U$
gives us an arc in $h(x, y)$ with endpoints $x$ or $x_{-}$ and 
an endpoint $y'$ in $\Bd A^{\prime} \subset \Bd A^n$. 
Here $y'$ is a fixed point in $h(x, y)$ different from $y$ as $y \not\in h(x, y)$. 
$x \in \clo(\Omega)$ implies $x_{-}\not\in \clo(\Omega)$ by the proper convexity. 
$x, y' \in \clo(\Omega)$ implies
$\ovl{x y'} \subset \Bd A^n \subset \clo(\Omega)$.
Finally, $\ovl{x y'} \subset \partial h(x, y)$ for 
the supporting subspace $\partial h(x, y)$ of $\clo(\Omega)$
violates the strict convexity of $\Omega$.
(See Benoist \cite{Ben1}.)


\end{proof} 

The proof of the following lemma is the different from one in \cite{afftame}. 
In Theorem \ref{thm:qFuch2}, we will obtain that these are also give us strict lens p-neighborhoods. 

\begin{lemma} \label{lem:inde}
Let $(x, y) \in \Lambda^*$. Then 
\begin{itemize}
\item $\tau(x, y)$ does not depend on $y$ and is unique for each $x$
and 
\item $h(x, y)$ is never a hemisphere in $\Bd A^n$ for every 
$(x, y) \in \Lambda^*$.
\item $\tau: \Bd \Omega \ra S(\Bd \Omega)$ is continuous. 
\end{itemize}
\end{lemma}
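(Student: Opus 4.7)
The plan is to dispose of (2) directly from the construction of $h(x,y)$, then establish (1) by first pinning down $h$ on pairs of attracting/repelling fixed points of hyperbolic elements, and finally extending to all of $\Lambda^*$ via density and $\Gamma$-equivariance; (3) will follow once (1) is in hand.

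For (2), recall from Proposition \ref{prop:mapgh} that $h(x,y)$ is the hyperplane in $A^n$ spanned by the codimension-two affine subspace $\bar s(l_{xy}) = \tilde s_0(l_{xy})$ together with the boundary point $x \in \Bd A^n$. Since the neutralized section $\tilde s_0 \colon U\Omega \to \bA$ takes values in the fiber $A^n$ (not in $\Bd A^n$), the image $\bar s(l_{xy})$ is a genuine codimension-two affine subspace of $A^n$, so $h(x,y)$ has nonempty interior in $A^n$ and cannot be a hemisphere contained entirely in $\Bd A^n$.

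For (1), I first treat a pair $(a_g, r_g)$ of fixed points of a hyperbolic $g \in \Gamma$. By the $\Gamma$-equivariance of $\tau$, the hemisphere $h(a_g, r_g)$ is $g$-invariant, and by Lemma \ref{lem:hdisj} it is disjoint from $U$. Combining with $\clo\Omega \subset \Bd U \cap \Bd A^n$ forces $\partial h(a_g, r_g) \subset \Bd A^n$ to be a supporting hyperplane to $\Omega$ at $a_g$, which by strict convexity of $\Bd\Omega$ (valid because $\Gamma$ is hyperbolic acting cocompactly on $\Omega$, by \cite{Ben1}) must coincide with the unique support $P_{a_g}$. Inside the one-parameter family of hyperplanes of $A^n$ with boundary $P_{a_g}$, the element $h(a_g, r_g)$ is pinned down as the unique \emph{non-degenerate} $g$-invariant representative, i.e.\ the one corresponding to the transversal eigendirection ($\lambda_{\tilde E}(g)$) of $g$ on the two-dimensional quotient $V^{n+1}/V_{P_{a_g}}$; the other $g$-invariant choice would lie inside $\Bd A^n$ and is ruled out by (2). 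Since hyperbolic $\Gamma$ acts cocompactly on strictly convex $\Omega$, the pairs $(a_g, r_g)$ are dense in $\Lambda^*$, so for arbitrary $(x,y) \in \Lambda^*$ I choose $g_n$ with $(a_{g_n}, r_{g_n}) \to (x,y)$; continuity of $\tau$ gives $h(a_{g_n}, r_{g_n}) \to h(x,y)$, and $C^1$-ness of $\Bd\Omega$ gives $P_{a_{g_n}} \to P_x$, proving $\partial h(x,y) = P_x$ depends only on $x$. To promote this to independence of the full hemisphere on $y$, I would exploit the equivariance $h(a_g, gy) = g \cdot h(a_g, y)$ whenever $x = a_g$: iterating $g^n$ sends $g^n y \to x$ for $y \neq r_g$, while $g^n \cdot h(x,y)$ is contracted onto the attracting fixed hyperplane of $g$'s action on the one-parameter family with boundary $P_x$. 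The uniform middle-eigenvalue condition, together with Proposition \ref{prop:eigSI} and the contraction estimates of Proposition \ref{prop:contr}, guarantees that this attracting hemisphere is canonically determined by $x$ alone (it is the one transversal to $\Bd A^n$). Using the fact that attracting fixed points are dense in $\Bd\Omega$, this yields (1). Statement (3) then follows at once from continuity of $\tau$ on $\Lambda^*$ together with the just-established factoring through $\Bd\Omega$.

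The main obstacle is the last step of the preceding paragraph: passing from independence of the \emph{boundary} $\partial h(x,y) = P_x$ (which is a soft consequence of $C^1$ strict convexity and density) to independence of the entire hemisphere $h(x,y)$. The one-parameter family of hyperplanes through $P_x$ gives genuine freedom, and pinning down the correct member requires the dynamical argument above. In particular one must take limits of $g^n \cdot h(x,y)$ carefully even though the corresponding sequence $(x, g^n y)$ in $\Lambda^*$ exits $\Lambda^*$ in the limit, and verify that the attracting-hemisphere selection is well-defined independently of which hyperbolic $g$ with $a_g = x$ is chosen, which is ultimately a consequence of the uniform middle-eigenvalue condition.
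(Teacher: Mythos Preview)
Your arguments for (2) and for $\partial h(x,y) = P_x$ (the unique supporting hypersphere at $x$) are correct. The gap is exactly where you locate it: promoting this to independence of the full hemisphere $h(x,y)$ within the pencil through $P_x$. Your forward iteration does not close it. You have $g^n h(x,y) = h(x, g^n y)$ converging to the transversal $g$-fixed hemisphere, but since $(x, g^n y) \to (x,x) \notin \Lambda^*$ this constrains only the limit, not $h(x,y)$ itself. The repair is to iterate backward. On the pencil through $P_{a_g}$ the induced action of $g$ has eigenvalues $\lambda_{\tilde E}(g)$ (transversal hemisphere) and $\lambda_n(g)$ (the hemisphere inside $\Bd A^n$, since that one contains $r_g$); by the middle-eigenvalue condition $\lambda_n(g) < \lambda_{\tilde E}(g)$, so under $g^{-1}$ the $\Bd A^n$ hemisphere is the attractor. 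Thus if $h(x,y)$ were not the transversal hemisphere then $g^{-n} h(x,y)$ would converge to the $\Bd A^n$ hemisphere. But $g^{-n} y \to r_g$ in $\Bd\Omega \setminus \{x\}$, so continuity of $\tau$ on $\Lambda^*$ gives $h(x, g^{-n} y) \to h(x, r_g)$, forcing $h(x, r_g) \subset \Bd A^n$ and contradicting (2). This shows $h(a_g, y)$ is independent of $y$; density of attracting fixed points in $\Bd\Omega$ together with continuity of $\tau$ then gives (1) for all $x$, and (3) follows.

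The paper's proof takes a different, more global route. It first extends $h(x,y) \cap U = \emp$ from periodic pairs to all of $\Lambda^*$ by density and continuity. Then, using that all $h(x,y)$ share the boundary $P_x$, it forms $H(x) = \bigcap_y H(x,y)$ and two properly convex $\Gamma$-invariant open domains: $U' = \bigcap_x H(x) \supset U$ and the antipodal companion $U'' = \bigcap_x \bigl(\SI^n \setminus \clo(H(x))\bigr)$, which has $\mathcal{A}(\clo(\Omega))$ in its closure. If some $h(x,y)$ differed from the boundary hemisphere $h(x)$ of $H(x)$ it would meet $U''$; running the argument of Lemma~\ref{lem:hdisj} with $U''$ in place of $U$ then yields a contradiction. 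This two-domain sandwich avoids tracking individual orbits; your dynamical argument, once repaired as above, has the virtue of identifying explicitly which hemisphere in the pencil is selected.
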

\begin{proof} 
We claim that for any $x, y $ in $\Bd \Omega$, $h(x, y)$ is disjoint from $U$: 
By Theorem 1.1, the geodesic flow on $\Omega/\Gamma$ is Anosov, and hence 
closed geodesics in $\Omega/\Gamma$ is dense in the space of geodesics
by the basic property of the Anosov flow. 
Since the fixed points in $\Bd \Omega$ are, we can find a sequence  $x_i \ra x$ and $y_i \ra y$ where 
$x_i$ and $y_i$ are fixed points of an element $g_i \in \Gamma$ for each $i$. 
If $h(x, y) \cap U \ne \emp$, then $h(x_i, y_i) \cap U \ne \emp$ for 
$i$ sufficiently large by the continuity of the map $\tau$.
This is a contradiction by Lemma \ref{lem:hdisj} 

Also $\Bd A^n$ does not contain $h(x, y)$ 
since $h(x, y)$ contains the $\bar s(\ovl{xy})$ while $y$ is chosen 
$y \ne x$. 

Let $H(x, y)$ denote the half-space bounded by $h(x, y)$ containing $U$. 
For each $x$, we define 
\[ H(x) := \bigcap_{y \in \Bd \Omega -\{x\}} H(x, y). \]
Define $h(x)$ as the boundary $(n-1)$-hemisphere of $H(x)$.  
(Note that $\partial H(x, y')$ is supporting $\Bd \Omega$ and hence is independent of $y'$
as $\Bd \Omega$ is $C^1$.)

Let $H(x)$ denote the open half-space bounded by $h(x)$ containing $U$. 
Let $U'$ be defined as the convex open domain 
$\bigcap_{x \in \Bd \Omega} H(x)$ containing $U$. 
Since $\Bd \Omega$ has at least $n+1$ points in general position 
and tangent hemispheres, $U'$ is properly convex.
Let $U''$ be the properly convex open domain $\bigcap_{x \in \Bd \Omega} (E - \clo(H(x)))$. 
It has the boundary $\mathcal{A}(\clo(\Omega))$ for the antipodal map $\mathcal A$ and is a properly convex domain
as the antipodal set of $\Bd \Omega$ has at least $n+1$ points in general position. 
Note that $U' \cap U'' = \emp$.

If for some $x, y$, $h(x, y)$ is different from $h(x)$, then 
$h(x, y) \cap U'' \ne \emp$.
This is a contradiction as the proof of Lemma \ref{lem:hdisj}. 
Thus, we obtain $h(x, y) = h(x)$ for all $y \in \Bd \Omega -\{x \}$. 

We show the continuity of $x \mapsto h(x)$:
Let $x_i \in \Bd \Omega$ be a sequence converging to $x \in \Bd \Omega$. 
Then choose $y_i \in \Bd \Omega$ so that 
$y_i \ra y$ and we have $\{h(x_i) = h(x_i, y_i)\}$ converges to 
$h(x, y) = h(x)$ by the continuity of $\tau$. 
Therefore, $h$ is continuous. 
\end{proof} 

\begin{proof}[{\rm (}Theorem \ref{thm:asymnice}\,{\rm )}] 
For each point $x \in \Bd \Omega$, an $(n-1)$-dimensional hemisphere $h(x)$ passes $A^n$ 
with $\partial h(x) \subset \Bd A^n$ supporting $\Omega$ by Lemma \ref{lem:inde}. 
Then a hemisphere $H(x) \subset A^n$ is bounded by $h(x)$ and contains $\Omega$. 
The properly convex open domain 
$\bigcap_{x \in \Bd \Omega} H(x)$ contains $U$.
The uniqueness of $h(x)$ in the proof of Lemma \ref{lem:inde} gives us the unique asymptotic totally geodesic hypersurfaces. 

\end{proof} 

The following is more useful version of Theorem \ref{thm:asymnice}. 
We don't assume that $\Gamma$ is hyperbolic here. 

\begin{theorem}\label{thm:lensn} 
Let $\Gamma$ be a discrete group in $\SLnp$ acting on $\Omega$, 
$\Omega \subset \Bd A^n$, so that $\Omega/\Gamma$ is 
a compact orbifold. 
\begin{itemize}
\item Suppose that $\Omega$ has a $\Gamma$-invariant properly convex open domain $U$ forming a one-sided neighborhood of $\Omega$ in $A^n$. 
\item Suppose that $\Gamma$ satisfies the uniform middle eigenvalue condition. 
\item Let $P$ be the hyperplane containing $\Omega$. 
\end{itemize}
Then $\Gamma$ acts on a properly convex open domain $L$ in $\SI^n$ containing $\Omega$ and contained in $U$
and having strictly convex boundary in $\SI^n - P$. 
That is, $L$ is a lens-shaped neighborhood of $\Omega$. 
\end{theorem}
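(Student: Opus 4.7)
The plan is to reduce the statement to the already-established existence of a lens-cone p-end-neighborhood for a properly tubular action, via projective duality. First I would invoke Theorem \ref{thm:asymnice} (together with the remark noted there, extending the conclusion from hyperbolic to virtual products of hyperbolic and abelian factors, which is the form relevant for general admissible $\Omega/\Gamma$). This produces a $\Gamma$-equivariant family of supporting hyperspheres $\{h(x)\}_{x\in\Bd\Omega}$ in $\SI^n$, each transversal to $P=\Bd A^n$, so $\Gamma$ is asymptotically nice on $A^n$.

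Next I would dualize using Proposition \ref{prop:dualDA}: the asymptotically nice affine action of $\Gamma$ corresponds to a properly tubular action of the dual group $\Gamma^*$ on a properly convex tube $B^*\subset\SI^n$ with vertex $\bv$ dual to $P$ and antipode $\bv_-$. The uniform middle-eigenvalue condition is preserved under duality (Proposition \ref{prop:dualend2}), so Theorem \ref{thm:distanced} gives a unique minimal $\Gamma^*$-invariant distanced compact convex set $K^*\subset\clo(B^*)$ meeting each open boundary segment of $B^*$ in exactly one point. Applying Proposition \ref{prop:convhull2} to this tubular data yields a lens-cone p-end-neighborhood $V^*$ of $\bv$, bounded by two smooth strictly convex $(n-1)$-cells. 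Dualizing back via Proposition \ref{prop:duality}, the pair (lens-cone, vertex) maps to (lens-type neighborhood of totally geodesic domain, hyperplane $P$); the strict convexity and $C^1$-smoothness of $\partial V^*\cap B^{*o}$ translates fiberwise by Lemma \ref{lem:predual} into strict convexity of $\partial L$ in $\SI^n-P$, where $L\subset A^n$ is the resulting $\Gamma$-invariant properly convex domain with $\clo(L)\cap P=\clo(\Omega)$.

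To enforce $L\subset U$, I would use the inclusion-reversal of duality (Equation \ref{eqn:dualinc}): enlarging $V^*$ within $B^*$ shrinks $L$ within $A^n$. Concretely, one chooses a fundamental domain for $\Gamma^*$ acting on $K^*\cap B^{*o}$ and enlarges $V^*$ by inserting finitely many $\Gamma^*$-translates of points in $B^{*o}$ that correspond, on the dual side, to points of $U$; the convex hull construction of Proposition \ref{prop:convhull2} guarantees the new $V^*$ is still lens-cone shaped. One then re-smooths $\partial L$ by a Koszul–Vinberg Hessian argument exactly as in the openness proof of Theorem \ref{thm:qFuch}, noting that small $C^2$-perturbations preserve strict convexity.

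The main obstacle I anticipate is ensuring that the duality genuinely turns the lens-cone around $\bv$ into a two-sided lens-type neighborhood of $\Omega$ with \emph{strictly} convex outer boundary, not merely a $C^1$ or weakly convex object. The strictness is read off from the fact that the two boundary cells of $V^*$ are strictly convex and their closures meet in a nowhere dense set in $\Bd V^*$; translating this through the fiberwise duality of Section \ref{sub:affdualtub} requires care at points of $\Bd\Omega$, where the dual supporting hemispheres form the boundary between the two cells. A secondary technical point is the reducible case for $\Gamma$, handled by Theorem \ref{thm:redtot} which already produces the lens-cone in that situation, so the same dualization goes through verbatim.
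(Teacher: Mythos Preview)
Your duality-based route differs substantially from the paper's argument, and as written it has a genuine gap in the ``dualize back'' step.

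The object Proposition~\ref{prop:convhull2} produces on the tubular side is a lens-\emph{cone} $V^*=\{\bv\}*L'$, whose boundary consists of the vertex $\bv$, the cone sides, and a \emph{single} strictly convex top hypersurface $\partial_+L'$; your phrase ``bounded by two smooth strictly convex cells'' already conflates the lens-cone with the lens $L'$ sitting inside it. Under domain duality the vertex $\bv$, together with its whole family of supporting hyperplanes, corresponds to $\clo(\Omega)\subset P$ lying in $\Bd(V^*)^*$. Thus $P$ is a supporting hyperplane of $(V^*)^*$ and $(V^*)^*$ is only a \emph{one-sided} neighborhood of $\Omega$, not a lens. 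To obtain a two-sided lens you would have to dualize the lens $L'$ itself and then verify directly that $\Omega\subset(L')^{*o}$ and that $\Bd(L')^*\setminus P$ breaks into two strictly convex cells. This can be done (for $x\in\Omega^o$ the hyperplane $x^*\ni\bv$ projects in the link $\SI^{n-1}_\bv$ to a hyperplane disjoint from $\clo(\Omega^*)=\clo(R_\bv(B^*))$, hence $x^*$ misses $\clo(L')$), but Proposition~\ref{prop:duality} and Lemma~\ref{lem:predual} alone do not give it: the lemma only swaps $C^1$ with strict convexity pointwise and says nothing about where $\Omega$ sits relative to $(L')^*$. A secondary issue is that Proposition~\ref{prop:convhull2} is formulated for a p-R-end of an ambient properly convex orbifold, requiring $K\cap\mathcal{T}^o_{\bv}\subset\torb$; you never supply such a $\torb$ on the dual side (the tube $B^*$ is not properly convex).

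The paper's proof avoids duality here entirely and is much shorter. From the asymptotically nice hemispheres $h(x)$ one forms the convex open set $V=\bigcap_{x\in\Bd\Omega}H'(x)$ containing $\Omega$ on both sides of $P$; if $V$ is not properly convex one first intersects with two auxiliary hemispheres containing $\clo(\Omega)$. On the resulting properly convex domain the Hilbert metric $d_V$ yields a convex $\eps$-neighborhood $N_\eps$ of $\Omega$ (via Lemma~1.8 of \cite{CLT2}) with compact quotient. Cutting by finitely many half-spaces $H_i\supset\Omega$ whose complements cover a fundamental domain of $\Bd N_\eps$ and setting $W=\bigcap_{g\in\Gamma}g(H_i)\cap V$ gives a polyhedral lens inside $N_\eps$, which one smooths to a strictly convex lens; choosing $\eps<d_V(\Omega,\Bd U\cap V)$ forces the construction inside $U$.
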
 
\begin{proof} 
For purpose here, we assume that $\Gamma$ is torsion-free
since we can always add finite order elements later. 
For each $H(x)$, $x \in \Bd \Omega$, above, 
an open hemisphere $H'(x)$ satisfies $\Bd H'(x) = H(x)$. 
Then $V:=\bigcap_{x \in \Bd \Omega} H'(x)$ is a convex open domain containing $\Omega$
as in the proof of Lemma \ref{lem:inde}.

First suppose that $V$ is properly convex. 
Then $V$ has a $\Gamma$-invariant Hilbert metric $d_V$ that is also Finsler. (See \cite{wmgnote} and \cite{Kobpaper}.)
Then
\[N_\eps =\{ x\in V| d_V(x, \Omega)) < \eps\}.\]
 is a convex subset of $V$ by Lemma 1.8 of \cite{CLT2}. 



A compact tubular neighborhood $M$ of $\Omega/\Gamma$ in $V/\Gamma$ is
diffeomorphic to $\Omega/\Gamma \times [-1,1]$. (See Section 4.4.2 of \cite{Cbook}.)
Since $\Omega$ is compact, the regular neighborhood has a compact closure. 
Thus, $\bdd_V(\Omega/\Gamma, \Bd M/\Gamma) > \eps_0$ for some $\eps_0 > 0$. 
If $\eps < \eps_0$, then $N_\eps \subset M$. We obtain that $\Bd N_\eps/\Gamma$ is compact.


Clearly, $\Bd N/\Gamma$ has two components in two respective components of $(V - \Omega)/\Gamma$.
Let $F_1$ and $F_2$ be the fundamental domains of both components. 
We procure 
finitely many open hemispheres $H_i$, $H_i \supset \Omega$, 
so that open sets $(\SI^n - \clo(H_i) )\cap N_\eps$ cover $F_1 \cup F_2$. 
Since any path from $\Omega$ to $\Bd N_\eps$ must meet $(\Bd W -P) \cap V$ first, $N_\eps$ contains
$W := \bigcap_{g \in \Gamma} g(H_i) \cap V$ and $\Bd W$. 
A collection of 
compact totally geodesic polyhedrons meet in angles $< \pi$ and 
comprise $\Bd W/\Gamma$. 
We can smooth $\Bd W$ to obtain a strictly convex lens neighborhood $W'$ of $\Omega$ in $N_\eps$.  

Since the closed set $\Bd U \cap V/\Gamma$ does not meet
the compact $\Sigma$, 
$d_V(\Omega, \Bd U \cap V) > \eps_0$ for a positive number $\eps_0$. 
We choose $\eps > 0$ smaller then this number. 
Then $W$ is a subset of $U$ which we construct for $N_\eps$ as above. 

Suppose that $V$ is not properly convex. Then $\Bd V$ contains $v, v_-$.
$V$ is a tube. 
We take any two open hemispheres $S_1$ and $S_2$ containing $\clo(\Omega)$ so that 
$\{v, v_-\} \cap S_1 \cap S_2 = \emp$. 
Then $\bigcap_{g \in \Gamma} g(S_1 \cap S_2) \cap V$ is a properly convex open domain containing $\Omega$.
and we can apply the same argument as above. 
\end{proof}


\bibliographystyle{plain}

\begin{thebibliography}{99}

\bibitem{Baues} { O. Baues}, 
\newblock{`Deformation spaces for affine crystallographic groups'}, In {\em Cohomology of groups and algebraic K-theory}, 
\newblock{ 55--129, Adv. Lect. Math. (ALM), 12, Int. Press, Somerville, MA, 2010.} 

\bibitem{Ben1} { Y. Benoist},
 \newblock{`Convexes divisibles. I',}
 \newblock{In {\em Algebraic groups and arithmetic}},
  {339--374}, Tata Inst. Fund. Res., Mumbai, 2004.
 
\bibitem{Ben2} { Y. Benoist},
\newblock{`Convexes divisibles. II'},
 \newblock{\em Duke Math. J.}, {120} (2003), 97--120.

\bibitem{Ben3} { Y. Benoist},
\newblock{`Convexes divisibles. III'},
\newblock{{\em Ann. Sci. Ecole Norm. Sup.} (4) 38 (2005), no. 5, 793--832. }

\bibitem{Ben4} { Y. Benoist},
\newblock{ `Convexes divisibles IV : Structure du bord en dimension 3'}, 
\newblock{{\em Invent. math.} 164 (2006), 249--278.}

\bibitem{Ben5} { Y. Benoist},
 \newblock{`Automorphismes des c\^ones convexes'},
\newblock{\em Invent. Math.}, {141} (2000), 149--193.

\bibitem{Benasym} { Y. Benoist}, 
\newblock{`Propri\'et\'es asymptotiques des groupes lin\'eaires',} 
\newblock{ {\em Geom. Funct. Anal.} 7 (1997), no. 1, 1--47.}

\bibitem{BenNil} { Y. Benoist}, 
\newblock{`Nilvari\'et\'es projectives',}
\newblock{ {\em Comm. Math. Helv.} 69 (1994), 447--473.}

\bibitem{Benz} { J.-P., Benz\'ecri}, 
\newblock{`Sur les vari\'et\'es localement affines et localement projectives',}
\newblock{Bull. Soc. Math. France 88 (1960) 229--332.} 

\bibitem{Borel} { A. Borel}, 
\newblock{{\em Linear algebraic group},} 
\newblock{Springer Verlag, 2nd edition p.288 + xi, 1991.}

\bibitem{Car} { Y. Carri\`ere}, 
\newblock{`Feuilletages riemanniens \`a croissance polyn\^omiale'}, 
\newblock{{\em Comment. Math. Helv.} 63 (1988), 1--20.}

\bibitem{ChCh}  { Y. Chae,  S. Choi,  and C. Park},  
\newblock{`Real projective manifolds developing into an affine space'}, 
\newblock{{\em Internat. J. Math.} 4 (1993), no. 2, 179--191.}

  \bibitem{Choi2004}
   { S. Choi},
    \newblock{`Geometric structures on orbifolds and holonomy representations'},
    \newblock{{\em Geom. Dedicata} 104 (2004), 161--199.}

 \bibitem{psconv} S.~Choi, 
\newblock{`The convex and concave decomposition of manifolds with real projective structures'}, 
\newblock{M\'emoires SMF, No. 78, 1999, 102 pp.}

 
  \bibitem{Choi2006}
   { S. Choi},
    \newblock{`The deformation spaces of projective structures on 3-dimensional Coxeter orbifolds',}
    \newblock{{\em Geom. Dedicata} 119 (2006), 69--90.}


\bibitem{conv} { S. Choi}, 
\newblock{`The convex real projective manifolds and orbifolds with radial or totally geodesic ends: the closedness and openness of deformations',} 
\newblock{arXiv:1011.1060}


\bibitem{rdsv} { S. Choi}, 
\newblock{`The decomposition and classification of radiant affine 3-manifolds',}
\newblock{{\em Mem. Amer. Math. Soc.} 154 (2001), no. 730, viii+122 pp.}

\bibitem{cdcr1}
{ S.~Choi}, 
\newblock{`Convex decompositions of real projective surfaces {{\rm {I}:}}
  $\pi$-annuli and convexity'},
\newblock {\em J. Differential Geom.} 40 (1994), 165--208.

\bibitem{cdcr2}
{ S.~Choi},
\newblock{`Convex decompositions of real projective surfaces {{\rm {II}:}}
  {A}dmissible decompositions',}
\newblock {\em J. Differential Geom.}, 40 (1994), 239--283.

\bibitem{cdcr3}
{ S.~Choi}, 
\newblock{`Convex decompositions of real projective surfaces {{\rm {III}:}}
  {F}or closed and nonorientable surfaces'}, 
\newblock {\em J. Korean Math. Soc.}, 33 (1996), 1138--1171.

\bibitem{Cbook} 
{ S.~Choi}, 
\newblock{{\em Geometric structures on 2-orbifolds\,{\rm :} exploration of discrete symmetry}}, 
\newblock{MSJ Memoirs, Vol. 27. 171pp + xii, 2012}

 \bibitem{CG}
   { S. Choi and W.M. Goldman}, 
    \newblock{`The deformation spaces of convex $\mathbb{RP}^2$-structures on 2-orbifolds',}
    \newblock{{\em Amer. J. Math.} 127 (2005), 1019--1102.}


\bibitem{afftame} { S.~Choi and W. M. Goldman}, 
\newblock{`Topological tameness of Margulis spacetimes'}, 
\newblock{arXive 1204.5308} 

\bibitem{CHL} 
{ S.~Choi, C.D.~Hodgson, and G.S.~Lee}, 
\newblock{`Projective deformations of hyperbolic Coxeter 3-orbifolds',} 
\newblock{{\em Geom. Dedicata} 159 (2012), 125--167.}

 \bibitem{Cooper2006}
{    D. Cooper, D. Long, and M. Thistlethwaite},
    \newblock{`Computing varieties of representations of hyperbolic 3-manifolds into ${\rm SL}(4,\mathbb R)$',}
    \newblock{{\em Experiment. Math.} 15 (2006), 291--305.}

  \bibitem{CLT}
{   D. Cooper, D. Long, and M. Thistlethwaite},
    \newblock{` Flexing closed hyperbolic manifolds',}
    \newblock{{\em Geom. Topol.} 11 (2007), 2413--2440.}
    
    \bibitem{CLT2} 
{    D. Cooper, D. Long, and S. Tillmann,} 
    \newblock{`On convex projective manifolds and cusps',} 
    \newblock Preprint, arXiv:1104.0585.

\bibitem{CG2} 
{ J. P. Conze and Y. Guivarch}, 
\newblock{`Remarques sur la distalit\'e dans les espaces vectoriels',} 
\newblock{C. R. Acad. Sci. Paris 278 (1974), 1083--1086.}


\bibitem{CM2} { M. Crampon and L. Marquis}, 
\newblock{`Finitude g\'eom\'etrique en g\'eom\'etrie de Hilbert'}, 
Preprint  arXiv:1202.5442.

\bibitem{GV} 
J. de Groot and H. de Vries, 
\newblock{`Convex sets in projective space',} 
\newblock{Compositio Math., 13 (1958), 113--118.}

\bibitem{Fried86} { D. Fried}, 
\newblock{`Distality, completeness, and affine structure'},
\newblock{ {\em J. Differential Geometry} 24 (1986), 265--273.}

\bibitem{FG} { D. Fried and W. Goldman}, 
\newblock{`Three-dimensional affine crystallographic groups'}, 
\newblock{{\em Adv. Math.} 47 (1983), 1--49.}

\bibitem{FGH} { D. Fried, W. Goldman, and M. Hirsch}, 
\newblock{`Affine manifolds with nilpotent holonomy'}, 
\newblock{{\em Comment. Math. Helv.} 56 (1981), 487--523.}

\bibitem{Gconv} { W. Goldman}, 
\newblock {`Convex real projective structures
    on compact surfaces',} 
\newblock{\em J. Differential Geometry}, 31 (1990), 791--845.

\bibitem{wmgnote} { W. Goldman}, 
\newblock{ `Projective geometry on manifolds',} 
\newblock{Lecture notes available from the author.}

\bibitem{GL} { W. Goldman and F. Labourie},  
\newblock{`Geodesics in Margulis space times'},
\newblock{{\em Ergod. Th. \& Dynamic. Sys.} 32 (2012), 643--651.}

\bibitem{GLM} { W, Goldman, F. Labourie, and G. Margulis},  
\newblock{`Proper affine actions and geodesic flows of
hyperbolic surfaces'},
\newblock{{\em Annals of Mathematics} 170 (2009), 1051--1083. }

\bibitem{Gr} { M. Gromov},
\newblock{ `Groups of polynomial growth and expanding maps',}
\newblock{ {\em Inst. Hautes \'Etudes Sci. Publ. Math.} No. 53 (1981), 53--73. }



\bibitem{Guichard} { O. Guichard}, 
\newblock{`Sur la r\'egularit\'e H\"older des convexes divisibles'}, 
\newblock{{\em Erg. Th. \& Dynam. Sys.} 25 (2005), 1857--1880.}


\bibitem{GW} { O. Guichard and A. Wienhard}, 
\newblock{`Anosov representations: domains of discontinuity and applications'}, 
\newblock{{\em Invent. Math.}} 190 (2012), no. 2, 357--438. 


\bibitem{JM}
{ D. Johnson and J. Millson}, 
\newblock{`Deformation spaces associated to compact hyperbolic manifolds'},
\newblock In {\em Discrete groups in geometry and analysis} (New Haven, Conn., 1984), pp. 48--106,
\newblock Progr. Math., 67, Birkh\"auser Boston, Boston, MA, 1987.



\bibitem{Kac1967}
{  V.G. Kac and  \`{E}.B. Vinberg}, 
    \newblock `Quasi-homogeneous cones',
    \newblock{\em Math. Zamnetki} 1 (1967), 347--354.
    
    \bibitem{Katok} 
 {   A. Katok and B. Hasselblatt}, 
    \newblock{\em Introduction to modern theory of dynamical systems}, 
    \newblock{ Cambridge University Press} 1995.
    
    \bibitem{ink} 
    { Inkang Kim}, 
    \newblock `Compactification of strictly convex real projective structures',
\newblock Geom. Dedicata 113 (2005), 185--195. 

\bibitem{Kobpaper} { S. Kobayashi},
\newblock `Projectively invariant distances for affine and projective structures', 
\newblock In {\em Differential geometry} (Warsaw, 1979), 127--152, 
Banach Center Publ., 12, PWN, Warsaw, 1984. 


\bibitem{BK} { B. Kostant}, 
\newblock{ `On convexity, the Weyl group and the Iwasawa decomposition'}, 
\newblock{{\em Ann. ENS} 4em s\'eree tome 6 no. 4 (1973), 413--455}. 

\bibitem{KS} { B. Kostant and D. Sullivan},
\newblock{ `The Euler characteristic of an affine space form is zero',} 
\newblock{{\em Bull. Amer. Math. Soc.} 81 (1975), no. 5, 937--938. }


\bibitem{Kos} { J. Koszul}, 
\newblock{`Deformations de connexions localement plates', }
\newblock{{\em Ann. Inst. Fourier (Grenoble)} 18 fasc. 1 (1968), 103--114. }

\bibitem{Lab} { F. Labourie}, 
\newblock `Flat Projective Structures on Surfaces and Cubic Holomorphic Differentials', 
\newblock{\em Pure and Applied Mathematics Quaterly} 3 no. 4 (2007), 1057--1099, 
Special Issue: In the Honor of Grisha Margulis, Part 1 of 2. 

\bibitem{Lee} { Jaejeong Lee}, 
\newblock `A convexity theorem for real projective structures',
\newblock arXiv:math.GT/0705.3920.


    \bibitem{Marquis}
{   L. Marquis},
    \newblock `Espace des modules de certains poly\`edres projectifs miroirs',
    \newblock{\em Geom. Dedicata} 147 (2010), 47--86.
    
    \bibitem{Mess}
 {   G. Mess}, 
\newblock{`Lorentz spacetimes of  curvature',} 
\newblock{{\em Geom. Dedicata} 126 (2007), 3--45. }

\bibitem{Mo1} 
{ P. Molino},
\newblock `G\'eom\'etrie global des feuilletages riemanniens', 
\newblock {\em Nederl. Akad. Wetensch. Indag. Math.} 44 (1982), no. 1, 45--76.

\bibitem{Moore} 
{ C. Moore}, 
\newblock Distal affine transformation groups, 
\newblock{\em Amer. J. Math.} 90 (1968) 733--751.

 \bibitem{Mol} { P. Molino}, 
\newblock {\em Riemannian foliations},
\newblock Progress in Mathematics, vol 73, Birkh\"auser, Boston, Basel, 1988.

\bibitem{Rag} { M. S. Raghunathan}, 
\newblock{{\em Discrete subgroups of Lie groups}}, 
\newblock{Ergebnisse der Mathematik und ihrer Grenzgebiete, Band 68}, Springer Verlag, Berlin, 1972.


\bibitem{Shbook} { H. Shima},
\newblock{{\em The geometry of Hessian structures},}
\newblock World Scientific Publishing Co. Pte. Ltd., Hackensack, NJ, xiv+246 pp, 2007. 

\bibitem{Thnote} { W. Thurston}, 
\newblock{\em Geometry and topology of $3$-manifolds,} 
\newblock{available at \url{http://library.msri.org/books/gt3m/}.}

\bibitem{Thbook} { W. Thurston}, 
\newblock{\em Three-dimensional geometry and topology,} 
\newblock{Princeton University Press, Princeton NJ, 1997.}


\bibitem{Var} { V.S. Varadarajan}, 
\newblock{{\em Lie groups, Lie algebras, and their representations}, }
\newblock{GTM Vol 102, Springer, Berlin, 1972.}

 \bibitem{Vey68} { J. Vey},  
  \newblock `Une notion d'hyperbolicit\'e sur les vari\'et\'es localement plates', 
  \newblock {\em C.R. Acad. Sc. Paris}, 266(1968), 622--624.

 \bibitem{Vey} { J. Vey}, 
  \newblock `Sur les automorphismes affines des ouverts convexes saillants',
  \newblock {\em Ann. Scuola Norm. Sup. Pisa} (3) 24(1970), 641--665.



 \bibitem{Vinberg1971}
  { \`{E}.B. Vinberg}, 
    \newblock `Discrete linear groups that are generated by reflections',
    \newblock {\em Izv. Akad. Nauk SSSR} Ser. Mat. 35 (1971), 1072--1112.

  \bibitem{Vinberg1985}
 {  \`{E}.B. Vinberg}, 
    \newblock `Hyperbolic reflection groups',
    \newblock {\em Uspekhi Mat. Nauk} 40 (1985), 29--66.
       
\bibitem{vin63}
{  \`{E}.B. Vinberg},
\newblock `Homogeneous convex cones',
Trans. Moscow Math. Soc. 12 (1963), 340--363.

 \bibitem{Weil1962}
  {  A. Weil}, 
    \newblock `On discrete subgroups of Lie groups II',
    \newblock{\em Ann. of Math.} 75 (1962), 578--602.

   \bibitem{Weil1964}
   { A. Weil}, 
    \newblock `Remarks on the cohomology of groups',
    \newblock{\em Ann. of Math.} 80 (1964), 149--157.


\bibitem{DW} 
{ D. Witte}, 
\newblock{`Superrigidity of lattices in solvable Lie groups'},
\newblock{Inv. Math. 122 (1995), 147--193.}

\end{thebibliography}



\end{document}